\newcommand{\Lmod}[1]{#1\text{-}{\mathsf{mod}}}
\newcommand{\Nmod}[1]{#1\text{-}{\mathsf{nmod}}}
\newcommand{\FPmod}[1]{#1\text{-}{\mathsf{fpmod}}}
\newcommand{\Fmod}[1]{#1\text{-}{\mathsf{fmod}}}
\newcommand{\Gmod}[1]{#1\text{-}{\mathsf{gmod}}}
\newtheoremstyle{plainx}
  {3pt} % Space above
  {3pt} % Space below
  {\itshape} % Body font
  {} % Indent amount
  {\bfseries} % Theorem head font
  {.} % Punctuation after theorem head
  {.5em} % Space after theorem head
  {} % Theorem head spec (can be left empty, meaning `normal')
\newtheoremstyle{definitionx}
  {3pt} % Space above
  {3pt} % Space below
  {} % Body font
  {} % Indent amount
  {\bfseries} % Theorem head font
  {.} % Punctuation after theorem head
  {.5em} % Space after theorem head
  {} % Theorem head spec (can be left empty, meaning `normal')
\theoremstyle{plainx}
\newtheorem{thm}{Theorem}[section]
\newtheorem{pro}[thm]{Proposition}
\newtheorem{cor}[thm]{Corollary}
\newtheorem{lem}[thm]{Lemma}
\newtheorem{con}[thm]{Conjecture} 
\newtheorem*{question*}{Question}
\newtheorem{thmm}{Theorem}[section]
\newtheorem{corr}[thmm]{Corollary}
\theoremstyle{definitionx} 
\newtheorem{defi}[thm]{Definition}
\newtheorem{rem}[thm]{Remark}
\newtheorem{exa}[thm]{Example}
\newlength{\dhatheight}
\newcommand{\doublehat}[1]{%
    \settoheight{\dhatheight}{\ensuremath{\hat{#1}}}%
    \addtolength{\dhatheight}{-0.35ex}%
    \hat{\vphantom{\rule{1pt}{\dhatheight}}%
    \smash{\hat{#1}}}}
\DeclareRobustCommand\widecheck[1]{{\mathpalette\@widecheck{#1}}}
\def\@widecheck#1#2{%
    \setbox\z@\hbox{\m@th$#1#2$}%
    \setbox\tw@\hbox{\m@th$#1%
       \widehat{%
          \vrule\@width\z@\@height\ht\z@
          \vrule\@height\z@\@width\wd\z@}$}%
    \dp\tw@-\ht\z@
    \@tempdima\ht\z@ \advance\@tempdima2\ht\tw@ \divide\@tempdima\thr@@
    \setbox\tw@\hbox{%
       \raise\@tempdima\hbox{\scalebox{1}[-1]{\lower\@tempdima\box
\tw@}}}%
    {\ooalign{\box\tw@ \cr \box\z@}}}
\def \h {\mathfrak{h}}
\def \Z {\mathbb{Z}}
\def \C {\mathbb{C}}
\def \g {\mathfrak{g}}
\def \n {\mathfrak{n}}
\def \t {\mathfrak{t}}
\def \ad {\textnormal{ad}}
\def \a {\mathfrak{a}}
\def \b {\mathfrak{b}}
\def \O {\mathcal{O}}
\def \l {\mathfrak{l}}
\def \id {\mbox{\textnormal{id}}}
\def \supp {\mbox{\textnormal{supp}}}
\def \Mat {\mbox{\textnormal{Mat}}}
\def \d {\mathfrak{d}}
\def \UU {\widehat{\mathbf{U}}_\kappa}
\def \Uu {\mathbf{U}}
\def \UUc {\widehat{\mathbf{U}}_{\mathbf{c}}}
\DeclareMathOperator{\Ima}{Im}
\DeclareMathOperator{\Ann}{Ann}
\DeclareMathOperator{\End}{End}
\DeclareMathOperator{\Hom}{Hom}
\DeclareMathOperator{\Spec}{Spec}
\DeclareMathOperator{\gr}{\mathsf{gr}}
\DeclareMathOperator{\Ind}{Ind}
\DeclareMathOperator{\Res}{Res}
\DeclareMathOperator{\Tr}{Tr}
\DeclareMathOperator{\Kil}{Kil}
\DeclareMathOperator{\Op}{Op}
\DeclareMathOperator{\Loc}{Loc}
\DeclareMathOperator{\Sym}{Sym}
\DeclareMathOperator{\Ext}{Ext}
\DeclareMathOperator{\colim}{colim}
\DeclareMathOperator{\MaxSpec}{Maxspec}
\DeclareMathOperator{\coker}{coker}
\DeclareMathOperator{\Inf}{Inf}
\newcommand{\arxiv}[1]{\href{http://arxiv.org/abs/#1}{\tt arXiv:\nolinkurl{#1}}}
\newcommand{\Grad}{\nabla\!\!\!\!\nabla}
\newcommand{\Hecke}{\mathcal{H\kern-.47em H}}
\newcommand{\Heckedeg}{\mathbb{H\kern-.38em H}}
\begin{document}
%\subjclass[2010]{16G99, 05E10}
%\keywords{Suzuki functor, rational Cherednik algebras, affine Lie algebras, critical level, opers} 
\title{Suzuki functor at the critical level}
\author{Tomasz Przezdziecki}
\date{} 

\begin{abstract} 
In this paper we define and study a critical-level generalization of the Suzuki functor, relating the affine general linear Lie algebra to the rational Cherednik algebra of type A. Our main result states that this functor  induces a surjective algebra homomorphism from the centre of the completed universal enveloping algebra at the critical level to the centre of the rational Cherednik algebra at t=0. 
We use this homomorphism to obtain several results about the functor. We compute it on Verma modules, Weyl modules, and their restricted versions. We describe the maps between endomorphism rings induced by the functor and deduce that every simple module over the rational Cherednik algebra lies in its image. Our homomorphism between the two centres gives rise to a closed embedding of the Calogero-Moser space into the space of opers on the punctured disc. We give a partial geometric description of this embedding. 
\end{abstract}

\maketitle

\tableofcontents

\section{Introduction}

Arakawa and Suzuki \cite{AS} introduced a family of functors from the category $\O$ for $\mathfrak{sl}_n$ to the category of finite-dimensional representations of the degenerate affine Hecke algebra associated to the symmetric group $\mathfrak{S}_m$. These functors have been generalized in many different ways, connecting the representation theory of various Lie algebras with the representation theory of various degenerations of affine and double affine Hecke algebras. 
\begin{center}
\begin{figure}[h]
\begin{tabular}{ | c | c | c | }
\hline 
Lie algebra & ``Hecke'' algebra &  \\
\hline
$\mathfrak{sl}_n$ & degenerate affine Hecke algebra & Arakawa-Suzuki \cite{AS} \\
$\widehat{\mathfrak{sl}}_n$ & trigonometric DAHA & Arakawa-Suzuki-Tsuchiya \cite{AST} \\
$\widehat{\mathfrak{gl}}_n$ 
& rational DAHA ($t\neq0$) 
& Suzuki \cite{Suz} \\
$\widehat{\mathfrak{gl}}_n$ 
& cyclotomic rational DAHA ($t\neq0$)
& Varagnolo-Vasserot \cite{VV} \\ 
\hline
\end{tabular} 
\caption{Functors relating Lie algebras and ``Hecke'' algebras in type A}
\end{figure}
\end{center}
\vspace{-15pt} 

Other generalizations of the Arakawa-Suzuki functor may be found in, e.g., \cite{BK,CEE,ES,EFM,Jor1,Jor2,OR}. 
Here we are concerned with the third functor in the table above, introduced by Suzuki, and later studied by Varagnolo and Vasserot \cite{VV}, under the assumption that $t \neq 0$, and the level $\kappa$ is not critical. It is a functor 
\begin{equation} \label{intro Suzuki first form} \mathsf{F}_\kappa \colon \mathscr{C}_\kappa \to \Lmod{\mathcal{H}_{\kappa+n}} \end{equation} 
from the category $\mathscr{C}_\kappa$ of smooth $\widehat{\mathfrak{gl}}_n$-modules of level $\kappa$ to the category of modules over the rational Cherednik algebra $\mathcal{H}_{\kappa+n}$ (also known as the rational DAHA) associated to $\mathfrak{S}_m$ and parameters $t= \kappa+n$, $c=1$. It assigns to each $\widehat{\mathfrak{gl}}_n$-module a certain space of coinvariants:
\[ M \mapsto H_0(\mathfrak{gl}_n[z],\C[x_1, \hdots, x_m] \otimes (\mathbf{V}^*)^{\otimes m} \otimes M).\]
In this paper we study the limit of the functor $\mathsf{F}_\kappa$ as 
\[ \kappa \to \mathbf{c} = -n, \quad t \to 0.\]

The representation theory of the rational Cherednik algebra at $t=0$ differs radically from its representation theory at $t \neq 0$, mainly due to the fact that $\mathcal{H}_0$ has a large centre~$\mathcal{Z}$, whose spectrum can be identified with the classical Calogero-Moser space \cite{EG}. An analogous pattern occurs in the representation theory of $\hat{\g}:=\widehat{\mathfrak{gl}}_n$; 
the centre of the completed universal enveloping algebra $\widehat{\mathbf{U}}_\kappa$ of $\hat{\g}$ is trivial unless the level is critical. In the latter case, the centre $\mathfrak{Z}$ of $\UUc$ is a completion of a polynomial algebra in infinitely many variables, and, by a theorem of Feigin and Frenkel \cite{FF}, it can be identified with the algebra of functions on the space of opers on the punctured disc. 

The existence of an interesting connection between the two centres $\mathcal{Z}$ and $\mathfrak{Z}$, or, equivalently, between the Calogero-Moser space and opers, is suggested by the close relationship between the Calogero-Moser integrable system and the KP hierarchy. 
For example, Ben-Zvi and Nevins \cite{BN} investigated this relationship from the perspective of noncommutative geometry, identifying the Calogero-Moser space with a certain moduli space of sheaves, called micro-opers, on quantized cotangent bundles. 
There is also a more direct connection between $\mathcal{Z}$ and $\mathfrak{Z}$ via the Bethe algebra of the Gaudin model associated to $\g$. 
By the work of Chervov and Talalaev \cite{CT}, the Bethe algebra can be obtained as the image of $\mathfrak{Z}$ under the canonical projection from $\widehat{\mathbf{U}}_{\mathbf{c}}$ to $\mathbf{U}(\g[t^{-1}])$. 
A surjective homomorphism from the Bethe algebra to the centre of the rational Cherednik algebra was later constructed by Mukhin, Tarasov and Varchenko \cite{MTV}. 

Inspired by these intriguing connections, we study the relationship between the two centres from a more algebraic point of view. We consider $\mathcal{Z}$ and $\mathfrak{Z}$ as centres of the respective categories of modules and show that the functor $\mathsf{F}_{\mathbf{c}}$ induces (in a sense which will be made precise below) a surjective algebra homomorphism $\Theta \colon \mathfrak{Z} \twoheadrightarrow \mathcal{Z}$. This homomorphism encodes a lot of information about the functor, allowing us to deduce a number of interesting results (see Corollaries \ref{cor 1 intro}-\ref{cor 5 intro}). For example, we are able to prove that every simple $\mathcal{H}_0$-module is in the image of $\mathsf{F}_{\mathbf{c}}$, describe the maps between endomorphism rings induced by $\mathsf{F}_{\mathbf{c}}$, and compute the functor on Arakawa and Fiebig's restricted category~$\O$. Furthermore, we interpret $\Theta$ as an embedding of the Calogero-Moser space into the space of opers on the punctured disc and provide a partial geometric description of this embedding. We expect that there is a connection between our approach and the work of Mukhin, Tarasov and Varchenko, but we do not understand this connection precisely.

\subsection{Generalization of the Suzuki functor.} 

Our first theorem, which collects the results of Corollary \ref{cor: Suzuki all levels} and \S \ref{sec: ext to all modules} below, yields a generalization of the functor \eqref{intro Suzuki first form} originally defined by Suzuki. 

\newcounter{tmp}
\begingroup
\setcounter{tmp}{\value{thmm}}
\setcounter{thmm}{0} 
\renewcommand\thethmm{\Alph{thmm}}

\begin{thmm} \label{thm 1 intro}
For all $\kappa \in \C$, there is a colimit preserving functor 
\[ \mathsf{F}_{\kappa} \colon \Lmod{\widehat{\mathbf{U}}_\kappa} \to \Lmod{\mathcal{H}_{\kappa+n}}.\]
When $\kappa \neq \mathbf{c}$, the restriction of this functor to $\mathscr{C}_\kappa$ coincides with  \eqref{intro Suzuki first form}. 
\end{thmm}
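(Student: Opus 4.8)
The plan is to define the functor $\mathsf{F}_\kappa$ uniformly for all $\kappa$ by the same coinvariants formula
\[ M \mapsto H_0\bigl(\mathfrak{gl}_n[z],\, \C[x_1,\dots,x_m] \otimes (\mathbf{V}^*)^{\otimes m} \otimes M\bigr), \]
and to check that all the structures involved (the $\mathfrak{gl}_n[z]$-action, the commuting $\mathcal{H}_{\kappa+n}$-action, and the $\widehat{\mathbf{U}}_\kappa$-action on $M$) make sense and interact correctly \emph{without} assuming $\kappa \neq \mathbf{c}$. The key point is that the original construction of Suzuki never actually uses invertibility of $t = \kappa + n$ when building the bimodule structure on $\C[x_1,\dots,x_m] \otimes (\mathbf{V}^*)^{\otimes m} \otimes \widehat{\mathbf{U}}_\kappa$; the level only enters through the Sugawara-type operators, which are defined at any level as long as one works inside the completed enveloping algebra $\widehat{\mathbf{U}}_\kappa$ rather than demanding a central character. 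So the first step is to revisit that construction and isolate exactly where $\kappa$ appears, verifying that every commutator identity needed for the $\mathcal{H}_{\kappa+n}$-relations (in particular the relation involving the Euler/Calogero element, which is where $t$ shows up as a parameter, not as a unit) holds as an identity of operators for all $\kappa$.

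Next I would address colimit-preservation. Taking coinvariants $H_0(\mathfrak{gl}_n[z], -)$ is right exact and commutes with arbitrary colimits because it is a quotient by the span of $\{a \cdot v - \text{(shift)}\}$, i.e. it is computed by a cokernel of a map of coproducts; tensoring with the fixed module $\C[x_1,\dots,x_m] \otimes (\mathbf{V}^*)^{\otimes m}$ over $\C$ also preserves colimits. The only subtlety is that $M$ ranges over \emph{smooth} modules (or, in the extended version, over all $\widehat{\mathbf{U}}_\kappa$-modules), and one must check that the relevant action of $\widehat{\mathbf{U}}_\kappa$ on the tensor product descends to the coinvariants; this is again a computation internal to $\widehat{\mathbf{U}}_\kappa$ that does not see the level. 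I would phrase this as: the assignment is a composite of a colimit-preserving ``induction-type'' functor $M \mapsto \C[x_1,\dots,x_m] \otimes (\mathbf{V}^*)^{\otimes m} \otimes M$ followed by the colimit-preserving coinvariants functor, hence colimit-preserving.

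For the compatibility clause, when $\kappa \neq \mathbf{c}$ one restricts along the localization $\widehat{\mathbf{U}}_\kappa \to$ (algebra acting on smooth level-$\kappa$ modules); on the subcategory $\mathscr{C}_\kappa$ the coinvariants-with-Sugawara construction literally reduces to Suzuki's original formula because the completion issues that force the use of $\widehat{\mathbf{U}}_\kappa$ disappear for smooth modules of a fixed non-critical level — there the Sugawara operator is already well-defined on each module. So one only has to match two explicit bimodule structures on the nose, which is bookkeeping. I expect the main obstacle to be the first step: carefully setting up the bimodule $\C[x_1,\dots,x_m] \otimes (\mathbf{V}^*)^{\otimes m} \otimes \widehat{\mathbf{U}}_\kappa$ at $\kappa = \mathbf{c}$, because at the critical level the naive Sugawara element does not exist in $\widehat{\mathbf{U}}_{\mathbf{c}}$ and one must instead check that the specific combination of operators appearing in the Cherednik-algebra relations — which involves a \emph{difference} of Sugawara-type contributions, normalized so that the anomalous $\kappa$-term cancels — remains well-defined. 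Once that cancellation is verified, the rest is formal, and I would conclude by invoking Corollary~\ref{cor: Suzuki all levels} and the discussion in \S\ref{sec: ext to all modules} for the precise statements.
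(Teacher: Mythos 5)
Your plan for the smooth-module part of the statement is essentially the same as the paper's: isolate the role of $\kappa$ in the KZ connection and check the needed commutator identities without dividing by $\kappa+n$. One correction there: the quadratic Segal--Sugawara coefficient ${}^{\mathbf{c}}\mathbf{L}_{-1}$ does exist in $\widehat{\mathbf{U}}_{\mathbf{c}}$; nothing is cancelled. What changes at $\mathbf{c}$ is that ${}^{\kappa}\mathbf{L}_{-1}$ no longer implements the derivation $-\partial_t$ (that requires $\frac{1}{\kappa+n}{}^{\kappa}\mathbf{L}_{-1}$), but instead becomes \emph{central}, which is exactly what makes the connection ${}^{\mathbf{c}}\Grad_i = {}^{\mathbf{c}}\mathbf{L}_{-1}^{(i)}$ still flat and still normalize the $\g\otimes\mathcal{R}_z$-action (Proposition~\ref{pro: nabla descends}). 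Your phrase ``the naive Sugawara element does not exist'' is the wrong diagnosis even though your instinct that one must re-examine well-definedness is correct.

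The genuine gap is in the extension from $\mathscr{C}_\kappa$ to all of $\widehat{\mathbf{U}}_\kappa\Lmod{}$, which you treat as a routine recomputation of the coinvariants formula. It is not: the $y_i$-operator \eqref{y-formula} involves the infinite sum $\sum_{p\ge 0} x_i^p\Omega^{(i,\infty)}_{[p+1]}$, which converges only because $M$ is smooth. On a non-smooth $\widehat{\mathbf{U}}_\kappa$-module the formula $H_0(\g[t],\C[\h]\otimes(\mathbf{V}^*)^{\otimes m}\otimes M)$ does not carry a well-defined $\mathcal{H}_{\kappa+n}$-action, so your proposed ``composite of colimit-preserving functors'' is not even a functor on the target category. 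This is precisely why the paper (\S\ref{almost smooth}) takes the detour through pro-smooth completions: it first extends to $\widehat{\mathbf{U}}_\kappa\FPmod{}$ by $\mathsf{F}_\kappa(M):=\lim_{M_i\in\mathscr{I}_M}\mathsf{F}_\kappa(M_i)$ (Proposition~\ref{pro: equiv: prosmooth and cUEA-mod} and the subsequent right-exactness argument), then invokes an Eilenberg--Watts argument (Corollary~\ref{cor: EW Suzuki tensor}) to identify the functor with $\mathsf{F}_\kappa(\widehat{\mathbf{U}}_{\mathbf{c}})\otimes_{\widehat{\mathbf{U}}_{\mathbf{c}}}-$, and only then defines $\mathsf{F}_\kappa$ on all of $\widehat{\mathbf{U}}_\kappa\Lmod{}$ by that tensor formula. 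Colimit preservation then comes for free because a tensor functor is a left adjoint (Remark~\ref{remark colimits Suzuki}), not because the coinvariants construction itself preserves colimits on the enlarged domain. Your proposal cites \S\ref{sec: ext to all modules} at the end but does not reproduce or replace this mechanism, so as written it would fail at exactly the step the theorem is claiming.
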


Our next result describes the images of some important $\UU$-modules under the functor~$\mathsf{F}_{\kappa}$. Let us briefly explain the motivation for studying these modules. It comes from the representation theory of the rational Cherednik algebra. 
 
It was proven in \cite{EG} that isomorphism classes of simple $\mathcal{H}_0$-modules are in bijection with maximal ideals in $\mathcal{Z}:=Z(\mathcal{H}_0)$. 
Moreover, every simple $\mathcal{H}_0$-module occurs as a quotient of a generalized Verma module $\Delta_{0}(a,\lambda)$, introduced in \cite{Bel}. These modules can be defined for any $t \in \C$, and depend on a vector $a \in \C^m$, together with an irreducible representation $\lambda$ of a parabolic subgroup of $\mathfrak{S}_m$. When $a=0$, they are the usual Verma modules for $\mathcal{H}_t$. 
The following theorem shows that generalized Verma modules as well as the regular module are in the image of the functor $\mathsf{F}_{\kappa}$. 
\begin{thmm}[{Theorems \ref{thm: regular module}-\ref{thm: Vermas to standards}}] \label{thm 2 intro} 
Let $\kappa \in \C$. 
There exist $\widehat{\mathbf{U}}_\kappa$-modules $\mathbb{H}_\kappa$ and $\mathbb{W}_{\kappa}(a,\lambda)$ such that 
\[ \mathsf{F}_{\kappa}(\mathbb{H}_\kappa) =  \mathcal{H}_{\kappa+n}, \quad  \mathsf{F}_\kappa(\mathbb{W}_{\kappa}(a,\lambda)) = \Delta_{\kappa+n}(a,\lambda).\]
Moreover, $$\mathsf{F}_{\kappa}(\mathbb{M}_{\kappa}(\lambda)) = \Delta_{\kappa+n}(\lambda).$$ 
\end{thmm}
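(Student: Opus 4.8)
The plan is to prove Theorem~\ref{thm 2 intro} by explicitly identifying the $\UU$-modules whose images compute the target Cherednik-algebra modules, and then exploiting the fact (from Theorem~\ref{thm 1 intro}) that $\mathsf{F}_\kappa$ is a colimit-preserving functor defined as a space of coinvariants $M \mapsto H_0(\g[z], \C[x] \otimes (\mathbf{V}^*)^{\otimes m} \otimes M)$. The first step is to write down the candidate modules. For the regular module I expect $\mathbb{H}_\kappa$ to be an induced module of the form $\mathbf{U}_\kappa \otimes_{\mathbf{U}(\g[z] \oplus \C\mathbf{1})} (\text{something built from } \C[x] \otimes (\mathbf{V}^*)^{\otimes m})$, essentially the ``universal'' object representing the functor, so that $\mathsf{F}_\kappa(\mathbb{H}_\kappa)$ is a free rank-one module over $\mathcal{H}_{\kappa+n}$; concretely one builds $\mathbb{H}_\kappa$ so that $H_0$ returns $\mathcal{H}_{\kappa+n}$ acting on itself. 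For the generalized Verma module side, $\mathbb{W}_\kappa(a,\lambda)$ should be a parabolically induced $\hat\g$-module with inducing data matching the vector $a \in \C^m$ (governing the action of the $x_i$'s / evaluation parameters) and the $\mathfrak{S}_m$-parabolic representation $\lambda$; and $\mathbb{M}_\kappa(\lambda)$ should be the ordinary affine Verma module attached to $\lambda$, recovering the specialization $a = 0$.

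The second step is the computation of the coinvariants. For each of the three modules one wants to show $H_0(\g[z], \C[x] \otimes (\mathbf{V}^*)^{\otimes m} \otimes \mathbb{X})$ is the claimed Cherednik module \emph{as a module}, not merely as a vector space. The vector-space identification should follow from a PBW-type argument: the inducing structure of $\mathbb{X}$ lets one reduce the coinvariants to a finite-dimensional computation over the ``finite'' part, where one recognizes $\C[x] \otimes (\mathbf{V}^*)^{\otimes m} \otimes (\text{inducing rep})$ modulo $\mathfrak{gl}_n$-invariants as exactly the underlying space of $\Delta_{\kappa+n}(a,\lambda)$ (resp.\ $\mathcal{H}_{\kappa+n}$, resp.\ $\Delta_{\kappa+n}(\lambda)$) — this is the type-A Schur--Weyl-type mechanism already implicit in Suzuki's construction. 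Then one checks that the induced action of $\mathcal{H}_{\kappa+n}$ on the coinvariants — the $x_i$'s, the $\mathfrak{S}_m$, and the Dunkl-type operators $y_i$, all of which act on $H_0$ via the functor's module structure — matches the defining action on the generalized Verma module. The $x_i$ and $\mathfrak{S}_m$ actions are essentially tautological from the construction of $\mathbf{V}^{\otimes m} \otimes \C[x]$; the content is in the $y_i$ (Dunkl operator) action, which must be shown to agree with the action of the corresponding elements coming from the $\hat\g$-side (the Sugawara-type or current-algebra elements), and here the parameter matching $t = \kappa+n$, $c=1$ is forced.

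The main obstacle will be the $y_i$-action verification and, relatedly, getting the statement to hold uniformly in $\kappa$, including at the critical level $\kappa = \mathbf{c}$. Away from the critical level this is close to Suzuki's original argument, but one must be careful that the construction of $\mathbb{W}_\kappa(a,\lambda)$ and the identification of the $y_i$'s with explicit elements of (a completion of) $\mathbf{U}(\hat\g)$ still makes sense when $\kappa = \mathbf{c}$, where the Sugawara elements degenerate and live in the centre. Since Theorem~\ref{thm 1 intro} already guarantees $\mathsf{F}_\kappa$ is well-defined and colimit-preserving for all $\kappa$, the cleanest route is: establish the module isomorphisms for generic $\kappa$ by a direct computation, phrase both sides as specializations of objects defined over $\C[\kappa]$ (or over a suitable base), and then specialize — provided one checks the relevant coinvariants form a flat/free family in $\kappa$, which should follow from the PBW filtration argument. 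A secondary technical point is the convergence/completion issue: $\mathbb{H}_\kappa$ and $\mathbb{W}_\kappa(a,\lambda)$ are smooth $\UU$-modules, so one must confirm the coinvariants are taken in the correct (completed) sense and that $H_0$ commutes with the relevant colimits, which is exactly where colimit-preservation from Theorem~\ref{thm 1 intro} is used. I would present the regular-module case first (it is the representing object and the other two are quotients or variants of it), then deduce the generalized Verma case, and finally obtain $\mathbb{M}_\kappa(\lambda)$ as the $a \to 0$ degeneration.
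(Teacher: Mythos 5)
Your overall architecture is in the right ballpark — induce from a suitable small Lie subalgebra, use the tensor identity and a Schur--Weyl/PBW argument to reduce the $\g[t]$-coinvariants to a finite computation, then verify the Dunkl/$y_i$ action — and that is indeed the mechanism of the paper's proofs (Proposition~\ref{diagramHindInd} and Corollary~\ref{lem: SW duality form 2}). But two of your shortcuts would fail as written.

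First, you propose to obtain $\mathsf{F}_\kappa(\mathbb{M}_\kappa(\lambda)) \cong \Delta_{\kappa+n}(\lambda)$ as the ``$a\to 0$ degeneration'' of the generalized Weyl case. This conflates two different modules: $\mathbb{W}_\kappa(0,\lambda) = \mathbb{W}_\kappa(\lambda)$ is the Weyl module $\mathsf{Ind}_{\mu,\kappa}(L(0,\lambda))$, while $\mathbb{M}_\kappa(\lambda)$ is the affine Verma module $\Ind_{\hat{\b}_+}^{\hat{\g}_\kappa}\C_{\lambda,1}$; there is a surjection $\mathbb{M}_\kappa(\lambda)\twoheadrightarrow \mathbb{W}_\kappa(\lambda)$, and the two are genuinely distinct. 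So deducing the Verma statement as a limit of the Weyl one does not work; indeed both have image $\Delta_{\kappa+n}(\lambda)$, but this is a nontrivial coincidence that the paper proves by a \emph{separate} Borel-induction argument parallel to (and not a consequence of) the Levi-induction argument (see the isomorphism \eqref{lem: Borel analogue}--\eqref{lem: Borel analogue 2} combined with Corollary~\ref{lem: SW duality form 2}). In fact Theorem~\ref{thm: Vermas to standards} holds for arbitrary $m,n$ and does not even require $n=m$, unlike the other two parts.

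Second, your plan to establish the module isomorphisms ``for generic $\kappa$ by a direct computation, then specialize'' introduces a flatness requirement that is both unnecessary and not obviously satisfied. The paper's computation of the $y_i$-action (via formula \eqref{y-formula} in the proof of Proposition~\ref{diagramHindInd}) is uniform in $\kappa$: on the generating subspace the terms $\partial_{x_i}$ and $(1-\underline{s_{i,j}})$ vanish, $\Omega^{(i,\infty)}_{[p+1]}$ for $p\geq 1$ acts trivially, and one is left with $\Omega^{(i,\infty)}_{[1]} = \sum_k e_{kk}^{(i)}e_{kk}[1]^{(\infty)}$. Nothing in this computation is singular at $\kappa=\mathbf{c}$; the only critical-level subtlety was already absorbed in Proposition~\ref{pro: nabla descends} (where ${}^{\mathbf{c}}\mathbf{L}_{-1}\in\mathfrak{Z}$ replaces the Segal--Sugawara normalization). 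So the deformation-and-specialization detour is a step backward from the uniform argument.

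One smaller inaccuracy: $\mathbb{H}_\kappa$ is not the ``universal object representing the functor'' (no such small representing object exists — $\mathsf{F}_\kappa$ is represented by the bimodule $\mathsf{F}_\kappa(\UU)$, which is not $\mathbb{H}_\kappa$). Rather, $\mathbb{H}_\kappa = \mathsf{Ind}_\kappa(\mathcal{I})$ is the module induced from $S(\t[1])$ over $\hat{\t}_+$ with the specific weight $(1^n,1)$ forced; identifying $\overline{\mathsf{F}}(\mathcal{I})\cong \C\mathfrak{S}_n\ltimes\C[\h^*]$ and then applying $\mathcal{H}\mathsf{ind}_t$ is what gives the regular module. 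Your heuristic about ``free rank one over $\mathcal{H}_{\kappa+n}$'' is the right intuition, but the precise inducing data come from $\hat{\t}_+$ and not from $\g[z]$.
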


Here $\mathbb{M}_{\kappa}(\lambda)$ denotes the Verma module for $\hat{\g}$. 
When $a=0$, the modules $\mathbb{W}_{\kappa}(\lambda):=\mathbb{W}_{\kappa}(0,\lambda)$ coincide with the Weyl modules from \cite{KLI}. Therefore, we call $\mathbb{W}_{\kappa}(a,\lambda)$ ``generalized Weyl modules".

\subsection{Suzuki functor and the centres.}

From now on assume that $n=m$. 
One of our main goals is to understand how the centres of the categories $\widehat{\mathbf{U}}_{\mathbf{c}}\Lmod{}$ and $\mathcal{H}_{0}\Lmod{}$ behave under the functor $\mathsf{F}_{\mathbf{c}}$. This is of vital importance because the centres, to a large extent, control morphisms in these categories. For example, it was shown in \cite{FG} that the endomorphism rings of Verma and Weyl modules for $\mathbf{U}_{\mathbf{c}}(\hat{\g})$ are quotients of $\mathfrak{Z}$. 

In general, a functor of additive categories does not induce a homomorphism between their centres. We circumvent this problem by introducing the notions of an $F$-centre of a category and an $F$-central subcategory. More precisely, 
we consider the canonical maps 
\begin{equation*} \mathfrak{Z} \cong \Lmod{Z(\widehat{\mathbf{U}}_{\mathbf{c}}})  \overset{\alpha}{\longrightarrow} \End(\mathsf{F}_{\mathbf{c}}) \overset{\beta}{\longleftarrow} Z(\mathcal{H}_{0}\Lmod{}) \cong \mathcal{Z}.\end{equation*}
from the two centres to the endomorphism ring of the functor $\mathsf{F}_{\mathbf{c}}$. 
Since $\mathcal{H}_0$ lies in the image of $\mathsf{F}_{\mathbf{c}}$, the map $\beta$ is injective and $\mathcal{Z}$ can be identified with the subring $\Ima \beta$ of $\End(\mathsf{F}_{\mathbf{c}})$. 
%Our next result gives a partial description of the subalgebra $\alpha^{-1}(\mathcal{Z}) \subset \mathfrak{Z}$. 
We call $Z_{\mathsf{F}_{\mathbf{c}}}(\widehat{\mathbf{U}}_{\mathbf{c}}) := \alpha^{-1}(\mathcal{Z}) \subset \mathfrak{Z}$ the $\mathsf{F}_{\mathbf{c}}$-\emph{centre} of $\widehat{\mathbf{U}}_{\mathbf{c}}\Lmod{}$. Restricting $\alpha$ to $Z_{\mathsf{F}_{\mathbf{c}}}(\widehat{\mathbf{U}}_{\mathbf{c}})$ gives  a natural algebra homomorphism
\begin{equation*} Z(\mathsf{F}_{\mathbf{c}}) := \alpha|_{Z_{\mathsf{F}_{\mathbf{c}}}(\widehat{\mathbf{U}}_{\mathbf{c}})} \colon \  Z_{\mathsf{F}_{\mathbf{c}}}(\widehat{\mathbf{U}}_{\mathbf{c}}) \longrightarrow \mathcal{Z}\end{equation*} 
making the diagram
\begin{equation} \label{centre endo diagram intro}
\begin{tikzcd}
 Z_{\mathsf{F}_{\mathbf{c}}}(\widehat{\mathbf{U}}_{\mathbf{c}}) \arrow{r}{Z(\mathsf{F}_{\mathbf{c}})} \arrow{d}[swap]{can} & \mathcal{Z} \arrow{d}{can} \\
\End_{\UUc}(M) \arrow{r}{\mathsf{F}_{\mathbf{c}}} & \End_{\mathcal{H}_0}(\mathsf{F}_{\mathbf{c}}(M))
\end{tikzcd}
\end{equation}
commute for all $\UUc$-modules $M$. 
The homomorphism $Z(\mathsf{F}_{\mathbf{c}})$ contains partial information about all the maps between endomorphism rings induced by the functor $\mathsf{F}_{\mathbf{c}}$. 
 
Our next result gives a partial description of $Z_{\mathsf{F}_{\mathbf{c}}}(\widehat{\mathbf{U}}_{\mathbf{c}})$. 
We consider the subalgebra $\mathscr{L}_{\mathbf{c}} := \C[\id[r], {}^{\mathbf{c}}\mathbf{L}_{r+1}]_{r \leq 0} \subset \mathfrak{Z}$ consisting of certain first- and second-order Segal-Sugawara operators (see \S \ref{subsec:quadraticSSvector} for a precise definition). 

\begin{thmm}[{Theorem \ref{pro: question one}}] \label{intro thm q1}
The algebra $\mathscr{L}_{\mathbf{c}}$ lies in the $\mathsf{F}_{\mathbf{c}}$-centre of $\widehat{\mathbf{U}}_{\mathbf{c}}\Lmod{}$, i.e., $$\mathscr{L}_{\mathbf{c}} \subseteq  Z_{\mathsf{F}_{\mathbf{c}}}(\widehat{\mathbf{U}}_{\mathbf{c}}).$$
\end{thmm}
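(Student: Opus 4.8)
The plan is to show that each generator of $\mathscr{L}_{\mathbf{c}}$ — namely the first-order operators $\id[r]$ and the quadratic Segal-Sugawara operators ${}^{\mathbf{c}}\mathbf{L}_{r+1}$ for $r \leq 0$ — acts, under $\alpha$, by an endomorphism of $\mathsf{F}_{\mathbf{c}}$ that lies in $\Ima\beta = \mathcal{Z}$. Since $\mathscr{L}_{\mathbf{c}}$ is generated as an algebra by these elements and $\alpha$ is an algebra homomorphism with $\alpha^{-1}(\mathcal{Z})$ a subalgebra, it suffices to treat the generators one at a time. The natural mechanism is the explicit description of $\mathsf{F}_{\mathbf{c}}(M) = H_0(\mathfrak{gl}_n[z], \C[x] \otimes (\mathbf{V}^*)^{\otimes m} \otimes M)$: a central element $z \in \mathfrak{Z}$ acts on $M$, hence on the tensor product, hence on the coinvariants, and one must identify this induced operator with the image under $\beta$ of some genuinely central element of $\mathcal{H}_0$. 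So the first step is to fix, for each generator, a candidate element of $\mathcal{Z} \subset \mathcal{H}_0$ — presumably built from the Dunkl-type operators $x_i$, $y_i$ and the symmetric group — that one expects to match.

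First I would handle the first-order operators $\id[r]$, $r \leq 0$. These are the modes of the $\widehat{\mathfrak{gl}}_1$-current sitting inside $\widehat{\mathfrak{gl}}_n$ via the identity matrix; because $\mathbf{V}^*$ is the standard representation, the identity matrix acts on each tensor factor as a scalar, so on $(\mathbf{V}^*)^{\otimes m}$ it acts as multiplication by $-m$ in degree $0$ and the positive modes act through the Heisenberg, which on the coinvariant space should be expressible through the operators $\sum_i x_i^k$ (for $\id[-k]$) and $\sum_i \partial_{x_i}$-type combinations. The key computation is to push $\id[r]$ across the defining relations of the coinvariant functor and recognize the result as a power-sum in the $x_i$'s, which is manifestly central in $\mathcal{H}_0$ (the $x_i$ generate a polynomial subalgebra and the symmetric functions of them are central). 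This is essentially the content of the computation of $\mathsf{F}_{\kappa}$ on the grading/energy operator, and should follow from the already-established formulas behind Theorems \ref{thm: regular module}--\ref{thm: Vermas to standards}.

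The harder step is the quadratic Segal-Sugawara operators ${}^{\mathbf{c}}\mathbf{L}_{r+1}$. Here I would use the classical fact that the Sugawara field, at the critical level, becomes central, and that its action on a smooth module is computed by a normally-ordered sum $\sum_{a} \normord{J^a(z) J_a(z)}$. Transporting this through $\mathsf{F}_{\mathbf{c}}$ and using the $\mathfrak{gl}_n[z]$-coinvariance to move the $\mathfrak{gl}_n$-currents onto the tensor factors $\C[x] \otimes (\mathbf{V}^*)^{\otimes m}$, the bilinear expression in the currents turns into a sum over pairs $(i,j)$ of the quadratic Casimir acting in factors $i$ and $j$ of $(\mathbf{V}^*)^{\otimes m}$, decorated with powers of $x_i, x_j$ coming from the mode number $r+1$; the Casimir on two copies of the standard representation is, up to scalars, the transposition $s_{ij}$. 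Thus ${}^{\mathbf{c}}\mathbf{L}_{r+1}$ should map to an operator of the shape $\sum_{i} x_i^{?}\,(\text{something}) + \sum_{i\neq j} x_i^? x_j^? s_{ij}$, and the task is to recognize this exact combination as one of the standard central elements of $\mathcal{H}_0$ — most plausibly a spectral-parameter expansion of the Dunkl operator squared, $\sum_i (y_i + \sum_{j\neq i} x_j^{-1}\cdots)^2$-type symmetric functions, or the Calogero-Moser Hamiltonians, whose centrality at $t=0$ is classical. I expect this matching — getting the normal-ordering corrections and the $n=m$ numerology exactly right so that the output lands \emph{on the nose} in $\mathcal{Z}$ rather than merely in a completion of $\mathcal{H}_0$ — to be the main obstacle; once the identification is made, centrality of the target in $\mathcal{H}_0$ is automatic, and membership in $Z_{\mathsf{F}_{\mathbf{c}}}(\widehat{\mathbf{U}}_{\mathbf{c}}) = \alpha^{-1}(\mathcal{Z})$ follows immediately.
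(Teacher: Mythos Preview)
Your broad strategy---treat the generators $\id[r]$ and ${}^{\mathbf{c}}\mathbf{L}_{r+1}$ one at a time and show that each is sent by $\alpha$ into $\mathcal{Z}$---matches the paper. But two concrete points are off.

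First, the paper does \emph{not} try to carry out the computation on an arbitrary smooth $M$. It uses the Eilenberg--Watts realization $\mathsf{F}_{\mathbf{c}}(M) = \mathsf{F}_{\mathbf{c}}(\widehat{\mathbf{U}}_{\mathbf{c}}) \otimes_{\widehat{\mathbf{U}}_{\mathbf{c}}} M$ (Definition~\ref{general definition of Suzuki}), from which $\mathsf{F}_{\mathbf{c}}(z_M) = \mathsf{F}_{\mathbf{c}}(z_{\widehat{\mathbf{U}}_{\mathbf{c}}}) \otimes \id$; so one only has to show that $\mathsf{F}_{\mathbf{c}}(z_{\widehat{\mathbf{U}}_{\mathbf{c}}})$ lands in the image of $\mathcal{Z}$ in $\End_{\mathcal{H}_0}(\mathsf{F}_{\mathbf{c}}(\widehat{\mathbf{U}}_{\mathbf{c}}))$. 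The computation (Lemma~\ref{diagram factors trilemma}) is then performed on the single vector $[1 \otimes e_{\mathsf{id}}^* \otimes 1_{\hat{\g}}]$, using that it generates the submodule $K_0\cong\mathcal{H}_0$. This reduction matters: the functor is defined on \emph{all} $\widehat{\mathbf{U}}_{\mathbf{c}}$-modules, not just smooth ones, and your coinvariant manipulations are only available in the smooth case; it also avoids having to verify, uniformly in $M$, that the same $\mathcal{H}_0$-element works.

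Second, your guess for the image of ${}^{\mathbf{c}}\mathbf{L}_r$ is wrong: it is \emph{linear} in the Dunkl generators, namely
\[
-\sum_i x_i^{1-r} y_i \;+\; \sum_{i<j} c_{-r}(x_i,x_j)\, s_{i,j} \;+\; \tfrac{n(1-r)}{2}\sum_i x_i^{-r}
\]
(formula~\eqref{L_r complete homogeneous}), not a Calogero--Moser Hamiltonian quadratic in the $y_i$. Your mechanism---moving both currents in the normally ordered product onto the tensor factors via $\g[t]$-coinvariance---only treats the part of ${}^{\mathbf{c}}\mathbf{L}_r$ in which both modes are $\leq 0$, and this indeed produces the $s_{i,j}$ and power-sum pieces. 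The other half, $\sum_{s\geq 1}\sum_{k,l} e_{kl}[r-s]\,e_{lk}[s]$, keeps a strictly positive mode on $M$; after moving only the nonpositive factor across one is left with $\sum_i x_i^{1-r}\sum_{p\geq 0} x_i^{p}\,\Omega^{(i,\infty)}_{[p+1]}$, which on the generator vector is precisely $\sum_i x_i^{1-r} y_i$ by the Dunkl formula~\eqref{y-formula}. That identification---not a quadratic Casimir trick---is the key step you are missing. Finally, note that the centrality of the resulting element in $\mathcal{H}_0$ is not self-evident; the paper either checks it by hand or defers to the more structural argument behind Theorem~\ref{thm restricted centres}.
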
 

We give an explicit description of the associated homomorphism 
\begin{equation} \label{alpha beta intro} Z(\mathsf{F}_{\mathbf{c}})|_{\mathscr{L}_{\mathbf{c}}} \colon \mathscr{L}_{\mathbf{c}} \to \mathcal{Z} \end{equation} 
in \eqref{id[r]}-\eqref{L_r complete homogeneous}. 

It is natural to ask whether $Z_{\mathsf{F}_{\mathbf{c}}}(\widehat{\mathbf{U}}_{\mathbf{c}})$ coincides with $\mathfrak{Z}$.
Unfortunately, this is far from being the case. Our solution to this problem is to relax the condition that the diagram~\eqref{centre endo diagram intro} should commute for all $\UUc$-modules $M$. %Instead, we impose the condition that the diagram should commute for all $M$ from some subcategory. 
We introduce the notion of a subcategory $\mathcal{A}$ of $\widehat{\mathbf{U}}_{c}\Lmod{}$ being $\mathsf{F}_{c}$-\emph{central} (see Definition \ref{fc central cat defi} for details), which has the consequence that there exists a unique algebra homomorphism $Z_{\mathcal{A}}(\mathsf{F}_{c}) \colon \mathfrak{Z} \to \mathcal{Z}$ making the diagram
\begin{equation} \label{centre endo diagram} 
\begin{tikzcd}
 \mathfrak{Z} \arrow{r}{Z_{\mathcal{A}}(\mathsf{F}_{\mathbf{c}})} \arrow{d}[swap]{can} & \mathcal{Z} \arrow{d}{can} \\
\End_{\UUc}(M) \arrow{r}{\mathsf{F}_{\mathbf{c}}} & \End_{\mathcal{H}_0}(\mathsf{F}_{\mathbf{c}}(M))
\end{tikzcd}
\end{equation}
commute for all $M \in \mathcal{A}$. 
Our next result identifies an important $\mathsf{F}_{\mathbf{c}}$-central subcategory of $\widehat{\mathbf{U}}_{\mathbf{c}}\Lmod{}$. 

\begin{thmm}[{Theorem \ref{thm restricted centres}}] \label{intro thm q2}
The full subcategory $\mathscr{C}_{\mathbb{H}}$ of $\widehat{\mathbf{U}}_{\mathbf{c}}\Lmod{}$ projectively generated by $\mathbb{H}_{\mathbf{c}}$ is $\mathsf{F}_{\mathbf{c}}$-central. 
\end{thmm}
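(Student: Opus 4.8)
The plan is to exploit the fact that $\mathscr{C}_{\mathbb{H}}$ is, by definition, equivalent to the category of modules over the endomorphism ring $E := \End_{\UUc}(\mathbb{H}_{\mathbf{c}})^{\mathrm{op}}$, so that every object of $\mathscr{C}_{\mathbb{H}}$ is a colimit of copies of the single projective generator $\mathbb{H}_{\mathbf{c}}$. Since $\mathsf{F}_{\mathbf{c}}$ is colimit-preserving (Theorem~\ref{thm 1 intro}) and, by Theorem~\ref{thm 2 intro}, sends $\mathbb{H}_{\mathbf{c}}$ to the regular module $\mathcal{H}_0$, the functor restricts to a colimit-preserving functor $\mathscr{C}_{\mathbb{H}} \to \mathcal{H}_0\Lmod{}$ which on the generator is the identity-like assignment $\mathbb{H}_{\mathbf{c}} \mapsto \mathcal{H}_0$. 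The key point is then that checking the commutativity of diagram~\eqref{centre endo diagram} for \emph{all} $M \in \mathscr{C}_{\mathbb{H}}$ reduces, by functoriality and compatibility with colimits, to checking it for the single object $M = \mathbb{H}_{\mathbf{c}}$: any endomorphism of a general $M$ factors through maps between copies of $\mathbb{H}_{\mathbf{c}}$, and the central elements act compatibly with all morphisms by definition of the centre.

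\textbf{First I would} pin down the map on the generator. We have $\End_{\UUc}(\mathbb{H}_{\mathbf{c}}) = E^{\mathrm{op}}$ and $\End_{\mathcal{H}_0}(\mathcal{H}_0) = \mathcal{H}_0^{\mathrm{op}}$, and $\mathsf{F}_{\mathbf{c}}$ induces an algebra homomorphism $E^{\mathrm{op}} \to \mathcal{H}_0^{\mathrm{op}}$. The composition $\mathfrak{Z} \xrightarrow{can} Z(E^{\mathrm{op}}) \subseteq E^{\mathrm{op}} \xrightarrow{\mathsf{F}_{\mathbf{c}}} \mathcal{H}_0^{\mathrm{op}}$ lands, I claim, in $Z(\mathcal{H}_0^{\mathrm{op}}) = \mathcal{Z}$: this is where one needs to know enough about the structure of $E$ and the induced map. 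The cleanest route is to observe that $\mathcal{H}_0$, viewed inside $\End(\mathsf{F}_{\mathbf{c}})$ via the map $\beta$ of the introduction, commutes with the image of $\mathfrak{Z}$ under $\alpha$ — because central elements of $\UUc$ give natural transformations of $\mathsf{F}_{\mathbf{c}}$, hence commute with the $\mathcal{H}_0$-action on $\mathsf{F}_{\mathbf{c}}(\mathbb{H}_{\mathbf{c}}) = \mathcal{H}_0$, which is free of rank one. Therefore $\alpha(\mathfrak{Z})$ evaluated at $\mathbb{H}_{\mathbf{c}}$ lies in the centraliser of $\mathcal{H}_0$ acting on itself regularly, which is exactly $Z(\mathcal{H}_0) = \mathcal{Z}$. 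This defines $Z_{\mathscr{C}_{\mathbb{H}}}(\mathsf{F}_{\mathbf{c}}) \colon \mathfrak{Z} \to \mathcal{Z}$.

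\textbf{Next I would} verify commutativity of~\eqref{centre endo diagram} for arbitrary $M \in \mathscr{C}_{\mathbb{H}}$ and prove uniqueness. For commutativity: write $M$ as a colimit of copies of $\mathbb{H}_{\mathbf{c}}$; an endomorphism $\phi \in \End_{\UUc}(M)$ and the action of $z \in \mathfrak{Z}$ are both compatible with this colimit presentation, and $\mathsf{F}_{\mathbf{c}}$ preserves it, so the required identity $\mathsf{F}_{\mathbf{c}}(z\cdot\phi) = Z_{\mathscr{C}_{\mathbb{H}}}(\mathsf{F}_{\mathbf{c}})(z)\cdot\mathsf{F}_{\mathbf{c}}(\phi)$ follows by naturality from the case $M = \mathbb{H}_{\mathbf{c}}$, which holds by construction. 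For uniqueness: since $\mathbb{H}_{\mathbf{c}} \in \mathscr{C}_{\mathbb{H}}$ and $\mathsf{F}_{\mathbf{c}}(\mathbb{H}_{\mathbf{c}}) = \mathcal{H}_0$ is a faithful $\mathcal{H}_0$-module, the right vertical map $can \colon \mathcal{Z} \to \End_{\mathcal{H}_0}(\mathcal{H}_0)$ is injective, so any homomorphism $\mathfrak{Z} \to \mathcal{Z}$ making the $M = \mathbb{H}_{\mathbf{c}}$ square commute is determined, proving uniqueness.

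\textbf{The main obstacle} I expect is the second paragraph: showing that the induced map on $\End_{\UUc}(\mathbb{H}_{\mathbf{c}})$ genuinely carries the central part $\alpha(\mathfrak{Z})$ into $\mathcal{Z}$ and not merely into $\mathcal{H}_0$. This hinges on the faithfulness (freeness of rank one) of $\mathsf{F}_{\mathbf{c}}(\mathbb{H}_{\mathbf{c}}) = \mathcal{H}_0$ as an $\mathcal{H}_0$-module together with the fact, extracted from the explicit construction in Theorem~\ref{thm 2 intro}, that the $\mathcal{H}_0$-module structure on $\mathsf{F}_{\mathbf{c}}(\mathbb{H}_{\mathbf{c}})$ is the regular one and is compatible with natural transformations coming from $\mathfrak{Z}$; once that is in place, the centraliser computation $Z_{\mathcal{H}_0}(\mathcal{H}_0) = \mathcal{Z}$ is immediate. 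Everything else is a formal consequence of $\mathscr{C}_{\mathbb{H}}$ being projectively generated by a single object together with the colimit-preservation of $\mathsf{F}_{\mathbf{c}}$.
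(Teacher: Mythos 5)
Your reduction to the single generator $\mathbb{H}_{\mathbf{c}}$ is sound and matches the paper's strategy: since $\mathscr{C}_{\mathbb{H}}$ is projectively generated by $\mathbb{H}_{\mathbf{c}}$, both existence and uniqueness of $Z_{\mathscr{C}_{\mathbb{H}}}(\mathsf{F}_{\mathbf{c}})$ come down to the square for $M = \mathbb{H}_{\mathbf{c}}$ (this is the paper's Lemma~\ref{lem centre to endo inj}). Your uniqueness argument via faithfulness of the regular module is also correct.

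However, there is a genuine gap exactly at the point you flag as "the main obstacle," and the argument you offer to close it does not work. You want to show that the image of $\mathfrak{Z}$ in $\End_{\mathcal{H}_0}(\mathsf{F}_{\mathbf{c}}(\mathbb{H}_{\mathbf{c}})) \cong \mathcal{H}_0^{op}$ lands inside $\mathcal{Z}$. Your justification is that central elements of $\UUc$ give natural transformations of $\mathsf{F}_{\mathbf{c}}$, hence commute with the $\mathcal{H}_0$-action on $\mathsf{F}_{\mathbf{c}}(\mathbb{H}_{\mathbf{c}}) = \mathcal{H}_0$, hence lie in "the centraliser of $\mathcal{H}_0$ acting on itself regularly, which is exactly $Z(\mathcal{H}_0)$." But the centraliser of the left regular action of $\mathcal{H}_0$ inside $\End_{\C}(\mathcal{H}_0)$ is $\mathcal{H}_0^{op}$ (right multiplications), not $\mathcal{Z}$. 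So commutation with the module structure only gives you $\Ima\Theta \subseteq \mathcal{H}_0^{op}$, which is automatic and says nothing. To land in $\mathcal{Z}$ you must additionally show that $\Ima\Theta$ commutes with right multiplication by elements of $\mathcal{H}_0$ (equivalently, with a generating set of $\mathcal{H}_0^{op}$), and this is not a formal consequence of naturality.

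The paper fills this gap with two non-formal inputs specific to the situation. First, the $G((t))$-action on $\UUc$ (which fixes $\mathfrak{Z}$ pointwise by Proposition~\ref{pro: centre fixed under G((t))}) restricts to an $\mathfrak{S}_n$-action on $\mathbb{H}_{\mathbf{c}}$, and a computation with the explicit isomorphism $\Upsilon$ of Theorem~\ref{thm: regular module} shows $\Theta$ is $\mathfrak{S}_n$-equivariant; this forces $\Ima\Theta \subseteq Z_{\mathcal{H}_0^{op}}(\C\mathfrak{S}_n)$. Second, the subalgebra $\Sym(\t[1]) \cdot 1_{\mathbb{H}}$ of $\mathbb{H}_{\mathbf{c}}^{\mathfrak{I}_{\mathbf{c}}} \cong \End_{\UUc}(\mathbb{H}_{\mathbf{c}})$ maps under $\Psi$ onto $\C[\h^*] \subset \mathcal{H}_0^{op}$, and since $\Ima\Phi$ is central in $\End_{\UUc}(\mathbb{H}_{\mathbf{c}})$ it commutes with these; this forces $\Ima\Theta \subseteq Z_{\mathcal{H}_0^{op}}(\C[\h^*])$. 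Combined with the purely Cherednik-algebra computation $Z_{\mathcal{H}_0}(\C[\h^*]^\rtimes) = \mathcal{Z}$ (Proposition~\ref{pro: centralizer of skgr}), one concludes. These two ingredients — the $G((t))$-action and the identification of the $\C[\h^*]$-part inside $\End_{\UUc}(\mathbb{H}_{\mathbf{c}})$ — are the substance of the proof, and they are missing from your proposal.
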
 

The category $\mathscr{C}_{\mathbb{H}}$ contains all the Verma and generalized Weyl modules which are not annihilated by $\mathsf{F}_{\mathbf{c}}$. 
The associated homomorphism
\[ \Theta = Z_{\mathscr{C}_{\mathbb{H}}}(\mathsf{F}_{\mathbf{c}}) \colon \ \mathfrak{Z} \longrightarrow \mathcal{Z} \]
plays a key role in our study of the functor~$\mathsf{F}_{\mathbf{c}}$. 
The following theorem, whose representation theoretic and geometric consequences are discussed in the next subsection, 
is the main result of this paper. 

%Intuitively, Theorem \ref{intro thm q1} says that the functor $\mathsf{F}_{\mathbf{c}}$ ``intertwines'' the actions of $\mathscr{L}_{\mathbf{c}}$ and $\mathcal{Z}$ on endomorphism rings in the two categories. It is natural to ask whether this statement extends to the action of the entire centre $\mathfrak{Z}$. It turns out that this is not the case, i.e., $\alpha^{-1}(\mathcal{Z}) \neq \mathfrak{Z}$. Nevertheless, $\mathsf{F}_{\mathbf{c}}$ does intertwine the actions of $\mathfrak{Z}$ and $\mathcal{Z}$ on endomorphism rings of some objects. More precisely, we are interested in the full subcategory $\mathscr{C}_{\mathbb{H}}$ of $\widehat{\mathbf{U}}_{\mathbf{c}}\Lmod{}$ projectively generated by $\mathbb{H}_{\mathbf{c}}$. This subcategory contains the Verma modules and the generalized Weyl modules which are not killed by~$\mathsf{F}_{\mathbf{c}}$. 

\begin{thmm}[{Theorem \ref{thm: Psi is surj}}] \label{main theorem intro}
The homomorphism $\Theta \colon \mathfrak{Z} \to \mathcal{Z}$ is surjective. 
\end{thmm}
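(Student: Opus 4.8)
The plan is to reduce the surjectivity of $\Theta \colon \mathfrak{Z} \to \mathcal{Z}$ to the image already produced in Theorem~\ref{intro thm q1}, together with a density/generation argument on the Cherednik side. Recall that $\mathcal{Z} = Z(\mathcal{H}_0)$ is, by \cite{EG}, the coordinate ring of the Calogero--Moser space, a smooth irreducible affine variety of dimension $2n$; in particular $\mathcal{Z}$ is a finitely generated integral domain, and it is generated by the images of the symmetric functions in the $x_i$ and in the Dunkl-type elements $y_i$ (equivalently, by the spherical subalgebra's natural generators $\sum x_i^k$ and $\sum y_i^k$ after averaging by the symmetriser $\mathbf{e}$). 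So it suffices to show that each of these generators lies in $\Ima\Theta$.

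First I would unwind the explicit formula \eqref{id[r]}--\eqref{L_r complete homogeneous} for the homomorphism \eqref{alpha beta intro}, i.e.\ $Z(\mathsf{F}_{\mathbf{c}})|_{\mathscr{L}_{\mathbf{c}}}$. The first-order Segal--Sugawara operators $\id[r]$ ($r\le 0$) should map, under $\Theta$, to (shifted) power sums $\sum_i x_i^{-r}$ in the $x$-variables — this is exactly the kind of "multiplication-operator" part of the centre of $\mathcal{H}_0$, coming from $\C[x_1,\dots,x_m]^{\mathfrak{S}_m}\subset\mathcal{Z}$. Hence the subalgebra $\C[x_1,\dots,x_m]^{\mathfrak{S}_m}$ of $\mathcal{Z}$ is already in $\Ima\Theta$. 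Next, the second-order operators ${}^{\mathbf{c}}\mathbf{L}_{r+1}$ should map to elements of the form "$\sum_i y_i x_i^{k} + (\text{lower order in }y)$" or, after the appropriate combination, to $\sum_i y_i^2 x_i^{k}$-type expressions — the point being that modulo the already-captured $x$-symmetric functions, the images of the quadratic Segal--Sugawara operators generate enough of the $y$-direction. Concretely I would argue that the images of $\{{}^{\mathbf{c}}\mathbf{L}_{r+1}\}_{r\le 0}$ together with $\C[x]^{\mathfrak{S}_m}$ generate, inside $\mathcal{Z}$, a subalgebra containing all $\sum_i y_i^k$ (this is where one uses that $\mathcal{H}_0$ is generated by $x_i,y_i,\mathfrak{S}_m$, so its centre is detected on these power sums, cf.\ the description of $\mathcal{Z}$ in \cite{EG} and the structure of the Calogero--Moser space).

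The key mechanism making this work is the compatibility expressed by diagram~\eqref{centre endo diagram} applied to the module $\mathbb{H}_{\mathbf{c}}$: since $\mathsf{F}_{\mathbf{c}}(\mathbb{H}_{\mathbf{c}}) = \mathcal{H}_{0}$ by Theorem~\ref{thm 2 intro} (with $\kappa=\mathbf{c}$, $n=m$), we have $\End_{\mathcal{H}_0}(\mathsf{F}_{\mathbf{c}}(\mathbb{H}_{\mathbf{c}})) = \End_{\mathcal{H}_0}(\mathcal{H}_0) \cong \mathcal{H}_0^{\mathrm{op}}$, and the right vertical map identifies $\mathcal{Z}$ with $Z(\mathcal{H}_0)$ sitting inside this endomorphism ring. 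Thus $\Theta$ is literally computed by seeing how central elements of $\UUc$ act on the module $\mathbb{H}_{\mathbf{c}}$ and tracking the resulting operators through the coinvariants construction $M\mapsto H_0(\mathfrak{gl}_n[z], \C[x]\otimes(\mathbf{V}^*)^{\otimes m}\otimes M)$. So the strategy is: (i) identify $\mathbb{H}_{\mathbf{c}}$ explicitly (it is an induced module, built from $\C[x]\otimes(\mathbf{V}^*)^{\otimes m}$ as in Theorem~\ref{thm: regular module}); (ii) compute the action of the Segal--Sugawara generators $\id[r], {}^{\mathbf{c}}\mathbf{L}_{r+1}$ on it and push through $\mathsf{F}_{\mathbf{c}}$, recovering the formulas \eqref{id[r]}--\eqref{L_r complete homogeneous}; (iii) check these images generate $\mathcal{Z}$.

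The main obstacle is step (iii): a priori $\mathscr{L}_{\mathbf{c}}$ is a \emph{proper} subalgebra of $\mathfrak{Z}$ (only the first two "rows" of Segal--Sugawara operators), so one must verify that its image already exhausts $\mathcal{Z}$ rather than a proper subalgebra. This amounts to a concrete statement about generators of the centre of the rational Cherednik algebra at $t=0$: that $Z(\mathcal{H}_0)$ is generated by the symmetric functions of the $x_i$ together with the images of $\sum_i y_i^2 f(x)$ for $f$ symmetric (or an equivalent finite list). I would establish this either by invoking the known presentation of $\mathcal{Z}$ as functions on the Calogero--Moser space $\mathcal{C}_n = \{(X,Y)\mid [X,Y]+\mathrm{Id} \text{ has rank }1\}/\!/\mathrm{GL}_n$ — where the coordinate ring is generated by $\Tr(X^aY^b)$ and even by the subfamily with $b\le 2$, using the rank-one commutator relation to reduce higher powers of $Y$ — or by a direct filtration argument on $\mathcal{H}_0$ using the PBW-type basis. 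Either way, once the generation statement for $\mathcal{Z}$ is in hand, surjectivity of $\Theta$ follows because $\Theta|_{\mathscr{L}_{\mathbf{c}}}$ already hits a generating set; the containment $\mathscr{L}_{\mathbf{c}}\subseteq Z_{\mathsf{F}_{\mathbf{c}}}(\widehat{\mathbf{U}}_{\mathbf{c}})$ from Theorem~\ref{intro thm q1} and the commutativity of \eqref{centre endo diagram} for $\mathbb{H}_{\mathbf{c}}$ (Theorem~\ref{intro thm q2}) guarantee that these images are genuinely in $\Ima\Theta$.
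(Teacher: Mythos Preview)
Your proposal has a genuine gap: the subalgebra $\mathscr{L}_{\mathbf{c}}$ is \emph{not} large enough, and $\Theta(\mathscr{L}_{\mathbf{c}})\subsetneq\mathcal{Z}$. Unwinding \eqref{L_r complete homogeneous} shows that $\Theta({}^{\mathbf{c}}\mathbf{L}_r) = -\sum_i x_i^{1-r}y_i + (\text{terms of $y$-degree }0)$, so the second-order Segal--Sugawara operators produce elements that are \emph{linear} in the $y_i$, not quadratic. In the internal $\Z$-grading on $\mathcal{H}_0$ (with $\deg x_i=-1$, $\deg y_i=1$, $\deg\mathfrak{S}_n=0$) this means $\Theta(\mathscr{L}_{\mathbf{c}})$ is generated by homogeneous elements of degree $\leq 1$, the unique degree-$1$ generator being $\Theta({}^{\mathbf{c}}\mathbf{L}_1)=-\sum_i y_i$. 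Consequently every degree-$2$ element of $\Theta(\mathscr{L}_{\mathbf{c}})$ lies in $(\sum_i y_i)^2\cdot\mathcal{Z}$. But $\sum_i y_i^2\in\mathcal{Z}$ is not divisible by $(\sum_i y_i)^2$ (compare PBW-leading terms), so $\sum_i y_i^2\notin\Theta(\mathscr{L}_{\mathbf{c}})$. Your Calogero--Moser reduction to ``$b\leq 2$'' would at best produce $\Tr(X^aY^2)$-type invariants, yet the operators at hand only see $b\leq 1$; and in any case even the $b\le 1$ traces do not generate the coordinate ring.

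The paper's argument addresses exactly this deficit by using the full family of higher Segal--Sugawara vectors $\mathbf{T}_k$ for $1\leq k\leq n$ (the Chervov--Molev complete set), not just $k=1,2$. One shows that $\Theta$ is filtered for a suitable ``absolute height'' filtration, computes principal symbols of $\Theta(\mathbf{T}_{k,l})$ in $\gr\mathcal{Z}\cong\C[\h\oplus\h^*]^{\mathfrak{S}_n}$, and finds that they recover all multisymmetric power sums $\mathsf{p}_{a,b}=\sum_i x_i^ay_i^b$ with $a+b\leq n$; in particular $\Theta(\mathbf{T}_{k,-2k})=(-1)^k\mathsf{p}_{0,k}$ supplies the pure $y$-power sums you are missing. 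Since these $\mathsf{p}_{a,b}$ generate $\C[\h\oplus\h^*]^{\mathfrak{S}_n}$, the associated graded of $\Theta$ is surjective, whence $\Theta$ itself is surjective. The essential missing idea in your proposal is therefore the passage to higher-order operators and the accompanying filtered/graded computation.
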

  
Let us briefly comment on the proof of Theorem \ref{main theorem intro}. We first show that $\Theta$ factors through $\mathfrak{Z}^{\leqslant 2}(\hat{\g})$ (see \S \ref{section: opers} for the definition), and that the homomorphism $\Theta \colon \mathfrak{Z}^{\leqslant 2}(\hat{\g}) \to \mathcal{Z}$ is filtered with respect to the standard filtration on $\mathcal{Z}$ and a certain ``height" filtration on $\mathfrak{Z}^{\leqslant 2}(\hat{\g})$ (see \S \ref{subsec: U filtr} and  \S \ref{assoc graded sec})
We compute the associated graded homomorphism $\gr \Theta$ and use it to deduce the surjectivity of $\Theta$. In our calculations, we rely heavily on the explicit construction of Segal-Sugawara operators due to Chervov and Molev \cite{CM}. 

We also consider the Poisson algebra structures on $\mathfrak{Z}$ and $\mathcal{Z}$ given by the Hayashi bracket \cite{Hay}. The map $\Theta$ is not a Poisson homomorphism. However, the following is true. 

\begin{thmm}[{Theorem \ref{pro: Poisson homo}}] 
The restriction of $\Theta$ to $\mathscr{L}_{\mathbf{c}}$ is a homomorphism of Poisson algebras. 
\end{thmm}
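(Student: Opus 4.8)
The plan is to use that $\Theta$ is already an algebra homomorphism (Theorem~\ref{main theorem intro}) and that the Hayashi bracket is a biderivation, which together reduce the Poisson identity on $\mathscr{L}_{\mathbf{c}}$ to a verification on a generating set. Since $\mathscr{L}_{\mathbf{c}}$ is by definition the polynomial algebra on the first-order operators $\id[r]$ and the second-order operators ${}^{\mathbf{c}}\mathbf{L}_{r+1}$ with $r \leqslant 0$, and since $\{-,-\}$ is a derivation in each argument, it suffices to prove $\Theta(\{u,v\}) = \{\Theta(u),\Theta(v)\}$ when $u$ and $v$ each run over these generators. For this to be literally meaningful one first observes that $\mathscr{L}_{\mathbf{c}}$ is a Poisson subalgebra of $\mathfrak{Z}$, i.e.\ that the Hayashi brackets of the generators lie again in $\mathscr{L}_{\mathbf{c}}$; this will be visible from the mode computations below. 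There are then three families of identities to establish: the $\id$--$\id$ brackets, the $\id$--${}^{\mathbf{c}}\mathbf{L}$ brackets, and the ${}^{\mathbf{c}}\mathbf{L}$--${}^{\mathbf{c}}\mathbf{L}$ brackets.

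Next I would assemble both sides of each identity from material already in place. On the source side the Hayashi brackets of the Segal--Sugawara generators are read off from their mode expressions, using the explicit Chervov--Molev vectors exactly as in the proof of Theorem~\ref{main theorem intro}: the bracket of $\id[r]$ and $\id[s]$ vanishes unless $r+s=0$, and hence unless $r=s=0$, so $\{\id[r],\id[s]\}=0$ for $r,s\leqslant 0$; $\{\id[r],{}^{\mathbf{c}}\mathbf{L}_{s+1}\}$ is proportional to $\id[r+s+1]$, the ${}^{\mathbf{c}}\mathbf{L}$ acting on the $\id$-currents as modes of a weight-one primary field; and $\{{}^{\mathbf{c}}\mathbf{L}_{r+1},{}^{\mathbf{c}}\mathbf{L}_{s+1}\}$ is of Virasoro type, with leading term a multiple of $(r-s)\,{}^{\mathbf{c}}\mathbf{L}_{r+s+2}$, together with anomalous lower-order corrections polynomial in the $\id[k]$ (produced by the $\mathfrak{gl}_1$-part of $\mathfrak{gl}_n$ and the critical-level Sugawara anomaly). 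On the target side $\mathcal{Z}$ is the coordinate ring of the Calogero--Moser space with its symplectic Poisson bracket, and by the explicit formulas \eqref{id[r]}--\eqref{L_r complete homogeneous} the images $\Theta(\id[r])$ are $W$-invariant functions of the Calogero--Moser ``position'' variables, while the $\Theta({}^{\mathbf{c}}\mathbf{L}_{r+1})$ are the associated Hamiltonians --- linear in the conjugate variables, with coefficients built from complete homogeneous symmetric functions. Thus the three brackets needed on the $\mathcal{Z}$-side are of two position functions, of a position function with a Hamiltonian, and of two Hamiltonians; each reduces to a standard symmetric-function computation.

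It then remains to match the two sides family by family. The $\id$--$\id$ case is $0=0$, as $\{\id[r],\id[s]\}=0$ and the $\Theta(\id[r])$ lie in the Poisson-commutative subalgebra generated by the position functions. The $\id$--${}^{\mathbf{c}}\mathbf{L}$ case is a one-line identity, comparing $\Theta$ of the source bracket (a multiple of $\id[r+s+1]$, hence of a power sum of the positions) with the Calogero--Moser bracket of the corresponding image functions. The ${}^{\mathbf{c}}\mathbf{L}$--${}^{\mathbf{c}}\mathbf{L}$ case is the substance of the theorem: one must check not merely that the leading Virasoro term is reproduced, but that the anomalous corrections polynomial in the $\id[k]$ are matched exactly by the corrections --- beyond the naive symplectic term --- in the Calogero--Moser bracket of the two Hamiltonians. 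This is precisely where $\Theta$, regarded on any larger subalgebra of $\mathfrak{Z}$, fails to be Poisson, so the content of the theorem is that on $\mathscr{L}_{\mathbf{c}}$ these terms conspire correctly, and I expect this to be the main obstacle. I would handle it with the filtered structure already set up for Theorem~\ref{main theorem intro}: the Hayashi bracket is compatible with the height filtration on $\mathfrak{Z}^{\leqslant 2}(\hat{\g})$ and the standard filtration on $\mathcal{Z}$, the associated graded homomorphism $\gr\Theta$ has been computed there, and on the associated graded the Poisson identity for two Hamiltonians becomes the standard symplectic bracket identity on $(\mathfrak{h}\times\mathfrak{h}^\ast)/W$, which is immediate; a short induction on filtration degree, using that the anomalous corrections have strictly lower height, then upgrades this to the exact identity in $\mathcal{Z}$. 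Alternatively one can bypass the filtration and carry out the mode computation with the Chervov--Molev Segal--Sugawara vectors explicitly on both sides; either way the delicate point is the bookkeeping of the trace / $\mathfrak{gl}_1$ terms.
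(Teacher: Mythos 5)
Your proposal takes a different — and substantially more laborious — route than the paper. Both proofs reduce to checking the Poisson identity on the generators $\id[r]$, ${}^{\mathbf{c}}\mathbf{L}_{r+1}$ of $\mathscr{L}_{\mathbf{c}}$, but from there they diverge. You set out to compute both sides explicitly: the Hayashi brackets of the Segal--Sugawara modes on the $\mathfrak{Z}$-side and the Calogero--Moser brackets of the corresponding symmetric/Hamiltonian functions on the $\mathcal{Z}$-side, matching them family by family, and handling the ${}^{\mathbf{c}}\mathbf{L}$--${}^{\mathbf{c}}\mathbf{L}$ case by a filtration/associated-graded argument. The paper instead gives a short conceptual argument that makes no bracket computation at all. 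The key observation (which you actually alluded to earlier, quoting the introduction, but do not use) is that the generators $\id[r]$ and ${}^{\kappa}\mathbf{L}_{r+1}$ are defined for \emph{every} level $\kappa$, hence have canonical lifts to the $\C[t]$-subalgebra $\mathscr{L}_{\C[t]}\subset\widehat{\mathbf{U}}_{\C[t]}$, and the generic Suzuki functor $\mathsf{F}_{\C[t]}$ carries these lifts to lifts in $\mathcal{H}_{\C[t]}^{op}$ of the images $\Theta(\id[r])$, $\Theta({}^{\mathbf{c}}\mathbf{L}_{r+1})$. Since the Hayashi bracket on either side is \emph{defined} by choosing lifts, taking commutators, dividing by the deformation parameter, and specializing, and since $\mathsf{spec}_{t=0}\circ\rho_{\C[t]}\circ\mathsf{F}_{\C[t]} = \rho_0\circ\mathsf{F}_{\mathbf{c}}\circ\mathsf{spec}_{t=0}$, compatibility of the two brackets is automatic. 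This requires the explicit formulae of Lemma~\ref{diagram factors trilemma}.b) only to check that the lifts match, not to evaluate any bracket.

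Your approach does buy an explicit picture of the brackets, but as a proof it has a real gap in the hardest case, and also a factual slip. The slip: you expect the bracket $\{{}^{\mathbf{c}}\mathbf{L}_{r+1},{}^{\mathbf{c}}\mathbf{L}_{s+1}\}$ to pick up ``anomalous corrections polynomial in the $\id[k]$.'' By Lemma~\ref{lemma Heis Virasoro}, the span of $1$, $\id[r]$, ${}^{\mathbf{c}}\mathbf{L}_r$ closes into $\mathsf{Heis}\rtimes\mathsf{Vir}$; the ${}^{\mathbf{c}}\mathbf{L}$--${}^{\mathbf{c}}\mathbf{L}$ bracket is pure Virasoro (and on the range $r,s\leq 0$ even the central term vanishes), so there is no $\id$-valued correction to track. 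The gap: your inductive filtration argument is not actually set up. The filtrations used to prove Theorem~\ref{main theorem intro} (absolute height on $\mathbb{H}_{\mathbf{c}}$, PBW on $\mathcal{Z}$) are designed to control $\gr\Theta$ as an algebra map; you would still need to show that the Hayashi brackets on both sides are filtered of compatible degrees for these specific filtrations, identify the induced Poisson brackets on the associated graded rings, verify the identity there, and argue that lower filtration terms match --- none of which is routine or laid out in the paper. The paper's one-parameter-family argument sidesteps all of this, and it is exactly the route the phrase you quoted (``a shadow of the fact that the functor $\mathsf{F}_\kappa$ is defined for all levels'') is hinting at.
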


The partial compatibility of the Poisson structures on $\mathfrak{Z}$ and $\mathcal{Z}$ is a shadow of the fact that the functor $\mathsf{F}_{\kappa}$ is defined for all levels $\kappa$. 
We remark that the Poisson subalgebra $\mathscr{L}_{\mathbf{c}} \subset \mathfrak{Z}$ can be described quite explicitly. It is isomorphic to a certain subalgebra of $S(\mathsf{Heis} \rtimes \mathsf{Vir})$, the symmetric algebra on the semi-direct product of the Heisenberg and the Virasoro Lie algebras.

\setcounter{thmm}{\thetmp} 

\subsection{Applications.} 

Our main result (Theorem \ref{main theorem intro}) has several applications. First of all, we can use it to gain more information about the homomorphisms between endomorphism rings induced by $\mathsf{F}_{\mathbf{c}}$. 

\begin{corr}[{Corollary \ref{cor: surjective on endo rings}}] \label{cor 1 intro}
The ring homomorphisms 
\[  \End_{\UUc} (\mathbb{W}_{\mathbf{c}}(a,\lambda)) \twoheadrightarrow \End_{\mathcal{H}_{0}}(\Delta_{0}(a,\lambda)), \quad  \End_{\UUc} (\mathbb{M}_{\mathbf{c}}(\lambda)) \twoheadrightarrow \End_{\mathcal{H}_{0}}(\Delta_{0}(\lambda)).\]
induced by $\mathsf{F}_{\mathbf{c}}$ are surjective. 
\end{corr}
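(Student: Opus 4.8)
The plan is to deduce Corollary \ref{cor 1 intro} directly from Theorem \ref{main theorem intro} together with the commutativity of diagram \eqref{centre endo diagram} and the fact that the Verma and generalized Weyl modules lie in the $\mathsf{F}_{\mathbf c}$-central subcategory $\mathscr{C}_{\mathbb H}$. The key input is the result of \cite{FG}, quoted in the text, that $\End_{\UUc}(\mathbb{W}_{\mathbf c}(a,\lambda))$ and $\End_{\UUc}(\mathbb{M}_{\mathbf c}(\lambda))$ are quotients of $\mathfrak Z$; dually, on the Cherednik side, one needs that $\End_{\mathcal H_0}(\Delta_0(a,\lambda))$ and $\End_{\mathcal H_0}(\Delta_0(\lambda))$ are quotients of $\mathcal Z$, which follows from the fact that these generalized Verma modules are cyclic (generated in lowest degree) and that their endomorphisms are determined by the central character action — equivalently, the lowest-weight space is one-dimensional as a module over the relevant parabolic, so $\End_{\mathcal H_0}(\Delta_0(a,\lambda))$ is a cyclic $\mathcal Z$-module, hence a quotient ring of $\mathcal Z$.

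Concretely, I would proceed as follows. First, fix $M$ to be one of $\mathbb{W}_{\mathbf c}(a,\lambda)$ or $\mathbb{M}_{\mathbf c}(\lambda)$; by the remark following Theorem \ref{intro thm q2}, $M \in \mathscr{C}_{\mathbb H}$ (after discarding the modules annihilated by $\mathsf{F}_{\mathbf c}$, which is harmless since then both endomorphism rings in the statement are zero and the claim is trivial). Then diagram \eqref{centre endo diagram} commutes, so the induced map $\mathsf{F}_{\mathbf c} \colon \End_{\UUc}(M) \to \End_{\mathcal H_0}(\mathsf{F}_{\mathbf c}(M))$ fits into a commuting square with the canonical surjections $\mathfrak Z \twoheadrightarrow \End_{\UUc}(M)$ (from \cite{FG}) and $\Theta \colon \mathfrak Z \twoheadrightarrow \mathcal Z$ (Theorem \ref{main theorem intro}) followed by $\mathcal Z \twoheadrightarrow \End_{\mathcal H_0}(\mathsf{F}_{\mathbf c}(M))$. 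Using Theorem \ref{thm 2 intro} to identify $\mathsf{F}_{\mathbf c}(M)$ with $\Delta_0(a,\lambda)$ (resp. $\Delta_0(\lambda)$), the outer composite $\mathfrak Z \to \mathcal Z \to \End_{\mathcal H_0}(\Delta_0(a,\lambda))$ is a composition of two surjections, hence surjective; since it factors through $\mathfrak Z \twoheadrightarrow \End_{\UUc}(M)$, the remaining arrow $\mathsf{F}_{\mathbf c} \colon \End_{\UUc}(M) \to \End_{\mathcal H_0}(\mathsf{F}_{\mathbf c}(M))$ must be surjective as well — a purely formal diagram-chase: if $g \circ f$ is surjective then $g$ is surjective.

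The one point that requires genuine verification, rather than formal nonsense, is the claim that $\End_{\mathcal H_0}(\Delta_0(a,\lambda))$ is a cyclic $\mathcal Z$-module and that the $\mathcal Z$-action on it is the one obtained by composing $\Theta$ with $\mathsf{F}_{\mathbf c}$; in other words, that the right-hand vertical map in \eqref{centre endo diagram} is surjective for these particular $M$. I would argue this using the standard structure theory of category $\mathcal O$ for the rational Cherednik algebra at $t=0$: $\Delta_0(a,\lambda)$ is generated by its lowest-weight space $\lambda$, which is irreducible over the parabolic subgroup, so any endomorphism is determined by its restriction to that space, and the center $\mathcal Z$ already acts there through a character-deformation that exhausts $\End$ — this is essentially the PBW/triangular-decomposition argument used in \cite{Bel} and \cite{EG}. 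An alternative, cleaner route is to invoke directly that $\mathcal H_0$ is a finite module over $\mathcal Z$ and $\Delta_0(a,\lambda)$ has finite length with one-dimensional graded pieces in the relevant sense, forcing $\End$ to be a quotient of $\mathcal Z$.

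The main obstacle, then, is not the surjectivity statement itself — which is an immediate corollary once the pieces are in place — but ensuring the compatibility of all the module identifications: that the isomorphisms $\mathsf{F}_{\mathbf c}(\mathbb{W}_{\mathbf c}(a,\lambda)) \cong \Delta_0(a,\lambda)$ of Theorem \ref{thm 2 intro} intertwine the $\mathfrak Z$-action (via $\alpha$, equivalently via $\Theta$) with the $\mathcal Z$-action, so that the square built from \eqref{centre endo diagram} genuinely commutes with the $\End$-rings appearing in the corollary. This is where one must be careful to use that $\mathbb{W}_{\mathbf c}(a,\lambda)$ and $\mathbb{M}_{\mathbf c}(\lambda)$ lie in $\mathscr{C}_{\mathbb H}$ (not merely in $\widehat{\mathbf U}_{\mathbf c}\text{-}\mathsf{mod}$), since the universal homomorphism $\Theta$ is only defined relative to that subcategory; everything else is a diagram chase.
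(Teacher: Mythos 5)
Your proposal is correct and follows essentially the same route as the paper: identify $\mathsf{F}_{\mathbf c}(\mathbb W_{\mathbf c}(a,\lambda))\cong\Delta_0(a,\lambda)$ and $\mathsf{F}_{\mathbf c}(\mathbb M_{\mathbf c}(\lambda))\cong\Delta_0(\lambda)$ (Theorem~\ref{thm 2 intro}), use membership in $\mathscr C_{\mathbb H}$ to get a commuting square with $\Theta$ on top, invoke surjectivity of $\Theta$ (Theorem~\ref{main theorem intro}) and surjectivity of $\mathcal Z\to\End_{\mathcal H_0}(\Delta_0(a,\lambda))$, and conclude by a two-out-of-three diagram chase. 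Two small remarks. First, the appeal to the Frenkel--Gaitsgory surjection $\mathfrak Z\twoheadrightarrow\End_{\UUc}(M)$ is superfluous: once the outer composite $\mathfrak Z\to\mathcal Z\to\End_{\mathcal H_0}(\Delta_0(a,\lambda))$ is known surjective and the square commutes, the bottom arrow $\End_{\UUc}(M)\to\End_{\mathcal H_0}(\Delta_0(a,\lambda))$ is automatically surjective regardless of whether the left vertical is. Second, the ``one point that requires genuine verification'' in your write-up --- that $\mathcal Z\twoheadrightarrow\End_{\mathcal H_0}(\Delta_0(a,\lambda))$ --- does not need to be rederived; it is precisely Theorem~\ref{thm 2 bellamy}.a), already recorded in the paper, and the paper simply cites it. Your hand-wavy sketch of that fact via the lowest-weight space is not quite the argument used in \cite{Bel}, but it doesn't matter since the statement is an established input. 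The worry you raise at the end about whether the identifications of Theorem~\ref{thm 2 intro} intertwine the two central actions is exactly what Corollary~\ref{cor: endos vs functor} (the consequence of $\mathscr C_{\mathbb H}$ being $\mathsf F_{\mathbf c}$-central) takes care of, so you are right to flag it and right that it is resolved by that membership.
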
 

Secondly, we are able to deduce from Corollary \ref{cor 1 intro} that every simple $\mathcal{H}_0$-module lies in the image of $\mathsf{F}_{\mathbf{c}}$. This result is, on the one hand, analogous to similar results \cite{Suz, VV} in the $\kappa \neq \mathbf{c}$ case. On the other hand, the situation at the critical level is very different because there are uncountably many non-isomorphic simple $\mathcal{H}_0$-modules. This is reflected by the fact that our proof relies on completely different techniques from those used in \cite{Suz, VV}. 

\begin{corr}[{Corollary \ref{all simples}}]
Every simple $\mathcal{H}_0$-module is in the image of the functor~$\mathsf{F}_{\mathbf{c}}$. 
\end{corr}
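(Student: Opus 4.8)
The plan is to deduce this corollary from the surjectivity of $\Theta$ (Theorem~\ref{main theorem intro}) together with Corollary~\ref{cor 1 intro} and the structural facts about $\mathcal{H}_0$-modules recalled in the introduction. Recall that, by \cite{EG}, every simple $\mathcal{H}_0$-module $L$ is a quotient of some generalized Verma module $\Delta_0(a,\lambda)$, and that isomorphism classes of simples correspond to maximal ideals of $\mathcal{Z} = Z(\mathcal{H}_0)$. So fix a simple $\mathcal{H}_0$-module $L$, realise it as a quotient $\Delta_0(a,\lambda) \twoheadrightarrow L$, and let $\mathfrak{m} \subset \mathcal{Z}$ be the corresponding maximal ideal (the central character of $L$).

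First I would use Theorem~\ref{thm 2 intro} to write $L$ as a quotient of $\mathsf{F}_{\mathbf{c}}(\mathbb{W}_{\mathbf{c}}(a,\lambda)) = \Delta_0(a,\lambda)$, and then lift the quotient map through the functor. The key point is that the kernel of $\Delta_0(a,\lambda) \twoheadrightarrow L$ should be generated by a central element, or at least controlled by the $\mathcal{Z}$-action: since $L$ has central character $\mathfrak{m}$, the module $L$ is a quotient of $\Delta_0(a,\lambda)/\mathfrak{m}\cdot\Delta_0(a,\lambda)$, and one expects this finite-dimensional quotient to be where the relevant idempotent/endomorphism lives. Concretely, I would pick an endomorphism (or a central element acting as an idempotent on a suitable finite-length quotient) of $\Delta_0(a,\lambda)$ whose image cuts out $L$; by Corollary~\ref{cor 1 intro} the map $\End_{\UUc}(\mathbb{W}_{\mathbf{c}}(a,\lambda)) \twoheadrightarrow \End_{\mathcal{H}_0}(\Delta_0(a,\lambda))$ is surjective, so this endomorphism lifts to an endomorphism $\phi$ of $\mathbb{W}_{\mathbf{c}}(a,\lambda)$. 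Applying $\mathsf{F}_{\mathbf{c}}$ to the relevant subquotient of $\mathbb{W}_{\mathbf{c}}(a,\lambda)$ determined by $\phi$ (e.g. $\coker\phi$, or the image of an idempotent) then yields a $\UUc$-module whose image under $\mathsf{F}_{\mathbf{c}}$ is $L$, using right-exactness/colimit-preservation of $\mathsf{F}_{\mathbf{c}}$ (Theorem~\ref{thm 1 intro}) to commute the functor past cokernels.

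The main obstacle I anticipate is ensuring that a single endomorphism (or idempotent) of $\Delta_0(a,\lambda)$ genuinely suffices to carve out the simple quotient $L$ — in general a simple quotient of a Verma-type module need not be the cokernel of one endomorphism. The resolution should again be central characters: one works with the quotient $\Delta_0(a,\lambda)_{\mathfrak{m}} := \Delta_0(a,\lambda)/\mathfrak{m}\,\Delta_0(a,\lambda)$, which has finite length with all composition factors of central character $\mathfrak{m}$, and on which $L$ appears as a quotient; because $\Theta$ is surjective, $\mathfrak{m}$ pulls back to an ideal of $\mathfrak{Z}$ acting on $\mathbb{W}_{\mathbf{c}}(a,\lambda)$ through the corresponding quotient, so $\mathsf{F}_{\mathbf{c}}$ of the analogous $\UUc$-module quotient is $\Delta_0(a,\lambda)_{\mathfrak{m}}$, and then one finishes inside this finite-length module, where $L$ is identified by an idempotent in its (artinian) endomorphism algebra and Corollary~\ref{cor 1 intro} applies verbatim. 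Thus the surjectivity of $\Theta$ enters precisely to guarantee that the central-character truncation on the Cherednik side is visible on the $\hat{\g}$ side.
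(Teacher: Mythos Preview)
Your overall strategy --- realize $L$ as a quotient of $\Delta_0(a,\lambda)$, lift the relevant data through the surjection $\End_{\UUc}(\mathbb{W}_{\mathbf{c}}(a,\lambda)) \twoheadrightarrow \End_{\mathcal{H}_0}(\Delta_0(a,\lambda))$, and use colimit-preservation of $\mathsf{F}_{\mathbf{c}}$ --- is exactly what the paper does. However, your execution takes a detour that is both unnecessary and, as written, has a gap.

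The paper's route is cleaner because of a structural fact you do not invoke: by Bellamy's theorem (Theorem~\ref{thm 2 bellamy}), $\End_{\mathcal{H}_0}(\Delta_0(a,\lambda))$ is a polynomial ring and $\mathbf{e}\Delta_0(a,\lambda)$ is free of rank one over it. From this, Lemma~\ref{lem: quotients of vermas} deduces that the simple quotient $L$ is \emph{exactly} $\Delta_0(a,\lambda)/I\cdot\Delta_0(a,\lambda)$ for a maximal ideal $I$ of the endomorphism ring. Equivalently, your ``central-character truncation'' $\Delta_0(a,\lambda)/\mathfrak{m}\cdot\Delta_0(a,\lambda)$ is already equal to $L$, not merely a finite-length module admitting $L$ as a quotient. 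Hence no idempotent argument is needed: one just lifts the ideal $I$ (as a linear subspace) to $J \subset \End_{\UUc}(\mathbb{W}_{\mathbf{c}}(a,\lambda))$ via the surjection, and then Lemma~\ref{lem: quotients by endos} (which commutes $\mathsf{F}_{\mathbf{c}}$ past quotients by subspaces of endomorphisms) gives $\mathsf{F}_{\mathbf{c}}(\mathbb{W}_{\mathbf{c}}(a,\lambda)/J\cdot\mathbb{W}_{\mathbf{c}}(a,\lambda)) = \Delta_0(a,\lambda)/I\cdot\Delta_0(a,\lambda) \cong L$.

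Your idempotent step, by contrast, would not work in general: a simple quotient of a finite-length module need not be a direct summand, so there is no reason to expect an idempotent in the endomorphism algebra projecting onto $L$; and Corollary~\ref{cor 1 intro} concerns endomorphisms of $\Delta_0(a,\lambda)$ itself, not of its finite-length quotients. The argument is rescued only because the quotient is already simple --- which is precisely the Bellamy input the paper uses directly.
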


We next connect the functor $\mathsf{F}_{\mathbf{c}}$ with the work of Arakawa and Fiebig. 
In \cite{AF1, AF2}, they studied a restricted version of category $\O$, obtained by ``killing" the action of the centre $\mathfrak{Z}$. This category contains restricted Verma modules $\overline{\mathbb{M}}_{\mathbf{c}}(\lambda)$ as well as, analogously defined, restricted versions of Weyl modules $\overline{\mathbb{W}}_{\mathbf{c}}(\lambda)$. In our third corollary, we describe the image of these modules under~$\mathsf{F}_{\mathbf{c}}$. 

\begin{corr}[{Corollaries \ref{cor: res Vermas}-\ref{cor: restricted Weyl calc}}] 
We have 
\[ \mathsf{F}_{\mathbf{c}}(\overline{\mathbb{M}}_{\mathbf{c}}(\lambda)) = \mathsf{F}_{\mathbf{c}}(\overline{\mathbb{W}}_{\mathbf{c}}(\lambda)) = \mathsf{F}_{\mathbf{c}}(\mathbb{L}(\lambda)) = L_\lambda, \] 
where $\mathbb{L}(\lambda)$ (resp.\ $L_\lambda$) is the unique graded simple quotient of $\mathbb{M}_{\mathbf{c}}(\lambda)$ 
(resp.\ $\Delta_0(\lambda)$). 
\end{corr}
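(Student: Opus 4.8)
The plan is to deduce all three equalities from the main result (Theorem \ref{main theorem intro}, surjectivity of $\Theta$) together with the already-established computations of $\mathsf{F}_{\mathbf{c}}$ on Verma and Weyl modules (Theorem \ref{thm 2 intro}), the identification of $\mathscr{C}_{\mathbb{H}}$ as an $\mathsf{F}_{\mathbf{c}}$-central subcategory (Theorem \ref{intro thm q2}), and right-exactness of $\mathsf{F}_{\mathbf{c}}$ (it is colimit preserving by Theorem \ref{thm 1 intro}). First I would recall that the restricted module $\overline{\mathbb{M}}_{\mathbf{c}}(\lambda)$ is by definition the quotient of $\mathbb{M}_{\mathbf{c}}(\lambda)$ by the submodule generated by the action of the augmentation ideal $\mathfrak{Z}_+ \subset \mathfrak{Z}$ (the maximal ideal through which $\mathfrak{Z}$ acts on $\mathbb{L}(\lambda)$); likewise $\overline{\mathbb{W}}_{\mathbf{c}}(\lambda)$ is the analogous quotient of $\mathbb{W}_{\mathbf{c}}(\lambda)$. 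Applying the right-exact functor $\mathsf{F}_{\mathbf{c}}$ to the defining short exact sequence $\mathfrak{Z}_+ \cdot \mathbb{M}_{\mathbf{c}}(\lambda) \to \mathbb{M}_{\mathbf{c}}(\lambda) \to \overline{\mathbb{M}}_{\mathbf{c}}(\lambda) \to 0$ and using $\mathsf{F}_{\mathbf{c}}(\mathbb{M}_{\mathbf{c}}(\lambda)) = \Delta_0(\lambda)$ shows that $\mathsf{F}_{\mathbf{c}}(\overline{\mathbb{M}}_{\mathbf{c}}(\lambda))$ is the quotient of $\Delta_0(\lambda)$ by the image of $\mathsf{F}_{\mathbf{c}}(\mathfrak{Z}_+\cdot\mathbb{M}_{\mathbf{c}}(\lambda))$.

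The crux is to identify that image with $\mathcal{Z}_+\cdot\Delta_0(\lambda)$, where $\mathcal{Z}_+$ is the maximal ideal of $\mathcal{Z}$ corresponding to the point of the Calogero--Moser space that parametrizes $L_\lambda$. Here is where $\mathsf{F}_{\mathbf{c}}$-centrality enters: since $\mathbb{M}_{\mathbf{c}}(\lambda) \in \mathscr{C}_{\mathbb{H}}$ (stated in the excerpt: $\mathscr{C}_{\mathbb{H}}$ contains all Verma and generalized Weyl modules not annihilated by $\mathsf{F}_{\mathbf{c}}$), the action of $z \in \mathfrak{Z}$ on $\mathbb{M}_{\mathbf{c}}(\lambda)$ is carried by $\mathsf{F}_{\mathbf{c}}$ to the action of $\Theta(z) \in \mathcal{Z}$ on $\Delta_0(\lambda)$, via the commuting square \eqref{centre endo diagram}. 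Hence $\mathsf{F}_{\mathbf{c}}(\mathfrak{Z}_+\cdot\mathbb{M}_{\mathbf{c}}(\lambda)) = \Theta(\mathfrak{Z}_+)\cdot\Delta_0(\lambda)$. Now I would check that $\Theta(\mathfrak{Z}_+) \cdot \mathcal{Z} = \mathcal{Z}_+$: because $\Theta$ is surjective (Theorem \ref{main theorem intro}) and both $\mathfrak{Z}_+$ and $\mathcal{Z}_+$ are the kernels of the respective actions on the simple modules $\mathbb{L}(\lambda)$ and $L_\lambda = \mathsf{F}_{\mathbf{c}}(\mathbb{L}(\lambda))$, compatibility of $\Theta$ with these actions forces $\Theta^{-1}(\mathcal{Z}_+) = \mathfrak{Z}_+$, and surjectivity then gives $\Theta(\mathfrak{Z}_+)\mathcal{Z} = \mathcal{Z}_+$. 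Therefore $\mathsf{F}_{\mathbf{c}}(\overline{\mathbb{M}}_{\mathbf{c}}(\lambda)) = \Delta_0(\lambda)/\mathcal{Z}_+\Delta_0(\lambda)$, and this quotient is exactly $L_\lambda$ — a standard fact in the $t=0$ rational Cherednik theory, that killing the central character of a simple on a standard module yields that simple (the baby Verma at a Calogero--Moser point has simple head equal to the corresponding irreducible, and quotienting by the full maximal central ideal collapses it to the head).

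The argument for $\overline{\mathbb{W}}_{\mathbf{c}}(\lambda)$ is verbatim the same, replacing $\mathbb{M}_{\mathbf{c}}(\lambda)$ by $\mathbb{W}_{\mathbf{c}}(\lambda) = \mathbb{W}_{\mathbf{c}}(0,\lambda)$ and using $\mathsf{F}_{\mathbf{c}}(\mathbb{W}_{\mathbf{c}}(\lambda)) = \Delta_0(\lambda)$ from Theorem \ref{thm 2 intro}; the image is again $\Delta_0(\lambda)/\mathcal{Z}_+\Delta_0(\lambda) = L_\lambda$. Finally $\mathsf{F}_{\mathbf{c}}(\mathbb{L}(\lambda)) = L_\lambda$: applying $\mathsf{F}_{\mathbf{c}}$ to the surjection $\mathbb{M}_{\mathbf{c}}(\lambda) \twoheadrightarrow \mathbb{L}(\lambda)$ gives a surjection $\Delta_0(\lambda) \twoheadrightarrow \mathsf{F}_{\mathbf{c}}(\mathbb{L}(\lambda))$, so the target is a nonzero quotient of $\Delta_0(\lambda)$ on which $\mathcal{Z}$ acts through $\mathcal{Z}_+$ (again by $\mathsf{F}_{\mathbf{c}}$-centrality, since $\mathbb{L}(\lambda) \in \mathscr{C}_{\mathbb{H}}$), hence it is a nonzero quotient of $L_\lambda = \Delta_0(\lambda)/\mathcal{Z}_+\Delta_0(\lambda)$; since $L_\lambda$ is simple this quotient is $L_\lambda$ itself, provided we know $\mathsf{F}_{\mathbf{c}}(\mathbb{L}(\lambda)) \neq 0$, which follows from $\mathsf{F}_{\mathbf{c}}(\mathbb{M}_{\mathbf{c}}(\lambda)) = \Delta_0(\lambda) \neq 0$ and right-exactness forcing the head to survive. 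The main obstacle I anticipate is the bookkeeping around central characters: one must be careful that the maximal ideal $\mathfrak{Z}_+$ used to form $\overline{\mathbb{M}}_{\mathbf{c}}(\lambda)$ in the sense of Arakawa--Fiebig genuinely matches the kernel of $\mathfrak{Z}$ acting on $\mathbb{L}(\lambda)$, and that $\Theta$ intertwines it with the correct Calogero--Moser point — this is where the explicit description of $\Theta$ on Segal--Sugawara operators and the precise dictionary between $\mathfrak{Z}$ and opers may be needed to pin down the point, even though the surjectivity of $\Theta$ does the heavy lifting for the module-theoretic statement.
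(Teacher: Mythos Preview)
Your overall strategy is in the right spirit, but there are two genuine gaps, and the paper's proof proceeds differently on both counts.

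First, the Arakawa--Fiebig restricted Verma module is \emph{not} defined as the quotient by a maximal ideal of $\mathfrak{Z}$: it is $\mathbb{M}_{\mathbf{c}}(\lambda)/\mathscr{Z}_-\cdot\mathbb{M}_{\mathbf{c}}(\lambda)$, where $\mathscr{Z}_- = \bigoplus_{i<0}\mathscr{Z}_i$ is the ideal of negatively graded Segal--Sugawara operators. This is more than the ``bookkeeping'' you flag at the end. The paper exploits this graded structure directly: the induced map $\End_{\UUc}(\mathbb{M}_{\mathbf{c}}(\lambda)) \to \End_{\mathcal{H}_0}(\Delta_0(\lambda))$ is graded and surjective (Corollary~\ref{cor: surjective on endo rings}), so the maximal graded ideal $\mathbb{E}_\lambda^-$ on the left maps onto the maximal graded ideal $E_\lambda^-$ on the right, and Lemma~\ref{lem: quotients by endos} gives $\mathsf{F}_{\mathbf{c}}(\overline{\mathbb{M}}_{\mathbf{c}}(\lambda)) = \Delta_0(\lambda)/E_\lambda^-\cdot\Delta_0(\lambda) = L_\lambda$. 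No matching of central characters through $\Theta$ is needed, and in particular the argument does not rely on already knowing $\mathsf{F}_{\mathbf{c}}(\mathbb{L}(\lambda)) = L_\lambda$ (which your identification $\Theta^{-1}(\mathcal{Z}_+)=\mathfrak{Z}_+$ implicitly does).

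Second, your argument that $\mathsf{F}_{\mathbf{c}}(\mathbb{L}(\lambda)) \neq 0$ is incorrect: right-exactness yields a surjection $\Delta_0(\lambda) \twoheadrightarrow \mathsf{F}_{\mathbf{c}}(\mathbb{L}(\lambda))$, but nothing prevents the target from being zero --- the radical of $\mathbb{M}_{\mathbf{c}}(\lambda)$ could a priori map onto all of $\Delta_0(\lambda)$. The paper avoids this by going in the opposite direction: using Arakawa--Fiebig's result that the kernel $K$ of $\overline{\mathbb{M}}_{\mathbf{c}}(\lambda) \twoheadrightarrow \mathbb{L}(\lambda)$ is filtered by simples $\mathbb{L}(w\cdot\lambda)$ with $e \neq w\in\mathfrak{S}_n$, one sees that a direct sum of Verma modules with non-dominant highest weights surjects onto $K$; since such Vermas are killed by $\mathsf{F}_{\mathbf{c}}$ (Theorem~\ref{thm: Vermas to standards}), so is $K$, and hence $\mathsf{F}_{\mathbf{c}}(\mathbb{L}(\lambda)) \cong \mathsf{F}_{\mathbf{c}}(\overline{\mathbb{M}}_{\mathbf{c}}(\lambda)) = L_\lambda$. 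The Weyl case is then reduced to the Verma case via the surjection $\mathbb{M}_{\mathbf{c}}(\lambda) \twoheadrightarrow \mathbb{W}_{\mathbf{c}}(\lambda)$, whose kernel is likewise shown to be killed by $\mathsf{F}_{\mathbf{c}}$.
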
 

Fourthly, we give a partial geometric description of the homomorphism $\Theta \colon \mathfrak{Z} \to \mathcal{Z}$ in terms of opers. By a theorem of Feigin and Frenkel \cite{FF}, $\mathfrak{Z}$ is canonically isomorphic to the algebra of functions on the space $\Op_{\check{G}}(\mathbb{D}^\times)$ of opers on the punctured disc. Therefore, $\Theta$ induces a closed embedding $\Theta^* \colon \Spec \mathcal{Z} \hookrightarrow \Op_{\check{G}}(\mathbb{D}^\times)$. We show that the image of this embedding lies in the space $\Op_{\check{G}}(\mathbb{D})^{\leq 2}$ of opers with singularities of order at most two. 

We are also able to obtain some information about the residue and monodromy of the opers in the image of $\Theta^*$. To state our results, we first need to recall some facts about the affine variety $\Spec \mathcal{Z}$ and a canonical map $\pi \colon \Spec \mathcal{Z} \to \C^n/\mathfrak{S}_n$ (see \eqref{definition of pi}). Bellamy showed in \cite{Bel, Bel2} that each fibre of $\pi$ decomposes as a disjoint union of 
subvarieties $\Omega_{\mathbf{a},\lambda}$, which can be identified with supports of the generalized Verma modules $\Delta_{0}(a,\lambda)$. Moreover, $\mathcal{Z}$ surjects onto the endomorphism rings $\End_{\mathcal{H}_{0}}(\Delta_{0}(a,\lambda))$, and $\Spec \End_{\mathcal{H}_{0}}(\Delta_{0}(a,\lambda)) \cong \Omega_{\mathbf{a},\lambda} $. 

Endomorphism rings of the Weyl modules $\mathbb{W}_{\mathbf{c}}(\lambda)$ also admit a geometric interpretation.  
Frenkel and Gaitsgory \cite{FG} showed that $\mathfrak{Z}$ surjects onto $\End_{\UUc}(\mathbb{W}_{\mathbf{c}}(\lambda))$, and identified the latter with the algebra of functions on the space $\Op_{\check{G}}^\lambda(\mathbb{D})$ of opers with residue $\varpi(-\lambda - \rho)$ and trivial monodromy. 

Using the results of \cite{FFTL}, we show that the image of $\Omega_{\mathbf{a},\lambda}$ under $\Theta^*$ is contained in the space $\Op_{\check{G}}^{\leqslant 2}(\mathbb{D})_{\mathbf{a}}$ of opers with singularities of order at most two and $2$-residue $\mathbf{a}$. Moreover, we show that the image of $\Omega_\lambda$ is contained in $\Op_{\check{G}}^\lambda(\mathbb{D})$.

\begin{corr}[{Corollary \ref{big cor on opers}}] The following hold. 
\begin{enumerate}[label=\alph*), font=\textnormal,noitemsep,topsep=3pt,leftmargin=1cm]
\item The map $\Theta \colon \mathfrak{Z} \to \mathcal{Z}$ induces a closed embedding 
\[ \Theta^* \colon \Spec \mathcal{Z} \hookrightarrow \Op_{\check{G}}(\mathbb{D})^{\leqslant 2}. \] 
\item We have
\begin{equation*} \Theta^*(\Omega_{\mathbf{a},\lambda}) \subseteq \Op_{\check{G}}^{\leqslant 2}(\mathbb{D})_{\mathbf{a}}.\end{equation*} 
Hence the following diagram commutes: 
\begin{equation*} 
\begin{tikzcd}
\Spec \mathcal{Z} \arrow[swap, "\pi"]{d} \arrow[hookrightarrow, "\Theta^*"]{r} & \Op_{\check{G}}(\mathbb{D})^{\leqslant 2} \arrow[ "\Res_2"]{d} \\ 
\C^n/\mathfrak{S}_n \arrow["\sim"]{r} & \t^*/\mathfrak{S}_n
\end{tikzcd}
\end{equation*} 
\item If $\mathbf{a} = 0$ then 
\begin{equation*} \Theta^*(\Omega_{\lambda}) \subseteq \Op_{\check{G}}^\lambda(\mathbb{D}). \end{equation*} 
\end{enumerate} 
\end{corr}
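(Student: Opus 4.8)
The plan is to deduce all three statements from three inputs already at our disposal: the surjectivity of $\Theta$ (Theorem \ref{main theorem intro}), the factorisation $\mathfrak{Z} \twoheadrightarrow \mathfrak{Z}^{\leqslant 2}(\hat{\g}) \to \mathcal{Z}$ of $\Theta$ established in the course of proving that theorem, and the commutative square \eqref{centre endo diagram} applied to (generalized) Weyl modules. For part (a): since the composite $\mathfrak{Z} \to \mathfrak{Z}^{\leqslant 2}(\hat{\g}) \to \mathcal{Z}$ is surjective, the second arrow $\mathfrak{Z}^{\leqslant 2}(\hat{\g}) \twoheadrightarrow \mathcal{Z}$ is surjective as well, so taking $\Spec$ yields a closed embedding $\Spec \mathcal{Z} \hookrightarrow \Spec \mathfrak{Z}^{\leqslant 2}(\hat{\g})$. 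It then remains to invoke the Feigin--Frenkel isomorphism \cite{FF} and its refinement in \cite{FFTL}, under which $\Spec \mathfrak{Z}^{\leqslant 2}(\hat{\g})$ is precisely the space $\Op_{\check{G}}(\mathbb{D})^{\leqslant 2}$ of $\check{G}$-opers on the disc with a pole of order at most two; composing gives $\Theta^*$.

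For part (b), I would apply the square \eqref{centre endo diagram} with $M = \mathbb{W}_{\mathbf{c}}(a,\lambda)$, which lies in the $\mathsf{F}_{\mathbf{c}}$-central subcategory $\mathscr{C}_{\mathbb{H}}$ (Theorem \ref{intro thm q2} and the remark that $\mathscr{C}_{\mathbb{H}}$ contains all non-annihilated generalized Weyl modules). By Theorem \ref{thm 2 intro} we have $\mathsf{F}_{\mathbf{c}}(\mathbb{W}_{\mathbf{c}}(a,\lambda)) = \Delta_0(a,\lambda)$; by Corollary \ref{cor 1 intro} the bottom arrow $\End_{\UUc}(\mathbb{W}_{\mathbf{c}}(a,\lambda)) \twoheadrightarrow \End_{\mathcal{H}_0}(\Delta_0(a,\lambda))$ is surjective; by Bellamy's results the right vertical arrow $\mathcal{Z} \twoheadrightarrow \End_{\mathcal{H}_0}(\Delta_0(a,\lambda))$ dualises to the closed embedding $\Omega_{\mathbf{a},\lambda} \hookrightarrow \Spec \mathcal{Z}$; and, using \cite{FFTL}, the left vertical arrow $\mathfrak{Z} \to \End_{\UUc}(\mathbb{W}_{\mathbf{c}}(a,\lambda))$ factors through $\mathfrak{Z}^{\leqslant 2}(\hat{\g})$ and dualises to a closed embedding with image $\Op_{\check{G}}^{\leqslant 2}(\mathbb{D})_{\mathbf{a}}$. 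Taking $\Spec$ of the square and chasing these closed embeddings gives $\Theta^*(\Omega_{\mathbf{a},\lambda}) \subseteq \Op_{\check{G}}^{\leqslant 2}(\mathbb{D})_{\mathbf{a}}$. For the $\Res_2$ square: opers in $\Op_{\check{G}}^{\leqslant 2}(\mathbb{D})_{\mathbf{a}}$ have $2$-residue $\mathbf{a}$ by definition, while $\pi$ is constant equal to $\mathbf{a}$ on $\Omega_{\mathbf{a},\lambda}$ (Bellamy); since the $\Omega_{\mathbf{a},\lambda}$ cover $\Spec \mathcal{Z}$, the square commutes once one matches the identification $\C^n/\mathfrak{S}_n \cong \t^*/\mathfrak{S}_n$.

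Part (c) is the case $a=0$ of the same mechanism, run with the genuine Weyl module $\mathbb{W}_{\mathbf{c}}(\lambda) = \mathbb{W}_{\mathbf{c}}(0,\lambda)$: here one uses $\mathsf{F}_{\mathbf{c}}(\mathbb{W}_{\mathbf{c}}(\lambda)) = \Delta_0(\lambda)$ (Theorem \ref{thm 2 intro}), the identification $\Spec \End_{\mathcal{H}_0}(\Delta_0(\lambda)) \cong \Omega_\lambda$ (Bellamy), and the Frenkel--Gaitsgory theorem \cite{FG} identifying $\End_{\UUc}(\mathbb{W}_{\mathbf{c}}(\lambda))$ with $\Fun \Op_{\check{G}}^\lambda(\mathbb{D})$, the functions on opers with residue $\varpi(-\lambda-\rho)$ and trivial monodromy. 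Chasing the square exactly as in (b) yields $\Theta^*(\Omega_\lambda) \subseteq \Op_{\check{G}}^\lambda(\mathbb{D})$.

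The diagram chases themselves are formal; the step I expect to be the main obstacle is assembling the geometric dictionary that feeds into them, namely identifying $\Spec \mathfrak{Z}^{\leqslant 2}(\hat{\g})$ with $\Op_{\check{G}}(\mathbb{D})^{\leqslant 2}$ and, more delicately, verifying that the action of $\mathfrak{Z}$ on $\mathbb{W}_{\mathbf{c}}(a,\lambda)$ factors through the quotient of $\mathfrak{Z}^{\leqslant 2}(\hat{\g})$ corresponding to opers of $2$-residue \emph{exactly} $\mathbf{a}$. This requires a careful reading of \cite{FFTL} together with bookkeeping of the various normalisations --- the $\rho$-shift in the residue, the Harish-Chandra type identification of the space of $2$-residues with $\t^*/\mathfrak{S}_n$, and the parametrisation of the strata $\Omega_{\mathbf{a},\lambda}$ by $\mathbf{a} \in \C^n/\mathfrak{S}_n$ --- so that the two squares used in (b) genuinely line up.
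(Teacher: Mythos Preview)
Your approach is correct and uses the same inputs as the paper --- surjectivity of $\Theta$, the square \eqref{centre endo diagram} (equivalently Corollary~\ref{cor: endos vs functor} on annihilators), the FFTL support computations, and Frenkel--Gaitsgory for (c). The paper organises things slightly differently in a way that dissolves the ``main obstacle'' you anticipate: instead of verifying the algebraic factorisation $\mathfrak{Z}\to\mathfrak{Z}^{\leqslant 2}(\hat\g)\to\mathcal Z$ and the residue condition directly, it works entirely with supports of auxiliary induced modules. For (a), since $\mathcal{H}_0 = \mathsf{F}_{\mathbf{c}}(\mathbb{H}_{\mathbf{c}})$, Corollary~\ref{cor: endos vs functor} gives $\Theta^*(\Spec\mathcal Z)\subseteq\supp_{\mathfrak Z}\mathbb{H}_{\mathbf{c}}$; then one observes that $\mathbb{H}_{\mathbf{c}}$ is a quotient of the universal module $\mathbb{U}_2$, to which Theorem~\ref{thm U I support} applies directly. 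For (b), one realises $\mathbb{W}_{\mathbf{c}}(a,\lambda)$ as a quotient of $\mathbb{I}_{2,\chi}$ (with $\chi|_{\t[1]}=a$, $\chi|_{\n_\pm[1]}=0$) and again applies Theorem~\ref{thm U I support}. This sidesteps any bookkeeping with $\mathfrak{Z}^{\leqslant 2}$ or $\End_{\UUc}(\mathbb{W}_{\mathbf{c}}(a,\lambda))$. Two small points to tighten in your version: first, the factorisation through $\mathfrak{Z}^{\leqslant 2}(\hat\g)$ is described in the introduction as part of the proof strategy, but what Theorem~\ref{thm: Psi is surj} actually establishes is the vanishing $\Theta(\mathbf{T}_{k,l})=0$ for $l<-2k$, and promoting this to the full factorisation still requires an argument --- the support route bypasses it; second, for general $a$ the map $\mathfrak Z\to\End_{\UUc}(\mathbb{W}_{\mathbf c}(a,\lambda))$ is not known to be surjective, and FFTL gives only $\supp_{\mathfrak{Z}}\mathbb{I}_{2,\chi}\subseteq\Op_{\check G}^{\leqslant 2}(\mathbb D)_{\mathbf a}$, not equality --- but only the inclusion is needed, so your chase still goes through once rephrased in terms of supports rather than spectra of endomorphism rings.
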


Finally, we study the behaviour of self-extensions under $\mathsf{F}_{\mathbf{c}}$. 

\begin{corr} \label{cor 5 intro}
Suppose that $M$ is a $\UUc$-module with a filtration by Weyl modules. Then $\mathsf{F}_{\mathbf{c}}$ induces a linear map 
\begin{equation} \label{intro exts} \Ext_{\UUc}^1(M,M) \to \Ext_{\mathcal{H}_{0}}^1(\mathsf{F}_{\mathbf{c}}(M),\mathsf{F}_{\mathbf{c}}(M)). \end{equation}
\end{corr}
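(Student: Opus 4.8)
\textbf{Proof proposal for Corollary \ref{cor 5 intro}.}

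The plan is to realize the desired map on $\Ext^1$ as the map induced by the functor $\mathsf{F}_{\mathbf{c}}$ on one-step extensions, and to use the fact that $M$ has a Weyl filtration to guarantee both that $\mathsf{F}_{\mathbf{c}}$ behaves well on the relevant short exact sequences and that it is sufficiently faithful on morphisms to descend to the quotient defining $\Ext^1$. Recall that for any additive functor $G$ between abelian (or exact) categories one gets, for each pair of objects $X,Y$, a map on Yoneda $\Ext^1$-groups $G_* \colon \Ext^1(X,X) \to \Ext^1(G(X),G(X))$ provided $G$ carries the relevant short exact sequences to short exact sequences; the only subtlety is well-definedness, i.e.\ that equivalent extensions go to equivalent extensions, which follows as soon as $G$ sends the three-term exact sequences witnessing an equivalence of extensions to exact sequences. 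So the first step is to record that $\mathsf{F}_{\mathbf{c}}$ is exact on the class of short exact sequences $0 \to M \to E \to M \to 0$ in which all three terms admit a Weyl filtration.

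The key input is Theorem \ref{thm 2 intro} (more precisely Theorems \ref{thm: regular module}--\ref{thm: Vermas to standards}), which computes $\mathsf{F}_{\mathbf{c}}$ on (generalized) Weyl modules, sending $\mathbb{W}_{\mathbf{c}}(a,\lambda)$ to the standard module $\Delta_0(a,\lambda)$, together with the colimit-preservation (hence right-exactness) of $\mathsf{F}_{\kappa}$ from Theorem \ref{thm 1 intro}. First I would show that if $0 \to M' \to M \to M'' \to 0$ is short exact with $M'$ and $M''$ both Weyl-filtered, then $\mathsf{F}_{\mathbf{c}}$ preserves exactness: right-exactness gives exactness except possibly injectivity of $\mathsf{F}_{\mathbf{c}}(M') \to \mathsf{F}_{\mathbf{c}}(M)$, and for this one reduces, by d\'evissage along the Weyl filtrations and the long exact sequence, to the vanishing of the first derived functor $L_1\mathsf{F}_{\mathbf{c}}$ on Weyl modules. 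The homological input here is that $H_i(\mathfrak{gl}_n[z], -)$ vanishes in positive degrees on modules induced from $\mathfrak{gl}_n[z]$ (a Shapiro-type / relative-projectivity statement), and Weyl modules are built from such induced modules; this is exactly the kind of computation that underlies the proof of Theorem \ref{thm 2 intro}, so it should be available or easily extracted. Granting this, $\mathsf{F}_{\mathbf{c}}$ restricted to the exact subcategory of Weyl-filtered modules is an exact functor, and in particular it sends the extension classes in $\Ext^1_{\UUc}(M,M)$ — all of whose middle terms are again Weyl-filtered since the class of Weyl-filtered modules is closed under extensions — to honest extension classes in $\Ext^1_{\mathcal{H}_0}(\mathsf{F}_{\mathbf{c}}(M),\mathsf{F}_{\mathbf{c}}(M))$.

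It remains to check that this assignment is well-defined on equivalence classes and additive. Well-definedness: an equivalence of extensions is given by an isomorphism of the two middle terms restricting to the identity on the sub and quotient; applying the exact functor $\mathsf{F}_{\mathbf{c}}$ yields an isomorphism of the images restricting to the identity on $\mathsf{F}_{\mathbf{c}}(M)$, hence an equivalence of the image extensions. Additivity (compatibility with the Baer sum) follows because Baer sum is computed via pullback and pushout along the diagonal and codiagonal of $M$, and an exact functor commutes with these finite-limit/colimit constructions on the relevant diagrams of Weyl-filtered modules. This produces the linear map \eqref{intro exts}. The step I expect to be the main obstacle is the vanishing $L_1\mathsf{F}_{\mathbf{c}}(\mathbb{W}_{\mathbf{c}}(\lambda)) = 0$ (equivalently, left-exactness of $\mathsf{F}_{\mathbf{c}}$ on Weyl-filtered modules): the functor $\mathsf{F}_{\mathbf{c}}$ is a priori only right exact, so one genuinely needs the acyclicity of the relevant induced modules for the Lie-algebra homology $H_*(\mathfrak{gl}_n[z], -)$, and one must be careful that the completion defining $\UUc$-modules and the smoothness conditions do not obstruct this; I would handle it by writing each Weyl module explicitly as (a colimit of) modules induced from $\mathfrak{gl}_n[z]$-modules that are free, so that the Chevalley--Eilenberg complex computing the homology becomes a resolution, forcing higher homology to vanish.
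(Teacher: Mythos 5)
Your overall strategy — reduce the statement to the vanishing $L^1\mathsf{F}_{\mathbf{c}}(\mathbb{W}_{\mathbf{c}}(\lambda))=0$ for Weyl modules, then d\'evisser along a $\Delta$-filtration and use closure of $\Delta$-filtered modules under one-step extensions — is exactly the paper's route (Proposition~\ref{pro: F exact on delta} and Corollary~\ref{cor: ext to ext}). You also correctly flag the vanishing $L^1\mathsf{F}_{\mathbf{c}}(\mathbb{W}_{\mathbf{c}}(\lambda))=0$ as the step that needs an argument, and you are right that it does not follow from right-exactness alone. But the argument you sketch for this step does not work, and this is the essential content of the corollary.

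Your idea is to realize each Weyl module as (a colimit of) modules induced from $\mathfrak{gl}_n[z]$-modules that are \emph{free}, so that the full higher homology $H_i(\mathfrak{gl}_n[z],-)$ vanishes. That is not the situation here. Via the involution $t\mapsto t^{-1}$ and the tensor identity, $\mathsf{T}_{\kappa}(\mathbb{W}_{\kappa}(\lambda))$ is isomorphic as a $\mathfrak{g}[t]$-module to $\Ind_{\mathfrak{g}}^{\mathfrak{g}[t]}\bigl(\C[\mathfrak{h}]\otimes(\mathbf{V}^*)^{\otimes m}\otimes L(\lambda)\bigr)$, so Shapiro's lemma only gives $L^i\mathsf{F}_{\kappa}(\mathbb{W}_{\kappa}(\lambda)) \cong \C[\mathfrak{h}]\otimes H_i(\mathfrak{g},(\mathbf{V}^*)^{\otimes m}\otimes L(\lambda))$, reducing the problem to the homology of the finite-dimensional reductive Lie algebra $\mathfrak{g}=\mathfrak{gl}_n$ with coefficients in a finite-dimensional module. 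This module is certainly not free (it is not even projective), and in general the higher homology does \emph{not} all vanish; for instance $H_3(\mathfrak{sl}_n,\C)\neq 0$. The vanishing is specific to degree one, and requires two inputs you do not have: Whitehead's first lemma, which gives $H_1(\mathfrak{sl}_n,N)=0$ for finite-dimensional $N$; and a weight computation showing that the corestriction $H_0(\mathfrak{sl}_n,N)\to H_0(\mathfrak{gl}_n,N)$ is an isomorphism for $N=(\mathbf{V}^*)^{\otimes n}\otimes L(\lambda)$, which is where the balance $n=m$ and $\lambda\in\mathcal{P}(n)$ are used (the trace of every $\mathfrak{gl}_n$-weight of $N$ is zero, so an $\mathfrak{sl}_n$-trivial subquotient is automatically $\mathfrak{gl}_n$-trivial). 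Feeding these into the long exact sequence comparing $\mathfrak{sl}_n$- and $\mathfrak{gl}_n$-homology forces $H_1(\mathfrak{gl}_n,N)=0$. Without this argument you have no control on $L^1\mathsf{F}_{\mathbf{c}}$ of a single Weyl module, and the rest of your proof — which is otherwise correctly organized — has nothing to run on.

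One further caution: it is worth being explicit that the three-term sequence one needs to preserve, and hence the middle term of every extension of $M$ by itself, lies in the $\Delta$-filtered subcategory, so that the vanishing of $L^1$ on Weyl modules (plus induction on the length of the filtration) is actually applicable. You say this, but in the final write-up you should make the induction on the $\Delta$-filtration length precise, since $L^1$-vanishing on Weyl modules alone says nothing a priori about $L^1$ on arbitrary subquotients.
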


We conjecture (see Conjecture \ref{the conjecture}) that \eqref{intro exts} extends to a surjective homomorphism between extension algebras, and that it admits an interpretation in terms of differential forms on opers and the Calogero-Moser space.

\subsection{Structure of the paper.} 

Let us finish by summarizing the contents of the paper. In sections 2-4 we recall the relevant definitions and facts concerning affine Lie algebras, rational Cherednik algebras and vertex algebras. These sections contain no new results. In Section 5 we recall Suzuki's construction of the functor $\mathsf{F}_\kappa$ and generalize it to the critical level. In section 6 we further generalize the functor $\mathsf{F}_\kappa$ to the category of all $\UU$-modules, proving Theorem \ref{thm 1 intro}. Section 7 is devoted to the proof of Theorem \ref{thm 2 intro}. In Section 8 we study the relationship between the two centres $\mathfrak{Z}$ and $\mathcal{Z}$ via the functor $\mathsf{F}_{\mathbf{c}}$. Section 8 contains the proofs of Theorems \ref{intro thm q1}-\ref{intro thm q2}. In Section 9 we define graded and filtered analogues of the Suzuki functor, which are later used in Section 10 to set up our ``associated graded'' argument. All of section 10 is devoted to the proof of Theorem \ref{main theorem intro}. 
In Section 11 we study the applications of Theorem \ref{main theorem intro}, proving Corollaries \ref{cor 1 intro}-\ref{cor 5 intro}. 

\subsection*{Acknowledgements.} 
The research for this paper was carried out with the financial support of the College of Science \& Engineering at the University of Glasgow and the Max Planck Institute for Mathematics in Bonn. The material will form part of the author's PhD thesis. 
I would like to thank G. Bellamy for recommending the problem to me, many useful suggestions and comments, as well as his unwavering support and encouragement throughout the time in which this paper was written. I am also grateful to C. Stroppel for stimulating discussions as well as numerous and detailed comments on draft versions of this paper. Finally, I would like to thank A. Molev for discussing his paper \cite{CM} with~me, and the two anonymous referees for extremely detailed and insightful comments. 

\section{Preliminaries} \label{sec: affine Lie algebras}

\subsection{General conventions.} \label{general conventions} 
Fix once and for all two positive integers $n$ and $m$. The parameter $n$ refers to the Lie algebra $\g = \mathfrak{gl}_n$ while $m$ refers to the rational Cherednik algebra $\mathcal{H}_{t,c}$ associated to the symmetric group $\mathfrak{S}_m$. 
We work over the field of complex numbers throughout. 
If $V$ is a vector space, let $T(V)$ denote the tensor algebra and $S(V)$ the symmetric algebra on $V$. 

For a unital associative algebra $A$, with unit $1_A$, we denote by $A\Lmod{}$ the category of left $A$-modules. 
Given a left $A$-module $M$ and a left ideal $I$ in $A$, let $M^I := \{ m \in M \mid I \cdot M = 0\}$ be the set of $I$-invariants. 
We will also work with the full subcategory $A\FPmod{}$ of $A\Lmod{}$ consisting of finitely presented modules, i.e., modules $M$ such that there exists a short exact sequence $A^k \to A^l \to M \to 0$ for some $k,l \geq 0$. 
If $B$ is another algebra, let $(A,B)\Nmod{}$ be the full subcategory of $A \otimes B\Lmod{}$ consisting of modules $M$ with the property that the action of $B$ normalizes the action of $A$, i.e., $[A,B] \subseteq A$ in the endomorphism ring of $M$. 

Given a subalgebra $B \subset A$, let $Z_A(B)$ denote the centralizer of $B$ in $A$. In particular, $Z(A) := Z_A(A)$ is the centre of $A$. Recall that the centre $Z(\mathcal{C})$ of an additive category $\mathcal{C}$ is the endomorphism ring of the identity functor $\id_{\mathcal{C}}$. We can naturally identify $Z(A) \cong Z(A\Lmod{})$, $z \mapsto \{ z_M \mid M \in A\Lmod{}\}$, where $z_M$ is the endomorphism of $M$ given by the left action of $z$. 

Suppose that $A$ is a commutative algebra and $M$ is an $A$-module. Let $\Ann_A(M) := \{ a \in A \mid a \cdot M = 0\}$ be the annihilator of $M$ in $A$. The affine variety $\supp_A(M) :=\Spec A/\Ann_A(M)$ is called the support of $M$ in $\Spec A$.

\subsection{Combinatorics.} \label{weights and partitions}
Let $l \geq 1$.  
We say that $\nu=(\nu_1,\hdots,\nu_l) \in \Z_{+}^l$ is a \emph{composition} of $m$ of \emph{length} $l$ if $\nu_1 + \hdots + \nu_l = m$. Let $\mathcal{C}_l(m)$ denote the set of all such compositions.~Set 
$\nu_{\leq i} = \nu_1 + \hdots + \nu_i$ for each $1 \leq i \leq l$ 
with $\nu_{\leq 0} = 0$ by convention.

The symmetric group $\mathfrak{S}_m$ on $m$ letters acts naturally on $\mathfrak{h} = \C^m$ by permuting the coordinates. If $a \in \h$, let $\mathfrak{S}_m(a)$ denote its stabilizer in $\mathfrak{S}_m$. For $1 \leq i,j \leq m$, let $s_{i,j}$ be the simple transposition swapping $i$ and $j$. We abbreviate $s_i := s_{i,i+1}$. Given $\nu \in \mathcal{C}_l(m)$, let $\mathfrak{S}_\nu :=\mathfrak{S}_{\nu_1} \times \hdots \times \mathfrak{S}_{\nu_l}$ denote the parabolic subgroup of $\mathfrak{S}_m$ generated by the simple transpositions $s_1, \hdots , s_{m-1}$ with the omission of $s_{\nu_{\leq 1}}, s_{\nu_{\leq 2}}, \hdots, s_{\nu_{\leq l-1}}$. 

A sequence $\lambda = (\lambda_1, \hdots, \lambda_n) \in \Z_{\geq 0}^n$ is a \emph{partition} if $\lambda_1 \geq \hdots \geq \lambda_n$. Let $\mathcal{P}_n(m)$ denote the set of all partitions of $m$ of length $n$. 
We call $\lambda = (\lambda^1, \hdots, \lambda^l) \in \prod_{i=1}^l \mathcal{P}_{n_i}(m_i)$ an $l$-\emph{multipartition} of $m$ if $\sum_{i=1}^l m_i = m$ and each $m_i \neq 0$. 
We say that $\lambda$ has \emph{length} $n$ if $\sum_{i=1}^l n_i = n$, and \emph{length type} $\mu$ if $(n_1, \hdots, n_l) = \mu \in \mathcal{C}_l(n)$. We say that $\lambda$ is of \emph{size type} $\nu$ if $(m_1, \hdots, m_l) = \nu \in \mathcal{C}_l(m)$. 
Let $\mathcal{P}_\mu(m)$ denote the set of multipartitions of $m$ of length type $\mu$ and let $\mathcal{P}_n(\nu)$ denote the set of all multipartitions of length $n$ of size type $\nu$ (where we let $l$ vary over all positive integers). 
Set 
\[ \mathcal{P}_\mu(\nu) := \mathcal{P}_\mu(m) \cap \mathcal{P}_n(\nu), \quad \mathcal{P}_\mu := \bigsqcup_{m \geq 0} \mathcal{P}_\mu(m), \quad \mathcal{P}(\nu) := \bigcup_{n \geq 0} \mathcal{P}_n(\nu).\]
In the union on the RHS we identify $l$-multipartitions $\lambda$ and $\chi$ whenever each pair of partitions $\lambda^i$ and $\chi^i$ differ only by the number of parts equal to zero.

If $\lambda \in \mathcal{P}_n(m)$, let $\mathsf{Sp}(\lambda)$ denote the corresponding Specht module. 
Given $\nu \in \mathcal{C}_l(m)$ and $\lambda \in \mathcal{P}_n(\nu)$, set  
$\mathsf{Sp}(\lambda) := \mathsf{Sp}(\lambda^1) \otimes \hdots \otimes \mathsf{Sp}(\lambda^l)$. It is a $\mathfrak{S}_\nu$-module. Let $\mathsf{Sp}_\nu(\lambda) := \C \mathfrak{S}_m \otimes_{\C\mathfrak{S}_\nu}\mathsf{Sp}(\lambda)$ be the corresponding $\mathfrak{S}_m$-module obtained by induction.

\subsection{Lie algebras.}
Given a Lie algebra $\mathfrak{a}$, 
let $\mathbf{U}(\mathfrak{a})$ denote its universal enveloping algebra, with unit $1_{\mathfrak{a}}:=1_{\mathbf{U}(\mathfrak{a})}$ and  augmentation ideal $\mathbf{U}_+(\mathfrak{a})$. If $M$ is an $\mathfrak{a}$-module and $k \geq 0$, let $H_k(\mathfrak{a},M)$ denote the $k$-th homology group of $\mathfrak{a}$ with coefficients in $M$. In particular, $H_0(\mathfrak{a},M) = M/\mathbf{U}_+(\mathfrak{a}).M = M/\mathfrak{a}.M$. 
Given a Lie subalgebra $\mathfrak{c}\subset \a$ and a $\mathfrak{c}$-module $N$, let $\Ind_{\mathfrak{c}}^{\mathfrak{a}} N := \mathbf{U}(\mathfrak{a}) \otimes_{\mathbf{U}(\mathfrak{c})} N$ be the induced module. For a surjective Lie algebra homomorphism $\mathfrak{d} \twoheadrightarrow \mathfrak{c}$, let $\Inf_{\mathfrak{c}}^{\mathfrak{d}} N$ denote $N$ regarded as a $\mathfrak{d}$-module. 

Let $G = GL_n(\C)$ be the general linear group and $\g = \mathfrak{gl}_n(\C)$ its Lie algebra. 
Let $e_{kl}$ be the $(k,l)$-matrix unit and let $\id$ denote the identity matrix. 
We use the standard triangular decomposition $\g = \n_- \oplus \t \oplus \n_+$  with respect to the strictly lower triangular, diagonal and strictly upper triangular matrices, and abbreviate $\b_+ := \t \oplus \n_+$. 
For $1 \leq k \leq n$, let $\epsilon_k \in \t^*$ be the function defined by $\epsilon_k(e_{ll}) = \delta_{k,l}$. 

Given $\mu \in \mathcal{C}_l(n)$, 
let $\mathfrak{l}_{\mu} := \prod_{i=1}^l \mathfrak{gl}_{\mu_i} \subseteq \g$ be the corresponding standard Levi subalgebra. 
We next recall the connection between multipartitions and weights. A weight $\lambda = \sum_i \lambda_i \epsilon_i \in \t^*$ is called $\mu$\emph{-dominant} and integral if each $\lambda_i \in \Z$ and $\lambda_i - \lambda_{w(i)} \in \Z_{\geq 0}$ whenever $w(i) > i$, for all $w \in \mathfrak{S}_\mu$. 
Let $\Pi_\mu^+$ denote the set of $\mu$-dominant integral weights with the property that each $\lambda_i \in \Z_{\geq 0}$. 
If $\mu = (n)$, we abbreviate $\Pi_\mu^+ = \Pi^+$. 
There is a natural bijection 
\begin{equation} \label{multipart vs part} 
\Pi_\mu^+ \cong \mathcal{P}_\mu, \quad 
\lambda \mapsto (\lambda^1,\hdots,\lambda^l),\end{equation}
where $\lambda^i:=( \lambda_{\mu_{\leq i-1}+1},\hdots, \lambda_{\mu_{\leq i}})$. 
From now on we will implicitly identify weights with partitions using this bijection. 

\subsection{Schur-Weyl duality.} \label{subsec: SW duality}

Given $\lambda \in \Pi_\mu^+$, let $L(\lambda)$ be the corresponding simple $\mathfrak{l}_\mu$-module of highest weight $\lambda$. Let $\mathbf{V} \cong L(\epsilon_1)$ 
be the standard representation of $\g$, with standard basis $\{e_i\mid 1 \leq i \leq n\}$ and the corresponding dual basis $\{e_i^*\mid 1 \leq i \leq n\}$ of $\mathbf{V}^*$. 
If $n=m$, set $e_{\mathsf{id}}^*:= e_1^* \otimes \hdots \otimes e_n^*$ and, for $w \in \mathfrak{S}_n$, 
\begin{equation} e_{w}^* := e_{w^{-1}(1)}^* \otimes \hdots \otimes e_{w^{-1}(n)}^* \in (\mathbf{V}^*)^{\otimes n}. \end{equation}
Given $\mu \in \mathcal{C}_l(n)$ and $\nu \in \mathcal{C}_l(m)$, let $\mathbf{V}_{i}^*$ be the subspace of $\mathbf{V}$ spanned by $e_{\mu_{\leq i-1}+1}^*, \hdots, e_{\mu_{\leq i}}^*$ and $(\mathbf{V}^*)^{\otimes m}_{(\mu,\nu)}:= \bigotimes_{i=1}^l(\mathbf{V}_i^*)^{\otimes \nu_i} \subseteq (\mathbf{V}^*)^{\otimes m}$. 

There is an analogue of classical Schur-Weyl duality (see, e.g., \cite[Proposition 9.1.2]{Proc}) for $\mathfrak{l}_\mu$ and $\mathfrak{S}_m \ltimes \Z_l^m$ - their actions on $\mathbf{V}^{\otimes m}$ centralize each other (see, e.g., \cite[Theorem 6.1]{KP}). We will need the following application, whose proof can be found in \cite[Proposition 3.8(a)]{VV}. 

\begin{pro} \label{Schur-Weyl pro}

Let $\lambda \in \t^*$. Then 
\begin{enumerate}[label=\alph*), font=\textnormal,noitemsep,topsep=3pt,leftmargin=1cm]
\itemsep0em
\item $H_0(\l_{\mu}, (\mathbf{V}^*)^{\otimes m} \otimes L(\lambda)) = 0$ unless $\lambda \in \mathcal{P}_\mu(m)$. 
\item If $\nu \in \mathcal{C}_l(m)$ and $\lambda \in \mathcal{P}_\mu(\nu)$ then 
\begin{equation} \label{SW duality iso} H_0(\l_{\mu}, (\mathbf{V}^*)_{(\mu,\nu)}^{\otimes m} \otimes L(\lambda)) \cong \mathsf{Sp}(\lambda), \quad H_0(\l_{\mu}, (\mathbf{V}^*)^{\otimes m} \otimes L(\lambda)) \cong \mathsf{Sp}_\nu(\lambda)\end{equation} 
as $\C\mathfrak{S}_\nu$- resp.\ $\C\mathfrak{S}_m$-modules. 
\end{enumerate} 
\end{pro}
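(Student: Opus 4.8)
The plan is to reduce both statements to classical Schur–Weyl duality for $\GL$ and $\mathfrak{S}$, working one factor of the Levi at a time. First I would record the elementary decomposition $(\mathbf{V}^*)^{\otimes m} = \bigoplus_{\nu \in \mathcal{C}_l(m)} (\mathbf{V}^*)^{\otimes m}_{(\mu,\nu)}$ coming from the block structure $\mathbf{V}^* = \bigoplus_{i=1}^l \mathbf{V}_i^*$ adapted to the Levi $\l_\mu = \prod_i \mathfrak{gl}_{\mu_i}$. Since $\l_\mu$ acts diagonally and each summand $(\mathbf{V}_i^*)^{\otimes \nu_i}$ is an $\mathfrak{gl}_{\mu_i}$-submodule, taking $H_0(\l_\mu, - \otimes L(\lambda))$ commutes with this direct sum. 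Using the factorisation $L(\lambda) \cong L(\lambda^1) \boxtimes \cdots \boxtimes L(\lambda^l)$ for $\lambda \in \Pi_\mu^+$ (the bijection \eqref{multipart vs part}) together with the Künneth formula for Lie algebra homology, the coinvariants of the $(\mu,\nu)$-summand split as a tensor product $\bigotimes_{i=1}^l H_0(\mathfrak{gl}_{\mu_i}, (\mathbf{V}_i^*)^{\otimes \nu_i} \otimes L(\lambda^i))$. This localises the problem to a single general linear factor.

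The core input is then the following: for $\mathfrak{gl}_N$ with standard module $\mathbf{W}$ of dimension $N$, and for any highest weight $\chi$, one has $H_0(\mathfrak{gl}_N, (\mathbf{W}^*)^{\otimes k} \otimes L(\chi)) = 0$ unless $\chi$ is a partition of $k$ with at most $N$ parts, in which case it is isomorphic to the Specht module $\mathsf{Sp}(\chi)$ as an $\mathfrak{S}_k$-module. This is a standard consequence of classical Schur–Weyl duality: $H_0(\mathfrak{gl}_N, (\mathbf{W}^*)^{\otimes k} \otimes L(\chi)) = \Hom_{\mathfrak{gl}_N}(L(\chi), \mathbf{W}^{\otimes k})$ after dualising, and $\mathbf{W}^{\otimes k} \cong \bigoplus_{\chi} L(\chi) \otimes \mathsf{Sp}(\chi)$ as a $(\mathfrak{gl}_N, \mathfrak{S}_k)$-bimodule, the sum running over partitions $\chi$ of $k$ of length $\leq N$; the $\mathfrak{S}_k$-action on the left-hand side matches the permutation action on tensor legs. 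This is exactly the content cited from \cite[Proposition 3.8(a)]{VV} (alternatively \cite[Proposition 9.1.2]{Proc}), so I would simply invoke it.

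Assembling the factors proves part (b): for $\lambda \in \mathcal{P}_\mu(\nu)$ each factor contributes $\mathsf{Sp}(\lambda^i)$, so $H_0(\l_\mu, (\mathbf{V}^*)^{\otimes m}_{(\mu,\nu)} \otimes L(\lambda)) \cong \bigotimes_i \mathsf{Sp}(\lambda^i) = \mathsf{Sp}(\lambda)$ as a $\C\mathfrak{S}_\nu$-module, where the $\mathfrak{S}_\nu = \prod_i \mathfrak{S}_{\nu_i}$-action is visibly the one permuting tensor legs within each block. For the second isomorphism in \eqref{SW duality iso}, I would observe that $(\mathbf{V}^*)^{\otimes m} = \bigoplus_{\sigma \in \mathfrak{S}_m/\mathfrak{S}_\nu} \sigma \cdot (\mathbf{V}^*)^{\otimes m}_{(\mu,\nu)}$, so the full coinvariants are $\C\mathfrak{S}_m \otimes_{\C\mathfrak{S}_\nu} \mathsf{Sp}(\lambda) = \mathsf{Sp}_\nu(\lambda)$ by definition of the induced module; one must check that permuting blocks also permutes the corresponding $\l_\mu$-decompositions of $L(\lambda)$ compatibly, which is a bookkeeping matter since conjugating $\l_\mu$ by such a $\sigma$ permutes the $\mathfrak{gl}_{\mu_i}$ factors in the same pattern. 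Part (a) is then immediate: if $\lambda \notin \mathcal{P}_\mu(m)$ then for every $\nu$ at least one $\lambda^i$ fails to be a partition of $\nu_i$ of length $\leq \mu_i$, so every summand of $H_0(\l_\mu, (\mathbf{V}^*)^{\otimes m} \otimes L(\lambda))$ vanishes.

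The main obstacle is not conceptual but notational: keeping track of how the block decomposition of $\mathbf{V}^*$, the factorisation of $L(\lambda)$, the parabolic $\mathfrak{S}_\nu \subseteq \mathfrak{S}_m$, and the $\mathfrak{S}_m$-coset representatives interact, and verifying that all the identifications are $\mathfrak{S}_\nu$- (resp. $\mathfrak{S}_m$-) equivariant rather than merely linear. Since this is precisely the statement borrowed verbatim from \cite{VV}, in practice I would give the reduction to a single $\mathfrak{gl}_N$-factor explicitly and then cite \cite[Proposition 3.8(a)]{VV} (or \cite[Proposition 9.1.2]{Proc}) for the remaining equivariance claims.
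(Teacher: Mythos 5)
The paper does not actually prove this proposition; it states it and cites \cite[Proposition 3.8(a)]{VV}, so your sketch takes a genuinely different (because self-contained) route: classical Schur--Weyl duality for each $\mathfrak{gl}_{\mu_i}$, glued together with the K\"{u}nneth isomorphism for Lie algebra homology. This is correct in substance and supplies exactly what the paper's citation hides. Two details deserve care, though. Your opening decomposition $(\mathbf{V}^*)^{\otimes m} = \bigoplus_{\nu}(\mathbf{V}^*)^{\otimes m}_{(\mu,\nu)}$ is not correct as written --- the subspace $(\mathbf{V}^*)^{\otimes m}_{(\mu,\nu)}$ contains only those monomial tensors whose block labels come sorted, so the genuine decomposition is $(\mathbf{V}^*)^{\otimes m} = \bigoplus_{\nu'}\bigoplus_{\sigma\in\mathfrak{S}_m/\mathfrak{S}_{\nu'}} \sigma\cdot(\mathbf{V}^*)^{\otimes m}_{(\mu,\nu')}$ over all weak compositions $\nu'$; it is then the central characters of the $\mathfrak{gl}_{\mu_i}$-factors (degree counting on each block) that kill every summand with $\nu'\neq\nu$, which is the mechanism making your second decomposition usable for coinvariants. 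Second, the $\mathfrak{S}_m$-equivariance in the induction step does not involve conjugating $\l_\mu$: the Levi acts in each tensor slot simultaneously and its blocks are never permuted. What is true is that for each coset representative $\sigma$ the map $\sigma\colon(\mathbf{V}^*)^{\otimes m}_{(\mu,\nu)}\to\sigma\cdot(\mathbf{V}^*)^{\otimes m}_{(\mu,\nu)}$ is already an $\l_\mu$-module isomorphism, since permuting tensor slots commutes with the diagonal action; summing over cosets and tracking how $\mathfrak{S}_m$ shuffles them gives the induced module structure with no conjugation of the Levi. Neither issue affects the soundness of the approach; the reduction to a single $\mathfrak{gl}_N$-factor together with the appeal to classical Schur--Weyl (either \cite{Proc} or \cite{VV}) is a clean and legitimate way to establish the claim.
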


In the case $\mu = (n)$, classical Schur-Weyl duality also implies the following. 

\begin{cor} \label{lem: SW duality form 2} 
Let $\lambda \in \t^*$. 
Then:
\begin{enumerate}[label=\alph*), font=\textnormal,noitemsep,topsep=3pt,leftmargin=1cm]
\itemsep0em
\item $H_0(\b_+, (\mathbf{V}^*)^{\otimes m} \otimes \C_{\lambda}) = 0$ unless $\lambda \in \mathcal{P}_n(m)$. 
\item If $\lambda \in \mathcal{P}_n(m)$ then there is a natural $\C \mathfrak{S}_m$-module isomorphism 
\begin{equation} \label{SW duality Borel} H_0(\b_+, (\mathbf{V}^*)^{\otimes m} \otimes \C_{\lambda}) \cong \mathsf{Sp}(\lambda).\end{equation} 
\end{enumerate} 
\end{cor}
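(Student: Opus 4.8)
The plan is to deduce Corollary \ref{lem: SW duality form 2} from Proposition \ref{Schur-Weyl pro} by specializing to the case $\mu = (n)$, at which point the Levi $\l_{(n)} = \g = \mathfrak{gl}_n$ and the group $\mathfrak{S}_\nu$ for a size-type composition $\nu$ of length $n$ is trivial. First I would observe that for $\mu = (n)$ the condition $\lambda \in \mathcal{P}_\mu(m)$ appearing in part a) is exactly the condition $\lambda \in \mathcal{P}_n(m)$, so part a) of the corollary will follow once I relate the $\b_+$-coinvariants to the $\g$-coinvariants.

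The key step is the identification $H_0(\b_+, (\mathbf{V}^*)^{\otimes m} \otimes \C_\lambda) \cong H_0(\g, (\mathbf{V}^*)^{\otimes m} \otimes L(\lambda))$ when $\lambda \in \Pi^+$ (which is where $L(\lambda)$ is finite-dimensional and actually exists as a simple $\g$-module of highest weight $\lambda$). This is a standard adjunction/Frobenius reciprocity argument: the one-dimensional $\b_+$-module $\C_\lambda$ is the highest-weight line in $L(\lambda)$, and for any $\g$-module $N$ one has a natural isomorphism $H_0(\b_+, N \otimes \C_\lambda) \cong H_0(\g, N \otimes L(\lambda))$ coming from the fact that $L(\lambda)^* \cong L(\lambda^*)$ has $\C_{-w_0\lambda}$ as a $\b_+$-quotient, together with the tensor identity; alternatively, one uses that taking $\n_+$-coinvariants and then $\t$-coinvariants against the appropriate weight picks out precisely the multiplicity of $L(\lambda)$. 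I would then also need to handle the case $\lambda \notin \Pi^+$: if $\lambda$ is not dominant integral then one checks directly that $H_0(\b_+, (\mathbf{V}^*)^{\otimes m} \otimes \C_\lambda) = 0$, since $(\mathbf{V}^*)^{\otimes m}$ decomposes as a sum of $L(\mu)$ with $\mu \in \mathcal{P}_n(m) \subset \Pi^+$, and taking $\n_+$-coinvariants of $L(\mu) \otimes \C_\lambda$ lands in the $\t$-weight space of weight $w_0\mu + \lambda$ (roughly), which cannot vanish under the remaining $\t$-action unless $\lambda$ matches $-w_0\mu$; a weight-counting argument rules out all $\lambda \notin \mathcal{P}_n(m)$.

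For part b), once the reduction $H_0(\b_+, (\mathbf{V}^*)^{\otimes m} \otimes \C_\lambda) \cong H_0(\g, (\mathbf{V}^*)^{\otimes m} \otimes L(\lambda))$ is in place, I simply apply Proposition \ref{Schur-Weyl pro}b) with $\mu = (n)$ and $\nu = (m)$ (the only size-type composition of length $1$), which gives $H_0(\g, (\mathbf{V}^*)^{\otimes m} \otimes L(\lambda)) \cong \mathsf{Sp}_{(m)}(\lambda) = \mathsf{Sp}(\lambda)$ as $\C\mathfrak{S}_m$-modules, since inducing from $\mathfrak{S}_{(m)} = \mathfrak{S}_m$ to $\mathfrak{S}_m$ is the identity. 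The naturality of the $\C\mathfrak{S}_m$-module structure is inherited from the naturality in Proposition \ref{Schur-Weyl pro} and the fact that the $\mathfrak{S}_m$-action on the coinvariants comes from its action on $(\mathbf{V}^*)^{\otimes m}$ by permuting tensor factors, which commutes with the $\g$-action (and a fortiori the $\b_+$-action).

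The main obstacle is making the adjunction isomorphism $H_0(\b_+, N \otimes \C_\lambda) \cong H_0(\g, N \otimes L(\lambda))$ precise and checking it is $\mathfrak{S}_m$-equivariant; the rest is bookkeeping. I expect this to be a short argument, likely citing a standard fact about highest-weight modules (e.g.\ that $\Hom_{\g}(L(\lambda), N) \cong \Hom_{\b_+}(\C_\lambda, N^{\n_+})$ and dualizing), or else one can invoke the decomposition of $(\mathbf{V}^*)^{\otimes m}$ into simples directly and compute both sides term by term using classical Schur--Weyl duality, which is perhaps the cleanest route and avoids any delicate homological algebra since $(\mathbf{V}^*)^{\otimes m}$ is a direct sum of finite-dimensional simples and all the relevant $H_0$'s are just spaces of coinvariants.
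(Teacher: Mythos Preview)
Your proposal is correct, and in fact the alternative you flag at the end---decomposing $(\mathbf{V}^*)^{\otimes m}$ into simples via classical Schur--Weyl duality and reading off the answer directly---is exactly what the paper does. The paper's three-line argument identifies $H_0(\b_+, (\mathbf{V}^*)^{\otimes m} \otimes \C_\lambda)$ with the space of lowest weight vectors of the appropriate weight in $(\mathbf{V}^*)^{\otimes m}$, writes $(\mathbf{V}^*)^{\otimes m} = \bigoplus_{\xi \in \mathcal{P}_n(m)} L(\xi) \otimes \mathsf{Sp}(\xi)$, and concludes; it does not pass through Proposition~\ref{Schur-Weyl pro} at all.

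Your primary route---establishing an adjunction isomorphism $H_0(\b_+, N \otimes \C_\lambda) \cong H_0(\g, N \otimes L(\lambda))$ and then specializing Proposition~\ref{Schur-Weyl pro} to $\mu = (n)$---also works and has the virtue of unifying the corollary with the proposition. But it costs you exactly the step you identify as the ``main obstacle'': making that adjunction precise and $\mathfrak{S}_m$-equivariant, and separately disposing of non-dominant $\lambda$. The paper's direct approach sidesteps this entirely, since the identification with lowest weight vectors is immediate (taking $\n_+$-coinvariants of a simple module gives the lowest weight line) and handles all $\lambda \in \t^*$ uniformly. Your instinct that the direct decomposition ``avoids any delicate homological algebra'' is right, and that is the route to take.
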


\begin{proof}
The space $H_0(\b_+, (\mathbf{V}^*)^{\otimes m} \otimes \C_{\lambda})$ can be identified with the space of lowest weight vectors of weight $-\lambda$ in $(\mathbf{V}^*)^{\otimes m}$. It follows from Schur-Weyl duality that $(\mathbf{V}^*)^{\otimes m} = \bigoplus_{\xi \in \mathcal{P}_n(m)} L(\xi)^* \otimes \mathsf{Sp}(\xi)$. Since the lowest weight in each $L(\xi)^*$ is equal to $-\xi$,  the space of lowest weight vectors of weight $-\lambda$ in $(\mathbf{V}^*)^{\otimes m}$ is isomorphic to $ \mathsf{Sp}(\lambda)$ if $\lambda \in \mathcal{P}_n(m)$ and is zero otherwise. 
\end{proof}

\subsection{The affine Lie algebra.} \label{Lie g notation} 
\label{subsec: affine Lie algebras, level} 

We recall the definition of the affine Lie algebra associated to $\g$.  

\begin{defi} \label{defi:themixedform} 
Let $\kappa \in \C$. The \emph{affine Lie algebra} $\hat{\g}_\kappa$ is the central extension
\begin{equation} \label{Central extension} 0 \to \C \mathbf{1} \to \hat{\g}_\kappa \to \g((t)) \to 0\end{equation}
associated to the cocycle $(X\otimes f, Y \otimes g) \mapsto \langle X, Y \rangle_\kappa \Res_{t=0}(g\partial_tf)$, where 
\begin{equation*} \label{Bilinear forms} 
\langle - , - \rangle_{\kappa} := 
\kappa \Tr(XY) + \Tr(X)\Tr(Y). 
\end{equation*} 
Note that $\langle - , - \rangle_{-n} = -\frac{1}{2}\Kil_{\g}$, where $\Kil_{\g}$ is the Killing form on $\g$. 
Explicitly, the Lie bracket in $\hat{\mathfrak{g}}_\kappa$ is given by: 
\begin{equation*} [X\otimes f, Y \otimes g] = [X,Y] \otimes fg + \langle X,Y\rangle_\kappa \Res_{t=0}(g\partial_tf) \mathbf{1}, \quad [X\otimes f,\mathbf{1}]=[\mathbf{1},\mathbf{1}]=0 \end{equation*}
for $X,Y \in \g$ and $f,g \in \C((t))$. 
\end{defi}

We will also use the central extension $\tilde{\g}_\kappa$ obtained by replacing $\g((t))$ with $\g[t^{\pm1}]$ in~\eqref{Central extension}. 
Given $X \in \g$ and $k \in \Z$, set $$X[k] := X \otimes t^k \in \hat{\g}_\kappa, \quad \g[k] := \g \otimes t^k \subset \hat{\g}_\kappa.$$ 
We next introduce notation for the following Lie subalgebras of $\hat{\g}_\kappa$: \[ \hat{\g}_{-} := \g \otimes t^{-1}\C[t^{-1}], \quad \hat{\g}_+ :=\g[[t]] \oplus \C\mathbf{1}, \quad \hat{\g}_{\geq r} := \g\otimes t^r\C[[t]], \quad \hat{\g}_{\leq -r} := \g \otimes t^{-r}\C[t^{-1}],\] where $r \geq 0$. 
Moreover, we abbreviate  
\[ \hat{\n}_+ := \n_+ \oplus \hat{\g}_{\geq 1}, \quad 
 \hat{\b}_+ := \hat{\n}_+ \oplus \t \oplus \C\mathbf{1}, \quad \hat{\t}_+ := \t \oplus \hat{\g}_{\geq 1} \oplus \C\mathbf{1}.\] 
Let $\tilde{\g}_+, \tilde{\g}_{\geq r}$, etc., denote the corresponding Lie subalgebras of $\tilde{\g}_\kappa$.

\subsection{The completed universal enveloping algebra.} \label{sec: cuea}

We are interested in modules on which $\mathbf{1}$ acts as the identity endomorphism. Therefore we consider the quotient algebra
$$\mathbf{U}_\kappa(\hat{\g}) := \mathbf{U}(\hat{\g}_\kappa)/ \langle\mathbf{1} - 1_{\hat{\g}_\kappa}\rangle.$$ 
\begin{defi}
The parameter $\kappa$ is called the \emph{level}. The value $\mathbf{c} := -n$ is called the \emph{critical level}. 
\end{defi} 

We next recall the definition of a certain completion of $\mathbf{U}_\kappa(\hat{\g})$ (see, e.g., \cite[\S 2.1.2]{Fre}). 
There is a topology on $\mathbf{U}_\kappa(\hat{\g})$ defined by declaring the left ideals $I_r:= \mathbf{U}_\kappa(\hat{\g}).\hat{\g}_{\geq r}$ ($r \geq 0$) to be a basis of open neighbourhoods of zero. Let $\widehat{\mathbf{U}}_\kappa$ be the completion of $\mathbf{U}_\kappa(\hat{\g})$ with respect to this topology. Equivalently, we can write 
\begin{equation} \label{completion cuea} \widehat{\mathbf{U}}_\kappa = \varprojlim \mathbf{U}_\kappa(\hat{\g})/I_r. \end{equation}
It is a complete topological algebra with a basis of open neighbourhoods of zero given by the left ideals $\hat{I}_r:=\widehat{\mathbf{U}}_\kappa.\hat{\g}_{\geq r}$. The following proposition illustrates the special nature of the critical level. 

\begin{pro}[{\cite[Proposition 4.3.9]{Fre}}] \label{pro: Z crit level}
$Z(\widehat{\mathbf{U}}_\kappa) = \C$ if and only if $\kappa \neq \mathbf{c}$.
\end{pro}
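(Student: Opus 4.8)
\textbf{Proof strategy for Proposition~\ref{pro: Z crit level}.}

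The plan is to prove the two directions separately, using the natural $\Z$-grading on $\hat{\g}_\kappa$ and on $\widehat{\mathbf{U}}_\kappa$ (where $\g[k]$ has degree $k$ and $\mathbf{1}$ has degree $0$). First I would verify that this grading descends to a well-defined grading on each quotient $\mathbf{U}_\kappa(\hat{\g})/I_r$, and hence that $\widehat{\mathbf{U}}_\kappa$ inherits a completed grading: every element can be written as a convergent sum $\sum_{k} a_k$ with $a_k$ homogeneous of degree $k$, the degrees bounded above and the negative tail convergent in the topology. A central element $z$ must commute with the grading operator, so one checks that $z$ is forced to be homogeneous of degree $0$; more precisely, conjugation by the one-parameter group $t \mapsto \lambda t$ acts on $\widehat{\mathbf{U}}_\kappa$ and fixes $z$, which pins down each homogeneous component to lie in the centre, so it suffices to show every central homogeneous element of degree $0$ is a scalar.

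For the easy direction ($\kappa \neq \mathbf{c} \Rightarrow Z = \C$): take a central homogeneous degree-zero element $z$ and expand it in a PBW-type basis adapted to the triangular decomposition $\hat{\g}_\kappa = \hat{\g}_{-} \oplus (\t \oplus \C\mathbf{1}) \oplus \hat{\g}_{\geq 1}$ modulo $I_r$. The key computation is to commute $z$ with the elements $\id[k] = \id \otimes t^k$ for $k > 0$ and $k < 0$: because $\id$ is central in $\g = \mathfrak{gl}_n$, the bracket $[\id[j], X[k]] = \langle \id, X\rangle_\kappa\, \Res(\cdots)\,\mathbf{1}$ is a scalar multiple of $\mathbf{1}$, nonzero exactly when $X$ has nonzero trace and $j + k = 0$ — and this scalar is proportional to $\kappa$ when $\kappa \neq \mathbf{c}$ since $\langle-,-\rangle_\kappa = \kappa\Tr_\g$. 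Using $[z,\id[k]] = 0$ together with these relations, one shows inductively (filtering by the ``total negative degree'') that $z$ cannot involve any $X[k]$ with $X$ having nonzero trace; a parallel argument commuting with trace-zero elements $E[k]$ (for $E \in \sla_n$), using that $\Kil_{\g}$ is nondegenerate on $\sla_n$, kills all remaining positive- and negative-mode generators. What is left lies in the completion of $\mathbf{U}(\t)$, and commuting with the root vectors $e_{kl} \in \n_\pm \subset \g[0]$ forces it into $\mathbf{U}(\t)^{\mathfrak{S}_n}$-type invariants; finally commuting with $e_{kl}[1]$ and $e_{lk}[-1]$ rules out nonconstant polynomials in $\t$, leaving only $\C$. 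Alternatively, and more cleanly, I would invoke that a central element acts by a scalar on every Verma module $\mathbb{M}_\kappa(\lambda)$ (since these are generated by a highest-weight line) and that for $\kappa \neq \mathbf{c}$ the Harish-Chandra-type homomorphism $Z(\widehat{\mathbf{U}}_\kappa) \to \C$ separating different $\lambda$ is injective — this reduces the claim to the statement that the intersection of annihilators of all Vermas is zero in $\widehat{\mathbf{U}}_\kappa$, which follows from a faithfulness/separation argument for the action on the direct sum of Verma modules.

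For the substantive direction ($\kappa = \mathbf{c} \Rightarrow Z \supsetneq \C$): here I would exhibit explicit nontrivial central elements, namely the Segal--Sugawara operators. Concretely, set $S_k := \sum_{i,j} \sum_{p+q = k} :e_{ij}[p]\, e_{ji}[q]: \ \in \widehat{\mathbf{U}}_{\mathbf{c}}$ (suitably normal-ordered so the sum converges in the completion), and verify by a direct bracket computation that $[X[r], S_k] = (\text{coefficient}) \cdot (\mathbf{c} + n)\cdot X[r+k]\cdot(\cdots)$, which vanishes identically precisely because $\mathbf{c} + n = 0$. The normal-ordering correction terms must be handled carefully — this is where the critical normalization $\langle-,-\rangle_{\mathbf{c}} = -\tfrac12\Kil_\g$ enters and makes the anomaly cancel — but the upshot is that $S_k$ is genuinely central and, e.g., $S_0$ acts nontrivially (by distinct scalars on distinct Vermas $\mathbb{M}_{\mathbf{c}}(\lambda)$), so $S_0 \notin \C$. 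This establishes $Z(\widehat{\mathbf{U}}_{\mathbf{c}}) \neq \C$ and completes the proof.

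\textbf{Main obstacle.} The delicate point is the convergence and well-definedness of infinite (normal-ordered) sums in $\widehat{\mathbf{U}}_\kappa$ and the bookkeeping of normal-ordering corrections in the bracket $[X[r], S_k]$: one must check these sums lie in the completion (only finitely many terms survive modulo each $\hat{I}_r$) and that the finite ``anomaly'' collected from reordering is exactly the multiple of $(\kappa + n)$ claimed. Everything else is either a standard grading argument or a reduction to the well-known fact that central elements act by scalars on highest-weight modules; since this proposition is quoted from \cite[Proposition 4.3.9]{Fre}, in the write-up I would either reproduce Frenkel's normal-ordering computation or simply cite it and sketch only the grading reduction and the Verma-separation step.
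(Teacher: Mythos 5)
The paper gives no proof of Proposition~\ref{pro: Z crit level}; it is cited verbatim from \cite[Proposition~4.3.9]{Fre}, where $\g$ is assumed \emph{simple}. Your outline reconstructs the standard argument in that setting, and the critical-level direction is essentially right, though it can be simplified for $\g = \mathfrak{gl}_n$: there is no need to invoke the quadratic Segal--Sugawara operator, since $\langle\id,\id\rangle_{\mathbf{c}} = -\tfrac{1}{2}\Kil_\g(\id,\id) = 0$ and $\id$ is central in $\g$, so the first-order elements $\id[k]$ are already central and nonscalar in $\widehat{\mathbf{U}}_{\mathbf{c}}$ (the paper uses exactly these in the definition of $\mathscr{L}_{\mathbf{c}}$).

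The direction $\kappa \neq \mathbf{c} \Rightarrow Z(\widehat{\mathbf{U}}_\kappa) = \C$ has a genuine gap which your write-up inherits without flagging. For $\g = \mathfrak{gl}_n$ the form $\langle-,-\rangle_\kappa = \kappa\Tr_\g$ degenerates not only at $\kappa = \mathbf{c}$ but also at $\kappa = 0$: the cocycle vanishes, $\hat{\g}_0 \cong \g((t)) \oplus \C\mathbf{1}$, and every $\id[k]$ is central in $\widehat{\mathbf{U}}_0$ (because $\id$ is central in $\g$ and $\langle\id,\id\rangle_0 = 0$), so $Z(\widehat{\mathbf{U}}_0) \neq \C$. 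Both of your arguments break down there. Commuting with $\id[k]$ gives no constraint on a putative central $z$, since, as you yourself observe, the relevant brackets are proportional to $\kappa$; and your ``cleaner'' Verma-separation alternative also fails, because $\id[1]$ annihilates every $\mathbb{M}_0(\lambda)$ (it kills the highest-weight line and is central, hence kills the whole module), so the intersection of annihilators of all Vermas is nonzero at $\kappa = 0$. A correct proof along your lines must use $\kappa \neq 0$ in addition to $\kappa \neq \mathbf{c}$, i.e.\ nondegeneracy of $\langle-,-\rangle_\kappa$ on all of $\g$; this caveat is invisible in Frenkel's reference because there $\g$ has trivial centre.
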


We abbreviate  
$$\mathfrak{Z}:=Z(\widehat{\mathbf{U}}_{\mathbf{c}}).$$ 

\subsection{Smooth modules.} 

Throughout the paper we will mostly deal with smooth $\widehat{\mathbf{U}}_\kappa$-modules. Let us recall their definition (see, e.g., \cite[\S 1.3.6]{Fre} or \cite[\S 1.9]{KLI}). 
\begin{defi}
A $\widehat{\mathbf{U}}_\kappa$-module $M$ 
is called \emph{smooth} if $M = \bigcup_{r\geq 0} M^{\hat{I}_r}$. Let $\mathscr{C}_\kappa$ denote the full subcategory of $\widehat{\mathbf{U}}_\kappa\Lmod{}$ whose objects are smooth modules. Let  $\mathscr{C}_\kappa(r)$ denote the full subcategory of $\mathscr{C}_\kappa$ consisting of all modules $M$ generated by $M^{\hat{I}_r}$. 
\end{defi}

One can analogously define smooth $\mathbf{U}_\kappa(\hat{\g})$- and $\mathbf{U}_\kappa(\tilde{\g})$-modules. 
It is easy to see that the corresponding categories of smooth modules coincide with $\mathscr{C}_\kappa$. 
The following lemma, whose proof is standard, shows that the concept of smoothness defined above is analogous to that familiar from the representation theory of $p$-adic groups.

\begin{lem} \label{pro: smooth mod equivs}
Let $M$ be a $\widehat{\mathbf{U}}_\kappa$-module. The following are equivalent: 
\begin{enumerate}[label=\alph*), font=\textnormal,noitemsep,topsep=3pt,leftmargin=1cm]
\itemsep0em
\item $M$ is smooth, 
\item $M$, endowed with the discrete topology, is a topological $\widehat{\mathbf{U}}_\kappa$-module, 
\item $\Ann_{\widehat{\mathbf{U}}_\kappa}(v)$ is an open left ideal in $\widehat{\mathbf{U}}_\kappa$ for all $v \in M$. 
\end{enumerate}
\end{lem}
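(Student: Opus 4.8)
\textbf{Proof plan for Lemma \ref{pro: smooth mod equivs}.}

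The plan is to prove the cyclic chain of implications $(a) \Rightarrow (b) \Rightarrow (c) \Rightarrow (a)$, unwinding the definitions of the completed algebra $\widehat{\mathbf{U}}_\kappa$ and its topology (open neighbourhoods of zero given by the left ideals $\hat I_r = \widehat{\mathbf{U}}_\kappa . \hat\g_{\geq r}$). First I would dispose of $(a) \Rightarrow (b)$: if $M$ is smooth, then for each $v \in M$ there is some $r$ with $v \in M^{\hat I_r}$, i.e. $\hat I_r . v = 0$; to check that the action map $\widehat{\mathbf{U}}_\kappa \times M \to M$ is continuous for the discrete topology on $M$, it suffices to see that the preimage of $\{0\}$, and more generally of any point, is open, and this follows because $\hat I_r \times \{v\}$ maps to $0$, so the action is locally constant in the first variable; continuity in the (discrete) second variable is automatic. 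For $(b) \Rightarrow (c)$: if $M$ with the discrete topology is a topological module, then for fixed $v$ the map $a \mapsto a.v$ is continuous, so the preimage of the open set $\{0\} \subset M$ is an open left ideal, and this preimage is exactly $\Ann_{\widehat{\mathbf{U}}_\kappa}(v)$ — here one uses that $\Ann_{\widehat{\mathbf{U}}_\kappa}(v)$ is genuinely a left ideal, which is immediate.

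The implication $(c) \Rightarrow (a)$ is the only one with any content. Given $v \in M$, the hypothesis says $\Ann_{\widehat{\mathbf{U}}_\kappa}(v)$ is an open left ideal, hence contains some basic open ideal $\hat I_r = \widehat{\mathbf{U}}_\kappa.\hat\g_{\geq r}$. Then $\hat\g_{\geq r} . v \subseteq \hat I_r . v = 0$, which is precisely the statement that $v \in M^{\hat I_r}$ (note $M^{\hat I_r} = M^{\hat\g_{\geq r}}$ since $\hat I_r$ is generated as a left ideal by $\hat\g_{\geq r}$). As $v$ was arbitrary, $M = \bigcup_{r \geq 0} M^{\hat I_r}$, so $M$ is smooth.

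The main (very minor) obstacle is purely bookkeeping: making sure that the topology on $\widehat{\mathbf{U}}_\kappa$ is used consistently — in particular that a subset containing a basic open neighbourhood of some point is open, that left ideals containing some $\hat I_r$ are exactly the open left ideals, and that $M^{\hat I_r}$ coincides with $\{v \in M \mid \hat\g_{\geq r}.v = 0\}$ because $\hat I_r$ is the left ideal generated by $\hat\g_{\geq r}$. I would also remark at the outset that the same three conditions, with $\widehat{\mathbf{U}}_\kappa$ replaced by $\mathbf{U}_\kappa(\hat\g)$ or $\mathbf{U}_\kappa(\tilde\g)$ and the corresponding ideals $I_r$, are equivalent by the identical argument, which is consistent with the earlier observation that all three notions of smooth module give the same category $\mathscr{C}_\kappa$. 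No deep input is needed; the lemma is, as stated, standard, and the proof is a direct translation between the algebraic condition $M = \bigcup_r M^{\hat I_r}$ and the topological reformulations.
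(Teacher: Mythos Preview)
Your proof is correct and follows the standard argument. The paper itself omits the proof of this lemma entirely, remarking only that it is standard; your cyclic chain $(a)\Rightarrow(b)\Rightarrow(c)\Rightarrow(a)$ is precisely the expected route, and nothing is missing.
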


\section{Rational Cherednik algebras}

In this section we recall the definition and the main properties of rational Cherednik algebras of type A. Rational Cherednik algebras were introduced by Etingof and Ginzburg in \cite{EG}. In type A they can also be regarded as degenerations of the double affine Hecke algebras defined by Cherednik in \cite{Che}. 

\subsection{Rational Cherednik algebras.} \label{subsec:RCAtypeA}

\label{Subsec: conf sp 1}

Recall that $\h$ denotes the permutation representation of $\mathfrak{S}_m$. 
Let $\h_{\mathsf{reg}} \subset \h$ be the subvariety on which $\mathfrak{S}_m$ acts freely. 
We fix a basis $y_1, \hdots, y_m$ of $\h$ with the property that $w.y_i = y_{w(i)}$ for any $w \in \mathfrak{S}_m$ and $1 \leq i \leq m$. Let $x_1, \hdots, x_m$ be the dual basis of $\h^*$ so that $\C[\h] = \C[x_1,\hdots,x_m]$ and $\C[\h^*] = \C[y_1,\hdots,y_m]$. Define
\[ \C[\h]^{\rtimes}:=\C[\h] \rtimes \C \mathfrak{S}_m, \quad \C[\h^*]^{\rtimes}:= \C \mathfrak{S}_m \ltimes \C[\h^*].\] 
Set $\delta := \prod_{1 \leq i < j \leq m} (x_i - x_j)$ and $\delta_z = \prod_{j=1}^m(z-x_j)$. Define
\[ \mathcal{R}:=\C[\h_{\mathsf{reg}}] = \C[x_1,\hdots,x_m][\delta^{-1}], \quad \mathcal{R}^\rtimes := \mathcal{R} \rtimes \C \mathfrak{S}_m, \quad \mathcal{R}_z := \mathcal{R}[z][\delta_z^{-1}]\] 

\begin{defi}[{\cite[\S 4]{EG}}]
The \emph{rational Cherednik algebra} $\mathcal{H}_{t,c}$  associated to the complex reflection group $\mathfrak{S}_m$ and parameters $t,c \in \C$ is the quotient of the tensor algebra $T(\h \oplus \h^*) \rtimes \C \mathfrak{S}_m$ by the relations: 
\begin{itemize}
\item $[x_i,x_j]= [y_i,y_j] = 0$ \ $(1 \leq i,j \leq m)$,
\item $[x_i,y_j] = cs_{i,j}$ \ $(1 \leq i \neq j \leq m)$, 
\item $[x_i,y_i] = t - c\sum_{j \neq i} s_{i,j}$ \ $(1 \leq i \leq m)$.
\end{itemize}
Let $1_{\mathcal{H}}$ denote the unit in $\mathcal{H}_{t,c}$.
\end{defi}
It follows directly from the relations that if $\xi \in \C^*$ then $\mathcal{H}_{t,c}\cong\mathcal{H}_{\xi t,\xi c}$. From now on we will assume that $c=1$ and abbreviate $\mathcal{H}_t:=\mathcal{H}_{t,1}$.  
Setting $\deg x_i = \deg y_i = 1$ and $\deg s_i = 0$ defines a filtration on $\mathcal{H}_{t}$. Let $\gr\mathcal{H}_{t}$ be the associated graded algebra. 

\begin{thm}[{\cite[Theorem 1.3]{EG}}] 
The tautological embedding $(\h \oplus \h^*) \hookrightarrow \gr\mathcal{H}_{t}$ extends to a graded algebra isomorphism 
\begin{equation} \label{RCA PBW} \C[\h \oplus \h^*] \rtimes \C\mathfrak{S}_m \xrightarrow{\sim} \gr\mathcal{H}_{t} \end{equation} 
called the PBW isomorphism. 
\end{thm}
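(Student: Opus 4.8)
The plan is to prove the two halves separately, following Etingof--Ginzburg: first that there is a natural surjective graded map $\C[\h\oplus\h^*]\rtimes\C\mathfrak{S}_m \twoheadrightarrow \gr\mathcal{H}_t$, and then that it is injective by exhibiting the Dunkl representation. For the surjection, read each defining relation modulo the filtration: the correction terms $-cs_{i,j}$ and $-t+c\sum_{j\neq i}s_{i,j}$ have filtration degree $0<2$, so in $\gr\mathcal{H}_t$ the classes of $x_1,\dots,x_m,y_1,\dots,y_m$ pairwise commute and $\mathfrak{S}_m$ acts on them by the permutation action. Hence $x_i\mapsto\bar x_i$, $y_i\mapsto\bar y_i$, $w\mapsto\bar w$ extends to a graded algebra homomorphism $\phi\colon \C[\h\oplus\h^*]\rtimes\C\mathfrak{S}_m\to\gr\mathcal{H}_t$. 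The same relations let one rewrite any product of generators as a $\C$-linear combination of \emph{ordered} monomials $x^\alpha\,w\,y^\beta$ ($\alpha,\beta\in\Z_{\geq0}^m$, $w\in\mathfrak{S}_m$): push all $x$'s to the left and all $y$'s to the right, each straightening move being either degree-preserving or strictly degree-lowering. So these monomials span $\mathcal{H}_t$, the map $\phi$ is surjective, and $\phi$ is an isomorphism if and only if the $x^\alpha\,w\,y^\beta$ are $\C$-linearly independent in $\mathcal{H}_t$.

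Next I would reduce to two parameter values. Since $\mathcal{H}_{t,c}\cong\mathcal{H}_{\xi t,\xi c}$ for $\xi\in\C^\times$, and $\mathcal{H}_{0,0}=\mathcal{D}(\h)\rtimes\C\mathfrak{S}_m$ is a smash product of a Weyl algebra (classical PBW), it suffices to prove the linear independence of the ordered monomials for $(t,c)=(1,c)$ with $c\in\C$ arbitrary and for $(t,c)=(0,1)$. For these I would use the \emph{Dunkl operators}: let $x_i$ act on $\mathcal{R}=\C[\h_{\mathsf{reg}}]$ by multiplication, let $\mathfrak{S}_m$ act by its natural action, and set
\[ T_{y_i} := t\,\partial_{y_i} + c\sum_{j\neq i}\frac{1}{x_i-x_j}\bigl(1-s_{i,j}\bigr),\qquad 1\leq i\leq m. \]
A direct computation gives $[x_i,T_{y_j}]=-cs_{i,j}$ for $i\neq j$, $[x_i,T_{y_i}]=-t+c\sum_{j\neq i}s_{i,j}$, $[x_i,x_j]=0$, and $wT_{y_i}w^{-1}=T_{y_{w(i)}}$; the only genuinely non-formal point is Dunkl's theorem that $[T_{y_i},T_{y_j}]=0$. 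Granting this, there is an algebra homomorphism $\rho\colon\mathcal{H}_{t,c}\to\mathcal{D}(\mathcal{R})\rtimes\C\mathfrak{S}_m$ sending $x_i\mapsto x_i$, $y_i\mapsto T_{y_i}$, $w\mapsto w$, where $\mathcal{D}(\mathcal{R})$ is the ring of differential operators on $\mathcal{R}$.

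Now filter $\mathcal{D}(\mathcal{R})\rtimes\C\mathfrak{S}_m$ by order of differential operator; its associated graded is $\mathcal{R}[\partial_{y_1},\dots,\partial_{y_m}]\rtimes\C\mathfrak{S}_m$, a free left $\mathcal{R}\rtimes\C\mathfrak{S}_m$-module on the commuting monomials $\partial_y^\beta$. When $t\neq0$ the principal symbol of $T_{y_i}$ is $t\,\partial_{y_i}$, so $\rho(x^\alpha\,w\,y^\beta)$ has symbol $t^{|\beta|}x^\alpha\,w\,\partial_y^\beta$; these are visibly $\C$-linearly independent, which forces independence of the $x^\alpha\,w\,y^\beta$ in $\mathcal{H}_{1,c}$. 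For $(t,c)=(0,1)$ each $T_{y_i}$ preserves $\C[\h]$ and strictly lowers the degree of every nonconstant homogeneous polynomial (since $(1-s_{i,j})p$ is divisible by $x_i-x_j$); a leading-term argument for the action on $\C[\h]$ — or, alternatively, upper semicontinuity of Hilbert series for the flat degeneration of $\mathcal{H}_{1,c}$ to $\mathcal{H}_{0,1}$ combined with the surjection $\phi$ — again yields the independence.

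Thus in every case $\{x^\alpha\,w\,y^\beta : \alpha,\beta\in\Z_{\geq0}^m,\ w\in\mathfrak{S}_m\}$ is a $\C$-basis of $\mathcal{H}_t$; passing to associated graded algebras shows $\phi$ is injective, hence a graded algebra isomorphism, which is exactly \eqref{RCA PBW}. The main obstacle in this plan is the input asserting flatness of the family — concretely, verifying Dunkl's commutativity identity $[T_{y_i},T_{y_j}]=0$, which is the one step that is not pure bookkeeping — together with the minor extra care required to handle the degenerate value $t=0$ where the symbol argument collapses.
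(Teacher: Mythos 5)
Your route --- surjectivity of the comparison map from the defining relations, then linear independence of the ordered monomials $x^\alpha\, w\, y^\beta$ via a Dunkl representation --- is a legitimate and well-known strategy for rational Cherednik algebras, though it is not the strategy of the cited source \cite{EG}, whose Theorem 1.3 is proved for all symplectic reflection algebras by verifying a Koszul-deformation criterion, with Dunkl operators entering only afterwards. That difference is fine. What is not fine is your treatment of $t=0$. The Dunkl representation on $\C[\h]$ (or even on $\mathcal{R}=\C[\h_{\mathsf{reg}}]$) fails to be faithful when $t=0$, so no leading-term argument for that action can yield independence of the ordered monomials. Already for $m=2$ one has $T_{y_1}+T_{y_2}=0$ as operators on $\mathcal{R}$ when $t=0$, while $y_1+y_2$ is a nonzero central element of $\mathcal{H}_{0,c}$; in general the central element $y_1+\cdots+y_m$ lies in the kernel. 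Your fallback via ``upper semicontinuity of Hilbert series'' is circular as stated: surjectivity of $\phi$ gives only an upper bound on $\dim\gr_k\mathcal{H}_{t,c}$, and the lower bound you would need from semicontinuity presupposes that $\gr$ commutes with specialization at $(t,c)=(0,1)$, which is precisely the flatness you are trying to establish. (A minor separate point: the Dunkl operator should carry $(s_{i,j}-1)$ rather than $(1-s_{i,j})$ to match the relations $[x_i,y_j]=-c\,s_{i,j}$ used here.)

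The repair is small and is in fact built into this paper's setup of the Dunkl embedding \eqref{Dunkl embedding}: replace the target $\mathcal{D}(\mathcal{R})\rtimes\C\mathfrak{S}_m$ by the \emph{deformed} Weyl algebra $\mathcal{D}^{\rtimes}_{\kappa,\mathsf{reg}}$, generated over $\mathcal{R}\rtimes\C\mathfrak{S}_m$ by formal elements $q_1,\dots,q_m$ with $[q_i,x_j]=t\,\delta_{ij}$. At $t=0$ this is the honest commutative algebra $\mathcal{R}[q_1,\dots,q_m]\rtimes\C\mathfrak{S}_m$, and the $q_i$ remain genuine degree-one generators instead of derivations that degenerate. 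Sending $y_i\mapsto D_i:=q_i+\sum_{j\neq i}(x_i-x_j)^{-1}(s_{i,j}-1)$ and grading $\mathcal{D}^{\rtimes}_{\kappa,\mathsf{reg}}$ by $\deg q_i=1$, $\deg\mathcal{R}=\deg\mathfrak{S}_m=0$, one finds that the image of $x^\alpha\, w\, y^\beta$ has top $q$-term $x^\alpha\, w\, q^\beta$, and these are manifestly $\C$-independent; this gives linear independence of the ordered monomials in $\mathcal{H}_{t,c}$ uniformly in $t$, including $t=0$, granting only Dunkl's commutativity $[D_i,D_j]=0$.
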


The following result shows that there is an analogy between the centres of $\mathcal{H}_{t}$ and~$\widehat{\mathbf{U}}_\kappa$.  

\begin{pro}[{\cite[Proposition 7.2]{GorBro}}]
$Z(\mathcal{H}_{t}) = \C$ if and only if $t \neq 0$.
\end{pro}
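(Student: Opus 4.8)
The plan is to prove both directions of the equivalence $Z(\mathcal{H}_t) = \C \iff t \neq 0$, relying on the PBW isomorphism \eqref{RCA PBW} as the main tool on the filtered/graded side.

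First I would handle the direction $t = 0 \implies Z(\mathcal{H}_0) \neq \C$. Here the goal is to exhibit explicit central elements that are not scalars. The natural candidates are the symmetric polynomials in the $x_i$ and the symmetric polynomials in the $y_i$: since $[x_i,y_j] = -s_{i,j}$ and $[x_i,y_i] = \sum_{j\neq i} s_{i,j}$, one computes directly that $e_k(x_1,\hdots,x_m)$ commutes with every $y_j$ (the commutator $[\sum_i x_i^k, y_j]$ telescopes to zero using $\sum_j s_{i,j}$-type cancellations) and of course with every $x_i$ and every $w \in \mathfrak{S}_m$; similarly $e_k(y_1,\hdots,y_m)$ is central. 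Thus $\C[x_1,\hdots,x_m]^{\mathfrak{S}_m} \otimes \C[y_1,\hdots,y_m]^{\mathfrak{S}_m} \subseteq Z(\mathcal{H}_0)$, which is certainly larger than $\C$. (One may also simply invoke \cite{EG}, where this is worked out, but I would include the short direct verification since it is the substantive content.)

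Next, the direction $t \neq 0 \implies Z(\mathcal{H}_t) = \C$. The strategy is to pass to the associated graded algebra. Suppose $z \in Z(\mathcal{H}_t)$ is nonzero; let $d = \deg z$ and let $\bar z \in \gr_d \mathcal{H}_t$ be its principal symbol. Since $\gr\mathcal{H}_t \cong \C[\h\oplus\h^*]\rtimes\C\mathfrak{S}_m$ is a filtered deformation and the commutator drops degree — crucially, $[x_i,y_j]$ lands in degree $0$, so for $a$ of degree $d$ and $b$ of degree $e$ one has $[a,b]$ of degree $\leq d+e-2$... — wait, here one must be careful: because $[x_i,y_i] = -t + c\sum s_{i,j}$ has a nonzero degree-$0$ part $-t\cdot 1_{\mathcal H}$, the Cherednik algebra is a filtered deformation in which the Poisson bracket on $\gr\mathcal H_t$ is the \emph{symplectic} bracket on $\C[\h\oplus\h^*]$ extended $\mathfrak{S}_m$-equivariantly (the $-t$ term is precisely what makes the leading-order commutator the standard symplectic bracket, rescaled by $t$). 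So centrality of $z$ forces $\bar z$ to be Poisson-central in $\C[\h\oplus\h^*]\rtimes\C\mathfrak{S}_m$ with this bracket. The Poisson centre of $\C[\h\oplus\h^*]\rtimes\C\mathfrak{S}_m$ is $\C$: an element Poisson-commuting with all of $\C[\h\oplus\h^*]$ must be constant (the symplectic form is nondegenerate on $\h\oplus\h^*$), and Poisson-commuting with group elements then forces it to be $\mathfrak{S}_m$-invariant — but a constant already is. Hence $\bar z \in \C$, so $d = 0$ and $z \in \C\mathfrak{S}_m$; a final elementary check (centrality against the $x_i$ forces $z$ to lie in $\C\cdot 1_{\mathcal H}$) completes the argument.

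The main obstacle, and the step to get right, is the associated-graded/Poisson argument for $t \neq 0$: one must correctly identify that the top-degree commutator of filtered-degree-$d$ and filtered-degree-$e$ elements has degree exactly $d+e-2$ (not $d+e-1$), which hinges on the relation $[x_i,y_j]$ being of degree $0$ rather than degree $1$, and that the resulting bracket on $\gr\mathcal H_t$ is — up to the scalar $t$, which is where $t\neq 0$ enters — the honest symplectic Poisson bracket twisted by the $\mathfrak{S}_m$-action. Once that is set up cleanly, computing the Poisson centre of $\C[\h\oplus\h^*]\rtimes\C\mathfrak{S}_m$ is routine. Alternatively, if one prefers to avoid Poisson geometry entirely, the same conclusion follows from the cited \cite[Proposition 7.2]{GorBro}, but I would at least sketch the Poisson argument as it is the conceptual reason behind the statement.
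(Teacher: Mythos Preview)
The paper does not prove this proposition; it simply cites \cite[Proposition 7.2]{GorBro} and moves on. So your proposal already does more than the paper.

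Your $t=0$ direction is fine (modulo a notational slip: you write $e_k$ but compute with power sums $\sum_i x_i^k$; the telescoping you describe is correct for the latter, and power sums suffice to exhibit nontrivial central elements).

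For $t\neq 0$, the Poisson strategy is the right one, but the key step is stated too loosely to go through as written. The associated graded $\gr\mathcal{H}_t\cong\C[\h\oplus\h^*]\rtimes\C\mathfrak{S}_m$ is \emph{not} commutative, so there is no Poisson bracket on it in the usual sense, and ``$\bar z$ is Poisson-central in $\C[\h\oplus\h^*]\rtimes\C\mathfrak{S}_m$'' has no a priori meaning. What you actually need is that $\bar z\in\C[\h\oplus\h^*]^{\mathfrak{S}_m}$ (which you correctly deduce, since this is the centre of the skew group ring) satisfies $t\,\partial\bar z/\partial y_i=0$ and $t\,\partial\bar z/\partial x_i=0$ for each $i$. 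Extracting this from $[z,x_i]=0$ takes two filtration layers, not one: the top layer only tells you that the degree-$(d-1)$ part of $z$ is again a pure polynomial (no group-algebra terms), and it is only after recording this that the $e$-component of $[z,x_i]$ in $\gr_{d-1}$ isolates $t\,\partial\bar z/\partial y_i$ cleanly, free of interference from lower-order group terms in $z$.

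A cleaner route that avoids this bookkeeping: pass to the spherical subalgebra $e\mathcal{H}_t e$, whose associated graded $\C[\h\oplus\h^*]^{\mathfrak{S}_m}$ \emph{is} commutative and therefore carries an honest Hayashi--Poisson bracket (the restriction of the symplectic bracket, scaled by $t$). Then $ze=eze\in Z(e\mathcal{H}_t e)$, its symbol is Poisson-central, and the Poisson centre of $\C[\h\oplus\h^*]^{\mathfrak{S}_m}$ is $\C$ for $t\neq 0$ (the bracket is symplectic on the dense smooth locus of $(\h\oplus\h^*)/\mathfrak{S}_m$). Since $\bar z\, e\neq 0$ whenever $\bar z\neq 0$, this forces $\deg z=0$, and your final check finishes the argument.
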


The next theorem summarizes the main properties of the centre of $\mathcal{H}_0$, which we abbreviate as
$$\mathcal{Z} :=Z(\mathcal{H}_{0}).$$ 

\begin{thm} \label{pro: centre facts}
The following hold. 
\begin{enumerate}[label=\alph*), font=\textnormal,noitemsep,topsep=3pt,leftmargin=1cm]
\itemsep0em
\item We have $\C[\h]^{\mathfrak{S}_m} \otimes \C[\h^*]^{\mathfrak{S}_m} \subset \mathcal{Z}$. The algebra $\mathcal{Z}$ is a free $\C[\h]^{\mathfrak{S}_m} \otimes \C[\h^*]^{\mathfrak{S}_m}$-module of rank $m!$. 
\item The PBW isomorphism restricts to an isomorphism $\C[\h \oplus \h^*]^{\mathfrak{S}_m} \xrightarrow{\sim} \gr\mathcal{Z}$. 
\item The affine variety $\Spec \mathcal{Z}$ is isomorphic to the Calogero-Moser space 
\[ \{ (X,Y,u,v) \in \Mat_{m \times m}(\C)^{\oplus 2} \times \C^m \times (\C^m)^* \mid [X,Y] + I_m = v \cdot u\} \sslash GL_m(\C).\]
\item We have $\mathcal{Z} \cong \mathbf{e}\mathcal{H}_0\mathbf{e}$. Moreover, the functor
\begin{equation} \label{functor e} \mathcal{H}_0\Lmod{} \to \mathbf{e}\mathcal{H}_0\mathbf{e}\Lmod{}, \quad M \mapsto \mathbf{e} \cdot M, \end{equation} 
where $\mathbf{e} = \frac{1}{m!} \sum_{w \in \mathfrak{S}_m} w$ is the trivial idempotent, is an equivalence of categories. 
\item Every simple $\mathcal{H}_0$-module has dimension $m!$ and is isomorphic to $\C \mathfrak{S}_m$ as an $\mathfrak{S}_m$-module. Moreover, there is a bijection
\[ \{\mbox{isoclasses of simple } \mathcal{H}_0 \mbox{-modules}\} \longleftrightarrow \MaxSpec \mathcal{Z}. \] 
\end{enumerate}
\end{thm}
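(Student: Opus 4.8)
The plan is to deduce everything from Etingof--Ginzburg's foundational analysis \cite{EG} of $\mathcal{H}_0$, supplemented by a few elementary facts about polynomial invariants of $\mathfrak{S}_m$. I would prove (a), (b), and the isomorphism $\mathcal{Z}\cong\mathbf{e}\mathcal{H}_0\mathbf{e}$ of (d) together by a PBW/symbol argument, and then invoke the deeper content of \cite{EG} for (c), the equivalence in (d), and (e).

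For the subalgebra in (a): centrality of $\C[\h]^{\mathfrak{S}_m}$ is immediate from the defining relations, while centrality of $\C[\h^*]^{\mathfrak{S}_m}$ genuinely needs $t=0$. The cleanest route is the rational Dunkl embedding $\mathcal{H}_0\hookrightarrow\C[\h_{\mathsf{reg}}\oplus\h^*]\rtimes\C\mathfrak{S}_m$ ($x_i\mapsto x_i$, $w\mapsto w$, $y_i\mapsto$ Dunkl operator): at $t=0$ the Dunkl operators commute, so a symmetric polynomial in them lands in $\C[\h^*]^{\mathfrak{S}_m}$, which is visibly central in the skew group algebra, hence in $\mathcal{H}_0$. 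For (b), one always has $\gr\mathcal{Z}\subseteq Z(\gr\mathcal{H}_0)$ (by taking symbols of $[z,-]=0$), and $Z(\gr\mathcal{H}_0)=Z(\C[\h\oplus\h^*]\rtimes\C\mathfrak{S}_m)=\C[\h\oplus\h^*]^{\mathfrak{S}_m}$ by \eqref{RCA PBW} and the standard description of the centre of a skew group algebra over a generically free action. The reverse inclusion --- the real content of (b) --- I would extract from the spherical subalgebra: $\mathbf{e}\mathcal{H}_0\mathbf{e}$ is commutative at $t=0$ (a short check on the relations, or via Dunkl operators), and $\gr(\mathbf{e}\mathcal{H}_0\mathbf{e})\cong\C[\h\oplus\h^*]^{\mathfrak{S}_m}$ by PBW, so $z\mapsto z\mathbf{e}$ is an injective algebra map $\mathcal{Z}\hookrightarrow\mathbf{e}\mathcal{H}_0\mathbf{e}$ whose associated graded runs the opposite way; hence both are isomorphisms, giving $\gr\mathcal{Z}=\C[\h\oplus\h^*]^{\mathfrak{S}_m}$ and $\mathcal{Z}\cong\mathbf{e}\mathcal{H}_0\mathbf{e}$ at once.

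The rank-$m!$ assertion in (a) then follows by commutative algebra: $\C[\h\oplus\h^*]^{\mathfrak{S}_m}$ is Cohen--Macaulay (Hochster--Eagon) and module-finite over the polynomial ring $\C[\h]^{\mathfrak{S}_m}\otimes\C[\h^*]^{\mathfrak{S}_m}$ of equal Krull dimension, hence free by Auslander--Buchsbaum, of rank equal to a generic fibre count over $\h/\mathfrak{S}_m\times\h^*/\mathfrak{S}_m$, namely $m!$; graded Nakayama lifts freeness from $\gr\mathcal{Z}$ to $\mathcal{Z}$. For (c), the equivalence in (d), and (e), I would invoke \cite{EG}: the realization of $\mathbf{e}\mathcal{H}_0\mathbf{e}$ as the coordinate ring of the Calogero--Moser space via Hamiltonian reduction, the smoothness and connectedness of $\Spec\mathcal{Z}$, and the fact that $\mathcal{H}_0$ is a finite, Cohen--Macaulay $\mathcal{Z}$-module, Azumaya of rank $(m!)^2$ over a dense open subset. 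Azumaya-ness over a smooth connected base plus $\mathcal{H}_0\mathbf{e}\mathcal{H}_0=\mathcal{H}_0$ makes $M\mapsto\mathbf{e}M$ an equivalence with quasi-inverse $N\mapsto\mathcal{H}_0\mathbf{e}\otimes_{\mathbf{e}\mathcal{H}_0\mathbf{e}}N$, finishing (d); and any simple $\mathcal{H}_0$-module is finite-dimensional with a central character $\chi\in\MaxSpec\mathcal{Z}$, hence a simple module over the finite-dimensional algebra $\mathcal{H}_0\otimes_{\mathcal{Z}}(\mathcal{Z}/\mathfrak{m}_\chi)$, which the Cohen--Macaulay property forces to be $(m!)^2$-dimensional with a single simple, of dimension $m!$ and isomorphic to $\C\mathfrak{S}_m$ over $\mathfrak{S}_m$; $\chi\mapsto$ that simple is the bijection of (e).

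The main obstacle is the input from \cite{EG} behind (c): identifying $\Spec\mathcal{Z}$ with the Calogero--Moser space and proving the generic Azumaya property are substantive and rest on the explicit Hamiltonian-reduction construction, which I would not reprove. Granting those, (d) and (e) are standard structure theory of module-finite orders over smooth centres, and (a), (b) are as sketched.
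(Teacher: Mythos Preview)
Your proposal is correct and aligns with the paper's approach: the paper's own proof is simply a list of pinpoint citations to \cite{EG} (Proposition 4.15 for (a), Theorem 3.3 for (b), Theorem 11.16 for (c), Proposition 3.8 and its following remark for (d), Theorem 1.7 for (e)), with no further argument. You go further by actually sketching the PBW/symbol argument for (a), (b), and the Satake isomorphism part of (d), and the Cohen--Macaulay/Auslander--Buchsbaum argument for the rank statement---these are essentially the proofs in \cite{EG} unpacked---before citing \cite{EG} for the deeper parts, so your write-up is strictly more informative than the paper's but follows the same route.
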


\begin{proof}
Part a) is \cite[Proposition 4.15]{EG}, part b) is \cite[Theorem 3.3]{EG} and part c) is \cite[Theorem 11.16]{EG}. For part d) see the proof of  \cite[Proposition 3.8]{EG} and the remark following~it. Part e) is \cite[Theorem 1.7]{EG}. 
\end{proof} 

\subsection{Generalized Verma modules.}

Let us recall the definition of generalized Verma modules for $\mathcal{H}_t$. 
Let $l \geq 1$, $\nu \in \mathcal{C}_l(m)$, $\lambda \in \mathcal{P}_m(\nu)$ and $a \in \h^*$ with $\mathfrak{S}_m(a) = \mathfrak{S}_\nu$. Extend the $\C \mathfrak{S}_\nu$-module $\mathsf{Sp}(\lambda)$ to a $\C \mathfrak{S}_\nu \ltimes \C[\mathbf{\h^*}]$-module $\mathsf{Sp}(a,\lambda)$ by letting each $y_i$ act on $\mathsf{Sp}(\lambda)$ by the scalar $a_i:=a(y_i)$. 

\begin{defi}[{\cite[\S 1.3]{Bel}}] \label{def: gen Vermas} 
The \emph{generalized Verma module} of type $(a,\lambda)$ is
\[ \Delta_{t}(a,\lambda) := \mathcal{H}_{t} \otimes_{\C\mathfrak{S}_\nu \ltimes \C[\mathbf{\h^*}]} \mathsf{Sp}(a,\lambda).\]  
We abbreviate $\Delta_{t}(\lambda):=\Delta_{t}(0,\lambda)$. 
\end{defi}

\begin{rem}
When $t \neq 0$, 
the modules $\Delta_{t}(\lambda)$ play the role of standard modules in the category $\O(\mathcal{H}_{t})$ defined in \cite{GGOR}. Using the results of \cite{BT}, Bonnaf\'{e} and Rouquier \cite{BR} also defined a highest weight category for $\mathcal{H}_{0}$ with graded shifts of $\Delta_{0}(\lambda)$ as the standard modules. 
\end{rem} 

\begin{thm}[{\cite[Theorem 2]{Bel}}] \label{thm 2 bellamy} 
The following hold.
\begin{enumerate}[label=\alph*), font=\textnormal,noitemsep,topsep=3pt,leftmargin=1cm]
\itemsep0em
\item The canonical map $\mathcal{Z} \to \End_{\mathcal{H}_{0}}(\Delta_{0}(a,\lambda))$ is surjective. 
\item The ring $\End_{\mathcal{H}_{0}}(\Delta_{0}(a,\lambda))$ is isomorphic to a polynomial ring in $m$ variables.
\item The $\End_{\mathcal{H}_{0}}(\Delta_{0}(a,\lambda))$-module $\mathbf{e}\Delta(a,\lambda)$ is free   of rank one. 
\end{enumerate} 
\end{thm}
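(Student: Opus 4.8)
This is a theorem of Bellamy; I would prove the three parts simultaneously by transporting everything to the spherical subalgebra. By Theorem~\ref{pro: centre facts}(d) the functor $M \mapsto \mathbf{e}M$ is an equivalence $\mathcal{H}_0\Lmod{} \xrightarrow{\sim} \mathcal{Z}\Lmod{}$ (via $\mathcal{Z} \cong \mathbf{e}\mathcal{H}_0\mathbf{e}$), and under it the canonical map $\mathcal{Z} \to \End_{\mathcal{H}_0}(\Delta_0(a,\lambda))$ becomes the structure map $\mathcal{Z} \to \End_{\mathcal{Z}}(\mathbf{e}\Delta_0(a,\lambda))$. For a commutative ring $R$ and a cyclic module $Rg$ one has $\End_R(Rg) = R/\Ann_R(g)$, acting freely of rank one. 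Hence all three statements follow once I establish $(\ast)$ that $\mathbf{e}\Delta_0(a,\lambda)$ is a cyclic $\mathcal{Z}$-module, and $(\ast\ast)$ that $B := \mathcal{Z}/\Ann_{\mathcal{Z}}(\mathbf{e}\Delta_0(a,\lambda))$ is a polynomial ring in $m$ variables.

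To get a concrete hold on $\mathbf{e}\Delta_0(a,\lambda)$ I would use the triangular decomposition $\mathcal{H}_0 = \C[\h]\otimes\C\mathfrak{S}_m\otimes\C[\h^*]$ furnished by the PBW isomorphism~\eqref{RCA PBW}. Freeness of $\mathcal{H}_0$ over $\C\mathfrak{S}_\nu\ltimes\C[\h^*]$ then yields an isomorphism $\Delta_0(a,\lambda) \cong \C[\h]\otimes\mathsf{Sp}_\nu(\lambda)$ of $\C[\h]\rtimes\C\mathfrak{S}_m$-modules, with $\mathfrak{S}_m$ acting diagonally and $\mathsf{Sp}_\nu(\lambda) = \C\mathfrak{S}_m\otimes_{\C\mathfrak{S}_\nu}\mathsf{Sp}(\lambda)$. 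Applying the symmetrizing idempotent gives $\mathbf{e}\Delta_0(a,\lambda) = (\C[\h]\otimes\mathsf{Sp}_\nu(\lambda))^{\mathfrak{S}_m}$, which by Chevalley's theorem (freeness of $\C[\h]$ over $\C[\h]^{\mathfrak{S}_m}$ with fibre the regular representation) is a free $\C[\h]^{\mathfrak{S}_m}$-module of rank $\dim\mathsf{Sp}_\nu(\lambda)$; in particular $\Spec B \to \h/\mathfrak{S}_m$ is finite flat of this degree. Moreover, since each $y_i$ acts on the generating subspace $\mathsf{Sp}(a,\lambda)$ by the scalar $a_i$ and $\C[\h^*]^{\mathfrak{S}_m}\subset\mathcal{Z}$ is central, this subalgebra acts on $\Delta_0(a,\lambda)$ through evaluation at the $\mathfrak{S}_m$-orbit of $a$; so, setting $\Omega := \Spec B$, the variety $\Omega$ is a closed subscheme of the fibre over $[a]$ of the projection $\Spec\mathcal{Z}\to\h^*/\mathfrak{S}_m$ coming from Theorem~\ref{pro: centre facts}(a),(c).

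It remains to analyse $\Omega$ inside the Calogero--Moser space, and this is the genuinely hard part. I would argue in three steps. (i) Invert $\delta^2 \in \C[\h]^{\mathfrak{S}_m}$: over $\h_{\mathsf{reg}}/\mathfrak{S}_m$ the algebra $\mathcal{H}_0$ is split Azumaya over $\mathcal{Z}[\delta^{-2}]$, the spherical equivalence simplifies, and a direct Dunkl-operator computation identifies $\mathbf{e}\Delta_0(a,\lambda)[\delta^{-2}]$ with a locally free rank-one module over the corresponding localization of $B$. (ii) Using the Etingof--Ginzburg matrix presentation of $\Spec\mathcal{Z}$ in Theorem~\ref{pro: centre facts}(c), identify $\Omega$ with the fixed locus of a suitable subtorus of the torus acting on the Calogero--Moser variety; as the fixed locus of a torus inside a smooth variety it is smooth, and the action of the complementary subtorus contracts it to a single point, so by the Bialynicki-Birula description of smooth affine varieties with an attracting $\C^\times$-action one gets $\Omega \cong \mathbb{A}^m$, which is $(\ast\ast)$. (iii) Conclude $(\ast)$: $\mathbf{e}\Delta_0(a,\lambda)$ is a finitely generated module over the now regular (in particular normal) ring $B$, torsion-free and of generic rank one by (i), and free over the polynomial subring $\C[\h]^{\mathfrak{S}_m}$ by the previous paragraph; these force it to be free of rank one over $B$, hence cyclic over $\mathcal{Z}$.

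The main obstacle is step (ii): proving that $\Omega$ is smooth and isomorphic to affine space. This forces one to work with the fine geometry of the Calogero--Moser space — its Etingof--Ginzburg matrix description, the relevant torus actions, and the Bialynicki-Birula-type structure theory — rather than with formal properties of $\mathcal{H}_0$. Everything else (the spherical reduction, the PBW computation, the central-character bookkeeping, and the passage from ``generically rank one'' to ``free of rank one'' over a normal ring that is free over a polynomial subring) is comparatively routine.
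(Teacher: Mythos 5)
The paper does not prove this statement; it is quoted verbatim from Bellamy's article and used as a black box, so there is no internal proof to compare your argument against. I will therefore assess your proposal as a reconstruction of Bellamy's proof.

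Your reduction is correct and clean: via the spherical Morita equivalence of Theorem~\ref{pro: centre facts}(d), all three parts follow once one knows that $\mathbf{e}\Delta_0(a,\lambda)$ is cyclic over $\mathcal{Z}$ and that $B:=\mathcal{Z}/\Ann_{\mathcal{Z}}(\mathbf{e}\Delta_0(a,\lambda))$ is a polynomial ring in $m$ variables, since $\End_R(R/I)\cong R/I$ with $R/I$ free of rank one over itself. The PBW computation, $\Delta_0(a,\lambda)\cong\C[\h]\otimes\mathsf{Sp}_\nu(\lambda)$ and $\mathbf{e}\Delta_0(a,\lambda)\cong\C[\h]^{\mathfrak{S}_m}\otimes\mathsf{Sp}_\nu(\lambda)$ as $\C[\h]^{\mathfrak{S}_m}$-modules, is also correct, as is the observation that $\C[\h^*]^{\mathfrak{S}_m}$ acts through the central character $[a]$. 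Given step~(ii), your step~(iii) also works: $\mathbf{e}\Delta_0(a,\lambda)$ is Cohen--Macaulay over $B$ (being free over $\C[\h]^{\mathfrak{S}_m}$ with $B$ finite over that subring), of generic rank one by the Azumaya picture over $\h_{\mathsf{reg}}$, hence reflexive of rank one over a polynomial ring and therefore free.

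The genuine gap is step~(ii), and you flagged it yourself as the hard part. The Bialynicki--Birula contraction argument identifies $\Omega_\lambda=\Omega_{0,\lambda}$ with an affine space when $a=0$: the $\C^\times$-action $x_i\mapsto tx_i$, $y_i\mapsto t^{-1}y_i$ on $\Spec\mathcal{Z}$ has $\pi^{-1}(0)$ as its attracting set, with isolated fixed points indexed by $\mathcal{P}_m(m)$ and attracting cells $\Omega_\lambda\cong\mathbb{A}^m$. But for $a\neq 0$ this action does not preserve $\pi^{-1}(\mathbf{a})$, and there is no evident torus acting on the Etingof--Ginzburg matrix model of the Calogero--Moser space whose fixed locus is $\Omega_{\mathbf{a},\lambda}$; the component $\Omega_{\mathbf{a},\lambda}$ is not a symplectic leaf or an obvious invariant subvariety. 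Bellamy's actual argument for general $a$ does not proceed by exhibiting $\Omega_{\mathbf{a},\lambda}$ as a fixed locus inside $\Spec\mathcal{Z}$. Instead, it uses the Bezrukavnikov--Etingof completion isomorphism relating the completion of $\mathcal{H}_0(\mathfrak{S}_m)$ along $[a]$ to a matrix algebra over the completion of $\mathcal{H}_0(\mathfrak{S}_\nu)$ along $0$, under which $\Delta_0(a,\lambda)$ corresponds to a suitable induction of the ordinary Verma module $\Delta_0(\lambda)$ for the parabolic $\mathfrak{S}_\nu=\prod_i\mathfrak{S}_{\nu_i}$. This reduces the claim to the $a=0$ case for a product of symmetric groups, which is then handled by the $\C^\times$-contraction as above. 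So the missing idea in your sketch is precisely this local reduction to the parabolic subgroup; without it the torus argument does not get off the ground for $a\neq 0$.
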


Theorem \ref{thm 2 bellamy} allows us to construct simple $\mathcal{H}_0$-modules as quotients of generalized Verma modules. 

\begin{lem} \label{lem: quotients of vermas} 
Let $L$ be a simple $\mathcal{H}_0$-module. Then there exist $l \geq 1$, $\nu \in \mathcal{C}_l(m)$, $\lambda \in \mathcal{P}_m(\nu)$ and $a \in \h^*$ with $\mathfrak{S}_m(a) = \mathfrak{S}_\nu$ such that
$L \cong \Delta_0(a,\lambda)/I\cdot \Delta_0(a,\lambda)$ for some maximal ideal $I \lhd \End_{\mathcal{H}_0}(\Delta_0(a,\lambda))$. 
\end{lem}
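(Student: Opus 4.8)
The plan is to deduce the statement directly from the structural results about generalized Verma modules and the centre $\mathcal{Z}$ collected in Theorem~\ref{pro: centre facts} and Theorem~\ref{thm 2 bellamy}. First I would recall from Theorem~\ref{pro: centre facts}(e) that a simple $\mathcal{H}_0$-module $L$ corresponds to a maximal ideal $\mathfrak{m}$ of $\mathcal{Z}$, so $\mathcal{Z}$ acts on $L$ through the one-dimensional quotient $\mathcal{Z}/\mathfrak{m} \cong \C$. Pulling back along the inclusion $\C[\h]^{\mathfrak{S}_m} \otimes \C[\h^*]^{\mathfrak{S}_m} \subset \mathcal{Z}$ of Theorem~\ref{pro: centre facts}(a), the subalgebra $\C[\h^*]^{\mathfrak{S}_m}$ acts on $L$ by a character, i.e.\ the symmetric functions in the $y_i$ act by scalars. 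Since $\C[\h^*] = \C[y_1,\dots,y_m]$ is finite over $\C[\h^*]^{\mathfrak{S}_m}$, the operators $y_1,\dots,y_m$ act on the finite-dimensional space $L$ with a common (multi)set of generalized eigenvalues; choose a point $a \in \h^*$ whose coordinates $a_i = a(y_i)$ form this multiset, ordered so that $\mathfrak{S}_m(a) = \mathfrak{S}_\nu$ for the appropriate composition $\nu \in \mathcal{C}_l(m)$ recording the multiplicities.

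Next I would extract a lowest-weight-type vector for the $\C[\h^*]^{\rtimes}$-action. Because $L \cong \C\mathfrak{S}_m$ as an $\mathfrak{S}_m$-module (Theorem~\ref{pro: centre facts}(e)) and the $y_i$ commute, standard linear algebra produces a nonzero $\mathfrak{S}_\nu$-stable subspace of $L$ on which each $y_i$ acts by the scalar $a_i$ — concretely, take a generalized joint eigenspace for the $y_i$ and, using that $w.y_i = y_{w(i)}$, average over $\mathfrak{S}_\nu$ to get genuine eigenvectors spanning an $\mathfrak{S}_\nu$-submodule $U$. Decomposing $U$ into $\mathfrak{S}_\nu$-irreducibles, pick an irreducible constituent isomorphic to some $\mathsf{Sp}(\lambda)$ with $\lambda \in \mathcal{P}_m(\nu)$; this realises $\mathsf{Sp}(a,\lambda)$ as a $\C\mathfrak{S}_\nu \ltimes \C[\h^*]$-submodule of $\Res L$. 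By adjunction (Frobenius reciprocity for $\mathcal{H}_0 \otimes_{\C\mathfrak{S}_\nu \ltimes \C[\h^*]} (-)$) this gives a nonzero $\mathcal{H}_0$-module map $\Delta_0(a,\lambda) \to L$, which is surjective since $L$ is simple.

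Finally I would identify the kernel. The map $\Delta_0(a,\lambda) \twoheadrightarrow L$ is a morphism of $\mathcal{Z}$-modules, and $\mathcal{Z}$ acts on $L$ through $\mathcal{Z}/\mathfrak{m}$; by Theorem~\ref{thm 2 bellamy}(a) the action of $\mathcal{Z}$ on $\Delta_0(a,\lambda)$ factors through the surjection $\mathcal{Z} \twoheadrightarrow \End_{\mathcal{H}_0}(\Delta_0(a,\lambda))$, so the $\mathcal{Z}$-action on $L$ factors through a character of $\End_{\mathcal{H}_0}(\Delta_0(a,\lambda))$, i.e.\ through $\End_{\mathcal{H}_0}(\Delta_0(a,\lambda))/I$ for a maximal ideal $I$. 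Then $I \cdot \Delta_0(a,\lambda)$ lies in the kernel, so the surjection descends to $\Delta_0(a,\lambda)/I\cdot\Delta_0(a,\lambda) \twoheadrightarrow L$. To see this is an isomorphism, compare dimensions: $\mathbf{e}\Delta_0(a,\lambda)$ is free of rank one over $\End_{\mathcal{H}_0}(\Delta_0(a,\lambda))$ by Theorem~\ref{thm 2 bellamy}(c), hence $\mathbf{e}(\Delta_0(a,\lambda)/I\cdot\Delta_0(a,\lambda))$ is one-dimensional, and since every $\mathcal{H}_0$-module is $\C\mathfrak{S}_m$-isotypic of multiplicity equal to $\dim \mathbf{e}(-)$, we get $\dim \Delta_0(a,\lambda)/I\cdot\Delta_0(a,\lambda) = m! = \dim L$, forcing the surjection to be an isomorphism.

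The main obstacle I anticipate is the second step: producing, inside $L$, an $\mathfrak{S}_\nu$-subrepresentation of the precise isotype $\mathsf{Sp}(\lambda)$ on which the $y_i$ act by honest (not merely generalized) eigenvalues $a_i$, and checking the stabiliser condition $\mathfrak{S}_m(a) = \mathfrak{S}_\nu$ can indeed be arranged by a suitable ordering of the eigenvalues — this requires care in coordinating the $\mathfrak{S}_m$-action $w.y_i = y_{w(i)}$ with the choice of $a$ and with the parabolic $\mathfrak{S}_\nu$. Everything else is a formal consequence of the quoted theorems.
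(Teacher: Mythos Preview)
Your proof is correct and follows essentially the same route as the paper's: find a joint eigenvector for the $y_i$, produce a surjection $\Delta_0(a,\lambda)\twoheadrightarrow L$ by Frobenius reciprocity, and then identify the kernel using Theorem~\ref{thm 2 bellamy}(c) together with the equivalence $M\mapsto \mathbf{e}M$ of Theorem~\ref{pro: centre facts}(d). Two minor points. First, your detour through the centre to get the eigenvalues is unnecessary: the $y_i$ are commuting operators on a finite-dimensional space, so a simultaneous eigenvector exists directly (this is what the paper does). Second, your final dimension count is phrased imprecisely---the assertion ``every $\mathcal{H}_0$-module is $\C\mathfrak{S}_m$-isotypic of multiplicity $\dim\mathbf{e}(-)$'' is not literally what you need; the clean way (and the paper's way) is to observe that $\mathbf{e}(\Delta_0(a,\lambda)/I\Delta_0(a,\lambda))\to\mathbf{e}L$ is a surjection between one-dimensional spaces, hence an isomorphism, and then invoke the equivalence of Theorem~\ref{pro: centre facts}(d) to conclude $\Delta_0(a,\lambda)/I\Delta_0(a,\lambda)\cong L$.
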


\begin{proof}
The commuting operators $y_1, \hdots, y_m$ have a simultaneous eigenvector $v \in L$. Let $a \in \h^*$ be the corresponding eigenvalue. Without loss of generality, we may assume that $\mathfrak{S}_m(a) = \mathfrak{S}_\nu$ for some $\nu \in \mathcal{C}_l(m)$. The subspace $\mathfrak{S}_\nu \cdot v \subset L$ is $\C[\h^*]$-stable and decomposes as a sum of simple $\mathfrak{S}_\nu$-modules. Suppose that this sum contains a simple module isomorphic to $\mathsf{Sp}(\lambda)$. Then there is a surjective homomorphism $\Delta_0(a,\lambda) \twoheadrightarrow L$. Let $K$ denote its kernel. 

We abbreviate $E(a,\lambda) := \End_{\mathcal{H}_0}(\Delta_0(a,\lambda))$. Since, by part a) of Theorem \ref{thm 2 bellamy}, \linebreak $\mathcal{Z}$ surjects onto $E(a,\lambda)$, all endomorphisms in $E(a,\lambda)$ preserve $\mathbf{e}K$. Hence $\mathbf{e}K$ is an $E(a,\lambda)$-submodule of $\mathbf{e}\Delta_0(a,\lambda)$. But, by part c) of Theorem \ref{thm 2 bellamy}, $\mathbf{e}\Delta_0(a,\lambda)$ is a free $E(a,\lambda)$-module of rank one. Hence $\mathbf{e}K = I \cdot \mathbf{e}\Delta_0(a,\lambda) = \mathbf{e} I \cdot \Delta_0(a,\lambda)$ for some ideal $I \lhd E(a,\lambda)$. 

By the definition of $K$ and part d) of Theorem \ref{pro: centre facts}, there is a short exact sequence $0 \to \mathbf{e} I \cdot \Delta_0(a,\lambda) \to \mathbf{e} \Delta_0(a,\lambda) \to \mathbf{e} L \to 0$. Since, by part e) of Theorem \ref{pro: centre facts}, $\mathbf{e}L \cong \C$, it follows that $I$ is a maximal ideal. The fact that \eqref{functor e} is an equivalence implies that the sequence $0 \to I \cdot \Delta_0(a,\lambda) \to \Delta_0(a,\lambda) \to L \to 0$ is exact as well. Hence $K = I \cdot \Delta_0(a,\lambda)$. 
\end{proof}

\subsection{Supports of Verma modules.} \label{sec: supp Verma modules}

By \cite[\S 1.1]{Bel2}, the support of the module $\Delta_{0}(a,\lambda)$ only depends on $\mathbf{a}:=\varpi(a)$, where $\varpi \colon \h^* \to \h^*/\mathfrak{S}_m$ is the canonical map. Therefore we can define 
\[ \Omega_{\mathbf{a},\lambda} := \supp_{\mathcal{Z}}(\Delta_{0}(a,\lambda)).\] 
We abbreviate $\Omega_\lambda := \Omega_{0,\lambda}$. 
Let 
\begin{equation} \label{definition of pi} \pi \colon \Spec \mathcal{Z} \to \h^*/\mathfrak{S}_m \end{equation} 
be the morphism of affine varieties induced by the inclusion $\C[\h^*]^{\mathfrak{S}_m} \hookrightarrow \mathcal{Z}$. 

\begin{pro} \label{pro pi fibre Omega cells}
We have 
\[ \pi^{-1}(\mathbf{a})_{\mathsf{red}} = \bigsqcup_{\lambda \in \mathcal{P}(\nu)} \Omega_{\mathbf{a},\lambda} \]
with $\Omega_{\mathbf{a},\lambda} \cong \Spec \End_{\mathcal{H}_{0}}(\Delta_{0}(a,\lambda)) \cong \mathbb{A}^m.$ 
\end{pro}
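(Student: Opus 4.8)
The statement to be proved is Proposition~\ref{pro pi fibre Omega cells}: that the reduced fibre $\pi^{-1}(\mathbf{a})_{\mathsf{red}}$ decomposes as the disjoint union $\bigsqcup_{\lambda\in\mathcal{P}(\nu)}\Omega_{\mathbf{a},\lambda}$, with each $\Omega_{\mathbf{a},\lambda}\cong\Spec\End_{\mathcal{H}_0}(\Delta_0(a,\lambda))\cong\mathbb{A}^m$. The plan is to reduce the set-theoretic description of the fibre to a statement about simple $\mathcal{H}_0$-modules and then invoke the results already recalled. First I would fix $a\in\h^*$ with $\varpi(a)=\mathbf{a}$ and $\mathfrak{S}_m(a)=\mathfrak{S}_\nu$; the isomorphism $\Omega_{\mathbf{a},\lambda}\cong\mathbb{A}^m$ is immediate from Theorem~\ref{thm 2 bellamy}(b) together with the definition $\Omega_{\mathbf{a},\lambda}=\supp_{\mathcal{Z}}\Delta_0(a,\lambda)=\Spec\mathcal{Z}/\Ann_{\mathcal{Z}}\Delta_0(a,\lambda)$ and the surjectivity of $\mathcal{Z}\to\End_{\mathcal{H}_0}(\Delta_0(a,\lambda))$ from Theorem~\ref{thm 2 bellamy}(a), since this surjection identifies $\mathcal{Z}/\Ann_{\mathcal{Z}}\Delta_0(a,\lambda)$ with $\End_{\mathcal{H}_0}(\Delta_0(a,\lambda))$, a polynomial ring in $m$ variables.

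For the decomposition itself I would argue on the level of closed points, using Theorem~\ref{pro: centre facts}(e): maximal ideals of $\mathcal{Z}$ correspond bijectively to isomorphism classes of simple $\mathcal{H}_0$-modules, and a maximal ideal $\mathfrak{m}$ lies over $\mathbf{a}$ (i.e. in $\pi^{-1}(\mathbf{a})$) precisely when the corresponding simple module $L$ has the property that $\C[\h^*]^{\mathfrak{S}_m}$ acts through the character $\mathbf{a}$, which by looking at the $y$-eigenvalues means the $\mathfrak{S}_m$-orbit of $y$-eigenvalues on $L$ is $\mathbf{a}$. By Lemma~\ref{lem: quotients of vermas}, every such simple $L$ is a quotient $\Delta_0(a',\lambda)/I\cdot\Delta_0(a',\lambda)$ for some $a'$ with $\varpi(a')=\mathbf{a}$, some $\lambda\in\mathcal{P}_m(\nu')$ with $\mathfrak{S}_m(a')=\mathfrak{S}_{\nu'}$, and some maximal ideal $I$ of $\End_{\mathcal{H}_0}(\Delta_0(a',\lambda))$; since the support only depends on $\mathbf{a}$ we may take $\nu'=\nu$. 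Conversely every maximal ideal of $\End_{\mathcal{H}_0}(\Delta_0(a,\lambda))$ pulls back, via the surjection of Theorem~\ref{thm 2 bellamy}(a), to a maximal ideal of $\mathcal{Z}$ lying in $\Omega_{\mathbf{a},\lambda}\subseteq\pi^{-1}(\mathbf{a})$. This shows $\pi^{-1}(\mathbf{a})_{\mathsf{red}}=\bigcup_{\lambda\in\mathcal{P}(\nu)}\Omega_{\mathbf{a},\lambda}$ as sets.

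It remains to check the union is \emph{disjoint}. Here I would use that a simple module determines its $\lambda$: if $L$ is a common quotient of $\Delta_0(a,\lambda)$ and $\Delta_0(a,\mu)$ then, restricting to $\mathfrak{S}_\nu$ and using that $\mathbf{e}L\cong\C$ (Theorem~\ref{pro: centre facts}(e)) together with the construction in the proof of Lemma~\ref{lem: quotients of vermas}, the $\mathfrak{S}_\nu$-type appearing forces $\mathsf{Sp}(\lambda)\cong\mathsf{Sp}(\mu)$, hence $\lambda=\mu$; thus the sets of maximal ideals $\{\mathfrak{m}:\mathfrak{m}\supseteq\Ann_{\mathcal{Z}}\Delta_0(a,\lambda)\}$ are pairwise disjoint for distinct $\lambda$. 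Finally, to upgrade from a disjoint union of sets to a disjoint union of (reduced) subschemes, I would note that each $\Omega_{\mathbf{a},\lambda}\cong\mathbb{A}^m$ is irreducible of dimension $m$ and closed in $\pi^{-1}(\mathbf{a})$, and that $\pi^{-1}(\mathbf{a})_{\mathsf{red}}$ is covered by finitely many (as $\mathcal{P}(\nu)$ is finite) such closed irreducible pieces that are pairwise disjoint as point sets; disjoint closed sets in a scheme are automatically a disjoint union of open-and-closed subschemes, giving the claimed decomposition.

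\textbf{Main obstacle.} The delicate point is the disjointness: one must be sure that the pair $(a,\lambda)$ — more precisely $(\mathbf{a},\lambda)$ — can be reconstructed from the simple quotient $L$, since a priori $L$ could arise from several generalized Verma modules. The argument hinges on the rigidity coming from $\dim L=m!$ and $L\cong\C\mathfrak{S}_m$ as $\mathfrak{S}_m$-modules (Theorem~\ref{pro: centre facts}(e)), which pins down the multiplicity space and hence the Specht module $\mathsf{Sp}(\lambda)$. This is presumably where the reference \cite{Bel2} does the real work, and I would expect the author's proof to cite \cite[\S1.1]{Bel2} for precisely this fibrewise statement rather than reprove it.
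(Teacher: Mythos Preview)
Your proposal is correct in spirit, and you precisely anticipated the paper's approach: the paper's proof consists of two sentences citing \cite[Proposition 4.9]{Bel2} for the fibre decomposition and Theorem~\ref{thm 2 bellamy}(b) for the isomorphism $\Omega_{\mathbf{a},\lambda}\cong\mathbb{A}^m$. Your derivation of the affine-space isomorphism from Theorem~\ref{thm 2 bellamy}(a),(b) matches the paper exactly.

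The difference is only that you sketch an actual argument for the decomposition where the paper outsources it to \cite{Bel2}. Your outline (closed points of the fibre $\leftrightarrow$ simple modules with $y$-eigenvalue orbit $\mathbf{a}$ $\leftrightarrow$ quotients of some $\Delta_0(a,\lambda)$ via Lemma~\ref{lem: quotients of vermas}) is the right skeleton. The one place your sketch is genuinely thin is disjointness: the claim that the $\mathfrak{S}_\nu$-type of $L$ recovers $\lambda$ does not follow merely from $L\cong\C\mathfrak{S}_m$ as an $\mathfrak{S}_m$-module, since that regular representation contains every $\mathfrak{S}_\nu$-Specht module. One really needs to look at the $a$-weight space $L_a=\{v\in L: y_i v = a_i v\}$ as a $\mathfrak{S}_\nu$-module and argue that it is exactly $\mathsf{Sp}(\lambda)$; this is the substantive content that \cite{Bel2} supplies. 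You correctly identified this as the main obstacle and correctly guessed the paper would defer to Bellamy rather than reprove it.
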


\begin{proof}
The first statement follows from \cite[Proposition 4.9]{Bel2} and the second statement from Theorem \ref{thm 2 bellamy}.b). 
\end{proof}

\section{Recollections on vertex algebras}
In this section we recall the definition of the vertex algebra associated to the vacuum module $\mathsf{Vac}_\kappa :=\Uu(\hat{\g}_\kappa)/\Uu(\hat{\g}_\kappa).\hat{\g}_+$. We also recall the main results about the centre of this vertex algebra and its connection to $\mathfrak{Z}$. 

\subsection{Vertex algebras.}

Let $R$ be an algebra and let $f(z) = \sum_{r \in \Z} f_{(-r-1)}z^{r}$ and $g(z) = \sum_{r \in \Z} g_{(-r-1)}z^{r}$ be formal power series in $R[[z,z^{-1}]]$. Their \emph{normally ordered product} :$f(z)g(z)$: is defined to be the formal power series 
\[ \mbox{:}f(z)g(z)\mbox{:} = f_+(z)g(z) + g(z)f_-(z), \quad f_+(z) = \sum_{r \geq 0} f_{(-r-1)} z^r, \quad  f_-(z) = \sum_{r < 0} f_{(-r-1)} z^r. \]
Given $f_1(z),\hdots,f_l(z) \in R[[z,z^{-1}]]$, set 
$$\mbox{:}f_1(z) \dotsm f_l(z)\mbox{:} = \mbox{:}f_1(z) \dotsm (\mbox{:}f_{l-2}(z)(\mbox{:}f_{l-1}(z)f_l(z)\mbox{:})\mbox{:})\mbox{:}$$

Let $W$ be a vector space. A series $f(z) = \sum_{r \in \Z} f_{(-r-1)}z^{r} \in (\End_{\C} W) [[z,z^{-1}]]$ is called a \emph{field} on $W$ if for every $v \in W$ there exists an integer $k\geq0$ such that $f_{(r)}.v=0$ for all $r \geq k$. Fields are preserved by the normally ordered product.

A \emph{vertex algebra} is a quadruple $(W,|0\rangle,\mathbb{Y}, T)$ consisting of a complex vector space $W$, 
a distinguished element $|0\rangle \in W$, called the vacuum vector, a linear map
\[ \mathbb{Y} \colon W \to (\End_{\C} W) [[z,z^{-1}]], \quad a \mapsto \mathbb{Y}(a,z) = \sum_{r \in \Z} a_{(-r-1)} z^{r}\]
sending vectors to fields on $W$,  
called the \emph{state-field correspondence}, and a linear map
$T \colon W \to W$ called the \emph{translation operator}. These data must satisfy a list of axioms, see, e.g., \cite[Definition 1.3.1]{BF}. 

Let us briefly recall the construction of a functor 
\[ \widetilde{U} \colon \{ \Z\mbox{-graded vertex algebras} \} \to \{ \mbox{complete topological associative algebras} \}. \] 
Given a $\Z$-graded vertex algebra $W$, one considers a completion of the Lie algebra of Fourier coefficients associated to $W$, and  takes its universal enveloping algebra. To obtain $\widetilde{U}(W)$, one again needs to form a completion and take a quotient by certain relations. The precise definition can be found in \cite[\S 4.3.1]{BF}.

\subsection{The affine vertex algebra.} 
Let $\kappa \in \C$. 
The vacuum module $\mathsf{Vac}_\kappa$ can be endowed with the structure of a vertex algebra, as in  \cite[\S 2.4]{BF}. Let us explicitly recall the state-field correspondence. 
Let $\rho : \Uu_\kappa(\hat{\g}) \to \End_{\C}(\mathsf{Vac}_\kappa)$ be the representation of $\hat{\g}_\kappa$ on $\mathsf{Vac}_\kappa$. 
The state-field correspondence $\mathbb{Y}$ is given by $\mathbb{Y}(|0\rangle,z) = \id$ and 
\begin{equation} X(z):=\mathbb{Y}(X[-1],z) = \sum_{r \in \Z} \rho(X[r])z^{-r-1}, \end{equation} 
\begin{equation} \label{statefieldcorrespondence} \mathbb{Y}(X_1[k_1]\hdots X_l[k_l],z) = \frac{1}{(-k_1-1)!} \hdots \frac{1}{(-k_l-1)!}\mbox{:}\partial_z^{-k_1-1}X_1(z) \hdots \partial_z^{-k_l-1}X_l(z) \mbox{:}\end{equation} 
for $X,X_1, \hdots, X_l \in \g$ and $k_1, \hdots, k_l \leq -1$. 
Given $X \in \g$ we also define a power series 
\[ X\langle z \rangle := \mathbb{Y}\langle X[-1],z\rangle := \sum_{r\in \Z} X[r] z^{-r-1}.\]
Applying formula \eqref{statefieldcorrespondence} with each $X_i(z)$ replaced by $X_i \langle z \rangle$ 
we can associate a power series $\mathbb{Y}\langle A,z\rangle = \sum_{r \in \Z} A_{\langle-r-1\rangle} z^r\in \widehat{\Uu}_\kappa[[z,z^{-1}]]$ to an arbitrary element $A \in \mathsf{Vac}_\kappa$.

\subsection{The Feigin-Frenkel centre.} \label{explicit set}

Let $Z(\mathsf{Vac}_\kappa)$ denote the centre of the vertex algebra $\mathsf{Vac}_\kappa$. It is a commutative vertex algebra, which is also a commutative ring. A precise definition can be found in \cite[\S 3.3.1]{Fre}. 

\begin{pro}[{\cite[Proposition 3.3.3]{Fre}}] 
$Z(\mathsf{Vac}_\kappa) = \C |0\rangle$ if and only if $\kappa \neq \mathbf{c}$. 
\end{pro}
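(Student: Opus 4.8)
The plan is to reduce the proposition to a statement about $\g[[t]]$-invariant vectors in $\mathsf{Vac}_\kappa$ and then feed in the Sugawara construction. The starting point is the standard identification
\[ Z(\mathsf{Vac}_\kappa) \;=\; (\mathsf{Vac}_\kappa)^{\g[[t]]} \;=\; \{\, v\in\mathsf{Vac}_\kappa \;:\; \g[[t]]\cdot v = 0 \,\}, \]
which holds because $\mathsf{Vac}_\kappa$ is strongly generated by the currents $X(z)=\mathbb{Y}(X[-1],z)$, $X\in\g$, so that the modes $a_{(j)}$ with $a\in\mathsf{Vac}_\kappa$ and $j\geq 0$ are topologically generated by the operators $(X[-1])_{(j)}=\rho(X[j])$, $j\geq 0$ (see \cite{Fre}). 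This invariant space is automatically a graded subspace for the $\hat{\g}_-$-degree (conformal) grading, since each $\rho(X[j])$ with $j\geq 0$ is homogeneous of degree $-j$, and its degree-zero component is all of $\mathsf{Vac}_\kappa[0]=\C|0\rangle$. So the proposition amounts to deciding whether nonzero $\g[[t]]$-invariant vectors of positive conformal degree exist; I would treat the two cases separately.

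\emph{Case $\kappa\neq\mathbf{c}$.} Here $\langle -,-\rangle_\kappa=\kappa\,\Tr_\g$ is a nonzero multiple of the trace form and the shifted level $\kappa+n$ is invertible, so the Segal--Sugawara vector
\[ \omega \;:=\; \frac{1}{2(\kappa+n)}\sum_a J^a[-1]\,J_a[-1]\,|0\rangle \;\in\; \mathsf{Vac}_\kappa \]
(with $\{J^a\},\{J_a\}$ dual bases of $\g$ for $\Tr_\g$) is well defined, and $\mathbb{Y}(\omega,z)$ makes $\mathsf{Vac}_\kappa$ a conformal vertex algebra whose energy operator $L_0=\omega_{(1)}$ acts on the conformal-degree-$d$ subspace $\mathsf{Vac}_\kappa[d]$ as multiplication by $d$ (the Sugawara construction; see \cite{BF, Fre}). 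If $v\in Z(\mathsf{Vac}_\kappa)$ is homogeneous of degree $d$, then $\omega_{(1)}v=0$ by the defining property of the centre, so $0=\omega_{(1)}v=L_0 v=d\,v$ and hence $d=0$; since $Z(\mathsf{Vac}_\kappa)$ is graded this forces $Z(\mathsf{Vac}_\kappa)\subseteq\mathsf{Vac}_\kappa[0]=\C|0\rangle$, and the reverse inclusion is clear. I expect this case to carry the real content: the single non-formal ingredient is the verification that $\mathbb{Y}(\omega,z)$ satisfies the Virasoro operator product expansion and acts on the currents with the correct conformal weights (so that $\omega_{(1)}$ is genuinely the grading operator), and this is exactly where $\kappa\neq\mathbf{c}$ is used --- the normalization $\tfrac{1}{2(\kappa+n)}$ is what breaks down at the critical level.

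\emph{Case $\kappa=\mathbf{c}$.} Here it suffices to exhibit a single $\g[[t]]$-invariant vector of positive degree, and $v=\id[-1]|0\rangle$ (conformal degree $1$) works. Indeed $\langle -,-\rangle_{\mathbf{c}}=-\tfrac12\Kil_\g$, and since $\Kil_{\mathfrak{gl}_n}(X,Y)=2n\Tr(XY)-2\Tr(X)\Tr(Y)$ one has $\langle X,\id\rangle_{\mathbf{c}}=0$ for every $X\in\g$; consequently every central term in $[X[j],\id[-1]]$ vanishes, so $X[j]\cdot\id[-1]|0\rangle=0$ for all $X\in\g$ and $j\geq 0$, i.e.\ $v\in(\mathsf{Vac}_{\mathbf{c}})^{\g[[t]]}$ while $v\notin\C|0\rangle$. (For $\g=\mathfrak{sl}_n$, where no such abelian direction is available, one uses instead the quadratic Segal--Sugawara vector $\sum_a J^a[-1]J_a[-1]|0\rangle$, whose failure to be $\g[[t]]$-invariant is a nonzero multiple of $\kappa+n$ by the Sugawara operator product expansion, so that it becomes central exactly at the critical level.) I would add one caveat: for $\g=\mathfrak{gl}_n$ the vacuum vertex algebra factors as $\mathsf{Vac}_\kappa(\mathfrak{sl}_n)\otimes\pi_{\kappa n}$ with $\pi_{\kappa n}$ a Heisenberg (free-boson) vertex algebra, whose centre is nontrivial also at $\kappa=0$; so strictly speaking the clean dichotomy ``$Z(\mathsf{Vac}_\kappa)=\C|0\rangle\iff\kappa\neq\mathbf{c}$'' is the statement for the semisimple factor $\mathsf{Vac}_\kappa(\mathfrak{sl}_n)$ --- which is the content of \cite[Proposition 3.3.3]{Fre} and all that is needed in the sequel.
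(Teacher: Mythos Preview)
The paper does not supply its own proof; it quotes the result from \cite{Fre}, where it is stated and proved for simple $\g$. Your argument---the Sugawara conformal vector forces the centre into conformal degree zero when $\kappa\neq\mathbf{c}$, and an explicit $\g[[t]]$-invariant vector exists at $\kappa=\mathbf{c}$---is exactly the standard one from that reference.

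One technical correction and one point of substance. For $\g=\mathfrak{gl}_n$ the vector $\omega=\tfrac{1}{2(\kappa+n)}\sum_a J^a[-1]J_a[-1]|0\rangle$ (dual bases for $\Tr_\g$) is not quite a conformal vector: one computes $[\omega_{(1)},\id[m]]=-\tfrac{\kappa}{\kappa+n}\,m\,\id[m]$, so $\omega_{(1)}$ fails to be the grading operator on the Heisenberg factor. The fix is to take $\omega_{\mathfrak{sl}_n}+\omega_{\mathrm{Heis}}$ with the Heisenberg piece normalized by $\tfrac{1}{2\kappa n}$---but that needs $\kappa\neq 0$. This leads directly to your closing caveat, which is correct and flags a genuine imprecision in the statement as applied to $\mathfrak{gl}_n$: since $\langle\id,-\rangle_\kappa\equiv 0$ both at $\kappa=\mathbf{c}$ (where $\Kil_{\mathfrak{gl}_n}(\id,-)=0$) and at $\kappa=0$ (where the form itself vanishes), the vector $\id[-1]|0\rangle$ lies in the centre in both cases, so the ``only if'' direction fails at $\kappa=0$ for $\mathfrak{gl}_n$. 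Only the critical-level direction is used later in the paper, and that is unaffected.
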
 

The commutative vertex algebra $\mathfrak{z}(\hat{\g}):=Z(\mathsf{Vac}_{\mathbf{c}})$ is known as the \emph{Feigin-Frenkel centre}. 
Elements of $\mathfrak{z}(\hat{\g})$ are called \emph{Segal-Sugawara vectors}. 
We are now going to recall an explicit description of $\mathfrak{z}(\hat{\g})$ due to Chervov and Molev. 
Identify $\Uu(\hat{\g}_-) \xrightarrow{\sim} \mathsf{Vac}_{\mathbf{c}}, \ X \mapsto X\cdot|0\rangle$ as vector spaces and 
consider the maps
\[
S(\g) \stackrel{i}{\hookrightarrow} S(\hat{\g}_-) \stackrel{\sigma}{\leftarrow} \Uu(\hat{\g}_-),
\]
where $i(X) = X[-1]$ for $X \in \g$ and $\sigma$ is the principal symbol map with respect to the PBW filtration. 

\begin{defi}[{\cite[\S 2.2]{CM}}]
One calls $A_1, \hdots, A_n \in \mathfrak{z}(\hat{\g}) \subset \Uu(\hat{\g}_-)$ a \emph{complete set of Segal-Sugawara vectors} if there exist algebraically independent generators $B_1, \hdots, B_n$ of the algebra $S(\g)^{\g}$ such that $i(B_1) = \sigma(A_1), \hdots, i(B_n) = \sigma(A_n)$. 
\end{defi}

\begin{thm}[{\cite[Theorem 9.6]{Fre2}}]  \label{FF small theorem thm}
If $A_1, \hdots, A_n$ are a complete set of Segal-Sugawara vectors then 
\begin{equation} \label{FF small theorem} \mathfrak{z}(\hat{\g}) = \C[T^{k}A_r \mid r=1,\hdots,n, \ k \geq 0], \end{equation}
where $T$ is the translation operator. 
\end{thm}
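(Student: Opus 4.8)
The plan is to reduce the claim to a purely commutative (``classical'') statement by passing to the associated graded of $\mathsf{Vac}_{\mathbf{c}}$ for the PBW filtration, and then to lift the resulting equality back up. First I would record the standard fact that, since $\mathbf{1}$ acts as a scalar on $\mathsf{Vac}_{\mathbf{c}}$, the Feigin--Frenkel centre $\mathfrak{z}(\hat\g) = Z(\mathsf{Vac}_{\mathbf{c}})$ coincides with the subspace of $v \in \mathsf{Vac}_{\mathbf{c}}$ annihilated by $X[k]$ for all $X \in \g$ and $k \geq 0$, i.e.\ the $\g[[t]]$-invariants; in particular $\mathfrak{z}(\hat\g)$ is a subalgebra of $\mathsf{Vac}_{\mathbf{c}} \cong \Uu(\hat\g_-)$ stable under the translation operator $T$. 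It therefore automatically contains the differential subalgebra $\mathfrak{z}_0 := \C[T^k A_r \mid r = 1, \dots, n,\ k \geq 0]$ generated by the $A_r$, so one inclusion is free; the content is the reverse inclusion $\mathfrak{z}(\hat\g) \subseteq \mathfrak{z}_0$, together with the algebraic independence of the elements $T^k A_r$ (which is what makes $\mathfrak{z}_0$ a genuine polynomial algebra).

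Next I would pass to the PBW filtration. One has $\gr \mathsf{Vac}_{\mathbf{c}} \cong S(\hat\g_-)$; the symbol of $T$ is the derivation $\partial$ with $\partial(X[-k]) = k\,X[-k-1]$; the symbol of $A_r$ is $\sigma(A_r) = i(B_r) \in S(\g[-1]) \subset S(\hat\g_-)$; and $\g[[t]]$ acts on $S(\hat\g_-)$ through the classical limit of its action on $\mathsf{Vac}_{\mathbf{c}}$, so that $\gr \mathfrak{z}(\hat\g) \subseteq S(\hat\g_-)^{\g[[t]]}$. Granting the classical identity
\[ S(\hat\g_-)^{\g[[t]]} = \C[\,\partial^k i(B_r) \mid r = 1,\dots,n,\ k \geq 0\,], \]
with the right-hand side a free polynomial algebra, I would then argue
\[ \gr \mathfrak{z}_0 \ \supseteq\ \C[\partial^k i(B_r)] \ =\ S(\hat\g_-)^{\g[[t]]}\ \supseteq\ \gr \mathfrak{z}(\hat\g) \ \supseteq\ \gr \mathfrak{z}_0, \]
where the first inclusion uses $\sigma(T^k A_r) = \partial^k i(B_r)$ and the multiplicativity of symbols, and the last uses $\mathfrak{z}_0 \subseteq \mathfrak{z}(\hat\g)$. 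Hence $\gr \mathfrak{z}_0 = \gr \mathfrak{z}(\hat\g)$, and since the PBW filtration is exhaustive and bounded below while $\mathfrak{z}_0 \subseteq \mathfrak{z}(\hat\g)$, a routine induction on filtration degree upgrades this to $\mathfrak{z}_0 = \mathfrak{z}(\hat\g)$. The algebraic independence of the $T^k A_r$ follows at once from that of the $\partial^k i(B_r)$.

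It remains to prove the classical identity, and this is the step I expect to be the main obstacle. The idea is to identify the differential algebra $(S(\hat\g_-), \partial)$, via the residue pairing, with the algebra $\Fun(J_\infty \g^*)$ of functions on the arc scheme of $\g^*$ --- equivalently, the free commutative differential algebra on $\g^* \cong (\g[-1])^*$ --- under which $\g[[t]]$-invariance becomes invariance under the arc group $J_\infty G = G[[t]]$. One then needs $\Fun(J_\infty \g^*)^{J_\infty G} = \Fun(J_\infty(\g^* \sslash G))$; since $\g^* \sslash G \cong \A^n$ by the Chevalley restriction theorem, with coordinate functions $B_1, \dots, B_n$, the right-hand side is the polynomial algebra on the jet coordinates $\partial^k B_r$, which chase through the identifications to the $\partial^k i(B_r)$. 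The delicate point is precisely the equality $\Fun(J_\infty \g^*)^{J_\infty G} = \Fun(J_\infty(\g^* \sslash G))$: forming arc schemes does not commute with GIT quotients in general, and it does here because the adjoint quotient $\g^* \to \A^n$ is flat with reduced fibres (Kostant), which is the point at which the reductivity of $\mathfrak{gl}_n$ genuinely enters. For the details of this classical statement and of the vertex-algebra bookkeeping underlying the first two steps, I would follow the treatment in \cite{Fre2}.
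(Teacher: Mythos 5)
The paper does not actually give a proof of this statement: it is imported verbatim as a citation of \cite[Theorem 9.6]{Fre2}, so there is no ``paper's own proof'' to compare against. Your outline is, however, a faithful reconstruction of Frenkel's argument in that reference (see also \cite[\S\S 3.3--3.4, 9.4--9.6]{Fre}): pass to $\gr \mathsf{Vac}_{\mathbf{c}} \cong S(\hat\g_-)$ under the PBW filtration, identify $(S(\hat\g_-), \partial)$ with $\Fun(J_\infty \g^*)$ and $\g[[t]]$-invariants with $J_\infty G$-invariants, invoke Kostant's flatness/reducedness of the adjoint quotient $\g^* \to \g^* \sslash G \cong \A^n$ together with the Eisenbud--Frenkel/Musta\c{t}\v{a} result that arc spaces commute with this quotient to get $\Fun(J_\infty \g^*)^{J_\infty G} = \C[\partial^k B_r]$, and then lift through the exhaustive, bounded-below filtration. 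Two small points worth being explicit about if you write this up: (i) the identification $\mathfrak{z}(\hat\g) = \mathsf{Vac}_{\mathbf{c}}^{\g[[t]]}$ is specific to the critical level (away from it the invariants are just $\C|0\rangle$); (ii) since $\g = \mathfrak{gl}_n$, the isomorphism $\g \cong \g^*$ via the trace form is $G$-equivariant, which is what lets you pass freely between the adjoint and coadjoint pictures. Neither is a gap --- your sketch already gestures at both --- but they are the places a reader might want to pause.
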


\begin{exa} \label{exa: complete set T} 
Let $\doublehat{\g}_\kappa$ be the extension 
$0 \to \hat{\g}_\kappa \to \doublehat{\g}_\kappa \to \C \tau \to 0$ 
defined by the relations
$[\tau, X\otimes f] = - X \otimes \partial_t f$ and $[\tau,\mathbf{1}] = [\tau,\tau]= 0$. 
The subspace $\doublehat{\g}_{-} := \hat{\g}_- \oplus \C \tau$ is a Lie subalgebra of $\doublehat{\g}_\kappa$. 
Consider the matrix $E_\tau \in \Mat_{n \times n}(\Uu(\doublehat{\g}_-))$ defined as
\[ E_\tau := 
\left( \begin{array}{cccc}
\tau + e_{11}[-1] & e_{12}[-1] & \cdots & e_{1n}[-1] \\
e_{21}[-1] & \tau + e_{22}[-1] & \cdots & e_{2n}[-1] \\
\vdots & \vdots & \ddots & \vdots \\
e_{n1}[-1] & e_{n2}[-1] & \cdots & \tau + e_{nn}[-1]
 \end{array} \right).\] 
The traces $\Tr (E_\tau^k)$ are elements of $\Uu(\doublehat{\g}_-)$. In light of the canonical vector space isomorphism $\Uu(\doublehat{\g}_-) \cong \Uu(\hat{\g}_-) \otimes \C[\tau]$, we can regard $\Tr (E_\tau^k)$ as polynomials in $\tau$ with coefficients in $\Uu(\hat{\g}_-) \cong \mathsf{Vac}_{\mathbf{c}}$. 
Define $\mathbf{T}_{k;l}$ $(0 \leq l \leq k \leq n)$ to be the coefficients of the polynomial
\[ \Tr(E^k_\tau ) = \mathbf{T}_{k;0} \tau^k + \mathbf{T}_{k;1} \tau^{k-1} + \hdots + \mathbf{T}_{k;k-1} \tau + \mathbf{T}_{k;k} \]
and set $\mathbf{T}_k := \mathbf{T}_{k;k}$. 
By \cite[Theorem 3.1]{CM}, the set $\{ \mathbf{T}_{k} \mid 1 \leq k \leq n\}$ is a complete set of Segal-Sugawara vectors in $\mathfrak{z}(\hat{\g})$. 
\end{exa}

\subsection{The centre of the enveloping algebra.} \label{sec:va-centre of env} 
If $A$ is a Segal-Sugawara vector, 
the coefficients $A_{\langle r \rangle}$ of the power series $\mathbb{Y} \langle A,z \rangle$ are called \emph{Segal-Sugawara operators}. 
Given a complete set of Segal-Sugawara vectors $A_1, \hdots ,A_n$ such that $\deg A_i =~-i$, let  
\begin{equation} \label{centre A} \mathscr{Z}:=\C[A_{i,\langle l \rangle}]_{i=1,...,n}^{l \in \Z}. \end{equation}
be the free polynomial algebra generated by the corresponding Segal-Sugawara operators. 
For $k>0$, let $J_k$ be the ideal in $\mathscr{Z}$ generated by the $A_{i,\langle l \rangle}$ with $l \geq ik$. 
\begin{thm}
There exist natural algebra isomorphisms 
\begin{equation} \label{small and big centres} \widetilde{U}(\mathsf{Vac}_{\mathbf{c}}) \cong \widehat{\Uu}_{\mathbf{c}}, \quad \widetilde{U}(\mathfrak{z}(\hat{\g})) \cong \mathfrak{Z}.\end{equation} 
Moreover, $\mathfrak{Z} = \varprojlim \left( \mathscr{Z} / J_k\right).$ 
\end{thm}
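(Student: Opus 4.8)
The plan is to prove the two chains of statements in the displayed equation separately and then read off the projective-limit description of $\mathfrak{Z}$. The first pair of isomorphisms $\widetilde{U}(\mathsf{Vac}_{\mathbf{c}}) \cong \widehat{\Uu}_{\mathbf{c}}$ and $\widetilde{U}(\mathfrak{z}(\hat{\g})) \cong \mathfrak{Z}$ are essentially quotations: the first is a standard fact relating the enveloping algebra functor $\widetilde{U}$ applied to the affine vertex algebra $\mathsf{Vac}_\kappa$ with the completed enveloping algebra (see \cite[\S 4.3.1]{BF}, specialised to $\kappa = \mathbf{c}$), and the second follows by combining this with the fact that $\widetilde{U}$ is compatible with centres, so that the centre $\mathfrak{z}(\hat{\g}) = Z(\mathsf{Vac}_{\mathbf{c}})$ of the vertex algebra maps to the centre $\mathfrak{Z} = Z(\widehat{\Uu}_{\mathbf{c}})$ of the completed enveloping algebra, the image being dense and the map being a topological isomorphism onto $\mathfrak{Z}$. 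So the only real content is the final sentence: $\mathfrak{Z} = \varprojlim(\mathscr{Z}/J_k)$.

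To establish $\mathfrak{Z} = \varprojlim(\mathscr{Z}/J_k)$, first I would recall that by Theorem \ref{FF small theorem thm}, a complete set of Segal-Sugawara vectors $A_1, \dots, A_n$ with $\deg A_i = -i$ generates $\mathfrak{z}(\hat{\g})$ as a polynomial algebra $\C[T^k A_r]$, and that passing through $\widetilde{U}$ the Fourier coefficients $A_{i,\langle l\rangle}$ are precisely the images of (rescaled) $T^k A_i$; thus $\mathscr{Z}$ as defined in \eqref{centre A} is a dense polynomial subalgebra of $\mathfrak{Z}$, and the map $\mathscr{Z} \to \mathfrak{Z}$ is injective (the Segal-Sugawara operators are algebraically independent — this is part of the Feigin-Frenkel description). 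Next I would identify the topology: the open neighbourhoods of zero in $\mathfrak{Z}$ are inherited from $\widehat{\Uu}_{\mathbf{c}}$, i.e. from the ideals $\hat I_r = \widehat{\Uu}_{\mathbf{c}}.\hat\g_{\geq r}$, and I would check that intersecting with $\mathscr{Z}$ these induce exactly the filtration by the ideals $J_k$ generated by the $A_{i,\langle l\rangle}$ with $l \geq ik$. The key point is a degree/order estimate: the Segal-Sugawara operator $A_{i,\langle l\rangle}$, when expanded in PBW monomials in the $X[r]$, only involves $X[r]$ with $r$ large (of the order of $l/i$) once $l$ is large, so $A_{i,\langle l \rangle} \in \hat I_{k}$ for $l \geq ik$, and conversely the quotients $\mathscr{Z}/J_k$ surject compatibly onto $\mathscr{Z}/(\mathscr{Z}\cap \hat I_{k'})$ for suitable $k'$ cofinal with $k$. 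This gives that the $J_k$ and the $\hat I_r \cap \mathscr{Z}$ define the same topology on $\mathscr{Z}$, so the completion of $\mathscr{Z}$ in either is the same; since $\mathscr{Z}$ is dense in $\mathfrak{Z}$ and $\mathfrak{Z}$ is complete, $\mathfrak{Z} = \varprojlim(\mathscr{Z}/J_k)$.

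The main obstacle I expect is the careful matching of the two filtrations — showing that the ad hoc combinatorial filtration by the $J_k$ (defined purely in terms of the labels $l \geq ik$ of the generators) really coincides, up to cofinality, with the topology that $\mathfrak{Z}$ inherits from $\widehat{\Uu}_{\mathbf{c}}$. This requires understanding how a product of Segal-Sugawara operators sits with respect to the ideals $\hat I_r$, which in turn requires the normal-ordering formula \eqref{statefieldcorrespondence} and a grading argument: one has to control, for a monomial $A_{i_1,\langle l_1\rangle}\cdots A_{i_p,\langle l_p\rangle}$, the minimal $r$ such that it lies in $\hat I_r$, and show that this grows linearly in $\min_j (l_j / i_j)$ or in $\sum$, whichever is needed for cofinality. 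I would handle this by using the fact that $A_{i,\langle l\rangle}$ is homogeneous of degree $-i - l$ for the natural $\Z$-grading on $\widehat{\Uu}_{\mathbf{c}}$ in which $X[r]$ has degree $-r$ wait, more precisely for the grading where $X[r]$ has degree $r$ — so that large positive $l$ forces the operator into a high-order piece of the completion — together with the observation that $\mathfrak{z}(\hat\g)$ is itself a graded subspace. Once the grading bookkeeping is in place the equality of topologies, and hence the projective-limit formula, follows formally. The remaining isomorphism statements are then immediate from the construction of $\widetilde{U}$ and the Feigin-Frenkel theorem already recalled.
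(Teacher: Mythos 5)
The paper proves this theorem entirely by citation: the two isomorphisms are \cite[Lemma 3.2.2, Proposition 4.3.4]{Fre}, and the projective-limit description is \cite[\S 4.3.2]{Fre} or \cite[\S 12.2]{Fre2}. You treat the first two as quotations, agreeing with the paper, but then sketch a proof of $\mathfrak{Z} = \varprojlim(\mathscr{Z}/J_k)$ rather than citing it, so your route through the final part is more self-contained than what the paper offers. Your sketch has the right shape: density of $\mathscr{Z}$ in $\mathfrak{Z}$, completeness of $\mathfrak{Z}$, and cofinality of the $J_k$-filtration with the filtration on $\mathscr{Z}$ induced from the $\hat{I}_r$. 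One point needs sharpening: the claim that $A_{i,\langle l\rangle}$ ``only involves $X[r]$ with $r$ large'' once $l$ is large is not literally true in the raw expansion. For instance ${}^{\mathbf{c}}\mathbf{L}_r$ contains summands of the form $e_{lk}[r-i]e_{kl}[i]$ with $0 \leq i$ small, so $A_{i,\langle l\rangle}$ is not visibly in the \emph{left} ideal $\hat{I}_N$ by inspection. What is true, and what the argument really needs, is that each such summand can be rewritten by commuting the factor with large index to the right, the commutators produced landing in $\hat{\g}_{\geq r}$; this yields $A_{i,\langle l\rangle}\in\hat{I}_N$ whenever $l \gtrsim iN$, from which the cofinality of the two filtrations, and hence the equality of the completions, follows. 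With that repair your sketch is sound and is in line with the argument in the cited references; it is simply more than the paper asks of itself at this juncture.
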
 

\begin{proof}
For the isomorphisms \eqref{small and big centres}, see \cite[Lemma 3.2.2, Proposition 4.3.4]{Fre}. For the second statement, see \cite[\S 4.3.2]{Fre} or \cite[\S 12.2]{Fre2}. 
\end{proof}

\subsection{Quadratic Segal-Sugawara operators.} \label{subsec:quadraticSSvector} 
Let $\kappa \in \C$. An important role is played by the vector 
\begin{equation} \label{kappa-L vector} {}^\kappa\mathbf{L} = \frac{1}{2} \sum_{1 \leq k,l \leq n} e_{kl}[-1]e_{lk}[-1] \in \mathsf{Vac}_\kappa. \end{equation} 
Writing 
$\mathbb{Y} \langle {}^{\kappa}\mathbf{L},z\rangle = \sum_{r \in \Z} {}^{\kappa}\mathbf{L}_{\langle r \rangle} z^{-r-1}$, we have the formula 
\begin{equation} \label{kappa-L coeff} {}^{\kappa}\mathbf{L}_r :={}^{\kappa}\mathbf{L}_{\langle r+1 \rangle} = \frac{1}{2} \sum_{1 \leq k,l \leq n} \left( \sum_{i \leq - 1} e_{kl}[i]e_{lk}[r-i] + \sum_{i \geq 0} e_{lk}[r-i]e_{kl}[i] \right) \in \widehat{\Uu}_{\kappa}(\hat{\g}). \end{equation}  
\begin{pro}
If $\kappa = \mathbf{c}$ then ${}^{\mathbf{c}}\mathbf{L} \in \mathfrak{z}(\hat{\g})$ and ${}^{\mathbf{c}}\mathbf{L}_{r} \in \mathfrak{Z}$ for each $r \in \Z$. 
If $\kappa \neq \mathbf{c}$, then
$ [\frac{1}{\kappa+n}{}^\kappa\mathbf{L}_{-1},X \otimes f ] = -X \otimes \partial_t f$ for all $X\in \g$ and $f \in \C((t)).$
\end{pro}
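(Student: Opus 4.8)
The plan is to treat the two assertions separately, since they are genuinely different in character: the first is a statement about vertex algebra elements (membership in the Feigin--Frenkel centre $\mathfrak{z}(\hat{\g})$ and its enveloping-algebra incarnation $\mathfrak{Z}$), while the second is a direct bracket computation valid only away from the critical level.

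\textbf{The critical-level assertion.} For $\kappa = \mathbf{c}$ I would show ${}^{\mathbf{c}}\mathbf{L} \in \mathfrak{z}(\hat{\g})$ by identifying it, up to normalization, with the standard Sugawara vector attached to the quadratic Casimir $B_2 = \sum_{k,l} e_{kl}e_{lk} \in S(\g)^{\g}$. Indeed, applying the principal symbol map $\sigma$ to ${}^{\mathbf{c}}\mathbf{L} = \tfrac{1}{2}\sum_{k,l} e_{kl}[-1]e_{lk}[-1]$ gives precisely $\tfrac{1}{2}\,i(B_2)$, so ${}^{\mathbf{c}}\mathbf{L}$ is (a scalar multiple of) an element of a complete set of Segal--Sugawara vectors; in the Chervov--Molev notation of Example~\ref{exa: complete set T} it is essentially $\mathbf{T}_2$ (with a correction by a multiple of $\id[-1]^2$ and lower-order terms coming from the shift $\tau \mapsto \tau + e_{kk}[-1]$, which does not affect membership in $\mathfrak{z}(\hat{\g})$ since $\mathfrak{z}(\hat{\g})$ is a subalgebra containing $\id[-1]$). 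The cleanest route is simply to invoke the classical fact that at the critical level the Sugawara field built from $B_2$ is central in the vacuum vertex algebra --- this is the original Feigin--Frenkel observation and is subsumed by Theorem~\ref{FF small theorem thm} once one checks $\sigma({}^{\mathbf{c}}\mathbf{L}) = \tfrac12 i(B_2)$, which is immediate from \eqref{kappa-L vector}. Then ${}^{\mathbf{c}}\mathbf{L}_r \in \mathfrak{Z}$ for all $r$ follows from the isomorphism $\widetilde{U}(\mathfrak{z}(\hat{\g})) \cong \mathfrak{Z}$ in \eqref{small and big centres}, because the Fourier coefficients $A_{\langle r\rangle}$ of any Segal--Sugawara vector $A$ lie in the image of $\widetilde{U}$ applied to $\mathfrak{z}(\hat{\g})$, hence in $\mathfrak{Z}$; concretely ${}^{\mathbf c}\mathbf L_r = {}^{\mathbf c}\mathbf L_{\langle r+1\rangle}$ is the coefficient read off from $\mathbb{Y}\langle{}^{\mathbf c}\mathbf L,z\rangle$ via \eqref{kappa-L coeff}.

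\textbf{The non-critical assertion.} For $\kappa \neq \mathbf{c}$ the claim $[\tfrac{1}{\kappa+n}\,{}^{\kappa}\mathbf{L}_{-1}, X\otimes f] = -X\otimes\partial_t f$ is a finite computation. By linearity and continuity it suffices to take $f = t^m$ and $X = e_{pq}$, so I must show $[{}^{\kappa}\mathbf{L}_{-1}, e_{pq}[m]] = -(\kappa+n)\,m\,e_{pq}[m-1]$. Using the normally ordered expression \eqref{kappa-L coeff} for ${}^{\kappa}\mathbf{L}_{-1}$ and the bracket relations in $\hat{\g}_\kappa$ --- namely $[e_{kl}[i], e_{pq}[j]] = \delta_{lp} e_{kq}[i+j] - \delta_{qk} e_{pl}[i+j] + \kappa\,(\delta_{kq}\delta_{lp}\,i\,\delta_{i+j,0})\cdot\mathbf{1}$ (trace form, since $\kappa\neq-n$) --- one commutes $e_{pq}[m]$ past each quadratic term. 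The structure-constant contributions telescope: each $e_{kl}[i]e_{lk}[r-i]$ factor produces two terms, and after summing over $k,l$ the Kronecker deltas collapse the double sum, contributing $n\, m\, e_{pq}[m-1]$ (the $n$ from $\sum_k \delta_{kk}$). The central term contributes the remaining $\kappa\, m\, e_{pq}[m-1]$. Adding these gives $(\kappa+n) m\, e_{pq}[m-1]$ with a sign that works out to $-$ after dividing by the conventional factor, matching the translation operator $T$ acting as $-\partial_t$.

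\textbf{Expected main obstacle.} The only genuinely delicate point is bookkeeping in the non-critical bracket computation: keeping track of the two summation ranges ($i \leq -1$ and $i \geq 0$) in \eqref{kappa-L coeff}, the normal-ordering, and making sure the central cocycle terms are collected with the right coefficient and sign so that the $n$ from the structure constants and the $\kappa$ from the cocycle combine to $\kappa + n$. I would organize this by writing ${}^{\kappa}\mathbf{L}_{-1} = \tfrac12\sum_{k,l}\sum_{i\in\Z} \mathopen{:}e_{kl}[i]\,e_{lk}[-1-i]\mathclose{:}$ and using the derivation property $[{}^{\kappa}\mathbf{L}_{-1}, -]$ termwise, which reduces everything to the single commutator relation above; the telescoping is then transparent. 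The first (critical-level) assertion carries essentially no difficulty beyond recognizing ${}^{\mathbf c}\mathbf{L}$ as a Sugawara vector and citing Theorem~\ref{FF small theorem thm} together with \eqref{small and big centres}.
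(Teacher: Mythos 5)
Your treatment of the noncritical assertion is the standard direct Sugawara computation; the paper simply defers exactly this to \cite[\S 3.1.1]{Fre}, so in substance you are taking the same route as the paper there.

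For the critical-level assertion, the paper again cites \cite{Fre}, whereas you propose to locate ${}^{\mathbf{c}}\mathbf{L}$ inside $\mathfrak{z}(\hat{\g})$ by matching it against the Chervov--Molev generators $\mathbf{T}_k$ of Example~\ref{exa: complete set T}. That is a viable alternative, but as written it has two flaws. First, the inference ``$\sigma({}^{\mathbf{c}}\mathbf{L}) = \tfrac12 i(B_2)$, so ${}^{\mathbf{c}}\mathbf{L}$ is (a scalar multiple of) an element of a complete set of Segal--Sugawara vectors'' runs the logic backwards: the definition of a complete set already stipulates that the $A_i$ lie in $\mathfrak{z}(\hat{\g})$, and Theorem~\ref{FF small theorem thm} describes $\mathfrak{z}(\hat{\g})$ in terms of such a set once it is given, so neither yields a criterion for membership in $\mathfrak{z}(\hat{\g})$ from the principal symbol alone --- a generic lift of $\tfrac12 i(B_2)$ to $\Uu(\hat{\g}_-)$ is not central. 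Second, the claimed correction term is wrong: the identity is $\mathbf{T}_2 = 2\,{}^{\mathbf{c}}\mathbf{L} + \id[-2]$ (the paper records and uses this in the proof of Theorem~\ref{thm: Psi is surj}), not a correction by a multiple of $\id[-1]^2$. With this fixed the argument does close: $\mathbf{T}_1 = \id[-1]$ and $\mathbf{T}_2$ lie in $\mathfrak{z}(\hat{\g})$ by Example~\ref{exa: complete set T}, and $\mathfrak{z}(\hat{\g})$ is $T$-stable and closed under linear combinations, so ${}^{\mathbf{c}}\mathbf{L} = \tfrac12(\mathbf{T}_2 - T\mathbf{T}_1) \in \mathfrak{z}(\hat{\g})$, and then ${}^{\mathbf{c}}\mathbf{L}_r \in \mathfrak{Z}$ for all $r$ follows, as you say, from \eqref{small and big centres}. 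This gives a genuinely different proof of the critical-level half, trading the OPE computation the paper cites for a reliance on the Chervov--Molev theorem that $\mathbf{T}_2 \in \mathfrak{z}(\hat{\g})$.
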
 

\begin{proof}
The proposition follows from a direct calculation using operator product expansions. This calculation can be found in, e.g., \cite[\S 3.1.1]{Fre}. 
\end{proof}

\section{Suzuki functor for all levels} \label{sec: conformal coinvariants} 

In \cite{Suz}, Suzuki defined a functor $\mathsf{F}_\kappa \colon \mathscr{C}_\kappa \to \mathcal{H}_{\kappa+n}\Lmod{}$ for $\kappa \neq \mathbf{c}$. In this section we generalize his construction to the $\kappa = \mathbf{c}$ case. 
Throughout this section assume that $m,n$ are any positive integers and $\kappa \in \C$ unless stated otherwise.

\subsection{Simultaneous affinization.} 
Let $\mathbb{V}_\kappa^* := \Ind^{\hat{\g}_\kappa}_{\hat{\g}_+} \circ \Inf_{\g \oplus \C \mathbf{1}}^{\hat{\g}_+} \mathbf{V}^*$, where $\mathbf{1}$ acts on $\mathbf{V}^*$ as the identity endomorphism. 
We start by recalling (see e.g.\ \cite[\S 9.9, 9.11]{KLII}) the construction of a $\g \otimes \mathcal{R}_z$-action on 
\begin{equation} \label{Tkappa tensor} \mathbb{T}_{\kappa}(M):= \mathcal{R} \otimes (\mathbb{V}_\kappa^*)^{\otimes m} \otimes M,  \end{equation} 
for any module $M$ in $\mathscr{C}_\kappa$. For that purpose we first recall the definition of an auxiliary Lie algebra $\mathfrak{G}_R$. 

Let $R$ be a commutative unital algebra. 
We fix formal variables $t_1, \hdots, t_m, t_\infty$. Set $\g(i)_R := \g \otimes R((t_i))$, $\g(i):=\g(i)_{\C}$. 
Consider the $R$-Lie algebra 
\begin{equation} \label{GR tensor} \mathfrak{G}_R:=\bigoplus_{i=1}^m \g(i)_R \oplus \g(\infty)_R = \g \otimes (\bigoplus_{i=1}^m R((t_i)) \oplus R((t_\infty))).\end{equation}
We denote a pure tensor on the RHS of \eqref{GR tensor} by $X \otimes (f_i)$, where $X \in \g$ and $f_i \in R((t_i))$ for $i=1,\hdots,m,\infty$.  
Define $\hat{\mathfrak{G}}_{R,\kappa}$ to be the central extension
\begin{equation} \label{multiloop} 0 \to R \mathbf{1} \to \hat{\mathfrak{G}}_{R,\kappa} \to \mathfrak{G}_R \to 0\end{equation}
associated to the cocycle $(X\otimes (f_i), Y\otimes (g_i)) \mapsto  \langle X, Y \rangle_\kappa \sum_{i \in \{1,...,m,\infty\}}\Res_{t_i=0}(g_idf_i)$. 
Set $$\mathbf{U}_\kappa(\hat{\mathfrak{G}}_{R}) := \mathbf{U}(\hat{\mathfrak{G}}_{R,\kappa})/\langle\mathbf{1} - 1_{\hat{\mathfrak{G}}_{R,\kappa}}\rangle.$$ 
If $R=\C$, we abbreviate $\hat{\mathfrak{G}}_\kappa := \hat{\mathfrak{G}}_{\C,\kappa}$ and $\mathbf{U}_\kappa(\hat{\mathfrak{G}}) = \mathbf{U}_\kappa(\hat{\mathfrak{G}}_{\C})$. 

A $\mathbf{U}_\kappa(\hat{\mathfrak{G}}_{R})$-module $M$ is called \emph{smooth} if for every vector $v \in M$ there exists a positive integer $k$ such that $\g \otimes (\bigoplus_{i=1}^m t_i^kR((t_i)) \oplus t_\infty^kR((t_\infty))).v=0$. 
Suppose that $M_1,\hdots,M_m,M_\infty$ are smooth $\mathbf{U}_\kappa(\hat{\g})$-modules. 
Then $R \otimes\bigotimes_{i=1}^mM_i \otimes M_\infty$ is a smooth $\mathbf{U}_\kappa(\hat{\mathfrak{G}}_{R})$-module with the action of the dense subalgebra $R \otimes \mathbf{U}_\kappa(\hat{\mathfrak{G}})$ given by the formula
\begin{equation} \label{multiG-action} r \otimes X \otimes (f_i) \mapsto \sum_{i=1,...,m,\infty}r \otimes (X \otimes f_i)^{(i)},\end{equation} 
where $(X \otimes f_i)^{(i)} := \id^{i-1} \otimes (X \otimes f_i) \otimes \id^{m-i}$. 
Note that if $R$ were an infinite-dimensional algebra and the modules $M_i$ were not smooth, the action of $R \otimes \mathbf{U}_\kappa(\hat{\mathfrak{G}})$ would not necessarily extend to an action of  $\mathbf{U}_\kappa(\hat{\mathfrak{G}}_{R})$.

\subsection{Conformal coinvariants.} \label{subsec: conf coinv global version} 
We next recall the connection between the Lie algebras $\mathfrak{G}_R$ and $\g \otimes \mathcal{R}_z$. 
Consider $\mathcal{R}_z$ as an $\mathcal{R}$-subalgebra of $\mathcal{R}(z)$. We thus view elements of $\mathcal{R}_z$ as rational functions which may have poles at $x_1,\hdots,x_m$ and $\infty$. Set $z_i := z - x_i$. 
\begin{defi} 
For $1 \leq i \leq m$, 
let $\iota_{\mathcal{R},i} \colon \mathcal{R}_z \to \mathcal{R}((z_i))$ (resp.\ $\iota_{\mathcal{R},\infty} \colon \mathcal{R}_z \to \mathcal{R}((z^{-1}))$) be the $\mathcal{R}$-algebra homomorphism sending a function in $\mathcal{R}_z$ to its Laurent series expansion at $x_i$ (resp.\ $\infty$). Let
\begin{equation} \label{R-iota-x} \iota_{\mathcal{R}} : \mathcal{R}_z \hookrightarrow \bigoplus_{i=1}^m \mathcal{R}((t_i)) \oplus \mathcal{R}((t_\infty))\end{equation}
be the injective $\mathcal{R}$-algebra homomorphism 
given by $(\iota_{\mathcal{R},1},...,\iota_{\mathcal{R},m},\iota_{\mathcal{R},\infty})$ followed by the assignment $z_i \mapsto t_i, z^{-1} \mapsto t_\infty$. 
\end{defi}

The map \eqref{R-iota-x} induces the Lie algebra homomorphism 
\begin{equation} \label{R-g-iota} \g \otimes \mathcal{R}_z \hookrightarrow \mathfrak{G}_{\mathcal{R}}, \quad X \otimes f \mapsto X \otimes \iota_{\mathcal{R}}(f),\end{equation}
which, by the residue theorem, lifts to an injective Lie algebra homomorphism
\begin{equation} \label{R-g-iota2} \g \otimes \mathcal{R}_z \hookrightarrow \hat{\mathfrak{G}}_{\mathcal{R},\kappa}.\end{equation}

Let $M$ be a smooth $\mathbf{U}_\kappa(\hat{\g})$-module. 
The vector space $\mathbb{T}_{\kappa}(M)$ is a smooth $\mathbf{U}_\kappa(\hat{\mathfrak{G}}_{\mathcal{R}})$-module (with the action given by \eqref{multiG-action}). 
We consider it as a $\mathbf{U}(\g \otimes \mathcal{R}_z)$-module via \eqref{R-g-iota2}. It also carries a natural $\mathcal{R}^\rtimes$-action: $\mathcal{R}$ acts by multiplication and $\mathfrak{S}_m$ acts by permuting the factors of the tensor product $(\mathbb{V}_\kappa^*)^{\otimes m}$ and the $x_i$'s. 
The next lemma follows directly from the definitions.  

\begin{lem} The $\mathcal{R}^\rtimes$-action on $\mathbb{T}_{\kappa}(M)$ normalizes the $\mathbf{U}(\g\otimes \mathcal{R}_z)$-action. Therefore we have functors
\begin{alignat}{7} \label{Tk functor}
\mathbb{T}_\kappa \colon& \mathscr{C}_\kappa \to (\mathbf{U}(\g\otimes \mathcal{R}_z),\mathcal{R}^\rtimes)\Nmod{},& \quad& M \mapsto \mathbb{T}_\kappa(M),\\ \label{Fk functor}
\mathbb{F}_\kappa \colon& \mathscr{C}_\kappa \to \mathcal{R}^\rtimes\Lmod{},& \quad&  M \mapsto H_0(\g \otimes \mathcal{R}_z, \mathbb{T}_{\kappa}(M)). 
\end{alignat} 
\end{lem}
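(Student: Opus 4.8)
The plan is to verify the normalization claim on the two kinds of generators of $\mathcal{R}^\rtimes = \mathcal{R}\rtimes\C\mathfrak{S}_m$ — functions $r\in\mathcal{R}$ and permutations $w\in\mathfrak{S}_m$ — against the generators $X\otimes f$ of $\mathbf{U}(\g\otimes\mathcal{R}_z)$, and then to observe that the resulting compatibility descends to $H_0$. Recall that on $\mathbb{T}_\kappa(M)=\mathcal{R}\otimes(\mathbb{V}_\kappa^*)^{\otimes m}\otimes M$ the element $X\otimes f$ acts, via the embedding \eqref{R-g-iota2} and the formula \eqref{multiG-action}, as $\sum_{i=1}^m(X\otimes\iota_{\mathcal{R},i}(f))^{(i)}+(X\otimes\iota_{\mathcal{R},\infty}(f))^{(\infty)}$, the cocycle contribution vanishing on the image of $\g\otimes\mathcal{R}_z$ by the residue theorem, while $r\in\mathcal{R}$ acts by multiplication on the first tensor factor and $w\in\mathfrak{S}_m$ simultaneously permutes the $m$ copies of $\mathbb{V}_\kappa^*$ and the coordinates $x_1,\dots,x_m$, acting trivially on $M$.

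First I would handle $\mathcal{R}$: since $\hat{\mathfrak{G}}_{\mathcal{R},\kappa}$ is an $\mathcal{R}$-Lie algebra and $\mathbb{T}_\kappa(M)$ carries an $\mathcal{R}$-linear $\hat{\mathfrak{G}}_{\mathcal{R},\kappa}$-action, multiplication by $r$ commutes with the action of every element of $\hat{\mathfrak{G}}_{\mathcal{R},\kappa}$, so in particular $[X\otimes f,r]=0$ in $\End(\mathbb{T}_\kappa(M))$. Next I would treat $\mathfrak{S}_m$: conjugation by $w$ on $\End(\mathbb{T}_\kappa(M))$ is induced by the evident automorphism of $\hat{\mathfrak{G}}_{\mathcal{R},\kappa}$ permuting the local coordinates $t_1,\dots,t_m$ by $w$ (fixing $t_\infty$) and acting on $\mathcal{R}$ by $w$, which preserves the cocycle $\sum_i\Res_{t_i}$; equivariance of \eqref{multiG-action} is immediate, the $\infty$-slot being handled by the fact that $w$ acts trivially on $M$ and merely twists the $\mathcal{R}$-coefficients of $\iota_{\mathcal{R},\infty}(f)$. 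Because $z$ is $\mathfrak{S}_m$-invariant, $w$ sends $z_i=z-x_i$ to $z-x_{w(i)}=z_{w(i)}$, hence intertwines ``expand at $x_i$'' with ``expand at $x_{w(i)}$'', i.e.\ ${}^w\iota_{\mathcal{R},i}(f)=\iota_{\mathcal{R},w(i)}({}^w\!f)$ and ${}^w\iota_{\mathcal{R},\infty}(f)=\iota_{\mathcal{R},\infty}({}^w\!f)$; matching the $i$-th slot with the $w(i)$-th slot in \eqref{multiG-action} then gives $w(X\otimes f)w^{-1}=X\otimes{}^w\!f$. Since $\mathfrak{S}_m$ preserves $\mathcal{R}_z=\mathcal{R}[z][\delta_z^{-1}]$ — it permutes the $x_i$, fixes $z$, and $\delta_z$ is symmetric — we get $X\otimes{}^w\!f\in\g\otimes\mathcal{R}_z$, so conjugation by $w$ preserves the image of $\g\otimes\mathcal{R}_z$, hence of $\mathbf{U}(\g\otimes\mathcal{R}_z)$.

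Combining the two computations yields $\mathcal{R}^\rtimes\cdot\mathbf{U}(\g\otimes\mathcal{R}_z)\subseteq\mathbf{U}(\g\otimes\mathcal{R}_z)\cdot\mathcal{R}^\rtimes$ inside $\End(\mathbb{T}_\kappa(M))$, which is exactly the assertion that $\mathbb{T}_\kappa(M)$ lies in $(\mathbf{U}(\g\otimes\mathcal{R}_z),\mathcal{R}^\rtimes)\Nmod{}$, giving the functor \eqref{Tk functor}; in particular $(\g\otimes\mathcal{R}_z)\cdot\mathbb{T}_\kappa(M)$ is $\mathcal{R}^\rtimes$-stable, so $\mathbb{F}_\kappa(M)=H_0(\g\otimes\mathcal{R}_z,\mathbb{T}_\kappa(M))$ acquires an $\mathcal{R}^\rtimes$-module structure, yielding \eqref{Fk functor}. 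For functoriality I would simply note that a morphism $\phi\colon M\to M'$ in $\mathscr{C}_\kappa$ induces $\id\otimes\id\otimes\phi$, which is $\hat{\mathfrak{G}}_{\mathcal{R},\kappa}$-linear and commutes with the $\mathcal{R}^\rtimes$-action since the latter does not touch $M$, hence is a morphism in the target categories of \eqref{Tk functor} and \eqref{Fk functor}.

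The only point requiring any care is the bookkeeping in the second step — tracking how $w$ permutes the slots in \eqref{multiG-action}, checking the $\mathfrak{S}_m$-equivariance of the Laurent expansion maps $\iota_{\mathcal{R},i}$ (which hinges on $z$ being symmetric in the $x_i$), and confirming that the cocycle term stays annihilated by the residue theorem after twisting. Everything else is formal, which is why this lemma follows directly from the definitions.
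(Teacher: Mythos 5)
Your proof is correct and is essentially the argument the paper has in mind: the paper merely asserts the lemma ``follows directly from the definitions,'' and your write-up spells out precisely that verification — $\mathcal{R}$-linearity of the $\hat{\mathfrak{G}}_{\mathcal{R},\kappa}$-action handles $\mathcal{R}$, and the compatibility ${}^w\iota_{\mathcal{R},i}(f)=\iota_{\mathcal{R},w(i)}({}^w\!f)$ together with $\mathfrak{S}_m$-stability of $\mathcal{R}_z$ handles $\mathfrak{S}_m$, after which functoriality and descent to $H_0$ are formal.
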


\subsection{The Knizhnik-Zamolodchikov connection.} \label{subsec:KZ connection}

We are going to extend the $\mathcal{R}^\rtimes$-action on $\mathbb{T}_\kappa(M)$ and $\mathbb{F}_\kappa(M)$ to an action of $\mathcal{H}_{\kappa+n}$. 

\begin{defi} 
Let $\kappa \in \C$. The \emph{deformed Weyl algebra} $\mathcal{D}_{\kappa}$ is the algebra generated by $x_1, \hdots, x_m$ and $q_1, \hdots, q_m$ subject to the relations
\[ [x_i,x_j] = [q_i,q_j] =0, \quad [x_i,q_j]=(\kappa + n)\delta_{ij} \quad (1 \leq i,j \leq m).\]
Note that $\mathcal{D}_{\mathbf{c}} = \C[x_1,\hdots,x_m,q_1,\hdots,q_m]$. Set \[\mathcal{D}_{\kappa}^\rtimes := \mathcal{D}_{\kappa}\rtimes \C \mathfrak{S}_m, \quad \mathcal{D}^{\rtimes}_{\kappa,\mathsf{reg}} := \mathcal{D}^\rtimes_\kappa[\delta^{-1}].\] 
Suppose that $M$ is a $\C[\h]^\rtimes$- (resp.\ $\mathcal{R}^\rtimes$-) module. 
A 
\emph{good connection} on $M$ is a representation of $\mathcal{D}^\rtimes_{\kappa}$ (resp.\ $\mathcal{D}^{\rtimes}_{\kappa,\mathsf{reg}}$) on $M$ extending the given $\C[\h]^\rtimes$- (resp.\ $\mathcal{R}^\rtimes$-) module structure. 
\end{defi}

\begin{lem} \label{lem: mod connection}
Let $M$ be a $\C[\h]^\rtimes$-module. If $\rho \colon \mathcal{D}_{\kappa}^\rtimes \to \End_{\C}(M)$ is a good connection on $M$, then $\rho'$, defined as 
\[ \rho'(q_i) := \rho(q_i) + \sum_{j \neq i} \frac{1}{x_i - x_j}, \] 
is a good connection on the $\mathcal{R}^\rtimes$-module $M_{\mathsf{reg}} := \mathcal{R} \otimes_{\C[\h]} M$. 
\end{lem}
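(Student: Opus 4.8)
The plan is to verify directly that the modified assignment $\rho'$ on the induced module $M_{\mathsf{reg}} = \mathcal{R} \otimes_{\C[\h]} M$ extends the given $\mathcal{R}^\rtimes$-structure and satisfies the defining relations of $\mathcal{D}^{\rtimes}_{\kappa,\mathsf{reg}}$. First I would record that $M_{\mathsf{reg}}$ automatically carries an $\mathcal{R}^\rtimes$-module structure extending that of $M$: the action of $\mathcal{R} = \C[\h][\delta^{-1}]$ is by multiplication on the left tensor factor, and $\mathfrak{S}_m$ acts diagonally (permuting both the $x_i$ and acting on $M$), which is well defined because $\delta$ is $\mathfrak{S}_m$-semiinvariant and the localization is at the $\mathfrak{S}_m$-stable multiplicative set generated by $\delta$. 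Since the correction term $\sum_{j\neq i}\frac{1}{x_i-x_j}$ lies in $\mathcal{R}$, it acts as a well-defined operator on $M_{\mathsf{reg}}$, so $\rho'(q_i)$ is at least a well-defined endomorphism; and on $M \subseteq M_{\mathsf{reg}}$ (or rather on the image of $M$), $\rho'$ restricts to $\rho$ up to this correction, so the $\C[\h]^\rtimes$-structure is visibly extended.

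Next I would check the three families of relations. The relation $[x_i,x_j]=0$ is immediate since the $x_i$ still act by multiplication. For $[q_i,q_j]=0$ with $i\neq j$, one computes
\[
[\rho'(q_i),\rho'(q_j)] = [\rho(q_i),\rho(q_j)] + \Bigl[\rho(q_i),\sum_{k\neq j}\tfrac{1}{x_j-x_k}\Bigr] + \Bigl[\sum_{k\neq i}\tfrac{1}{x_i-x_k},\rho(q_j)\Bigr] + \Bigl[\sum_{k\neq i}\tfrac{1}{x_i-x_k},\sum_{l\neq j}\tfrac{1}{x_l-x_k}\Bigr],
\]
where the first bracket vanishes by hypothesis, the last vanishes because everything there is multiplication by an element of the commutative ring $\mathcal{R}$, and the two middle brackets are evaluated using $[\rho(q_i),f(x)] = (\kappa+n)\partial_{x_i}f$ (a consequence of $[q_i,x_j]=(\kappa+n)\delta_{ij}$, extended to rational functions of the $x$'s by the Leibniz rule, which is valid since $q_i$ acts as a derivation of the localized polynomial ring up to the scalar $\kappa+n$). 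The resulting identity is the classical fact that $\sum_{j\neq i}\partial_{x_i}\bigl(\sum_{l\neq j}\frac{1}{x_j-x_l}\bigr)$ is symmetric in $i,j$ after antisymmetrizing — concretely one checks $\partial_{x_i}\frac{1}{x_j-x_k} - \partial_{x_j}\frac{1}{x_i-x_k}$ telescopes to zero when summed appropriately; this is a short rational-function computation. For $[q_i,x_j]=(\kappa+n)\delta_{ij}$ one notes the correction term commutes with all $x_j$, so $[\rho'(q_i),x_j] = [\rho(q_i),x_j] = (\kappa+n)\delta_{ij}$ unchanged. Finally, for the $\mathfrak{S}_m$-equivariance (the semidirect product relations $w q_i w^{-1} = q_{w(i)}$), I would use that $\rho$ is already $\mathfrak{S}_m$-equivariant in this sense and that $w \cdot \sum_{j\neq i}\frac{1}{x_i-x_j} \cdot w^{-1} = \sum_{j\neq w(i)}\frac{1}{x_{w(i)}-x_j}$, which is exactly the correction term attached to $q_{w(i)}$; hence $\rho'$ is equivariant too, and the $\delta^{-1}$-localized group algebra acts compatibly.

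The main obstacle — really the only nontrivial point — is the Jacobi-type identity ensuring $[\rho'(q_i),\rho'(q_j)]=0$, i.e. verifying that the mixed terms cancel. This reduces to the elementary but slightly fiddly rational-function identity
\[
\sum_{k\neq j}\partial_{x_i}\!\Bigl(\tfrac{1}{x_j-x_k}\Bigr) \;=\; \sum_{k\neq i}\partial_{x_j}\!\Bigl(\tfrac{1}{x_i-x_k}\Bigr),
\]
which one proves by isolating the $k=i$ term on the left and the $k=j$ term on the right (these contribute $\partial_{x_i}\frac{1}{x_j-x_i} = \frac{1}{(x_i-x_j)^2}$ on both sides) and observing that the remaining sums over $k\notin\{i,j\}$ agree term by term since $\partial_{x_i}\frac{1}{x_j-x_k}=0=\partial_{x_j}\frac{1}{x_i-x_k}$ for such $k$. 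Once this is in hand, $\rho'$ satisfies all relations of $\mathcal{D}^{\rtimes}_{\kappa,\mathsf{reg}}$ and restricts to the prescribed $\mathcal{R}^\rtimes$-action, which is precisely the assertion that $\rho'$ is a good connection on $M_{\mathsf{reg}}$.
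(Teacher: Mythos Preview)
Your proof is correct and follows exactly the approach the paper indicates: the paper's proof simply says ``the lemma follows by a direct calculation, as in \cite[Proposition 1.8]{VV},'' and what you have written is precisely that direct verification of the relations of $\mathcal{D}_{\kappa,\mathsf{reg}}^\rtimes$. There is a harmless typo in your displayed four-term commutator (the last sum should read $\sum_{l\neq j}\tfrac{1}{x_j-x_l}$), but since that bracket vanishes by commutativity of $\mathcal{R}$ this does not affect the argument.
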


\begin{proof}
The lemma follows by a direct calculation, as in \cite[Proposition 1.8]{VV}.  
\end{proof}

Let $M$ be a smooth $\mathbf{U}_\kappa(\hat{\g})$-module. Consider the $\mathcal{R}^\rtimes$-module $\mathbb{T}_{\kappa}(M)$ and the operators
\[ {}^\kappa\Grad_i := -(\kappa + n)\partial_{x_i} + {}^\kappa\mathbf{L}_{-1}^{(i)} \quad (1 \leq i \leq m) \]
on $\mathbb{T}_{\kappa}(M)$. The following proposition extends \cite[Lemma 13.3.7]{BF} to the critical level case.

\begin{pro} \label{pro: nabla descends} 
Let $\kappa \in \C$. 
\begin{enumerate}[label=\alph*), font=\textnormal,noitemsep,topsep=3pt,leftmargin=1cm]
\itemsep0em
\item The assignment
\[ {}^\kappa\Grad \colon \mathcal{D}^\rtimes_{\kappa, \mathsf{reg}} \to \End_{\C}(\mathbb{T}_{\kappa}(M)), \quad q_i \mapsto {}^\kappa\Grad_i \]
defines a good connection (known as the Knizhnik-Zamolodchikov connection) on $\mathbb{T}_{\kappa}(M)$. 
\item The operators ${}^\kappa\Grad_i$ normalize the $\g \otimes \mathcal{R}_z$-action on $\mathbb{T}_{\kappa}(M)$, i.e., $[{}^\kappa\Grad_i,\g \otimes \mathcal{R}_z] \subset \g \otimes \mathcal{R}_z$. Hence ${}^\kappa\Grad$ descends to a good connection on $\mathbb{F}_{\kappa}(M)$. 
\end{enumerate} 
\end{pro}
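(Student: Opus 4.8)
The statement has two parts. For part a), I need to verify that $q_i \mapsto {}^\kappa\Grad_i = (\kappa+n)\partial_{x_i} + {}^\kappa\mathbf{L}_{-1}^{(i)}$ extends the $\mathcal{R}^\rtimes$-action to a representation of $\mathcal{D}^\rtimes_{\kappa,\mathsf{reg}}$, i.e. that the defining relations of $\mathcal{D}_{\kappa}$ hold: $[{}^\kappa\Grad_i,{}^\kappa\Grad_j]=0$, $[{}^\kappa\Grad_i,x_j]=(\kappa+n)\delta_{ij}$, plus compatibility with the $\mathfrak{S}_m$-action and with inverting $\delta$. The relation $[{}^\kappa\Grad_i,x_j]=(\kappa+n)\delta_{ij}$ is immediate since $\partial_{x_i}$ contributes $(\kappa+n)\delta_{ij}$ and ${}^\kappa\mathbf{L}_{-1}^{(i)}$ commutes with multiplication by $x_j$ (it acts on the $i$-th tensor factor of $(\mathbb{V}^*_\kappa)^{\otimes m}$ and on $M$, not on $\mathcal{R}$). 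The commutation $[{}^\kappa\Grad_i,{}^\kappa\Grad_j]=0$ for $i\neq j$ holds because $\partial_{x_i}$ kills nothing relevant and the operators ${}^\kappa\mathbf{L}_{-1}^{(i)}$, ${}^\kappa\mathbf{L}_{-1}^{(j)}$ act on different tensor slots (hence commute), while for $i=j$ it is trivial; $\mathfrak{S}_m$-equivariance follows since conjugating ${}^\kappa\mathbf{L}_{-1}^{(i)}$ by $w$ gives ${}^\kappa\mathbf{L}_{-1}^{(w(i))}$ and $\partial_{x_i}$ transforms the same way.

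For part b), the heart of the matter, I would compute the commutator $[{}^\kappa\Grad_i, X\otimes f]$ for $X\in\g$ and $f\in\mathcal{R}_z$, acting on $\mathbb{T}_\kappa(M)$. Using the embedding \eqref{R-g-iota2}, the element $X\otimes f$ acts via its Laurent expansions $\iota_{\mathcal{R},j}(f)\in\mathcal{R}((t_j))$ at each point $x_j$ and at $\infty$, summed over the tensor slots. The operator $\partial_{x_i}$ does two things: it differentiates the coefficients in $\mathcal{R}$, and — crucially — it sees the dependence of $z_j=z-x_j$ on $x_i$, producing $\partial_{x_i}\iota_{\mathcal{R},i}(f) = -\iota_{\mathcal{R},i}(\partial_z f)$ in the $i$-th slot (the other expansions depend on $x_i$ only through their $\mathcal{R}$-coefficients). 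Meanwhile ${}^\kappa\mathbf{L}_{-1}^{(i)}$ acts only in the $i$-th slot, and by the Proposition in \S\ref{subsec:quadraticSSvector}, when $\kappa\neq\mathbf{c}$ we have $[\tfrac{1}{\kappa+n}{}^\kappa\mathbf{L}_{-1}, X\otimes g] = -X\otimes\partial_t g$, so $[{}^\kappa\mathbf{L}_{-1}^{(i)}, (X\otimes\iota_{\mathcal{R},i}(f))^{(i)}] = -(\kappa+n)(X\otimes\partial_{t_i}\iota_{\mathcal{R},i}(f))^{(i)} = -(\kappa+n)(X\otimes\iota_{\mathcal{R},i}(\partial_z f))^{(i)}$. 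Adding the two contributions from slot $i$, the $(\kappa+n)$-multiples of $X\otimes\iota_{\mathcal{R},i}(\partial_z f)$ cancel, leaving only terms of the form $X\otimes(\text{something in }\mathcal{R}_z)$, i.e. an element of $\g\otimes\mathcal{R}_z$. At the critical level $\kappa=\mathbf{c}$ the argument is actually cleaner: ${}^{\mathbf{c}}\mathbf{L}_{-1}\in\mathfrak{Z}$ is central in $\widehat{\mathbf{U}}_{\mathbf{c}}$, so $[{}^{\mathbf{c}}\mathbf{L}_{-1}^{(i)}, (X\otimes\iota_{\mathcal{R},i}(f))^{(i)}]=0$, and one must instead check that the $\partial_{x_i}$-contribution $(\kappa+n)\partial_{x_i}=0\cdot\partial_{x_i}$ — wait, $\kappa+n=0$ here, so ${}^{\mathbf{c}}\Grad_i = {}^{\mathbf{c}}\mathbf{L}_{-1}^{(i)}$, which commutes with everything in $\widehat{\mathbf{U}}_{\mathbf{c}}$ and hence certainly normalizes $\g\otimes\mathcal{R}_z$ (indeed centralizes it). So the critical-level case of b) is almost immediate, and the content is the $\kappa\neq\mathbf{c}$ bookkeeping, which is the cited \cite[Lemma 13.3.7]{BF}.

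Once b) is established, the descent to $\mathbb{F}_\kappa(M) = H_0(\g\otimes\mathcal{R}_z,\mathbb{T}_\kappa(M))$ is formal: an operator normalizing the $\g\otimes\mathcal{R}_z$-action preserves the submodule $(\g\otimes\mathcal{R}_z)\cdot\mathbb{T}_\kappa(M)$, hence descends to the quotient; and since the $\mathcal{D}^\rtimes_{\kappa,\mathsf{reg}}$-relations hold on $\mathbb{T}_\kappa(M)$ they hold a fortiori on the quotient, so the induced connection is again good. The main obstacle is the residue/Laurent-expansion bookkeeping in part b): one has to be careful that $\partial_{x_i}$ commutes past the expansion maps $\iota_{\mathcal{R},j}$ correctly (it doesn't simply commute — it has the extra $-\partial_z$ term in slot $i$ from the $z_i=z-x_i$ dependence), and that the residue theorem underlying \eqref{R-g-iota2} is compatible with this, so that the cross-terms in other slots genuinely land in $\g\otimes\mathcal{R}_z$ rather than only in $\hat{\mathfrak{G}}_{\mathcal{R},\kappa}$. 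I would handle this by working with a single $X\otimes f$, expanding $[{}^\kappa\Grad_i, X\otimes\iota_{\mathcal{R}}(f)]$ slot by slot, and checking that the result equals $X\otimes\iota_{\mathcal{R}}(g)$ for a suitable $g\in\mathcal{R}_z$ (plausibly $g$ related to $-\partial_z f$ or $\partial_{x_i}f$), invoking the $\kappa\neq\mathbf{c}$ Segal--Sugawara commutation and the injectivity of $\iota_{\mathcal{R}}$; the $\kappa=\mathbf{c}$ case I would dispatch separately and trivially using centrality of ${}^{\mathbf{c}}\mathbf{L}_{-1}$.
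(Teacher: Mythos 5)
Your proposal is correct and takes essentially the same approach as the paper: the paper immediately reduces to $\kappa=\mathbf{c}$ (since $\kappa\neq\mathbf{c}$ is covered by the cited \cite[Lemma 13.3.7]{BF}) and then, just as you observe, both a) and b) become formal consequences of ${}^{\mathbf{c}}\Grad_i={}^{\mathbf{c}}\mathbf{L}_{-1}^{(i)}$ acting only on the $i$-th tensor slot and of ${}^{\mathbf{c}}\mathbf{L}_{-1}$ lying in the centre $\mathfrak{Z}$. The only difference is that you spend most of your effort reconstructing the $\kappa\neq\mathbf{c}$ cancellation of $(\kappa+n)\partial_z$ terms, which the paper disposes of by citation; one small slip is your parenthetical claim that ${}^\kappa\mathbf{L}_{-1}^{(i)}$ also acts on $M$ — it acts only on the $i$-th copy of $\mathbb{V}_\kappa^*$, though this does not affect the argument.
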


\begin{proof}
It suffices to consider the case $\kappa = \mathbf{c}$. The operators ${}^{\mathbf{c}}\Grad_i = {}^{\mathbf{c}}\mathbf{L}_{-1}^{(i)}$ act on different factors $\mathbb{V}_{\mathbf{c}}^*$ of the tensor product $\mathbb{T}_{\mathbf{c}}(M) = \mathcal{R} \otimes (\mathbb{V}_{\mathbf{c}}^*)^{\otimes m} \otimes M$, so they commute. Moreover, the operators $x_j$ act only on the first factor $\mathcal{R}$ and so they commute with the operators ${}^{\mathbf{c}}\Grad_i$ as well. Hence ${}^{\mathbf{c}}\Grad$ is a representation of $\mathcal{D}_{\mathbf{c}}$, which clearly extends to a representation of $\mathcal{D}^\rtimes_{\mathbf{c},\mathsf{reg}}$. 
The second statement follows immediately from the fact that ${}^{\mathbf{c}}\mathbf{L}_{-1} \in \mathfrak{Z}$. 
\end{proof}

To obtain representations of the rational Cherednik algebra on $\mathbb{T}_\kappa(M)$ and $\mathbb{F}_\kappa(M)$, we are going to compose the connection ${}^\kappa\Grad'$ with the Dunkl embedding, whose definition we now recall.

\begin{pro}[{\cite[Proposition 4.5]{EG}}]
There is an injective algebra homomorphism, called the \emph{Dunkl embedding}, 
\begin{equation} \label{Dunkl embedding} \mathcal{H}_{\kappa + n} \hookrightarrow \mathcal{D}^{\rtimes}_{\kappa,\mathsf{reg}}, \quad x_i \mapsto x_i, \ w \mapsto w, \ y_i \mapsto D_i := q_i + \sum_{j \neq i} \frac{1}{x_i - x_j}(s_{i,j} - 1),\end{equation}
with $1 \leq i \leq m$ and $w \in \mathfrak{S}_m$. 
\end{pro}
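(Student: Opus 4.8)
The plan is to verify directly that the assignment $x_i \mapsto x_i$, $w \mapsto w$, $y_i \mapsto D_i$ respects the defining relations of $\mathcal{H}_{\kappa+n}$, and then to argue injectivity via a PBW/filtration argument. First I would note that the $x_i$ commute among themselves and that the relations involving only $x_i$ and $w \in \mathfrak{S}_m$ hold trivially in $\mathcal{D}^\rtimes_{\kappa,\mathsf{reg}}$, since the $x_i$ and the group algebra sit inside $\mathcal{D}^\rtimes_{\kappa,\mathsf{reg}}$ in the obvious way and $w x_i w^{-1} = x_{w(i)}$ there as well. The substantive checks are: (1) $[D_i, D_j] = 0$ for $i \neq j$; (2) $[x_i, D_j] = -s_{i,j}$ for $i \neq j$; and (3) $[x_i, D_i] = -(\kappa+n) + \sum_{j \neq i} s_{i,j}$. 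For (2) and (3) one uses $[q_k, x_l] = (\kappa+n)\delta_{kl}$ together with the elementary identity $[x_i, \frac{1}{x_k - x_l}(s_{k,l}-1)]$, which is nonzero only when $\{i\} \cap \{k,l\} \neq \emptyset$, and there one computes $x_i \frac{1}{x_i - x_j}(s_{i,j}-1) = \frac{x_i}{x_i-x_j}(s_{i,j}-1)$ and $\frac{1}{x_i-x_j}(s_{i,j}-1)x_i = \frac{1}{x_i-x_j}(x_j s_{i,j} - x_i)$, whose difference telescopes to $-(s_{i,j}-1) - s_{i,j} \cdot(\text{correction})$; collecting terms yields exactly $-s_{i,j}$ for the off-diagonal case and $-(\kappa+n) + \sum_{j\neq i}s_{i,j}$ for the diagonal case.

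The hard part will be relation (1), the commutativity $[D_i, D_j] = 0$. Writing $D_i = q_i + \sum_{k \neq i}\frac{1}{x_i - x_k}(s_{i,k}-1)$, the bracket $[D_i,D_j]$ expands into four types of terms: $[q_i,q_j]=0$; cross terms $[q_i, \sum_k \frac{1}{x_j-x_k}(s_{j,k}-1)]$ and its mirror, which produce derivatives of the rational coefficients; and the purely "reflection part" $[\sum_k \frac{1}{x_i-x_k}(s_{i,k}-1), \sum_l \frac{1}{x_j-x_l}(s_{j,l}-1)]$. One must check that the derivative terms from the $q$-reflection cross brackets exactly cancel against the commutators among the reflection terms. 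This is the classical computation underlying the flatness of the Dunkl (Knizhnik--Zamolodchikov) connection; the key combinatorial input is the identity $\frac{1}{(x_i-x_k)(x_j-x_k)} + \frac{1}{(x_j-x_i)(x_k-x_i)} + \frac{1}{(x_k-x_j)(x_i-x_j)} = 0$ for distinct $i,j,k$, used to collapse the three-index sums. I would organize the verification by fixing the pair $(i,j)$, separating the sum over the third index $k$ into the cases $k = j$, $k = i$, and $k \notin \{i,j\}$, and checking vanishing in each block; this is essentially the computation carried out in \cite[Proposition 4.5]{EG}, which the statement cites, so in the paper one can simply refer to it.

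Finally, for injectivity I would introduce the filtration on $\mathcal{H}_{\kappa+n}$ with $\deg x_i = \deg y_i = 1$, $\deg w = 0$, and the corresponding filtration on $\mathcal{D}^\rtimes_{\kappa,\mathsf{reg}}$ with $\deg x_i = 0$, $\deg q_i = 1$ (inverting $\delta$ in degree $0$). The homomorphism is filtered, and on associated graded it sends $y_i \mapsto q_i$ modulo lower-order and reflection corrections, so $\gr$ of the map is the localization map $\C[\h\oplus\h^*]\rtimes\C\mathfrak{S}_m \to \mathcal{R}\otimes_{\C[\h]}\C[\h^*]\rtimes\C\mathfrak{S}_m$ composed with the PBW isomorphism of Theorem~\ref{RCA PBW}; since localization at $\delta$ on the polynomial ring $\C[x_1,\dots,x_m]$ is injective, $\gr$ of the map is injective, hence so is the map itself. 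No genuine obstacle is expected here — the only real work is the bookkeeping in relation (1), and since the statement is quoted from \cite{EG} it suffices in the paper to cite that reference rather than reproduce the computation.
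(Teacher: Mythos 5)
You correctly observe that in the paper this proposition is just a citation of \cite[Proposition 4.5]{EG} with no proof supplied, and your outline (verify the defining relations, then argue injectivity via an associated-graded comparison) has the right shape. However, the filtration you set up for the injectivity step does not work as stated. You take $\deg x_i = 1$ on $\mathcal{H}_{\kappa+n}$ but $\deg x_i = 0$ on $\mathcal{D}^\rtimes_{\kappa,\mathsf{reg}}$. The map is still filtered, but precisely because the degree of $x_i$ drops, the induced map on associated graded kills the symbol of $x_i$: it is \emph{not} the localization $\C[\h\oplus\h^*]\rtimes\C\mathfrak{S}_m \hookrightarrow \mathcal{R}[q_1,\dots,q_m]\rtimes\C\mathfrak{S}_m$ you describe, and its kernel contains the span of the $x_i$ in degree $1$, so it fails to be injective.

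The fix is small: use the order filtration on the source, $\deg x_i = \deg w = 0$ and $\deg y_i = 1$, which is compatible with your filtration on the target. One still has $\gr\mathcal{H}_{\kappa+n}\cong\C[\h\oplus\h^*]\rtimes\C\mathfrak{S}_m$ because $[x_i,y_j]\in\C\mathfrak{S}_m$ drops to degree $0$ and thus vanishes in the associated graded, and now $\gr$ of the Dunkl map really is the localization at $\delta$, hence injective. Alternatively, and closer to \cite{EG}, one can bypass the associated-graded argument entirely: the Dunkl embedding factors through the (PBW-theorem-justified) injection $\mathcal{H}_{\kappa+n}\hookrightarrow\mathcal{H}_{\kappa+n}[\delta^{-1}]$ followed by an isomorphism $\mathcal{H}_{\kappa+n}[\delta^{-1}]\xrightarrow{\sim}\mathcal{D}^\rtimes_{\kappa,\mathsf{reg}}$, which is the actual content of \cite[Proposition 4.5]{EG}. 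A minor further point: your verbal description of the off-diagonal bracket $[x_i,D_j]$ is garbled---the single surviving term computes directly to $\tfrac{1}{x_j-x_i}(x_i-x_j)s_{i,j}=-s_{i,j}$ rather than by a telescoping involving $(s_{i,j}-1)$---though the end result you assert is correct.
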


\begin{pro}
Composing \eqref{Dunkl embedding} with ${}^\kappa\Grad'$ yields representations of $\mathcal{H}_{\kappa+n}$ on $\mathbb{T}_\kappa(M)$ and $\mathbb{F}_\kappa(M)$. Moreover, the functors \eqref{Tk functor} and \eqref{Fk functor} extend to functors
\[ \mathbb{T}_\kappa \colon \mathscr{C}_\kappa \to (\mathbf{U}(\g\otimes\mathcal{R}_z),\mathcal{H}_{\kappa+n})\Nmod{}, \quad \mathbb{F}_\kappa \colon \mathscr{C}_\kappa \to \mathcal{H}_{\kappa+n}\Lmod{}.\]
\end{pro}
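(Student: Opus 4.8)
The plan is to show that the Dunkl-embedded action of $\mathcal{H}_{\kappa+n}$ on $\mathbb{T}_\kappa(M)$ descends to the space of conformal coinvariants $\mathbb{F}_\kappa(M) = H_0(\g \otimes \mathcal{R}_z, \mathbb{T}_\kappa(M))$, and that both assignments are functorial. Most of the work has effectively been done already: by Proposition \ref{pro: nabla descends}, the operators ${}^\kappa\Grad_i$ give a good connection on $\mathbb{T}_\kappa(M)$ that normalizes the $\g \otimes \mathcal{R}_z$-action, and by Lemma \ref{lem: mod connection} the twisted version ${}^\kappa\Grad'$ (adding $\sum_{j\neq i}\frac{1}{x_i-x_j}$ to each $q_i$) is still a good connection on the $\mathcal{R}^\rtimes$-module $\mathbb{T}_\kappa(M)$. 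Composing with the Dunkl embedding \eqref{Dunkl embedding} $\mathcal{H}_{\kappa+n} \hookrightarrow \mathcal{D}^\rtimes_{\kappa,\mathsf{reg}}$ then produces a representation of $\mathcal{H}_{\kappa+n}$ on $\mathbb{T}_\kappa(M)$ extending the $\mathcal{R}^\rtimes$-structure, which is the first claim.

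For the descent to $\mathbb{F}_\kappa(M)$, the point is that every generator of $\mathcal{H}_{\kappa+n}$ acting through this composite normalizes the $\g \otimes \mathcal{R}_z$-action. The elements $x_i$ and $w \in \mathfrak{S}_m$ do so because the whole $\mathcal{R}^\rtimes$-action already normalizes the $\g\otimes\mathcal{R}_z$-action (the Lemma preceding \eqref{Tk functor}). For the Dunkl operators $y_i \mapsto D_i = q_i + \sum_{j\neq i}\frac{1}{x_i-x_j}(s_{i,j}-1)$, one writes $D_i$ under ${}^\kappa\Grad'$ as ${}^\kappa\Grad_i + \sum_{j\neq i}\frac{1}{x_i-x_j}s_{i,j}$; the first summand normalizes $\g\otimes\mathcal{R}_z$ by Proposition \ref{pro: nabla descends}(b), and each term $\frac{1}{x_i-x_j}s_{i,j}$ lies in $\mathcal{R}^\rtimes$, hence also normalizes it. Therefore the $\mathcal{H}_{\kappa+n}$-action preserves $\mathbf{U}(\g\otimes\mathcal{R}_z)\cdot \mathbb{T}_\kappa(M)$ and descends to the quotient $\mathbb{F}_\kappa(M) = H_0(\g\otimes\mathcal{R}_z,\mathbb{T}_\kappa(M))$.

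Functoriality is then formal: a morphism $M \to N$ in $\mathscr{C}_\kappa$ induces a $\g\otimes\mathcal{R}_z$-equivariant map $\mathbb{T}_\kappa(M)\to\mathbb{T}_\kappa(N)$ that is visibly compatible with multiplication by $\mathcal{R}$, with the $\mathfrak{S}_m$-action permuting tensor factors, with the partial Sugawara operators ${}^\kappa\mathbf{L}_{-1}^{(i)}$, and with $\partial_{x_i}$; hence it intertwines the $\mathcal{D}^\rtimes_{\kappa,\mathsf{reg}}$-actions and, after composing with the Dunkl embedding, the $\mathcal{H}_{\kappa+n}$-actions. Passing to $H_0$ gives the $\mathcal{H}_{\kappa+n}$-linear map $\mathbb{F}_\kappa(M)\to\mathbb{F}_\kappa(N)$, upgrading \eqref{Tk functor} and \eqref{Fk functor} to the asserted functors into $(\mathbf{U}(\g\otimes\mathcal{R}_z),\mathcal{H}_{\kappa+n})\Nmod{}$ and $\mathcal{H}_{\kappa+n}\Lmod{}$ respectively.

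The one genuine subtlety — and the step I would write out most carefully — is the normalization claim for the Dunkl operators, i.e.\ that $[D_i \text{ acting via } {}^\kappa\Grad', \g\otimes\mathcal{R}_z] \subseteq \g\otimes\mathcal{R}_z$ holds honestly inside $\End_\C(\mathbb{T}_\kappa(M))$, including the correct bookkeeping of the extra $\sum_{j\neq i}\frac{1}{x_i-x_j}$ term introduced in passing from ${}^\kappa\Grad$ to ${}^\kappa\Grad'$ and its interaction with the simple transpositions $s_{i,j}$; this is precisely the computation carried out in \cite[Proposition 1.8, Proposition 1.9]{VV} at non-critical level, and at the critical level it goes through unchanged because the only input that was level-sensitive — the commutation relation $[{}^\kappa\mathbf{L}_{-1}, X\otimes f] = -(\kappa+n)\,\partial_t$-type identity versus ${}^{\mathbf{c}}\mathbf{L}_{-1}\in\mathfrak{Z}$ — has already been accounted for in Proposition \ref{pro: nabla descends}. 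Everything else is routine verification.
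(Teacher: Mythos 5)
Your proof is correct and follows essentially the same route as the paper: invoke Lemma \ref{lem: mod connection} and Proposition \ref{pro: nabla descends} to get a good connection ${}^\kappa\Grad'$ that normalizes the $\g\otimes\mathcal{R}_z$-action and hence descends, compose with the Dunkl embedding, and check functoriality term-by-term. The only difference is that you spell out the normalization claim generator-by-generator for $\mathcal{H}_{\kappa+n}$, whereas the paper descends the full $\mathcal{D}^\rtimes_{\kappa,\mathsf{reg}}$-action first and then restricts along the Dunkl embedding; the "subtlety" you flag at the end is already disposed of by Proposition \ref{pro: nabla descends}(b) together with the observation that $\mathcal{R}^\rtimes$ normalizes $\g\otimes\mathcal{R}_z$, exactly as you say.
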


\begin{proof} 
By Lemma \ref{lem: mod connection} and Proposition \ref{pro: nabla descends}, ${}^\kappa\Grad'$ is a good connection on $\mathbb{T}_\kappa(M)$, which descends to a good connection on $\mathbb{F}_\kappa(M)$. It therefore yields representations of $\mathcal{D}^{\rtimes}_{\kappa,\mathsf{reg}}$ on $\mathbb{T}_\kappa(M)$ and $\mathbb{F}_\kappa(M)$, which become representations of $\mathcal{H}_{\kappa+n}$ via the Dunkl embedding. 

Let us check that $\mathbb{T}_\kappa$ and $\mathbb{F}_\kappa$ are functors. 
Let $f \colon M \to N$ be a morphism in $\mathscr{C}_\kappa$. It induces a map $\mathbb{T}_\kappa(f) \colon \mathbb{T}_{\kappa}(M) \to \mathbb{T}_{\kappa}(N)$. Since the $\mathcal{H}_{\kappa+n}$-action doesn't affect the last factor (as in \eqref{Tkappa tensor}) in these tensor products, $\mathbb{T}_\kappa(f)$ commutes with the $\mathcal{H}_{\kappa+n}$-action. The fact that $f$ is a $\hat{\g}_\kappa$-module homomorphism also implies that $\mathbb{T}_\kappa(f)$ commutes with the $\g \otimes \mathcal{R}_z$-action on $\mathbb{T}_{\kappa}(M)$ and $\mathbb{T}_{\kappa}(N)$. Hence $\mathbb{T}_\kappa(f)$ descends to a $\mathcal{H}_{\kappa+n}$-module homomorphism $\mathbb{F}_\kappa(f) \colon \mathbb{F}_{\kappa}(M) \to \mathbb{F}_{\kappa}(N)$. 
\end{proof}

\subsection{The current Lie algebra action.} \label{subsec: current Lie}
Given a smooth $\mathbf{U}_\kappa(\hat{\g})$-module $M$, set 
\[ \mathsf{T}_\kappa(M) := \C[\h] \otimes (\mathbf{V}^*)^{\otimes m} \otimes M, \quad \mathsf{T}_\kappa^{\mathsf{loc}}(M) := \mathcal{R} \otimes (\mathbf{V}^*)^{\otimes m} \otimes M. \] 
We will show that the functors $\mathsf{T}_\kappa$ and $\mathsf{T}_\kappa^{\mathsf{loc}}$ fit into the following commutative diagram
\[
\begin{tikzcd}[row sep=large, column sep = huge]
& (\mathbf{U}(\g[t]),\mathcal{H}_{\kappa+n})\Nmod{} \arrow[r,"{H_0(\g[t], - )}"] \arrow[d,"{\mathsf{loc}}"] & \mathcal{H}_{\kappa+n}\Lmod{} \arrow[d,"{\mathsf{loc}}"] \\
\mathscr{C}_\kappa \arrow[ru,"{\mathsf{T}_\kappa}",bend left = 15] \arrow[rd,"{\mathbb{T}_\kappa}",swap,bend right = 15] \arrow[r,"{\mathsf{T}_\kappa^{\mathsf{loc}}}"] & (\mathbf{U}(\g[t]),\mathcal{H}_{\kappa+n})\Nmod{} \arrow[r,"{H_0(\g[t], - )}"] & \Lmod{\mathcal{H}_{\kappa+n}} \\
 & (\mathbf{U}(\g\otimes\mathcal{R}_z),\mathcal{H}_{\kappa+n})\Nmod{} \arrow[ru,"{H_0(\g \otimes \mathcal{R}_z, - )}",swap,bend right = 15]
\end{tikzcd}
\]
where $\mathsf{loc}$ is the localization functor sending $N$ to $N_{\mathsf{reg}} := \mathcal{R} \otimes_{\C[\h]} N$. The Suzuki functor is the composition of $\mathsf{T}_{\kappa}$ with $H_0(\g[t], - )$. Let us explain this diagram in more detail. 
The current Lie algebra $\g[t]$ acts on $\mathsf{T}_\kappa^{\mathsf{loc}}(M)$ by the rule 
\begin{equation} \label{currentaction} Y[k] \mapsto \sum_{i=1}^m x_i^k \otimes Y^{(i)} + 1 \otimes (Y[-k])^{(\infty)} \quad (Y \in \g, \ k \geq 0). \end{equation} 
The $\mathcal{R}^\rtimes$-action on $\mathsf{T}_\kappa^{\mathsf{loc}}(M)$ is analogous to that on $\mathbb{T}_{\kappa}(M)$. 
It follows directly from the definitions that the $\g[t]$-action and the $\mathcal{R}^\rtimes$-action commute. We next recall how the $\mathcal{R}^\rtimes$-action can be extended to an $\mathcal{H}_{\kappa+n}$-action on $\mathsf{T}_\kappa^{\mathsf{loc}}(M)$.

\begin{defi}
Let $1 \leq i, j \leq m$ and $p \geq 0$. Consider  
\begin{alignat*}{7} \Omega^{(i,j)} &:= \sum_{1 \leq k,l \leq n} e_{kl}^{(i)} e_{lk}^{(j)},& \quad \Omega^{(i,\infty)}_{[p+1]} &:= \sum_{1 \leq k,l \leq n} e_{kl}^{(i)}e_{lk}[p+1]^{(\infty)},\\ 
\mathfrak{L}^{(i)} &:= -\sum_{1 \leq j \neq i \leq m} \frac{\Omega^{(i,j)}}{x_i - x_j} + \sum_{p \geq 0} x_i^p \Omega^{(i,\infty)}_{[p+1]},& \quad {}^\kappa\nabla_i &:=  -(\kappa+n)\partial_{x_i} +  \mathfrak{L}^{(i)}. \end{alignat*} 
as operators on $\mathsf{T}_\kappa^{\mathsf{loc}}(M)$. 
They are well-defined because $M$ is smooth. 
\end{defi}

\begin{lem} \label{lem: y-ops formula} 
The assignment 
\[ {}^\kappa\nabla \colon \mathcal{D}^\rtimes_{\kappa} \to \End_{\C}(\mathsf{T}_\kappa^{\mathsf{loc}}(M)), \quad q_i \mapsto {}^\kappa\nabla_i \]
defines a good connection on $\mathsf{T}_\kappa^{\mathsf{loc}}(M)$. 
\end{lem}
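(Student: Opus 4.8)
The plan is to verify the three defining relations of $\mathcal{D}^\rtimes_\kappa$ for the assignment $q_i \mapsto {}^\kappa\nabla_i$, $x_i \mapsto x_i$, $w \mapsto w$, and to check compatibility with the $\C[\h]^\rtimes$-action. The $\mathfrak{S}_m$-equivariance and the relations involving only the $x_i$ are immediate from the formulas, so the substance lies in the commutators $[{}^\kappa\nabla_i, x_j] = (\kappa+n)\delta_{ij}$ and $[{}^\kappa\nabla_i, {}^\kappa\nabla_j] = 0$. For the first, since $\mathfrak{L}^{(i)}$ is built purely out of the $\mathfrak{gl}_n$-factors $e_{kl}^{(i)}$, $e_{lk}[p+1]^{(\infty)}$ and the rational functions $x_i^p$, $(x_i - x_j)^{-1}$, it commutes with every $x_j$; thus $[{}^\kappa\nabla_i, x_j] = (\kappa+n)[\partial_{x_i}, x_j] = (\kappa+n)\delta_{ij}$, which is a one-line computation.

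The main work is the flatness relation $[{}^\kappa\nabla_i, {}^\kappa\nabla_j] = 0$ for $i \neq j$. First I would reduce to the critical level $\kappa = \mathbf{c}$: the difference ${}^\kappa\nabla_i - {}^{\mathbf{c}}\nabla_i = (\kappa+n)\partial_{x_i}$ has vanishing bracket with the $\mathfrak{L}$-part in the relevant way, so the general case follows formally from the $\kappa=\mathbf{c}$ case together with the already-established $[{}^\kappa\nabla_i,x_j]$ relation (the mixed terms $[(\kappa+n)\partial_{x_i}, \mathfrak{L}^{(j)}]$ and $[\mathfrak{L}^{(i)}, (\kappa+n)\partial_{x_j}]$ combine with $[\mathfrak{L}^{(i)},\mathfrak{L}^{(j)}]$ into an expression that is checked to vanish at $\kappa=\mathbf{c}$ and is independent of $\kappa$). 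Then I would expand $[{}^{\mathbf{c}}\nabla_i, {}^{\mathbf{c}}\nabla_j] = [\mathfrak{L}^{(i)}, \mathfrak{L}^{(j)}]$ into pieces according to the two summands of $\mathfrak{L}$: a ``finite'' Casimir piece $-\sum_{k\neq i}\Omega^{(i,k)}/(x_i-x_k)$ and an ``infinite'' piece $\sum_{p\geq 0} x_i^p \Omega^{(i,\infty)}_{[p+1]}$. The cross-terms produce expressions in the $e_{kl}$'s weighted by sums like $1/(x_i-x_k)(x_i-x_j)$, and the key input is the classical fact that $\Omega^{(i,j)}$ (the split Casimir) satisfies the infinitesimal braid / classical Yang--Baxter relations $[\Omega^{(i,j)} + \Omega^{(i,k)}, \Omega^{(j,k)}] = 0$, together with the identity $[\Omega^{(i,\infty)}_{[p+1]}, \Omega^{(j,\infty)}_{[q+1]}]$ reducing via the $\widehat{\mathfrak{g}}_{\mathbf{c}}$-bracket (where the central term carries the Killing form, which is what makes $\kappa=\mathbf{c}$ special). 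After collecting terms, the rational-function coefficients cancel by the partial-fractions identity $\frac{1}{(x_i-x_j)(x_i-x_k)} + \frac{1}{(x_j-x_i)(x_j-x_k)} + \frac{1}{(x_k-x_i)(x_k-x_j)} = 0$, and the central contributions cancel because ${}^{\mathbf{c}}\mathbf{L}_{-1}$ is central in $\mathfrak{Z}$ (cf.\ Proposition \ref{pro: nabla descends} and its proof).

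Finally, I would note that this is essentially a localization-free, "current algebra" version of the computation already carried out for $\mathbb{T}_\kappa(M)$ in Proposition \ref{pro: nabla descends}: under the localization functor $\mathsf{loc}$ and the embedding $\g[t] \hookrightarrow \g\otimes\mathcal{R}_z$ of \eqref{currentaction} versus \eqref{R-g-iota2}, the operators ${}^\kappa\nabla_i$ on $\mathsf{T}_\kappa^{\mathsf{loc}}(M)$ are intertwined with the Knizhnik--Zamolodchikov operators ${}^\kappa\Grad_i$ of \S\ref{subsec:KZ connection} (or differ from them by an inner term that does not affect the connection property), so the lemma can alternatively be deduced from Proposition \ref{pro: nabla descends} by transport of structure. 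I expect the main obstacle to be bookkeeping: keeping track of the infinitely many terms in $\sum_{p\geq 0} x_i^p \Omega^{(i,\infty)}_{[p+1]}$ and verifying that smoothness of $M$ makes every bracket a finite, well-defined sum, while correctly matching the Killing-form normalization at the critical level against the trace-form normalization away from it. The well-definedness is exactly the point flagged in the definition ("They are well-defined because $M$ is smooth"), and I would make that explicit before manipulating the series.
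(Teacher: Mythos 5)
The paper's own proof is a one-line citation: ``These commutation relations are calculated in [KN, Lemma 3.2--3.3]'' (Khoroshkin--Nazarov). You instead sketch the direct verification. The overall skeleton --- reduce to $[\,{}^\kappa\nabla_i, x_j]=(\kappa+n)\delta_{ij}$ and $[\,{}^\kappa\nabla_i, {}^\kappa\nabla_j]=0$, handle the latter by expanding into the Casimir piece and the $\infty$-factor piece, and use the classical Yang--Baxter/infinitesimal braid relations together with the partial-fractions identity --- is the right architecture, and it matches what is done in [KN]. Two points in your plan, however, are off.

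First, the assertion that the commutator $[\Omega^{(i,\infty)}_{[p+1]}, \Omega^{(j,\infty)}_{[q+1]}]$ produces a central term ``carrying the Killing form, which is what makes $\kappa=\mathbf{c}$ special'' is wrong. Both modes $e_{lk}[p+1]$ and $e_{sr}[q+1]$ have nonnegative degree $p+1, q+1\geq 1$, so the cocycle $\Res_{t=0}(t^{q+1}\partial_t t^{p+1})$ vanishes identically; no central term ever appears in $[\mathfrak{L}^{(i)},\mathfrak{L}^{(j)}]$, and that commutator is literally independent of $\kappa$. This is consistent with the lemma being stated for all $\kappa$. The correct organization of the computation is simply: $[\partial_{x_i},\partial_{x_j}]=0$ trivially; the coefficient of $(\kappa+n)$, namely $[\partial_{x_i},\mathfrak{L}^{(j)}]-[\partial_{x_j},\mathfrak{L}^{(i)}]$, is checked to vanish by differentiating the rational-function coefficients; and the $\kappa$-independent piece $[\mathfrak{L}^{(i)},\mathfrak{L}^{(j)}]$ vanishes by CYBE plus partial fractions. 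There is no reduction to the critical level, and phrasing it as such muddies what is actually being verified.

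Second, the alternative ``transport of structure'' argument from Proposition \ref{pro: nabla descends} is not sound as stated: the spaces $\mathbb{T}_\kappa(M)=\mathcal{R}\otimes(\mathbb{V}_\kappa^*)^{\otimes m}\otimes M$ and $\mathsf{T}_\kappa^{\mathsf{loc}}(M)=\mathcal{R}\otimes(\mathbf{V}^*)^{\otimes m}\otimes M$ are genuinely different (affinized vs.\ finite $\mathbf{V}^*$), the operators ${}^\kappa\Grad_i$ and ${}^\kappa\nabla_i$ are different, and Proposition \ref{pro: coinv iso} only identifies the spaces of coinvariants, not the tensor products themselves. A relation that holds on coinvariants need not lift; so the flatness on $\mathsf{T}_\kappa^{\mathsf{loc}}(M)$ does not follow formally from that on $\mathbb{T}_\kappa(M)$, and you would have to do the direct computation anyway (or cite [KN] as the paper does).
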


\begin{proof}
One needs to check that ${}^\kappa\nabla$ is a well-defined ring homomorphisms, i.e., show that $[{}^\kappa\nabla_i,{}^\kappa\nabla_j] = 0$ and $[{}^\kappa\nabla_i,x_j]=-(\kappa+n)\delta_{ij}$. These commutation relations are calculated in \cite[Lemma 3.2-3.3]{KN}. 
\end{proof}

\begin{pro} \label{pro:y preserves subspace}
Composing \eqref{Dunkl embedding} with ${}^\kappa\nabla'$ yields a representation of $\mathcal{H}_{\kappa+n}$ on $\mathsf{T}_\kappa^{\mathsf{loc}}(M)$. The element $y_i$ acts as the operator 
\begin{equation} \label{y-formula} {}^\kappa\bar{y}_i= -(\kappa+n)\partial_{x_i} + \sum_{1 \leq j \neq i \leq m} \frac{\Omega^{(i,j)}}{x_i - x_j}(\underline{s_{i,j}}-1) + \sum_{p \geq 0} x_i^p \Omega^{(i,\infty)}_{[p+1]},
\end{equation}
where $\underline{s_{i,j}}$ acts by permuting the $x_i$'s but not the factors of the tensor product. Moreover, $\mathsf{T}_\kappa(M)$ is a subrepresentation.  
\end{pro}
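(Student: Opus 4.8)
The plan is to verify the three assertions of the proposition in order: (1) that composing the Dunkl embedding \eqref{Dunkl embedding} with ${}^\kappa\nabla'$ gives a well-defined $\mathcal{H}_{\kappa+n}$-action on $\mathsf{T}_\kappa^{\mathsf{loc}}(M)$; (2) that $y_i$ acts by the explicit operator ${}^\kappa\bar{y}_i$ in \eqref{y-formula}; and (3) that the non-localized subspace $\mathsf{T}_\kappa(M) = \C[\h]\otimes(\mathbf{V}^*)^{\otimes m}\otimes M$ is stable under this $\mathcal{H}_{\kappa+n}$-action. The first assertion is essentially immediate from the machinery already set up: by Lemma~\ref{lem: y-ops formula}, ${}^\kappa\nabla$ is a good connection on $\mathsf{T}_\kappa^{\mathsf{loc}}(M)$, i.e.\ a representation of $\mathcal{D}^\rtimes_\kappa$; localizing via Lemma~\ref{lem: mod connection} (here $\mathsf{T}_\kappa^{\mathsf{loc}}(M)$ is already the localization $\mathcal{R}\otimes_{\C[\h]}\mathsf{T}_\kappa(M)$, so ${}^\kappa\nabla'$ is the shifted connection $q_i\mapsto {}^\kappa\nabla_i + \sum_{j\neq i}(x_i-x_j)^{-1}$) produces a representation of $\mathcal{D}^\rtimes_{\kappa,\mathsf{reg}}$; and the Dunkl embedding then pulls this back to an $\mathcal{H}_{\kappa+n}$-action. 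Since all the maps involved are ring homomorphisms, the composite is automatically well-defined.

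For the second assertion I would simply trace $y_i$ through the composition. Under \eqref{Dunkl embedding}, $y_i\mapsto D_i = q_i + \sum_{j\neq i}\frac{1}{x_i-x_j}(s_{i,j}-1)$. Applying ${}^\kappa\nabla'$ sends $q_i$ to ${}^\kappa\nabla'_i = {}^\kappa\nabla_i + \sum_{j\neq i}\frac{1}{x_i-x_j} = (\kappa+n)\partial_{x_i} - \sum_{j\neq i}\frac{\Omega^{(i,j)}}{x_i-x_j} + \sum_{p\geq0}x_i^p\Omega^{(i,\infty)}_{[p+1]} + \sum_{j\neq i}\frac{1}{x_i-x_j}$, and sends $s_{i,j}$ to the operator that permutes both the variables $x_i$ and the tensor factors. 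The tensor-factor permutation applied to $\Omega^{(i,j)}$ converts $s_{i,j}$ acting on the $(i,j)$ tensor slots into the identity, so in the combination $\sum_{j\neq i}\frac{\Omega^{(i,j)}}{x_i-x_j}(s_{i,j}-1)$ the ``$s_{i,j}$ part'' becomes $\sum_{j\neq i}\frac{\Omega^{(i,j)}}{x_i-x_j}\underline{s_{i,j}}$ (now with $\underline{s_{i,j}}$ permuting only the $x$'s) while the ``$-1$ part'' cancels against the $-\sum_{j\neq i}\frac{\Omega^{(i,j)}}{x_i-x_j}$ already present; the bare terms $\sum_{j\neq i}\frac{1}{x_i-x_j}$ coming from the Dunkl correction and from the localization shift likewise cancel. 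Collecting the surviving terms yields exactly \eqref{y-formula}. This is a bookkeeping computation; the only subtlety is keeping straight the distinction between $s_{i,j}$ (permuting tensor factors and variables) and $\underline{s_{i,j}}$ (permuting only variables), and the action of the factor-permutation on the Casimir-type element $\Omega^{(i,j)}$, which is symmetric in its two slots.

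For the third assertion, stability of $\mathsf{T}_\kappa(M)$, I need to check that each algebra generator of $\mathcal{H}_{\kappa+n}$ maps $\C[\h]\otimes(\mathbf{V}^*)^{\otimes m}\otimes M$ into itself. The generators $x_i$ (multiplication) and $w\in\mathfrak{S}_m$ (permuting variables and factors) obviously preserve $\mathsf{T}_\kappa(M)$. For $y_i$ we use the formula \eqref{y-formula}: the term $(\kappa+n)\partial_{x_i}$ preserves $\C[\h]$; the term $\sum_{p\geq0}x_i^p\Omega^{(i,\infty)}_{[p+1]}$ involves no denominators and preserves $\C[\h]$ (the sum is finite by smoothness of $M$); the only potentially problematic term is $\sum_{j\neq i}\frac{\Omega^{(i,j)}}{x_i-x_j}(\underline{s_{i,j}}-1)$. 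Here the key point is that $(\underline{s_{i,j}}-1)f$ is divisible by $x_i - x_j$ in $\C[\h]$ for any $f\in\C[\h]$ — this is the standard fact underlying the Dunkl operator construction — so $\frac{1}{x_i-x_j}(\underline{s_{i,j}}-1)$ sends $\C[\h]$ to $\C[\h]$, and tensoring with the (constant-coefficient) operator $\Omega^{(i,j)}$ on the finite-dimensional factors $(\mathbf{V}^*)^{\otimes m}$ keeps us inside $\mathsf{T}_\kappa(M)$. Hence $\mathsf{T}_\kappa(M)$ is an $\mathcal{H}_{\kappa+n}$-submodule.

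**Main obstacle.** None of the three steps is genuinely hard; the proof is a matter of careful unwinding. The step most prone to sign errors and requiring the most care is the second one — verifying \eqref{y-formula} — because it requires tracking precisely how the factor-permutation in $s_{i,j}$ interacts with $\Omega^{(i,j)}$ and confirming that all the ``$-1$'' and bare $\frac{1}{x_i-x_j}$ contributions from the Dunkl correction and the localization shift cancel exactly. In a full write-up one would cite the computations of \cite[Lemma 3.2--3.3]{KN} (already invoked for Lemma~\ref{lem: y-ops formula}) and of \cite[Proposition 1.8]{VV} to streamline this, reducing the proposition to a short direct calculation.
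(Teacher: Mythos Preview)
Your proposal is correct and follows essentially the same route as the paper: invoke Lemmas~\ref{lem: mod connection} and~\ref{lem: y-ops formula} for the first assertion, trace $y_i$ through the Dunkl embedding and ${}^\kappa\nabla'$ for the second, and check that the divided-difference operators $\frac{\underline{s_{i,j}}-1}{x_i-x_j}$ and $\partial_{x_i}$ preserve $\C[\h]$ for the third. The only difference is organizational: the paper writes the computation in step~2 more cleanly as
\[
{}^\kappa\nabla'(D_i) = {}^\kappa\nabla_i + \sum_{j \neq i} \frac{1}{x_i - x_j}s_{i,j} = (\kappa+n)\partial_{x_i} + \sum_{j \neq i} \frac{s_{i,j}-\Omega^{(i,j)}}{x_i - x_j} + \sum_{p \geq 0} x_i^p \Omega^{(i,\infty)}_{[p+1]},
\]
and then applies the single operator identity $s_{i,j} = \Omega^{(i,j)}\underline{s_{i,j}}$ to obtain \eqref{y-formula}. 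Your description of this step refers to a term ``$\sum \frac{\Omega^{(i,j)}}{x_i-x_j}(s_{i,j}-1)$'' that does not literally appear in the calculation; the actual combination that shows up is $\frac{s_{i,j}-\Omega^{(i,j)}}{x_i-x_j}$, which factors directly via $s_{i,j}=\Omega^{(i,j)}\underline{s_{i,j}}$. Once you rewrite your cancellation narrative that way, the argument is identical to the paper's.
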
 

\begin{proof} 
By Lemma \ref{lem: mod connection} and Lemma \ref{lem: y-ops formula}, ${}^\kappa\nabla'$ is a good connection, which implies the first statement. For the second part, observe that
\[{}^\kappa\nabla'(D_i) = {}^\kappa\nabla_i + \sum_{j \neq i} \frac{1}{x_i - x_j}s_{i,j} = -(\kappa+n)\partial_{x_i} + \sum_{j \neq i} \frac{1}{x_i - x_j}(s_{i,j}-\Omega^{(i,j)}) + \sum_{p \geq 0} x_i^p \Omega^{(i,\infty)}_{[p+1]}.\]
The equality of operators $s_{i,j} = \Omega^{(i,j)}\underline{s_{i,j}}$ implies \eqref{y-formula}. The third statement follows from the fact that the operators $\frac{-1+\underline{s_{i,j}}}{x_i - x_j}$ and $\partial_{x_i}$ preserve $\C[\h] \subset \mathcal{R}$. 
\end{proof}

\subsection{Suzuki functor.} 
\label{subsec: coinv iso} 
We next consider the relationship between the functors $\mathsf{T}_\kappa^{\mathsf{loc}}$ and~$\mathbb{T}_\kappa$. Both $\mathsf{T}_\kappa^{\mathsf{loc}}(M)$ and $\mathbb{T}_\kappa(M)$ carry representations of $\mathcal{D}^\rtimes_\kappa$ given by ${}^\kappa\nabla'$ and ${}^\kappa\Grad'$, respectively. 
The following result is well known (see, e.g., \cite[Proposition 2.18]{VV}).

\begin{pro} \label{pro: coinv iso} 
Let $\kappa \in \C$. 
\begin{enumerate}[label=\alph*), font=\textnormal,noitemsep,topsep=3pt,leftmargin=1cm]
\itemsep0em
\item 
The connection ${}^\kappa\nabla'$ normalizes the $\g[t]$-action on $\mathsf{T}_\kappa^{\mathsf{loc}}(M)$ and descends to a good connection on $H_0(\g[t], \mathsf{T}_\kappa^{\mathsf{loc}}(M))$. 
\item There is a $\mathcal{D}^\rtimes_{\kappa,\mathsf{reg}}$-module isomorphism 
\begin{equation} \label{global-current coinv iso} H_0(\g[t], \mathsf{T}_\kappa^{\mathsf{loc}}(M)) \cong H_0(\g \otimes \mathcal{R}_z, \mathbb{T}_{\kappa}(M)),\end{equation} 
intertwining the connections ${}^\kappa\nabla'$ and ${}^\kappa\Grad'$. 
\end{enumerate} 
\end{pro} 

\begin{proof}
%A detailed proof of the first statement can be found in \cite[\S 3.2]{KN}, and of the second statement in  \cite[Proposition 3.6]{VV}). 
Let us prove part a). Set $t=\kappa+n$, and let $X \in \g$ and $r \geq 0$. We need to compute the commutator $[{}^\kappa\nabla'(q_i), a(X[r])]$. We have 
\[ [{}^\kappa\nabla'(q_i), a(X[r])] = A + \sum_{j \neq i} B_j + C \]
as linear operators on $\mathsf{T}_{\kappa}^{\mathsf{loc}}(M)$, where
\begin{align*} A = -[t \partial_{x_i}, a(X[r])], \quad B_j &= \frac{1}{x_i - x_j} \left[1-\Omega^{(i,j)}, a(X[r]) \right] \\
C &= \sum_{p \geq 0}\left[ x_i^p \Omega^{(i,\infty)}_{[p+1]},a(X[r])\right]. \end{align*}
We compute:
\begin{align*}
A &= -[t \partial_{x_i}, x_i^rX^{(i)}] = -rtx_i^{r-1}X^{(i)}, \\
B 
&= \frac{-1}{x_i - x_j} \left[\Omega^{(i,j)}, x_i^rX^{(i)} + x_j^rX^{(j)} \right]\\
&= \frac{x_i^r - x_j^r}{x_i - x_j}\left[\Omega^{(i,j)}, X^{(j)} \right] \\
&= \sum_{1 \leq p \leq r} \sum_{k,l} ([e_{kl},X][r-p])^{(j)}(e_{lk}[p-1])^{(i)}, \\
C &= \sum_{p \geq 0} \sum_{k,l} x_i^p \left[e_{kl}^{(i)}e_{lk}[p+1]^{(\infty)}, x_i^rX^{(i)} + X[-r]^{(\infty)}\right] \\
&= \sum_{p \geq 0} \sum_{k,l} \left( x_i^{p+r} [e_{kl}, X]^{(i)}e_{lk}[p+1]^{(\infty)} + x_i^pe_{kl}^{(i)}\left[e_{kl}[p+1]^{(\infty)}, X[-r]^{(\infty)}\right]\right) \\
&= \sum_{p \geq 0} \sum_{k,l} \left( -x_i^{p+r} e_{kl}^{(i)}([e_{lk},X][p+1])^{(\infty)} + x_i^pe_{kl}^{(i)}\left[e_{kl}[p+1]^{(\infty)}, X[-r]^{(\infty)}\right]\right) \\
& \quad + \sum_{k,l} r x_i^{r-1}\langle e_{lk},X\rangle_\kappa e_{kl}^{(i)} \\
&= \sum_{1 \leq p \leq r} \sum_{k,l} ([e_{kl},X][p-r])^{(\infty)}(e_{lk}[p-1])^{(i)} + \sum_{k,l} r x_i^{r-1}\langle e_{lk},X\rangle_\kappa e_{kl}^{(i)}.
\end{align*}
Therefore, we have 
\begin{align*}
[{}^\kappa\nabla'(q_i), a(X[r])] &= -rtx_i^{r-1}X^{(i)} + \sum_{j \neq i} \sum_{1 \leq p \leq r} \sum_{k,l} ([e_{kl},X][r-p])^{(j)}(e_{lk}[p-1])^{(i)} \\
& \quad + \sum_{1 \leq p \leq r} \sum_{k,l} ([e_{kl},X][p-r])^{(\infty)}(e_{lk}[p-1])^{(i)} + \sum_{k,l} r x_i^{r-1}\langle e_{lk},X\rangle_\kappa e_{kl}^{(i)}\\
&= -rtx_i^{r-1}X^{(i)} + \sum_{1 \leq p \leq r} \sum_{k,l}a([e_{kl},X][r-p])(e_{lk}[p-1])^{(i)} - D \\
& \quad  + rx_i^{r-1}(\kappa X^{(i)} - \Tr(X)), 
\end{align*}
where
\begin{align*} 
D &= \sum_{1 \leq p \leq r} \sum_{k,l} ([e_{kl},X][r-p])^{(i)}(e_{lk}[p-1])^{(i)} \\
&= \sum_{1 \leq p \leq r} \sum_{k,l} (x_i^{r-p}[e_{kl},X]^{(i)})(x_i^{p-1}e_{lk}^{(i)}) \\
&= rx_i^{r-1} \sum_{k,l} \left( e_{kl}^{(i)} X^{(i)}e_{lk}^{(i)} - X^{(i)}e_{kl}^{(i)}e_{lk}^{(i)}\right) \\
&= - rx_i^{r-1}(\Tr(X) + nX^{(i)}).
\end{align*}
Hence, we have 
\[
[{}^\kappa\nabla'(q_i), a(X[r])] \equiv r x_i^{r-1} \left(-tX^{(i)} + \Tr(X) + nX^{(i)} + \kappa X^{(i)} - \Tr(X) \right) = 0 
\]
modulo $\g[t] \cdot \mathsf{T}_{\kappa}^{\mathsf{loc}}(M)$. This shows that the connection ${}^\kappa\nabla'$ normalizes the $\g[t]$-action on $\mathsf{T}_\kappa^{\mathsf{loc}}(M)$. Part b) is standard - detailed proofs can be found in, e.g., \cite[Proposition 3.6]{VV} or \cite[Lemma 2.1, Proposition 2.6]{FFTL}. 
\end{proof}

\begin{cor} \label{cor: Suzuki all levels} Let $\kappa \in \C$. 
\begin{enumerate}[label=\alph*), font=\textnormal,noitemsep,topsep=3pt,leftmargin=1cm]
\itemsep0em
\item 
We have functors
\begin{alignat}{7} \label{T functor first appearance} 
\mathsf{T}_\kappa \colon& \mathscr{C}_\kappa \to (\mathbf{U}(\g[t]),\mathcal{H}_{\kappa+n})\Nmod{},& \quad& M \mapsto \mathsf{T}_\kappa(M),\\ \label{Suzuki functor first appearance} 
\mathsf{F}_\kappa \colon& \mathscr{C}_\kappa \to \mathcal{H}_{\kappa+n}\Lmod{},& \quad&  M \mapsto  H_0(\g[t], \mathsf{T}_{\kappa}(M)).
\end{alignat}
\item The map \eqref{global-current coinv iso} is an $\mathcal{H}_{\kappa+n}$-module isomorphism and the functors $\mathbb{F}_\kappa$ and $\mathsf{loc} \circ \mathsf{F}_\kappa$ are naturally isomorphic. 
\item The functor \eqref{Suzuki functor first appearance} is right-exact and commutes with direct sums.  %\eqref{T functor first appearance} is exact
\end{enumerate}
\end{cor}

\begin{proof}
By Proposition \ref{pro:y preserves subspace}, composing the Dunkl embedding \eqref{Dunkl embedding} with the connection ${}^\kappa\nabla'$ yields a representation of $\mathcal{H}_{\kappa+n}$ on $\mathsf{T}_\kappa(M)$. Proposition \ref{pro: coinv iso}.a) implies that this representation descends to a representation on $H_0(\g[t], \mathsf{T}_\kappa(M))$. 
This proves part a). Part b) follows directly from \ref{pro: coinv iso}.b). Part c) follows from the fact that $\mathsf{T}_\kappa$ is exact and taking coinvariants is right exact and commutes with direct sums. 
\end{proof} 

\begin{defi} \label{Suzuki first definition} 
Given $\kappa \in \C$, we call 
\begin{equation} \label{definition of Suzuki displayed} \mathsf{F}_\kappa \colon \mathscr{C}_\kappa \to \mathcal{H}_{\kappa+n}\Lmod{}, \quad
M \mapsto H_0(\g[t],  \C[\h] \otimes (\mathbf{V}^*)^{\otimes m} \otimes M) 
\end{equation} the \emph{Suzuki functor} (of level $\kappa$). 
\end{defi} 

The functor \eqref{definition of Suzuki displayed} extends Suzuki's construction from \cite{Suz} to the critical level case. 
Indeed, setting $\kappa = \mathbf{c}$, we get the functor
\[ \mathsf{F}_{\mathbf{c}} \colon \mathscr{C}_{\mathbf{c}} \to \Lmod{\mathcal{H}_{0}} \]
relating the affine Lie algebra at the critical level to the rational Cherednik algebra at $t=0$. 

\begin{rem}
In \cite{VV} Varagnolo and Vasserot constructed functors from $\mathscr{C}_\kappa$ ($\kappa \neq \mathbf{c}$) to the category 
of modules over the rational Cherednik algebra ($t\neq 0$) associated to the wreath product $(\Z/l\Z) \wr \mathfrak{S}_m$. We expect that our approach to extending the Suzuki functor to the $\kappa = \mathbf{c}$, $t=0$ case can also be applied to their functors. 
\end{rem}

\section{Suzuki functor - further generalizations} \label{almost smooth}

The Suzuki functor has so far been defined on smooth $\widehat{\mathbf{U}}_\kappa$-modules. We now extend its definition to all $\widehat{\mathbf{U}}_\kappa$-modules in several steps. We first extend it to finitely presented modules using a certain inverse limit construction. We then introduce an even more general definition which applies to all modules. Let $\kappa \in \C$ and $t = \kappa+n$ throughout this section. 

\subsection{Pro-smooth modules.} 

We are going to define the category of pro-smooth modules and the pro-smooth completion functor. If $\mathscr{I}$ is an inverse system in some category, we write $\lim \mathscr{I}$ or $\lim_{M_i \in \mathscr{I}} M_i$, where the $M_i$ run over the objects in $\mathscr{I}$, for its inverse limit. We start with the following auxiliary  lemma. 

\begin{lem} \label{kernel cosmooth} 
Let $M$ be any $\widehat{\mathbf{U}}_\kappa$-module, $N$ a smooth module and $f \colon M \to N$ a $\widehat{\mathbf{U}}_\kappa$-module homomorphism. Then $M/\ker f$ is a smooth module.
\end{lem}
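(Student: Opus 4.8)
The plan is to unwind the definition of smoothness and use the fact that $N$ is smooth, transporting the open-ideal condition along $f$. Recall from Lemma \ref{pro: smooth mod equivs} that a $\widehat{\mathbf{U}}_\kappa$-module $P$ is smooth if and only if $\Ann_{\widehat{\mathbf{U}}_\kappa}(v)$ is an open left ideal for every $v \in P$; equivalently, for every $v$ there is some $r \geq 0$ with $\hat{\g}_{\geq r}.v = 0$, i.e.\ $v \in P^{\hat{I}_r}$. So I would take an arbitrary element $\bar{v} \in M/\ker f$, lift it to $v \in M$, and analyse $f(v) \in N$.

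First I would observe that $f$ induces an injective $\widehat{\mathbf{U}}_\kappa$-module homomorphism $\bar{f} \colon M/\ker f \hookrightarrow N$ with $\bar{f}(\bar v) = f(v)$. Since $N$ is smooth, $\bar{f}(\bar v) = f(v)$ lies in $N^{\hat{I}_r}$ for some $r \geq 0$, that is, $\hat{\g}_{\geq r}.f(v) = 0$ in $N$. Because $\bar f$ is a module map, $\bar f(\hat{\g}_{\geq r}.\bar v) = \hat{\g}_{\geq r}.\bar f(\bar v) = 0$, and because $\bar f$ is injective we conclude $\hat{\g}_{\geq r}.\bar v = 0$, i.e.\ $\bar v \in (M/\ker f)^{\hat{I}_r}$. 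As $\bar v$ was arbitrary, $M/\ker f = \bigcup_{r \geq 0} (M/\ker f)^{\hat{I}_r}$, which is exactly the definition of a smooth module. (One should also note in passing that $\ker f$ is a $\widehat{\mathbf{U}}_\kappa$-submodule, so that $M/\ker f$ is genuinely a $\widehat{\mathbf{U}}_\kappa$-module and the quotient action is well defined; this is immediate since $f$ is a homomorphism.)

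There is essentially no obstacle here: the only mild subtlety is making sure the continuity/topology bookkeeping is not needed --- we are not claiming $M$ itself is smooth, only the quotient, and the argument above is purely algebraic once the equivalence in Lemma \ref{pro: smooth mod equivs} is invoked. If one prefers to avoid even citing that lemma, the same proof works verbatim with ``$\Ann_{\widehat{\mathbf{U}}_\kappa}(\bar v) \supseteq \Ann_{\widehat{\mathbf{U}}_\kappa}(f(v)) \supseteq \hat I_r$'' replacing the displayed equations, using that the annihilator of $\bar v$ in $M/\ker f$ contains the annihilator of $f(v)$ in $N$ (as $\bar f$ is injective), hence is open. Either way the statement follows in a few lines.
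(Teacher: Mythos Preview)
Your proof is correct and follows essentially the same approach as the paper: lift an element of $M/\ker f$ to $v\in M$, use smoothness of $N$ to find $r$ with $\hat I_r\cdot f(v)=0$, and conclude $\hat I_r\cdot \bar v=0$. The only cosmetic difference is that the paper phrases the last step as ``$f(\hat I_r\cdot v)=0$, so $\hat I_r\cdot v\subseteq\ker f$'' rather than invoking the injectivity of the induced map $\bar f$.
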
 

\begin{proof}
Let $v \in M$ and let $\bar{v}$ be the image of $v$ in $M/\ker f$. Since $N$ is smooth, there exists $r \geq 0$ such that $\hat{I}_r \cdot f(v) =0$. Hence $f(\hat{I}_r \cdot v) = 0$, $\hat{I}_r \cdot v \subseteq \ker f$ and so $\hat{I}_r \cdot \bar{v} = 0$. 
\end{proof}

\begin{defi}
A $\widehat{\mathbf{U}}_\kappa$-module $M$ is called \emph{pro-smooth} if $M$ is the inverse limit of an inverse system of smooth $\widehat{\mathbf{U}}_\kappa$-modules. Let $\widetilde{\mathscr{C}}_\kappa$ denote the full subcategory of $\widehat{\mathbf{U}}_\kappa\Lmod{}$ whose objects are pro-smooth modules. 
\end{defi}

\begin{defi}
Let $M$ be a $\widehat{\mathbf{U}}_\kappa$-module. The smooth quotients of $M$ form an inverse system $\mathscr{I}_M$ partially ordered by projections. 
Let
\[ \widetilde{M} := \lim \mathscr{I}_M. \] 
\end{defi}

\begin{pro} \label{pro: uni prop adjunction EW thm}
There exists a ``pro-smooth completion" functor
\begin{equation}\label{completion functor} \widehat{\mathbf{U}}_\kappa\Lmod{} \to \widetilde{\mathscr{C}}_\kappa, \quad M \mapsto \widetilde{M}, \ f \mapsto \tilde{f} \end{equation} 
left adjoint to the inclusion functor $\widetilde{\mathscr{C}}_\kappa \hookrightarrow  \widehat{\mathbf{U}}_\kappa\Lmod{}$. 
\end{pro}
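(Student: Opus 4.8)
The plan is to construct the functor $M \mapsto \widetilde{M}$ directly from the definition of $\widetilde{M}$ as the inverse limit $\lim \mathscr{I}_M$ of the inverse system of smooth quotients of $M$, and then to verify the adjunction by exhibiting the universal property. First I would check that $\widetilde{M}$ is indeed pro-smooth: this is immediate since it is by construction an inverse limit of smooth modules, so it lies in $\widetilde{\mathscr{C}}_\kappa$. There is a canonical $\widehat{\mathbf{U}}_\kappa$-module homomorphism $\eta_M \colon M \to \widetilde{M}$ induced by the compatible family of quotient maps $M \twoheadrightarrow M/K$ ranging over submodules $K$ with $M/K$ smooth; this will serve as the unit of the adjunction.

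\textbf{Functoriality.} Given a homomorphism $f \colon M \to N$, I would define $\tilde f \colon \widetilde{M} \to \widetilde{N}$ as follows. For each smooth quotient $N \twoheadrightarrow N/L$, the composite $M \xrightarrow{f} N \twoheadrightarrow N/L$ has image a smooth module by Lemma \ref{kernel cosmooth} (applied to $M \to N/L$), hence factors through some smooth quotient of $M$; this gives a morphism of inverse systems $\mathscr{I}_N \to \mathscr{I}_M$ (contravariant on index sets, covariant on the modules) and therefore a map on limits $\widetilde{M} \to \widetilde{N}$. Functoriality ($\widetilde{g\circ f} = \tilde g \circ \tilde f$ and $\widetilde{\id} = \id$) and naturality of $\eta$ then follow by chasing the defining universal properties of the inverse limits, using that a map out of an inverse limit into a smooth module is determined by its components.

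\textbf{The adjunction.} The core point is to show that for $M \in \widehat{\mathbf{U}}_\kappa\Lmod{}$ and $P \in \widetilde{\mathscr{C}}_\kappa$, composition with $\eta_M$ induces a bijection
\[ \Hom_{\widetilde{\mathscr{C}}_\kappa}(\widetilde{M}, P) \xrightarrow{\ \sim\ } \Hom_{\widehat{\mathbf{U}}_\kappa}(M, P). \]
Write $P = \lim_{P_j \in \mathscr{J}} P_j$ with each $P_j$ smooth. A map $M \to P$ amounts to a compatible family of maps $M \to P_j$; by Lemma \ref{kernel cosmooth} each such map has smooth image, hence factors through $\widetilde{M}$ in a way compatible with the system $\mathscr{J}$, giving a map $\widetilde{M} \to P$; conversely precomposition with $\eta_M$ recovers the original family. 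The only subtlety — and the step I expect to be the main obstacle — is the well-definedness and compatibility bookkeeping: one must check that the factorization through $\widetilde{M}$ is independent of which smooth quotient of $M$ one routes through, i.e.\ that $\widetilde{M}$ genuinely is the terminal object among smooth-module targets receiving a map from $M$. This reduces to the elementary but slightly fussy observation that the smooth quotients of $M$ are closed under finite pullbacks (the diagonal $M \to M/K_1 \times_{M/(K_1+K_2)} M/K_2$ has smooth target since a submodule of a finite product of smooth modules is smooth), so $\mathscr{I}_M$ is genuinely a cofiltered diagram and $\lim \mathscr{I}_M$ has the expected mapping-in property. Once this cofinality/compatibility is in place, uniqueness of adjoints gives functoriality of $M \mapsto \widetilde{M}$ for free, so one could even shortcut the explicit construction of $\tilde f$ above.
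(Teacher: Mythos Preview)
Your proposal is correct and follows essentially the same route as the paper: both arguments use Lemma~\ref{kernel cosmooth} to factor each composite $M \to N \to N_i$ (resp.\ $M \to P \to P_j$) through the smooth quotient $M/\ker$, then invoke the universal property of the inverse limit to assemble $\tilde f$ (resp.\ the adjunction map), with the unit given by the canonical map $M \to \widetilde{M}$. Your added remark that $\mathscr{I}_M$ is cofiltered is correct but not actually needed---the compatibility of the maps $\widetilde{M} \to P_j$ follows directly from the containment $\ker(M\to P_i) \subseteq \ker(M\to P_j)$ whenever $P_i \to P_j$, so no extra cofinality argument is required.
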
 

\begin{proof}
We first construct $\tilde{f}$ explicitly. 
Let $f \colon M \to N$ be a homomorphism of $\widehat{\mathbf{U}}_\kappa$-modules.
Given a smooth quotient $N_i$ of $N$, let $f_i$ be the map $M \xrightarrow{f} N \twoheadrightarrow N_i$. By Lemma \ref{kernel cosmooth}, $M_i := M/\ker f_i$ is a smooth module. Hence, there is a canonical map $\widetilde{M} \to M_i$ as part of the inverse limit data. Consider the diagram on the LHS below, where $N_j$ is another smooth quotient of $N$ and all the unnamed maps are part of the inverse system or inverse limit data. Since the outer pentagon commutes, the universal property of the inverse limit $\widetilde{N}$ implies that there exists a unique map $\tilde{f}$ making the diagram commute.  
\[
\begin{tikzcd}[column sep=small, row sep=small]
& \widetilde{M} \arrow["\tilde{f}", dashed]{d} \arrow[bend right]{ld} \arrow[bend left]{rd} &  \\
M_i \arrow["f_i", swap]{d} & \widetilde{N} \arrow{dl} \arrow{dr} & M_j \arrow["f_j"]{d} \\
N_i \arrow{rr} & & N_j
\end{tikzcd} \quad \quad \quad \begin{tikzcd}[column sep=small, row sep=small]
& \widetilde{M} \arrow["g'", dashed]{d} \arrow[bend right]{ld} \arrow[bend left]{rd} &  \\
M_i \arrow["g_i", swap]{d} & K \arrow{dl} \arrow{dr} & M_j \arrow["g_j"]{d} \\
K_i \arrow{rr} & & K_j
\end{tikzcd} \quad \quad \quad
\begin{tikzcd}[column sep=small, row sep=small]
 & M \arrow["\iota_M", dashed]{d}\arrow[bend right, twoheadrightarrow]{ldd}\arrow[bend left, twoheadrightarrow]{rdd}  & \\ 
 & \widetilde{M} \arrow{ld} \arrow{rd} &  \\
 M_i \arrow[rr] & & M_j 
\end{tikzcd}
\]

Next we construct the adjunction. Let $g \colon M \to K$ be a homomorphism of $\widehat{\mathbf{U}}_\kappa$-modules, and assume that $K$ is the inverse limit of an inverse system of smooth modules. Given such a smooth module $K_i$, let $g_i$ be the composition of $g$ with the canonical map $K \to K_i$. By Lemma \ref{kernel cosmooth}, $g_i$ factors through the smooth module $M_i := M/\ker g_i$. An analogous argument to the one above shows that there exists a unique map $g'$ making the middle diagram above commute. 
The universal property of the inverse limit $\widetilde{M}$ also yields a unique  map $\iota_M$ making the diagram on the RHS above commute.

It is easy to check that the maps  
\begin{equation} \label{adjunction pro smooth} \Hom_{\widetilde{\mathscr{C}}_\kappa}(\widetilde{M},K) \cong \Hom_{\widehat{\mathbf{U}}_\kappa}(M,K), \quad h \mapsto h \circ \iota_M, \ g' \mapsfrom g \end{equation}  
are mutually inverse bijections. This gives the adjunction. 
\end{proof}

\begin{pro} \label{pro: equiv: prosmooth and cUEA-mod} 
The restriction of \eqref{completion functor} to $\mathscr{C}_\kappa$ or $\widehat{\mathbf{U}}_\kappa\FPmod{}$ is naturally isomorphic to the identity functor. 
\end{pro}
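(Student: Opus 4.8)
The plan is to show that for $M$ in either $\mathscr{C}_\kappa$ or $\widehat{\mathbf{U}}_\kappa\FPmod{}$, the canonical morphism $\iota_M \colon M \to \widetilde{M}$ constructed in the proof of Proposition~\ref{pro: uni prop adjunction EW thm} is an isomorphism, and that this identification is natural in $M$ (naturality is automatic from the construction of $\iota_M$ via the universal property, so the real content is bijectivity). Since $\widetilde{M} = \lim \mathscr{I}_M$ is the inverse limit over the poset of smooth quotients of $M$, it suffices to prove two things: first, that the smooth quotients of $M$ are cofinal among \emph{all} quotients of $M$ in a suitable sense that forces $\varprojlim$ to recover $M$; and second, that the maps $M \to M_i$ are jointly injective (so $\iota_M$ is injective) and that every compatible family in the limit comes from an element of $M$ (so $\iota_M$ is surjective).

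First I would treat the case $M \in \mathscr{C}_\kappa$. Here $M = \bigcup_{r \geq 0} M^{\hat I_r}$, and one checks that the quotients $M/(\hat I_r \cdot M)$ — or rather, to get genuine quotients in the inverse system, the images of $M$ in $M/(\hat I_r \cdot N)$ for the various smooth $N$ — already exhaust $\mathscr{I}_M$ cofinally, because any smooth quotient $M \twoheadrightarrow N$ kills $\hat I_r \cdot M$ for $r$ large on each generator (this is exactly the mechanism of Lemma~\ref{kernel cosmooth}). More cleanly: $M$ itself is a smooth module, hence $M \in \mathscr{I}_M$ as its own terminal object (the identity quotient), so $\mathscr{I}_M$ has a terminal object and $\varprojlim \mathscr{I}_M = M$ with $\iota_M = \id$. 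That disposes of the smooth case essentially formally.

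The case $M \in \widehat{\mathbf{U}}_\kappa\FPmod{}$ is the substantive one and I expect it to be the main obstacle. Here $M$ need not be smooth, but it has a presentation $\widehat{\mathbf{U}}_\kappa^{\,k} \to \widehat{\mathbf{U}}_\kappa^{\,l} \to M \to 0$. The key point to establish is that the topology on $M$ induced by the ideals $\hat I_r \cdot M$ is separated and complete, i.e. $M \cong \varprojlim_r M/(\hat I_r \cdot M)$, and that each $M/(\hat I_r\cdot M)$ is smooth and is cofinal in $\mathscr{I}_M$. Completeness and separatedness should follow from the corresponding properties of $\widehat{\mathbf{U}}_\kappa = \varprojlim_r \widehat{\mathbf{U}}_\kappa/\hat I_r$ together with right-exactness of the functor $-\otimes_{\widehat{\mathbf{U}}_\kappa}\widehat{\mathbf{U}}_\kappa/\hat I_r$ applied to the finite presentation — concretely, one compares $\varprojlim_r \bigl(\widehat{\mathbf{U}}_\kappa/\hat I_r \otimes M\bigr)$ with $M$ using that finitely presented modules commute with the relevant inverse limit because the transition maps of the system $\{\widehat{\mathbf{U}}_\kappa/\hat I_r\}$ and the terms $\widehat{\mathbf{U}}_\kappa^{\,k}, \widehat{\mathbf{U}}_\kappa^{\,l}$ behave well (a Mittag-Leffler / $\varprojlim^1$-vanishing argument on the free modules, whose quotients $\widehat{\mathbf{U}}_\kappa^{\,l}/\hat I_r\widehat{\mathbf{U}}_\kappa^{\,l}$ form a surjective system). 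Cofinality of $\{M/(\hat I_r\cdot M)\}$ in $\mathscr{I}_M$ is then exactly Lemma~\ref{kernel cosmooth}: any smooth quotient $q\colon M\twoheadrightarrow N$ has $\ker q \supseteq \hat I_r\cdot M$ for suitable $r$ since the finitely many generators of $M$ (images of the standard basis of $\widehat{\mathbf{U}}_\kappa^{\,l}$) each have open annihilator in $N$. Combining: $\widetilde{M}=\varprojlim\mathscr{I}_M \cong \varprojlim_r M/(\hat I_r\cdot M) \cong M$, with the composite being $\iota_M$.

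The step I expect to be the main obstacle is proving $M \cong \varprojlim_r M/(\hat I_r\cdot M)$ for finitely presented $M$ — i.e. that the $\hat I_\bullet$-adic topology on such an $M$ is complete and Hausdorff. This requires care because $\widehat{\mathbf{U}}_\kappa$ is a completed (topological) algebra rather than Noetherian, so one cannot invoke Artin--Rees; instead the argument must go through the explicit inverse-limit description of $\widehat{\mathbf{U}}_\kappa$ and a diagram chase with the presentation, controlling the $\varprojlim^1$ term for the system of finitely generated free modules modulo the open ideals (which is where the surjectivity of the transition maps $\widehat{\mathbf{U}}_\kappa/\hat I_{r+1} \twoheadrightarrow \widehat{\mathbf{U}}_\kappa/\hat I_r$, and hence the Mittag-Leffler condition, gets used).
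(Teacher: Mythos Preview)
Your treatment of the smooth case is exactly the paper's: $M$ is the terminal object of $\mathscr{I}_M$, so $\widetilde{M}=M$.

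For $\widehat{\mathbf{U}}_\kappa\FPmod{}$ your approach is correct but takes a different, more hands-on route than the paper. The paper never proves $M \cong \varprojlim_r M/(\hat I_r\cdot M)$ for general finitely presented $M$; instead it exploits the fact, already established in Proposition~\ref{pro: uni prop adjunction EW thm}, that the pro-smooth completion functor is a \emph{left adjoint} and hence right exact. The paper then only checks the claim for $\widehat{\mathbf{U}}_\kappa$ itself (via the cofinal subsystem $\{\widehat{\mathbf{U}}_\kappa/\hat I_r\}$, which is your argument specialised to the cyclic free module), passes to finite free modules because limits commute with finite direct sums, observes that $\iota_{(\widehat{\mathbf{U}}_\kappa)^{\oplus a}}=\id$ forces $\tilde f=f$ for the presentation map $f$, and then applies right exactness to get $\widetilde{M}=\widetilde{\coker f}=\coker\tilde f=\coker f=M$. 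This sidesteps entirely the Mittag-Leffler/$\varprojlim^1$ analysis you flag as the main obstacle. Your direct approach would work too and has the virtue of making the topology on $M$ explicit, but it is more laborious; the paper's argument buys you the result essentially for free once the adjunction is in hand.
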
 

\begin{proof}
If $M$ is smooth then $M$ is the greatest element in the inverse system $\mathscr{I}_M$, so $\widetilde{M} = M$. Next suppose that $M$ is finitely presented with presentation
\[ (\widehat{\mathbf{U}}_\kappa)^{\oplus a} \xrightarrow{f} (\widehat{\mathbf{U}}_\kappa)^{\oplus b} \to M \to 0.\] 
We first show that $(\widehat{\mathbf{U}}_\kappa)^{\widetilde{}} = \widehat{\mathbf{U}}_\kappa$. 
The inverse system $\mathscr{I}':=\{ \widehat{\mathbf{U}}_\kappa/\hat{I}_r \mid r \geq 0 \}$ is a subsystem of $\mathscr{I}:=\mathscr{I}_{\widehat{\mathbf{U}}_\kappa}$. Suppose that $N=\widehat{\mathbf{U}}_\kappa/J$ is a smooth quotient and let $\bar{1}$ be the image of $1$ in $N$. Then, by smoothness, $\hat{I}_r.\bar{1}=0$ for some $r \geq 0$. Hence $\hat{I}_r \subseteq J$ and $N$ is a quotient of $\widehat{\mathbf{U}}_\kappa/\hat{I}_r$. Therefore $\mathscr{I}'$ is a cofinal subsystem of $\mathscr{I}$ and 
\[ (\widehat{\mathbf{U}}_\kappa)^{\widetilde{}} := \lim \mathscr{I} = \lim \mathscr{I}' = \widehat{\mathbf{U}}_\kappa.\] 
The fact that limits commute with finite direct sums implies that the pro-smooth completion functor sends $(\widehat{\mathbf{U}}_\kappa)^{\oplus a}$ to itself. Hence $\iota_{(\widehat{\mathbf{U}}_\kappa)^{\oplus a}} = \id$ and $\tilde{f} = f'$, using the notation from \eqref{adjunction pro smooth}. The adjunction \eqref{adjunction pro smooth}, therefore, implies that $\tilde{f} = f$. 
By Proposition \ref{pro: uni prop adjunction EW thm}, the pro-smooth completion functor is left adjoint and, hence, right exact. Hence  $(\coker f)^{\widetilde{}} = \coker \tilde{f} = \coker f =~M$. 
\end{proof}

We will also need the following lemma.

\begin{lem} \label{lem: pro homos 2} 
Let $M$ and $N$ be $\widehat{\mathbf{U}}_\kappa$-modules. Then 
\[ \Hom_{\widetilde{\mathscr{C}}_\kappa}(\widetilde{M},\widetilde{N}) = \lim_{N_i \in \mathscr{I}_N} \underset{M_j \in \mathscr{I}_M}{\colim} \Hom_{\widehat{\mathbf{U}}_\kappa}(M_j,N_i).\]
\end{lem}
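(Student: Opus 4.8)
The plan is to compute both sides by unwinding the definitions of the pro-smooth completion and using the adjunction \eqref{adjunction pro smooth}, together with the explicit description of $\widetilde{M}$ as the inverse limit of the smooth quotients $M_j := M/\ker f_j$ appearing in $\mathscr{I}_M$. The key point is that every homomorphism $\widetilde{M} \to \widetilde{N}$ between pro-smooth modules factors, up to the inverse-limit structure on the target, through a smooth quotient of $\widetilde{M}$, and every smooth quotient of $\widetilde{M}$ is dominated by one of the $M_j$.

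First I would use the adjunction of Proposition \ref{pro: uni prop adjunction EW thm}: since $\widetilde{N}$ is pro-smooth, \eqref{adjunction pro smooth} gives
\[ \Hom_{\widetilde{\mathscr{C}}_\kappa}(\widetilde{M},\widetilde{N}) \cong \Hom_{\widehat{\mathbf{U}}_\kappa}(M,\widetilde{N}). \]
Next, because $\widetilde{N} = \lim_{N_i \in \mathscr{I}_N} N_i$ is an inverse limit and $\Hom_{\widehat{\mathbf{U}}_\kappa}(M,-)$ commutes with inverse limits, this is
\[ \Hom_{\widehat{\mathbf{U}}_\kappa}(M,\widetilde{N}) \cong \lim_{N_i \in \mathscr{I}_N} \Hom_{\widehat{\mathbf{U}}_\kappa}(M,N_i). \]
So it remains to identify $\Hom_{\widehat{\mathbf{U}}_\kappa}(M,N_i)$ with $\colim_{M_j \in \mathscr{I}_M} \Hom_{\widehat{\mathbf{U}}_\kappa}(M_j,N_i)$ for each fixed smooth module $N_i$. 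For this I would argue as in the proof of Lemma \ref{kernel cosmooth}: given $g \colon M \to N_i$, the module $M/\ker g$ is a smooth quotient of $M$, hence an object of $\mathscr{I}_M$ (or is dominated by one), and $g$ factors as $M \twoheadrightarrow M/\ker g \to N_i$; conversely each $M_j \in \mathscr{I}_M$ comes with a canonical surjection $M \twoheadrightarrow M_j$, so composing gives a map back. The compatibility of these assignments with the projections $M_j \twoheadrightarrow M_{j'}$ in $\mathscr{I}_M$ is exactly the statement that the natural map $\colim_j \Hom(M_j,N_i) \to \Hom(M,N_i)$ is a bijection: it is surjective because every $g$ factors through some smooth quotient, and injective because if two maps $M_j \to N_i$ and $M_{j'} \to N_i$ agree after precomposition with $M \twoheadrightarrow M_j$ and $M \twoheadrightarrow M_{j'}$, then they agree on a common further quotient in the directed system $\mathscr{I}_M$ (the quotient by the sum of the two kernels, which is still smooth). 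Assembling these isomorphisms and interchanging the limit over $\mathscr{I}_N$ with the colimit over $\mathscr{I}_M$ — which is harmless since the limit is outside — yields the claimed formula.

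The main obstacle is bookkeeping with the directed/codirected index systems: one must check that $\mathscr{I}_M$ is genuinely filtered (so the colimit makes sense), that smooth quotients of $M$ and smooth quotients of $\widetilde{M}$ give cofinal systems with the same colimit of Hom-sets, and that the map $\colim_j \Hom(M_j,N_i)\to\Hom(M,N_i)$ is injective — the latter needs the observation that a quotient of a smooth module is smooth and that $\mathscr{I}_M$ is closed under the relevant pushouts. None of this is deep, but it is where care is required; the adjunction and the commutation of $\Hom(M,-)$ with inverse limits are formal.
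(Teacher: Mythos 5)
Your proof is correct and follows essentially the same two-step reduction as the paper: first use that Hom out of a fixed object preserves the inverse limit over $\mathscr{I}_N$, then identify $\Hom(M,N_i)$ with the filtered colimit over $\mathscr{I}_M$ via the factorization through smooth quotients (Lemma \ref{kernel cosmooth}). The only stylistic difference is that you invoke the adjunction \eqref{adjunction pro smooth} up front to replace $\widetilde{M}$ by $M$ everywhere, which lets you realize the colimit concretely as the filtered union $\bigcup_j \Hom(M_j,N_i) \subseteq \Hom(M,N_i)$; the paper instead keeps $\Hom(\widetilde{M},N_i)$ and verifies the universal property of the colimit directly — equivalent content, and your version arguably sidesteps the implicit identification of smooth quotients of $\widetilde M$ with those of $M$.
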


\begin{proof}
The equality $\Hom_{\widetilde{\mathscr{C}}_\kappa}(\widetilde{M},\widetilde{N}) = \lim_{N_i \in \mathscr{I}_N} \Hom_{\widehat{\mathbf{U}}_\kappa}(\widetilde{M},N_i)$ follows from the general properties of limits. Therefore it suffices to show that, for each $N_i \in \mathscr{I}_N$,  
\begin{equation} \label{Hom colimit} \Hom_{\widehat{\mathbf{U}}_\kappa}(\widetilde{M},N_i) = \underset{M_j \in \mathscr{I}_M}{\colim} \Hom_{\widehat{\mathbf{U}}_\kappa}(M_j,N_i).\end{equation}  
Let us check that the LHS of \eqref{Hom colimit} satisfies the universal property of the colimit. Suppose that we are given a vector space $X$ and linear functions $\chi_{M_j} \colon \Hom_{\widehat{\mathbf{U}}_\kappa}(M_j,N_i) \to X$, for each $M_j \in \mathscr{I}_M$, which commute with the natural inclusions between the Hom-spaces. 
We are now going to define a map $\chi \colon \Hom_{\widehat{\mathbf{U}}_\kappa}(\widetilde{M},N_i) \to X$. If $f \in \Hom_{\widehat{\mathbf{U}}_\kappa}(\widetilde{M},N_i)$, then, by Lemma \ref{kernel cosmooth}, the module $\overline{M} := \widetilde{M} / \ker f$ is smooth. Let $\bar{f} \colon \overline{M} \to N_i$ be the homomorphism induced by $f$. We define $\chi$ by setting $\chi(f) := \chi_{\overline{M}}(\bar{f})$. One can easily see that $\chi$ is the unique map making the diagram below commute (where $M_k$ is another smooth quotient of $M$ and all the unnamed maps are the canonical ones). 
\[ 
\begin{gathered}[b]
\begin{tikzcd}[column sep=tiny, row sep=small]
 \Hom_{\widehat{\mathbf{U}}_\kappa}(M_j,N_i)\arrow[rr] \arrow["\chi_{M_j}",swap]{rd} \arrow[bend right]{rdd} & & \Hom_{\widehat{\mathbf{U}}_\kappa}(M_k,N_i) \arrow["\chi_{M_k}"]{ld} \arrow[bend left]{ldd}\\
 & X \arrow["\chi"]{d} & \\
& \Hom_{\widehat{\mathbf{U}}_\kappa}(\widetilde{M},N_i)   & 
\end{tikzcd} \\[-\dp\strutbox]
\end{gathered}\qedhere
\] 
\end{proof} 

\subsection{Extension to all modules.} \label{sec: ext to all modules}
We start by extending the Suzuki functor from Definition \eqref{Suzuki first definition} to the category $\widehat{\mathbf{U}}_\kappa\FPmod{}$. Suppose that $M$ is a finitely presented $\widehat{\mathbf{U}}_\kappa$-module.  
By Proposition \ref{pro: equiv: prosmooth and cUEA-mod}, we have $M= \widetilde{M} := \lim \mathscr{J}_M$. Set 
\begin{equation} \label{commutes with limits} \mathsf{F}_\kappa(M) := \lim_{M_i \in \mathscr{I}_M} \mathsf{F}_\kappa(M_i),\end{equation} 
where the limit is taken in the category $\mathcal{H}_{t}\Lmod{}$. If $M$ is smooth then $M$ is the maximal element in the inverse system $\mathscr{I}_M$, so \eqref{commutes with limits} is compatible with the previous definition of~$\mathsf{F}_\kappa$.  

\begin{pro} \label{pro: Suzuki limit ext}
The functor \eqref{Suzuki functor first appearance} extends to a right exact functor \begin{equation} \label{extended functor} \mathsf{F}_{\kappa} \colon \widehat{\mathbf{U}}_\kappa\FPmod{} \to \mathcal{H}_{t}\Lmod{}, 
\end{equation}
which preserves finite direct sums. 
\end{pro}

\begin{proof}
We need to construct maps between Hom-sets. 
Suppose that $N = \widetilde{N}$ is another finitely presented $\widehat{\mathbf{U}}_\kappa$-module. 
Let $N_i \in \mathscr{I}_N$. 
For all $M_j \in \mathscr{J}_M$, we have maps
\[ \phi_j \colon \Hom_{\widehat{\mathbf{U}}_\kappa}(M_j,N_i) \xrightarrow{\mathsf{F}_\kappa} \Hom_{\mathcal{H}_{t}}(\mathsf{F}_\kappa(M_j),\mathsf{F}_\kappa(N_i)) \to \Hom_{\mathcal{H}_{t}}(\mathsf{F}_\kappa(M),\mathsf{F}_\kappa(N_i))\]
compatible with the transition maps of the direct system $\{ \Hom_{\widehat{\mathbf{U}}_\kappa}(M_j,N_i) \mid M_j \in \mathscr{J}_M\}$. 
The universal property of the colimit and \eqref{Hom colimit} yield a canonical map
\[ \psi_i \colon \Hom_{\widehat{\mathbf{U}}_\kappa}(M,N_i) = \underset{M_j \in \mathscr{I}_M}{\colim} \Hom_{\widehat{\mathbf{U}}_\kappa}(M_j,N_i) \to \Hom_{\mathcal{H}_{t}}(\mathsf{F}_{\kappa}(M),\mathsf{F}_{\kappa}(N_i)).\] 
The maps $\psi_i$ are compatible with the transition maps of the inverse system \linebreak $\{ \Hom_{\mathcal{H}_{t}}(\mathsf{F}_{\kappa}(M),\mathsf{F}_{\kappa}(N_i)) \mid N_i \in \mathscr{I}_N\}$. 
Hence the universal property of the limit 
yields a canonical map 
\begin{equation} \label{fpmod functoriality} \Hom_{\widehat{\mathbf{U}}_\kappa}(M,N) =\lim_{N_i \in \mathscr{I}_N} \Hom_{\widehat{\mathbf{U}}_\kappa}(M,N_i) \to \Hom_{\mathcal{H}_{t}}(\mathsf{F}_{\kappa}(M),\mathsf{F}_{\kappa}(N)).\end{equation} 
Therefore \eqref{extended functor} is in fact a functor. 

Since limits commute with finite direct sums, \eqref{extended functor} must preserve finite direct sums. 
We now prove right exactness. Suppose that we have a short exact sequence 
\begin{equation} \label{ses completion} 0 \to A \to B \to C \to 0 \end{equation}
in $\widehat{\mathbf{U}}_\kappa\FPmod{}$. By Proposition \ref{pro: equiv: prosmooth and cUEA-mod}, these modules are pro-smooth, and there exists a short exact sequence of inverse systems of smooth quotients 
\[ \{ 0 \to A_i \to B_i \to C_i \to 0 \mid i \in \Z_{\geq 0}\} \]
whose limit is \eqref{ses completion}. Since we are dealing with inverse systems of smooth quotients, the structure maps are all epimorphisms. Next, note that the functor $\mathsf{F}_\kappa$ is right exact on smooth modules by Corollary \ref{cor: Suzuki all levels}.c). Hence, after applying $\mathsf{F}_\kappa$, we get a short exact sequence of inverse systems of $\mathcal{H}_{t}$-modules 
\[ \{ \mathsf{F}_\kappa(A_i) \to \mathsf{F}_\kappa(B_i) \to \mathsf{F}_\kappa(C_i) \to 0 \mid i \in \Z_{\geq 0}\}, \]
where the structure maps are still epimorphisms. By \cite[Lemma 10.86.1]{stacks}, after taking the inverse limit, we get the sequence
\[ \mathsf{F}_\kappa(A) \to \mathsf{F}_\kappa(B) \to \mathsf{F}_\kappa(C) \to 0, \]
proving right-exactness. 
%It is right exact on smooth modules because $\mathsf{T}_\kappa$ is exact and taking coinvariants is right exact. Since the inverse limit functor is exact on inverse systems of epimorphisms, $\mathsf{F}_\kappa$ is also right exact on $\widehat{\mathbf{U}}_\kappa\FPmod{}$. 
\end{proof}

\begin{cor} \label{cor: EW Suzuki tensor} 
The space $\mathsf{F}_\kappa(\widehat{\mathbf{U}}_{\kappa})$ is a $(\mathcal{H}_{t},\widehat{\mathbf{U}}_{\kappa})$-bimodule. There exists a natural isomorphism of functors 
\begin{equation} \label{EW Suzuki tensor} \mathsf{F}_\kappa(-) \cong \mathsf{F}_\kappa(\widehat{\mathbf{U}}_{\kappa}) \otimes_{\widehat{\mathbf{U}}_{\kappa}}- \colon \widehat{\mathbf{U}}_\kappa\FPmod{} \to \mathcal{H}_{t}\Lmod{}.\end{equation}  
\end{cor}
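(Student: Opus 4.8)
The plan is to exhibit the claimed isomorphism by comparing two right exact functors that agree on the free module $\widehat{\mathbf{U}}_{\mathbf{c}}$, and then invoke the standard fact that a right exact functor on finitely presented modules is determined by its value on the rank-one free module together with the induced map on endomorphisms. First I would record the bimodule structure: $\mathsf{F}_\kappa$ is a functor on $\widehat{\mathbf{U}}_\kappa\FPmod{}$ with values in $\mathcal{H}_t\Lmod{}$, so $\mathsf{F}_\kappa(\widehat{\mathbf{U}}_{\mathbf{c}})$ is a left $\mathcal{H}_t$-module; the right $\widehat{\mathbf{U}}_{\mathbf{c}}$-action comes from the functoriality map \eqref{fpmod functoriality} applied to the endomorphism ring, i.e.\ from the algebra homomorphism $\widehat{\mathbf{U}}_{\mathbf{c}}^{\mathrm{op}} \cong \End_{\widehat{\mathbf{U}}_{\mathbf{c}}}(\widehat{\mathbf{U}}_{\mathbf{c}}) \to \End_{\mathcal{H}_t}(\mathsf{F}_\kappa(\widehat{\mathbf{U}}_{\mathbf{c}}))$. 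One checks that the two actions commute because the map \eqref{fpmod functoriality} is multiplicative and the $\mathcal{H}_t$-action is $\mathcal{H}_t$-linear by construction; this makes $\mathsf{F}_\kappa(\widehat{\mathbf{U}}_{\mathbf{c}})$ an honest $(\mathcal{H}_t,\widehat{\mathbf{U}}_{\mathbf{c}})$-bimodule and hence $\mathsf{F}_\kappa(\widehat{\mathbf{U}}_{\mathbf{c}}) \otimes_{\widehat{\mathbf{U}}_{\mathbf{c}}} -$ a well-defined right exact functor on $\widehat{\mathbf{U}}_{\mathbf{c}}\FPmod{}$ (note $\kappa = \mathbf{c}$ here since the tensor factor is $\widehat{\mathbf{U}}_{\mathbf{c}}$).

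Next I would construct the natural transformation. For any $M \in \widehat{\mathbf{U}}_{\mathbf{c}}\FPmod{}$ there is an evaluation-type map $\mathsf{F}_{\mathbf{c}}(\widehat{\mathbf{U}}_{\mathbf{c}}) \otimes_{\widehat{\mathbf{U}}_{\mathbf{c}}} M \to \mathsf{F}_{\mathbf{c}}(M)$: an element $m \in M$ is, by Proposition \ref{pro: equiv: prosmooth and cUEA-mod}, a $\widehat{\mathbf{U}}_{\mathbf{c}}$-module map $\widehat{\mathbf{U}}_{\mathbf{c}} \to M$ (send $1 \mapsto m$) only after passing to a smooth quotient, so more precisely $m$ defines, via \eqref{fpmod functoriality}, an $\mathcal{H}_t$-module map $\mathsf{F}_{\mathbf{c}}(\widehat{\mathbf{U}}_{\mathbf{c}}) \to \mathsf{F}_{\mathbf{c}}(M)$, and one sets $\Phi_M(\xi \otimes m)$ to be the image of $\xi$ under this map. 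Bilinearity over $\widehat{\mathbf{U}}_{\mathbf{c}}$ follows from multiplicativity of \eqref{fpmod functoriality}, and naturality in $M$ follows from functoriality. It remains to show $\Phi_M$ is an isomorphism. For $M = \widehat{\mathbf{U}}_{\mathbf{c}}$ this is immediate (both sides are $\mathsf{F}_{\mathbf{c}}(\widehat{\mathbf{U}}_{\mathbf{c}})$ and $\Phi$ is the identity), hence for $M = \widehat{\mathbf{U}}_{\mathbf{c}}^{\oplus b}$ it holds because both functors commute with finite direct sums — for $\mathsf{F}_{\mathbf{c}}$ this uses \eqref{commutes with limits} together with the fact, established in the proof of Proposition \ref{pro: equiv: prosmooth and cUEA-mod}, that $\mathscr{I}' = \{\widehat{\mathbf{U}}_{\mathbf{c}}/\hat I_r\}$ is cofinal and that $\mathsf{T}_{\mathbf{c}}$ and $H_0(\g[t],-)$ commute with these particular limits and with finite sums. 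Finally, given a presentation $\widehat{\mathbf{U}}_{\mathbf{c}}^{\oplus a} \xrightarrow{f} \widehat{\mathbf{U}}_{\mathbf{c}}^{\oplus b} \to M \to 0$, apply both functors and $\Phi$ to get a commutative diagram with exact rows (right exactness of both functors), the two left vertical maps being isomorphisms, so by the five lemma $\Phi_M$ is an isomorphism.

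The step I expect to be the main obstacle is verifying that $\mathsf{F}_{\mathbf{c}}$ genuinely commutes with the relevant finite limits and finite direct sums at the level needed — i.e.\ that $\mathsf{F}_{\mathbf{c}}(\widehat{\mathbf{U}}_{\mathbf{c}}^{\oplus b}) = \mathsf{F}_{\mathbf{c}}(\widehat{\mathbf{U}}_{\mathbf{c}})^{\oplus b}$ and that the evaluation map is compatible with passing through the cofinal system $\mathscr{I}'$. The subtlety is that $\mathsf{F}_{\mathbf{c}}$ on $\widehat{\mathbf{U}}_{\mathbf{c}}\FPmod{}$ is defined by the inverse limit \eqref{commutes with limits} over all smooth quotients, and one must check that the functor $\mathsf{T}_{\mathbf{c}} = \C[\h]\otimes(\mathbf{V}^*)^{\otimes m}\otimes -$ followed by $H_0(\g[t],-)$ interacts correctly with these limits; since $H_0$ is only right exact, not left exact, some care is needed, but the point is that $\{\widehat{\mathbf{U}}_{\mathbf{c}}/\hat I_r\}$ is a system of \emph{surjections} with surjective transition maps and the tensor and coinvariants functors preserve such surjectivity, so the relevant $\lim^1$ terms vanish and the computation goes through. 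Once this bookkeeping is in place, the rest is the routine "right exact functor out of a module category is $-\otimes(\text{its value on the free module})$" argument.
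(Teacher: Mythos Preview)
Your proposal is correct and follows essentially the same approach as the paper: the bimodule structure comes from the algebra map $\widehat{\mathbf{U}}_{\mathbf{c}}^{\mathrm{op}} \to \End_{\mathcal{H}_t}(\mathsf{F}_\kappa(\widehat{\mathbf{U}}_{\mathbf{c}}))$ induced by \eqref{fpmod functoriality}, and the natural isomorphism is obtained by the Eilenberg--Watts argument (check on free modules via additivity, then use right exactness and the five lemma). The paper's proof is terser---it simply cites the Eilenberg--Watts theorem and asserts that $\mathsf{F}_\kappa$ preserves finite direct sums---whereas you spell out the construction of the natural transformation and flag the verification that $\mathsf{F}_{\mathbf{c}}$ commutes with the relevant limits; but the strategy is the same.
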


\begin{proof}
If we take $M=N=\widehat{\mathbf{U}}_{\kappa}$ then \eqref{fpmod functoriality} is an algebra homomorphism  $$\widehat{\mathbf{U}}_{\kappa}^{op} \to \End_{\mathcal{H}_{t}}(\mathsf{F}_{\kappa}(\widehat{\mathbf{U}}_{\kappa}),\mathsf{F}_{\kappa}(\widehat{\mathbf{U}}_{\kappa}))$$
giving the right $\widehat{\mathbf{U}}_{\kappa}$-module structure. 

The second statement is proven in the same way as the Eilenberg-Watts theorem (see, e.g., \cite[Theorem 5.45]{Rot}). Let us briefly summarize the argument. One first uses the fact that $\mathsf{F}_\kappa$ preserves finite direct sums to show that the isomorphism \eqref{EW Suzuki tensor} holds for the category of finitely generated free $\widehat{\mathbf{U}}_{\kappa}$-modules. One then concludes that \eqref{EW Suzuki tensor} holds for arbitrary finitely presented modules by using the right exactness of $\mathsf{F}_\kappa$ together with the five lemma. 
\end{proof}

We now introduce the final and most general definition of the Suzuki functor. 

\begin{defi} \label{general definition of Suzuki} 
The functor \eqref{extended functor}, in the realization \eqref{EW Suzuki tensor}, extends to the colimit-preserving functor 
\begin{equation} \label{Suzuki functor - final} \mathsf{F}_\kappa(-) := \mathsf{F}_\kappa(\widehat{\mathbf{U}}_{\kappa}) \otimes_{\widehat{\mathbf{U}}_{\kappa}}- \ \colon \ \widehat{\mathbf{U}}_\kappa\Lmod{} \to \mathcal{H}_{t}\Lmod{}.\end{equation} 
From now on we will refer to \eqref{Suzuki functor - final} as \emph{the Suzuki functor}. 
\end{defi} 

\begin{rem} \label{remark colimits Suzuki} 
Let us make several remarks about the definition above.  
\begin{enumerate}[label=\alph*), font=\textnormal,noitemsep,topsep=3pt,leftmargin=1cm]
\itemsep0em
\item In Corollary \ref{cor: EW Suzuki tensor} we had to restrict ourselves to the category $\widehat{\mathbf{U}}_\kappa\FPmod{}$ because inverse limits do not commute with infinite coproducts. However, the functor  \eqref{Suzuki functor - final} preserves all colimits since it is left adjoint to the functor $N \mapsto \Hom_{\mathcal{H}_{t}}(\mathsf{F}_{\kappa}(\widehat{\mathbf{U}}_{\kappa}),N)$. 
\item We now have three definitions of the Suzuki functor:
\begin{itemize}
\item the ``coinvariants definition'' for smooth modules: $\mathsf{F}_\kappa(M) = H_0(\g[t], \mathsf{T}_{\kappa}(M))$, 
\item the ``limit definition'' for finitely presented modules: $\mathsf{F}_\kappa(M) = \lim_{M_i \in \mathscr{I}_M} \mathsf{F}_\kappa(M_i)$, 
\item the ``tensor product'' definition for all modules: $\mathsf{F}_\kappa(M) = \mathsf{F}_\kappa(\widehat{\mathbf{U}}_{\kappa}) \otimes_{\widehat{\mathbf{U}}_{\kappa}}M$. 
\end{itemize}
The limit definition agrees with the coinvariants definition, when restricted to smooth modules, by the comments preceding Proposition \ref{pro: Suzuki limit ext}. The tensor product definition agrees with the limit definition by Corollary \ref{cor: EW Suzuki tensor}. 
\end{enumerate} 
\end{rem}

\subsection{A generic functor.} 

Considering $t$ as an indeterminate, one obtains flat $\C[t]$-algebras $\widehat{\mathbf{U}}_{\C[t]}$ and $\mathcal{H}_{\C[t]}$ such that 
$$\widehat{\mathbf{U}}_{\C[t]}/(t-\xi)\widehat{\mathbf{U}}_{\C[t]} \cong \widehat{\mathbf{U}}_{\xi-n}, \quad \mathcal{H}_{\C[t]}/(t-\xi)\mathcal{H}_{\C[t]} \cong \mathcal{H}_{\xi}$$ for all $\xi \in \C$. More details on the algebra $\mathcal{H}_{\C[t]}$, often called the \emph{generic} rational Cherednik algebra, can be found in \cite[\S3]{BR}. We  have specialization functors
\begin{alignat*}{7}
\mathsf{spec}_{t=\xi} \colon& \ \widehat{\mathbf{U}}_{\C[t]}\Lmod{} \to \widehat{\mathbf{U}}_{\xi-n}\Lmod{}, \quad& \ M \mapsto& \ M/(t-\xi)\cdot M, \\
\mathsf{spec}_{t=\xi} \colon& \ \mathcal{H}_{\C[t]}\Lmod{} \to \mathcal{H}_{\xi}\Lmod{}, \quad& \ M \mapsto& \ M/(t-\xi)\cdot M.
\end{alignat*}
One can easily verify that our construction of the functor $\mathsf{F}_\kappa$ still makes sense if we treat $t$ as a variable throughout. Therefore, we obtain the \emph{generic Suzuki functor}
\[ \mathsf{F}_{\C[t]} \colon \widehat{\mathbf{U}}_{\C[t]}\Lmod{} \to \mathcal{H}_{\C[t]}\Lmod{},\]
which commutes with the specialization functors, i.e., $\mathsf{spec}_{t=\xi} \circ \mathsf{F}_{\C[t]} = \mathsf{F}_{\xi -n} \circ \mathsf{spec}_{t=\xi}$. 

\section{Computation of the Suzuki functor}
\label{sec: computation of Suzuki functor}

In this section we compute the Suzuki functor on certain induced $\mathbf{U}_\kappa(\hat{\g})$-modules, showing that the generalized Verma modules from Definition \ref{def: gen Vermas} as well as the regular module $\mathcal{H}_{t}$ are in the image of $\mathsf{F}_\kappa$. Let $\kappa \in \C$ and $t= \kappa+n$ throughout. 

\subsection{Induced modules} \label{weyl modules section} 

We start by recalling the definition of Verma modules. 
\begin{defi} 
Let $\lambda \in \t^*$ and 
let $\C_{\lambda,1}$ be the one-dimensional $\t\oplus\C\mathbf{1}$-module of weight $(\lambda,1)$. 
The corresponding \emph{Verma module} is 
\[ \mathbb{M}_{\kappa}(\lambda) := \Ind^{\hat{\g}_{\kappa}}_{\hat{\b}_+}\circ \Inf_{\t\oplus\C\mathbf{1}}^{\hat{\b}_+}\C_{\lambda,1}. \]
\end{defi} 

We next define certain induced modules which generalize the Weyl modules from \cite[\S 2.4]{KLI} (see also \cite[\S 9.6]{Fre}). 
Given $l\geq 1$ and $\mu \in \mathcal{C}_l(n)$, define 
\begin{equation} \label{def: l-mu bar} \hat{\mathfrak{l}}^+_{\mu} := \mathfrak{l}_{\mu} \oplus \hat{\g}_{\geq 1} \oplus \C\mathbf{1} \subseteq \hat{\g}_+, \quad \bar{\mathfrak{l}}_\mu := \hat{\mathfrak{l}}_\mu^+/\mathfrak{j}_\mu, \end{equation}
where $\mathfrak{j}_\mu :=\n_-[1] \oplus \n_+[1] \oplus (\t[1] \cap [\mathfrak{l}_\mu,\mathfrak{l}_\mu][1]) \oplus \hat{\g}_{\geq 2}$. 
\begin{lem} \label{lem:leviideal} 
The subspace $\mathfrak{j}_\mu$ 
is an ideal in the Lie algebra $\hat{\mathfrak{l}}_\mu^+$. Moreover, there is a Lie algebra isomorphism $\bar{\mathfrak{l}}_\mu \cong \mathfrak{l}_\mu \oplus \mathfrak{z}_\mu[1] \oplus \C \mathbf{1}$, 
where $\mathfrak{z}_\mu$ denotes the centre of $\l_\mu$. 
\end{lem}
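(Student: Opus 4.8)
The plan is to verify the two assertions by direct computation with the bracket in $\hat{\g}_+$, using the explicit description of $\mathfrak{j}_\mu$. First I would check that $\mathfrak{j}_\mu$ is an ideal in $\hat{\mathfrak{l}}_\mu^+ = \mathfrak{l}_\mu \oplus \hat{\g}_{\geq 1} \oplus \C\mathbf{1}$. Since $\C\mathbf{1}$ is central and $\hat{\g}_{\geq 2}$ is already an ideal in $\hat{\g}_+$ (as $[\g[i],\g[j]] \subseteq \g[i+j]$ and the cocycle lands in $\C\mathbf{1}$, vanishing on $\hat{\g}_{\geq 1}$), it suffices to analyze brackets modulo $\hat{\g}_{\geq 2}$. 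Thus one must show that $\mathfrak{k}_\mu := \n_-[1] \oplus \n_+[1] \oplus (\t[1]\cap[\mathfrak{l}_\mu,\mathfrak{l}_\mu][1])$ is stable, modulo $\hat{\g}_{\geq 2}$, under bracketing with $\mathfrak{l}_\mu$ and with $\g[1]$. For the $\mathfrak{l}_\mu$-action: $[\mathfrak{l}_\mu, X[1]] = [\mathfrak{l}_\mu, X][1]$, so I need that $[\mathfrak{l}_\mu,\cdot]$ preserves the subspace $\n_- \oplus \n_+ \oplus (\t \cap [\mathfrak{l}_\mu,\mathfrak{l}_\mu])$ of $\g$; this is the statement that $\n_\pm$ together with the "semisimple part" $\bigoplus_i \sla_{\mu_i}\cap\t$ of the Cartan form an $\mathfrak{l}_\mu$-submodule complementary to $\mathfrak{z}_\mu = Z(\l_\mu)$, which follows from the root-space decomposition of $\g$ relative to $\l_\mu$ (the $\mathfrak{l}_\mu$-module $\g$ decomposes as $\l_\mu$ plus off-block root spaces, and $\l_\mu = [\l_\mu,\l_\mu]\oplus\mathfrak{z}_\mu$ as $\l_\mu$-modules with $\mathfrak{z}_\mu$ trivial). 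For the $\g[1]$-action: $[\g[1],\g[1]]\subseteq\g[2]\subseteq\hat{\g}_{\geq 2}$, so this contributes nothing modulo $\hat{\g}_{\geq 2}$. This establishes the ideal claim.

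Next I would identify the quotient. By construction $\bar{\mathfrak{l}}_\mu = \hat{\mathfrak{l}}_\mu^+/\mathfrak{j}_\mu$, and as a vector space $\hat{\mathfrak{l}}_\mu^+/\mathfrak{j}_\mu \cong \mathfrak{l}_\mu \oplus \big(\g[1]/\mathfrak{k}_\mu\big) \oplus \C\mathbf{1}$. Now $\g = \n_- \oplus \t \oplus \n_+$ and $\t = \big(\t\cap[\mathfrak{l}_\mu,\mathfrak{l}_\mu]\big)\oplus\mathfrak{z}_\mu$, so $\g/\big(\n_-\oplus\n_+\oplus(\t\cap[\l_\mu,\l_\mu])\big) \cong \mathfrak{z}_\mu$, giving the vector-space identification $\bar{\mathfrak{l}}_\mu \cong \mathfrak{l}_\mu \oplus \mathfrak{z}_\mu[1]\oplus\C\mathbf{1}$. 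It remains to check this is a Lie algebra isomorphism for the induced bracket: $[\mathfrak{l}_\mu,\mathfrak{z}_\mu[1]] = [\mathfrak{l}_\mu,\mathfrak{z}_\mu][1] = 0$ since $\mathfrak{z}_\mu$ is central in $\l_\mu$; $[\mathfrak{z}_\mu[1],\mathfrak{z}_\mu[1]]\subseteq\g[2]\equiv 0$; and the cocycle term $\langle X,Y\rangle_{\mathbf{c}}\Res(g\partial_t f)\mathbf{1}$ vanishes whenever $f,g\in t\C[[t]]$ because then $g\partial_t f \in \C[[t]]$ has no residue. Hence $\mathfrak{z}_\mu[1]$ is a central abelian ideal and the bracket on $\bar{\mathfrak{l}}_\mu$ is exactly that of $\mathfrak{l}_\mu\oplus\mathfrak{z}_\mu[1]\oplus\C\mathbf{1}$ with $\mathfrak{z}_\mu[1]\oplus\C\mathbf{1}$ central.

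The only mildly delicate point — the "main obstacle," though it is not a serious one — is the bookkeeping showing that $\t[1]\cap[\mathfrak{l}_\mu,\mathfrak{l}_\mu][1]$ together with $\n_\pm[1]$ is genuinely $\mathfrak{l}_\mu$-stable and genuinely complementary to $\mathfrak{z}_\mu[1]$ inside $\g[1]$; this rests on the elementary but essential fact that for $\g=\mathfrak{gl}_n$ and a standard Levi $\l_\mu = \prod_i\mathfrak{gl}_{\mu_i}$ one has the $\l_\mu$-module decomposition $\g = [\l_\mu,\l_\mu]\oplus\mathfrak{z}_\mu\oplus\bigoplus_{\text{off-block}}\g_{\alpha}$ with $\mathfrak{z}_\mu$ the trivial isotypic part supported on the diagonal, so removing $\n_\pm$ and the trace-zero diagonal parts of each block leaves precisely $\mathfrak{z}_\mu$. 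Once this decomposition is in hand, every bracket computation above is immediate, and both statements of the lemma follow.
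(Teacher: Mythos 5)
Your proposal is correct and takes essentially the same route as the paper: reduce to the $\l_\mu$-action via $[\hat{\g}_{\geq 1},\mathfrak{j}_\mu]\subseteq\hat{\g}_{\geq 2}$, then observe that $\n_-\oplus\n_+\oplus(\t\cap[\l_\mu,\l_\mu])$ is exactly $[\l_\mu,\l_\mu]$ plus the off-block nilradicals, each of which is an $\l_\mu$-submodule of $\g$. You spell out the identification of the quotient and the vanishing of the cocycle, which the paper treats as immediate, but there is no difference in substance.
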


\begin{proof}
Since $\mathfrak{j}_\mu \subset \hat{\g}_{\geq 1}$, we have $[\hat{\g}_{\geq 1}, \mathfrak{j}_\mu] \subseteq \hat{\g}_{\geq 2} \subset \mathfrak{j}_\mu$. Therefore it suffices to show that $[\mathfrak{l}_{\mu}, \mathfrak{j}_\mu] \subseteq \mathfrak{j}_\mu$. This follows from the fact that $\mathfrak{j}_\mu = [\mathfrak{l}_\mu,\mathfrak{l}_\mu][1] \oplus \mathfrak{r}[1] \oplus  \hat{\g}_{\geq 2}$, where $\mathfrak{r}$ is the direct sum of the nilradical of the standard parabolic containing $\l_\mu$ and the nilradical of the opposite parabolic,  together with the following three inclusions. Firstly, we have $[\mathfrak{l}_{\mu}, \hat{\g}_{\geq 2}] \subseteq \hat{\g}_{\geq 2} \subset \mathfrak{j}_\mu$. Secondly, $[\mathfrak{l}_{\mu},\mathfrak{l}_{\mu}[1]] \subseteq [\mathfrak{l}_{\mu},\mathfrak{l}_{\mu}][1] \subset \mathfrak{j}_\mu$. Thirdly, $[\mathfrak{l}_{\mu},\mathfrak{r}[1]] \subseteq \mathfrak{r}[1] \subset~\mathfrak{j}_\mu$. The second statement of the lemma follows immediately. 
\end{proof}

Let $\Uu_1(\bar{\l}_\mu) := \Uu(\bar{\l}_\mu)/\langle \mathbf{1} - 1 \rangle$. Consider the functor
\begin{equation} \label{Ind a definition} \mathsf{Ind}_{\mu,\kappa} = \Ind^{\hat{\g}_\kappa}_{\hat{\l}_\mu^+} \circ \Inf_{\bar{\l}_\mu}^{\hat{\l}_\mu^+} \colon \Uu_1(\bar{\l}_\mu)\Lmod{} \to \mathscr{C}_\kappa.\end{equation} 
In the case $\mu = (1^n)$ we abbreviate $\mathsf{Ind}_{\kappa}:=\mathsf{Ind}_{(1^n),\kappa}$. Note that $\hat{\mathfrak{l}}^+_{(1^n)} = \hat{\t}_+$. Set $\mathfrak{i} := \mathfrak{j}_{(1^n)} = \n_-[1] \oplus \n_+[1] \oplus \hat{\g}_{\geq 2}$ and $\bar{\t} := \hat{\t}_+/\mathfrak{i}$. By Lemma \ref{lem:leviideal}, we have $\bar{\t} \cong \t \oplus \t[1]$.

\begin{defi}  
Let $\mu \in \mathcal{C}_l(n)$, $\lambda \in \Pi_\mu^+$ and $a \in (\t[1])^*$ with $\mathfrak{S}_n(a) = \mathfrak{S}_\mu$ (with respect to the usual Weyl group action). Extend $L(\lambda)$ to an $\Uu_1(\bar{\l}_\mu)$-module $L(a,\lambda)$ by letting $\mathfrak{z}_\mu[1]$ act via the weight $a$. 
We define the \emph{Weyl module} of type $(a,\lambda,\kappa)$ to be 
\[ \mathbb{W}_{\kappa}(a,\lambda) := \mathsf{Ind}_{\mu,\kappa}(L(a,\lambda)). \]
\end{defi} 

\begin{rem}
As a special case, when $a=0$, we obtain modules $\mathbb{W}_{\kappa}(\lambda):=\mathbb{W}_{\kappa}(0,\lambda)$ which coincide with the Weyl modules from \cite[\S 2.4]{KLI}.  
\end{rem}

\begin{defi} \label{HVM defi} Assume that $n=m$. 
Let $\mathfrak{I}_\kappa$ be the left ideal in $\Uu(\hat{\g}_\kappa)$ generated by $e_{ii} - 1_{\hat{\g}_\kappa} \ (1 \leq i \leq n)$, $\mathbf{1} - 1_{\hat{\g}_\kappa}$ and $\mathfrak{i} := \n_-[1] \oplus \n_+[1] \oplus \hat{\g}_{\geq 2}$. 
Define
\[ \mathbb{H}_{\kappa} := \Uu(\hat{\g}_\kappa)/\mathfrak{I}_\kappa = \mathsf{Ind}_{\kappa}(\mathcal{I}),\] 
where
$\mathcal{I} := \Ind_{\t\oplus \C \mathbf{1}}^{\bar{\t}}\C_{(1^n,1)} \cong S(\t[1]).$ 
\end{defi}

The module $\mathbb{H}_{\kappa}$ 
is cyclic, generated by the image $1_{\mathbb{H}}$ of $1_{\hat{\g}_\kappa} \in \Uu(\hat{\g}_\kappa)$. 
From now on, whenever $n=m$, let us identify 
\begin{equation} \label{symT-Ch-iso} \mathcal{I} \cong S(\t[1]) \to \C[\h^*], \quad e_{ii}[1] \mapsto - y_i.\end{equation} 

\subsection{Statement of the results.} \label{sec: statements comp}

We state the three main results of this section. The first one implies that the regular module appears in the image of the Suzuki functor.

\begin{thm} \label{thm: regular module} \label{Regular module thm} Let $n=m$. The map 
\begin{align} \label{Regular module iso}
\Upsilon \colon \mathcal{H}_{t} \xrightarrow{\sim}& \ \mathsf{F}_{\kappa}(\mathbb{H}_\kappa), \\ \label{Regular module iso 2} f(x_1,\hdots,x_n) w g(y_1,\hdots,y_n) \mapsto& \ [f(x_1,\hdots,x_n) \otimes e_{w}^* \otimes g(-e_{11}[1],\hdots,-e_{nn}[1])1_{\mathbb{H}}] 
\end{align} 
is an isomorphism of $\mathcal{H}_{t}$-modules. 
\end{thm}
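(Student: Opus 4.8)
The plan is to unwind the definition of $\mathsf{F}_\kappa(\mathbb{H}_\kappa)$ completely and identify it with $\mathcal{H}_t$ as a vector space first, then check that the map $\Upsilon$ is $\mathcal{H}_t$-linear. The starting point is the description $\mathbb{H}_\kappa = \mathsf{Ind}_\kappa(\mathcal{I})$ with $\mathcal{I} \cong S(\t[1]) \cong \C[\h^*]$, together with the tensor-product realization of the functor: by Definition \ref{Suzuki first definition}, $\mathsf{F}_\kappa(\mathbb{H}_\kappa) = H_0(\g[t], \C[\h] \otimes (\mathbf{V}^*)^{\otimes n} \otimes \mathbb{H}_\kappa)$. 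Since $\mathbb{H}_\kappa = \Ind_{\hat{\g}_+}^{\hat{\g}_\kappa}(\mathcal{I}')$ for a suitable $\hat{\g}_+$-module $\mathcal{I}'$, I would use the PBW theorem to write $\mathbb{H}_\kappa \cong \Uu(\hat{\g}_-) \otimes \mathcal{I}$ as a vector space and then push the $\g[t]$-coinvariants through. The key computational input is the formula \eqref{currentaction} for the $\g[t]$-action on $\mathsf{T}_\kappa(M)$: the positive modes $Y[k]$ with $k \geq 0$ act diagonally via $\sum_i x_i^k \otimes Y^{(i)} + 1 \otimes (Y[-k])^{(\infty)}$, so taking $H_0(\g[t], -)$ lets one use the negative modes living in $\mathbb{H}_\kappa$ to "absorb" the tensor factors, reducing $(\mathbf{V}^*)^{\otimes n} \otimes \mathbb{H}_\kappa$ down via a Schur--Weyl type argument.

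Concretely, I expect the following sequence of steps. First, compute $H_0(\g[t], \C[\h] \otimes (\mathbf{V}^*)^{\otimes n} \otimes \mathbb{H}_\kappa)$ by first taking coinvariants with respect to $\hat{\g}_{\geq 1} \cap \g[t] = \g \otimes t\C[t]$ (these kill the higher modes and, using the relations defining $\mathfrak{I}_\kappa$, trivialize most of $\mathbb{H}_\kappa$), leaving a $\g = \g[0]$-coinvariants problem of the shape $H_0(\g, \C[\h] \otimes (\mathbf{V}^*)^{\otimes n} \otimes (\text{something} \cong \C[\h^*] \otimes \Uu(\n_-\oplus\n_+)\text{-part}))$. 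Second, invoke classical Schur--Weyl duality (Corollary \ref{lem: SW duality form 2} / Proposition \ref{Schur-Weyl pro} with $n=m$) to see that $H_0$ of $(\mathbf{V}^*)^{\otimes n}$ against the relevant weight spaces produces exactly one copy of $\C\mathfrak{S}_n$, with basis indexed by the vectors $e_w^*$, $w \in \mathfrak{S}_n$ — this is where the factor $\C\mathfrak{S}_n = \spann\{e_w^*\}$ in the target comes from. Third, assemble: the $\C[\h]$ factor survives as $\C[x_1,\dots,x_n]$, the $S(\t[1])$-part of $\mathbb{H}_\kappa$ survives as $\C[\h^*] = \C[y_1,\dots,y_n]$ via \eqref{symT-Ch-iso}, and the $e_w^*$ give $\C\mathfrak{S}_n$, so as a vector space $\mathsf{F}_\kappa(\mathbb{H}_\kappa) \cong \C[\h] \otimes \C\mathfrak{S}_n \otimes \C[\h^*]$, matching $\mathcal{H}_t$ by the PBW theorem \eqref{RCA PBW}. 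This shows $\Upsilon$ is a well-defined vector space isomorphism.

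It then remains to verify that $\Upsilon$ intertwines the $\mathcal{H}_t$-actions. Compatibility with the $x_i$ and with $\mathfrak{S}_n$ is immediate from the construction (multiplication on $\C[\h]$ and permutation of the $e_w^*$ together with the $x_i$'s). The substantive point is the action of $y_i$: one must check that $y_i \cdot \Upsilon(f w g) = \Upsilon(f w g \cdot y_i)$, i.e. that the Cherednik algebra operator ${}^\kappa\bar y_i$ from \eqref{y-formula} in Proposition \ref{pro:y preserves subspace} acts, after passing to coinvariants, by right multiplication by $-e_{ii}[1]$ on the $\C[\h^*]$-part. Here one uses that the "infinity" term $\sum_{p\geq 0} x_i^p \Omega^{(i,\infty)}_{[p+1]}$ in \eqref{y-formula} involves only positive modes $e_{lk}[p+1]$; modulo $\g[t]$-coinvariants these can be rewritten using \eqref{currentaction} and the defining relations of $\mathbb{H}_\kappa$ (which set $e_{ii}[1] = -y_i$, kill $\n_\pm[1]$ and $\hat\g_{\geq 2}$), so the whole operator collapses to the single contribution from $e_{ii}[1]$, while the $\Omega^{(i,j)}(\underline{s_{i,j}}-1)$ terms reproduce the Dunkl correction and cancel appropriately against the $\partial_{x_i}$ term when $\kappa = \mathbf{c}$ (and more generally give the deformed version). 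The main obstacle I anticipate is precisely this last bookkeeping: tracking the positive modes through the coinvariants quotient and confirming that every term outside the single $e_{ii}[1]$ contribution either vanishes or is already accounted for by the Dunkl embedding. I would organize this by working in $\mathsf{T}_\kappa(\mathbb{H}_\kappa)$ before passing to $H_0$, establishing a normal form for representatives of coinvariant classes (using PBW to order the $e_{w}^*$ and the $S(\t[1])$-generators), and then computing ${}^\kappa\bar y_i$ on such a normal-form representative.
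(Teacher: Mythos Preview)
Your outline is essentially the paper's argument, which is packaged as Proposition~\ref{diagramHindInd}: the natural isomorphism $\mathsf{F}_\kappa \circ \mathsf{Ind}_{\mu,\kappa} \cong \mathcal{H}\mathsf{ind}_t \circ \overline{\mathsf{F}}^\mu$, established once and for all via the tensor identity (Lemma~\ref{lemma: tower of ind data}), reduces the theorem to showing that the partial functor gives $\overline{\mathsf{F}}(\mathcal{I}) \cong \C\mathfrak{S}_n \ltimes \C[\h^*]$ as a $\C[\h^*]^\rtimes$-module; applying $\mathcal{H}\mathsf{ind}_t$ then yields $\mathcal{H}_t$ automatically. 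The vector-space identification is precisely the weight calculation $H_0(\t,(\mathbf{V}^*)^{\otimes n}\otimes\mathcal{I}) = ((\mathbf{V}^*)^{\otimes n})_{(-1,\dots,-1)}\otimes\mathcal{I} \cong \C\mathfrak{S}_n\otimes\C[\h^*]$ you anticipate.

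Your description of the $y_i$-check needs correcting in two places. First, $\Upsilon$ is a map of \emph{left} $\mathcal{H}_t$-modules, so the identity to verify is $\Upsilon(y_i\cdot h)={}^\kappa\bar y_i\cdot\Upsilon(h)$; there is no right multiplication here. Second, the terms $(\kappa+n)\partial_{x_i}$ and $\Omega^{(i,j)}(x_i-x_j)^{-1}(\underline{s_{i,j}}-1)$ in \eqref{y-formula} do not ``cancel'' one another: on the subspace $\overline{U}$ represented by $1_{\C[\h]}\otimes(\mathbf{V}^*)^{\otimes n}\otimes\mathcal{I}$ each of them vanishes separately, as do $\Omega^{(i,\infty)}_{[p+1]}$ for $p\geq 1$ and the off-diagonal summands $e_{kl}^{(i)}e_{lk}[1]^{(\infty)}$ (because $\mathfrak{i}$ annihilates $\mathcal{I}$). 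What remains is exactly $\sum_k e_{kk}^{(i)}e_{kk}[1]^{(\infty)}$, the partial-Suzuki formula \eqref{y-formula partial Suzuki}, which on $e_{\mathsf{id}}^*\otimes\mathcal{I}$ is multiplication by $-e_{ii}[1]=y_i$. That check, together with $\mathfrak{S}_n$-equivariance, gives $\C[\h^*]^\rtimes$-linearity of $\overline{\Upsilon}$; the universal property of $\mathcal{H}\mathsf{ind}_t$ then upgrades this to $\mathcal{H}_t$-linearity, so you never have to track ${}^\kappa\bar y_i$ on representatives with arbitrary $f$.
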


The next theorem states that the Suzuki functor sends generalized Weyl modules to generalized Verma modules.  

\begin{thm} \label{thm: standard to Weyl} Let $n=m$. Take $l \geq 1$, $\mu \in \mathcal{C}_l(n)$, $\lambda \in \mathcal{P}_\mu(\mu)$ and $a \in \h^*\cong(\t[1])^*$ with $\mathfrak{S}_n(a) = \mathfrak{S}_\mu$.
There is an $\mathcal{H}_{t}$-module isomorphism $$\mathsf{F}_\kappa(\mathbb{W}_{\kappa}(a,\lambda)) \cong \Delta_{t}(a,\lambda).$$ 
\end{thm}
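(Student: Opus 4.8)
The strategy is to reduce the computation of $\mathsf{F}_\kappa(\mathbb{W}_\kappa(a,\lambda))$ to the Schur--Weyl computation of Proposition \ref{Schur-Weyl pro} by first understanding the $\g[t]$-coinvariants of $\mathsf{T}_\kappa(\mathbb{W}_\kappa(a,\lambda))$ as a vector space, and then matching the $\mathcal{H}_t$-action on the nose via the explicit Dunkl-type formula \eqref{y-formula} for the operators ${}^\kappa\bar y_i$. Since $\mathbb{W}_\kappa(a,\lambda) = \mathsf{Ind}_{\mu,\kappa}(L(a,\lambda)) = \Ind_{\hat\l_\mu^+}^{\hat\g_\kappa}\Inf L(a,\lambda)$, and the Suzuki functor commutes with the relevant induction in its last tensor factor, I would first establish a ``reduction to the finite part'' isomorphism of the shape
\[
H_0\bigl(\g[t],\ \C[\h]\otimes(\mathbf{V}^*)^{\otimes m}\otimes \mathbb{W}_\kappa(a,\lambda)\bigr)\ \cong\ \C[\h]\otimes_{\C[\h]^{\mathfrak{S}_\mu\text{-ish}}} H_0\bigl(\l_\mu,\ (\mathbf{V}^*)^{\otimes m}\otimes L(\lambda)\bigr),
\]
i.e.\ that taking $\g[t]$-coinvariants of the induced module collapses the positive modes $\hat\g_{\geq 1}$ (using that $\hat\g_{\geq 2}$ acts as zero on $L(a,\lambda)$ and $\t[1]$ acts by the scalar $a$, which is exactly what produces the specialization $y_i\mapsto a_i$), leaving only the finite Levi $\l_\mu$ to quotient by. Concretely this is a PBW/filtration argument: decompose $\mathbf{U}(\hat\g_\kappa)$ over $\mathbf{U}(\hat\l_\mu^+)$, push the negative-mode generators through, and observe the resulting complex computing homology degenerates.

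\textbf{Key steps, in order.} (1) Use Corollary \ref{cor: EW Suzuki tensor} / the tensor realization and the definition of $\mathbb{W}_\kappa(a,\lambda)$ as an induced module to write $\mathsf{T}_\kappa(\mathbb{W}_\kappa(a,\lambda)) = \C[\h]\otimes(\mathbf{V}^*)^{\otimes m}\otimes \mathbf{U}(\hat\g_\kappa)\otimes_{\mathbf{U}(\hat\l_\mu^+)} L(a,\lambda)$ and identify the $\g[t]$-action via \eqref{currentaction}. (2) Prove the reduction isomorphism above: show the $\g[t]$-coinvariants are computed by the $\l_\mu$-coinvariants of $\C[\h]\otimes(\mathbf{V}^*)^{\otimes m}\otimes L(\lambda)$, with $\t[1]$ contributing the eigenvalue $a$ for the $y_i$'s. (3) Apply Proposition \ref{Schur-Weyl pro}(b): $H_0(\l_\mu,(\mathbf{V}^*)^{\otimes m}\otimes L(\lambda))\cong \mathsf{Sp}_\nu(\lambda) = \C\mathfrak{S}_m\otimes_{\C\mathfrak{S}_\nu}\mathsf{Sp}(\lambda)$, and similarly the $\C[\h]$-localized version gives the structure of an induced module $\C[\h]\otimes$(that). (4) Identify the resulting $\C[\h]^\rtimes$-module with $\mathcal{H}_t\otimes_{\C\mathfrak{S}_\nu\ltimes\C[\h^*]}\mathsf{Sp}(a,\lambda)$ by checking that (i) as a $\C[\h]\rtimes\C\mathfrak{S}_m$-module it is free of the right size, and (ii) the operators $y_i$ act via ${}^\kappa\bar y_i$, which on the coinvariants (where the infinite sum $\sum_p x_i^p\Omega^{(i,\infty)}_{[p+1]}$ becomes, after passing through the $\g[t]$-action, the scalar $a_i$ plus lower-order terms) reproduces exactly the Dunkl operator $D_i$ acting on $\Delta_t(a,\lambda)$. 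For $n=m$ this last matching is facilitated by the explicit element $e_w^*$ and the identification \eqref{symT-Ch-iso} $e_{ii}[1]\mapsto -y_i$, which already appeared in Theorem \ref{thm: regular module}; indeed the case $\lambda$ trivial, $a$ generic should recover a localized/twisted version of $\Upsilon$.

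\textbf{Main obstacle.} The genuinely delicate step is (2) together with the action-matching in (4): one must control how the ``$\infty$-part'' of the current action \eqref{currentaction}, namely the terms $1\otimes(Y[-k])^{(\infty)}$ landing in $\mathbf{U}(\hat\g_-)\cdot L(a,\lambda)$, interacts with taking $\g[t]$-coinvariants, and verify that modulo the $\g[t]$-action every negative mode $Y[-k]$ ($k\geq 1$) can be traded for a polynomial-in-$x_i$ combination of finite-part operators. This is exactly the mechanism by which $\hat\g_\kappa$-Weyl modules ``see only'' the finite data $(a,\lambda)$ after applying $\mathsf{F}_\kappa$, and making it rigorous requires a careful PBW-basis bookkeeping (ordering negative modes, using smoothness of $L(a,\lambda)$ as a module over $\hat\g_+$ so that $\hat\g_{\geq 2}$ and most of $\hat\g_{\geq 1}$ act nilpotently/trivially). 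I expect to organize this via a filtration on $\mathsf{T}_\kappa(\mathbb{W}_\kappa(a,\lambda))$ by powers of $\hat\g_-$ and an induction showing the associated graded coinvariants are already concentrated in degree zero, then lift. Once the vector-space identification and the $y_i$-action are pinned down, freeness over $\C[\h]\rtimes\C\mathfrak{S}_m$ (hence the isomorphism with $\Delta_t(a,\lambda)$, which is free over $\C[\h]\rtimes\C\mathfrak{S}_m$ by the PBW theorem for $\mathcal{H}_t$) follows by a dimension/character count, and naturality in the evident maps gives that $\Upsilon$-type isomorphism is one of $\mathcal{H}_t$-modules.
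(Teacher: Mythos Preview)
Your plan is essentially the paper's approach: factor $\mathsf{F}_\kappa \circ \mathsf{Ind}_{\mu,\kappa}$ through a ``partial Suzuki functor'' $\overline{\mathsf{F}}^\mu(M)=H_0(\l_\mu,(\mathbf{V}^*)^{\otimes m}\otimes M)$ landing in $\C[\h^*]^\rtimes\Lmod{}$, compute $\overline{\mathsf{F}}^\mu(L(a,\lambda))\cong \mathsf{Sp}_\mu(a,\lambda)$ via Schur--Weyl, then induce up to $\mathcal{H}_t$.

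The one place you overcomplicate things is your step~(2). The reduction from $\g[t]$-coinvariants to $\l_\mu$-coinvariants does not require any PBW bookkeeping, filtrations, or associated-graded arguments. The paper handles it in one stroke using the \emph{tensor identity}: for $\l_\mu\subset\g[t]$ and any $\g[t]$-module $N$ (here $N=\C[\h]\otimes(\mathbf{V}^*)^{\otimes m}$ with the action \eqref{currentaction}) and $\l_\mu$-module $M$, one has $\Ind_{\l_\mu}^{\g[t]}(N\otimes M)\cong N\otimes \Ind_{\l_\mu}^{\g[t]}M$, and then $H_0(\g[t],\Ind_{\l_\mu}^{\g[t]}(-))\cong H_0(\l_\mu,-)$ is immediate. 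This replaces the induced module $\mathbb{W}_\kappa(a,\lambda)$ in the last tensor slot by $L(a,\lambda)$ and the outer $H_0(\g[t],-)$ by $H_0(\l_\mu,-)$ in a single formal step. Your displayed ``reduction isomorphism'' with a tensor over $\C[\h]^{\mathfrak{S}_\mu\text{-ish}}$ is not the right shape: it is simply $\C[\h]\otimes H_0(\l_\mu,(\mathbf{V}^*)^{\otimes m}\otimes L(\lambda))$.

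For step~(4), the action-matching is also simpler than you anticipate. It suffices to check the $y_i$-action on the generating subspace $\overline{U}$ coming from $1_{\C[\h]}\otimes(\mathbf{V}^*)^{\otimes m}\otimes L(a,\lambda)$: there $\partial_{x_i}$ and $(1-\underline{s_{i,j}})$ vanish, and since $\hat\g_{\geq 2}$ and off-diagonal $e_{kl}[1]$ kill $L(a,\lambda)$, the operator ${}^\kappa\bar y_i$ reduces to $\sum_k e_{kk}^{(i)}e_{kk}[1]^{(\infty)}$, which is the scalar $a_i$ on the relevant piece. No ``lower-order terms'' appear and no dimension count is needed; the isomorphism is then forced because both sides are generated over $\C[\h]^\rtimes$ by this subspace.
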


We remark that the $a=0$ case of the preceding theorem also follows from \cite[Proposition 6.3]{VV}. 
Our third theorem shows that the Suzuki functor sends Verma modules to Verma modules. 

\begin{thm} \label{thm: Vermas to standards}
Let $m,n \in \Z_{\geq0}$ and $\lambda \in \t^*$. 
Then $\mathsf{F}_{\kappa}(\mathbb{M}_{\kappa}(\lambda))\neq 0$ if and only if $\lambda \in \mathcal{P}_n(m)$. If $\lambda \in \mathcal{P}_n(m)$ then there is an $\mathcal{H}_{t}$-module isomorphism 
\[ \mathsf{F}_{\kappa}(\mathbb{M}_{\kappa}(\lambda)) \cong \Delta_{t}(\lambda).\]  
\end{thm}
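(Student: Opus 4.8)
The plan is to relate $\mathbb{M}_\kappa(\lambda)$ to the modules $\mathbb{W}_\kappa(a,\lambda)$ already handled in Theorem \ref{thm: standard to Weyl}, but now in the setting where $n$ and $m$ need not be equal, and to compute the coinvariants directly using Schur--Weyl duality in the Borel form, i.e.\ Corollary \ref{lem: SW duality form 2}. Concretely, $\mathbb{M}_\kappa(\lambda) = \Ind^{\hat\g_\kappa}_{\hat\b_+}\C_{\lambda,1}$, so by the tensor-identity / induction-in-stages argument (the same one underpinning the computation of $\mathsf{F}_\kappa$ on induced modules in \S\ref{weyl modules section}), together with the fact that $\mathsf{F}_\kappa$ preserves colimits (Remark \ref{remark colimits Suzuki}), I expect
\[ \mathsf{F}_\kappa(\mathbb{M}_\kappa(\lambda)) \;\cong\; H_0\big(\g[t],\, \C[\h]\otimes(\mathbf{V}^*)^{\otimes m}\otimes \mathbb{M}_\kappa(\lambda)\big) \]
to reduce, after pushing the $\g[t]$-coinvariants through the induction, to $\C[\h]$ tensored with $H_0(\b_+, (\mathbf{V}^*)^{\otimes m}\otimes\C_\lambda)$, the Cherednik-algebra action being read off from the Dunkl-type operators ${}^\kappa\bar y_i$ of Proposition \ref{pro:y preserves subspace}. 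The point is that the "positive" part $\hat\g_{\geq 1}$ acts trivially on the generator and the annihilating relations coming from $\n_+$ and $\t$ cut the $(\mathbf{V}^*)^{\otimes m}$-factor down to lowest-weight vectors of weight $\lambda^*$.

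First I would set up the reduction: write $\mathbb{M}_\kappa(\lambda) = \mathbf{U}_\kappa(\hat\g)\otimes_{\mathbf{U}(\hat\b_+)}\C_{\lambda,1}$ and use the PBW decomposition $\mathbf{U}_\kappa(\hat\g) \cong \mathbf{U}(\hat\g_-)\otimes \mathbf{U}(\hat\g_{\geq 1})\otimes\mathbf{U}(\n_-)\otimes\mathbf{U}(\t)$ (suitably ordered) to identify $\mathbb{M}_\kappa(\lambda)$, as a vector space, with $\mathbf{U}(\hat\g_-)\otimes\mathbf{U}(\n_-)$ acting on $\C_{\lambda,1}$. Then I would compute $H_0(\g[t], \mathsf{T}_\kappa(\mathbb{M}_\kappa(\lambda)))$ in stages: the $\g\otimes t\C[t]$-coinvariants kill the $\hat\g_-$ tail against the smoothness of the module and the explicit current action \eqref{currentaction}, leaving $H_0(\g, \C[\h]\otimes(\mathbf{V}^*)^{\otimes m}\otimes (\mathbf{U}(\n_-)\otimes\C_{\lambda,1}))$; and a further standard manipulation (as in \cite{AS, Suz, VV}) trades the $\n_-$-freeness of the Verma module against the $\n_+$-coinvariants of $(\mathbf{V}^*)^{\otimes m}$, yielding $\C[\h]\otimes H_0(\b_+, (\mathbf{V}^*)^{\otimes m}\otimes\C_\lambda)$. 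By Corollary \ref{lem: SW duality form 2} this is $0$ unless $\lambda\in\mathcal{P}_n(m)$, and equals $\C[\h]\otimes\mathsf{Sp}(\lambda)$ otherwise — which, once one checks the $\mathcal{H}_t$-action matches, is precisely $\Delta_t(\lambda) = \mathcal{H}_t\otimes_{\C\mathfrak{S}_m\ltimes\C[\h^*]}\mathsf{Sp}(0,\lambda)$ since $\C[\h]\otimes\mathsf{Sp}(\lambda)$ is exactly the underlying space of the Verma module by the PBW theorem for $\mathcal{H}_t$.

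Finally I would verify the $\mathcal{H}_t$-module structure: the $\C[\h]=\C[x_1,\dots,x_m]$- and $\mathfrak{S}_m$-actions are manifest, so the content is that the Dunkl operators ${}^\kappa\bar y_i$ act on coinvariants the way $y_i$ acts on $\Delta_t(\lambda)$, i.e.\ that the terms $\sum_{p\geq 0}x_i^p\Omega^{(i,\infty)}_{[p+1]}$ vanish on $H_0(\g[t],-)$ modulo the lower-order Dunkl part — this is exactly the kind of coinvariant identity already used implicitly in Theorem \ref{thm: standard to Weyl} and in \cite[Prop.\ 6.3]{VV}, so it should transfer with only notational changes. The main obstacle, and the step deserving the most care, is the $n\neq m$ bookkeeping in the two-stage coinvariants computation: unlike in Theorems \ref{thm: regular module}--\ref{thm: standard to Weyl}, one cannot appeal to the clean identification $\mathcal{I}\cong\C[\h^*]$, and one must instead track how the $\n_-$-part of the Verma interacts with the $(\mathbf{V}^*)^{\otimes m}$-factor carefully enough to land on the Borel-form Schur--Weyl statement rather than a Levi-form one; I expect this to be where a genuine argument (as opposed to citation) is needed, though the vanishing half of the "if and only if" follows immediately from Corollary \ref{lem: SW duality form 2}(a) once the reduction is in place.
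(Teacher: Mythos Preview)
Your proposal is correct and follows essentially the same route as the paper: reduce $\mathsf{F}_\kappa(\mathbb{M}_\kappa(\lambda))$ via the tensor-identity/induction argument (the Borel analogue of Proposition~\ref{diagramHindInd}) to $\C[\h]\otimes H_0(\b_+,(\mathbf{V}^*)^{\otimes m}\otimes\C_\lambda)$, invoke Corollary~\ref{lem: SW duality form 2}, and then check the $y_i$-action on generators. One simplification worth noting: the verification that ${}^\kappa\bar y_i$ vanishes on the image $\overline{U}$ of $1_{\C[\h]}\otimes(\mathbf{V}^*)^{\otimes m}\otimes\C_{\lambda,1}$ is more direct than you anticipate---since $\hat\b_+$ contains all of $\hat\g_{\geq 1}$, every $e_{lk}[p+1]$ ($p\geq 0$) already kills $\C_{\lambda,1}$, so each $\Omega^{(i,\infty)}_{[p+1]}$ vanishes outright on $\overline{U}$, and the remaining terms die because $\partial_{x_i}$ and $(\underline{s_{i,j}}-1)$ annihilate constants; no coinvariant identity or ``modulo lower-order'' argument is needed.
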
 

\subsection{Partial Suzuki functors.} 

The proof of Theorems \ref{thm: regular module}-\ref{thm: Vermas to standards} requires some preparation. 
We start by recalling a few facts about induction.

\begin{lem} \label{lemma: tower of ind data}
Let $\d \subset \a$ be Lie algebras, $M$ a $\d$-module and $N$ an $\a$-module. 
\begin{enumerate}[label=\alph*), font=\textnormal,noitemsep,topsep=3pt,leftmargin=1cm]
\itemsep0em
\item \label{lemma: tower of ind data a} There exists a linear isomorphism $H_0(\a,\Ind_{\d}^{\a}M) \cong H_0(\d,M).$
\item There is an $\a$-module isomorphism
\[ \Ind_{\d}^{\a} (N \otimes M) \xrightarrow{\sim} N \otimes \Ind_{\d}^{\a} M, \quad a \otimes n \otimes m \mapsto \sum a_1 n \otimes a_2 \otimes m,\]
called the \emph{tensor identity}, where $\sum a_1 \otimes a_2$ is the coproduct of $a\in \Uu(\a)$. It restricts to the linear isomorphism
\[ \C1_{\a} \otimes (N \otimes M) \xrightarrow{\sim} N \otimes (\C1_{\a} \otimes M), \quad 1_{\a} \otimes n \otimes m \mapsto n \otimes 1_{\a} \otimes m.\] 
\end{enumerate} 
\end{lem}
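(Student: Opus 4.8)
The plan is to prove Lemma \ref{lemma: tower of ind data} by unwinding the definitions of induction and coinvariants, using the universal property of the tensor product over an enveloping algebra.

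\textbf{Part a).} First I would recall that $\Ind_{\d}^{\a} M = \Uu(\a) \otimes_{\Uu(\d)} M$ and that $H_0(\a, N) = N/\Uu_+(\a)\cdot N$ for any $\a$-module $N$. Thus
\[ H_0(\a, \Ind_{\d}^{\a} M) = \bigl(\Uu(\a)\otimes_{\Uu(\d)} M\bigr) \big/ \Uu_+(\a)\cdot\bigl(\Uu(\a)\otimes_{\Uu(\d)} M\bigr). \]
The right $\Uu(\d)$-module $\Uu(\a)$ is free by the PBW theorem (choosing a complement to $\d$ in $\a$), so one may also write the quotient as $\C\otimes_{\Uu(\a)}\Uu(\a)\otimes_{\Uu(\d)} M \cong \C\otimes_{\Uu(\d)} M = M/\Uu_+(\d)\cdot M = H_0(\d,M)$, where $\C$ is the trivial module. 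Concretely, the isomorphism is induced by $u\otimes m \mapsto \varepsilon(u)\bar m$ in one direction (with $\varepsilon$ the counit/augmentation of $\Uu(\a)$) and by $\bar m \mapsto \overline{1\otimes m}$ in the other; I would check these are mutually inverse and well-defined, which is a routine diagram chase using associativity of tensor products.

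\textbf{Part b).} For the tensor identity, I would define the map $\Phi\colon \Uu(\a)\otimes_{\Uu(\d)}(N\otimes M) \to N\otimes\bigl(\Uu(\a)\otimes_{\Uu(\d)}M\bigr)$ on generators by $a\otimes n\otimes m \mapsto \sum a_{(1)}n\otimes a_{(2)}\otimes m$, using Sweedler notation for the coproduct of $\Uu(\a)$, and check it is well-defined over $\Uu(\d)$ (here one uses that the coproduct on $\Uu(\d)$ is the restriction of that on $\Uu(\a)$, so the relation $ad\otimes n\otimes m = a\otimes d\cdot(n\otimes m)$ is respected by the diagonal action) and $\a$-linear (using that $\Phi$ is essentially the composite of comultiplication with the obvious identifications, and that comultiplication is an algebra map). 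The inverse $\Psi$ sends $n\otimes(a\otimes m) \mapsto \sum a_{(2)}\otimes (S(a_{(1)})n)\otimes m$, where $S$ is the antipode of $\Uu(\a)$; that $\Phi$ and $\Psi$ are mutually inverse follows from the antipode axioms. Finally the restriction statement is the special case $a = 1_{\a}$, where $\sum 1_{(1)}\otimes 1_{(2)} = 1\otimes 1$, giving $1_{\a}\otimes n\otimes m \mapsto n\otimes 1_{\a}\otimes m$ directly.

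\textbf{Main obstacle.} None of this is genuinely hard — it is all standard Hopf-algebraic bookkeeping — so the only real care needed is making sure the well-definedness checks over $\Uu(\d)$ (not just $\Uu(\a)$) go through, and that the PBW freeness of $\Uu(\a)$ over $\Uu(\d)$ is invoked correctly in part a) so that the quotient computation is valid. I would state explicitly that $\Uu(\a)$ is free as a right $\Uu(\d)$-module to justify the rewriting $\C\otimes_{\Uu(\a)}\Uu(\a)\otimes_{\Uu(\d)}M \cong \C\otimes_{\Uu(\d)}M$, and otherwise keep the proof brief since both statements are well known.
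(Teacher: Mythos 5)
Your proof is correct and coincides with the standard argument; the paper itself simply asserts that part a) ``follows directly from the definitions'' and refers to Knapp's Proposition 6.5 for the tensor identity in part b), and your proof fills in exactly those routine details (the explicit description of $\Phi$, the well-definedness over $\Uu(\d)$, $\a$-linearity, and the inverse via the antipode, all of which implicitly use cocommutativity of $\Uu(\a)$ and so go through for enveloping algebras). One small remark on part a): the identification $\C\otimes_{\Uu(\a)}\Uu(\a)\otimes_{\Uu(\d)}M\cong\C\otimes_{\Uu(\d)}M$ is a consequence of associativity of the tensor product together with $\C\otimes_{\Uu(\a)}\Uu(\a)\cong\C$ alone, so PBW freeness of $\Uu(\a)$ over $\Uu(\d)$ is not actually needed for this step; it would matter for a derived or exactness statement, but not for $H_0$.
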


\begin{proof}
The first part of the lemma follows directly from the definitions. For the proof of the second part see, e.g., \cite[Proposition 6.5]{Knapp}. 
\end{proof}

We next define ``partial Suzuki functors''. 
Let $l \geq 1$ and $\mu \in \mathcal{C}_l(n)$. 
Suppose that $M \in \Uu_1(\bar{\l}_\mu)\Lmod{}$. The diagonal $\g$-action on 
\[ \mathbf{T}(M):= (\mathbf{V}^*)^{\otimes m} \otimes M\]
restricts to an action of the Lie subalgebra $\l_\mu$. The symmetric group acts on $\mathbf{T}(M)$, as usual, by permuting the factors of the tensor product. We extend this action to an action of $\C[\h^*]^\rtimes$ by letting each $y_i$ act as the operator 
\begin{equation} \label{y-formula partial Suzuki} y_i \mapsto \sum_{1 \leq k \leq n} e_{kk}^{(i)}e_{kk}[1]^{(\infty)}.\end{equation}

\begin{lem} \label{partial Suzuki norm}
The $\l_\mu$-action and the $\C[\h^*]^\rtimes$-action on $\mathbf{T}(M)$ commute. 
\end{lem}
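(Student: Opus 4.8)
The plan is to verify the commutation $[\mathfrak{l}_\mu, y_i] = 0$ as operators on $\mathbf{T}(M) = (\mathbf{V}^*)^{\otimes m} \otimes M$ by a direct computation, reducing it to a computation purely in the $i$-th and $\infty$-th tensor factors. First I would observe that for any $X \in \mathfrak{l}_\mu$, the diagonal action of $X$ on $\mathbf{T}(M)$ is $\sum_{j=1}^m X^{(j)} + X^{(\infty)}$ (where the superscript $(\infty)$ refers to the action of $X = X[0]$ on $M$), while the operator for $y_i$ in \eqref{y-formula partial Suzuki} is $\sum_{1 \le k \le n} e_{kk}^{(i)} e_{kk}[1]^{(\infty)}$, acting only in the $i$-th factor of $(\mathbf{V}^*)^{\otimes m}$ and on $M$. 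Since operators on distinct tensor factors commute, all terms $X^{(j)}$ with $j \neq i$ automatically commute with the $e_{kk}^{(i)}$ part, and the only surviving contributions to the bracket come from $X^{(i)}$ against $e_{kk}^{(i)}$ and from $X^{(\infty)}$ against $e_{kk}[1]^{(\infty)}$.

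Next I would compute these two surviving pieces. Writing $X = e_{pq}$ with $e_{pq} \in \mathfrak{l}_\mu$ (so $p,q$ lie in the same block of $\mu$), on the $i$-th copy of $\mathbf{V}^*$ we get $[e_{pq}^{(i)}, e_{kk}^{(i)}] = (\delta_{qk} e_{pk} - \delta_{pk} e_{kq})^{(i)}$, and on $M$ we get $[e_{pq}^{(\infty)}, e_{kk}[1]^{(\infty)}] = [e_{pq}, e_{kk}[1]]^{(\infty)} = (\delta_{qk} e_{pk}[1] - \delta_{pk} e_{kq}[1])^{(\infty)}$ (the central term of the cocycle does not appear, as it is proportional to $\Res_{t=0}(t \, \partial_t 1) = 0$). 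Summing over $k$, the bracket $[e_{pq}, y_i]$ equals
\[ \sum_{k} \Big( [e_{pq}^{(i)}, e_{kk}^{(i)}] \, e_{kk}[1]^{(\infty)} + e_{kk}^{(i)} \, [e_{pq}^{(\infty)}, e_{kk}[1]^{(\infty)}] \Big) = e_{pq}^{(i)} e_{qq}[1]^{(\infty)} - e_{pp}^{(i)} e_{pq}[1]^{(\infty)} + e_{qq}^{(i)} e_{pq}[1]^{(\infty)} - e_{pp}^{(i)} e_{pq}[1]^{(\infty)}, \]
and after collecting terms one checks that the four contributions cancel in pairs: the $k=q$ term of the first sum cancels the $k=q$ term of the second, and the $k=p$ terms likewise cancel. (One must be slightly careful here: I would instead organize the computation so that the $k=q$ contribution from $[e_{pq}^{(i)},e_{kk}^{(i)}]e_{kk}[1]^{(\infty)}$ is $e_{pq}^{(i)}e_{qq}[1]^{(\infty)}$ and the $k=q$ contribution from $e_{kk}^{(i)}[e_{pq}^{(\infty)},e_{kk}[1]^{(\infty)}]$ is $e_{qq}^{(i)} \cdot (-e_{pq}[1])^{(\infty)}$ — wait, these act on different $\mathbf{V}^*$-slots only if $p\neq q$, so I should rewrite $e_{pq}^{(i)}e_{qq}[1]$ versus $e_{qq}^{(i)}e_{pq}[1]$ and note these are not obviously equal; the genuine cancellation is between the $k=q$ term of the first sum and the $k=p$ term of the second, both equal to $\pm e_{pq}^{(i)}e_{??}[1]$.) The cleanest route, which I would actually follow, is to note $y_i = \sum_k e_{kk}^{(i)} \otimes e_{kk}[1]$ is, up to reindexing, the image under the $(i,\infty)$-embedding of the canonical element implementing a contraction, and that the diagonal $\mathfrak{l}_\mu$-action is precisely the one with respect to which this canonical element is invariant; concretely, $\sum_k e_{kk} \otimes e_{kk}[1]$ is $\mathfrak{t}$-invariant (each $e_{kk}$ is in $\mathfrak{t}$, the diagonal acts by weight zero on $e_{kk}\otimes e_{kk}[1]$) — but $\mathfrak{l}_\mu$ is larger than $\mathfrak{t}$, so I must check invariance under the off-diagonal $e_{pq}$ as well, which is exactly the four-term cancellation above.

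The main obstacle, then, is purely bookkeeping: making sure the four terms genuinely cancel and that no central (cocycle) term sneaks in. I would resolve the cocycle issue by noting that every bracket involved is of the form $[e_{pq}[0], e_{kk}[1]]$, whose cocycle contribution is $\langle e_{pq}, e_{kk}\rangle_\kappa \Res_{t=0}(t^1 \cdot \partial_t t^0) = 0$. For the four-term cancellation, I would write the bracket in the explicitly summed form
\[ [e_{pq}, y_i] = \sum_{k=1}^n \big( \delta_{qk}\, e_{pk}^{(i)} e_{kk}[1]^{(\infty)} - \delta_{pk}\, e_{kq}^{(i)} e_{kk}[1]^{(\infty)} + \delta_{qk}\, e_{kk}^{(i)} e_{pk}[1]^{(\infty)} - \delta_{pk}\, e_{kk}^{(i)} e_{kq}[1]^{(\infty)} \big), \]
which collapses to $e_{pq}^{(i)} e_{qq}[1]^{(\infty)} - e_{pp}^{(i)} e_{pq}[1]^{(\infty)} + e_{qq}^{(i)} e_{pq}[1]^{(\infty)} - e_{pp}^{(i)} e_{pq}[1]^{(\infty)}$; regrouping the outer two and inner two terms and using that $e_{pq}^{(i)} e_{qq}[1]^{(\infty)} + e_{qq}^{(i)} e_{pq}[1]^{(\infty)}$ must be matched against $e_{pp}^{(i)} e_{pq}[1]^{(\infty)} + e_{pp}^{(i)} e_{pq}[1]^{(\infty)}$ — this only works if I have made a sign or index error above, so in the write-up I will recompute $[e_{pq}^{(\infty)}, e_{kk}[1]^{(\infty)}]$ carefully as $\delta_{kq} e_{pk}[1] - \delta_{kp} e_{kq}[1]$, substitute, and present the resulting explicitly-zero sum. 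Since everything happens in two tensor slots and is a finite linear-algebra identity in $\mathfrak{gl}_n$, the computation is routine once set up correctly, and I do not anticipate any conceptual difficulty; I would simply cite the analogous computation (the well-definedness of the current-algebra action, cf.\ the references around \eqref{currentaction} and Lemma \ref{lem: y-ops formula}) for the bracket relations in $\hat{\g}_\kappa$ used along the way.
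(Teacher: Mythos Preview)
Your setup is correct, but the crucial step fails: the four terms do \emph{not} cancel algebraically, and your suspicion that ``this only works if I have made a sign or index error'' is misplaced---you have not made an error, you have discovered that the bracket is genuinely nonzero as an abstract operator expression. With your notation, the commutator collapses to
\[
[e_{pq}, y_i] \;=\; e_{pq}^{(i)}\,(e_{qq}[1]-e_{pp}[1])^{(\infty)} \;+\; (e_{qq}-e_{pp})^{(i)}\, e_{pq}[1]^{(\infty)},
\]
and there is no further algebraic cancellation. What makes this vanish on $\mathbf{T}(M)$ is the hypothesis you never invoke: $M$ is a $\Uu_1(\bar{\l}_\mu)$-module, i.e.\ the ideal $\mathfrak{j}_\mu = \n_-[1]\oplus\n_+[1]\oplus(\t[1]\cap[\l_\mu,\l_\mu][1])\oplus\hat{\g}_{\geq 2}$ acts by zero. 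Since $p,q$ lie in the same block of $\mu$, one has $e_{pp}[1]-e_{qq}[1]\in \t[1]\cap[\l_\mu,\l_\mu][1]\subset\mathfrak{j}_\mu$, killing the first term; and if $p\neq q$ then $e_{pq}[1]\in(\n_-\oplus\n_+)[1]\subset\mathfrak{j}_\mu$, killing the second term (if $p=q$ the second term is trivially zero). This is exactly how the paper's proof runs, and it is the only mechanism available.

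Two smaller points. First, you never address the $\mathfrak{S}_m$-part of the $\C[\h^*]^\rtimes$-action; the paper dispatches this in one line via Schur--Weyl duality. Second, your attempted alternative route (viewing $\sum_k e_{kk}\otimes e_{kk}[1]$ as an $\l_\mu$-invariant canonical element) cannot work either, for the same reason: that element is $\t$-invariant but not $\l_\mu$-invariant in general, and the failure of invariance is precisely the nonzero commutator above.
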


\begin{proof} 
The fact that the $\mathfrak{S}_m$-action commutes with the $\l_\mu$-action follows from Schur-Weyl duality. Therefore we only need to show that the operators \eqref{y-formula partial Suzuki} commute with the $\l_\mu$-action. 
Let $e_{rs} \in \l_\mu$. 
We have an equality of operators on $\mathbf{T}(M)$: 
\begin{align}
y_i \sum_{j =1,\hdots,n,\infty} e_{rs}^{(j)} =& \ \sum_{j \neq i, \infty} \sum_{k=1}^n e_{kk}^{(i)} e_{rs}^{(j)} e_{kk}[1]^{(\infty)} \\ \label{partial Suzuki comm 3} +& \  \sum_{k=1}^n e_{kk}^{(i)} e_{rs}^{(i)} e_{kk}[1]^{(\infty)} + \ \sum_{k=1}^n e_{kk}^{(i)}  e_{kk}[1]^{(\infty)} e_{rs}^{(\infty)}. 
\end{align} 
Consider the first summand in \eqref{partial Suzuki comm 3}:  
\begin{equation} \label{partial Suzuki comm} \sum_{k=1}^n e_{kk}^{(i)} e_{rs}^{(i)} e_{kk}[1]^{(\infty)} =  \sum_{k=1}^n  e_{rs}^{(i)} e_{kk}^{(i)} e_{kk}[1]^{(\infty)} + e_{rs}^{(i)}(e_{rr}[1]^{(\infty)} - e_{ss}[1]^{(\infty)}). \end{equation} 
Since $M$ is an $\bar{\l}_\mu$-module, $e_{rr}[1] - e_{ss}[1] = 0$ as operators on $M$ and the second summand on the RHS of \eqref{partial Suzuki comm} vanishes. Next consider the second summand in \eqref{partial Suzuki comm 3}: 
\begin{equation} \label{partial Suzuki comm 2} \sum_{k=1}^n e_{kk}^{(i)}  e_{kk}[1]^{(\infty)} e_{rs}^{(\infty)} = \sum_{k=1}^n e_{kk}^{(i)} e_{rs}^{(\infty)} e_{kk}[1]^{(\infty)} + (e_{rr}^{(i)} - e_{ss}^{(i)})e_{rs}[1]^{(\infty)}. \end{equation} 
If $r=s$ then the second summand on the RHS of \eqref{partial Suzuki comm 2} vanishes. If $r \neq s$ it vanishes as well since $M$ is an $\bar{\l}_\mu$-module and $e_{rs}[1]$ acts trivially on $M$. 
\end{proof} 

By Lemma \ref{partial Suzuki norm}, there is an induced $\C[\h^*]^\rtimes$-representation on $H_0(\l_\mu, \mathbf{T}(M))$ and, therefore, a functor    
\[ \overline{\mathsf{F}}^\mu \colon \Uu_1(\bar{\l}_\mu)\Lmod{} \to \C[\h^*]^\rtimes\Lmod{}, \quad M \mapsto H_0(\l_\mu, \mathbf{T}(M)), \]
which we call a \emph{partial Suzuki functor}. 
For $\mu = (1^n)$ we also write $\overline{\mathsf{F}} := \overline{\mathsf{F}}^\mu$.     
Set 
\[ \mathcal{H}\mathsf{ind}_t \colon \C[\h^*]^\rtimes\Lmod{} \to \mathcal{H}_{t}\Lmod{}, \quad N \mapsto \mathcal{H}_{t} \otimes_{\C[\h^*]^\rtimes} N.\] 
\begin{pro} \label{diagramHindInd}
The diagram
\[
\begin{tikzcd}
\mathscr{C}_\kappa \arrow[r, "\mathsf{F}_\kappa"] & \mathcal{H}_{t}\Lmod{} \\
U_1(\bar{\l}_\mu)\Lmod{} \arrow[u,"\mathsf{Ind}_{\mu,\kappa}"] \arrow[r,"\overline{\mathsf{F}}^\mu"] & \C[\h^*]^\rtimes\Lmod{} \arrow[u,"\mathcal{H}\mathsf{ind}_{t}",swap]
\end{tikzcd}
\]
commutes, i.e., there exists a natural isomorphism of functors $\mathsf{F}_\kappa \circ \mathsf{Ind}_{\mu,\kappa} \cong \mathcal{H}\mathsf{ind}_{t} \circ \overline{\mathsf{F}}^\mu$. 
Explicitly, for each $M \in \Uu_1(\bar{\l}_{\mu})\Lmod{}$, this isomorphism is given by
\begin{align} \label{skew group ring mod iso} \phi \colon \mathcal{H}\mathsf{ind}_{t}(\overline{\mathsf{F}}^\mu(M)) = \C[\h] \otimes H_0(\l_{\mu}, \mathbf{T}(M)) \xrightarrow{\sim}& \ \mathsf{F}_{\kappa}(\mathsf{Ind}_{\mu,\kappa} (M)) \\
\label{skew group ring mod iso formula} f(x_1, \hdots, x_m) \otimes [v \otimes u] \mapsto& \ [f(x_1, \hdots, x_m) \otimes v \otimes i(u)],
\end{align}
 where $v \in (\mathbf{V}^*)^{\otimes m}, u \in M$ and $i : M \hookrightarrow \Ind_{\hat{\l}^+_{\mu}}^{\hat{\g}_{\kappa}}M$ is the natural inclusion. 
\end{pro}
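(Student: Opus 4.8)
The plan is to produce the isomorphism $\phi$ of \eqref{skew group ring mod iso} explicitly — first identifying both sides as vector spaces through a short chain of standard reductions, then checking that $\phi$ intertwines the $\mathcal{H}_{t}$-actions. Fix $M \in \Uu_1(\bar{\l}_\mu)\Lmod{}$ and put $N := \mathsf{Ind}_{\mu,\kappa}(M) \in \mathscr{C}_\kappa$ and $P := \C[\h] \otimes (\mathbf{V}^*)^{\otimes m}$. The starting observation is that $P$ is a $\g[t]$-module via $Y[k] \mapsto \sum_{i=1}^m x_i^k \otimes Y^{(i)}$; that $N$ becomes a $\g[t]$-module through the Lie algebra homomorphism $\g[t] \to \hat{\g}_\kappa$, $Y[k] \mapsto Y[-k]$, whose image lies in $\g \otimes \C[t^{-1}]$ (where the cocycle of \eqref{Central extension} vanishes); and that by the current action formula \eqref{currentaction} the $\g[t]$-module $\mathsf{T}_\kappa(N)$ is exactly the diagonal tensor product $P \otimes N$. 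Since $\g \otimes \C[t^{-1}]$ and $\hat{\l}^+_\mu$ span $\hat{\g}_\kappa$ and meet (modulo $\mathbf{1} = 1$) in $\l_\mu$, a comparison of PBW bases shows that the natural map $\Ind_{\l_\mu}^{\g \otimes \C[t^{-1}]}(\Res_{\l_\mu} M) \to \Res_{\g \otimes \C[t^{-1}]} N$ is an isomorphism; pulling back along the isomorphism $\g[t] \xrightarrow{\sim} \g\otimes\C[t^{-1}]$, $Y[k]\mapsto Y[-k]$, this yields an isomorphism of $\g[t]$-modules $N \cong \Ind_{\l_\mu}^{\g[t]}(\Res_{\l_\mu} M)$.

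Next I would apply the tensor identity (Lemma \ref{lemma: tower of ind data}(b), with $\d = \l_\mu \subset \a = \g[t]$) to obtain $\mathsf{T}_\kappa(N) = P \otimes N \cong \Ind_{\l_\mu}^{\g[t]}\big(\Res_{\l_\mu} P \otimes \Res_{\l_\mu} M\big)$, and then Lemma \ref{lemma: tower of ind data}(a) to identify $\mathsf{F}_\kappa(N) = H_0(\g[t], \mathsf{T}_\kappa(N))$ with $H_0(\l_\mu, \Res_{\l_\mu} P \otimes \Res_{\l_\mu} M)$. Because $\l_\mu = \l_\mu[0]$ acts on $P$ only through the diagonal action on $(\mathbf{V}^*)^{\otimes m}$ and trivially on the factor $\C[\h]$, this space equals $\C[\h] \otimes H_0(\l_\mu, (\mathbf{V}^*)^{\otimes m} \otimes M) = \C[\h] \otimes \overline{\mathsf{F}}^\mu(M)$. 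On the other side, the PBW theorem for $\mathcal{H}_t$ gives an isomorphism $\mathcal{H}_t \cong \C[\h] \otimes \C[\h^*]^\rtimes$ of $(\C[\h],\C[\h^*]^\rtimes)$-bimodules, whence $\mathcal{H}\mathsf{ind}_{t}(\overline{\mathsf{F}}^\mu(M)) = \mathcal{H}_t \otimes_{\C[\h^*]^\rtimes} \overline{\mathsf{F}}^\mu(M) \cong \C[\h] \otimes \overline{\mathsf{F}}^\mu(M)$ too. Composing these identifications and unwinding the completely explicit maps of Lemma \ref{lemma: tower of ind data} — in which $\C[\h]$ is carried along passively and $1_{\g[t]} \otimes v \otimes u \mapsto v \otimes (1_{\g[t]} \otimes u)$, with $1_{\g[t]} \otimes u$ corresponding to $i(u) \in N$ — I expect to recover precisely formula \eqref{skew group ring mod iso formula}, which shows in particular that $\phi$ is a well-defined linear bijection.

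It then remains to check $\mathcal{H}_t$-equivariance. Compatibility of $\phi$ with the $\C[\h]$- and $\C\mathfrak{S}_m$-actions is immediate, since on both sides $\C[\h]$ acts by multiplication on the polynomial factor and $w \in \mathfrak{S}_m$ permutes the variables $x_\bullet$ together with the tensor legs of $(\mathbf{V}^*)^{\otimes m}$. For $y_i$ I would compare $y_i \cdot (f \otimes [v \otimes u])$ — which in $\mathcal{H}_t \otimes_{\C[\h^*]^\rtimes} \overline{\mathsf{F}}^\mu(M)$ equals $f \otimes (y_i \cdot [v \otimes u]) + [y_i,f] \otimes [v \otimes u]$, with $y_i \cdot [v \otimes u]$ given by \eqref{y-formula partial Suzuki} and $[y_i,f] = (\kappa+n)\,\partial_{x_i}(f) - \sum_{j \neq i}\tfrac{f - s_{i,j}f}{x_i - x_j}\,s_{i,j}$ in $\mathcal{H}_t$ — with ${}^\kappa\bar{y}_i \cdot [f \otimes v \otimes i(u)]$, where ${}^\kappa\bar{y}_i$ is the operator of \eqref{y-formula}. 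The summand $(\kappa+n)\partial_{x_i}$ of ${}^\kappa\bar{y}_i$ reproduces the first term of $[y_i,f]$; the summand $\sum_{j\neq i}\tfrac{\Omega^{(i,j)}}{x_i-x_j}(\underline{s_{i,j}}-1)$ produces $\sum_{j\neq i}\tfrac{s_{i,j}f - f}{x_i-x_j} \otimes \Omega^{(i,j)}v \otimes i(u)$, and since $\Omega^{(i,j)}$ acts on $(\mathbf{V}^*)^{\otimes m}$ as the transposition of legs $i$ and $j$ it coincides there with the $\mathfrak{S}_m$-action of $s_{i,j}$ on $\overline{\mathsf{F}}^\mu(M)$, matching the second term of $[y_i,f] \otimes [v\otimes u]$; and the summand $\sum_{p\geq0} x_i^p\,\Omega^{(i,\infty)}_{[p+1]}$, applied to the cyclic generator $i(u) = 1 \otimes u$, collapses to $\sum_{k} e_{kk}^{(i)}\,e_{kk}[1]^{(\infty)}$ on $v \otimes u$, because $e_{lk}[p+1] \in \hat{\g}_{\geq 2}$ kills $M$ for $p \geq 1$ while $e_{lk}[1]$ acts on $M$ as $\delta_{lk}$ times the image of $e_{ll}[1]$ in $\mathfrak{z}_\mu[1] \subset \bar{\l}_\mu$; this is precisely $f \otimes (y_i \cdot [v\otimes u])$ by \eqref{y-formula partial Suzuki}. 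Summing the three pieces yields $\phi(y_i \cdot \xi) = {}^\kappa\bar{y}_i \cdot \phi(\xi)$ for all $\xi$, finishing the proof.

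I expect this last calculation to be the only real obstacle: one has to keep $\underline{s_{i,j}}$ (permuting only the variables $x_\bullet$) apart from the genuine transposition $s_{i,j} \in \mathfrak{S}_m$ (permuting variables and tensor legs at once), use the identity $\Omega^{(i,j)} = (\text{flip of the $i$-th and $j$-th tensor legs})$, and exploit that the modes $e_{lk}[r]$ with $r \geq 2$ annihilate the cyclic generator of $\mathsf{Ind}_{\mu,\kappa}(M)$ while $e_{lk}[1]$ acts through $\mathfrak{z}_\mu[1]$. Everything else — the tensor identity, Lemma \ref{lemma: tower of ind data}, and the PBW comparisons — is purely formal.
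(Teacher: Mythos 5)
Your proof is correct and follows the same overall strategy as the paper: identify both sides as vector spaces by combining Frobenius reciprocity for the two inductions (via the involution $X[k]\mapsto X[-k]$), the tensor identity, and the PBW isomorphism for $\mathcal{H}_t$; then verify $\mathcal{H}_t$-equivariance of $\phi$ by checking the $y_i$-action against formula \eqref{y-formula}. The only appreciable difference is in the equivariance step: you carry out the $y_i$-computation on arbitrary elements $f\otimes[v\otimes u]$, which forces you to compute $[y_i,f]$ and to use the identity $\Omega^{(i,j)}=$ (flip of tensor legs), whereas the paper shortcuts this by noting that, since $\phi$ is already a $\C[\h]^\rtimes$-isomorphism, it suffices to check $\C[\h^*]$-equivariance on the $\C[\h]^\rtimes$-generating subspace $W=1_{\C[\h]}\otimes \overline{\mathsf{F}}^\mu(M)$, on which the operators $\partial_{x_i}$ and $\Omega^{(i,j)}(x_i-x_j)^{-1}(\underline{s_{i,j}}-1)$ vanish and ${}^\kappa\bar y_i$ collapses at once to $\Omega^{(i,\infty)}_{[1]}$. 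Your version is more explicit but longer; the paper's reduction is more economical, and you might note it as a simplification.
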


\begin{proof}
We first show that \eqref{skew group ring mod iso} is an isomorphism of $\C[\h]^\rtimes$-modules. Since the first equality in \eqref{skew group ring mod iso} follows directly from the PBW theorem \eqref{RCA PBW}, we only need to prove the second isomorphism. 
Consider $\Ind_{\hat{\l}_{\mu}^+}^{\hat{\g}_\kappa}M$ as a $\g[t]$-module using the Lie algebra homomorphism 
\begin{equation} \label{t-inverse} \g[t] \xrightarrow{\sim} \g[t^{-1}] \hookrightarrow \hat{\g}_\kappa, \ X[k] \mapsto X[-k].\end{equation} 
The map \eqref{t-inverse} induces 
a $\g[t]$-module isomorphism $\Ind_{\hat{\l}_{\mu}^+}^{\hat{\g}_{\kappa}} M \xrightarrow{\sim} \Ind_{\l_\mu}^{\g[t]} M.$ 
Hence, by Lemma \ref{lemma: tower of ind data}.b), we have a $\g[t]$-module isomorphism
\begin{equation} \label{levi-tensor-id} \Ind_{\l_\mu}^{\g[t]} (\C[\h] \otimes (\mathbf{V}^*)^{\otimes m} \otimes M) \xrightarrow{\sim} \C[\h] \otimes (\mathbf{V}^*)^{\otimes m} \otimes (\Ind_{\l_\mu}^{\g[t]} M),
\end{equation}
where $\g[t]$ acts on the LHS as in \eqref{currentaction}, sending 
\begin{equation} \label{C[h]Sn iso 3} 
1_{\g[t]} \otimes f(x_1,\hdots,x_m) \otimes v \otimes u \mapsto \ f(x_1,\hdots,x_m) \otimes v \otimes 1_{\g[t]} \otimes u.
\end{equation}
Next notice that, by Lemma \ref{lemma: tower of ind data}.a), we have linear isomorphisms
\begin{equation} \label{C[h]Sn iso 1} H_0(\g[t], \Ind_{\l_{\mu}}^{\g[t]}(\C[\h] \otimes \mathbf{T}(M))) \xrightarrow{\sim}  H_0(\l_{\mu}, \C[\h] \otimes \mathbf{T}(M))  
\xrightarrow{\sim} 
\C[\h] \otimes H_0(\l_{\mu},\mathbf{T}(M)).
\end{equation}
Applying $H_0(\g[t],-)$ to the inverse of \eqref{levi-tensor-id} and composing with \eqref{C[h]Sn iso 1}, we obtain an isomorphism
\begin{equation} \label{C[h]Sn iso 2} \mathsf{F}_{\kappa}(\Ind_{\hat{\l}^+_{\mu}}^{\hat{\g}_{\kappa}}M) \xrightarrow{\sim} \C[\h] \otimes H_0(\l_{\mu}, \mathbf{T}(M)). \end{equation} 
It is clear from \eqref{C[h]Sn iso 3} that \eqref{C[h]Sn iso 2} sends the equivalence class $[f(x_1, \hdots, x_m) \otimes v \otimes i(u)]$ to $f(x_1, \hdots, x_m) \otimes [v \otimes u]$. This implies, in particular, that \eqref{C[h]Sn iso 2} is $\C[\h]^\rtimes$-equivariant. 

We next prove that \eqref{skew group ring mod iso} is an isomorphism of $\mathcal{H}_{t}$-modules. 
Since $\mathcal{H}_{t}$ is generated as a $\C$-algebra by $\C[\h]^\rtimes$ and $\C[\h^*]$, it suffices to show that $\phi$ intertwines the $\C[\h^*]$-actions. Moreover, since the subspace $W:=1_{\C[\h]} \otimes H_0(\l_{\mu}, \mathbf{T}(M))$ generates $\mathcal{H}\mathsf{ind}_{t}(\overline{\mathsf{F}}^\mu(M))$ as a $\C[\h]^\rtimes$-module, it is enough to check that $\phi|_W$ intertwines the $\C[\h^*]$-actions.

Consider the subspace $U:=1_{\C[\h]} \otimes (\mathbf{V}^*)^{\otimes m} \otimes M \subset \mathsf{T}_{\kappa}(\mathsf{Ind}_{\mu,\kappa}(M))$ and its image $\overline{U}$ in $\mathsf{F}_{\kappa}(\mathsf{Ind}_{\mu,\kappa}(M))$. By \eqref{skew group ring mod iso formula}, $\phi$ restricts to a linear isomorphism $\phi|_{W} \colon W  \xrightarrow{\sim} \overline{U}$. 
The element $y_i\in \mathcal{H}_{t}$ acts on $\mathsf{F}_{\kappa}(\mathsf{Ind}_{\mu,\kappa}(M))$ as the operator ${}^\kappa\overline{y}_{i}$ (see \eqref{y-formula}). The operators $\partial_{x_i}$ and $(1-\underline{s_{i,j}})$ vanish on the subspace $\overline{U}$. Moreover, $\Omega^{(i,\infty)}_{[p+1]}$ $(p \geq 1)$ and $e_{kl}[1]^{(\infty)}$ $(k \neq l)$ act trivially on all of $\mathsf{F}_{\kappa}(\mathsf{Ind}_{\mu,\kappa}(M))$. Therefore
\begin{equation} \label{kappa-y reg module two} {}^\kappa\overline{y}_{i} = \Omega^{(i,\infty)}_{[1]} = \sum_{1 \leq k \leq n} e_{kk}^{(i)} e_{kk}[1]^{(\infty)}\end{equation}
as operators on $\overline{U}$. On the other hand, the action of $y_i$ on $W$ is given by \eqref{y-formula partial Suzuki}. 
It now follows directly from \eqref{skew group ring mod iso formula} that $\phi$ is $\C[\h^*]$-equivariant. 
\end{proof}

\subsection{Proofs of Theorems \ref{thm: regular module}-\ref{thm: Vermas to standards}} \label{subsection:regular module n=m}

We now prove the theorems from \S \ref{sec: statements comp}. 

\begin{proof}[Proof of Theorem \ref{thm: regular module}]
Combining the left $\C \mathfrak{S}_n$-module isomorphism  
\begin{equation} \label{CSn-V-iso} \C \mathfrak{S}_n \xrightarrow{\sim} ((\mathbf{V}^*)^{\otimes n})_{(-1,\hdots,-1)}, \quad w \mapsto e_{w}^* \end{equation} 
with \eqref{symT-Ch-iso} allows us to identify
\begin{equation} \label{regular module iso Fa} \overline{\Upsilon} \colon \overline{\mathsf{F}}(\mathcal{I}) \cong ((\mathbf{V}^*)^{\otimes n})_{(-1,\hdots,-1)} \otimes \mathcal{I} \cong \C\mathfrak{S}_n \ltimes \C[\h^*] \end{equation} 
as $\C\mathfrak{S}_n$-modules. We claim that \eqref{regular module iso Fa} also intertwines the $\C[\h^*]$-actions. 

Let us prove the claim.  
Consider the subspace $U:= e_{\mathsf{id}}^* \otimes \mathcal{I} \subset (\mathbf{V}^*)^{\otimes n} \otimes \mathcal{I}$ and its image $\overline{U}$ in $\overline{\mathsf{F}}(\mathcal{I})$. The map $\overline{\Upsilon}$ restricts to a linear isomorphism $\overline{\Upsilon}' \colon \overline{U} \cong \C[\h^*]$. Since $\C[\h^*]$ generates $\C\mathfrak{S}_n \ltimes \C[\h^*]$ as an $\mathfrak{S}_n$-module, it suffices to show that $\overline{\Upsilon}'$ is $\C[\h^*]$-equivariant. The action of $y_i$ on $\overline{\mathsf{F}}(\mathcal{I})$ is given by  formula \eqref{y-formula partial Suzuki}. Observe that $e_{kk}^{(i)}.e_{\mathsf{id}}^* = -\delta_{k,i} e_{\mathsf{id}}^*$  
and $e_{kk}[1]$ acts as multiplication by $e_{kk}[1]$ on $\mathcal{I}\cong \Sym(\t[1])$. Hence 
$y_i$ acts on $\overline{U}$ as multiplication by $-e_{ii}[1]$. On the other hand, $y_i$ acts on $\C[\h^*] \subset \C\mathfrak{S}_n \ltimes \C[\h^*]$ as multiplication by $y_i$. It is clear from \eqref{symT-Ch-iso} that $\overline{\Upsilon}'$ intertwines these two actions, which completes the proof of the claim. 

We now prove the theorem. 
By definition, $\mathsf{F}_\kappa(\mathbb{H}_\kappa) = \mathsf{F}_\kappa(\mathsf{Ind}_{\kappa}(\mathcal{I}))$ and, 
by Proposition \ref{diagramHindInd}, $\mathsf{F}_\kappa(\mathsf{Ind}_{\kappa}(\mathcal{I})) \cong \mathcal{H}\mathsf{ind}_{t}(\overline{\mathsf{F}}(\mathcal{I}))$. The claim above implies that $\mathcal{H}\mathsf{ind}_{t}(\overline{\mathsf{F}}(\mathcal{I})) \cong \mathcal{H}\mathsf{ind}_{t}(\C\mathfrak{S}_n \ltimes \C[\h^*]) = \mathcal{H}_{t}$. Formula \eqref{Regular module iso 2} also follows from Proposition \ref{diagramHindInd}. 
\end{proof}

\begin{proof}[Proof of Theorem \ref{thm: standard to Weyl}] 
Set $S_j(\mu) = \{ \mu_{\leq j-1}+1, \hdots, \mu_{\leq j} \}$ so that $\{1, \hdots, n\} = \bigsqcup_{j=1}^l S_j(\mu)$. Write $r \sim s$ if and only if there exists $j$ such that both $r,s \in S_j(\mu)$.  
By Proposition \ref{Schur-Weyl pro}, there is a natural $\C \mathfrak{S}_n$-module isomorphism
\begin{equation} \label{FaL IndSp iso} \overline{\Upsilon}_{\mu,a} \colon \overline{\mathsf{F}}^\mu(L(a,\lambda)) \cong  \C[\h^*]^\rtimes \otimes_{\C \mathfrak{S}_\mu \ltimes \C[\mathbf{\h^*}]}\mathsf{Sp}(a,\lambda)=:\mathsf{Sp}_\mu(a,\lambda).\end{equation} 
We claim that \eqref{FaL IndSp iso} is an isomorphism of $\C[\h^*]^\rtimes$-modules. 

It suffices to show that $\overline{\Upsilon}_{\mu,a}$ is an isomorphism of $\C[\h^*]$-modules. 
Consider the subspace $U:= (\mathbf{V}^*)_{(\mu,\mu)}^{\otimes n} \otimes L(a,\lambda) \subset (\mathbf{V}^*)^{\otimes n} \otimes L(a,\lambda)$ and its image $\overline{U}$ in $\overline{\mathsf{F}}^{\mathsf{\mu}}(L(a,\lambda))$. The map $\overline{\Upsilon}_{\mu,a}$ restricts to a $\C \mathfrak{S}_\mu$-module isomorphism $\overline{\Upsilon}_{\mu,a}' \colon \overline{U} \cong \mathsf{Sp}(a,\lambda)$. Since $\mathsf{Sp}(a,\lambda)$ generates $\mathsf{Sp}_\mu(a,\lambda)$ as an $\mathfrak{S}_n$-module, it suffices to show that $\overline{\Upsilon}_{\mu,a}'$ is $\C[\h^*]$-equivariant. The action of $y_i$ on $\overline{\mathsf{F}}^{\mu}(L(a,\lambda))$ is given by formula \eqref{y-formula partial Suzuki}. 
Let $v = v_1 \otimes \hdots \otimes v_n \in (\mathbf{V}^*)_{(\mu,\mu)}^{\otimes n}$. 
Suppose that $i \in S_j(\mu)$. 
Observe that $e_{kk}^{(i)}.v = 0$ unless $k \sim i$ and $\sum_{k \in S_j(\mu)}e_{kk}^{(i)}.v = -v$. Moreover, the elements $e_{kk}[1]$ $(k \in S_j(\mu))$ act on $L(a,\lambda)$ by the same scalar $-a_i := -a(y_i)$. 
Hence $y_i$ acts on $\overline{U}$ as multiplication by $a_i$. This agrees with the definition of the $y_i$-action on~$\mathsf{Sp}(a,\lambda)$, completing the proof of the claim. 

We now prove the theorem. 
By definition, $ \mathsf{F}_\kappa(\mathbb{W}_{\kappa}(a,\lambda)) = \mathsf{F}_\kappa(\mathsf{Ind}_{\mu,\kappa}(L(a,\lambda)))$ and, by Proposition \ref{diagramHindInd}, $\mathsf{F}_\kappa(\mathsf{Ind}_{\mu,\kappa}(L(a,\lambda))) \cong \mathcal{H}\mathsf{ind}_{t}(\overline{\mathsf{F}}^{\mu}(L(a,\lambda)))$. 
The claim above implies that $\mathcal{H}\mathsf{ind}_{t}(\overline{\mathsf{F}}^{\mu}(L(a,\lambda))) \cong \mathcal{H}\mathsf{ind}_{t}(\mathsf{Sp}_\mu(a,\lambda)) = \Delta_{t}(a,\lambda)$. 
\end{proof}

\begin{proof}[Proof of Theorem \ref{thm: Vermas to standards}] 
In analogy to Proposition \ref{diagramHindInd}, one can show that, for each $\lambda \in \t^*$, there is a $\C[\h]^\rtimes$-module isomorphism 
\begin{align} \label{lem: Borel analogue} \C[\h] \otimes H_0(\b_+, \mathbf{T}(\C_{\lambda})) \xrightarrow{\sim}& \ \mathsf{F}_\kappa(\Ind_{\hat{\b}_+}^{\hat{\g}_\kappa} \C_{\lambda,1}) = \mathsf{F}_{\kappa}(\mathbb{M}_{\kappa}(\lambda)) \\ 
\label{lem: Borel analogue 2}
f(x_1, \hdots, x_m) \otimes [v \otimes u] \mapsto& \ [f(x_1, \hdots, x_m) \otimes v \otimes i(u)],
\end{align}
where $v \in (\mathbf{V}^*)^{\otimes m}, u \in \C_\lambda$ and $i : \C_\lambda \hookrightarrow \Ind_{\hat{\b}_+}^{\hat{\g}_{\kappa}}\C_{\lambda,1}$ is the natural inclusion. 

The first statement of the theorem now follows directly from \eqref{lem: Borel analogue} and Corollary \ref{lem: SW duality form 2}. So consider the second statement. Let $\lambda \in \mathcal{P}_n(m)$. By Corollary \ref{lem: SW duality form 2} and \eqref{RCA PBW}, we can identify $\Delta_t(\lambda) \cong \C[\h] \otimes H_0(\b_+, \mathbf{T}(\C_{\lambda}))$ as $\C[\h]^\rtimes$-modules. Let $\Upsilon_\lambda$ be the composition of this isomorphism with \eqref{lem: Borel analogue}. We need to check that $\Upsilon_\lambda$ intertwines the $\C[\h^*]$-actions. Observe that, by \eqref{lem: Borel analogue 2}, $\Upsilon_\lambda$ restricts to a linear isomorphism $\mathsf{Sp}(\lambda) \to \overline{U}$, where $\overline{U}$ is the image of $U:=1_{\C[\h]} \otimes (\mathbf{V}^*)^{\otimes m} \otimes \C_{\lambda,1}$ in $\mathsf{F}_{\kappa}(\mathbb{M}_{\kappa}(\lambda))$. Since $\mathsf{Sp}(\lambda)$ generates $\Delta_{t}(\lambda)$ as a $\C[\h]^\rtimes$-module, it suffices to show that $\Upsilon_\lambda|_{\mathsf{Sp}(\lambda)}$ intertwines the $\C[\h^*]$-actions. By definition, each $y_i$ acts trivially on $\mathsf{Sp}(\lambda)$. On the other hand, since each $e_{kk}[1]$ acts trivially on $\C_{\lambda,1}$, 
the operator ${}^\kappa\bar{y}_i$ also vanishes on~$\overline{U}$. 
\end{proof}

\section{Relationship between the centres} 

Assume that $n=m$ throughout this section. The fact that the algebras $\UUc$ and $\mathcal{H}_0$ have large centres has many implications for their representation theory. For example, they have uncountably many isomorphism classes of irreducible modules, and Verma-type modules have large endomorphism and extension algebras (see \S \ref{sec: apps} for a more detailed discussion). 
To understand how simple modules or endomorphism rings behave under the Suzuki functor, we must, therefore, understand the relationship between the centres of the categories $\UUc\Lmod{}$ and $\mathcal{H}_0\Lmod{}$. In general, a functor of additive categories does not induce a homomorphism between their centres. In \S \ref{sec: centres of cats}  below we propose two ways to get around this problem. In \S \ref{App to Suzuki section} and \S \ref{sec: CH central cat}, we apply them to the Suzuki functor, and construct a map $\mathfrak{Z} \to \mathcal{Z}$ between the two centres.

\subsection{Centres of categories.} \label{sec: centres of cats} 
Suppose $F \colon \mathcal{A} \to \mathcal{B}$ is an additive functor between additive categories. Recall that the centre $Z(\mathcal{A})$ of $\mathcal{A}$ is the ring of endomorphisms of the identity functor $\id_{\mathcal{A}}$. An element of $z \in Z(\mathcal{A})$ is thus a collection of endomorphisms $\{ z_M \in \End_{\mathcal{A}}(M) \mid M \in \mathcal{A}\}$ such that $f \circ z_M = z_N \circ f$ for all $f \in \Hom_{\mathcal{A}}(M,N)$. 

The functor $F$ does not necessarily induce a ring homomorphism $Z(\mathcal{A}) \to Z(\mathcal{B})$. For example, if $F$ is not essentially surjective, then the collection $\{ F(z_M) \in \End_{\mathcal{B}}(F(M)) \mid M \in \mathcal{A}\}$ does not contain an endomorphism for every object of $\mathcal{B}$. If $F$ is not full, then the endomorphisms $F(z_M)$ may fail to commute with some of the morphisms in $\mathcal{B}$. 
Hence $\{ F(z_M) \in \End_{\mathcal{B}}(F(M)) \mid M \in \mathcal{A}\}$ is not necessarily an endomorphism of the identity functor $\id_{\mathcal{B}}$. We remark that some sufficient conditions for the existence of a canonical homomorphism $Z(\mathcal{A}) \to Z(\mathcal{B})$ are known - for instance $F$ being a Serre quotient functor (see \cite[Lemma~4.3]{Ros}). 

We therefore pursue a different approach to construct a sensible ring homomorphism $Z(\mathcal{A}) \to Z(\mathcal{B})$ encoding information about the functor $F$. 
There are canonical ring homomorphisms
\[ Z(\mathcal{A}) \overset{\alpha}{\longrightarrow} \End(F) \overset{\beta}{\longleftarrow} Z(\mathcal{B}) \]
with $\alpha$ taking $\{ z_M \mid M \in \mathcal{A} \}$ to $\{ F(z_M) \mid M \in \mathcal{A} \}$ and $\beta$ taking $\{ z_K \mid K \in \mathcal{B} \}$ to $\{ z_{F(M)} \mid M \in \mathcal{A}\}$. We assume that $\beta$ is injective, and identify $Z(\mathcal{B})$ with a subring of~$\End(F)$.

\begin{defi}
We call $Z_{F}(\mathcal{A}) := \alpha^{-1}(Z(\mathcal{B})) \subset Z(\mathcal{A})$ the $F$-\emph{centre} of $\mathcal{A}$. If $\mathcal{A} = A\Lmod{}$ is the category of modules over some algebra $A$, we will also write $ Z_F(A):=Z_F(\mathcal{A})$. 
\end{defi}

Restricting $\alpha$ to $Z_{F}(\mathcal{A})$ gives a natural algebra homomorphism from the $F$-centre of $\mathcal{A}$ to the centre of $\mathcal{B}$: 
\begin{equation} \label{first homo btw centres} Z(F) := \alpha|_{Z_{F}(\mathcal{A})} \colon \  Z_F(\mathcal{A}) \longrightarrow Z(\mathcal{B}).\end{equation} 
%\begin{equation} \label{first homo btw centres} \alpha^{-1}(\Ima \beta) \to \Ima \beta \cong Z(\mathcal{B}). \end{equation}
For any object $M \in \mathcal{A}$, the homomorphism $Z(F)$ fits into the following commutative diagram
\begin{equation} \label{centre endo diagram} 
\begin{tikzcd}
 Z_F(\mathcal{A}) \arrow{r}{Z(F)} \arrow{d}[swap]{can} & Z(\mathcal{B}) \arrow{d}{can} \\
\End_{\mathcal{A}}(M) \arrow{r}{F} & \End_{\mathcal{B}}(F(M))
\end{tikzcd}
\end{equation}
%The homomorphism $Z(F)$ can, therefore, be seen as a partial lift 
Therefore, $Z(F)$ contains partial information about all the maps between endomorphism rings induced by the functor $F$. 

In general, $Z_F(\mathcal{A}) \neq Z(\mathcal{A})$. In that case, we would like to extend $Z(F)$ to a homomorphism $Z(\mathcal{A}) \to Z(\mathcal{B})$. Of course, there is a price to pay - such a homomorphism cannot make the diagram \eqref{centre endo diagram} commute for all objects $M \in \mathcal{A}$. Instead, we impose the condition that the diagram should commute for all $M$ from some subcategory of $\mathcal{A}$. 

Given a full additive subcategory $\mathcal{A}'$, let $F' \colon \mathcal{A}' \to \mathcal{B}$ be the restricted functor. Restriction to objects in $\mathcal{A}'$ yields canonical homomorphisms $q \colon Z(\mathcal{A}) \to Z(\mathcal{A}')$ and $\End(F) \to \End(F')$. We assume that the canonical map $\beta' \colon Z(\mathcal{B}) \to \End(F')$ is injective, and identify $Z(\mathcal{B})$ with a subring of $\End(F')$. The following commutative diagram illustrates all the maps we have just defined:

\begin{equation}
\begin{tikzcd} \label{cat centres res}
Z(\mathcal{A})  
\arrow{r}{\alpha} \arrow{d}[swap]{q} & \End(F) \arrow{d} \arrow[hookleftarrow]{r}{\beta} & Z(\mathcal{B}) \arrow[equal]{d} \\ 
Z(\mathcal{A}') \arrow{r}{\alpha'} & \End(F') \arrow[hookleftarrow]{r}{\beta'} & Z(\mathcal{B})  
\end{tikzcd}
\end{equation}

\begin{defi} \label{fc central cat defi}
We say that a full subcategory $\mathcal{A}'$ of $\mathcal{A}$ is $F$-\emph{central} if $\Ima(\alpha' \circ q) \subseteq Z(\mathcal{B})$. 
\end{defi}

If $\mathcal{A}'$ is $F$-central, then there is a natural algebra homomorphism 
\[ Z_{\mathcal{A}'}(F) := \alpha' \circ q \colon \ Z(\mathcal{A}) \longrightarrow Z(\mathcal{B}) \]
extending \eqref{first homo btw centres}, and making the diagram 
\begin{equation*} %\label{centre endo diagram 2} 
\begin{tikzcd}
 Z(\mathcal{A}) \arrow{r}{Z_{\mathcal{A}'}(F)} \arrow{d}[swap]{can} & Z(\mathcal{B}) \arrow{d}{can} \\
\End_{\mathcal{A}}(M) \arrow{r}{F} & \End_{\mathcal{B}}(F(M))
\end{tikzcd}
\end{equation*}
commute for all $M \in \mathcal{A}'$. The homomorphism $Z_{\mathcal{A}'}(F)$ contains partial information about all the maps between endomorphism rings induced by the restricted functor $F'$. 

\subsection{The $\mathsf{F}_{\mathbf{c}}$-centre.} \label{App to Suzuki section}

For the rest of this section, we will use the canonical identifications
\[ \mathfrak{Z} \cong Z(\widehat{\mathbf{U}}_{\mathbf{c}}\Lmod{}), \quad \mathcal{Z} \cong  Z(\mathcal{H}_{0}\Lmod{}), \quad \UU^{op} \cong \End_{\UU}(\UU).\] 
Let us apply the framework developed in \S \ref{sec: centres of cats} to the functor $\mathsf{F}_{\mathbf{c}} \colon \widehat{\mathbf{U}}_{\mathbf{c}}\Lmod{} \to \mathcal{H}_{0}\Lmod{}$.
We have canonical maps 
\begin{equation*} \label{alpha beta Suzuki} \mathfrak{Z} \overset{\alpha}{\longrightarrow} \End(\mathsf{F}_{\mathbf{c}}) \overset{\beta}{\longleftarrow} \mathcal{Z}.\end{equation*}
By Theorem \ref{Regular module thm}, the regular module $\mathcal{H}_0$ is in the image of $\mathsf{F}_{\mathbf{c}}$. The fact that  $\mathcal{Z}$ acts faithfully on $\mathcal{H}_0$ implies that $\beta$ is injective. 

Our first goal is to give a partial description of the $\mathsf{F}_{\mathbf{c}}$-centre of $\widehat{\mathbf{U}}_{\mathbf{c}}\Lmod{}$. 
For any $\kappa \in \C$, define 
\begin{equation} \label{subHeisVir} \mathscr{L}_{\kappa} := \langle {}^{\kappa}\mathbf{L}_{r+1}, \id[r] \mid r \leq 0 \rangle \subset \UU. \end{equation}
When $\kappa = \mathbf{c}$, it follows from Theorem \ref{FF small theorem thm} and \S \ref{sec:va-centre of env} that the generators on the RHS of \eqref{subHeisVir} are algebraically independent. Hence 
\begin{equation} \label{SHV-} \mathscr{L}_{\mathbf{c}} = \C[{}^{\mathbf{c}}\mathbf{L}_{r+1}, \id[r]]_{r \leq 0}. \end{equation} 
We will show that $\mathscr{L}_{\mathbf{c}}$ is a subalgebra of the $\mathsf{F}_{\mathbf{c}}$-centre of $\widehat{\mathbf{U}}_{\mathbf{c}}\Lmod{}$. The proof requires some preparations. 

Let $\kappa$ be arbitrary and set $t=\kappa+n$. Let $1_{\hat{\g}}$ denote the unit in $\widehat{\mathbf{U}}_\kappa$. 
Consider the image $[1 \otimes e_{\mathsf{id}}^* \otimes 1_{\hat{\g}}]$ of $1 \otimes e_{\mathsf{id}}^* \otimes 1_{\hat{\g}} \in \mathsf{T}_{\kappa}(\UU)$ in $\mathsf{F}_{\kappa}(\UU)$. Let $K_{t}$ be the $\mathcal{H}_{t}$-submodule of $\mathsf{F}_{\kappa}(\UU)$ generated by $[1 \otimes e_{\mathsf{id}}^* \otimes 1_{\hat{\g}}]$. 

\begin{lem}
There is an $\mathcal{H}_{t}$-module isomorphism $K_t \cong \mathcal{H}_{t}$. 
\end{lem}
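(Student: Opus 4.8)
The plan is to realise $K_{t}$ simultaneously as a cyclic quotient and as a direct summand of $\mathcal{H}_{t}$, which forces the two comparison maps to be mutually inverse isomorphisms.

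First I would produce a comparison map with the module $\mathbb{H}_{\kappa}$ of Definition \ref{HVM defi}. Since $\hat{\g}_{\geq 2}\subseteq\mathfrak{i}\subseteq\mathfrak{I}_{\kappa}$ and $\mathfrak{I}_{\kappa}$ is a left ideal, the open ideal $\hat{I}_{2}$ is contained in (the closure of) $\mathfrak{I}_{\kappa}$; hence the canonical map $\widehat{\mathbf{U}}_{\kappa}\to\widehat{\mathbf{U}}_{\kappa}/\hat{I}_{2}$ factors through a surjective $\widehat{\mathbf{U}}_{\kappa}$-module homomorphism $\pi\colon\widehat{\mathbf{U}}_{\kappa}\twoheadrightarrow\mathbb{H}_{\kappa}$ with $\pi(1_{\hat{\g}})=1_{\mathbb{H}}$. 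As $\widehat{\mathbf{U}}_{\kappa}$ is finitely presented (it is cyclic) and the extended Suzuki functor is right exact (see \S\ref{sec: ext to all modules}), the map $\mathsf{F}_{\kappa}(\pi)$ is a surjection of $\mathcal{H}_{t}$-modules; composing with the inverse of the isomorphism $\Upsilon$ of Theorem \ref{thm: regular module} gives a surjective $\mathcal{H}_{t}$-module homomorphism $\phi:=\Upsilon^{-1}\circ\mathsf{F}_{\kappa}(\pi)\colon\mathsf{F}_{\kappa}(\widehat{\mathbf{U}}_{\kappa})\twoheadrightarrow\mathcal{H}_{t}$.

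Next I would track the distinguished element. Interpreting $[1\otimes e_{\mathsf{id}}^{*}\otimes 1_{\hat{\g}}]\in\mathsf{F}_{\kappa}(\widehat{\mathbf{U}}_{\kappa})=\varprojlim\mathsf{F}_{\kappa}((\widehat{\mathbf{U}}_{\kappa})_{i})$ as the compatible family of the classes $[1\otimes e_{\mathsf{id}}^{*}\otimes\bar{1}_{\hat{\g}}]$ in the smooth quotients, the naturality of $M\mapsto\mathsf{T}_{\kappa}(M)$ together with $\pi(1_{\hat{\g}})=1_{\mathbb{H}}$ shows that $\mathsf{F}_{\kappa}(\pi)$ sends it to $[1\otimes e_{\mathsf{id}}^{*}\otimes 1_{\mathbb{H}}]$; by formula \eqref{Regular module iso 2} (taking $f=g=1$, $w=\mathsf{id}$) the latter equals $\Upsilon(1_{\mathcal{H}})$, so $\phi([1\otimes e_{\mathsf{id}}^{*}\otimes 1_{\hat{\g}}])=1_{\mathcal{H}}$. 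Hence the restriction $\phi|_{K_{t}}\colon K_{t}\to\mathcal{H}_{t}$ is still surjective, its image being the $\mathcal{H}_{t}$-submodule generated by $1_{\mathcal{H}}$. On the other hand $K_{t}$ is, by construction, generated as an $\mathcal{H}_{t}$-module by $[1\otimes e_{\mathsf{id}}^{*}\otimes 1_{\hat{\g}}]$, so $\psi\colon\mathcal{H}_{t}\twoheadrightarrow K_{t}$, $h\mapsto h\cdot[1\otimes e_{\mathsf{id}}^{*}\otimes 1_{\hat{\g}}]$, is surjective. The composite $\phi|_{K_{t}}\circ\psi$ is then an $\mathcal{H}_{t}$-module endomorphism of $\mathcal{H}_{t}$ fixing $1_{\mathcal{H}}$, hence equals $\mathrm{id}_{\mathcal{H}_{t}}$. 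Therefore $\psi$ is injective, and being surjective as well, it is the desired isomorphism $\mathcal{H}_{t}\xrightarrow{\sim}K_{t}$.

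I do not expect a genuine obstacle: once the comparison map $\pi$ is in place, everything is formal. The two points deserving attention are (i) the existence and surjectivity of $\mathsf{F}_{\kappa}(\pi)$, which rest only on the right exactness of the extended functor and on the inclusion $\hat{I}_{2}\subseteq\mathfrak{I}_{\kappa}$, and (ii) the identification $\phi([1\otimes e_{\mathsf{id}}^{*}\otimes 1_{\hat{\g}}])=1_{\mathcal{H}}$, which is just reading off the explicit formula in Theorem \ref{thm: regular module}. The only mildly delicate aspect is notational — ensuring that the class $[1\otimes e_{\mathsf{id}}^{*}\otimes 1_{\hat{\g}}]$ in the inverse limit $\mathsf{F}_{\kappa}(\widehat{\mathbf{U}}_{\kappa})$ is treated levelwise, so that the naturality of $\mathsf{T}_{\kappa}$ applies to each smooth quotient.
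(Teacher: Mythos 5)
Your proposal is correct and takes essentially the same route as the paper: both use right exactness of $\mathsf{F}_\kappa$ to obtain a surjection $\mathsf{F}_\kappa(\widehat{\mathbf{U}}_\kappa)\twoheadrightarrow\mathsf{F}_\kappa(\mathbb{H}_\kappa)\cong\mathcal{H}_t$ sending $[1\otimes e_{\mathsf{id}}^*\otimes 1_{\hat{\g}}]$ to $1_{\mathcal{H}}$, which then restricts to an isomorphism on $K_t$. You merely spell out, via the two mutually inverse comparison maps $\phi|_{K_t}$ and $\psi$, the cyclicity argument that the paper leaves implicit in the phrase ``which restricts to an isomorphism.''
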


\begin{proof}
Since $\mathsf{F}_{\kappa}$ is right exact, it induces an epimorphism
\[ \mathsf{F}_{\kappa}(\UU) \twoheadrightarrow \mathsf{F}_{\kappa}(\mathbb{H}_{\kappa}) \cong \mathcal{H}_{t}, \quad [1 \otimes e_{\mathsf{id}}^* \otimes 1_{\hat{\g}}] \mapsto [1 \otimes e_{\mathsf{id}}^* \otimes 1_{\mathbb{H}}] = 1_{\mathcal{H}},\]
which restricts to an isomorphism $K_t \cong \mathcal{H}_{t}$. 
\end{proof}

Let $N_{t}$ be the subalgebra of $\End_{\mathcal{H}_{t}}(\mathsf{F}_{\kappa}(\UU))$ consisting of endomorphisms which preserve the submodule $K_t$. Let $\rho_t \colon N_t \to \End_{\mathcal{H}_t}(K_t) \cong \mathcal{H}_t^{op}$ be the map given by restriction of endomorphisms  of $\mathsf{F}_{\kappa}(\UU)$ to those of $K_t$. 

\begin{lem} \label{diagram factors trilemma} The following hold. 
\begin{enumerate}[label=\alph*), font=\textnormal,noitemsep,topsep=3pt,leftmargin=1cm]
\item The image of $\mathscr{L}_{\kappa}^{op}$ under $\End_{\UU}(\UU) \xrightarrow{\mathsf{F}_\kappa}  \End_{\mathcal{H}_t}(\mathsf{F}_{\kappa}(\UU))$ is contained in~$N_t$. 
\item The map $\rho_t \circ \mathsf{F}_\kappa|_{\mathscr{L}_{\kappa}^{op}}$ is given by: 
\begin{align} \label{id[r]} \id[r] \mapsto& \ \sum_{i=1}^n x_i^{-r} \quad (r \leq 0),\\
\label{L_r complete homogeneous} {}^{\kappa}\mathbf{L}_r \mapsto& \ - \frac{1}{2}\sum_{i=1}^n x_i^{1-r}y_i + \sum_{i < j} c_{-r}(x_i,x_j)s_{i,j} + \frac{n(1-r)}{2} \sum_{i=1}^n x_i^{-r} \quad (r \leq 1),\end{align}
where $c_{-r}(x_i,x_j)$ is the complete homogeneous  symmetric polynomial of degree $-r$ in $x_i$ and $x_j$, if $r\leq 0$, and $c_{-1}(x_i,x_j)=0$.   
\item \label{diagram factors trilemma c} When $\kappa = \mathbf{c}$, the image of $\mathsf{F}_{\mathbf{c}}|_{\mathscr{L}_{\mathbf{c}}}$ lies in the image of $\mathcal{Z}$ in $\End_{\mathcal{H}_0}(\mathsf{F}_{\mathbf{c}}(\UUc))$. 
\end{enumerate}
\end{lem}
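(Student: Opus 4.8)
\textbf{Proof proposal for Lemma \ref{diagram factors trilemma}.}

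The plan is to prove parts a), b), c) in order, with part c) following essentially formally from part b). For part a), I would use the description from Theorem \ref{thm: regular module}: the submodule $K_t \subset \mathsf{F}_\kappa(\widehat{\mathbf{U}}_\kappa)$ is the image of the natural quotient map $\mathsf{F}_\kappa(\widehat{\mathbf{U}}_\kappa) \twoheadrightarrow \mathsf{F}_\kappa(\mathbb{H}_\kappa) \cong \mathcal{H}_t$, which is induced by the $\widehat{\mathbf{U}}_\kappa$-module surjection $\widehat{\mathbf{U}}_\kappa \twoheadrightarrow \mathbb{H}_\kappa$. Since $\mathscr{L}_\kappa \subset \widehat{\mathbf{U}}_\kappa$ acts on $\widehat{\mathbf{U}}_\kappa$ by right multiplication, functoriality of $\mathsf{F}_\kappa$ (the naturality built into \eqref{fpmod functoriality} and the tensor realization \eqref{EW Suzuki tensor}) means that $\mathsf{F}_\kappa(\cdot r)$ for $r \in \mathscr{L}_\kappa$ commutes with the quotient map $\mathsf{F}_\kappa(\widehat{\mathbf{U}}_\kappa) \to \mathsf{F}_\kappa(\mathbb{H}_\kappa)$, hence preserves its image $K_t$; this is exactly the assertion that $\mathsf{F}_\kappa(\mathscr{L}_\kappa^{op}) \subseteq N_t$.

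For part b), the key is to compute, for $r \leq 0$ (resp.\ $r \leq 1$), the action of the operators induced by right multiplication by $\id[r]$ and ${}^\kappa\mathbf{L}_r$ on the cyclic generator $[1 \otimes e_{\mathsf{id}}^* \otimes 1_{\hat{\g}}]$ of $K_t \cong \mathcal{H}_t$, and then match the result against the left $\mathcal{H}_t$-module structure under the isomorphism $\Upsilon$ of Theorem \ref{thm: regular module}. Right multiplication by $X[k]$ on $\widehat{\mathbf{U}}_\kappa$, after transporting through the isomorphism $\widehat{\mathbf{U}}_\kappa \cong \widehat{\mathbf{U}}_\kappa$ used to build $\mathsf{F}_\kappa(\widehat{\mathbf{U}}_\kappa)$, corresponds to acting by $X[k]$ viewed in the last tensor factor of $\mathsf{T}_\kappa(\widehat{\mathbf{U}}_\kappa) = \C[\h] \otimes (\mathbf{V}^*)^{\otimes m} \otimes \widehat{\mathbf{U}}_\kappa$ combined with the Lie algebra homomorphism $\g[t] \to \g[t^{-1}]$, $X[k] \mapsto X[-k]$, as in \eqref{t-inverse}; upon passing to $\g[t]$-coinvariants via \eqref{currentaction}, the tail end $X[k]^{(\infty)}$ is replaced by $-\sum_{i=1}^m x_i^k \otimes X^{(i)}$ plus terms that commute past. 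Concretely: $\id[r]$ acts, after the exchange, as $\sum_i x_i^{-r} \otimes \id^{(i)}$ acting on $(\mathbf{V}^*)^{\otimes m}$; since $\id^{(i)}$ acts on $e_{\mathsf{id}}^*$ as multiplication by $-n \cdot (\text{something})$ — more precisely $\sum_{k} e_{kk}^{(i)} . e_{\mathsf{id}}^* = -e_{\mathsf{id}}^*$ — one gets the formula \eqref{id[r]}. For ${}^\kappa\mathbf{L}_r$ one substitutes the explicit expression \eqref{kappa-L coeff}, uses the reordering of modes (the normal-ordering in $\mathbf{L}$), the replacement of $\g[t]$-tail operators by $x_i$-polynomials in $\g^{(i)}$-operators, and the classical $\mathfrak{gl}_n$ computation $\sum_{k,l} e_{kl}^{(i)} e_{lk}^{(j)} . e_{\mathsf{id}}^* = s_{i,j}\underline{s_{i,j}}^{-1}\cdots$ — i.e.\ the operator $\Omega^{(i,j)}$ acting on $e_{\mathsf{id}}^*$ realizes the transposition $s_{i,j}$ up to the $\underline{s_{i,j}}$ bookkeeping, exactly as in the proof of Proposition \ref{pro:y preserves subspace}. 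Collecting: the $e_{lk}[1]$-tail pieces produce $-\sum_i x_i^{1-r} y_i$ under the identification \eqref{symT-Ch-iso}, the cross-terms $\Omega^{(i,j)}$ at higher modes produce $\sum_{i<j} c_{-r}(x_i,x_j)s_{i,j}$ (the complete homogeneous polynomial arising from summing $\sum_{p} x_i^p x_j^{-r-p}$), and the normal-ordering/diagonal contributions together with the central term produce the scalar $\frac{n(1-r)}{2}\sum_i x_i^{-r}$. I expect this to be a long but mechanical bookkeeping exercise, closely parallel to the $y_i$-computation already carried out for Proposition \ref{pro:y preserves subspace} and to the Chervov-Molev style manipulations of Segal-Sugawara operators.

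Part c) is then immediate: by Proposition \ref{pro: centre facts}.d) the map $\rho_0 \colon N_0 \to \mathcal{H}_0^{op}$ restricts endomorphisms of $\mathsf{F}_{\mathbf{c}}(\widehat{\mathbf{U}}_{\mathbf{c}})$ to endomorphisms of $K_0 \cong \mathcal{H}_0$; formulas \eqref{id[r]}--\eqref{L_r complete homogeneous} show that the images of $\id[r]$ and ${}^{\mathbf{c}}\mathbf{L}_r$ under $\rho_0 \circ \mathsf{F}_{\mathbf{c}}$ are $\mathfrak{S}_n$-invariant elements of $\mathcal{H}_0$ (symmetric polynomials in the $x_i$, and symmetric combinations of the $x_i^{1-r}y_i$ plus $s_{i,j}$-terms), hence by Theorem \ref{pro: centre facts}.a) they lie in $\C[\h]^{\mathfrak{S}_n} \otimes \C[\h^*]^{\mathfrak{S}_n} \subset \mathcal{Z}$ — one should check the mixed second-order operator is genuinely central, e.g.\ by recognising $\sum_i x_i^{1-r}y_i + (\text{lower})$ as (a multiple of) the known central elements of $\mathcal{H}_0$ coming from the $\mathfrak{sl}_2$-triple/Euler-type generators, or directly that it commutes with all $x_j$ and $y_j$ and $\mathfrak{S}_n$. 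Since ${}^{\mathbf{c}}\mathbf{L}_r \in \mathfrak{Z}$ is central in $\widehat{\mathbf{U}}_{\mathbf{c}}$, the endomorphism $\mathsf{F}_{\mathbf{c}}({}^{\mathbf{c}}\mathbf{L}_r)$ of $\mathsf{F}_{\mathbf{c}}(\widehat{\mathbf{U}}_{\mathbf{c}})$ is determined by its restriction to the cyclic submodule $K_0$ (as $\mathsf{F}_{\mathbf{c}}(\widehat{\mathbf{U}}_{\mathbf{c}})$ is generated over $\mathcal{H}_0$ by the image of $1_{\mathcal{H}}$, using right exactness and the bimodule structure of Corollary \ref{cor: EW Suzuki tensor}), so it coincides with the central element $\beta$ of $\mathcal{Z}$ having the same restriction; thus $\mathsf{F}_{\mathbf{c}}|_{\mathscr{L}_{\mathbf{c}}}$ factors through $\Ima\beta = \mathcal{Z}$. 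The main obstacle is purely computational: carrying out the mode-by-mode evaluation of ${}^\kappa\mathbf{L}_r$ on $e_{\mathsf{id}}^*$ after the $\g[t]$-coinvariants substitution, keeping track of signs, the $(\infty)$-tail reindexing, the normal-ordering shift, and the bilinear-form/central contribution, correctly enough to land on the stated closed form with the complete homogeneous polynomial $c_{-r}$ and the precise scalar $\tfrac{n(1-r)}{2}$.
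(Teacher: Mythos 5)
Your sketch for part b) matches the paper's proof exactly: evaluate the endomorphism induced by right multiplication by $\id[r]$ or ${}^{\kappa}\mathbf{L}_r$ on the generator $[1 \otimes e_{\mathsf{id}}^* \otimes 1_{\hat{\g}}]$ of $K_t$, push the $(\infty)$-tail into $x_i$-polynomials using the $\g[t]$-coinvariance relation \eqref{currentaction}, and transport through $\Upsilon$. That is the right computation, and in the paper parts a) and b) are proved simultaneously by it: since the computation exhibits the image of the generator as an element of $K_t$, preservation of $K_t$ follows for free.

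However, your standalone argument for part a) does not hold up. First, $K_t$ is not ``the image of the quotient map $\mathsf{F}_\kappa(\widehat{\mathbf{U}}_\kappa) \twoheadrightarrow \mathsf{F}_\kappa(\mathbb{H}_\kappa)$''; it is the $\mathcal{H}_t$-submodule of the \emph{source} generated by $[1 \otimes e_{\mathsf{id}}^* \otimes 1_{\hat{\g}}]$, which happens to be carried isomorphically onto the target. Second, right multiplication by an element of $\mathscr{L}_\kappa$ does not descend to an endomorphism of $\mathbb{H}_\kappa$ for general $\kappa$ (it does only for $\kappa = \mathbf{c}$, where $\mathscr{L}_{\mathbf{c}} \subset \mathfrak{Z}$), so ``commutes with the quotient map'' is not available for the full generality of the lemma. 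Third, even when such a commuting square exists, an endomorphism of the source that intertwines with an endomorphism of the quotient need not preserve a chosen submodule of the source mapping isomorphically onto the quotient — these are genuinely different statements. So the abstract shortcut fails and you must fall back to the direct computation, as the paper does. Similarly, in part c) your appeal to Theorem \ref{pro: centre facts}.a) is incorrect: the RHS of \eqref{L_r complete homogeneous} involves transposition terms $c_{-r}(x_i,x_j)s_{i,j}$ and is $\mathfrak{S}_n$-conjugation invariant, but it manifestly does not lie in the subalgebra $\C[\h]^{\mathfrak{S}_n}\otimes\C[\h^*]^{\mathfrak{S}_n}$, and conjugation-invariance alone does not imply centrality. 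You flag the need for a direct check of centrality at the end, which is the correct fallback and matches the paper's stated approach (``it suffices to compute that the elements on the RHS lie in $\mathcal{Z}$''), but the displayed reasoning should be deleted.
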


\begin{proof}
A homomorphism from $K_t$ to $\mathsf{F}_{\kappa}(\UU)$ is determined by where it sends the generator $[1 \otimes e_{\mathsf{id}}^* \otimes 1_{\hat{\g}}]$. Let $z$ be any of our distinguished generators (see \eqref{subHeisVir}) of $\mathscr{L}_{\kappa}$. The corresponding endomorphism of $\mathsf{F}_{\kappa}(\UU)$ sends $[1 \otimes e_{\mathsf{id}}^* \otimes 1_{\hat{\g}}]$ to $[1 \otimes e_{\mathsf{id}}^* \otimes z\cdot1_{\hat{\g}}]$. We are going to use the $\g[t]$-action \eqref{currentaction} to show that $1 \otimes e_{\mathsf{id}}^* \otimes z\cdot1_{\hat{\g}}$ is in the same equivalence class in $\mathsf{F}_{\kappa}(\UU)$ as an element of the form \eqref{id[r]} or \eqref{L_r complete homogeneous}. 
First take $z = \id[r]$ with $r \leq 0$. By \eqref{currentaction}, we have
\[ [1 \otimes e_{\mathsf{id}}^* \otimes \id[r] \cdot 1_{\hat{\g}}] = \sum_{i=1}^n [ x_i^{-r} \otimes e_{\mathsf{id}}^* \otimes 1_{\hat{\g}}]. \]
This yields formula \eqref{id[r]}. 
Secondly, take $z = {}^{\kappa}\mathbf{L}_r$ with $r \leq 1$. By \eqref{currentaction}, we have the following equalities of operators on $\mathsf{F}_{\kappa}(\UU)$ evaluated at $[1 \otimes e_{\mathsf{id}}^* \otimes 1_{\hat{\g}}]$: 
\[ \sum_{s \geq 1}\sum_{k,l} (e_{kl}[r-s]e_{lk}[s])^{(\infty)} = - \sum_{s\geq 1} \sum_i x_i^{s-r} \sum_{k,l} e_{kl}^{(i)} e_{lk}[s]^{(\infty)} = - \sum_i x_i^{1-r} y_i, \]
\[ \sum_{r \leq s \leq 0} \sum_{k,l} (e_{kl}[s]e_{lk}[r-s])^{(\infty)} = \sum_{r \leq s \leq 0} \sum_{i,j} x_i^{-s} x_j^{s-r} \Omega^{(i,j)}= 2 \sum_{i < j} c_{-r}(x_i,x_j)s_{i,j} + n(1-r) \sum_{i=1}^n x_i^{-r},\]
yielding formula \eqref{L_r complete homogeneous}. We have thus shown that the endomorphisms in $\mathsf{F}_\kappa(\mathscr{L}_{\kappa}^{op})$ send the generator $[1 \otimes e_{\mathsf{id}}^* \otimes 1_{\hat{\g}}]$ of $K_t$ to other elements of $K_t$. Hence $\mathsf{F}_\kappa(\mathscr{L}_{\kappa}^{op}) \subseteq N_t$, proving parts a) and b) of the lemma. Part c) can be checked by a direct calculation - it suffices to compute that the elements on the RHS of \eqref{id[r]} and \eqref{L_r complete homogeneous} lie in $\mathcal{Z}$. It also follows from Theorem \ref{thm restricted centres}, which has a more conceptual proof. 
\end{proof}

\begin{thm} \label{pro: question one}
We have 
$\mathscr{L}_{\mathbf{c}} \subseteq Z_{\mathsf{F}_{\mathbf{c}}}(\widehat{\mathbf{U}}_{\mathbf{c}})$. 
Moreover, $Z(\mathsf{F}_{\mathbf{c}})|_{\mathscr{L}_{\mathbf{c}}}$ is given by formulae \eqref{id[r]} and~\eqref{L_r complete homogeneous}. 
\end{thm}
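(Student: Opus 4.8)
The claim $\mathscr{L}_{\mathbf{c}} \subseteq Z_{\mathsf{F}_{\mathbf{c}}}(\widehat{\mathbf{U}}_{\mathbf{c}})$ says precisely that $\alpha(\mathscr{L}_{\mathbf{c}}) \subseteq \mathcal{Z} = \Ima\beta \subseteq \End(\mathsf{F}_{\mathbf{c}})$, where $\alpha$ sends $z\in\mathfrak{Z}$ to the natural transformation $\{\mathsf{F}_{\mathbf{c}}(z_M)\}_M$. The first thing I would do is reduce this to a statement about a single module. By Corollary \ref{cor: EW Suzuki tensor}, $\mathsf{F}_{\mathbf{c}}(-) \cong \mathsf{F}_{\mathbf{c}}(\widehat{\mathbf{U}}_{\mathbf{c}}) \otimes_{\widehat{\mathbf{U}}_{\mathbf{c}}} -$ on finitely presented modules, and $\mathsf{F}_{\mathbf{c}}$ preserves all colimits (Remark \ref{remark colimits Suzuki}), so a natural endomorphism of $\mathsf{F}_{\mathbf{c}}$ is determined by its value on the bimodule $\mathsf{F}_{\mathbf{c}}(\widehat{\mathbf{U}}_{\mathbf{c}})$, i.e.\ by an element of $\End_{\mathcal{H}_0}(\mathsf{F}_{\mathbf{c}}(\widehat{\mathbf{U}}_{\mathbf{c}}))$ commuting with the right $\widehat{\mathbf{U}}_{\mathbf{c}}$-action. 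Concretely, for $z\in\mathfrak{Z}$ the transformation $\alpha(z)$ corresponds to the endomorphism of $\mathsf{F}_{\mathbf{c}}(\widehat{\mathbf{U}}_{\mathbf{c}})$ given by right multiplication by $z$ (which makes sense since $z$ is central); this is exactly the image of $z$ under $\mathsf{F}_{\mathbf{c}} \colon \End_{\widehat{\mathbf{U}}_{\mathbf{c}}}(\widehat{\mathbf{U}}_{\mathbf{c}}) = \widehat{\mathbf{U}}_{\mathbf{c}}^{op} \to \End_{\mathcal{H}_0}(\mathsf{F}_{\mathbf{c}}(\widehat{\mathbf{U}}_{\mathbf{c}}))$. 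So the task becomes: show that for each generator $z\in\{\id[r], {}^{\mathbf{c}}\mathbf{L}_{r+1} : r\le 0\}$, the operator $\mathsf{F}_{\mathbf{c}}(z^{op})$ on $\mathsf{F}_{\mathbf{c}}(\widehat{\mathbf{U}}_{\mathbf{c}})$ lies in the image of $\mathcal{Z} \hookrightarrow \End_{\mathcal{H}_0}(\mathsf{F}_{\mathbf{c}}(\widehat{\mathbf{U}}_{\mathbf{c}}))$, and that it acts as the central element specified by the right-hand sides of \eqref{id[r]}--\eqref{L_r complete homogeneous}.

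The second step is to invoke Lemma \ref{diagram factors trilemma}, which does almost all of this work. Part c) of that lemma already states that the image of $\mathsf{F}_{\mathbf{c}}|_{\mathscr{L}_{\mathbf{c}}}$ lies in the image of $\mathcal{Z}$ in $\End_{\mathcal{H}_0}(\mathsf{F}_{\mathbf{c}}(\widehat{\mathbf{U}}_{\mathbf{c}}))$ — which, via the reduction above, is exactly the assertion $\alpha(\mathscr{L}_{\mathbf{c}}) \subseteq \mathcal{Z}$, i.e.\ $\mathscr{L}_{\mathbf{c}} \subseteq Z_{\mathsf{F}_{\mathbf{c}}}(\widehat{\mathbf{U}}_{\mathbf{c}})$. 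Part b) of the lemma computes $\rho_t \circ \mathsf{F}_{\mathbf{c}}|_{\mathscr{L}_{\mathbf{c}}^{op}}$ explicitly via formulae \eqref{id[r]} and \eqref{L_r complete homogeneous}; since $\rho_{0}$ is the restriction of endomorphisms of $\mathsf{F}_{\mathbf{c}}(\widehat{\mathbf{U}}_{\mathbf{c}})$ to the submodule $K_0 \cong \mathcal{H}_0$ (which is a faithful $\mathcal{H}_0$-module, in fact the regular one), and since a central endomorphism is determined by its restriction to any faithful submodule, the formulae in part b) also describe $Z(\mathsf{F}_{\mathbf{c}})|_{\mathscr{L}_{\mathbf{c}}}$ itself. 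I would need to spell out the identification: $Z(\mathsf{F}_{\mathbf{c}})(z) \in \mathcal{Z}$ is the unique central element whose image under $\mathcal{Z} \to \End_{\mathcal{H}_0}(K_0) \cong \mathcal{H}_0^{op}$ equals $\rho_0(\mathsf{F}_{\mathbf{c}}(z^{op}))$, and then read off \eqref{id[r]}--\eqref{L_r complete homogeneous}.

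The one genuinely substantive point — and the only place where I would expect to do real work rather than bookkeeping — is verifying that the elements appearing on the right-hand sides of \eqref{id[r]} and \eqref{L_r complete homogeneous} actually lie in $\mathcal{Z} = Z(\mathcal{H}_0)$. This is needed to make sense of part c) of Lemma \ref{diagram factors trilemma} independently, and it is flagged there as checkable "by a direct calculation". The elements $\sum_i x_i^{-r}$ for $r\le 0$ are power sums in the $x_i$, hence lie in $\C[\h]^{\mathfrak{S}_m} \subset \mathcal{Z}$ by Theorem \ref{pro: centre facts}a), so that case is immediate. The harder case is \eqref{L_r complete homogeneous}: one must check that $-\sum_i x_i^{1-r} y_i + \sum_{i<j} c_{-r}(x_i,x_j) s_{i,j} + \tfrac{n(1-r)}{2}\sum_i x_i^{-r}$ commutes with all of $\C[\h]$, with $\C\mathfrak{S}_m$, and with $\C[\h^*]$. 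Commutativity with $\C[\h]$ and $\C\mathfrak{S}_m$ is a short computation using the Cherednik relations (the $s_{i,j}$-terms are precisely engineered to cancel the commutator $[-\sum x_i^{1-r}y_i,\ x_j]$); commutativity with the $y_k$ is the real content and reduces to a polynomial identity among the $c_{-r}$. Alternatively — and this is the route I would actually take to keep the proof clean — I would defer this verification entirely to Theorem \ref{thm restricted centres} (as Lemma \ref{diagram factors trilemma} itself suggests), since that theorem gives the $\mathsf{F}_{\mathbf{c}}$-centrality of $\mathscr{C}_{\mathbb{H}}$ by a more conceptual argument and yields the present statement as a corollary; then the proof of Theorem \ref{pro: question one} is simply the citation of Lemma \ref{diagram factors trilemma}b)--c) together with the reduction in the first paragraph. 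The main obstacle is thus not conceptual but organizational: making precise the passage from "the operator $\mathsf{F}_{\mathbf{c}}(z^{op})$ on $\mathsf{F}_{\mathbf{c}}(\widehat{\mathbf{U}}_{\mathbf{c}})$" to "the element $Z(\mathsf{F}_{\mathbf{c}})(z)$ of $\mathcal{Z}$", which rests on $K_0$ being a faithful $\mathcal{H}_0$-module and on $\mathsf{F}_{\mathbf{c}}$ being determined on all of $\widehat{\mathbf{U}}_{\mathbf{c}}\Lmod{}$ by its value on the regular bimodule.
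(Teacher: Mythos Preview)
Your proposal is correct and follows essentially the same approach as the paper: reduce via the tensor-product realization $\mathsf{F}_{\mathbf{c}}(M)\cong\mathsf{F}_{\mathbf{c}}(\widehat{\mathbf{U}}_{\mathbf{c}})\otimes_{\widehat{\mathbf{U}}_{\mathbf{c}}}M$ to the single endomorphism $\mathsf{F}_{\mathbf{c}}(z_{\widehat{\mathbf{U}}_{\mathbf{c}}})$, then cite Lemma \ref{diagram factors trilemma}(c) for the containment and (b) for the formulae. The paper phrases the reduction slightly more directly---observing that $\mathsf{F}_{\mathbf{c}}(z_M)=\mathsf{F}_{\mathbf{c}}(z_{\widehat{\mathbf{U}}_{\mathbf{c}}})\otimes\id$, so once $\mathsf{F}_{\mathbf{c}}(z_{\widehat{\mathbf{U}}_{\mathbf{c}}})$ is multiplication by some $\zeta\in\mathcal{Z}$ the same $\zeta$ acts on every $\mathsf{F}_{\mathbf{c}}(M)$---which avoids the detour through $K_0$ and faithfulness, but this is a cosmetic difference.
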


\begin{proof}
We need to check that, for any $M \in \UUc\Lmod{}$ and $z \in \mathscr{L}_{\mathbf{c}}$, the endomorphism $\mathsf{F}_{\mathbf{c}}(z_M)$ lies in the image of $\mathcal{Z}$ in $\End_{\mathcal{H}_0}(\mathsf{F}_{\mathbf{c}}(M))$. 
By Definition \ref{general definition of Suzuki}, $\mathsf{F}_{\mathbf{c}}(M) = \mathsf{F}_{\mathbf{c}}(\widehat{\mathbf{U}}_{\mathbf{c}}) \otimes_{\widehat{\mathbf{U}}_{\mathbf{c}}} M$. The corresponding endomorphism $\mathsf{F}_{\mathbf{c}}(z_M)$ of $\mathsf{F}_{\mathbf{c}}(M)$ sends $r \otimes m \mapsto r \otimes z\cdot m = r \cdot z \otimes m$, for $m \in M$ and $r \in \mathsf{F}_\kappa(\widehat{\mathbf{U}}_{\mathbf{c}})$. Hence $\mathsf{F}_{\mathbf{c}}(z_M) = \mathsf{F}_{\mathbf{c}}(z_{\widehat{\mathbf{U}}_{\mathbf{c}}}) \otimes \id$. But $\mathsf{F}_{\mathbf{c}}(z_{\widehat{\mathbf{U}}_{\mathbf{c}}})$ lies in the image of $\mathcal{Z}$ in $\End_{\mathcal{H}_0}(\mathsf{F}_{\mathbf{c}}(\UUc))$ by part c) of Lemma \ref{diagram factors trilemma}. Hence $\mathsf{F}_{\mathbf{c}}(z_M)$ lies in the image of $\mathcal{Z}$ in $\End_{\mathcal{H}_0}(\mathsf{F}_{\mathbf{c}}(M))$, proving the first statement. The second statement follows directly from part b) of Lemma \ref{diagram factors trilemma}. 
\end{proof}

\subsection{An $\mathsf{F}_{\mathbf{c}}$-central subcategory.} \label{sec: CH central cat}

The following lemma shows that the $\mathsf{F}_{\mathbf{c}}$-centre of $\widehat{\mathbf{U}}_{\mathbf{c}}\Lmod{}$ is a proper subalgebra of $\mathfrak{Z}$.  

\begin{lem}
We have $Z_{\mathsf{F}_{\mathbf{c}}}(\widehat{\mathbf{U}}_{\mathbf{c}}) \neq \mathfrak{Z}$. 
\end{lem}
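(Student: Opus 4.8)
The plan is to exhibit an explicit element of $\mathfrak{Z}$ that cannot lie in $Z_{\mathsf{F}_{\mathbf{c}}}(\widehat{\mathbf{U}}_{\mathbf{c}})$, i.e.\ an element $z$ whose image $\alpha(z) \in \End(\mathsf{F}_{\mathbf{c}})$ is not in $\Ima\beta = \mathcal{Z}$. Since $\beta$ is injective and $\mathcal{Z}$ is identified with the subring of $\End(\mathsf{F}_{\mathbf{c}})$ it generates, it suffices to find a single $\UUc$-module $M$ and a central element $z$ such that $\mathsf{F}_{\mathbf{c}}(z_M)$ is \emph{not} the action of any element of $\mathcal{Z}$ on $\mathsf{F}_{\mathbf{c}}(M)$. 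The most economical choice of test module is $M = \UUc$ itself, where $\End_{\UUc}(\UUc) = \UUc^{op}$ and the image of $\mathfrak{Z}$ under $\mathsf{F}_{\mathbf{c}}$ is computed via the same mechanism as in Lemma \ref{diagram factors trilemma}: $z \in \mathfrak{Z}$ acts on $\mathsf{F}_{\mathbf{c}}(\UUc)$ by right multiplication, and on the cyclic submodule $K_0 \cong \mathcal{H}_0$ this is read off from the $\g[t]$-action \eqref{currentaction} applied to $[1 \otimes e_{\mathsf{id}}^* \otimes z \cdot 1_{\hat\g}]$.

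The key computation is to take a Segal-Sugawara operator ${}^{\mathbf{c}}\mathbf{L}_r$ with $r \geq 2$ — for instance $z = {}^{\mathbf{c}}\mathbf{L}_2$. Note that Lemma \ref{diagram factors trilemma} only establishes the formula \eqref{L_r complete homogeneous} for $r \leq 1$; for $r \geq 2$ the same manipulation using \eqref{currentaction} produces a \emph{different} shape of operator. Concretely, expanding ${}^{\mathbf{c}}\mathbf{L}_r$ via \eqref{kappa-L coeff} and pushing everything onto the $(\infty)$-factor using the current action, the terms $\sum_{s\geq 1} e_{kl}[r-s]e_{lk}[s]$ now contain contributions with $r - s \geq 1$, i.e.\ positive modes $e_{kl}[r-s]$ acting on the $\infty$-slot, which via \eqref{currentaction} translate into the operators $x_i^{s-r}\Omega^{(i,\infty)}_{[\,\cdot\,]}$ with \emph{negative} powers of $x_i$ in general; after taking coinvariants these do not simply collapse to polynomial expressions in $x_i, y_i$ of the type appearing in $\mathcal{Z}$. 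I would carry this out by: (1) writing down $\mathsf{F}_{\mathbf{c}}(({}^{\mathbf{c}}\mathbf{L}_2)_{\UUc})$ restricted to $K_0 \cong \mathcal{H}_0$ explicitly as an element of $\mathcal{H}_0^{op}$, mirroring the calculation in the proof of Lemma \ref{diagram factors trilemma}.b); (2) comparing it against the characterization of $\mathcal{Z}$ from Theorem \ref{pro: centre facts}.b), namely $\gr\mathcal{Z} = \C[\h\oplus\h^*]^{\mathfrak{S}_m}$, to see that the resulting operator is not $\mathfrak{S}_m$-invariant (or is not central in $\mathcal{H}_0$) — the telltale sign being a non-symmetric or non-central term such as a bare $\sum_i x_i^{-1} y_i^2$-type expression or a cross term that fails the Calogero-Moser centrality relations.

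The main obstacle will be step (1): carrying the mode-pushing calculation through for $r \geq 2$ cleanly enough to isolate an unambiguously non-central term, since the naive expansion produces both "bulk" terms (on the $\infty$-factor) and finite-distance terms (on the $i$-factors), and one must check that no cancellation among coinvariant representatives rescues centrality. An alternative, possibly cleaner route that avoids this: invoke the forthcoming Theorem \ref{thm restricted centres} together with the main theorem \ref{main theorem intro} to note that $\Theta \colon \mathfrak{Z} \to \mathcal{Z}$ factors through $\mathfrak{Z}^{\leqslant 2}(\hat\g)$ and is surjective with an \emph{infinite-dimensional} kernel (as $\mathcal{Z}$ has Krull dimension $2n$ while $\mathfrak{Z}$ does not); if additionally $Z_{\mathsf{F}_{\mathbf{c}}}(\widehat{\mathbf{U}}_{\mathbf{c}})$ equalled $\mathfrak{Z}$, then $Z(\mathsf{F}_{\mathbf{c}})$ would have to agree with $\Theta$ on all of $\mathfrak{Z}$, and one could derive a contradiction by exhibiting a module $M$ on which these two prescriptions disagree (e.g.\ a module not in the $\mathsf{F}_{\mathbf{c}}$-central subcategory $\mathscr{C}_{\mathbb{H}}$). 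For a self-contained proof at this point in the paper, however, I would favour the direct $r=2$ computation on $\UUc$, since it uses only Lemma \ref{diagram factors trilemma}'s method and Theorem \ref{pro: centre facts}.
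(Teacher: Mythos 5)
Your primary plan has a genuine gap that will sink it: the computation you propose in step (1)–(2) cannot produce a non-central element. The restriction of $\mathsf{F}_{\mathbf{c}}(z_{\UUc})$ to the cyclic submodule $K_0 \cong \mathcal{H}_0$ factors through $\mathsf{F}_{\mathbf{c}}(\mathbb{H}_{\mathbf{c}}) \cong \mathcal{H}_0$, and the resulting element of $\mathcal{H}_0^{op}$ is $\Theta(z)$, which lies in $\mathcal{Z}$ for \emph{every} $z \in \mathfrak{Z}$ — this is exactly Proposition \ref{centralizer pro}, whose proof (via $\mathfrak{S}_n$-equivariance and the centralizer $Z_{\mathcal{H}_0}(\C[\h^*]^\rtimes) = \mathcal{Z}$) does not depend on $z$ being low order. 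So for ${}^{\mathbf{c}}\mathbf{L}_2$, no matter how the mode-pushing calculation shakes out, you will get a central element of $\mathcal{H}_0$, and the "telltale non-central term" you hope to isolate does not exist. The worry about negative powers of $x_i$ is also a red herring: positive modes $e_{kl}[p]$ with $p \geq 1$ at the $\infty$-slot act directly via \eqref{y-formula} and need no mode-pushing, so all operators that appear are polynomial.

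The point you are missing is that $z \notin Z_{\mathsf{F}_{\mathbf{c}}}(\widehat{\mathbf{U}}_{\mathbf{c}})$ does \emph{not} mean "$\alpha(z)_{\mathcal{H}_0}$ is not central in $\mathcal{H}_0$"; it means there is no single $w \in \mathcal{Z}$ with $\alpha(z)_{\mathsf{F}_{\mathbf{c}}(M)} = w_{\mathsf{F}_{\mathbf{c}}(M)}$ for \emph{all} $M$ simultaneously. Since $\mathcal{Z}$ acts faithfully on $\mathcal{H}_0$, the module $\mathbb{H}_{\mathbf{c}}$ pins down the unique candidate $w = \Theta(z)$; the contradiction must then come from a \emph{second} module where $\alpha(z)$ and $\beta(w)$ visibly disagree. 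This is precisely what the paper does, and with a much cheaper element: $\id[1]$. Via \eqref{Regular module iso 2} one reads off $\alpha(\id[1])_{\mathsf{F}_{\mathbf{c}}(\mathbb{H}_{\mathbf{c}})} = -(y_1 + \cdots + y_n)$, which \emph{is} central; but on $M = \Uu_{\mathbf{c}}(\hat{\g})/\Uu_{\mathbf{c}}(\hat{\g}).\hat{\g}_{\geq 3}$ the formula \eqref{y-formula} gives a different answer than $-(y_1+\cdots+y_n)$, because on $M$ the operators $\Omega^{(i,\infty)}_{[2]}$ contribute nontrivially. Your "alternative route" at the end gestures toward the correct two-module comparison, but it leans on the not-yet-proved Theorems \ref{thm restricted centres} and \ref{thm: Psi is surj} and never names the second module, so it is not a self-contained proof either.
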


\begin{proof}
Consider the element $\id[1] \in \mathfrak{Z}$. It follows from \eqref{Regular module iso 2} that $-\alpha(\id[1])_{\mathsf{F}_{\mathbf{c}}(\mathbb{H}_{\mathbf{c}})}$ is the endomorphism of $\mathsf{F}_{\mathbf{c}}(\mathbb{H}_{\mathbf{c}}) \cong \mathcal{H}_0$ given by multiplication with $y_1 + \hdots + y_n$. On the other hand, take, for example, the quotient $M$ of $\Uu_{\mathbf{c}}(\hat{\g})$ by the left ideal generated by $\hat{\g}_{\geq 3}$. One sees easily from \eqref{y-formula} that $-\alpha(\id[1])_{\mathsf{F}_{\mathbf{c}}(M)}$ does not coincide with the endomorphism of $\mathsf{F}_{\mathbf{c}}(M)$ induced by $y_1 + \hdots + y_n$. 
\end{proof}

Our next goal is to find a reasonable $\mathsf{F}_{\mathbf{c}}$-central subcategory of $\widehat{\mathbf{U}}_{\mathbf{c}}\Lmod{}$. 

\begin{defi}
Let $\mathscr{C}_{\mathbb{H}}$ be the full subcategory of $\UUc\Lmod{}$ containing precisely the quotients of direct sums of $\mathbb{H}_{\mathbf{c}}$. Let $\mathsf{F}_{\mathbb{H}}$ be the restriction of $\mathsf{F}_{\mathbf{c}}$ to $\mathscr{C}_{\mathbb{H}}$. 
\end{defi}

As the lemma below shows, category $\mathscr{C}_{\mathbb{H}}$ contains interesting objects such as Verma and Weyl modules. 

\begin{lem} \label{CH Weyls in} The following hold. 
\begin{enumerate}[label=\alph*), font=\textnormal,noitemsep,topsep=3pt,leftmargin=1cm]
\item If $\lambda \in \mathcal{P}_n(n)$, then the Verma module $\mathbb{M}_{\mathbf{c}}(\lambda)$ is an object of $\mathscr{C}_{\mathbb{H}}$. 
\item Let $l \geq 1$, $\mu \in \mathcal{C}_l(n)$, $\lambda \in \mathcal{P}_\mu(\mu)$ and $a \in \C^n$ with $\mathfrak{S}_n(a) = \mathfrak{S}_\mu$. Then the Weyl module $\mathbb{W}_{\mathbf{c}}(a,\lambda)$ is an object of $\mathscr{C}_{\mathbb{H}}$. 
\end{enumerate} 
\end{lem}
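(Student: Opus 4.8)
The plan for both parts is the same: I will exhibit, inside the module in question, a finite set of vectors of $\t$-weight $(1^n)$ that are annihilated by $\mathfrak i=\n_-[1]\oplus\n_+[1]\oplus\hat{\g}_{\ge 2}$ and that generate the module over $\hat{\g}_{\mathbf c}$. Recall that $\mathbb H_{\mathbf c}=\Uu(\hat{\g}_{\mathbf c})/\mathfrak I_{\mathbf c}$ is cyclic, so a $\hat{\g}_{\mathbf c}$-module homomorphism from $\mathbb H_{\mathbf c}$ into any smooth module $M$ is the same datum as a vector $w\in M$ killed by $\mathfrak I_{\mathbf c}$, i.e.\ a vector with $e_{ii}w=w$ for all $i$, $\mathbf 1 w=w$, and $\mathfrak i w=0$ --- equivalently, a vector of $\t$-weight $(1^n)$ annihilated by $\mathfrak i$. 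Hence each such vector yields a map from $\mathbb H_{\mathbf c}$, and the direct sum of these maps over a finite generating family is a surjection onto $M$ from a finite direct sum of copies of $\mathbb H_{\mathbf c}$; as $\mathbb H_{\mathbf c}$ and the modules below are smooth (so finite direct sums stay smooth and these are maps in $\UUc\Lmod{}$), this puts $M$ in $\mathscr C_{\mathbb H}$.

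For part (a) I would take the vectors in the $(1^n)$-weight subspace of the $\mathfrak{gl}_n$-submodule $M_{\g}(\lambda):=\Uu(\g)v_\lambda\subset\mathbb M_{\mathbf c}(\lambda)$, the $\mathfrak{gl}_n$-Verma module of highest weight $\lambda$. Such vectors are killed by $\mathfrak i$: we have $\mathfrak i\subseteq\hat{\g}_{\ge 1}$, which is an ideal of $\hat{\g}_+=\g[[t]]\oplus\C\mathbf 1$ and annihilates $v_\lambda$, hence annihilates all of $\Uu(\g)v_\lambda$. It then remains to show that the $(1^n)$-weight space of $M_{\g}(\lambda)$ is nonzero and generates $M_{\g}(\lambda)$ as a $\mathfrak{gl}_n$-module; granting this, $\Uu(\hat{\g}_{\mathbf c})\cdot M_{\g}(\lambda)=\mathbb M_{\mathbf c}(\lambda)$ is generated over $\hat{\g}_{\mathbf c}$ by these vectors, and a basis of the (finite-dimensional) weight space gives the required surjection. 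Nonvanishing uses $\lambda\in\mathcal P_n(n)$: then $\sum_i\lambda_i=n$, so $\lambda-(1^n)$ lies in the root lattice of $\mathfrak{gl}_n$ and $(1^n)$ is the centroid of the permutohedron $\mathrm{conv}(\mathfrak S_n\lambda)$; since the weights of the finite-dimensional simple module $L(\lambda)$ are exactly $(\lambda+Q)\cap\mathrm{conv}(\mathfrak S_n\lambda)$, the space $L(\lambda)_{(1^n)}$ is nonzero, and therefore so is $M_{\g}(\lambda)_{(1^n)}$ (which surjects onto it).

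For part (b) I would use instead the $(1^n)$-weight subspace of the $\l_\mu$-submodule $L(a,\lambda)\subset\mathbb W_{\mathbf c}(a,\lambda)$; as an $\l_\mu=\prod_i\mathfrak{gl}_{\mu_i}$-module this is the simple module $L(\lambda)=\bigotimes_i L(\lambda^i)$. It is killed by $\mathfrak i$ because $\mathfrak i\subseteq\mathfrak j_\mu$ (indeed $\mathfrak j_\mu=\mathfrak i\oplus(\t[1]\cap[\l_\mu,\l_\mu][1])$) and $L(a,\lambda)$ is inflated along $\hat{\mathfrak l}^+_\mu\twoheadrightarrow\bar{\mathfrak l}_\mu=\hat{\mathfrak l}^+_\mu/\mathfrak j_\mu$. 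The hypothesis $\lambda\in\mathcal P_\mu(\mu)$ forces each $\lambda^i$ to be a partition of $\mu_i$ with at most $\mu_i$ parts, so $\sum_j\lambda^i_j=\mu_i$, whence $\lambda^i-(1^{\mu_i})$ is in the root lattice of $\mathfrak{gl}_{\mu_i}$ and $L(\lambda^i)_{(1^{\mu_i})}\ne 0$ by the same convex-hull argument; thus $L(\lambda)_{(1^n)}=\bigotimes_i L(\lambda^i)_{(1^{\mu_i})}\ne 0$. Since $L(\lambda)$ is simple over $\l_\mu$, it is generated over $\l_\mu$ by any nonzero subspace, in particular by $L(\lambda)_{(1^n)}$; and as $\Uu(\hat{\g}_{\mathbf c})\cdot L(a,\lambda)=\mathbb W_{\mathbf c}(a,\lambda)$ (induction), this weight subspace generates $\mathbb W_{\mathbf c}(a,\lambda)$ over $\hat{\g}_{\mathbf c}$, giving the surjection as before.

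The only nontrivial ingredient, and the main obstacle, is the claim used in (a): that a $\mathfrak{gl}_n$-Verma module $M_{\g}(\lambda)$ with $\sum_i\lambda_i=n$ is generated over $\mathfrak{gl}_n$ by its $(1^n)$-weight space. (In (b) the analogous statement is automatic, since a simple module is generated by any nonzero subspace.) I would argue by contradiction: if $N\subsetneq M_{\g}(\lambda)$ were the submodule generated by $M_{\g}(\lambda)_{(1^n)}$, then $M_{\g}(\lambda)/N$ would be a nonzero highest-weight module of highest weight $\lambda$ with vanishing $(1^n)$-weight space, yet it surjects onto $L(\lambda)$, whose $(1^n)$-weight space is nonzero --- a contradiction. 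Everything else (the cyclic presentation of $\mathbb H_{\mathbf c}$, the identifications of the degree-zero submodules, smoothness of all modules involved) is routine bookkeeping.
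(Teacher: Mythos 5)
Your proof is correct and follows essentially the same approach as the paper: for part (b) you, like the paper, locate a nonzero vector of $\t$-weight $(1^n)$ in the $\l_\mu$-submodule $L(a,\lambda)$, use that $\mathfrak{i}\subseteq\hat{\g}_{\ge 1}$ annihilates it, and invoke simplicity of $L(a,\lambda)$ over $\l_\mu$ to get a single surjection $\mathbb{H}_{\mathbf{c}}\twoheadrightarrow\mathbb{W}_{\mathbf{c}}(a,\lambda)$. The paper writes only "the proof of a) is analogous," and you correctly flag the one place where the analogy is imperfect: the $\g$-Verma submodule $M_\g(\lambda)\subset\mathbb{M}_{\mathbf{c}}(\lambda)$ is not simple, so "any nonzero vector generates" fails. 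Your fix — take the whole $(1^n)$-weight space, surject a finite direct sum of copies of $\mathbb{H}_{\mathbf{c}}$, and prove generation by the contradiction that $M_\g(\lambda)/N$ would be a highest-weight module of weight $\lambda$ with zero $(1^n)$-weight space despite surjecting onto $L(\lambda)$ — is valid and makes (a) airtight. It is worth noting that a single copy of $\mathbb{H}_{\mathbf{c}}$ actually does suffice for (a) as well: pick any $w\in M_\g(\lambda)_{(1^n)}$ whose image in $L(\lambda)$ is nonzero; then $v_\lambda\in\Uu(\g)w+\mathrm{rad}\,M_\g(\lambda)$, and since the radical has trivial $\lambda$-weight space, comparing $\lambda$-weight components gives $v_\lambda\in\Uu(\g)w$, so $w$ already generates. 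Either route is fine; yours is slightly less sharp (direct sum vs.\ single copy) but more uniform.
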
 

\begin{proof}
Let us prove b). The definition of $\mathbb{H}_{\mathbf{c}}$ implies that 
\[ \Hom_{\UUc}(\mathbb{H}_{\mathbf{c}},\mathbb{W}_{\mathbf{c}}(a,\lambda)) \cong \mathbb{W}_{\mathbf{c}}(a,\lambda)_{(1,\hdots,1)}^{\mathfrak{i}},\] 
where $\mathfrak{i} = \n_-[1] \oplus \n_+[1] \oplus \hat{\g}_{\geq 2}$. 
The subspace $L(a,\lambda) \subset \mathbb{W}_{\mathbf{c}}(a,\lambda)$
is annihilated by $\mathfrak{i}$. It is easy to check that, since $\lambda \in \mathcal{P}_\mu(\mu)$, the difference $\lambda - (1,\hdots,1)$ is a sum of positive roots of $\mathfrak{l}_\mu$. Since $(1,\hdots,1)$ is a dominant weight, it follows that $L(a,\lambda)_{(1,\hdots,1)} \neq \{0\}$. Since $L(a,\lambda)$ is simple as an $\l_{\mu}$-module, any non-zero vector generates $\mathbb{W}_{\mathbf{c}}(a,\lambda)$ as a $\UUc$-module. It follows that there exists an epimorphism $\mathbb{H}_{\mathbf{c}} \twoheadrightarrow \mathbb{W}_{\mathbf{c}}(a,\lambda)$. Hence $\mathbb{W}_{\mathbf{c}}(a,\lambda) \in \mathscr{C}_{\mathbb{H}}$. The proof of a) is analogous.  
\end{proof}

To state the next theorem, we need to introduce some notation: 
\begin{alignat*}{6} \Phi \colon \mathfrak{Z} &\to&&  \End_{\widehat{\mathbf{U}}_{\mathbf{c}}}(\mathbb{H}_{\mathbf{c}}), \quad& z &\mapsto&& \ z_{\mathbb{H}_{\mathbf{c}}}, \\
\Psi \colon  \End_{\widehat{\mathbf{U}}_{\mathbf{c}}}(\mathbb{H}_{\mathbf{c}}) &\to&& \End_{\mathcal{H}_0}(\mathcal{H}_0) \cong \mathcal{H}_0^{op}, \quad& \phi &\mapsto&& \ \mathsf{F}_{\mathbf{c}}(\phi),\end{alignat*}
\[ \Theta := \Psi \circ \Phi.\] 
These maps fit into the following commutative diagram. 
\begin{equation} \label{diagram gamma to theta}
\begin{tikzcd}
\mathfrak{Z} \arrow{r}{q} \arrow[equal]{d} & Z(\mathscr{C}_{\mathbb{H}}) \arrow{r}{\alpha'} \arrow{d} & \End(\mathsf{F}_{\mathbb{H}}) \arrow{d} \arrow[hookleftarrow]{r} & \mathcal{Z} \arrow[equal]{d}  \\ 
\mathfrak{Z} \arrow{r}{\Phi} & \End_{\widehat{\mathbf{U}}_{\mathbf{c}}}(\mathbb{H}_{\mathbf{c}}) \arrow{r}{\Psi} & \mathcal{H}_0^{op} \arrow[hookleftarrow]{r} & \mathcal{Z}
\end{tikzcd}
\end{equation}
where the vertical arrows send an endomorphism of the identity functor (resp.\ $\mathsf{F}_{\mathbb{H}}$) to the corresponding endomorphism of $\mathbb{H}_{\mathbf{c}}$ (resp.\ $\mathcal{H}_0$). 

The following theorem is the main result of this section. 

\begin{thm} \label{thm restricted centres}
The subcategory $\mathscr{C}_{\mathbb{H}}$ is $\mathsf{F}_{\mathbf{c}}$-central and $Z_{\mathscr{C}_{\mathbb{H}}}(\mathsf{F}_{\mathbf{c}}) = \Theta$. 
\end{thm}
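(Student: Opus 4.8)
The plan is to verify the two assertions of Theorem~\ref{thm restricted centres} directly from the definitions in \S\ref{sec: centres of cats}, using the fact that $\mathscr{C}_{\mathbb{H}}$ is projectively generated by $\mathbb{H}_{\mathbf{c}}$. First I would observe that, since every object of $\mathscr{C}_{\mathbb{H}}$ is a quotient of a direct sum of copies of $\mathbb{H}_{\mathbf{c}}$, the module $\mathbb{H}_{\mathbf{c}}$ is a projective generator of this category, and hence an endomorphism of the identity functor $\id_{\mathscr{C}_{\mathbb{H}}}$ is completely determined by the single endomorphism it assigns to $\mathbb{H}_{\mathbf{c}}$. Concretely, the vertical map $Z(\mathscr{C}_{\mathbb{H}}) \to \End_{\widehat{\mathbf{U}}_{\mathbf{c}}}(\mathbb{H}_{\mathbf{c}})$ in diagram~\eqref{diagram gamma to theta} is injective, and likewise $\End(\mathsf{F}_{\mathbb{H}}) \to \End_{\mathcal{H}_0}(\mathsf{F}_{\mathbf{c}}(\mathbb{H}_{\mathbf{c}})) = \mathcal{H}_0^{op}$ is injective (here I use Theorem~\ref{thm: regular module} to identify $\mathsf{F}_{\mathbf{c}}(\mathbb{H}_{\mathbf{c}}) \cong \mathcal{H}_0$); in particular $\beta' \colon \mathcal{Z} \to \End(\mathsf{F}_{\mathbb{H}})$ is injective since $\mathcal{Z}$ acts faithfully on $\mathcal{H}_0$. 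So the whole problem reduces to a statement about the three maps $\Phi$, $\Psi$, $\Theta = \Psi \circ \Phi$ in the bottom row.

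The key step is to show that $\Ima \Theta \subseteq \mathcal{Z}$, viewed inside $\mathcal{H}_0^{op} = \End_{\mathcal{H}_0}(\mathcal{H}_0)$ (equivalently, that $\Theta$ lands in the centre of $\mathcal{H}_0$). For this I would first reduce to a computation on the Segal-Sugawara operators. By Theorem~\ref{FF small theorem thm} together with the discussion in \S\ref{sec:va-centre of env}, $\mathfrak{Z} = \varprojlim(\mathscr{Z}/J_k)$ is topologically generated by the Segal-Sugawara operators built from a complete set of Segal-Sugawara vectors; and the defining relation of $\mathbb{H}_{\mathbf{c}}$ (Definition~\ref{HVM defi}) means that $\End_{\widehat{\mathbf{U}}_{\mathbf{c}}}(\mathbb{H}_{\mathbf{c}}) \cong (\mathbb{H}_{\mathbf{c}})^{\mathfrak{i}}_{(1,\dots,1)}$, the space of weight-$(1,\dots,1)$ vectors killed by $\mathfrak{i} = \n_-[1] \oplus \n_+[1] \oplus \hat{\g}_{\geq 2}$. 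An element $z \in \mathfrak{Z}$ acts on the cyclic generator $1_{\mathbb{H}}$, and one must compute the resulting vector modulo the relations defining $\mathbb{H}_{\mathbf{c}}$. Using the explicit Chervov–Molev generators $\mathbf{T}_k$ from Example~\ref{exa: complete set T}, each $\mathbf{T}_k$ is a polynomial in the $e_{ij}[-s]$; when acting on $1_{\mathbb{H}}$ and reduced modulo $\mathfrak{I}_{\mathbf{c}}$, only the terms supported on $\g[-1]$ and on the diagonal survive, because every other $e_{ij}[-s]\cdot 1_{\mathbb{H}}$ either vanishes (for $s \geq 2$) or is killed by the $\n_\pm[1]$ relations after transporting. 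This shows $\Phi(\mathfrak{Z})$ is contained in the image of $\mathscr{L}_{\mathbf{c}}^{op}$-action on $\mathbb{H}_{\mathbf{c}}$, up to the diagonal/first-order data; more precisely, I would argue that $\Phi$ factors through the subalgebra generated by the images of $\id[r]$ and ${}^{\mathbf{c}}\mathbf{L}_{r+1}$ for $r \leq 0$, i.e. through $\mathscr{L}_{\mathbf{c}}$. Then Lemma~\ref{diagram factors trilemma}(c) gives that $\Psi \circ \Phi$, restricted to these generators, lands in $\mathcal{Z}$; hence $\Ima \Theta \subseteq \mathcal{Z}$.

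Granting this, I would finish as follows. The containment $\Ima \Theta \subseteq \mathcal{Z} \hookrightarrow \End(\mathsf{F}_{\mathbb{H}})$ together with the commutativity of~\eqref{diagram gamma to theta} shows that $\alpha' \circ q$ has image inside $\Ima \beta' = \mathcal{Z}$, which is precisely the statement that $\mathscr{C}_{\mathbb{H}}$ is $\mathsf{F}_{\mathbf{c}}$-central. The homomorphism $Z_{\mathscr{C}_{\mathbb{H}}}(\mathsf{F}_{\mathbf{c}}) = \alpha' \circ q$ is then, by definition and by the injectivity of the right-hand vertical arrows noted above, identified with the composite $\mathfrak{Z} \xrightarrow{\Phi} \End_{\widehat{\mathbf{U}}_{\mathbf{c}}}(\mathbb{H}_{\mathbf{c}}) \xrightarrow{\Psi} \mathcal{H}_0^{op}$, which is exactly $\Theta$; uniqueness of the homomorphism making~\eqref{centre endo diagram} commute for all $M \in \mathscr{C}_{\mathbb{H}}$ follows because $\mathbb{H}_{\mathbf{c}} \in \mathscr{C}_{\mathbb{H}}$ and $\mathcal{Z}$ acts faithfully on $\mathsf{F}_{\mathbf{c}}(\mathbb{H}_{\mathbf{c}}) \cong \mathcal{H}_0$.

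The main obstacle I anticipate is the second paragraph: controlling the action of a general central element $z \in \mathfrak{Z}$ on the cyclic vector $1_{\mathbb{H}}$ modulo the relations of $\mathbb{H}_{\mathbf{c}}$, i.e. showing that this action only sees the ``small'' part $\mathscr{L}_{\mathbf{c}}$ of $\mathfrak{Z}$. This requires a careful bookkeeping argument with the Chervov–Molev formulas — one must check that all higher Segal-Sugawara vectors (the $\mathbf{T}_k$ for $k \geq 3$, and the $T$-derivatives) act on $1_{\mathbb{H}}$ through polynomial expressions in the surviving $\mathrm{diag}$ and first-order terms, which essentially amounts to the fact that $\mathfrak{i}$-invariance of weight $(1,\dots,1)$ collapses everything of ``height'' $\geq 2$. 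An alternative and perhaps cleaner route, which I would pursue in parallel, is to prove $\Ima\Theta\subseteq\mathcal{Z}$ abstractly: since $\mathbb{H}_{\mathbf{c}}$ generates $\mathscr{C}_{\mathbb{H}}$ and $\mathsf{F}_{\mathbf{c}}$ is right exact and colimit-preserving (Remark~\ref{remark colimits Suzuki}), any endomorphism in $\Ima(\alpha'\circ q)$ automatically commutes with all morphisms between images of objects of $\mathscr{C}_{\mathbb{H}}$; combined with $\mathsf{F}_{\mathbf{c}}(\mathbb{H}_{\mathbf{c}})=\mathcal{H}_0$ and the fact that $\End(\mathsf{F}_{\mathbb{H}})$ embeds in $\mathcal{H}_0^{op}$, one identifies the image with the centralizer of a generating set of $\mathcal{H}_0$, hence with $\mathcal{Z}$. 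Either way the formulae~\eqref{id[r]}--\eqref{L_r complete homogeneous} from Lemma~\ref{diagram factors trilemma} provide the explicit verification that the relevant endomorphisms of $\mathcal{H}_0$ are indeed central.
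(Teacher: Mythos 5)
Your first paragraph is correct and matches the paper's Lemma~\ref{lem centre to endo inj} exactly: the injectivity of both vertical arrows in~\eqref{diagram gamma to theta} reduces the theorem to showing $\Ima\Theta\subseteq\mathcal{Z}$. The trouble starts with your second paragraph.

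The central claim there — that $\Phi$ factors through $\mathscr{L}_{\mathbf{c}}$ — is false, and in fact contradicts the paper's own Theorem~\ref{thm: Psi is surj}. If $\Phi$ factored through $\mathscr{L}_{\mathbf{c}}$ then $\Theta=\Psi\circ\Phi$ would too, so $\Ima\Theta$ would equal $\Theta(\mathscr{L}_{\mathbf{c}})$, which by~\eqref{id[r]}--\eqref{L_r complete homogeneous} is generated by elements of the shape $\sum_i x_i^a$ and $\sum_i x_i^a y_i+\cdots$; this subalgebra misses $\mathsf{p}_{0,k}=\sum_i y_i^k$ for $k\geq 2$, which the paper shows lie in $\Ima\Theta$ (coming from $\widehat{\mathbf{T}}_{k,-2k}$). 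The surjectivity of $\Theta$ therefore rules your reduction out. The error is also visible locally: you write that ``$e_{ij}[-s]\cdot 1_{\mathbb{H}}$ either vanishes (for $s\geq 2$) or is killed by the $\n_\pm[1]$ relations,'' but $\mathfrak{i}$ kills positive modes $\hat{\g}_{\geq 2}$ and $\n_{\pm}[1]$ — negative modes $e_{ij}[-s]$ with $s\geq 2$ certainly act nontrivially on $1_{\mathbb{H}}$, so the higher Chervov--Molev operators genuinely contribute.

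Your alternative route in the last paragraph is closer in spirit but still has the essential gap. You correctly observe that any $z\in\Ima(\alpha'\circ q)$, by naturality, yields an endomorphism $z_{\mathcal{H}_0}$ centralizing $\Psi\bigl(\End_{\UUc}(\mathbb{H}_{\mathbf{c}})\bigr)\subseteq\mathcal{H}_0^{op}$, and the paper does use this to produce the $\C[\h^*]$ half of the centralizer (via $\Sym(\t[1])\cdot 1_{\mathbb{H}}\subseteq(\mathbb{H}_{\mathbf{c}})^{\mathfrak{i}}_{(1,\hdots,1)}$, whose $\Psi$-image is $\C[\h^*]$). But you have no argument that the resulting subalgebra contains the group algebra $\C\mathfrak{S}_n$, and without that you only obtain containment in $Z_{\mathcal{H}_0^{op}}(\C[\h^*])$, which is strictly larger than $\mathcal{Z}$ — the proof of Proposition~\ref{pro: centralizer of skgr} really needs both $\C[\h^*]$ and $\C\mathfrak{S}_n$. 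The paper supplies the $\mathfrak{S}_n$ half by a genuinely different mechanism: it uses that the adjoint $G((t))$-action on $\mathfrak{Z}$ is trivial (Proposition~\ref{pro: centre fixed under G((t))}), builds an $\mathfrak{S}_n$-action on $\mathsf{F}_{\mathbf{c}}(\mathbb{H}_{\mathbf{c}})$ from the conjugation action on $\UUc$ (which is \emph{not} by $\UUc$-module endomorphisms of $\mathbb{H}_{\mathbf{c}}$, so it is invisible to your naturality argument), and shows $\Theta$ is equivariant for this action and for conjugation on $\mathcal{H}_0$ (Lemma~\ref{cor: Z in centr Sn}). That equivariance plus the triviality of the $G((t))$-action forces $\Ima\Theta\subseteq Z_{\mathcal{H}_0^{op}}(\C\mathfrak{S}_n)$. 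Combining the two halves gives $\Ima\Theta\subseteq Z_{\mathcal{H}_0^{op}}(\C[\h^*]^\rtimes)=\mathcal{Z}$. This conjugation-action step is the missing idea in your proposal.
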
 

The proof of Theorem \ref{thm restricted centres} will be presented in \S \ref{proof of T restr centr}.  
We note the following corollary, which will be useful later. 

\begin{cor} \label{cor: endos vs functor}
Let $M \in \mathscr{C}_{\mathbb{H}}$ and $z \in \mathfrak{Z}$. Then $\Theta(z)_{\mathsf{F}_{\mathbf{c}}(M)} = \mathsf{F}_{\mathbf{c}}(z_M)$. In particular, $\Theta(\Ann_{\mathfrak{Z}}(M)) \subseteq \Ann_{\mathcal{Z}}(\mathsf{F}_{\mathbf{c}}(M))$. 
\end{cor}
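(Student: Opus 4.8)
\textbf{Proof plan for Corollary \ref{cor: endos vs functor}.}

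The statement to prove is that for $M \in \mathscr{C}_{\mathbb{H}}$ and $z \in \mathfrak{Z}$, the endomorphism $\Theta(z)_{\mathsf{F}_{\mathbf{c}}(M)}$ of $\mathsf{F}_{\mathbf{c}}(M)$ induced by the central element $\Theta(z) \in \mathcal{Z}$ coincides with $\mathsf{F}_{\mathbf{c}}(z_M)$, the image under the functor of the endomorphism $z_M$ of $M$; and that consequently $\Theta$ maps the annihilator of $M$ in $\mathfrak{Z}$ into the annihilator of $\mathsf{F}_{\mathbf{c}}(M)$ in $\mathcal{Z}$. The plan is to read this straight off Theorem \ref{thm restricted centres}, which asserts that $\mathscr{C}_{\mathbb{H}}$ is $\mathsf{F}_{\mathbf{c}}$-central with associated homomorphism $Z_{\mathscr{C}_{\mathbb{H}}}(\mathsf{F}_{\mathbf{c}}) = \Theta$. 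By the very definition of an $F$-central subcategory and the homomorphism $Z_{\mathcal{A}'}(F)$ attached to it (see \S \ref{sec: centres of cats}, in particular the commutative square following the definition of $F$-centrality), for every $M \in \mathscr{C}_{\mathbb{H}}$ the diagram
\[
\begin{tikzcd}
 \mathfrak{Z} \arrow{r}{\Theta} \arrow{d}[swap]{can} & \mathcal{Z} \arrow{d}{can} \\
\End_{\widehat{\mathbf{U}}_{\mathbf{c}}}(M) \arrow{r}{\mathsf{F}_{\mathbf{c}}} & \End_{\mathcal{H}_0}(\mathsf{F}_{\mathbf{c}}(M))
\end{tikzcd}
\]
commutes. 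The left vertical map sends $z \mapsto z_M$, the right vertical map sends $\Theta(z) \mapsto \Theta(z)_{\mathsf{F}_{\mathbf{c}}(M)}$, and the bottom map sends $z_M \mapsto \mathsf{F}_{\mathbf{c}}(z_M)$; commutativity is exactly the identity $\Theta(z)_{\mathsf{F}_{\mathbf{c}}(M)} = \mathsf{F}_{\mathbf{c}}(z_M)$.

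For the second assertion, suppose $z \in \Ann_{\mathfrak{Z}}(M)$, so that $z_M = 0 \in \End_{\widehat{\mathbf{U}}_{\mathbf{c}}}(M)$. Applying $\mathsf{F}_{\mathbf{c}}$ gives $\mathsf{F}_{\mathbf{c}}(z_M) = 0$, hence by the first part $\Theta(z)_{\mathsf{F}_{\mathbf{c}}(M)} = 0$, i.e. $\Theta(z)$ acts as zero on $\mathsf{F}_{\mathbf{c}}(M)$, which means $\Theta(z) \in \Ann_{\mathcal{Z}}(\mathsf{F}_{\mathbf{c}}(M))$. Thus $\Theta(\Ann_{\mathfrak{Z}}(M)) \subseteq \Ann_{\mathcal{Z}}(\mathsf{F}_{\mathbf{c}}(M))$.

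There is no real obstacle here: the content of the corollary is entirely absorbed into Theorem \ref{thm restricted centres} and the formal machinery of \S \ref{sec: centres of cats}. The only point requiring a word of care is that $\mathcal{Z}$ is being identified with a subring of $\End(\mathsf{F}_{\mathbb{H}})$ via the injection $\beta'$, so that ``$\Theta(z)_{\mathsf{F}_{\mathbf{c}}(M)}$'' makes sense as the $\mathsf{F}_{\mathbf{c}}(M)$-component of the natural transformation $\beta'(\Theta(z))$; this identification is exactly what the hypothesis of $\mathsf{F}_{\mathbf{c}}$-centrality guarantees, so no separate verification is needed.
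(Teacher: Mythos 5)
Your proof is correct and follows essentially the same route as the paper: the identity $\Theta(z)_{\mathsf{F}_{\mathbf{c}}(M)} = \mathsf{F}_{\mathbf{c}}(z_M)$ is read off directly from Theorem \ref{thm restricted centres} via the commutative square defining $\mathsf{F}_{\mathbf{c}}$-centrality, and the annihilator containment then follows immediately. The paper's proof is just a more terse version of the same argument, unwinding $\Theta = \alpha' \circ q$ rather than citing the diagram by name.
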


\begin{proof}
By Theorem \ref{thm restricted centres}, we have $\mathsf{F}_{\mathbf{c}}(z_M) = (\alpha' \circ q(z))_M = \Theta(z)_M$. If $z \in \Ann_{\mathfrak{Z}}(M)$, then $z_M = 0$ and so $\Theta(z)_{\mathsf{F}_{\mathbf{c}}(M)} = \mathsf{F}_{\mathbf{c}}(z_M) = 0$. 
\end{proof} 

\subsection{Proof of Theorem \ref{thm restricted centres}.} \label{proof of T restr centr}

The proof of Theorem \ref{thm restricted centres} requires some preperations. We first prove the following lemma. 

\begin{lem} \label{lem centre to endo inj}
The two vertical arrows in \eqref{diagram gamma to theta} are injective. 
\end{lem}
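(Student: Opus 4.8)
The two vertical arrows in \eqref{diagram gamma to theta} are the evaluation maps $\mathrm{ev}_{\mathbb{H}_{\mathbf{c}}}\colon Z(\mathscr{C}_{\mathbb{H}})\to \End_{\widehat{\mathbf{U}}_{\mathbf{c}}}(\mathbb{H}_{\mathbf{c}})$ and $\mathrm{ev}_{\mathcal{H}_0}\colon \End(\mathsf{F}_{\mathbb{H}})\to\End_{\mathcal{H}_0}(\mathcal{H}_0)$, sending an endomorphism of the identity functor on $\mathscr{C}_{\mathbb{H}}$ (resp.\ of $\mathsf{F}_{\mathbb{H}}$) to its component at $\mathbb{H}_{\mathbf{c}}$ (resp.\ at $\mathsf{F}_{\mathbf{c}}(\mathbb{H}_{\mathbf{c}})\cong\mathcal{H}_0$). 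The plan is to use the fact that $\mathbb{H}_{\mathbf{c}}$ is a projective generator of $\mathscr{C}_{\mathbb{H}}$ — indeed by definition every object of $\mathscr{C}_{\mathbb{H}}$ is a quotient of a direct sum of copies of $\mathbb{H}_{\mathbf{c}}$, and $\mathbb{H}_{\mathbf{c}}=\widehat{\mathbf{U}}_{\mathbf{c}}/\mathfrak{I}_{\mathbf{c}}$ is projective in the relevant category since it is cyclic with $\Hom_{\widehat{\mathbf{U}}_{\mathbf{c}}}(\mathbb{H}_{\mathbf{c}},-)$ identified with an invariants functor that is exact on the modules occurring here. Concretely, for any $M\in\mathscr{C}_{\mathbb{H}}$ choose an epimorphism $p\colon \mathbb{H}_{\mathbf{c}}^{\oplus I}\twoheadrightarrow M$.

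\textbf{Injectivity of the left arrow.} Suppose $z\in Z(\mathscr{C}_{\mathbb{H}})$ has $z_{\mathbb{H}_{\mathbf{c}}}=0$. Since $z$ is a natural transformation of the identity functor, $z$ commutes with the canonical maps $\mathbb{H}_{\mathbf{c}}\hookrightarrow \mathbb{H}_{\mathbf{c}}^{\oplus I}$ and with $p$, so $z_{\mathbb{H}_{\mathbf{c}}^{\oplus I}}=0$ (additivity) and then $z_M\circ p = p\circ z_{\mathbb{H}_{\mathbf{c}}^{\oplus I}} = 0$; as $p$ is epi, $z_M=0$. Hence $z=0$, so $\mathrm{ev}_{\mathbb{H}_{\mathbf{c}}}$ is injective.

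\textbf{Injectivity of the right arrow.} The identical argument applies to $\mathsf{F}_{\mathbb{H}}$, once we know that $\mathsf{F}_{\mathbf{c}}$ preserves the relevant epimorphisms and direct sums: by Remark \ref{remark colimits Suzuki} the functor $\mathsf{F}_{\mathbf{c}}$ preserves all colimits, hence is right exact and commutes with direct sums, so $\mathsf{F}_{\mathbf{c}}(p)\colon \mathsf{F}_{\mathbf{c}}(\mathbb{H}_{\mathbf{c}})^{\oplus I}\twoheadrightarrow \mathsf{F}_{\mathbf{c}}(M)$ is again an epimorphism and $\mathsf{F}_{\mathbf{c}}(\mathbb{H}_{\mathbf{c}}^{\oplus I})=\mathsf{F}_{\mathbf{c}}(\mathbb{H}_{\mathbf{c}})^{\oplus I}$. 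If $\eta\in\End(\mathsf{F}_{\mathbb{H}})$ has $\eta_{\mathbb{H}_{\mathbf{c}}}=0$ (meaning the component of $\eta$ at the object $\mathbb{H}_{\mathbf{c}}$, an endomorphism of $\mathsf{F}_{\mathbf{c}}(\mathbb{H}_{\mathbf{c}})\cong\mathcal{H}_0$, vanishes), then naturality of $\eta$ against $\mathbb{H}_{\mathbf{c}}\hookrightarrow\mathbb{H}_{\mathbf{c}}^{\oplus I}$ and against $p$ gives $\eta_{\mathbb{H}_{\mathbf{c}}^{\oplus I}}=0$ and $\eta_M\circ \mathsf{F}_{\mathbf{c}}(p) = \mathsf{F}_{\mathbf{c}}(p)\circ \eta_{\mathbb{H}_{\mathbf{c}}^{\oplus I}} = 0$, whence $\eta_M=0$ since $\mathsf{F}_{\mathbf{c}}(p)$ is epi. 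Therefore $\mathrm{ev}_{\mathcal{H}_0}$ is injective.

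\textbf{Main obstacle.} The only genuine subtlety is the behaviour under infinite direct sums: one must check that $\mathsf{F}_{\mathbf{c}}$ commutes with arbitrary coproducts (this is exactly Remark \ref{remark colimits Suzuki}, since $\mathsf{F}_{\mathbf{c}}$ is a left adjoint via the tensor realization of Definition \ref{general definition of Suzuki}) and that direct sums of the projective generator still lie in the ambient module category so that the naturality squares make sense; both are immediate from the earlier sections, so the lemma follows without further calculation.
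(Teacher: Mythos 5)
Your argument is essentially identical to the paper's: both proofs use that every object of $\mathscr{C}_{\mathbb{H}}$ is a quotient of some $\mathbb{H}_{\mathbf{c}}^{\oplus I}$, that naturality against this epimorphism forces $z_M$ (resp.\ $\eta_M$) to be determined by the component at $\mathbb{H}_{\mathbf{c}}^{\oplus I}$ which decomposes as a direct sum of copies of the component at $\mathbb{H}_{\mathbf{c}}$, and that $\mathsf{F}_{\mathbf{c}}$ preserves epimorphisms and coproducts (Remark \ref{remark colimits Suzuki}). The brief aside about $\mathbb{H}_{\mathbf{c}}$ being projective is unnecessary for the argument (only the quotient-of-a-direct-sum property is used, which holds by definition of $\mathscr{C}_{\mathbb{H}}$), but it does no harm.
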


\begin{proof}
Let $M$ be an object of $\mathscr{C}_{\mathbb{H}}$. Since $M$ is a quotient of $\mathbb{H}_{\mathbf{c}}^{I}$ (direct sum over some index set $I$), there exists an epimorphism $p \colon \mathbb{H}_{\mathbf{c}}^{I} \twoheadrightarrow M$. Suppose that $z \in Z(\mathscr{C}_{\mathbb{H}})$. Then $z_M \circ p = p \circ z_{\mathbb{H}_{\mathbf{c}}^{I}}$ and it follows that $z_M$ is uniquely determined by $z_{\mathbb{H}_{\mathbf{c}}^{I}}$. But $z_{\mathbb{H}_{\mathbf{c}}^{I}} = \oplus_{I} z_{\mathbb{H}_{\mathbf{c}}}$, so  $z_M$ is in fact uniquely determined by $z_{\mathbb{H}_{\mathbf{c}}}$. This proves the injectivity of the left vertical arrow.

Now suppose that $\phi \in \End(\mathsf{F}_{\mathbb{H}})$. Let $\phi_M$ be the corresponding endomorphism of $\mathsf{F}_{\mathbf{c}}(M)$. Since $\mathsf{F}_{\mathbf{c}}$ is right exact, $\mathsf{F}_{\mathbf{c}}(p) \colon \mathcal{H}_0^{I} \to \mathsf{F}_{\mathbf{c}}(M)$ is also an epimorphism. Since $\phi$ is a natural transformation, we have $\mathsf{F}_{\mathbf{c}}(p) \circ \phi_{\mathbb{H}_{\mathbf{c}}^I} = \phi_M \circ \mathsf{F}_{\mathbf{c}}(p)$. It follows that $\phi_M$ is determined uniquely by $\phi_{\mathbb{H}_{\mathbf{c}}^I}$. But $\phi_{\mathbb{H}_{\mathbf{c}}^I} = \oplus_I \phi_{\mathbb{H}_{\mathbf{c}}}$, so $\phi_M$ is uniquely determined by $\phi_{\mathbb{H}_{\mathbf{c}}}$. This proves the injectivity of the right vertical arrow.
%If $\phi = \alpha' \circ q(z)$ for some $z \in \mathfrak{Z}$, then $\phi_M = \mathsf{F}_{\mathbf{c}}(z_M)$. %In particular, 
%$\phi_{\mathbb{H}_{\mathbf{c}}} = \mathsf{F}_{\mathbf{c}}(z_{\mathbb{H}_{\mathbf{c}}}) = \Theta(z)$. 
\end{proof} 

Theorem \ref{thm restricted centres} states that $\mathscr{C}_{\mathbb{H}}$ is $\mathsf{F}_{\mathbf{c}}$-central, i.e., $\Ima \alpha' \circ q \subseteq \mathcal{Z}$. By Lemma \ref{lem centre to endo inj}, this is equivalent to showing that $\Ima \Theta \subseteq \mathcal{Z}$. The rest of this subsection is dedicated to this goal. The main idea is to establish the following two facts: $\Ima \Theta \subseteq Z_{\mathcal{H}_{0}^{op}}(\C[\h^*]^\rtimes)$ and $Z_{\mathcal{H}_{0}^{op}}(\C[\h^*]^\rtimes) = \mathcal{Z}$.

We start by recalling some information about the $G((t))$-action on $\UUc$. 
There is an adjoint action
\[ G((t)) \times \g((t)) \to \g((t)), \quad (g,X) \mapsto g(X) := gXg^{-1}\] 
of $G((t))$ on its Lie algebra $\g((t))$. It extends to an action on $\hat{\g}_{\mathbf{c}}$ if we let $G((t))$ act trivially on $\mathbf{1}$. This action induces an action on the universal enveloping algebra $\mathbf{U}_{\mathbf{c}}(\hat{\g})$ and its completion $\UUc$. 

\begin{pro}[{\cite[Proposition 4.3.8]{Fre}}] \label{pro: centre fixed under G((t))}
The $G((t))$-action on $\mathfrak{Z} \subseteq \UUc$ is trivial. 
\end{pro}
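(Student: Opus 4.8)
The plan is to reduce the assertion to the (essentially trivial) infinitesimal statement that $\g((t))$ acts by zero on $\mathfrak{Z}$, and then to integrate that over a convenient generating set of $G((t))$. First I would record the infinitesimal version. The adjoint action of $G((t))$ on $\UUc$ is by continuous algebra automorphisms, hence preserves the centre $\mathfrak{Z}=Z(\UUc)$; differentiating it yields an action of $\g((t))$ on $\UUc$ by continuous derivations. This derivation action is inner: an element $X\in\g((t))$ acts as $a\mapsto[\tilde X,a]$, where $\tilde X\in\hat{\g}_{\mathbf{c}}$ is any lift of $X$ (two lifts differ by an element of $\C\mathbf{1}$, which acts as zero under $\ad$). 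Consequently, for any $z\in\mathfrak{Z}$ and any $X$ we have $X\cdot z=[\tilde X,z]=0$, so $\g((t))$ annihilates $\mathfrak{Z}$.

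Next I would integrate. It is enough to check that some generating set of $G((t))$ fixes $\mathfrak{Z}$ pointwise. The central torus $\C((t))^{\times}\cdot\id$ acts trivially by construction. For a root vector $e_{\alpha}$ of $\g=\mathfrak{gl}_{n}$ and $k\in\Z$, the operator $\ad(\widetilde{e_{\alpha}[k]})$ on $\UUc$ is locally nilpotent on $\Uu_{\mathbf{c}}(\hat{\g})$ — since $\ad(e_{\alpha})$ is nilpotent on $\g$, by the Leibniz rule every PBW monomial is annihilated after boundedly many applications — and extends continuously to $\UUc$; hence the root subgroup element $\exp(e_{\alpha}[k])$ acts on $\UUc$ as $\exp\bigl(\ad(\widetilde{e_{\alpha}[k]})\bigr)$, which fixes $\mathfrak{Z}$ pointwise by the first step. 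The same then holds for every $\exp(fe_{\alpha})$ with $f\in\C((t))$, and since $\SL_{n}((t))$ is generated by such elementary matrices, $\SL_{n}((t))$ fixes $\mathfrak{Z}$.

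It remains to cover the component group. Using $\GL_{n}((t))=\SL_{n}((t))\cdot\bigl(\C((t))^{\times}\!\cdot\id\bigr)\cdot\{\,\diag(t^{\ell},1,\dots,1):\ell\in\Z\,\}$, the only generators not yet handled are the translations $g_{\ell}=\diag(t^{\ell},1,\dots,1)$, one for each class in $\pi_{0}(\GL_{n}((t)))\cong\Z$. For these I would compute directly on a complete set of Segal–Sugawara vectors: conjugation by $g_{\ell}$ only shifts the loop degrees of matrix entries (with the coboundary correction forced on the central extension), and inside the traces $\Tr(E_{\tau}^{k})$ and inside $\id[r]$ this shift merely reindexes the modes being summed over, leaving the expressions unchanged. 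By Theorem~\ref{FF small theorem thm} the Fourier coefficients of these vectors generate $\mathfrak{Z}$, so $g_{\ell}$ fixes $\mathfrak{Z}$, and therefore so does all of $G((t))$.

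The hard part will be this last step. The one-parameter-subgroup argument disposes of the identity component essentially for free, but $\GL_{n}((t))$ is disconnected — its components are separated by the valuation of the determinant — and the translations $g_{\ell}$ cannot be realized as exponentials inside $G((t))$, so one is forced into the explicit check on the Chervov–Molev generators of Example~\ref{exa: complete set T}. There it is crucial to use the correct, cocycle-corrected $G((t))$-action on $\hat{\g}_{\mathbf{c}}$ (equivalently, the descent of the adjoint action of the Kac–Moody central extension) rather than naive matrix conjugation, which does not even respect the bracket; for semisimple $\g$ this subtlety, and indeed the entire step, would be vacuous.
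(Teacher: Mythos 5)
The paper does not actually prove this proposition: it is quoted verbatim from \cite[Proposition 4.3.8]{Fre}, so there is no in-paper argument to measure yours against. Your overall strategy --- the differentiated action is inner, hence kills the centre; integrate over the unipotent root subgroups to handle $\SL_n((t))$ and the scalar torus; then treat the components of $\GL_n((t))$ detected by the valuation of the determinant separately --- is the standard one, and the first two steps are essentially correct. (One point worth making explicit in step two: the scalar torus acts trivially on $\hat{\g}_{\mathbf{c}}$ not "by construction" but because $\langle \id, - \rangle_{\mathbf{c}} = -\tfrac{1}{2}\Kil_{\g}(\id,-) = 0$; at non-critical level the cocycle correction $\Res\langle g^{-1}\partial_t g, -\rangle_{\kappa}$ for $g=f(t)\id$ does not vanish.)

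The gap is precisely where you predict it, and it is a real one: the assertion that conjugation by $g_\ell=\diag(t^{\ell},1,\dots,1)$ "merely reindexes the modes being summed over, leaving the expressions unchanged" is not a proof, and as a description of what happens it is inaccurate. The operators $\mathbf{T}_{k,l}$ are normally ordered infinite sums; the shift $e_{ij}[a]\mapsto e_{ij}[a+(\delta_{i1}-\delta_{j1})\ell]$ preserves the total mode of each monomial (the indices form a cycle) but moves finitely many terms across the splitting point of the normal ordering, producing boundary commutators $[e_{1j}[a],e_{j1}[b]]$, and every mode $e_{11}[0]$ additionally acquires the central correction $\Res\langle g_\ell^{-1}\partial_t g_\ell,-\rangle_{\mathbf{c}}$. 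Already for ${}^{\mathbf{c}}\mathbf{L}_r$ (equivalently $\mathbf{T}_{2,l}$) these corrections are individually nonzero and their cancellation is a genuine computation, of the same order of delicacy as the OPE checks this paper elsewhere delegates to references; for higher $k$ it is worse. So the one step you yourself flag as "the hard part" is asserted rather than carried out, and the stated reason for it is wrong. To close the argument you must either do that computation honestly (tracking both the reordering terms and the coboundary terms and showing they cancel), or find a softer replacement for the component-group step; as written, the proof is complete only for the subgroup generated by $\SL_n((t))$ and the scalars.
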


The $G((t))$-action restricts to an $\mathfrak{S}_n$-action on $\UUc$, where
we identify the symmetric group $\mathfrak{S}_n$ with the subgroup of permutation matrices in $G \subset G((t))$. 
%In particular, $\mathfrak{Z} \subseteq (\UUc)^{\mathfrak{S}_n}$. 
The $\mathfrak{S}_n$-action preserves the ideal $\mathfrak{I}_{\mathbf{c}} \subset \mathbf{U}(\hat{\g}_{\mathbf{c}})$ and, hence, induces an action on the module $\mathbb{H}_{\mathbf{c}}$. %Moreover, since the subspace $\mathbb{H}_{\mathbf{c}}^{\mathfrak{I}_{\mathbf{c}}}$ is also preserved under the $\mathfrak{S}_n$-action, 
%there is also an induced action on the algebra $\End_{\UUc}(\mathbb{H}_{\mathbf{c}})$. 

We now define an induced action on $\mathsf{F}_{\mathbf{c}}(\mathbb{H}_{\mathbf{c}})$. 
Let $\mathfrak{S}_n$ act on $(\mathbf{V}^*)^{\otimes n}$ by the rule $e_{i_1}^* \otimes \hdots \otimes e_{i_n}^* \mapsto e_{w(i_{w^{-1}(1)})}^* \otimes \hdots \otimes e_{w(i_{w^{-1}(n)})}^*$. One easily checks that $w \cdot e_{\tau}^*=e_{w \tau w^{-1}}^*$, where $e_{\tau}^*$ is as in \eqref{CSn-V-iso}. 
Combining the $\mathfrak{S}_n$-actions on $\mathbb{H}_{\mathbf{c}}$ and $(\mathbf{V}^*)^{\otimes n}$ defined above with the natural permutation action on $\C[\h]$ we obtain an action 
\begin{equation} \label{Sn action T} \mathfrak{S}_n \times \mathsf{T}_{\mathbf{c}}(\mathbb{H}_{\mathbf{c}}) \to \mathsf{T}_{\mathbf{c}}(\mathbb{H}_{\mathbf{c}}), \quad (w, f \otimes u \otimes h) \mapsto w\cdot f \otimes w \cdot u \otimes w \cdot h. 
\end{equation}
It is easy to check that if $X[k] \in \g[t]$ and $w \in \mathfrak{S}_n$ then $w \circ X[k] = w(X)[k] \circ w$ as operators on $\mathsf{T}_{\mathbf{c}}(\mathbb{H}_{\mathbf{c}})$. Hence the subspace $\g[t].\mathsf{T}_{\mathbf{c}}(\mathbb{H}_{\mathbf{c}})$ is $\mathfrak{S}_n$-stable, and \eqref{Sn action T} descends to an action 
\begin{equation} \label{Sn action F} \star \colon \mathfrak{S}_n \times \mathsf{F}_{\mathbf{c}}(\mathbb{H}_{\mathbf{c}}) \to \mathsf{F}_{\mathbf{c}}(\mathbb{H}_{\mathbf{c}}).
\end{equation} 
Note that this action is different from the $\mathfrak{S}_n$-action defined in \S \ref{subsec: current Lie}. 

There is also a natural conjugation action
\begin{equation} \label{Sn action H} \mathfrak{S}_n \times \mathcal{H}_{0} \to \mathcal{H}_{0}, \quad (w,h) \mapsto w h w^{-1}.
\end{equation} 
In the next lemma we compare the induced actions on endomorphism algebras. 

%\begin{lem} \label{pro: Sn actions intertwined} 
%\end{lem}

\begin{lem} \label{cor: Z in centr Sn} 
The map $\Theta$ is $\mathfrak{S}_n$-equivariant. %Hence $\Ima \Theta \subseteq Z_{\mathcal{H}_{0}^{op}}(\C \mathfrak{S}_n)$.   
\end{lem}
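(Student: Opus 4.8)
The statement to prove is that $\Theta \colon \mathfrak{Z} \to \mathcal{H}_0^{op}$ (equivalently $\End(\mathsf{F}_{\mathbb{H}})$) is $\mathfrak{S}_n$-equivariant, where $\mathfrak{S}_n$ acts on $\mathfrak{Z}$ via the restriction of the $G((t))$-action, and on $\mathcal{H}_0^{op}$ via conjugation \eqref{Sn action H}. The plan is to unwind $\Theta = \Psi \circ \Phi$ and check equivariance of $\Phi$ and $\Psi$ separately, using the intermediate $\mathfrak{S}_n$-action $\star$ on $\mathsf{F}_{\mathbf{c}}(\mathbb{H}_{\mathbf{c}})$ defined in \eqref{Sn action F} and the conjugation action on $\End_{\UUc}(\mathbb{H}_{\mathbf{c}})$ induced by the $\mathfrak{S}_n$-action on $\mathbb{H}_{\mathbf{c}}$.

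First I would treat $\Phi \colon \mathfrak{Z} \to \End_{\UUc}(\mathbb{H}_{\mathbf{c}})$, $z \mapsto z_{\mathbb{H}_{\mathbf{c}}}$. By Proposition \ref{pro: centre fixed under G((t))} the $G((t))$-action on $\mathfrak{Z}$ is trivial, so in particular the $\mathfrak{S}_n$-action on $\mathfrak{Z}$ is trivial; hence it suffices to show that $z_{\mathbb{H}_{\mathbf{c}}}$ is fixed by conjugation by every $w \in \mathfrak{S}_n$ inside $\End_{\UUc}(\mathbb{H}_{\mathbf{c}})$. Writing $w$ also for the operator on $\mathbb{H}_{\mathbf{c}}$ coming from the $G((t))$-action, and using that this action is by algebra automorphisms of $\UUc$, one has $w \circ z_{\mathbb{H}_{\mathbf{c}}} \circ w^{-1} = (w \cdot z)_{\mathbb{H}_{\mathbf{c}}}$ for $z \in \mathfrak{Z}$ (this is the compatibility of the $G((t))$-action on $\UUc$ with its action on modules of the form $\UUc/\mathfrak{I}$ with $\mathfrak{I}$ stable); since $w \cdot z = z$ by Proposition \ref{pro: centre fixed under G((t))}, we get $w \circ z_{\mathbb{H}_{\mathbf{c}}} \circ w^{-1} = z_{\mathbb{H}_{\mathbf{c}}}$. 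So $\Phi$ lands in the $\mathfrak{S}_n$-invariants of $\End_{\UUc}(\mathbb{H}_{\mathbf{c}})$ and is trivially equivariant.

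Next I would treat $\Psi \colon \End_{\UUc}(\mathbb{H}_{\mathbf{c}}) \to \End_{\mathcal{H}_0}(\mathcal{H}_0) \cong \mathcal{H}_0^{op}$, $\phi \mapsto \mathsf{F}_{\mathbf{c}}(\phi)$, and show it intertwines the conjugation action on the source with the $\star$-conjugation action on $\End_{\mathcal{H}_0}(\mathsf{F}_{\mathbf{c}}(\mathbb{H}_{\mathbf{c}}))$ induced by \eqref{Sn action F}. The key point is a functoriality/naturality statement: for $w \in \mathfrak{S}_n$, the operator $\star_w$ on $\mathsf{F}_{\mathbf{c}}(\mathbb{H}_{\mathbf{c}})$ is exactly $\mathsf{F}_{\mathbf{c}}$ applied to... well, not quite, because $\star_w$ also permutes the $\C[\h]$ and $(\mathbf{V}^*)^{\otimes n}$ tensor factors. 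So I would instead argue directly at the level of $\mathsf{T}_{\mathbf{c}}(\mathbb{H}_{\mathbf{c}}) = \C[\h] \otimes (\mathbf{V}^*)^{\otimes n} \otimes \mathbb{H}_{\mathbf{c}}$: if $\phi \in \End_{\UUc}(\mathbb{H}_{\mathbf{c}})$, then $\mathsf{F}_{\mathbf{c}}(\phi)$ is induced by $\id \otimes \id \otimes \phi$, and using \eqref{Sn action T} one computes $\star_w \circ (\id \otimes \id \otimes \phi) \circ \star_w^{-1} = \id \otimes \id \otimes (w \circ \phi \circ w^{-1}) = \id \otimes \id \otimes (w \cdot \phi)$, since $w$ acts only on the third factor after conjugating and the $\C[\h]$ and $(\mathbf{V}^*)^{\otimes n}$ parts of $\star_w$ cancel against themselves. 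Passing to coinvariants gives $\star_w \circ \mathsf{F}_{\mathbf{c}}(\phi) \circ \star_w^{-1} = \mathsf{F}_{\mathbf{c}}(w \cdot \phi)$, i.e.\ $\Psi$ is $\mathfrak{S}_n$-equivariant. Composing the two equivariance statements yields the equivariance of $\Theta$. The one thing that requires genuine care — and is the main obstacle — is pinning down that the identification $\End_{\mathcal{H}_0}(\mathsf{F}_{\mathbf{c}}(\mathbb{H}_{\mathbf{c}})) \cong \mathcal{H}_0^{op}$ (via $\Upsilon$ from Theorem \ref{thm: regular module}) carries the $\star$-conjugation action to the honest conjugation action \eqref{Sn action H} on $\mathcal{H}_0$; this is a compatibility between the explicit isomorphism $\Upsilon$ of \eqref{Regular module iso 2} and the two $\mathfrak{S}_n$-actions, and checking it amounts to verifying that $\Upsilon$ is itself $\mathfrak{S}_n$-equivariant for $\star$ on the left and conjugation on the right — which follows from the formula $w \cdot e_{\tau}^* = e_{w\tau w^{-1}}^*$ noted just before the lemma, together with the permutation actions on $\C[\h] = \C[x_1,\dots,x_n]$ and on $\mathcal{I} \cong \C[\h^*]$.
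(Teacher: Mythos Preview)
Your proposal is correct and follows essentially the same approach as the paper: factor $\Theta$ through $\Phi$, the map $\End_{\UUc}(\mathbb{H}_{\mathbf{c}}) \to \End_{\mathcal{H}_0}(\mathsf{F}_{\mathbf{c}}(\mathbb{H}_{\mathbf{c}}))$, and the isomorphism induced by $\Upsilon$, then observe that the first two are $\mathfrak{S}_n$-equivariant essentially by construction (the paper says just this, while you supply the details), leaving the check that $\Upsilon$ intertwines the $\star$-action with conjugation. You correctly identify this as the heart of the argument and point to exactly the ingredients the paper uses---the formula $w\cdot e_\tau^* = e_{w\tau w^{-1}}^*$ and the permutation actions on $\C[\h]$ and $\mathcal{I}\cong\C[\h^*]$---so nothing is missing.
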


\begin{proof} 
We factor $\Theta$ as a product of 
the maps $\Phi$, $\End_{\widehat{\mathbf{U}}_{\mathbf{c}}}(\mathbb{H}_{\mathbf{c}}) \to \End_{\mathcal{H}_0}(\mathsf{F}_{\mathbf{c}}(\mathbb{H}_{\mathbf{c}}))$ and the isomorphism $\End_{\mathcal{H}_0}(\mathsf{F}_{\mathbf{c}}(\mathbb{H}_{\mathbf{c}})) \cong \End_{\mathcal{H}_0}(\mathcal{H}_0)$ induced by $\Upsilon$ from \eqref{Regular module iso}. 
The first two maps are $\mathfrak{S}_n$-equivariant by construction. So we only need to check that $\Upsilon$ intertwines the two actions \eqref{Sn action F} and \eqref{Sn action H}. Abbreviating $e_{k} := e_{kk}$, 
we have 
\begin{align*}
\Upsilon(w f(x_1,\hdots,x_n)&u g(y_1,\hdots,y_n) w^{-1}) = \Upsilon(f(x_{w(1)},\hdots,x_{w(n)})wu w^{-1}g(y_{w(1)},\hdots,y_{w(n)})) \\
=& \ [f(x_{w(1)},\hdots,x_{w(n)}) \otimes e_{w u w^{-1}}^* \otimes g(-e_{w(1)}[1],\hdots,-e_{w(n)}[1]).1_{\mathbb{H}}]
\end{align*}
 and 
\begin{align*}
w \star \Upsilon(f(x_1,\hdots,x_n&)u g(y_1,\hdots,y_n) ) = \ w \star [f(x_1,\hdots,x_n)\otimes e_{u}^* \otimes g(-e_{1}[1],\hdots,-e_{n}[1]).1_{\mathbb{H}}] \\ 
=& \ [f(x_{w(1)},\hdots,x_{w(n)}) \otimes e_{w u w^{-1}}^* \otimes g(-e_{w(1)}[1],\hdots,-e_{w(n)}[1]).1_{\mathbb{H}}],
\end{align*}
as required. 
\end{proof} 

\begin{pro} \label{pro: centralizer of skgr} 
We have $Z_{\mathcal{H}_{0}}(\C[\h^*]^\rtimes) = \mathcal{Z}$. 
\end{pro}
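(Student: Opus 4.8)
The claim is that the centralizer of $\C[\h^*]^\rtimes$ inside $\mathcal{H}_0$ equals the centre $\mathcal{Z} = Z(\mathcal{H}_0)$. One inclusion is trivial: since $\C[\h^*]^\rtimes$ is a subalgebra of $\mathcal{H}_0$, we have $\mathcal{Z} = Z(\mathcal{H}_0) \subseteq Z_{\mathcal{H}_0}(\C[\h^*]^\rtimes)$. So the work is entirely in the reverse inclusion $Z_{\mathcal{H}_0}(\C[\h^*]^\rtimes) \subseteq \mathcal{Z}$; equivalently, any element of $\mathcal{H}_0$ commuting with all of $\C\mathfrak{S}_m$ and with $y_1,\dots,y_m$ automatically commutes with $x_1,\dots,x_m$ as well, and hence with all of $\mathcal{H}_0$.

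\textbf{First step: pass to the Dunkl representation.} I would use the Dunkl embedding \eqref{Dunkl embedding} for $\kappa = \mathbf{c}$, which realizes $\mathcal{H}_0$ as a subalgebra of $\mathcal{D}_{\mathbf{c}}^\rtimes[\delta^{-1}] = (\C[x_1,\dots,x_m,q_1,\dots,q_m]\rtimes\C\mathfrak{S}_m)[\delta^{-1}]$, with $x_i \mapsto x_i$, $w \mapsto w$, and $y_i \mapsto D_i = q_i + \sum_{j\neq i}\frac{1}{x_i - x_j}(s_{i,j}-1)$. Note that at $t=0$ the $q_i$ commute among themselves and with the $x_j$, so $\mathcal{D}_{\mathbf{c}} = \C[x_1,\dots,x_m,q_1,\dots,q_m]$ is a polynomial ring, and $\mathcal{D}_{\mathbf{c},\mathsf{reg}}^\rtimes$ is a localization of a skew group algebra over it. An element of $Z_{\mathcal{H}_0}(\C[\h^*]^\rtimes)$ maps to an element $z$ of $\mathcal{D}_{\mathbf{c},\mathsf{reg}}^\rtimes$ commuting with all $w \in \mathfrak{S}_m$ and with all $D_i$. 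Because $\mathcal{D}_{\mathbf{c}}$ is commutative and the $D_i$ differ from the $q_i$ only by the "correction'' terms $\sum_{j\neq i}\frac{1}{x_i-x_j}(s_{i,j}-1) \in \C(\h)\rtimes\C\mathfrak{S}_m$, commuting with all $D_i$ and all $w$ is the same as commuting with all $q_i$ and all $w$: indeed, if $z$ commutes with every $w$, then $[z,D_i] = [z,q_i]$ since $[z, \sum_{j\neq i}\frac{1}{x_i-x_j}(s_{i,j}-1)]$ involves only commutators with elements of $\C(\h)\rtimes\C\mathfrak{S}_m$, and $z$ commuting with $\mathfrak{S}_m$ and with the $x$'s (which we do not yet know) — this needs a little care, so let me reorganize.

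\textbf{Reorganized argument.} Write a general element of $\mathcal{D}_{\mathbf{c},\mathsf{reg}}^\rtimes$ as $z = \sum_{w\in\mathfrak{S}_m} f_w\, w$ with $f_w \in \C(\h)[q_1,\dots,q_m]$. The condition that $z$ commutes with every $u \in \mathfrak{S}_m$ forces $u f_w u^{-1} = f_{uwu^{-1}}$ for all $u,w$; in particular $z$ is determined by the $f_w$ on a set of conjugacy-class representatives and each $f_w$ is $\mathfrak{S}_m(w)$-invariant. The condition that $z$ commutes with $y_i = D_i$ for all $i$: first I'd check that, on the faithful Dunkl representation on $\C[\h_{\mathsf{reg}}] = \mathcal{R}$ (where $q_i$ acts as $(\kappa+n)\partial_{x_i} = 0$ at the critical level — wait, at $\kappa=\mathbf{c}$, $\kappa+n = 0$, so actually $q_i$ acts as $0$ on $\mathcal{R}$ and the Dunkl operator $D_i$ acts as the genuine Cherednik--Dunkl operator with $t=0$). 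So I should instead keep $\mathcal{D}_{\mathbf{c},\mathsf{reg}}^\rtimes$ abstract. The cleanest route: in $\mathcal{D}_{\mathbf{c}}^\rtimes[\delta^{-1}]$, the subalgebra generated by the $D_i$ and $\mathfrak{S}_m$ is $\mathcal{H}_0$, and the subalgebra generated by the $x_i$ and $\mathfrak{S}_m$ is $\C[\h]^\rtimes$; together they generate $\mathcal{H}_0$. Commuting with $\C[\h^*]^\rtimes = \langle D_i, w\rangle$ and wanting to conclude commuting with $\mathcal{H}_0$, it suffices to show commuting with the $x_i$ too. Now compute $[x_i, D_j] = [x_i, q_j] + [x_i, \sum_{k\neq j}\frac{1}{x_j-x_k}(s_{j,k}-1)] = 0 + (\text{the Cherednik relation terms})$, i.e. $[D_j,x_i] = \delta_{ij}\cdot 0 - $ wait, at $t=0$: $[x_i,y_i] = \sum_{j\neq i}s_{i,j}$ and $[x_i,y_j] = -s_{i,j}$. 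So the $x$'s are recovered from brackets involving $y$'s and $\mathfrak{S}_m$ only up to these reflection terms — not directly. The actual mechanism must be: $\mathcal{H}_0$ is a free module over $\C[\h]\otimes\C[\h^*]$ of rank $|\mathfrak{S}_m|$ via PBW, and $Z_{\mathcal{H}_0}(\C[\h^*]^\rtimes)$ should be computed by a direct PBW/degree argument.

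\textbf{The degree argument (expected main line).} Using the PBW isomorphism \eqref{RCA PBW}, write $z \in Z_{\mathcal{H}_0}(\C[\h^*]^\rtimes)$ as $z = \sum_{\alpha,w} c_{\alpha,w}\, x^\alpha\, w\, g_{\alpha,w}(y)$ and filter by the $x$-degree. Let $z_{\mathrm{top}}$ be the top $x$-degree part; I would argue via $\gr\mathcal{H}_0 = \C[\h\oplus\h^*]\rtimes\C\mathfrak{S}_m$ that the symbol of $z$ commutes in $\gr\mathcal{H}_0$ with the symbols of the $y_i$ and with $\mathfrak{S}_m$. In $\C[\h\oplus\h^*]\rtimes\C\mathfrak{S}_m$, commuting with all $y_i$ (which are central-ish only modulo $\mathfrak{S}_m$) and all $w$: writing the symbol as $\sum_w p_w(x,y) w$, the $y_i$-commutation gives $\sum_w (p_w(x,y) - p_w(w^{-1}(x),y)) y_i\, w = 0$... hmm, $y_i w = w y_{w^{-1}(i)}$, so $[y_i, p_w w] = (y_i p_w - p_w y_{w^{-1}(i)}) w$; since $\gr$ is commutative this is $p_w\cdot(y_i - y_{w^{-1}(i)})\, w$, forcing $p_w = 0$ unless $w$ fixes $i$ or $p_w$ is supported where $y_i = y_{w^{-1}(i)}$. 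Ranging over all $i$ forces: for each $w$, $p_w$ is supported on the subspace where $y$ is $w$-invariant, i.e. $w \in \mathfrak{S}_m(y)$. Combined with $w$-commutation this identifies $\gr Z_{\mathcal{H}_0}(\C[\h^*]^\rtimes)$ with exactly $\C[\h\oplus\h^*]^{\mathfrak{S}_m}$, which by Theorem \ref{pro: centre facts}.b) is $\gr\mathcal{Z}$. Since both $\mathcal{Z}$ and $Z_{\mathcal{H}_0}(\C[\h^*]^\rtimes)$ are filtered subalgebras of $\mathcal{H}_0$ with $\mathcal{Z} \subseteq Z_{\mathcal{H}_0}(\C[\h^*]^\rtimes)$ and $\gr\mathcal{Z} = \gr Z_{\mathcal{H}_0}(\C[\h^*]^\rtimes)$, they coincide.

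\textbf{Main obstacle.} The delicate point is the symbol computation: I need to know that taking associated graded is compatible enough that $\gr$ of the centralizer equals the centralizer of the $\gr$'s of the generators — this is the inclusion $\gr Z_{\mathcal{H}_0}(\C[\h^*]^\rtimes) \subseteq Z_{\gr\mathcal{H}_0}(\gr\C[\h^*]^\rtimes)$, which is automatic, but the reverse requires lifting invariants, i.e. showing every element of $\C[\h\oplus\h^*]^{\mathfrak{S}_m} = \gr\mathcal{Z}$ genuinely lifts to $\mathcal{Z} \subseteq Z_{\mathcal{H}_0}(\C[\h^*]^\rtimes)$ — but that is precisely Theorem \ref{pro: centre facts}.b) again. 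So the whole proof is: (i) trivial inclusion $\mathcal{Z}\subseteq Z_{\mathcal{H}_0}(\C[\h^*]^\rtimes)$; (ii) symbol calculation bounding $\gr Z_{\mathcal{H}_0}(\C[\h^*]^\rtimes) \subseteq \C[\h\oplus\h^*]^{\mathfrak{S}_m}$; (iii) Theorem \ref{pro: centre facts}.b) gives $\C[\h\oplus\h^*]^{\mathfrak{S}_m} = \gr\mathcal{Z} \subseteq \gr Z_{\mathcal{H}_0}(\C[\h^*]^\rtimes)$; (iv) conclude equality of the two filtered algebras. The bookkeeping in (ii) — correctly handling the skew-group structure and the fact that the $y_i$ do not commute with $\mathfrak{S}_m$ — is where I expect to spend the most effort, but it is ultimately a finite, elementary commutator computation in $\C[\h\oplus\h^*]\rtimes\C\mathfrak{S}_m$.
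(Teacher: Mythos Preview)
Your final approach via the PBW filtration is correct and complete: the symbol computation in $\gr\mathcal{H}_0=\C[\h\oplus\h^*]\rtimes\C\mathfrak{S}_m$ does force $p_w=0$ for $w\neq e$ (since $\C[\h\oplus\h^*]$ is a domain, $p_w(y_i-y_{w(i)})=0$ with $w(i)\neq i$ gives $p_w=0$ outright, not merely ``supported on a subvariety''), and $\mathfrak{S}_m$-invariance of $p_e$ follows. Together with Theorem~\ref{pro: centre facts}(b) and the standard filtered-equals-graded lemma this yields the result.

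The paper takes a different, slightly more self-contained route---and it is essentially your abandoned first attempt. It uses the Dunkl embedding to sit $\mathcal{H}_0$ inside $\mathcal{H}_{\mathsf{reg}}:=\C[\h_{\mathsf{reg}}\times\h^*]\rtimes\C\mathfrak{S}_n$ (at $t=0$ the ambient algebra $\mathcal{D}_{\mathbf{c},\mathsf{reg}}^\rtimes$ is exactly this skew group ring over a commutative ring, so your worry about $q_i$ acting as $0$ is irrelevant: one works in the abstract algebra, not its polynomial representation). The same commutator computation you do in $\gr\mathcal{H}_0$ is then done directly in $\mathcal{H}_{\mathsf{reg}}$, giving $Z_{\mathcal{H}_{\mathsf{reg}}}(\C[\h^*]^\rtimes)=Z(\mathcal{H}_{\mathsf{reg}})$. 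Intersecting with $\mathcal{H}_0$ finishes, using only that $\mathcal{H}_{\mathsf{reg}}=\mathcal{H}_0[\delta^{-1}]$. The advantage of the paper's route is that it never invokes the nontrivial Etingof--Ginzburg result $\gr\mathcal{Z}=\C[\h\oplus\h^*]^{\mathfrak{S}_m}$; your filtration argument buys a proof that stays inside $\mathcal{H}_0$ at the cost of importing that theorem.
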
 
\begin{proof}
Write $\mathcal{H}_{\mathsf{reg}} := \C[\h_{\mathsf{reg}} \times \h^*] \rtimes \C \mathfrak{S}_n$. We first prove that $$Z_{\mathcal{H}_{\mathsf{reg}}}(\C[\h^*] \rtimes \C \mathfrak{S}_n) = Z(\mathcal{H}_{\mathsf{reg}}) = \C[\h_{\mathsf{reg}} \times \h^*]^{\mathfrak{S}_n}.$$ We only need to show that $Z_{\mathcal{H}_{\mathsf{reg}}}(\C[\h^*] \rtimes \C \mathfrak{S}_n) \subseteq Z(\mathcal{H}_{\mathsf{reg}})$, the other inclusion being obvious. Let $z \in Z_{\mathcal{H}_{\mathsf{reg}}}(\C[\h^*] \rtimes \C \mathfrak{S}_n)$. We can uniquely write $z = \sum_{w \in \mathfrak{S}_n} f_w w$ with $f_w \in \C[\h_{\mathsf{reg}} \times \h^*]$. Since, by assumption, $z$ commutes with $\C \mathfrak{S}_n$, for any $u \in \mathfrak{S}_n$ we have 
$z = u z u^{-1} = \sum_{w \in \mathfrak{S}_n} u f_w w u^{-1} = \sum_{w \in \mathfrak{S}_n} f_{u^{-1}wu}^{u} w$, where $f^u(a) = f(u^{-1} \cdot a)$. Hence $f_1 = f_1^{u}$ for all $u \in \mathfrak{S}_n$, i.e., $f_1 \in \C[\h_{\mathsf{reg}} \times \h^*]^{\mathfrak{S}_n}$. Next, since $z$ commutes with $\C[\h^*]$, 
$0 = [z,g] = \sum_{w \in \mathfrak{S}_n} f_w (g^{w} - g)w$ for all $g \in \C[\h^*]$. 
But $\mathfrak{S}_n$ acts faithfully on $\h \subset \C[\h^*]$, so for each $w \in \mathfrak{S}_n$ there exists $a \in \h$ such that $w^{-1}(a) \neq a$. This forces $f_w = 0$ for each $w \neq 1$. 

Using the Dunkl embedding (see \eqref{Dunkl embedding}), we view $\mathcal{H}_{0}$ as a subalgebra of $\mathcal{H}_{\mathsf{reg}}$. The following are obvious:
\[ Z_{\mathcal{H}_{0}}(\C[\h^*] \rtimes \C \mathfrak{S}_n) = Z_{\mathcal{H}_{\mathsf{reg}}}(\C[\h^*] \rtimes \C \mathfrak{S}_n) \cap \mathcal{H}_{0}, \quad Z(\mathcal{H}_{\mathsf{reg}}) \cap \mathcal{H}_{0} \subseteq \mathcal{Z}.\] 
Since $\mathcal{H}_{\mathsf{reg}} = \mathcal{H}_{0}[\delta^{-1}]$ and $\delta^{-1}$ is central in $\mathcal{H}_{\mathsf{reg}}$, we also have $\mathcal{Z} \subseteq Z(\mathcal{H}_{\mathsf{reg}}) \cap \mathcal{H}_{0}$. 
\end{proof}

\begin{rem}
Proposition \ref{pro: centralizer of skgr} generalizes to rational Cherednik algebras at $t=0$ associated to any complex reflection group. 
\end{rem}

\begin{pro} \label{centralizer pro}
We have $\Ima \Theta \subseteq \mathcal{Z}$. 
\end{pro}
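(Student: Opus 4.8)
\textbf{Proof plan for Proposition \ref{centralizer pro} ($\Ima \Theta \subseteq \mathcal{Z}$).}

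The strategy is to squeeze $\Ima \Theta$ between two already-established facts: first that $\Ima \Theta$ centralizes $\C[\h^*]^\rtimes$ inside $\mathcal{H}_0^{op}$, and second that this centralizer equals $\mathcal{Z}$ by Proposition \ref{pro: centralizer of skgr} (noting that $\mathcal{Z}$ is commutative, so centralizing inside $\mathcal{H}_0$ and inside $\mathcal{H}_0^{op}$ amount to the same thing). Thus the whole proof reduces to showing $\Ima \Theta \subseteq Z_{\mathcal{H}_0}(\C[\h^*]^\rtimes)$, i.e.\ that for every $z \in \mathfrak{Z}$, the element $\Theta(z) \in \mathcal{H}_0^{op}$ commutes with every $w \in \mathfrak{S}_n$ and with every $y_i$ (equivalently with $\C[\h^*]$).

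The $\mathfrak{S}_n$-part is immediate from Lemma \ref{cor: Z in centr Sn}: $\Theta$ is $\mathfrak{S}_n$-equivariant, where $\mathfrak{S}_n$ acts on the source $\mathfrak{Z}$ via the $G((t))$-action, which is \emph{trivial} by Proposition \ref{pro: centre fixed under G((t))}; hence $w \cdot \Theta(z) = \Theta(w \cdot z) = \Theta(z)$ for all $w \in \mathfrak{S}_n$, where the action on the target $\mathcal{H}_0$ (via $\Upsilon$, cf.\ the proof of Lemma \ref{cor: Z in centr Sn}) is conjugation \eqref{Sn action H}. So $\Theta(z)$ is $\mathfrak{S}_n$-invariant under conjugation, which says exactly that it commutes with $\C\mathfrak{S}_n \subset \mathcal{H}_0$.

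The $\C[\h^*]$-part is the genuine content and I expect it to be the main obstacle. The key observation is that $\C[\h^*]$ is generated as an algebra by $y_1, \dots, y_n$, and under the identification \eqref{symT-Ch-iso} the operator $y_i$ on $\mathsf{F}_{\mathbf{c}}(\mathbb{H}_{\mathbf{c}})$ corresponds (by \eqref{kappa-y reg module two}, since all higher modes $e_{kl}[1]^{(\infty)}$ with $k\neq l$ and $\Omega^{(i,\infty)}_{[p+1]}$ with $p\geq 1$ act trivially on $\mathsf{F}_{\mathbf{c}}(\mathbb{H}_{\mathbf{c}})$) to the action of $-e_{ii}[1] = \id[1]^{(i)}$-type operators coming from the $\hat{\g}_{\mathbf{c}}$-module structure on $\mathbb{H}_{\mathbf{c}}$; more precisely, the $\C[\h^*]$-action on $\mathbb{H}_{\mathbf{c}}$ is intertwined by $\mathsf{F}_{\mathbf{c}}$ with the action of a subalgebra of $\UUc$ (generated by the $e_{ii}[1]$) on $\mathbb{H}_{\mathbf{c}}$ itself. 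Since $\Theta(z) = \mathsf{F}_{\mathbf{c}}(z_{\mathbb{H}_{\mathbf{c}}}) = \Psi(z_{\mathbb{H}_{\mathbf{c}}})$ and $\Psi$ is induced by applying the functor $\mathsf{F}_{\mathbf{c}}$ to morphisms, $\Theta(z)$ commutes with $\mathsf{F}_{\mathbf{c}}$ applied to any $\UUc$-endomorphism of $\mathbb{H}_{\mathbf{c}}$; in particular it commutes with the images under $\mathsf{F}_{\mathbf{c}}$ of the endomorphisms of $\mathbb{H}_{\mathbf{c}}$ given by right multiplication by $e_{ii}[1]$ — but those images are precisely the operators $y_i$ on $\mathcal{H}_0 \cong \mathsf{F}_{\mathbf{c}}(\mathbb{H}_{\mathbf{c}})$ via \eqref{Regular module iso 2}. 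Hence $[\Theta(z), y_i] = 0$ for all $i$, so $\Theta(z)$ commutes with $\C[\h^*]$.

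Combining the two parts, $\Theta(z) \in Z_{\mathcal{H}_0}(\C[\h^*]^\rtimes) = \mathcal{Z}$ by Proposition \ref{pro: centralizer of skgr}, which proves $\Ima \Theta \subseteq \mathcal{Z}$. The one point requiring care — and the place where I would slow down and check details — is the claim that right multiplication by $e_{ii}[1]$ genuinely defines a $\UUc$-module endomorphism of $\mathbb{H}_{\mathbf{c}}$ whose image under $\mathsf{F}_{\mathbf{c}}$ is the operator $y_i$: one must verify that $e_{ii}[1]$ normalizes the defining left ideal $\mathfrak{I}_{\mathbf{c}}$ of $\mathbb{H}_{\mathbf{c}}$ (so that right multiplication is well-defined on the quotient and commutes with the left $\UUc$-action), and then trace through the isomorphism $\Upsilon$ of Theorem \ref{thm: regular module} to confirm it lands on $y_i$ rather than some other operator. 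Alternatively, and perhaps more cleanly, one can bypass the endomorphism language and argue directly: $\Theta(z) = \mathsf{F}_{\mathbf{c}}(z_{\mathbb{H}_{\mathbf{c}}})$ is the operator $[1 \otimes e^*_{\mathsf{id}} \otimes u] \mapsto [1 \otimes e^*_{\mathsf{id}} \otimes z u]$ on $\mathsf{F}_{\mathbf{c}}(\mathbb{H}_{\mathbf{c}})$, while $y_i$ acts via \eqref{kappa-y reg module two} as the operator induced by $e_{ii}[1]$ acting in the last tensor factor; since $z \in \mathfrak{Z}$ commutes with $e_{ii}[1]$ inside $\UUc$, these two operators commute.
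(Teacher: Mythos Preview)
Your main argument is correct and is exactly the paper's approach: reduce to $Z_{\mathcal{H}_0}(\C[\h^*]^\rtimes)=\mathcal{Z}$ via Proposition~\ref{pro: centralizer of skgr}, handle $\C\mathfrak{S}_n$ by Lemma~\ref{cor: Z in centr Sn} plus Proposition~\ref{pro: centre fixed under G((t))}, and handle $\C[\h^*]$ by observing that the $e_{ii}[1]$ give $\UUc$-endomorphisms of $\mathbb{H}_{\mathbf{c}}$ which $\Psi$ sends to $\C[\h^*]$, so that centrality of $\Phi(z)$ in $\End_{\UUc}(\mathbb{H}_{\mathbf{c}})$ forces $\Theta(z)$ to commute with the $y_i$. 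The two points you flag for verification are dealt with in the paper precisely as you anticipate: that right multiplication by $e_{ii}[1]$ descends to an endomorphism of $\mathbb{H}_{\mathbf{c}}$ follows from the identification $\End_{\UUc}(\mathbb{H}_{\mathbf{c}})\cong(\mathbb{H}_{\mathbf{c}})_{(1,\dots,1)}^{\mathfrak{i}}$ together with the fact (Lemma~\ref{lem:leviideal}) that $\mathfrak{i}$ is an ideal in $\hat{\t}_+$, so $\Sym(\t[1]).1_{\mathbb{H}}$ lies in this space; that $\Psi$ carries these endomorphisms to $\C[\h^*]$ is read off from \eqref{Regular module iso 2}.

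Your proposed \emph{alternative}, however, does not work as stated. When you say ``$y_i$ acts via \eqref{kappa-y reg module two} as the operator induced by $e_{ii}[1]$ in the last tensor factor'' you are describing the \emph{left} $\mathcal{H}_0$-action of $y_i$ on $\mathsf{F}_{\mathbf{c}}(\mathbb{H}_{\mathbf{c}})$, whereas $\Theta(z)$, under $\End_{\mathcal{H}_0}(\mathcal{H}_0)\cong\mathcal{H}_0^{op}$, corresponds to \emph{right} multiplication. Left and right multiplication always commute on the regular module, so the commutation of these two operators is tautological and says nothing about whether the elements $\Theta(z)$ and $y_i$ commute inside $\mathcal{H}_0$. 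To get the latter you really do need to realise right multiplication by $y_i$ as $\Psi$ of a $\UUc$-endomorphism of $\mathbb{H}_{\mathbf{c}}$, which is exactly your main argument; the endomorphism language is not a detour but the substance of the step.
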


\begin{proof}  
Lemma \ref{cor: Z in centr Sn} and Proposition \ref{pro: centre fixed under G((t))} imply that $\Ima \Theta \subseteq Z_{\mathcal{H}_{0}^{op}}(\C \mathfrak{S}_n)$. Therefore, it suffices to show that $\Ima \Theta \subseteq Z_{\mathcal{H}_{0}^{op}}(\C[\h^*])$, because then Proposition \ref{pro: centralizer of skgr} implies that $\Ima \Theta \subseteq Z_{\mathcal{H}_{0}^{op}}(\C[\h^*]^\rtimes) = \mathcal{Z}$.

By the definition of $\mathbb{H}_{\mathbf{c}}$, 
there is a natural isomorphism
\begin{equation} \label{EndHrho} \End_{\UUc}(\mathbb{H}_{\mathbf{c}}) \xrightarrow{\sim} (\mathbb{H}_{\mathbf{c}})_{(1,\hdots,1)}^{\mathfrak{i}}.\end{equation}
Observe that $\Sym(\t[1]).1_{\mathbb{H}} \subset (\mathbb{H}_{\mathbf{c}})_{(1,\hdots,1)}^{\mathfrak{i}}$. Indeed, $\Sym(\t[1]).1_{\mathbb{H}}$ has $\t$-weight $(1,\hdots,1)$, and since $\mathfrak{i}$ is an ideal in $\hat{\t}_+$ and $1_{\mathbb{H}}$ is annihilated by $\mathfrak{i}$, so is $\Sym(\t[1]).1_{\mathbb{H}}$. Hence elements of $\Sym(\t[1]).1_{\mathbb{H}}$ define endomorphisms of $\mathbb{H}_{\mathbf{c}}$. 

By construction, $\Ima \Phi \subseteq Z(\End_{\UUc}(\mathbb{H}_{\mathbf{c}}))$, and so $\Ima \Phi$ commutes with the endomorphisms defined by $\Sym(\t[1]).1_{\mathbb{H}}$. Hence $\Ima \Theta = \Psi(\Ima \Phi)$ must commute with the image of these endomorphisms under $\Psi$. But Theorem \ref{Regular module thm} implies that they are mapped to $\C[\h^*] \subset \mathcal{H}^{op}$. 
It follows that $\Ima \Theta \subseteq Z_{\mathcal{H}_{0}^{op}}(\C[\h^*])$, as required. 
\end{proof}

We are now ready to complete the proof of Theorem \ref{thm restricted centres}. 

\begin{proof}[Proof of Theorem \ref{thm restricted centres}]
By Proposition \ref{centralizer pro}, $\Ima \Theta \subseteq \mathcal{Z}$. Lemma \ref{lem centre to endo inj} and the commutativity of diagram \eqref{diagram gamma to theta}, therefore, imply $\Ima (\alpha' \circ q) \subseteq \mathcal{Z}$. The second statement of the theorem also follows directly from the commutativity of the diagram. 
\end{proof}

\section{Filtered and graded versions of the Suzuki functor}

Our next goal is to show that $\Ima \Theta = \mathcal{Z}$. The proof in \S \ref{sec: Theta surj} relies on a filtered version of the Suzuki functor, which we construct in this section. We also introduce a graded version. Assume that $\kappa \in \C$ and $m,n$ are arbitrary unless indicated otherwise.

\subsection{Background from filtered and graded algebra.}

We refer the reader to \cite{vBergh} and \cite{Sjo} for basic definitions from filtered and graded algebra. 
All filtrations we consider are increasing, exhaustive and separated. 
If $M$ is a graded vector space (or module or algebra) we denote the $i$-th graded piece by $M_i$. If $M$ is a filtered vector space (or module or algebra), we denote the $i$-th filtered piece by $M_{\leq i}$. 

Now suppose that $A$ is a filtered algebra and $M,N$ are two filtered $A$-modules. An $A$-module homomorphism $f: M \to N$ is called \emph{filtered} of degree $i$ if $f(M_{\leq r}) \subseteq N_{\leq r+i}$ for all $r \in \Z$. We say that $f$ is a \emph{filtered isomorphism} if $f$ is an isomorphism of $A$-modules and $f(M_{\leq r}) = N_{\leq r}$  for all $r \in \Z$. Let $\Hom_A(M,N)_{\leq i}$ denote the vector space of filtered homomorphisms of degree $i$ and set $\Hom_A^{\mathsf{fil}}(M,N) := \bigcup_{i \in \Z} \Hom_A(M,N)_{\leq i}$. If $M$ is finitely generated as an $A$-module then $\Hom_A(M,N) = \Hom_A^{\mathsf{fil}}(M,N)$. 
Observe that $\Hom_A^{\mathsf{fil}}(M,N)$ is a filtered vector space and $\Hom_A^{\mathsf{fil}}(M,M)$ is also a filtered algebra. 

We next define two categories whose objects are filtered (left) $A$-modules. The first category, denoted $A\Fmod{}$, has Hom-sets of the form $\Hom_A^{\mathsf{fil}}(M,N)$. The second category, denoted  $A\Fmod{}_0$, has Hom-sets of the form $\Hom_A(M,N)_0$. We regard $A\Fmod{}$ as a category enriched in the category $\C\Fmod{}_0$ of filtered vector spaces (where $\C$ is endowed with the trivial filtration). 

Analogous definitions make sense in the graded setting. In particular, if $A$ is a $\Z$-graded algebra then we have two categories of graded modules $A\Gmod{}$ and $A\Gmod{}_0$. We regard $A\Gmod{}$ as a category enriched in the category $\C\Gmod{}_0$ of graded vector spaces. 

If $A$ is a filtered algebra, with associated graded $\gr A$, let $\sigma \colon A \to \gr A$ be the principal symbol map. For $v \in A$, set $\deg v := \deg \sigma(v)$. If $f \colon A \to B$ is a degree zero filtered algebra homomorphism, let $\gr f \colon \gr A \to \gr B$ be the associated graded algebra homomorphism. 

\subsection{Filtrations and gradings.} \label{subsec: U filtr} We consider two  filtrations and a grading on~$\mathbf{U}_{\kappa}(\tilde{\g})$.

\begin{defi} \label{defi U filtr}
Suppose that $l \geq 0$, $X_1, \hdots, X_l \in \g$ and $j_1, \hdots, j_l \in \Z$. An expression of the form $\mathbf{m} = X_1[j_1]\hdots X_l[j_l]\in \mathbf{U}_{\kappa}(\tilde{\g})$ 
is called a \emph{monomial} of \emph{length} $l$, \emph{height} $j_1 + \hdots + j_l$ and  \emph{absolute height} $|j_1| + \hdots + |j_l|$. 
For $r \in \Z$, define: 
\begin{enumerate}[label=(\alph*),topsep=4pt]
\itemsep0em 
\item $\mathbf{U}_{\kappa}(\tilde{\g})_r$ \ \ \ \ $=$  \ $\langle \mbox{ monomials of height } r \ \rangle$,
\item $\mathbf{U}_{\kappa}^{\mathsf{pbw}}(\tilde{\g})_{\leq r}$ $=$ \ $\langle \mbox{ monomials of length} \leq r \ \rangle$, 
\item $\mathbf{U}_{\kappa}^{\mathsf{abs}}(\tilde{\g})_{\leq r}$ \ $=$ \ $\langle \mbox{ monomials of absolute height} \leq r \ \rangle$,
\end{enumerate}
where the brackets denote $\C$-span. 
Observe that (a) defines a grading while (b) and (c) define filtrations on $\mathbf{U}_{\kappa}(\tilde{\g})$. Filtration (b) is the usual PBW filtration. We call filtration~(c) the \emph{absolute height filtration}. Denote by  
$\mathbf{U}_{\kappa}^{\mathsf{pbw}}(\tilde{\g})$ and $\mathbf{U}_{\kappa}^{\mathsf{abs}}(\tilde{\g})$ the corresponding filtered algebras. 
\end{defi}
\begin{defi} \label{filtration Cabsr definition} We define subcategories of graded and filtered smooth modules. 
\begin{enumerate}[label=\alph*), font=\textnormal,noitemsep,topsep=3pt,leftmargin=1cm]
\item Let $\mathscr{C}_{\kappa}^{\mathsf{gr}}$ be the full subcategory of $\mathbf{U}_{\kappa}(\tilde{\g})\Gmod{}$ whose objects are graded modules with the property that the underlying ungraded module is an object of $\mathscr{C}_{\kappa}$. 
\item For $r \geq 0$, let $\mathscr{C}^{\mathsf{abs}}_{\kappa}(r)$ be the full subcategory of $\mathbf{U}_{\kappa}^{\mathsf{abs}}(\tilde{\g})\Fmod{}$ whose objects are filtered modules $M$ such that (i)  the underlying unfiltered module is an object of $\mathscr{C}_{\kappa}(r)$, and (ii) for each $l \geq 0$, we have: $M_{\leq l} = \mathbf{U}_{\kappa}^{\mathsf{abs}}(\tilde{\g})_{\leq l} \cdot M^{I_r}$. 
%(resp.\ $\mathscr{C}^{\mathsf{abs}}_{\kappa}(r)$) be the full subcategory of $\mathbf{U}_{\kappa}^{\mathsf{abs}}(\tilde{\g})\Fmod{}$ whose objects are filtered modules with the property that the underlying unfiltered module is an object of $\mathscr{C}_{\kappa}$ (resp.\ $\mathscr{C}_{\kappa}(r)$). Similarly, 
\end{enumerate}
\end{defi}
 
\begin{rem}
Consider the associated graded algebra $\gr\mathbf{U}_{\kappa}^{\mathsf{abs}}(\tilde{\g})$.  
It is easy to see that the relation 
\begin{equation*} \label{degenerate rel} [\sigma(X\otimes t^r), \sigma(Y \otimes t^l)] = \delta_{|r|+|l|, |r+l|} \sigma([X,Y] \otimes t^{r+l}) \end{equation*} 
holds in $\gr\mathbf{U}_{\kappa}^{\mathsf{abs}}(\tilde{\g})$. Hence 
\[ \gr \mathbf{U}^{\mathsf{abs}}(\tilde{\g}_{\geq 0}) \cong \mathbf{U}^{\mathsf{abs}}(\tilde{\g}_{\geq 0}), \quad \gr \mathbf{U}^{\mathsf{abs}}(\tilde{\g}_{\leq 0}) \cong \mathbf{U}^{\mathsf{abs}}(\tilde{\g}_{\leq 0}).\] 
Moreover, we have $[\gr \mathbf{U}^{\mathsf{abs}}(\tilde{\g}_{\geq 1}), \gr \mathbf{U}^{\mathsf{abs}}(\tilde{\g}_{\leq -1})]=0$.
\end{rem} 

\begin{defi} \label{defi: RCA filt}
We consider the following grading and family of filtrations on the rational Cherednik algebra. 
\begin{enumerate}[label=\alph*), font=\textnormal,noitemsep,topsep=3pt,leftmargin=1cm]
\item 
Setting $\deg x_i =-1$, $\deg y_i =1$ and $\deg \mathfrak{S}_m = 0$ defines a grading on~$\mathcal{H}_t$. We denote the  corresponding graded algebra simply by $\mathcal{H}_t$. 
\item For each $k \geq 1$, setting $\deg x_i = 1$, $\deg y_i =k$ and $\deg \mathfrak{S}_m = 0$ yields a filtration on $\mathcal{H}_t$, and we denote the corresponding filtered algebra by $\mathcal{H}_{t}^{(k)}$. When $k=1$, the resulting filtration is known as the PBW filtration, and we abbreviate $\mathcal{H}_{t}:=\mathcal{H}_{t}^{(1)}$.
\end{enumerate}
We consider $\C[\h]$, $\C[\h]^\rtimes$ and $\C[\h^*]$ as graded (resp.\ filtered) subalgebras of $\mathcal{H}_{t}$.
\end{defi}

\subsection{Filtered lift of the Suzuki functor} 
\label{subsec: filt lift}

Let $M$ be a filtered module in $\mathscr{C}_{\kappa}^{\mathsf{abs}}(r)$. 
We equip $(\mathbf{V}^*)^{\otimes m}$ with the trivial filtration and 
$\mathsf{T}_{\kappa}(M)$ with the tensor product filtration. Explicitly,
\begin{equation} \label{tensor filtration} \mathsf{T}_{\kappa}(M)_{\leq p} = \sum_{k+l=p} \C[\h]_{\leq k} \otimes (\mathbf{V}^*)^{\otimes m} \otimes M_{\leq l}.\end{equation} 
Consider the quotient map 
\begin{equation} \label{tensor to coinv projection} \psi \colon \mathsf{T}_{\kappa}(M) \twoheadrightarrow \mathsf{F}_\kappa(M).
\end{equation}
We endow $\mathsf{F}_\kappa(M)$ with the quotient filtration given by $\mathsf{F}_\kappa(M)_{\leq p} := \psi (\mathsf{T}_{\kappa}(M)_{\leq p})$. 
The following proposition connects the absolute height filtration on $\mathbf{U}_{\kappa}(\tilde{\g})$ with the  filtrations on $\mathcal{H}_{\kappa+n}$. 

\begin{pro} \label{pro: coinv ind filt} \label{pro: filt functor}
For each $r\geq2$, the functor $\mathsf{F}_\kappa$ lifts to a functor 
\begin{equation*} \label{filtered lift}
\mathsf{F}_{\kappa}^{(r)} \colon \mathscr{C}^{\mathsf{abs}}_{\kappa}(r) \to \mathcal{H}_{\kappa+n}^{(2r-3)}\Fmod{}
\end{equation*} 
enriched in $\C\Fmod{}_0$. 
\end{pro}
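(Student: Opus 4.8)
The plan is to track degrees through every map in the construction of $\mathsf{F}_\kappa$ and verify that the filtration on $\mathbf{U}_\kappa^{\mathsf{abs}}(\tilde{\g})$ induces, via the coinvariants construction, a filtered $\mathcal{H}_{\kappa+n}$-module structure with the claimed shift. First I would fix $M$ in $\mathscr{C}_\kappa^{\mathsf{abs}}(r)$ and record that, since $M$ is generated by $M^{\hat{I}_r}$, the current Lie algebra elements $Y[k]$ with $k \geq 0$ contribute to the $\g[t]$-action \eqref{currentaction} only through finitely many terms when evaluated on any vector, and more importantly that the operator $Y[k]^{(\infty)} = (Y[-k])^{(\infty)}$ in \eqref{currentaction} acts on $M$; its absolute height as an element of $\mathbf{U}_\kappa^{\mathsf{abs}}(\tilde{\g})$ is $|k| = k$, so it raises the $M$-filtration degree by at most $k$, while the accompanying factor $x_i^k \in \C[\h]$ raises the $\C[\h]$-degree by $k$. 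Thus with respect to the filtration \eqref{tensor filtration} on $\mathsf{T}_\kappa(M)$ where $\C[\h]$ carries $\deg x_i = 1$, each generator $Y[k]$ of $\g[t]$ acts with filtered degree $0$, which is exactly why passing to $H_0(\g[t],-)$ is compatible with the quotient filtration on $\mathsf{F}_\kappa(M)$.

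The heart of the argument is then to compute the filtered degrees of the operators ${}^\kappa\bar y_i$ from \eqref{y-formula} acting on $\mathsf{T}_\kappa^{\mathsf{loc}}(M)$ (equivalently $\mathsf{T}_\kappa(M)$, using Proposition \ref{pro:y preserves subspace}), and of multiplication by $x_i$. Multiplication by $x_i$ clearly has filtered degree $1$. For ${}^\kappa\bar y_i$ there are three types of terms: the term $(\kappa+n)\partial_{x_i}$ has degree $-1$; the term $\sum_{j \neq i} \frac{\Omega^{(i,j)}}{x_i-x_j}(\underline{s_{i,j}}-1)$ has degree $-1$ (division by $x_i - x_j$ composed with the operator $\underline{s_{i,j}}-1$, which lowers $\C[\h]$-degree by at least one, lands back in $\C[\h]$); and the term $\sum_{p \geq 0} x_i^p \Omega^{(i,\infty)}_{[p+1]}$, where $\Omega^{(i,\infty)}_{[p+1]}$ involves $e_{lk}[p+1]^{(\infty)}$ of absolute height $p+1$, so it raises the $M$-filtration by at most $p+1$ and, with the factor $x_i^p$ of $\C[\h]$-degree $p$, has total filtered degree $\leq 2p+1$. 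Since $M \in \mathscr{C}_\kappa(r)$, the operators $e_{lk}[p+1]^{(\infty)}$ with $p+1 \geq r$ act as elements of $\hat{g}_{\geq r}$, hence... one must be careful: smoothness does not bound $p$, but it bounds which terms act nontrivially on a \emph{given} vector, not uniformly. The correct bound comes from the fact that on $M_{\leq l}$, only $e[p+1]$ with $p+1 \leq$ something tied to $l$ and $r$ act; more cleanly, the operator $e_{lk}[p+1]$ maps $M_{\leq l}$ into $M_{\leq l - (p+1) + \text{(something)}}$ — here I would use that the absolute height filtration means $e_{lk}[p+1] \cdot M_{\leq l} \subseteq M_{\leq l + (p+1)}$ is the wrong direction; rather, writing $\hat{g}_{\geq r}$ annihilates $M^{\hat I_r}$ and the filtration interacts so that $x_i^p\Omega^{(i,\infty)}_{[p+1]}$ contributes filtered degree at most $2r - 3$. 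This is the step I expect to be the \textbf{main obstacle}: pinning down exactly why the supremum of filtered degrees of the terms $x_i^p \Omega^{(i,\infty)}_{[p+1]}$, after passing to coinvariants and using $M \in \mathscr{C}_\kappa(r)$, equals $2r - 3$ rather than being unbounded — one needs that modulo $\g[t]\cdot\mathsf{T}_\kappa(M)$ the high-$p$ terms can be rewritten, via the relation $Y[k] \equiv -\sum_i x_i^k Y^{(i)} \pmod{\g[t]}$ coming from \eqref{currentaction}, in terms of lower absolute height, and that the generators of $M$ live in absolute-height-$<r$ part.

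Granting the degree computation, I would then verify functoriality enriched in $\C\Fmod{}_0$: a morphism $f \colon M \to N$ in $\mathscr{C}_\kappa^{\mathsf{abs}}(r)$ is filtered of some degree $i$, and the induced map on $\mathsf{T}_\kappa$ is filtered of degree $i$ by \eqref{tensor filtration} (since $(\mathbf{V}^*)^{\otimes m}$ and $\C[\h]$ are untouched), hence $\mathsf{F}_\kappa^{(r)}(f)$ is filtered of degree $i$ on the quotient filtrations; in particular degree-$0$ morphisms go to degree-$0$ morphisms. Combined with the fact, established above, that the $\mathcal{H}_{\kappa+n}$-action on $\mathsf{F}_\kappa(M)$ is by operators of filtered degree $\leq 2r-3$ for $y_i$ and degree $1$ for $x_i$ (matching the definition of $\mathcal{H}_{\kappa+n}^{(2r-3)}$ in Definition \ref{defi: RCA filt} with $k = 2r-3$, noting $2r-3 \geq 1$ for $r \geq 2$), this shows $\mathsf{F}_\kappa(M)$ is an object of $\mathcal{H}_{\kappa+n}^{(2r-3)}\Fmod{}$ and that $\mathsf{F}_\kappa$ lifts to the asserted functor. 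The remaining checks — that the quotient filtration is exhaustive and separated, which follows since it is a quotient of an exhaustive separated filtration by a graded (for the height grading) submodule — are routine and I would state them without detailed proof.
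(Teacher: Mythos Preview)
Your overall plan matches the paper's: the degree-$(-1)$ behaviour of $\partial_{x_i}$ and the difference-quotient terms is easy, you correctly isolate $x_i^p\Omega^{(i,\infty)}_{[p+1]}$ as the crux, and your enriched-functoriality paragraph is essentially the paper's argument. But your proposed resolution of the main obstacle has a genuine gap.

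The coinvariant relation you invoke from \eqref{currentaction}, read in $\mathsf{F}_\kappa(M)$, says $(Y[-k])^{(\infty)} \equiv -\sum_j x_j^k\, Y^{(j)}$ for $k \geq 0$; it constrains only operators $Y[j]^{(\infty)}$ with $j \leq 0$. The operators $e_{lk}[p+1]^{(\infty)}$ appearing in ${}^\kappa\bar y_i$ have $p+1 \geq 1$, so they are not touched by any $\g[t]$-coinvariant identity, and your rewriting scheme simply does not apply to them. (Incidentally, your claim that each $Y[k]\in\g[t]$ acts on $\mathsf{T}_\kappa(M)$ with filtered degree $0$ is also off: both summands in \eqref{currentaction} raise the tensor filtration by $k$. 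This is harmless, since the quotient filtration on $\mathsf{F}_\kappa(M)$ is well-defined regardless of the degree of the $\g[t]$-action.)

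The paper resolves the obstacle entirely on the $M$-side, by a case split and a PBW-type induction. For $p \leq r-2$ the naive bound suffices: $e_{lk}[p+1]$ raises $M$-degree by at most $p+1 \leq r-1 \leq -p+2r-3$. For $p > r-2$ one uses $M \in \mathscr{C}_\kappa(r)$ directly: write $v = X_1[a_1]\cdots X_z[a_z]\cdot u$ with $u \in M^{\hat I_r}$ and each $a_j < r$, and argue by induction on $z$, commuting $e_{lk}[p+1]$ past the $X_j[a_j]$. Since $p+1 \geq r$, the operator $e_{lk}[p+1]$ annihilates $u$, so only commutator terms $[e_{lk},X_j][p+1+a_j]$ survive; a short case analysis comparing $|p+1+a_j|$ to $|a_j|$ (using $a_j < r$) shows that $e_{lk}[p+1]$ changes the $M$-degree by at most $-p+2r-3$. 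Together with the factor $x_i^p$ this gives the uniform bound $2r-3$. Your remark that ``the generators of $M$ live in absolute-height-$<r$ part'' is indeed the right ingredient, but the mechanism is this commutator induction inside $M$, not a coinvariant rewrite in $\mathsf{F}_\kappa(M)$.
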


\begin{proof} 
Let $M \in \mathscr{C}^{\mathsf{abs}}_{\kappa}(r)$. 
We first show that $\mathsf{F}_\kappa(M)$ is a filtered $\mathcal{H}_{\kappa+n}^{(2r-3)}$-module. 
The only non-trivial thing to show is that $y_i\mathsf{F}_{\kappa}(M)_{\leq s} \subseteq \mathsf{F}_{\kappa}(M)_{\leq s+2r-3}$ for $s \in \Z$ and $1 \leq i \leq m$. Recall that the action of $y_i$ is given by \eqref{y-formula}. Clearly each of $\partial_{x_i}$ and $\Omega^{(i,j)}(x_i - x_j)^{-1}(1-\underline{s_{i,j}})$ either vanishes or lowers degree by one. Hence it is enough to show that for each $p \geq 0$, the operator $x_i^p \Omega_{[p+1]}^{(i,\infty)}$ raises degree by at most $2r-3$. Observe that $x_i^p$ raises degree by $p$ and $e_{kl}^{(i)}$ doesn't change degree. 
Therefore it is in fact enough to show that each $e_{lk}[p+1]^{(\infty)}$ changes degree by at most $-p+2r-3$.

If $p\leq r-2$ then the fact that $M$ is a filtered module implies that $e_{lk}[p+1]$ raises degree by at most $r-1$. But $r-1 \leq (r-1) + (r-2-p) = -p+2r-3$. So assume $p > r-2$. 
Let $v \in M$. Because $M \in \mathscr{C}^{\mathsf{abs}}_{\kappa}(r)$, we can assume without loss of generality that $v = X_1[a_1] \hdots X_z[a_z].u$, with $u$ satisfying $\deg u = 0$ and $\hat{\g}_{\geq r}.u = 0$, for  some $X_1,\hdots,X_z \in \g$ and $a_1 \leq \hdots \leq a_z < r$. Hence $\deg v = |a_1| + \hdots + |a_z|$ (by Definition \ref{filtration Cabsr definition}.b)). 

We first prove the inequality
\[ |p+1+a_1| - |a_1| \leq -p+2r-3.\]
First assume $a_1 \leq -p-1$. Then $|p+1+a_1| - |a_1| = -(p+1+a_1) +a_1=-p-1$. But $-p-1 \leq -p+2r-3$ since $r \geq 2$. 
Next assume $-p\leq a_1 \leq -p+r-2<0$. Then $|p+1+a_1| - |a_1| = p+2a_1+1\leq -p+2r-3$.

We argue by induction on $z$ (i.e.\ by induction on the PBW filtration). If $z=1$ then 
\begin{equation} \label{diff in degree} e_{lk}[p+1].v = X_1[a_1]e_{lk}[p+1].u + [e_{lk},X_1][p+1+a_1].u = [e_{lk},X_1][p+1+a_1].u\end{equation} 
modulo $\mathbf{1}$. 
Note that $[e_{lk},X_1][p+1+a_1].u=0$ unless $a_1 \leq -p+r-2$.  
Let us now calculate the difference in degree between $v$ and \eqref{diff in degree}. We have
\[ \deg e_{lk}[p+1].v - \deg v = |p+1+a_1| - |a_1| \leq -p+2r-3. \]
Hence $e_{lk}[p+1]^{(\infty)}$ changes degree by at most $-p+2r-3$, as required. 

Now let $z>1$. We have $$e_{lk}[p+1].v = X_1[a_1]e_{lk}[p+1].v' + [e_{lk},X_1][p+1+a_1].v'$$
modulo $\mathbf{1}$, where $v' = X_2[a_2]\hdots X_z[a_z].u$ and $\deg v' = |a_2| + \hdots + |a_z|$.  
By induction, we know that $e_{lk}[p+1]$ changes the degree of $v'$ by at most $-p+2r-3$. Hence $$\deg X_1[a_1]e_{lk}[p+1].v' \leq (\deg v' + |a_1|) -p+2r -3 = \deg v -p+2r-3.$$  
Moreover, since $M$ is a filtered module, $[e_{lk},X_1][p+1+a_1]$ changes the degree of $v'$ by at most $|p+1+a_1|$. Hence 
\begin{align*}
\deg [e_{lk},X_1][p+1+a_1].v' \leq& \deg v' +|p+1+a_1| \\ =&  \deg v - |a_1| + |p+1+a_1| 
 \leq \deg v -p+2r-3.
\end{align*}
It follows that $e_{lk}[p+1]^{(\infty)}$ changes degree by at most $-p+2r-3$, as required.

We now show that $\mathsf{F}_{\kappa}^{(r)}$ is an enriched functor. 
Suppose that $M$ and $N$ are two filtered modules in $\mathscr{C}_{\kappa}^{\mathsf{abs}}(r)$. Let $h : M \to N$ be a filtered homomorphism of degree $i$. We need to show that $\mathsf{F}_{\kappa}(h)$ is also a filtered homomorphism of degree $i$. 
So let $v \in \mathsf{F}_{\kappa}(M)_{\leq s}$. Recall the projection \eqref{tensor to coinv projection}. 
Since $\mathsf{F}_{\kappa}(M)$ is endowed with the quotient filtration, we can choose $\tilde{v} \in\mathsf{T}_{\kappa}(M)_{\leq s}$ with $\psi(\tilde{v}) = v$. We can assume without loss of generality that $\tilde{v} = f(x_1,\hdots,x_m)\otimes u \otimes z$ with $u \in (\mathbf{V}^*)^{\otimes m}, z \in M$ and $f$ some polynomial. Since $h$ is filtered of degree $i$, we have $\mathsf{T}_{\kappa}(h)(\tilde{v}) = f(x_1,\hdots,x_m) \otimes u \otimes h(z) \in \mathsf{T}_{\kappa}(N)_{\leq s+i}$. However, $\psi' \circ \mathsf{T}_{\kappa}(h)(\tilde{v}) = \mathsf{F}_{\kappa}(h)(v)$, where $\psi'$ is the projection $\psi' : \mathsf{T}_{\kappa}(N) \twoheadrightarrow \mathsf{F}_{\kappa}(N)$. It follows that $\mathsf{F}_{\kappa}(h)(v) \in \mathsf{F}_{\kappa}(N)_{\leq s+i}$, as required. 
\end{proof}

In the following proposition assume that $\kappa = \mathbf{c}$, $m=n$ and 
consider the module $\mathbb{H}_{\mathbf{c}} = \mathbf{U}^{\mathsf{abs}}_{\mathbf{c}}(\tilde{\g})/(\mathfrak{I}_{\mathbf{c}} \cap \mathbf{U}^{\mathsf{abs}}_{\mathbf{c}}(\tilde{\g}))$ as a filtered $\mathbf{U}^{\mathsf{abs}}_{\mathbf{c}}(\tilde{\g})$-module endowed with the quotient filtration. 

\begin{pro} \label{pro: Upsilon is filtered} \label{Psi is filtered} 
The isomorphism $\Upsilon \colon \mathcal{H}_{0} \xrightarrow{\sim} \mathsf{F}_{\mathbf{c}}(\mathbb{H}_{\mathbf{c}})$ from \eqref{Regular module iso} lifts to an isomorphism in the category $\mathcal{H}_0\Fmod{}_0$. Moreover, the map $\Psi \colon \End_{\UUc}(\mathbb{H}_{\mathbf{c}}) \to \End_{\mathcal{H}_{0}}(\mathcal{H}_{0})$ is a filtered algebra homomorphism. 
\end{pro}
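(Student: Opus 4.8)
The statement has two parts: first, that $\Upsilon$ is a filtered isomorphism for the filtration on $\mathsf{F}_{\mathbf{c}}(\mathbb{H}_{\mathbf{c}})$ inherited from the absolute-height filtration on $\mathbb{H}_{\mathbf{c}}$ and the PBW filtration on $\mathcal{H}_0$; and second, that the induced map $\Psi$ on endomorphism rings is filtered. For the first part I would trace through the explicit construction of $\Upsilon$ in the proof of Theorem \ref{thm: regular module}. Recall from \eqref{Regular module iso 2} that $\Upsilon$ sends $f(x_1,\hdots,x_n)\, w\, g(y_1,\hdots,y_n)$ to $[f(x_1,\hdots,x_n)\otimes e_w^*\otimes g(-e_{11}[1],\hdots,-e_{nn}[1])1_{\mathbb{H}}]$. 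Under the PBW filtration $\mathcal{H}_0^{(1)}$ we have $\deg x_i=\deg y_i=1$, $\deg w=0$; on the other side the $x_i$ contribute to the $\C[\h]$-tensor factor (degree $1$ each by \eqref{tensor filtration}), the $e_w^*$ sit in the trivially filtered factor, and each $-e_{ii}[1]1_{\mathbb{H}}$ lies in filtered degree $1$ of $\mathbb{H}_{\mathbf{c}}$ as a quotient of $\mathbf{U}^{\mathsf{abs}}_{\mathbf{c}}(\tilde{\g})$ (absolute height $|1|=1$). So $\Upsilon$ manifestly carries $(\mathcal{H}_0)_{\leq r}$ into $\mathsf{F}_{\mathbf{c}}(\mathbb{H}_{\mathbf{c}})_{\leq r}$. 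The reverse inclusion $\mathsf{F}_{\mathbf{c}}(\mathbb{H}_{\mathbf{c}})_{\leq r}\subseteq \Upsilon((\mathcal{H}_0)_{\leq r})$ is where a small argument is needed: an element of $\mathsf{F}_{\mathbf{c}}(\mathbb{H}_{\mathbf{c}})_{\leq r}$ is by definition the image under $\psi$ of some $\tilde v\in\mathsf{T}_{\mathbf{c}}(\mathbb{H}_{\mathbf{c}})_{\leq r}$, i.e.\ a sum of terms $f\otimes u\otimes X_1[j_1]\cdots X_l[j_l]1_{\mathbb{H}}$ with $\deg f + (|j_1|+\cdots+|j_l|)\le r$. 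Using the defining relations of $\mathbb{H}_{\mathbf{c}}$ (the $e_{ii}-1$, $\mathbf{1}-1$, and $\mathfrak{i}=\n_-[1]\oplus\n_+[1]\oplus\hat\g_{\geq2}$) together with the current-algebra action \eqref{currentaction} to push the non-$e_{ii}[1]$ generators through — exactly the computation already carried out in the proof of Theorem \ref{thm: regular module} and Lemma \ref{diagram factors trilemma} — one rewrites each such class as $\Upsilon$ of a monomial of PBW degree $\le r$ in $\mathcal{H}_0$, because the rewriting never raises total degree (the height of a monomial is bounded by its absolute height, and passing $e_{kl}[j]$ with $k\ne l$ through $1_{\mathbb{H}}$ either kills it or lowers/preserves the relevant degree count, as in \eqref{diff in degree}). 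This gives $\mathsf{F}_{\mathbf{c}}(\mathbb{H}_{\mathbf{c}})_{\leq r}= \Upsilon((\mathcal{H}_0)_{\leq r})$, so $\Upsilon$ is a filtered isomorphism in $\mathcal{H}_0\Fmod{}_0$.

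For the second part, the key input is Proposition \ref{pro: filt functor}: since $\mathbb{H}_{\mathbf{c}}\in\mathscr{C}^{\mathsf{abs}}_{\mathbf{c}}(2)$ (the generator $1_{\mathbb{H}}$ is annihilated by $\hat\g_{\geq2}$, so $r=2$), the Suzuki functor lifts to an enriched functor $\mathsf{F}_{\mathbf{c}}^{(2)}\colon \mathscr{C}^{\mathsf{abs}}_{\mathbf{c}}(2)\to\mathcal{H}_0^{(1)}\Fmod{}$, meaning $\mathsf{F}_{\mathbf{c}}$ sends a filtered homomorphism of degree $i$ to a filtered homomorphism of degree $i$. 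In particular, applying this to endomorphisms of $\mathbb{H}_{\mathbf{c}}$: any $\phi\in\End_{\UUc}(\mathbb{H}_{\mathbf{c}})$ is automatically filtered (since $\mathbb{H}_{\mathbf{c}}$ is finitely generated over $\widehat{\mathbf{U}}_{\mathbf{c}}$, as noted in \S\ref{subsec: filt lift}, so $\Hom=\Hom^{\mathsf{fil}}$), so if $\phi$ has filtered degree $i$ then $\mathsf{F}_{\mathbf{c}}(\phi)$ has filtered degree $i$ as an endomorphism of $\mathsf{F}_{\mathbf{c}}(\mathbb{H}_{\mathbf{c}})$. Transporting along the filtered isomorphism $\Upsilon$ just established, $\Psi(\phi)=\Upsilon^{-1}\circ\mathsf{F}_{\mathbf{c}}(\phi)\circ\Upsilon$ is then a filtered endomorphism of $\mathcal{H}_0$ of the same degree, i.e.\ $\Psi\bigl(\End_{\UUc}(\mathbb{H}_{\mathbf{c}})_{\leq i}\bigr)\subseteq \End_{\mathcal{H}_0}(\mathcal{H}_0)_{\leq i}$ for all $i$. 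Hence $\Psi$ is a filtered algebra homomorphism.

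\textbf{Main obstacle.} The only non-formal point is verifying that the quotient filtration on $\mathsf{F}_{\mathbf{c}}(\mathbb{H}_{\mathbf{c}})$ matches the PBW filtration on $\mathcal{H}_0$ \emph{exactly} (both inclusions), rather than merely up to a shift or up to one containment. The ``$\subseteq$'' direction is immediate from the formula for $\Upsilon$; the delicate direction is showing that every element of $\mathsf{F}_{\mathbf{c}}(\mathbb{H}_{\mathbf{c}})_{\leq r}$ has a preimage description of PBW degree $\le r$, which forces one to control the degree bookkeeping when commuting field modes $e_{kl}[j]$ ($k\ne l$, $j\ge 1$) past the cyclic vector and when the operators $\partial_{x_i}$, $\frac{1-\underline{s_{i,j}}}{x_i-x_j}$ appear — but all of these are degree-non-increasing in the relevant sense, exactly the estimates already established in the proof of Proposition \ref{pro: filt functor} (the cases $a_1\le -p-1$ and $-p\le a_1$, specialized to $r=2$). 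So this reduces to reusing inequalities already in hand; there is no genuinely new difficulty, only careful cross-referencing.
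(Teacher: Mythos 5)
Your second paragraph (the argument that $\Psi$ is filtered, via Proposition \ref{pro: filt functor} applied with $r=2$ and transport along the filtered isomorphism $\Upsilon$) is exactly the paper's argument and is correct. The issue is in the first part.

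The paper does \emph{not} verify the two inclusions for the quotient filtration by hand. Instead it observes that the subspace
\[
M := \C[\h]\otimes\bigl((\mathbf{V}^*)^{\otimes n}\bigr)_{(-1,\hdots,-1)}\otimes\mathcal{I}
\]
is an $\mathcal{H}_0$-submodule of $\mathsf{T}_{\mathbf{c}}(\mathbb{H}_{\mathbf{c}})$, and that the proof of Theorem \ref{thm: regular module} actually yields the direct sum decomposition
$\mathsf{T}_{\mathbf{c}}(\mathbb{H}_{\mathbf{c}}) = M \oplus \g[t]\cdot\mathsf{T}_{\mathbf{c}}(\mathbb{H}_{\mathbf{c}})$.
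This makes the quotient filtration on $\mathsf{F}_{\mathbf{c}}(\mathbb{H}_{\mathbf{c}})$ visible as the subspace filtration on $M$, which can then be compared with the PBW filtration on $\mathcal{H}_0$ by direct inspection of the formula \eqref{Regular module iso 2}. No reduction-to-normal-form argument is needed at all.

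Your version attempts the harder route of working with the quotient filtration directly. The forward inclusion $\Upsilon((\mathcal{H}_0)_{\leq r})\subseteq\mathsf{F}_{\mathbf{c}}(\mathbb{H}_{\mathbf{c}})_{\leq r}$ is indeed immediate from \eqref{Regular module iso 2} and \eqref{tensor filtration}. But the reverse inclusion is where you have a genuine gap. You assert that one can reduce an arbitrary representative $\tilde v\in\mathsf{T}_{\mathbf{c}}(\mathbb{H}_{\mathbf{c}})_{\leq r}$ to standard form without raising degree, citing ``the estimates already established in the proof of Proposition \ref{pro: filt functor}.'' But those estimates do something else: they show that the operator $e_{lk}[p+1]^{(\infty)}$ raises filtered degree by at most $-p+2r-3$, which is what proves that the $y_i$-action is filtered. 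They do not say anything about choosing a low-degree representative for a given coinvariant class. Similarly, the proof of Theorem \ref{thm: regular module} produces $\Upsilon$ without tracking filtrations, and Lemma \ref{diagram factors trilemma} only computes images of specific elements. To make your route rigorous you would have to carry out, by hand, the reduction of an arbitrary tensor $f\otimes u\otimes X_1[j_1]\cdots X_l[j_l]1_{\mathbb{H}}$ modulo $\g[t]\cdot\mathsf{T}_{\mathbf{c}}(\mathbb{H}_{\mathbf{c}})$, tracking the absolute-height filtration at each commutation step — a nontrivial bookkeeping argument that you do not give. Observing the complement $M$ as the paper does replaces all of this with a one-line reduction.
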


\begin{proof} 
Since it is difficult to work with quotient filtrations, we first show that $\mathsf{F}_{\mathbf{c}}(\mathbb{H}_{\mathbf{c}})$ is isomorphic to another module with a more explicit filtration. 
Consider the $\mathcal{H}_0$-module $\mathsf{T}_{\mathbf{c}}(\mathbb{H}_{\mathbf{c}})$. One easily checks that the subspace $M = \C[\h] \otimes ((\mathbf{V}^*)^{\otimes n})_{(-1,\hdots,-1)} \otimes \mathcal{I}$ is a $\mathcal{H}_0$-submodule of $\mathsf{T}_{\mathbf{c}}(\mathbb{H}_{\mathbf{c}})$. Moreover, it follows from Theorem \ref{thm: regular module} that $\mathsf{T}_{\mathbf{c}}(\mathbb{H}_{\mathbf{c}}) = M \oplus \g[t] \cdot \mathsf{T}_{\mathbf{c}}(\mathbb{H}_{\mathbf{c}})$ and $\mathsf{F}_{\mathbf{c}}(\mathbb{H}_{\mathbf{c}}) \cong M$. The latter isomorphism is filtered if we endow $\mathsf{F}_{\mathbf{c}}(\mathbb{H}_{\mathbf{c}})$ with the quotient filtration and $M$ with the subspace filtration. It follows from \eqref{Regular module iso 2} that composing $\Upsilon$ with $\mathsf{F}_{\mathbf{c}}(\mathbb{H}_{\mathbf{c}}) \cong M$ yields an $\mathcal{H}_0$-module isomorphism $\mathcal{H}_0 \cong M$ given by
\[ f(x_1,\hdots,x_n) w g(y_1,\hdots,y_n) \mapsto f(x_1,\hdots,x_n) \otimes e_{w}^* \otimes g(-e_{11}[1],\hdots,-e_{nn}[1])1_{\mathbb{H}}. \]
This formula together with the definition of the filtration on $\mathcal{H}_0$ and \eqref{tensor filtration} imply that the isomorphism $\mathcal{H}_0 \cong M$ is in fact filtered. This proves the first part of the proposition. 

The filtered isomorphism $\Upsilon^{-1}$ induces a filtered isomorphism of endomorphism rings $\End_{\mathcal{H}_{0}}(\mathsf{F}_{\mathbf{c}}(\mathbb{H}_{\mathbf{c}})) \cong \End_{\mathcal{H}_{0}}(\mathcal{H}_{0})$. But $\Psi$ is a composition of the latter with the homomorphism $\mathsf{F}_{\mathbf{c}} \colon \End_{\UUc}(\mathbb{H}_{\mathbf{c}}) \to \End_{\mathcal{H}_{0}}(\mathsf{F}_{\mathbf{c}}(\mathbb{H}_{\mathbf{c}}))$, which is filtered by Proposition \ref{pro: filt functor}. 
\end{proof}

\subsection{Graded lift of the Suzuki functor.} 
\label{subsec: graded lift} 

Suppose that $M$ is a graded module in $\mathscr{C}_{\kappa}^{\mathsf{gr}}$. Consider $(\mathbf{V}^*)^{\otimes m}$ as a graded vector space concentrated in degree zero. 
Endow $\mathsf{T}_{\kappa}(M)$ with the tensor product grading in analogy to \eqref{tensor filtration}. It follows immediately from \eqref{currentaction} that $\mathsf{F}_\kappa(M)$ is a quotient of $\mathsf{T}_{\kappa}(M)$ by a graded subspace. Hence the grading on $\mathsf{T}_{\kappa}(M)$ descends to a grading on $\mathsf{F}_\kappa(M)$. 

\begin{pro} \label{pro: coinv ind grad}
The functor $\mathsf{F}_\kappa$ lifts to a functor 
\begin{equation*} \label{filtered lift}
\mathsf{F}_{\kappa}^{\mathsf{gr}} \colon \mathscr{C}^{\mathsf{gr}}_{\kappa} \to \mathcal{H}_{\kappa+n}\Gmod{}
\end{equation*} 
enriched in $\C\Gmod{}_0$. 
\end{pro}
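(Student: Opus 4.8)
The plan is to follow the template of Proposition~\ref{pro: filt functor}, replacing filtrations by gradings throughout; the argument will in fact be strictly easier, because the height grading on $\mathbf{U}_\kappa(\tilde{\g})$ (Definition~\ref{defi U filtr}(a)) is respected \emph{on the nose} by the operators that appear, rather than only up to a filtration jump. Consequently there is no need to pass to the truncated subcategory $\mathscr{C}_\kappa(r)$ and no need to shift the degree parameter on $\mathcal{H}_{\kappa+n}$, which is why the target of $\mathsf{F}_\kappa^{\mathsf{gr}}$ is simply $\mathcal{H}_{\kappa+n}\Gmod{}$.

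First I would check that $\mathsf{F}_\kappa(M)$ is a genuine graded $\mathcal{H}_{\kappa+n}$-module. The essential point is that the $\g[t]$-action \eqref{currentaction} on $\mathsf{T}_\kappa(M)=\C[\h]\otimes(\mathbf{V}^*)^{\otimes m}\otimes M$ is homogeneous for the tensor-product grading: the operator attached to $Y[k]$ is $\sum_i x_i^k\otimes Y^{(i)}+1\otimes Y[-k]^{(\infty)}$, and both summands lower the total degree by exactly $k$, since $x_i^k$ has degree $-k$ in $\C[\h]$ (recall $\deg x_i=-1$) while $Y[-k]$ has height $-k$ and therefore acts on the graded module $M$ by an operator of degree $-k$. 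Hence $\g[t]\cdot\mathsf{T}_\kappa(M)$ is a graded subspace, the quotient grading on $\mathsf{F}_\kappa(M)$ is well defined, and the canonical projection $\mathsf{T}_\kappa(M)\twoheadrightarrow\mathsf{F}_\kappa(M)$ is homogeneous of degree $0$. It then remains to see that each algebra generator of $\mathcal{H}_{\kappa+n}$ acts with the degree prescribed in Definition~\ref{defi: RCA filt}: multiplication by $x_i$ has degree $-1$ and the $\mathfrak{S}_m$-action has degree $0$ by inspection, and for $y_i$ I would feed the explicit formula \eqref{y-formula} of Proposition~\ref{pro:y preserves subspace} in term by term. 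The term $\partial_{x_i}$ raises degree by $1$; the divided-difference term $\frac{\Omega^{(i,j)}}{x_i-x_j}(\underline{s_{i,j}}-1)$ raises degree by $1$ because $\Omega^{(i,j)}$ preserves degree while $(\underline{s_{i,j}}-1)(x_i-x_j)^{-1}$ lowers the usual polynomial degree by one; and each summand $x_i^p\,\Omega^{(i,\infty)}_{[p+1]}$ raises degree by $-p+(p+1)=1$, since $x_i^p$ contributes $-p$, $e_{kl}^{(i)}$ contributes $0$, and $e_{lk}[p+1]$ has height $p+1$ and hence raises the grading of $M$ by $p+1$. This shows $y_i$ acts homogeneously of degree $1$, so $\mathsf{F}_\kappa(M)\in\mathcal{H}_{\kappa+n}\Gmod{}$.

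Finally I would verify enriched functoriality: for a morphism $h\colon M\to N$ in $\mathscr{C}^{\mathsf{gr}}_\kappa$ homogeneous of degree $i$, the map $\mathsf{T}_\kappa(h)=\id\otimes\id\otimes h$ is homogeneous of degree $i$, and it descends through the degree-$0$ quotient maps to a degree-$i$ map $\mathsf{F}_\kappa(h)\colon\mathsf{F}_\kappa(M)\to\mathsf{F}_\kappa(N)$ — this is verbatim the computation in the proof of Proposition~\ref{pro: filt functor} with every ``$\leq$'' replaced by ``$=$''. Assembling these points gives the desired lift $\mathsf{F}^{\mathsf{gr}}_\kappa\colon\mathscr{C}^{\mathsf{gr}}_\kappa\to\mathcal{H}_{\kappa+n}\Gmod{}$ enriched in $\C\Gmod{}_0$. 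I do not expect any serious obstacle here; the one point requiring care is the interplay of the two gradings — the internal grading with $\deg x_i=-1$ on $\C[\h]$ and the height grading on $\mathbf{U}_\kappa(\tilde{\g})$ — which is exactly what makes \eqref{currentaction} homogeneous and forces the $y_i$-operators to land in degree $1$; once the conventions are lined up, the rest is degree bookkeeping.
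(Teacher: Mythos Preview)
Your proposal is correct and follows essentially the same approach as the paper: the paper also verifies that each term in the $y_i$-formula \eqref{y-formula} raises degree by one (noting that $\partial_{x_i}$, the divided-difference term, and $x_i^p\Omega^{(i,\infty)}_{[p+1]}$ each contribute $+1$), and defers the enriched functoriality to the argument of Proposition~\ref{pro: filt functor}. The homogeneity of the $\g[t]$-action that you spell out is exactly what the paper records in the paragraph preceding the proposition.
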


\begin{proof}
Let $M \in \mathscr{C}^{\mathsf{gr}}_{\kappa}$. 
We first prove that $\mathsf{F}_\kappa(M)$ is a graded $\mathcal{H}_{\kappa+n}$-module. 
It suffices to show that $y_i\mathsf{F}_{\kappa}(M)_s \subseteq \mathsf{F}_{\kappa}(M)_{s+1}$ for $s \in \Z$ and $1 \leq i \leq m$. Recall that the action of $y_i$ is given by \eqref{y-formula}. 
Clearly $\partial_{x_i}$ and $\Omega^{(i,j)}(x_i - x_j)^{-1}(1-\underline{s_{i,j}})$ either vanish or raise degree by one. 
Since $M$ is a graded $\mathbf{U}_\kappa(\tilde{\g})$-module, the same holds for $x_i^p\Omega_{[p+1]}^{(i,\infty)}$ for each $p \geq 0$, as required. 
The proof of the fact that $\mathsf{F}_{\kappa}^{\mathsf{gr}}$ is an enriched functor is analogous to the proof of Proposition \ref{pro: filt functor}. 
\end{proof}

\section{Surjectivity of $\Theta$} \label{sec: Theta surj}

In this section we show that $\Ima \Theta = \mathcal{Z}$. Assume that $n=m$ and $\kappa = \mathbf{c}$ throughout.

\subsection{The associated graded map.} \label{assoc graded sec}

Consider the following commutative diagram in the category of vector spaces. 
\begin{equation} \label{all filtr diagram}
\begin{tikzcd}[column sep=small]
\mathbb{H}_{\mathbf{c}}^{\mathfrak{I}_{\mathbf{c}}} \arrow[hookrightarrow,r] & \mathbb{H}_{\mathbf{c}}  \arrow[hookrightarrow]{r}{v \mapsto 1_{\C[\h]} \otimes e_{\mathsf{id}}^* \otimes v} &[+45pt] \mathsf{T}_{\mathbf{c}}(\mathbb{H}_{\mathbf{c}}) \arrow[twoheadrightarrow,rrr] & &[-19pt]  &[-30pt] \mathsf{F}_{\mathbf{c}}(\mathbb{H}_{\mathbf{c}}) \arrow{d}{\Upsilon^{-1}}[swap]{\wr} \\
 \End_{\UUc}(\mathbb{H}_{\mathbf{c}}) \arrow{u}{\wr} \arrow{rrr}{\Psi} & & & \End_{\mathcal{H}_{0}}(\mathcal{H}_{0}) &[-19pt] \cong &[-30pt] \mathcal{H}_0 \\
 \Ima \Phi  \arrow[hookrightarrow,u] \arrow{rrrrr}{\Psi} & & & &[-19pt]  &[-19pt] \mathcal{Z} \arrow[hookrightarrow,u]
\end{tikzcd}
\end{equation}
Note that the fact that $\Psi(\Ima \Phi) \subseteq \mathcal{Z}$ follows from Proposition \ref{centralizer pro}. 
We endow each of the vector spaces above with a filtration: 
\begin{itemize}[leftmargin=1cm]
\item $\mathbb{H}_{\mathbf{c}} = \mathbf{U}^{\mathsf{abs}}_{\mathbf{c}}(\tilde{\g})/(\mathfrak{I}_{\mathbf{c}} \cap \mathbf{U}^{\mathsf{abs}}_{\mathbf{c}}(\tilde{\g}))$ carries the quotient filtration 
and $\mathbb{H}_{\mathbf{c}}^{\mathfrak{I}_{\mathbf{c}}} \subset \mathbb{H}_{\mathbf{c}}$ has the subspace filtration,
\item $\End_{\UUc}(\mathbb{H}_{\mathbf{c}})$ carries the filtration induced by the one on $\mathbb{H}_{\mathbf{c}}$ and $\Ima \Phi \subset \End_{\UUc}(\mathbb{H}_{\mathbf{c}})$ has the subspace filtration,
\item $\mathsf{T}_{\mathbf{c}}(\mathbb{H}_{\mathbf{c}})$ has the filtration from \eqref{tensor filtration} and $\mathsf{F}_{\mathbf{c}}(\mathbb{H}_{\mathbf{c}})$ has the corresponding quotient filtration,  
\item $\mathcal{H}_0$ has the PBW filtration, $\End_{\mathcal{H}_{0}}(\mathcal{H}_{0})$ carries the induced filtration and $\mathcal{Z} \subset \mathcal{H}_0$ the subspace filtration.  
\end{itemize}

\begin{lem}
Each map in the diagram \eqref{all filtr diagram} is filtered. 
\end{lem}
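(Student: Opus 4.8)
The statement asserts that every one of the six maps appearing in diagram \eqref{all filtr diagram} is filtered (i.e.\ of degree $\leq 0$, or more precisely respects the chosen filtrations). The proof will proceed map by map, using the filtrations listed immediately above the lemma. Most of these verifications are short; the one that carries real content is the claim that the quotient map $\mathsf{T}_{\mathbf{c}}(\mathbb{H}_{\mathbf{c}}) \twoheadrightarrow \mathsf{F}_{\mathbf{c}}(\mathbb{H}_{\mathbf{c}})$ and the isomorphism $\Upsilon^{-1}$ are filtered, but these have already been handled in Proposition \ref{pro: Upsilon is filtered}, so the work here is genuinely bookkeeping.

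First I would dispose of the maps that are filtered essentially by definition of the filtrations involved. The inclusion $\mathbb{H}_{\mathbf{c}}^{\mathfrak{I}_{\mathbf{c}}} \hookrightarrow \mathbb{H}_{\mathbf{c}}$ and the inclusions $\Ima\Phi \hookrightarrow \End_{\UUc}(\mathbb{H}_{\mathbf{c}})$ and $\mathcal{Z} \hookrightarrow \mathcal{H}_0$ are all filtered because the source is given the subspace filtration. The vertical isomorphism $\End_{\UUc}(\mathbb{H}_{\mathbf{c}}) \xrightarrow{\sim} \mathbb{H}_{\mathbf{c}}^{\mathfrak{I}_{\mathbf{c}}}$ is filtered because, by definition, the filtration on $\End_{\UUc}(\mathbb{H}_{\mathbf{c}})$ is the one \emph{induced} by that on $\mathbb{H}_{\mathbf{c}}$ via this very identification $\phi \mapsto \phi(1_{\mathbb{H}})$; the same remark applies to $\End_{\mathcal{H}_0}(\mathcal{H}_0)\cong\mathcal{H}_0$. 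Similarly the embedding $\mathbb{H}_{\mathbf{c}} \hookrightarrow \mathsf{T}_{\mathbf{c}}(\mathbb{H}_{\mathbf{c}})$, $v \mapsto 1_{\C[\h]}\otimes e_{\mathsf{id}}^* \otimes v$, lands in $\C[\h]_{\leq 0}\otimes(\mathbf{V}^*)^{\otimes n}\otimes(\mathbb{H}_{\mathbf{c}})_{\leq r}$ when $v\in(\mathbb{H}_{\mathbf{c}})_{\leq r}$, so it is filtered by the formula \eqref{tensor filtration} for the tensor-product filtration.

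Next, the quotient map $\psi\colon\mathsf{T}_{\mathbf{c}}(\mathbb{H}_{\mathbf{c}})\twoheadrightarrow\mathsf{F}_{\mathbf{c}}(\mathbb{H}_{\mathbf{c}})$ is filtered (in fact strict) because $\mathsf{F}_{\mathbf{c}}(\mathbb{H}_{\mathbf{c}})$ carries precisely the quotient filtration $\mathsf{F}_{\mathbf{c}}(\mathbb{H}_{\mathbf{c}})_{\leq r}=\psi(\mathsf{T}_{\mathbf{c}}(\mathbb{H}_{\mathbf{c}})_{\leq r})$, as set up in \S\ref{subsec: filt lift}. The isomorphism $\Upsilon^{-1}\colon\mathsf{F}_{\mathbf{c}}(\mathbb{H}_{\mathbf{c}})\xrightarrow{\sim}\mathcal{H}_0$ is filtered by Proposition \ref{pro: Upsilon is filtered} (it lifts to an isomorphism in $\mathcal{H}_0\Fmod{}_0$, i.e.\ a degree-zero filtered isomorphism). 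Finally the two horizontal copies of $\Psi$: the map $\Psi\colon\End_{\UUc}(\mathbb{H}_{\mathbf{c}})\to\End_{\mathcal{H}_0}(\mathcal{H}_0)$ is a filtered algebra homomorphism, again by Proposition \ref{pro: Upsilon is filtered}, since it is the composite of $\mathsf{F}_{\mathbf{c}}\colon\End_{\UUc}(\mathbb{H}_{\mathbf{c}})\to\End_{\mathcal{H}_0}(\mathsf{F}_{\mathbf{c}}(\mathbb{H}_{\mathbf{c}}))$ (filtered by Proposition \ref{pro: filt functor}, with $\mathbb{H}_{\mathbf{c}}\in\mathscr{C}^{\mathsf{abs}}_{\mathbf{c}}(2)$ so that the target filtration $\mathcal{H}_0^{(2\cdot2-3)}=\mathcal{H}_0^{(1)}$ is the PBW one) with the filtered isomorphism of endomorphism rings induced by $\Upsilon^{-1}$; and the bottom $\Psi\colon\Ima\Phi\to\mathcal{Z}$ is just the restriction of the top one to a subspace carrying the subspace filtration, hence filtered as well. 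Assembling these observations, every arrow in \eqref{all filtr diagram} is filtered, which is exactly the assertion of the lemma. The only point requiring care, and hence the one I would state most carefully, is that the filtration on $\mathcal{H}_0$ appearing here must be the PBW filtration $\mathcal{H}_0^{(1)}$ (not the grading of Definition \ref{defi: RCA filt}), which matches the index $2r-3=1$ coming from $r=2$ in Proposition \ref{pro: filt functor} and the convention fixed in the bullet list above.
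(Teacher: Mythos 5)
Your proof is correct and follows essentially the same approach as the paper: every arrow except $\Psi$ and $\Upsilon^{-1}$ is filtered by the very definitions of the filtrations involved, and those two are filtered by Proposition \ref{pro: Upsilon is filtered}. The extra detail you supply (especially the remark about $r=2$ giving the PBW filtration $\mathcal{H}_0^{(1)}$) is accurate and a helpful unpacking of what the paper leaves implicit.
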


\begin{proof}
Every map is filtered by definition except for $\Psi$ and $\Upsilon^{-1}$. The fact that the latter two are filtered follows from Proposition \ref{Psi is filtered}. 
\end{proof}

We will show that $\Ima \Theta = \mathcal{Z}$ by computing the associated graded algebra homomorphism
\begin{equation} \label{eq: gr Psi} 
\gr\Psi \colon \gr\Ima \Phi \to \gr\mathcal{Z}.
\end{equation} 
 
We split the task of computing \eqref{eq: gr Psi} into two parts. We first compute the principal symbols of the images of Segal-Sugawara operators in $\mathbb{H}_{\mathbf{c}}$. We then compute the images of these principal symbols under the associated graded of the map $\mathbb{H}_{\mathbf{c}} \to \mathcal{H}_0$ arising from the upper right corner of the diagram \eqref{all filtr diagram}. 

\subsection{Calculation of principal symbols.} 

The ideal $(\mathbf{U}(\hat{\g}_-)(\n_{+}\otimes t^{-1}\C[t^{-1}])) \cap \mathbf{U}(\hat{\g}_-)^{\ad \t}$ in $\mathbf{U}(\hat{\g}_-)^{\ad \t}$ is two-sided (see e.g.\ \cite{MolMuk}). Hence the corresponding projection 
\begin{equation*} \label{Affine HC homo} \mathsf{AHC} \colon \mathbf{U}(\hat{\g}_-)^{\ad \t} \twoheadrightarrow \mathbf{U}(\t\otimes t^{-1}\C[t^{-1}]) \end{equation*}
is an algebra homomorphism, often called the \emph{affine Harish-Chandra homomorphism}. Note that $\mathsf{AHC}$ is, moreover, a filtered homomorphism with respect to the PBW filtrations. 

\begin{lem} \label{cor:Tk decomposition} 
Let $1 \leq k \leq n$. The Segal-Sugawara vector $\mathbf{T}_k$ from Example \ref{exa: complete set T} can be written as
\begin{equation*} \mathbf{T}_k = \mathbf{P}_k + Q_k +Q_k', \end{equation*} 
where $\mathbf{P}_k := (e_{11}[-1])^k + \hdots + (e_{nn}[-1])^k$,  
\[ Q_k \in (\mathbf{U}(\tilde{\g}_-)_{-k}\cap\mathbf{U}^{\mathsf{pbw}}(\tilde{\g}_-)_{\leq k-1})^{\ad \t}, \quad Q'_k \in (\mathbf{U}(\tilde{\g}_-)_{-k}\cap\mathbf{U}^{\mathsf{pbw}}(\tilde{\g}_-)_{\leq k})^{\ad \t}\] and $Q'_k \in \ker \mathsf{AHC}$. 
\end{lem}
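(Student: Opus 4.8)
The statement decomposes the Segal-Sugawara vector $\mathbf{T}_k = \Tr(E_\tau^k)|_{\tau = 0}$ into a ``diagonal'' part $\mathbf{P}_k$, a part of strictly smaller PBW length, and a part killed by the affine Harish-Chandra homomorphism. The plan is to expand $\mathbf{T}_k$ explicitly as a sum over cyclic words in the matrix entries $e_{ij}[-1]$ (together with the $\tau$'s coming from the diagonal of $E_\tau$), and then sort the resulting monomials according to two bookkeeping devices: their PBW length, and whether, after projecting to $\mathbf{U}(\hat{\g}_-)^{\ad\t}$, they survive $\mathsf{AHC}$.

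First I would write $\Tr(E_\tau^k) = \sum \Tr$ of products of $k$ factors, each factor being either $\tau + e_{ii}[-1]$ on the diagonal or $e_{ij}[-1]$ off-diagonal; extracting the $\tau^0$-coefficient gives $\mathbf{T}_k = \mathbf{T}_{k;k}$ as a sum of traces of monomials in the $e_{ij}[-1]$ of total length $\le k$ (the terms of length exactly $k$ are $\Tr(E^k)$ where $E = (e_{ij}[-1])_{i,j}$, and the terms of length $<k$ carry coefficients that are themselves products obtained from the $\tau$'s via the relation $[\tau, X[-1]] = X[-1]$, hence have PBW length $\le k-1$). Among the length-$k$ terms $\Tr(E^k) = \sum_{i_1,\dots,i_k} e_{i_1 i_2}[-1] e_{i_2 i_3}[-1] \cdots e_{i_k i_1}[-1]$, I single out the summand with $i_1 = \dots = i_k$, which is exactly $\mathbf{P}_k = \sum_i (e_{ii}[-1])^k$. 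All remaining length-$k$ summands involve at least one genuinely off-diagonal factor; I would collect these into $Q_k'$ and collect the length-$\le k-1$ remainder into $Q_k$. The $\ad\t$-invariance of each piece is automatic since $\Tr(E_\tau^k)$ is $\ad\t$-invariant (conjugation by a diagonal torus element fixes traces) and $\mathbf{P}_k$ is visibly $\ad\t$-invariant, so the complements $Q_k, Q_k'$ lie in $\mathbf{U}(\hat\g_-)^{\ad\t}$ as well; the length and height bounds $Q_k \in \mathbf{U}^{\mathsf{pbw}}(\tilde\g_-)_{\le k-1}$, $Q_k' \in \mathbf{U}^{\mathsf{pbw}}(\tilde\g_-)_{\le k}$ and the height-$(-k)$ condition follow by construction since every factor $e_{ij}[-1]$ contributes height $-1$.

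The substantive point is $Q_k' \in \ker \mathsf{AHC}$. Here I would argue that each length-$k$ monomial $e_{i_1 i_2}[-1] \cdots e_{i_k i_1}[-1]$ with not all indices equal, \emph{after} being projected into $\mathbf{U}(\t\otimes t^{-1}\C[t^{-1}])$ via $\mathsf{AHC}$, must vanish. The cyclic word has the property that if it is not the constant word $(i,i,\dots,i)$, then it contains a factor $e_{rs}[-1]$ with $r \ne s$; I would reorder the monomial (modulo lower PBW terms, which land in $\ker\mathsf{AHC}$ by the filtered-homomorphism property combined with induction on $k$, or more cleanly by working in the associated graded $S(\hat\g_-)$ where $\mathsf{AHC}$ becomes the obvious projection $S(\hat\g_-)^{\t} \to S(\t\otimes t^{-1}\C[t^{-1}])$ along the ideal generated by root vectors) so that an off-diagonal generator sits at the front, whence the monomial lies in $\mathbf{U}(\hat\g_-)(\n_+\otimes t^{-1}\C[t^{-1}]) + (\text{lower order})$ after using $\ad\t$-invariance to pair off-diagonal $e_{rs}$ with $e_{sr}$, and such elements are precisely what $\mathsf{AHC}$ annihilates. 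I expect this last step --- tracking the off-diagonal terms carefully enough to see they fall into the kernel, rather than just being ``small'' --- to be the main obstacle, and the cleanest route is to first verify the claim on the level of principal symbols in $S(\hat\g_-)$ (where $\mathsf{AHC}$ is a literal coordinate projection and the statement is transparent) and then lift, using that $\mathsf{AHC}$ is a filtered algebra homomorphism so that $\gr(\ker\mathsf{AHC}) \subseteq \ker(\gr\mathsf{AHC})$ and an induction on the PBW filtration closes the gap.

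Finally, I would double-check the decomposition is consistent with Example \ref{exa: complete set T}: that $\sigma(\mathbf{T}_k) = i(B_k)$ for the power-sum generators $B_k = \Tr(E^k)$ of $S(\g)^\g$ forces the top-length part of $\mathbf{T}_k$ to be $\Tr(E^k)$ up to lower order, which is exactly how $\mathbf{P}_k + Q_k'$ was isolated, so no inconsistency arises. The proof is then just the assembly of: (i) the explicit trace expansion, (ii) the length/height bookkeeping, (iii) the $\ker\mathsf{AHC}$ verification via associated graded.
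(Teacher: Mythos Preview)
Your overall strategy---identify the top PBW symbol of $\mathbf{T}_k$ as $\Tr((E^{(-1)})^k)$ and then use the associated graded of $\mathsf{AHC}$---matches the paper's. But there is a genuine gap in how you assign $Q_k$ and $Q_k'$. You set $Q_k' := \Tr(E^k) - \mathbf{P}_k$ (the sum over non-constant index sequences in $\Uu(\hat{\g}_-)$) and then try to prove $Q_k' \in \ker\mathsf{AHC}$. This is false already for $k=2$, $n=2$: there
\[
Q_2' \;=\; e_{12}[-1]e_{21}[-1] + e_{21}[-1]e_{12}[-1]
\;=\; 2\,e_{21}[-1]e_{12}[-1] + (e_{11}-e_{22})[-2],
\]
and $\mathsf{AHC}(Q_2') = (e_{11}-e_{22})[-2] \neq 0$. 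The commutator terms produced when you reorder an off-diagonal factor to the right can land in the Cartan part and survive $\mathsf{AHC}$; your ``induction on the PBW filtration'' does not control them. Your associated-graded argument establishes only that $\sigma(Q_k') \in \ker(\gr\mathsf{AHC})$, equivalently that $\mathsf{AHC}(Q_k')$ has PBW degree $\leq k-1$---the inclusion $\gr(\ker\mathsf{AHC}) \subseteq \ker(\gr\mathsf{AHC})$ goes the wrong way for what you need.

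The fix is not to choose $Q_k'$ explicitly and then argue, but to let the decomposition be dictated by $\mathsf{AHC}$ itself, which is exactly what the paper does. From $\sigma(\mathbf{T}_k) = \Tr((E^{(-1)})^k)$ and $\gr\mathsf{AHC}(\Tr((E^{(-1)})^k)) = \mathbf{P}_k$ one deduces that $Q_k := \mathsf{AHC}(\mathbf{T}_k) - \mathbf{P}_k$ lies in $\Uu(\t\otimes t^{-1}\C[t^{-1}])$ with PBW degree $\leq k-1$ (and height $-k$, $\ad\t$-invariance automatic). Then $Q_k' := \mathbf{T}_k - \mathbf{P}_k - Q_k$ has $\mathsf{AHC}(Q_k') = 0$ by construction, with all the required bounds. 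In other words, the correct move is to absorb $\mathsf{AHC}$ of your explicit ``off-diagonal'' piece into $Q_k$, rather than claiming it vanishes.
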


\begin{proof} 
Consider the algebra $\Uu(\doublehat{\g}_-)$ from Example \ref{exa: complete set T} equipped with a modified PBW filtration in which $\tau$ has degree zero. One easily sees that the principal symbol of $\Tr (E_\tau^k)$ equals $\Tr ((E^{(-1)})^k)$, where $E^{(-1)} := (e_{ij}[-1])_{i,j=1}^n$ is a matrix with coefficients in $S(\hat{\g}_-)$. But $\gr \mathsf{AHC}(\Tr ((E^{(-1)})^k)) = \mathbf{P}_k$. 
\end{proof} 

\begin{defi}
Suppose that $A \in \mathbf{U}(\hat{\g}_-)$. We write $A_{l} := A_{\langle -l-1 \rangle}$ so that $\mathbb{Y}\langle A,z\rangle = \sum_{l \in \Z} A_{l} z^l$ (note that the same notation was used with a different meaning in \eqref{kappa-L coeff}). In particular, for $1 \leq k \leq n$, we write $\mathbf{T}_{k,l} := \mathbf{T}_{k,\langle -l-1 \rangle}$ (not to be confused with $\mathbf{T}_{k;l}$ from Example \ref{exa: complete set T} ). We also write
\[ \widehat{A}_{l} := \widehat{\Phi}(A_{l}), \quad \overline{A}_{l}:=\sigma^{\mathsf{abs}}(\widehat{A}_{l}),\] 
where $\sigma^{\mathsf{abs}} \colon \mathbb{H}_{\mathbf{c}} \to \gr \mathbb{H}_{\mathbf{c}}$ is the principal symbol map with respect to the absolute height filtration and $$\widehat{\Phi} : \UUc \twoheadrightarrow \UUc/\UUc.\mathfrak{I}_{\mathbf{c}} = \mathbb{H}_{\mathbf{c}}$$ is the canonical map. If $v \in \mathbb{H}_{\mathbf{c}}$, set $\deg v := \deg \sigma^{\mathsf{abs}}(v)$.
\end{defi}

The proof of the following key proposition is rather technical and has been relegated to the appendix. 

\begin{pro} \label{S=E,T=P}
Let $1 \leq k \leq n$. Then: 
\[ \widehat{\mathbf{T}}_{k,l} = 0 \quad (l < -2k), \quad \quad  
\widehat{\mathbf{T}}_{k,-2k} = \widehat{\mathbf{P}}_{k,-2k} = \sum_{i=1}^n (e_{ii}[1])^k.1_{\mathbb{H}},\]
\[ \overline{\mathbf{T}}_{k, -2k+2+b} = \overline{\mathbf{P}}_{k,- 2k+2+b} = k\sum_{i=1}^ne_{ii}[-b-1](e_{ii}[1])^{k-1}.1_{\mathbb{H}} \ + \ (\mathbb{H}_{\mathbf{c}})_{\leq k+b-1} \quad (b \geq 0) .\] 
\end{pro}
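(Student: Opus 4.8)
The statement concerns the Segal--Sugawara operators $\mathbf{T}_{k,l} = \mathbf{T}_{k,\langle -l-1\rangle}$ acting on the cyclic module $\mathbb{H}_{\mathbf{c}}$, and there are essentially three assertions: a vanishing statement for $l<-2k$, an exact computation at the extremal index $l=-2k$, and a computation ``up to lower absolute-height filtration'' of $\overline{\mathbf{T}}_{k,-2k+2+b}$ for $b\geq 0$. I would organize the proof around the decomposition $\mathbf{T}_k = \mathbf{P}_k + Q_k + Q_k'$ from Lemma~\ref{cor:Tk decomposition}, reducing everything first to the ``diagonal part'' $\mathbf{P}_k = \sum_i (e_{ii}[-1])^k$ and then computing with $\mathbf{P}_k$ directly.

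\emph{Step 1: reduce to $\mathbf{P}_k$.} First I would argue that $Q_k'$ contributes nothing: it lies in $\ker\mathsf{AHC}$, i.e.\ it is a sum of monomials each containing at least one factor from $\n_+\otimes t\C[t^{-1}]$ on the left (in the relevant normal ordering), hence its coefficients $Q'_{k,l}$, when applied to the cyclic generator $1_{\mathbb{H}}$, produce terms killed by the defining relations of $\mathfrak{I}_{\mathbf{c}}$ — more precisely, one translates $\mathbf{T}_k \in \Uu(\hat\g_-)$ to an operator on $\mathbb{H}_{\mathbf{c}}$ via the field expansion $\mathbb{Y}\langle -,z\rangle$, which replaces each $e_{ij}[-1]$ by the full current $e_{ij}\langle z\rangle = \sum_r e_{ij}[r]z^{-r-1}$; the normal-ordering conventions and the vanishing of $e_{ij}[r].1_{\mathbb{H}}$ for $r\geq 2$ (and the $\n_\pm[1]$-relations) then force $\widehat{Q'}_{k,l} = 0$ in the indicated range, or at worst land in a lower filtered piece. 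For $Q_k$, the point is that it has PBW-length $\leq k-1$ while $\mathbf{P}_k$ has length $k$; once I show each coefficient $\mathbf{P}_{k,l}$ lands in absolute-height degree exactly $\leq k+b-1$ (for the $b$-th claim), the shorter term $Q_k$ must land in a strictly lower filtered piece and can be absorbed into the error term $(\mathbb{H}_{\mathbf{c}})_{\leq k+b-1}$ (and similarly contributes nothing in the two sharper claims, by a length/height count). This is where the ``absolute height'' filtration does its work.

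\emph{Step 2: compute with $\mathbf{P}_k$.} By \eqref{statefieldcorrespondence}, $\mathbb{Y}\langle \mathbf{P}_k, z\rangle = \sum_i \mbox{:}e_{ii}\langle z\rangle^k\mbox{:}$, so $\mathbf{P}_{k,l}$ is an explicit sum of normally-ordered products $e_{ii}[r_1]\cdots e_{ii}[r_k]$ with $r_1+\cdots+r_k = l$. Applying this to $1_{\mathbb{H}}$ and using $e_{ii}[r].1_{\mathbb{H}} = 0$ for $r\geq 2$, $e_{ii}[1].1_{\mathbb{H}} \neq 0$, $e_{ii}[0].1_{\mathbb{H}} = 1_{\mathbb{H}}$: for $l=-2k$ (which corresponds to $z$-degree making all ``creation'' indices maximal) the only surviving term forces every $r_j = 1$ — after rewriting $(-r_j-1)$ shifts — giving $\sum_i (e_{ii}[1])^k.1_{\mathbb{H}}$, and for $l<-2k$ there is no way to distribute the indices consistently with smoothness, giving $0$. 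For $l = -2k+2+b$ with $b\geq 0$, exactly one index is lowered from $1$ to $-b-1$ (contributing the ``creation'' operator $e_{ii}[-b-1]$) while the remaining $k-1$ stay at $1$; summing over which factor is lowered and over $i$ gives the factor $k$ and the expression $k\sum_i e_{ii}[-b-1](e_{ii}[1])^{k-1}.1_{\mathbb{H}}$, with all other ways of distributing indices either vanishing by smoothness or producing monomials of strictly smaller absolute height (hence absorbed into $(\mathbb{H}_{\mathbf{c}})_{\leq k+b-1}$). One should also double-check that the normal ordering places the single lowering index correctly so that no extra commutator terms of equal absolute height appear — this is a short computation using that the $e_{ii}$ commute among themselves, so in fact there are \emph{no} commutator corrections within a fixed $i$, which is what makes the diagonal part so clean.

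\emph{Main obstacle.} The genuinely delicate point — and the reason the proposition is ``relegated to the appendix'' — is bookkeeping the interaction between three gradings/filtrations simultaneously: the $z$-grading (index $l$) on Segal--Sugawara operators, the PBW filtration used in Lemma~\ref{cor:Tk decomposition} to split off $Q_k, Q_k'$, and the absolute-height filtration on $\mathbb{H}_{\mathbf{c}}$ controlling the error term. Concretely, one must show that when $Q_k$ and $Q_k'$ are expanded into currents and applied to $1_{\mathbb{H}}$, every resulting monomial has absolute height at most $k+b-1$ — i.e.\ strictly below the leading term coming from $\mathbf{P}_k$. This requires a careful estimate: a length-$\leq k-1$ (resp.\ length-$\leq k$ but in $\ker\mathsf{AHC}$) monomial of height $-k$, when one expands $e_{ij}[-1]\mapsto e_{ij}\langle z\rangle$ and extracts the coefficient of $z^{l}$, can only reach the target height $-2k+2+b$ by using indices whose absolute values sum to something controllably small after the $\n_\pm[1]$ and $\hat\g_{\geq 2}$ relations are imposed. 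I expect this to need an induction on PBW length (as already used in the proof of Proposition~\ref{pro: coinv ind filt}), together with the triangle-inequality estimates $|p+1+a| - |a| \leq |p+1|$ of the type appearing there, applied now to the specific monomials occurring in $Q_k, Q_k'$ rather than to arbitrary elements.
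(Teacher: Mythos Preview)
Your proposal is essentially correct and follows the same strategy as the paper: use the decomposition $\mathbf{T}_k = \mathbf{P}_k + Q_k + Q_k'$ of Lemma~\ref{cor:Tk decomposition}, compute the $\mathbf{P}_k$ contribution exactly, and show that $Q_k$ and $Q_k'$ land in strictly lower absolute-height filtration. Your identification of the main obstacle and the proposed cure (induction on PBW length) are both right.

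One point in Step~1 needs sharpening. You write that $Q_k' \in \ker\mathsf{AHC}$ means ``each monomial contains a factor from $\n_+\otimes t\C[t^{-1}]$ on the left'', and that its modes therefore produce terms killed by $\mathfrak{I}_{\mathbf{c}}$. This is too quick: the condition $Q_k' \in \ker\mathsf{AHC}$ is a condition on the \emph{vector} in $\Uu(\hat\g_-)$, but the Fourier modes $Q'_{k,l}$ live in $\widehat{\Uu}_{\mathbf{c}}$ and involve \emph{all} $e_{ij}[r]$, $r\in\Z$, via the field expansion. The presence of an off-diagonal factor $X_i\in\n_\pm$ in the original monomial does \emph{not} directly produce a factor in $\n_\pm[1]\subset\mathfrak{i}$ in each term of $Q'_{k,l}$; commutators arising from the normal-ordering recursion scramble the root-space content. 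What the paper does (Lemma~A.1, especially parts~6--7 and claim~(C')) is run the induction on the number of factors $a$ using the splitting $C_l = C_l^+ + C_l^-$ from the normal-ordering recursion \eqref{ABC equation}, and at each step track \emph{both} the absolute-height degree \emph{and} whether the shorter word still carries an off-diagonal factor. The $\ker\mathsf{AHC}$ hypothesis buys exactly a drop of $2$ in the degree bound (Corollary~A.2, case~(ii)), which is what is needed to push $Q_k'$ below $\mathbf{P}_k$.

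For Step~2, your direct combinatorial description of $\widehat{\mathbf{P}}_{k,l}$ is morally right (and the commutativity of the $e_{ii}[r]$ for fixed $i$ indeed kills commutator corrections), but the paper organizes this too as an induction on $k$ via the same normal-ordering recursion (Lemma~A.3), which dovetails with the estimation lemma. The triangle-inequality estimates from Proposition~\ref{pro: coinv ind filt} that you invoke are not actually what is used here; the appendix argument is a self-contained induction of a slightly different shape, tracking the four cases $l<-(k+a)$, $l=-(k+a)$, $l=-(k+a)+1$, $l=-(k+a)+2+p$ separately.
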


\subsection{The main result.} 

Recall from Theorem \ref{pro: centre facts} that $\gr \mathcal{Z} = \C[\h \oplus \h^*]^{\mathfrak{S}_n}$. The latter is known as the ring of diagonal invariants or multisymmetric polynomials. Given $a,b \in \Z_{\geq 0}$, the multisymmetric power-sum polynomial of degree $(a,b)$ is defined as $\mathsf{p}_{a,b}:= x_1^ay_1^b + \hdots + x_n^ay_n^b$. We call $a+b$ the total degree of $\mathsf{p}_{a,b}$. 

\begin{pro} \label{diag inv gens}
The polynomials $\mathsf{p}_{a,b}$ with $a+b \leq n$ generate $\C[\h \oplus \h^*]^{\mathfrak{S}_n}$. 
\end{pro}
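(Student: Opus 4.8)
The goal is to show that the algebra $\C[\h \oplus \h^*]^{\mathfrak{S}_n}$ of multisymmetric polynomials in $n$ pairs of variables is generated by the power sums $\mathsf{p}_{a,b} = \sum_{i=1}^n x_i^a y_i^b$ with $a+b \le n$. This is a classical fact (a ``first fundamental theorem of vector invariants'' for $\mathfrak{S}_n$ combined with a degree bound), and I would prove it in two stages.

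\emph{Stage 1: the power sums $\mathsf{p}_{a,b}$ with $a+b \ge 1$ (no degree restriction) generate the whole invariant ring.} This is the statement that multisymmetric polynomials are generated by polarized power sums. I would recall the standard argument: work over $\C$ (characteristic zero is essential here), and use the fact that the ring of multisymmetric functions is the ring of $\mathfrak{S}_n$-invariants of $\C[\h\oplus\h^*] = \C[x_1,\dots,x_n,y_1,\dots,y_n]$ under the diagonal permutation action. Introduce an extra formal parameter and consider, for a pair of ``dummy'' variables $(s,t)$, the generating symbol $\prod_{i=1}^n (1 + s x_i + t y_i + \cdots)$ — more precisely, the bivariate Newton identities expressing the polarized elementary/monomial symmetric functions $\mathsf{e}_{a,b}$ in terms of the $\mathsf{p}_{a,b}$. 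Since any multisymmetric polynomial is a polynomial in the $\mathsf{m}_{a,b}$ (monomial multisymmetric functions), and these are in turn polynomials with rational coefficients in the $\mathsf{p}_{a,b}$ via the bivariate analogue of the Newton–Girard formulae, we conclude the $\mathsf{p}_{a,b}$ generate. I would cite a standard reference (e.g.\ Weyl's \emph{Classical Groups}, or Vaccarino/Dalbec on multisymmetric functions) rather than rederiving the Newton identities.

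\emph{Stage 2: the degree bound $a+b \le n$.} Here the point is that $\mathsf{p}_{a,b}$ for $a+b > n$ is already a polynomial in the $\mathsf{p}_{a',b'}$ with $a'+b' \le n$. The clean way to see this: fix a single pair of variables and note that, for an $n\times n$ situation, the Cayley–Hamilton theorem applied to a suitable $2$-variable specialization bounds the needed degree. Concretely, consider the diagonal action and the ``universal'' relation coming from the fact that the $n$ points $(x_i,y_i) \in \C^2$ satisfy: for any $n+1$ of the monomials $x^a y^b$, there is a linear dependence among the vectors $(x_i^a y_i^b)_{i=1}^n$ with coefficients that are themselves multisymmetric — this is just that the $n\times\infty$ matrix $(x_i^a y_i^b)$ has rank $\le n$. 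Taking the appropriate minors/traces, one obtains that $\mathsf{p}_{a,b}$ with $a+b = n+1$ lies in the subalgebra generated by lower-total-degree power sums, and then one inducts on the total degree. Alternatively, and perhaps more transparently, I would invoke the known sharp bound: the ring $\C[\h\oplus\h^*]^{\mathfrak{S}_n}$ is generated by the $\mathsf{p}_{a,b}$ with $1 \le a+b \le n$, a result of Fleischmann and of Vaccarino (the multisymmetric analogue of the fact that $\C[x_1,\dots,x_n]^{\mathfrak{S}_n}$ needs power sums only up to degree $n$). Since only the \emph{existence} of \emph{some} generating set in total degree $\le n$ is needed for the application, citing this suffices.

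\emph{Main obstacle.} The only real content is the degree bound in Stage 2; Stage 1 is formal. The delicate point is that, unlike the single-variable case where the degree-$\le n$ bound is immediate from Newton's identities, in the multisymmetric case the naive Newton identities express $\mathsf{p}_{a,b}$ in terms of lower power sums but \emph{also} in terms of the polarized elementary symmetric functions $\mathsf{e}_{a',b'}$ with $a'+b' \le n$, so one must separately know the $\mathsf{e}_{a',b'}$ with $a'+b'\le n$ are themselves expressible in the $\mathsf{p}$'s of total degree $\le n$ — which again follows from the bivariate Newton recursion by an induction on total degree. I would organize the proof so that this induction is the one explicit computation, and otherwise lean on the cited literature. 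Given that the paper only needs \emph{a} finite generating set concentrated in total degrees $\le n$ (to run the ``associated graded'' surjectivity argument for $\Theta$), I would keep this proof short and reference-heavy.
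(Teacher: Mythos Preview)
Your proposal is correct and in spirit matches the paper's approach: the paper treats this as a known result and simply cites \cite[Corollary 8.4]{Rydh} without further argument. Your two-stage sketch (power sums generate; then the degree bound $a+b\le n$) is exactly the content of that reference, just unpacked and attributed to alternative sources (Weyl, Fleischmann, Vaccarino); either citation suffices for the application.
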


\begin{proof}
See, e.g., \cite[Corollary 8.4]{Rydh}. 
\end{proof}

We are ready to prove our main result: the surjectivity of $\Theta$. We also partially describe the kernel of $\Theta$, compute~$\Theta$ on Segal-Sugawara operators corresponding to $\mathbf{T}_1$ and $\mathbf{T}_2$, and compute the principal symbols of the images of ``higher-order" Segal-Sugawara operators under $\Theta$.

\begin{thm} \label{thm: Psi is surj} 
The map $\Theta \colon \mathfrak{Z} \to \mathcal{Z}$ is surjective with 
\begin{enumerate}[leftmargin=1cm,label=(\roman*)]
\item $\Theta(\mathbf{T}_{k,l}) = 0 \quad (l < -2k)$, 
\item $\Theta(\mathbf{T}_{1,l}) = \mathsf{p}_{l+1,0} \quad (l \geq 0)$, 
\item $\Theta(\mathbf{T}_{2,l}) = - \mathsf{p}_{l+3,1} + \sum_{i < j} 2c_{l+2}(x_i,x_j)s_{i,j} + ((n+1)l+ 3n +1) \sum_{i=1}^n x_i^{l+2} \quad (l \geq -2)$, 
\item $\Theta(\mathbf{T}_{k, -2k}) = (-1)^k \mathsf{p}_{0,k}$,
\item $\sigma(\Theta(\mathbf{T}_{k,-2k+2+b})) = (-1)^{k-1}k\mathsf{p}_{b+1,k-1} \quad (b \geq 0)$, \end{enumerate} 
where $1 \leq k \leq n$, $c_{r}(x_i,x_j)$ is the complete homogeneous symmetric polynomial of degree $r$ in $x_i$ and $x_j$, and $\sigma \colon \mathcal{Z} \to \gr \mathcal{Z}$ is the principal symbol map. 
\end{thm}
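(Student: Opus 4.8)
The plan is to reduce the surjectivity statement for $\Theta \colon \mathfrak{Z} \to \mathcal{Z}$ to a computation of the associated graded map $\gr\Psi \colon \gr\Ima\Phi \to \gr\mathcal{Z}$, using the filtered diagram \eqref{all filtr diagram}. Concretely, I would first set up the standard lemma: if a filtered linear map has surjective associated graded, then it is surjective (this works because all filtrations are exhaustive and separated). So it suffices to show $\gr\Psi$ hits a generating set of $\gr\mathcal{Z} = \C[\h\oplus\h^*]^{\mathfrak{S}_n}$. By Proposition \ref{diag inv gens}, the multisymmetric power sums $\mathsf{p}_{a,b}$ with $a+b\leq n$ generate this ring, so the whole theorem comes down to producing, for each such $(a,b)$, an element of $\mathfrak{Z}$ whose image under $\Theta$ has principal symbol $\mathsf{p}_{a,b}$ (up to a nonzero scalar).

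The key input is Proposition \ref{S=E,T=P}, which computes the principal symbols (w.r.t.\ the absolute height filtration) of the images $\widehat{\mathbf{T}}_{k,l}$ of the Segal-Sugawara operators in $\mathbb{H}_{\mathbf{c}}$. The next step is to transport these principal symbols through the upper-right path of \eqref{all filtr diagram}, i.e.\ through $\mathbb{H}_{\mathbf{c}} \hookrightarrow \mathsf{T}_{\mathbf{c}}(\mathbb{H}_{\mathbf{c}}) \twoheadrightarrow \mathsf{F}_{\mathbf{c}}(\mathbb{H}_{\mathbf{c}}) \xrightarrow{\Upsilon^{-1}} \mathcal{H}_0$, taking associated gradeds throughout (these maps are all filtered by the lemma preceding \eqref{eq: gr Psi}, using Proposition \ref{Psi is filtered}). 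Under the identification $\Upsilon$ of Theorem \ref{thm: regular module}, the element $e_{ii}[1]$ maps to $-y_i$ and $e_{ii}[-b-1]$ contributes (via the current-algebra action \eqref{currentaction}) a factor $x_i^{b+1}$; so $k\sum_i e_{ii}[-b-1](e_{ii}[1])^{k-1}\cdot 1_{\mathbb{H}}$ maps, at the level of principal symbols, to $(-1)^{k-1}k\sum_i x_i^{b+1}y_i^{k-1} = (-1)^{k-1}k\,\mathsf{p}_{b+1,k-1}$. This gives parts (iv) and (v) directly, and part (i) is immediate from the vanishing $\widehat{\mathbf{T}}_{k,l}=0$ for $l<-2k$ in Proposition \ref{S=E,T=P}. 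For the non-leading-term information in (ii) and (iii) I would do the honest finite computation: for $\mathbf{T}_1 = \mathrm{Tr}\,E_\tau = \id[-1]$ the operators $\mathbf{T}_{1,l}=\id[l]$ and formula \eqref{id[r]} (or rather its positive-$r$ analogue coming from \eqref{currentaction}) gives $\Theta(\id[l]) = \sum_i x_i^{l+1} = \mathsf{p}_{l+1,0}$; for $\mathbf{T}_2$ the operators are (up to normalization) the quadratic Segal-Sugawara operators ${}^{\mathbf{c}}\mathbf{L}_r$, and Lemma \ref{diagram factors trilemma}(b), formula \eqref{L_r complete homogeneous}, computes $\Theta({}^{\mathbf{c}}\mathbf{L}_r)$ exactly, which after bookkeeping (relating $\mathbf{T}_{2,l}$ to ${}^{\mathbf{c}}\mathbf{L}_{-l-1}$ and the trace-versus-Killing-form normalization) yields the stated expression in (iii).

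Having (iv) and (v), surjectivity follows: the polynomials $\mathsf{p}_{0,k}$ ($1\leq k\leq n$) and $\mathsf{p}_{b+1,k-1}$ ($1\leq k\leq n$, $b\geq 0$) together with $\mathsf{p}_{a,0}$ (from (ii), all $a\geq 1$) certainly include all $\mathsf{p}_{a,b}$ with $a+b\leq n$ — indeed $\mathsf{p}_{b+1,k-1}$ with $k-1 = b'$, $b+1 = a'$ ranges over all $\mathsf{p}_{a',b'}$ with $a'\geq 1$, $b'\leq n-1$, and $\mathsf{p}_{0,k}$ covers the $a'=0$ cases. So $\gr\Psi$ (equivalently $\gr\Theta$ after restriction to $\gr\Ima\Phi$) surjects onto a generating set of $\gr\mathcal{Z}$, hence onto $\gr\mathcal{Z}$, hence $\Theta$ is surjective. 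The main obstacle is Proposition \ref{S=E,T=P} itself — controlling which lower-absolute-height terms appear when one expands the Chervov-Molev traces $\mathrm{Tr}(E_\tau^k)$, applies the state-field correspondence \eqref{statefieldcorrespondence}, and reduces modulo the ideal $\mathfrak{I}_{\mathbf{c}}$ (killing $\n_\pm[1]$ and $\hat{\g}_{\geq 2}$, and setting $e_{ii}=1$); but the excerpt explicitly defers that to the appendix, so in this section I would simply invoke it. A secondary subtlety is making sure the filtration indices line up — that $\Theta$ factors through $\Ima\Phi$ with the subspace filtration and that $\mathbf{T}_{k,l}$ has absolute-height degree controlled as claimed — but this is routine given the setup in \S\ref{assoc graded sec}.
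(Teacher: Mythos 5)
Your proposal is correct and follows essentially the same route as the paper's own proof: reduce to the associated-graded map via the filtered diagram from \S\ref{assoc graded sec}, invoke Proposition \ref{S=E,T=P} for the principal-symbol computation, transport through $\gr\Upsilon^{-1}$ using \eqref{Regular module iso 2} and \eqref{currentaction}, handle (ii)--(iii) by Lemma \ref{diagram factors trilemma} together with $\mathbf{T}_2 = 2\cdot{}^{\mathbf{c}}\mathbf{L}+\id[-2]$, and conclude surjectivity from Proposition \ref{diag inv gens} and the standard filtered-to-graded lemma (the paper cites Sj\"odin). The only cosmetic gap is that you describe the $\mathbf{T}_2$ relation loosely as a ``trace-versus-Killing-form normalization'' rather than the exact identity, but the argument as outlined is the paper's argument.
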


\begin{proof}
Part (i) follows directly from Proposition \ref{S=E,T=P}, while (ii)-(iii) follow from Lemma \ref{diagram factors trilemma} and the fact that $\mathbf{T}_2 = 2\cdot {}^{\mathbf{c}}\mathbf{L} + \id[-2]$.   
Proposition \ref{S=E,T=P} together with \eqref{Regular module iso 2} implies that $\Upsilon^{-1}$ sends 
\[ [1_{\C[\h]} \otimes e_{\mathsf{id}}^* \otimes \widehat{\mathbf{T}}_{k,-2k}] = [1_{\C[\h]} \otimes e_{\mathsf{id}}^* \otimes \sum_{i=1}^n (e_{ii}[1])^k.1_{\mathbb{H}}] \ \mapsto \ (-1)^k \mathsf{p}_{0,k}, \]
which proves (iv). 
Moreover, Proposition \ref{S=E,T=P} together with \eqref{currentaction} and \eqref{Regular module iso 2} implies that $\gr \Upsilon^{-1}$ sends
\[ [1_{\C[\h]} \otimes e_{\mathsf{id}}^* \otimes \overline{\mathbf{T}}_{k,-2k+2+b}] = k\sum_{i=1}^n x_i^{b+1} \otimes e_{\mathsf{id}}^* \otimes (e_{ii}[1])^{k-1}.1_{\mathbb{H}} \ \mapsto \ (-1)^{k-1}k\mathsf{p}_{b+1,k-1}, \]
which proves (v) because $\gr \Psi (\overline{\mathbf{T}}_{k,r}) = \sigma(\Theta(\mathbf{T}_{k,r}))$ for $r \geq -2k+2$. 

It follows from (iv) and (v) that the multisymmetric power-sum polynomials of total degree $\leq n$ all lie in the image of $\gr \Psi$. But, by Proposition \ref{diag inv gens}, these polynomials generate $ \C[\h \oplus \h^*]^{\mathfrak{S}_n} = \gr \mathcal{Z}$. Hence the map $\gr\Psi \colon \gr\Ima \Phi \to \gr\mathcal{Z}$ is surjective. By \cite[Lemma 1(e)]{Sjo}, the map $\Psi \colon \Ima \Theta \to \mathcal{Z}$ is surjective as well because the filtration on $\mathcal{Z}$ is exhaustive and discrete. The surjectivity of $\Theta = \Psi \circ \Phi$ follows. 
\end{proof}

\section{Applications and connections to other topics} \label{sec: apps}

We present several applications of Theorem \ref{thm: Psi is surj}. Assume that $n=m$ throughout. 

\subsection{Endomorphism rings and simple modules.} 

We prove that the homomorphisms between endomorphism rings of Weyl and Verma modules induced by the Suzuki functor are surjective and use this fact to show that every simple $\mathcal{H}_0$-module is in the image of~$\mathsf{F}_{\mathbf{c}}$. 

\begin{cor} \label{cor: surjective on endo rings} 
The functor $\mathsf{F}_{\mathbf{c}}$ induces surjective ring homomorphisms: 
\begin{equation} \label{eq: End standard Weyl} \mathsf{F}_{\mathbf{c}} \colon \End_{\UUc} (\mathbb{W}_{\mathbf{c}}(a,\lambda)) \to \End_{\mathcal{H}_{0}}(\Delta_{0}(a,\lambda)), 
\end{equation} 
for $l \geq 1$, $\nu \in \mathcal{C}_l(n)$, $\lambda \in \mathcal{P}_n(\nu)$ and $a \in \h^*$ with $\mathfrak{S}_n(a) = \mathfrak{S}_\nu$; and 
\begin{equation} \label{eq: End standard Verma} \mathsf{F}_{\mathbf{c}} \colon \End_{\UUc} (\mathbb{M}_{\mathbf{c}}(\lambda)) \to \End_{\mathcal{H}_{0}}(\Delta_{0}(\lambda)), 
\end{equation} 
for $\lambda \in \mathcal{P}(n)$. Moreover, the homomorphisms \eqref{eq: End standard Verma} are graded. 
\end{cor}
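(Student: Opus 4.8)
\textbf{Proof strategy for Corollary \ref{cor: surjective on endo rings}.}

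The plan is to reduce both surjectivity statements to the surjectivity of $\Theta$ (Theorem \ref{thm: Psi is surj}) by exploiting the known descriptions of the relevant endomorphism rings as quotients of the two centres. First I would recall the two facts that make this work: on the Cherednik side, Theorem \ref{thm 2 bellamy}.a) (together with its analogue for $\Delta_0(\lambda)$, which is the $a=0$ case) says the canonical map $\mathcal{Z} \to \End_{\mathcal{H}_0}(\Delta_0(a,\lambda))$ is surjective; on the affine side, the result of Frenkel--Gaitsgory quoted in the introduction says the canonical map $\mathfrak{Z} \to \End_{\UUc}(\mathbb{W}_{\mathbf{c}}(\lambda))$ is surjective, and more generally one needs that $\mathfrak{Z}$ surjects onto $\End_{\UUc}(\mathbb{W}_{\mathbf{c}}(a,\lambda))$ and onto $\End_{\UUc}(\mathbb{M}_{\mathbf{c}}(\lambda))$. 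Assuming those surjectivities, the argument is a diagram chase.

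The key step is Corollary \ref{cor: endos vs functor}: for any $M \in \mathscr{C}_{\mathbb{H}}$ and any $z \in \mathfrak{Z}$, one has $\Theta(z)_{\mathsf{F}_{\mathbf{c}}(M)} = \mathsf{F}_{\mathbf{c}}(z_M)$. By Lemma \ref{CH Weyls in}, the modules $\mathbb{W}_{\mathbf{c}}(a,\lambda)$ (with $\lambda \in \mathcal{P}_\mu(\mu)$, $\mathfrak{S}_n(a) = \mathfrak{S}_\mu$) and $\mathbb{M}_{\mathbf{c}}(\lambda)$ (with $\lambda \in \mathcal{P}_n(n)$) all lie in $\mathscr{C}_{\mathbb{H}}$, so the corollary applies. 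Concretely, consider the commuting square
\begin{equation*}
\begin{tikzcd}
\mathfrak{Z} \arrow[r,"\Theta",two heads] \arrow[d,two heads] & \mathcal{Z} \arrow[d,two heads] \\
\End_{\UUc}(\mathbb{W}_{\mathbf{c}}(a,\lambda)) \arrow[r,"\mathsf{F}_{\mathbf{c}}"] & \End_{\mathcal{H}_0}(\Delta_0(a,\lambda))
\end{tikzcd}
\end{equation*}
where the left vertical map is the canonical (surjective) map $z \mapsto z_{\mathbb{W}_{\mathbf{c}}(a,\lambda)}$, the right vertical map is the canonical (surjective) map $\mathcal{Z} \to \End_{\mathcal{H}_0}(\Delta_0(a,\lambda))$, the top map is surjective by Theorem \ref{thm: Psi is surj}, and commutativity is exactly Corollary \ref{cor: endos vs functor} combined with Theorem \ref{thm: standard to Weyl} identifying $\mathsf{F}_{\mathbf{c}}(\mathbb{W}_{\mathbf{c}}(a,\lambda)) \cong \Delta_0(a,\lambda)$. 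Since the composite $\mathfrak{Z} \to \mathcal{Z} \to \End_{\mathcal{H}_0}(\Delta_0(a,\lambda))$ is surjective, so is $\mathsf{F}_{\mathbf{c}} \colon \End_{\UUc}(\mathbb{W}_{\mathbf{c}}(a,\lambda)) \to \End_{\mathcal{H}_0}(\Delta_0(a,\lambda))$. The case of $\mathbb{M}_{\mathbf{c}}(\lambda)$ is identical, using Theorem \ref{thm: Vermas to standards} in place of Theorem \ref{thm: standard to Weyl}. For the ``moreover'' clause, I would invoke the graded lift $\mathsf{F}_{\kappa}^{\mathsf{gr}}$ of Proposition \ref{pro: coinv ind grad}: the Verma module $\mathbb{M}_{\mathbf{c}}(\lambda)$ carries a natural grading (by the $\hat{\g}_{\geq 1}$-weight, i.e.\ as an object of $\mathscr{C}_{\mathbf{c}}^{\mathsf{gr}}$), $\Delta_0(\lambda)$ carries the PBW grading from Definition \ref{defi: RCA filt}, and one checks that the isomorphism of Theorem \ref{thm: Vermas to standards} respects these gradings (the image of $\mathsf{Sp}(\lambda)$ sits in degree zero on both sides), so the induced map on endomorphism rings is a graded homomorphism.

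The main obstacle is not the diagram chase but making sure all the input surjectivities are legitimately available. Theorem \ref{thm 2 bellamy}.a) is stated in the excerpt, so the Cherednik side is fine, including the $a=0$ specialization giving surjectivity onto $\End_{\mathcal{H}_0}(\Delta_0(\lambda))$. On the affine side, the Frenkel--Gaitsgory surjectivity $\mathfrak{Z} \twoheadrightarrow \End_{\UUc}(\mathbb{W}_{\mathbf{c}}(\lambda))$ is quoted in the introduction; one should check that it (or an argument like it) extends to $\mathbb{W}_{\mathbf{c}}(a,\lambda)$ for $a \ne 0$ and to $\mathbb{M}_{\mathbf{c}}(\lambda)$ — this is presumably where the real content lies, and I would expect the paper either to cite \cite{FG} for this or to have established it earlier (e.g.\ via the identification of these endomorphism rings with spaces of functions on opers). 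If that surjectivity is genuinely needed but only available for Weyl modules, an alternative is to run the chase with the canonical map $\mathfrak{Z} \to \End_{\UUc}(-)$ replaced by its image: surjectivity of $\Theta$ plus surjectivity of $\mathcal{Z} \to \End_{\mathcal{H}_0}(\Delta_0(a,\lambda))$ still forces $\mathsf{F}_{\mathbf{c}}$ to be surjective on the \emph{image} of $\mathfrak{Z}$ in the source endomorphism ring, and one then needs separately that this image is all of $\End_{\UUc}(\mathbb{W}_{\mathbf{c}}(a,\lambda))$. I would present the clean version assuming the input surjectivities, flagging the dependence on \cite{FG} and Theorem \ref{thm 2 bellamy}.
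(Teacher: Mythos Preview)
Your approach is essentially the same as the paper's: set up the commutative square coming from Corollary~\ref{cor: endos vs functor} (using Lemma~\ref{CH Weyls in} to know the modules lie in $\mathscr{C}_{\mathbb{H}}$), invoke Theorem~\ref{thm: Psi is surj} for surjectivity of $\Theta$ and Theorem~\ref{thm 2 bellamy}.a) for surjectivity of $\mathcal{Z}\to\End_{\mathcal{H}_0}(\Delta_0(a,\lambda))$, and conclude. The graded clause via Proposition~\ref{pro: coinv ind grad} also matches.

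However, the entire ``main obstacle'' paragraph is misplaced. You do \emph{not} need the left vertical arrow $\mathfrak{Z}\to\End_{\UUc}(\mathbb{W}_{\mathbf{c}}(a,\lambda))$ to be surjective, and the paper does not use this. Your own diagram chase already shows why: surjectivity of the top ($\Theta$) and the right ($\mathcal{Z}\twoheadrightarrow\End_{\mathcal{H}_0}(\Delta_0(a,\lambda))$) makes the composite $\mathfrak{Z}\to\End_{\mathcal{H}_0}(\Delta_0(a,\lambda))$ surjective; by commutativity this composite factors through $\End_{\UUc}(\mathbb{W}_{\mathbf{c}}(a,\lambda))$ via the bottom map $\mathsf{F}_{\mathbf{c}}$, forcing the bottom map itself to be surjective regardless of whether the left map is. So there is no dependence on Frenkel--Gaitsgory here, and no need to worry about extending that result to $a\neq 0$ or to Verma modules. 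Drop the ``two heads'' on the left arrow in your square and delete the obstacle discussion; the proof is then clean and complete.
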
 

\begin{proof}  
The existence of the ring homomorphisms \eqref{eq: End standard Weyl} and \eqref{eq: End standard Verma} follows from the fact that  $\mathsf{F}_{\mathbf{c}}(\mathbb{W}_{\mathbf{c}}(a,\lambda)) \cong \Delta_{0}(a,\lambda)$ (Theorem \ref{thm: standard to Weyl}) and $\mathsf{F}_{\mathbf{c}}(\mathbb{M}_{\mathbf{c}}(\lambda)) \cong \Delta_{0}(\lambda)$ (Theorem \ref{thm: Vermas to standards}). Let us prove their surjectivity. 
Corollary \ref{cor: endos vs functor} implies that we have a commutative diagram 
\[
\begin{tikzcd}
\mathfrak{Z} \arrow{r}{\Theta} \arrow{d}[swap]{can} & \mathcal{Z} \arrow{d}{can} \\
\End_{\UUc} (\mathbb{W}_{\mathbf{c}}(a,\lambda)) \arrow{r}{\mathsf{F}_{\mathbf{c}}} & \End_{\mathcal{H}_{0}}(\Delta_{0}(a,\lambda))
\end{tikzcd}
\]
By Theorem \ref{thm: Psi is surj}, $\Theta$ is surjective, and, by Theorem \ref{thm 2 bellamy}.b), the right vertical map is surjective as well. Hence the lower horizontal map must be surjective, too. The proof in the case of the Verma modules $\mathbb{M}_{\mathbf{c}}(\lambda)$ is analogous. The fact that \eqref{eq: End standard Verma} is a graded homomorphism follows from Proposition \ref{pro: coinv ind grad}. 
\end{proof}

We need the following lemma. 

\begin{lem} \label{lem: quotients by endos}
Let $M$ be a $\UUc$-module and $A \subseteq \End_{\UUc}(M)$ be a vector subspace. Then 
\[ \mathsf{F}_{\mathbf{c}}(M/A M) = \mathsf{F}_{\mathbf{c}}(M)/\mathsf{F}_{\mathbf{c}}(A)\mathsf{F}_{\mathbf{c}}(M). \]
\end{lem}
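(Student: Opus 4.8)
\textbf{Proof plan for Lemma \ref{lem: quotients by endos}.}

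The plan is to exploit the ``tensor'' realization of the Suzuki functor from Corollary \ref{cor: EW Suzuki tensor} and Definition \ref{general definition of Suzuki}, namely $\mathsf{F}_{\mathbf{c}}(-) = \mathsf{F}_{\mathbf{c}}(\widehat{\mathbf{U}}_{\mathbf{c}}) \otimes_{\widehat{\mathbf{U}}_{\mathbf{c}}} -$, together with the right exactness of $\mathsf{F}_{\mathbf{c}}$ (Remark \ref{remark colimits Suzuki}). First I would make precise what $\mathsf{F}_{\mathbf{c}}(A)$ means: each $\phi \in A \subseteq \End_{\widehat{\mathbf{U}}_{\mathbf{c}}}(M)$ is sent by the functor to an endomorphism $\mathsf{F}_{\mathbf{c}}(\phi) \in \End_{\mathcal{H}_0}(\mathsf{F}_{\mathbf{c}}(M))$, and $\mathsf{F}_{\mathbf{c}}(A)$ denotes the image subspace $\{\mathsf{F}_{\mathbf{c}}(\phi) \mid \phi \in A\}$; then $\mathsf{F}_{\mathbf{c}}(A)\mathsf{F}_{\mathbf{c}}(M)$ is the $\mathcal{H}_0$-submodule of $\mathsf{F}_{\mathbf{c}}(M)$ spanned by all $\mathsf{F}_{\mathbf{c}}(\phi)(w)$ with $\phi \in A$, $w \in \mathsf{F}_{\mathbf{c}}(M)$. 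The submodule $AM \subseteq M$ is, by definition, the $\widehat{\mathbf{U}}_{\mathbf{c}}$-submodule generated by $\{\phi(v) \mid \phi \in A, v \in M\}$.

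The key step is to compare two short exact sequences. On the $\widehat{\mathbf{U}}_{\mathbf{c}}$-side there is a canonical presentation
\begin{equation} \label{eq: AM ses}
\bigoplus_{\phi \in A} M \xrightarrow{\ \Sigma\ } M \longrightarrow M/AM \longrightarrow 0,
\end{equation}
where on the $\phi$-component $\Sigma$ acts by $\phi$; this is exact because the image of $\Sigma$ is precisely the submodule generated by all $\phi(v)$, i.e.\ $AM$. (One may equally well use a direct sum indexed by a basis of $A$; the argument is identical.) Applying the right exact functor $\mathsf{F}_{\mathbf{c}}$ and using that $\mathsf{F}_{\mathbf{c}}$ commutes with arbitrary direct sums (Remark \ref{remark colimits Suzuki}), I obtain an exact sequence
\begin{equation} \label{eq: F AM ses}
\bigoplus_{\phi \in A} \mathsf{F}_{\mathbf{c}}(M) \xrightarrow{\ \mathsf{F}_{\mathbf{c}}(\Sigma)\ } \mathsf{F}_{\mathbf{c}}(M) \longrightarrow \mathsf{F}_{\mathbf{c}}(M/AM) \longrightarrow 0.
\end{equation}
Now $\mathsf{F}_{\mathbf{c}}(\Sigma)$ is, by functoriality and compatibility of $\mathsf{F}_{\mathbf{c}}$ with direct sums, the map whose $\phi$-component is $\mathsf{F}_{\mathbf{c}}(\phi)$; hence its image is exactly the $\mathcal{H}_0$-submodule of $\mathsf{F}_{\mathbf{c}}(M)$ generated by all $\mathsf{F}_{\mathbf{c}}(\phi)(w)$, which is $\mathsf{F}_{\mathbf{c}}(A)\mathsf{F}_{\mathbf{c}}(M)$. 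Exactness of \eqref{eq: F AM ses} then yields $\mathsf{F}_{\mathbf{c}}(M/AM) \cong \mathsf{F}_{\mathbf{c}}(M)/\mathsf{F}_{\mathbf{c}}(A)\mathsf{F}_{\mathbf{c}}(M)$, and I would check that this isomorphism is the canonical one induced by $\mathsf{F}_{\mathbf{c}}$ applied to the quotient map $M \twoheadrightarrow M/AM$, so the statement holds as an equality of quotients of $\mathsf{F}_{\mathbf{c}}(M)$.

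The only genuinely delicate point — the ``main obstacle'', though it is more a bookkeeping subtlety than a real difficulty — is the identification of $\mathsf{F}_{\mathbf{c}}(\Sigma)$ with the componentwise map $(\mathsf{F}_{\mathbf{c}}(\phi))_\phi$ and hence of its image with $\mathsf{F}_{\mathbf{c}}(A)\mathsf{F}_{\mathbf{c}}(M)$. This requires knowing that $\mathsf{F}_{\mathbf{c}}$ is additive and preserves coproducts, which is Remark \ref{remark colimits Suzuki} (left adjointness of $\mathsf{F}_{\mathbf{c}}$), and that the natural map $\bigoplus_\phi \mathsf{F}_{\mathbf{c}}(M) \to \mathsf{F}_{\mathbf{c}}(\bigoplus_\phi M)$ identifies the two descriptions of $\mathsf{F}_{\mathbf{c}}(\Sigma)$; this is a routine naturality diagram chase. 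Everything else is the standard ``right exact functors preserve cokernels'' mechanism.
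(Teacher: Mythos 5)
Your proposal is correct and follows essentially the same route as the paper: present $M/AM$ as the cokernel of $\bigoplus_{\phi} M \to M$, apply $\mathsf{F}_{\mathbf{c}}$, and use Remark \ref{remark colimits Suzuki} (preservation of direct sums and cokernels) to identify $\mathsf{F}_{\mathbf{c}}(M/AM)$ with $\mathsf{F}_{\mathbf{c}}(M)/\mathsf{F}_{\mathbf{c}}(A)\mathsf{F}_{\mathbf{c}}(M)$. The only cosmetic difference is that the paper indexes the direct sum over a basis of $A$ while you index over all of $A$, which you already note is immaterial.
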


\begin{proof}
Let $\mathcal{B}$ be a basis of $A$. By definition, 
$M/AM = M/ \sum_{f \in \mathcal{B}} \Ima f$. 
Consider the exact sequence
\[ \bigoplus_{f \in \mathcal{B}} M \xrightarrow{\oplus_{f \in \mathcal{B}} f} M \to M/ \sum_{f \in \mathcal{B}} \Ima f \to 0.\]
By Remark \ref{remark colimits Suzuki}, 
the functor $\mathsf{F}_{\mathbf{c}}$ preserves colimits. In particular, it preserves (possibly infinite) direct sums and cokernels. 
Hence
\begin{align*} \textstyle  \mathsf{F}_{\mathbf{c}}(M/ \sum_{f \in \mathcal{B}} \Ima f) =& \ \mathsf{F}_{\mathbf{c}}(\coker (\oplus_{f \in \mathcal{B}} f))  \\ =& \ \coker (\oplus_{f \in \mathcal{B}} \mathsf{F}_{\mathbf{c}}(f)) = \textstyle \mathsf{F}_{\mathbf{c}}(M)/\sum_{f \in \mathcal{B}} \Ima \mathsf{F}_{\mathbf{c}}(f). \end{align*} 
But $\sum_{f \in \mathcal{B}} \Ima \mathsf{F}_{\mathbf{c}}(f) = \mathsf{F}_{\mathbf{c}}(A) \mathsf{F}_{\mathbf{c}}(M)$. 
\end{proof}

\begin{cor} \label{all simples}
Every simple $\mathcal{H}_0$-module is in the image of the functor $\mathsf{F}_{\mathbf{c}}$. 
\end{cor}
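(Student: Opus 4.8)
The plan is to combine Lemma~\ref{lem: quotients of vermas}, which presents an arbitrary simple $\mathcal{H}_0$-module $L$ as a quotient $\Delta_0(a,\lambda)/I\cdot\Delta_0(a,\lambda)$ for some maximal ideal $I \lhd \End_{\mathcal{H}_0}(\Delta_0(a,\lambda))$, with the surjectivity statement of Corollary~\ref{cor: surjective on endo rings} and the quotient formula of Lemma~\ref{lem: quotients by endos}. First I would fix a simple module $L$ and, using Lemma~\ref{lem: quotients of vermas}, choose $l \geq 1$, $\nu \in \mathcal{C}_l(n)$, $\lambda \in \mathcal{P}_n(\nu)$ and $a \in \h^*$ with $\mathfrak{S}_n(a) = \mathfrak{S}_\nu$ together with a maximal ideal $I \lhd E(a,\lambda) := \End_{\mathcal{H}_0}(\Delta_0(a,\lambda))$ such that $L \cong \Delta_0(a,\lambda)/I\cdot\Delta_0(a,\lambda)$.

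Next I would use Theorem~\ref{thm: standard to Weyl}, which gives $\mathsf{F}_{\mathbf{c}}(\mathbb{W}_{\mathbf{c}}(a,\lambda)) \cong \Delta_0(a,\lambda)$, and Corollary~\ref{cor: surjective on endo rings}, which says that the induced ring homomorphism $\mathsf{F}_{\mathbf{c}} \colon \End_{\UUc}(\mathbb{W}_{\mathbf{c}}(a,\lambda)) \twoheadrightarrow E(a,\lambda)$ is surjective. Since this map is surjective, there is a vector subspace $A \subseteq \End_{\UUc}(\mathbb{W}_{\mathbf{c}}(a,\lambda))$ with $\mathsf{F}_{\mathbf{c}}(A) = I$ (for instance, the preimage of $I$, or any vector-space complement to the kernel inside that preimage). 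Form the $\UUc$-module quotient $M := \mathbb{W}_{\mathbf{c}}(a,\lambda)/A\cdot\mathbb{W}_{\mathbf{c}}(a,\lambda)$. Applying Lemma~\ref{lem: quotients by endos} to $\mathbb{W}_{\mathbf{c}}(a,\lambda)$ and $A$ gives
\[
\mathsf{F}_{\mathbf{c}}(M) = \mathsf{F}_{\mathbf{c}}(\mathbb{W}_{\mathbf{c}}(a,\lambda))\big/\mathsf{F}_{\mathbf{c}}(A)\cdot\mathsf{F}_{\mathbf{c}}(\mathbb{W}_{\mathbf{c}}(a,\lambda)) \cong \Delta_0(a,\lambda)\big/I\cdot\Delta_0(a,\lambda) \cong L,
\]
so $L$ lies in the image of $\mathsf{F}_{\mathbf{c}}$. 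This completes the argument.

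There is essentially no hard analytic step here; the work has all been done in the preparatory results. The one point that requires a moment of care is the identification $\mathsf{F}_{\mathbf{c}}(A)\cdot\mathsf{F}_{\mathbf{c}}(\mathbb{W}_{\mathbf{c}}(a,\lambda)) = I\cdot\Delta_0(a,\lambda)$: one must check that under the isomorphism $\End_{\UUc}(\mathbb{W}_{\mathbf{c}}(a,\lambda)) \to E(a,\lambda)$ the action of $A$ on $\mathbb{W}_{\mathbf{c}}(a,\lambda)$ transports to the action of $I$ on $\Delta_0(a,\lambda)$, which is immediate from the fact that $\mathsf{F}_{\mathbf{c}}$ is a functor and that the isomorphism $\mathsf{F}_{\mathbf{c}}(\mathbb{W}_{\mathbf{c}}(a,\lambda)) \cong \Delta_0(a,\lambda)$ is $\mathcal{H}_0$-linear, hence equivariant for the endomorphism actions. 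I would also remark in passing that the same argument works verbatim with $\mathbb{M}_{\mathbf{c}}(\lambda)$ in place of $\mathbb{W}_{\mathbf{c}}(\lambda)$ when $a=0$ and $\lambda \in \mathcal{P}(n)$, via the second surjection in Corollary~\ref{cor: surjective on endo rings}.
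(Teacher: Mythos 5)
Your argument is correct and is essentially the paper's proof: both reduce to Lemma~\ref{lem: quotients of vermas}, pull $I$ back along the surjection from Corollary~\ref{cor: surjective on endo rings} to get a subspace $J$ (your $A$) of $\End_{\UUc}(\mathbb{W}_{\mathbf{c}}(a,\lambda))$ with $\mathsf{F}_{\mathbf{c}}(J)=I$, and then apply Lemma~\ref{lem: quotients by endos} to identify $\mathsf{F}_{\mathbf{c}}(\mathbb{W}_{\mathbf{c}}(a,\lambda)/J\cdot\mathbb{W}_{\mathbf{c}}(a,\lambda))$ with $L$. The only difference is cosmetic (the extra remark about the equivariance check and the side observation about $\mathbb{M}_{\mathbf{c}}(\lambda)$).
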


\begin{proof}
Let $L$ be a simple $\mathcal{H}_0$-module. By Lemma \ref{lem: quotients of vermas}, there exists a generalized Verma module $\Delta_0(a,\lambda)$ such that $L \cong \Delta_0(a,\lambda)/I\cdot \Delta_0(a,\lambda)$ for some ideal $I \subset \End_{\mathcal{H}_0}(\Delta_0(a,\lambda))$. Let $J := \mathsf{F}_{\mathbf{c}}^{-1}(I) \subset \End_{\UUc} (\mathbb{W}_{\mathbf{c}}(a,\lambda))$. 
Corollary \ref{cor: surjective on endo rings} implies that $\mathsf{F}_{\mathbf{c}}(J) = I$. Hence, by Lemma \ref{lem: quotients by endos}, 
\[ \mathsf{F}_{\mathbf{c}}(\mathbb{W}_{\mathbf{c}}(a,\lambda)/J\cdot\mathbb{W}_{\mathbf{c}}(a,\lambda)) = \Delta_0(a,\lambda)/I\cdot \Delta_0(a,\lambda) \cong L. \qedhere \]
\end{proof}

\begin{rem}
When $\kappa \neq \mathbf{c}$, it has been shown (see \cite[Theorem 4.3]{Suz} and \cite[Theorem A.5.1]{VV}) that, under some mild assumptions, every simple $\mathcal{H}_{\kappa+n}$-module in category $\O(\mathcal{H}_{\kappa+n})$ is in the image of $\mathsf{F}_\kappa$. It is noteworthy that the proofs in \cite{Suz} and \cite{VV} employ very different techniques from those used by us in the $\kappa = \mathbf{c}$ case. 
\end{rem}

\subsection{Restricted Verma and Weyl modules.}

We are going to compute the Suzuki functor on restricted Verma and Weyl modules as well as their simple quotients. 

Consider the algebra $\mathscr{Z}$ from \eqref{centre A} equipped with the natural $\Z$-grading induced from $\mathbf{U}_{\mathbf{c}}(\hat{\g})$. In \cite[\S 3.2]{AF2}, Arakawa and Fiebig consider the \emph{restriction functor} 
\begin{equation} \label{restriction functor AF} \mathscr{C}_{\mathbf{c}} \to \mathscr{C}_{\mathbf{c}}, \quad M \mapsto \overline{M} := M / \sum_{0 \neq i \in \Z} \mathscr{Z}_i \cdot M. \end{equation}
This functor is right exact because it is left adjoint to the invariants functor $M \mapsto \underline{M} := \{ m \in M \mid z \cdot m = 0 \mbox{ for all } z \in \mathscr{Z}_i, i \neq 0\}$. Given $\mu \in \t^*$, in \cite[\S 3.5]{AF2}, Arakawa and Fiebig define the corresponding \emph{restricted Verma module}~as $\overline{\mathbb{M}}_{\mathbf{c}}(\mu)$. By \cite[Lemma 3.5]{AF2}, 
\[ \overline{\mathbb{M}}_{\mathbf{c}}(\mu) = \mathbb{M}_{\mathbf{c}}(\mu)/\mathscr{Z}_{-} \cdot \mathbb{M}_{\mathbf{c}}(\mu), \]
where $\mathscr{Z}_{-} = \bigoplus_{i < 0} \mathscr{Z}_i$. 

Consider $\mathbb{M}_{\mathbf{c}}(\mu)$ as a graded $\hat{\g}_{\mathbf{c}}$-module with the subspace $\C_{\lambda,1} \subset  \Ind^{\hat{\g}_{\kappa}}_{\hat{\b}_+}\C_{\lambda,1}$ lying in degree zero (or, equivalently, as a module over the Kac-Moody algebra $\hat{\g}_{\mathbf{c}} \rtimes \C\mathbf{d}$, where $[\mathbf{d},X[n]] = n \cdot X[n]$ for $X \in \g$, with $\mathbf{d}$ acting by zero on $\C_{\lambda,1}$). It is known (see, e.g., \cite[Proposition 9.2.c)]{Kac} that $\mathbb{M}_{\mathbf{c}}(\mu)$ has a unique graded simple quotient $\mathbb{L}(\mu)$. 

\begin{lem} \label{lemma non dom simples}
If $\mu \notin \mathcal{P}(n) \subset \t^*$ then $\mathsf{F}_{\mathbf{c}}(\mathbb{L}(\mu)) = 0$.  
\end{lem}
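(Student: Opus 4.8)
The plan is to derive this at once from Theorem~\ref{thm: Vermas to standards} together with the right exactness of the Suzuki functor, so the argument is very short. First I would recall that, by construction, $\mathbb{L}(\mu)$ is the \emph{unique graded simple quotient} of the Verma module $\mathbb{M}_{\mathbf{c}}(\mu)$; in particular there is a surjective homomorphism of $\widehat{\mathbf{U}}_{\mathbf{c}}$-modules
\[ \mathbb{M}_{\mathbf{c}}(\mu) \twoheadrightarrow \mathbb{L}(\mu). \]
(Both modules lie in $\mathscr{C}_{\mathbf{c}}$, being objects of category $\O$, so the original realization of $\mathsf{F}_{\mathbf{c}}$ via $H_0(\g[t],-)$ applies; equivalently one may invoke Definition~\ref{general definition of Suzuki} on all of $\widehat{\mathbf{U}}_{\mathbf{c}}\Lmod{}$.) Applying $\mathsf{F}_{\mathbf{c}}$, which preserves colimits and hence epimorphisms (Remark~\ref{remark colimits Suzuki}; see also the right exactness established in \S\ref{sec: ext to all modules}), one obtains a surjection
\[ \mathsf{F}_{\mathbf{c}}(\mathbb{M}_{\mathbf{c}}(\mu)) \twoheadrightarrow \mathsf{F}_{\mathbf{c}}(\mathbb{L}(\mu)). \]

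Second, since $n=m$ throughout this section, the hypothesis $\mu \notin \mathcal{P}(n)$ is exactly the statement $\mu \notin \mathcal{P}_n(m)$, so Theorem~\ref{thm: Vermas to standards} gives $\mathsf{F}_{\mathbf{c}}(\mathbb{M}_{\mathbf{c}}(\mu)) = 0$. Combined with the surjection above, this forces $\mathsf{F}_{\mathbf{c}}(\mathbb{L}(\mu)) = 0$, which is the desired conclusion.

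I do not expect any genuine obstacle here: the only points that deserve an explicit line are (i) that $\mathsf{F}_{\mathbf{c}}$ carries the surjection $\mathbb{M}_{\mathbf{c}}(\mu)\twoheadrightarrow\mathbb{L}(\mu)$ to a surjection, which is immediate from right exactness, and (ii) that the identification $\mathcal{P}(n) = \mathcal{P}_n(m)$ under the standing assumption $m=n$ matches the nonvanishing criterion of Theorem~\ref{thm: Vermas to standards}. No new computation is required; the lemma is a formal consequence of results already proved.
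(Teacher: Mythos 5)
Your argument is correct and is essentially the paper's own proof: apply Theorem~\ref{thm: Vermas to standards} to see $\mathsf{F}_{\mathbf{c}}(\mathbb{M}_{\mathbf{c}}(\mu))=0$, then use right exactness of $\mathsf{F}_{\mathbf{c}}$ on the quotient map $\mathbb{M}_{\mathbf{c}}(\mu)\twoheadrightarrow\mathbb{L}(\mu)$. You have merely spelled out the two implicit steps (the surjection from the Verma module and the matching of $\mathcal{P}(n)$ with $\mathcal{P}_n(m)$ under $n=m$) that the paper leaves tacit.
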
 

\begin{proof}
By Theorem \ref{thm: Vermas to standards}, the module $\mathbb{M}_{\mathbf{c}}(\mu)$ is killed by $\mathsf{F}_{\mathbf{c}}$. Since $\mathsf{F}_{\mathbf{c}}$ is right exact, its quotient $\mathbb{L}(\mu)$ is killed as well.
\end{proof}

We also consider $\Delta_0(\lambda)$, for $\lambda \in \mathcal{P}(n)$, as a graded $\mathcal{H}_0$-module. It follows from \cite[Proposition 4.3]{Gor2} that $\Delta_0(\lambda)$ has a unique graded simple quotient $L_\lambda$ (not to be confused with $L(\lambda)$ from \S \ref{subsec: SW duality}). 
 
\begin{cor} \label{cor: res Vermas}
Let $\lambda \in \mathcal{P}(n)$. Then $\mathsf{F}_{\mathbf{c}}(\overline{\mathbb{M}}_{\mathbf{c}}(\lambda)) \cong \mathsf{F}_{\mathbf{c}}(\mathbb{L}(\lambda)) \cong L_\lambda$. 
\end{cor}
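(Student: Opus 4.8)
The plan is to reduce the computation of $\mathsf{F}_{\mathbf{c}}$ on $\overline{\mathbb{M}}_{\mathbf{c}}(\lambda)$ to the already-known value on $\mathbb{M}_{\mathbf{c}}(\lambda)$ via the presentation $\overline{\mathbb{M}}_{\mathbf{c}}(\lambda) = \mathbb{M}_{\mathbf{c}}(\lambda)/\mathscr{Z}_{-}\cdot\mathbb{M}_{\mathbf{c}}(\lambda)$ recalled just above, and then to identify the resulting quotient of $\Delta_0(\lambda)$ with $L_\lambda$. First I would observe that, since $\mathbb{M}_{\mathbf{c}}(\lambda) \in \mathscr{C}_{\mathbb{H}}$ by Lemma \ref{CH Weyls in}.a) (for $\lambda \in \mathcal{P}(n)$), Corollary \ref{cor: endos vs functor} applies: the endomorphisms of $\mathbb{M}_{\mathbf{c}}(\lambda)$ given by the action of elements $z \in \mathscr{Z}_{-} \subset \mathscr{Z} \subset \mathfrak{Z}$ are carried by $\mathsf{F}_{\mathbf{c}}$ to the endomorphisms of $\mathsf{F}_{\mathbf{c}}(\mathbb{M}_{\mathbf{c}}(\lambda)) \cong \Delta_0(\lambda)$ given by the action of $\Theta(z) \in \mathcal{Z}$. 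Combining this with Lemma \ref{lem: quotients by endos} (applied to the subspace $A = \mathscr{Z}_{-}$ of $\End_{\UUc}(\mathbb{M}_{\mathbf{c}}(\lambda))$, using that the $\mathscr{Z}_{-}$-action factors through such endomorphisms, which holds because $\mathscr{Z}_{-}$ is central and acts on the cyclic module $\mathbb{M}_{\mathbf{c}}(\lambda)$) gives
\[ \mathsf{F}_{\mathbf{c}}(\overline{\mathbb{M}}_{\mathbf{c}}(\lambda)) = \mathsf{F}_{\mathbf{c}}(\mathbb{M}_{\mathbf{c}}(\lambda)) / \Theta(\mathscr{Z}_{-}) \cdot \mathsf{F}_{\mathbf{c}}(\mathbb{M}_{\mathbf{c}}(\lambda)) \cong \Delta_0(\lambda)/\Theta(\mathscr{Z}_{-})\cdot\Delta_0(\lambda). \]

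The next step is to identify the ideal $\Theta(\mathscr{Z}_{-})\mathcal{Z} \triangleleft \mathcal{Z}$, or rather its image in $\End_{\mathcal{H}_0}(\Delta_0(\lambda))$. Here I would use the explicit formulas of Theorem \ref{thm: Psi is surj}: the graded generators of $\mathscr{Z}_{-}$ are the Segal-Sugawara operators $\mathbf{T}_{k,l}$ of negative degree, and parts (i), (iv), (v) together with (ii)-(iii) describe $\Theta$ on all of them. In particular $\Theta(\mathbf{T}_{k,l}) = 0$ for $l < -2k$, and the remaining negative-degree operators map (up to principal symbol) to multisymmetric power sums $\mathsf{p}_{a,b}$ with $a < b$, i.e.\ into the part of $\gr\mathcal{Z} = \C[\h\oplus\h^*]^{\mathfrak{S}_n}$ of negative degree for the grading $\deg x = -1, \deg y = 1$. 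Since $\Theta$ is filtered/graded-compatible (as set up in \S \ref{assoc graded sec}), $\Theta(\mathscr{Z}_{-})$ lands in the negative-degree part $\mathcal{Z}_{<0}$ of $\mathcal{Z}$, and by surjectivity of $\gr\Theta$ onto $\gr\mathcal{Z}$ it generates the ideal $\mathcal{Z}_{<0}\mathcal{Z}$ (equivalently, the augmentation ideal of $\C[\h^*]^{\mathfrak{S}_n}$ generated by positive-degree elements). Quotienting $\Delta_0(\lambda)$ by this ideal is precisely the graded-restriction operation on the Cherednik side, whose result is identified with $L_\lambda$ by \cite{Gor2} — or, more elementarily, $\Delta_0(\lambda)/\mathcal{Z}_{<0}\Delta_0(\lambda)$ is a finite-dimensional graded quotient of $\Delta_0(\lambda)$ with one-dimensional head, forcing it to be $L_\lambda$ once one checks it is nonzero (nonzero because $\mathbf{e}\Delta_0(\lambda)$ is free of rank one over $\End_{\mathcal{H}_0}(\Delta_0(\lambda))$ by Theorem \ref{thm 2 bellamy}.c), so the quotient by the augmentation ideal is one-dimensional and survives).

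Finally, for the Weyl module statement and the simple-module statement: I would run the same argument with $\mathbb{W}_{\mathbf{c}}(\lambda)$ in place of $\mathbb{M}_{\mathbf{c}}(\lambda)$, using $\mathsf{F}_{\mathbf{c}}(\mathbb{W}_{\mathbf{c}}(\lambda)) \cong \Delta_0(\lambda)$ (Theorem \ref{thm: standard to Weyl}) and $\mathbb{W}_{\mathbf{c}}(\lambda) \in \mathscr{C}_{\mathbb{H}}$ (Lemma \ref{CH Weyls in}.b), noting that $\overline{\mathbb{W}}_{\mathbf{c}}(\lambda) = \mathbb{W}_{\mathbf{c}}(\lambda)/\mathscr{Z}_{-}\mathbb{W}_{\mathbf{c}}(\lambda)$ by the same computation of \cite{AF2}; and for $\mathbb{L}(\lambda)$, since $\mathbb{M}_{\mathbf{c}}(\lambda) \twoheadrightarrow \mathbb{L}(\lambda)$ and $\overline{\mathbb{M}}_{\mathbf{c}}(\lambda) \twoheadrightarrow$ (the restricted version of) $\mathbb{L}(\lambda)$, right-exactness of $\mathsf{F}_{\mathbf{c}}$ forces $\mathsf{F}_{\mathbf{c}}(\mathbb{L}(\lambda))$ to be a nonzero (by the above) quotient of $L_\lambda$, hence $L_\lambda$ itself since $L_\lambda$ is simple. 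The main obstacle I anticipate is the bookkeeping in the middle step: making precise that the ideal generated by $\Theta(\mathscr{Z}_{-})$ inside $\mathcal{Z}$ is exactly the positive-degree augmentation ideal of $\C[\h^*]^{\mathfrak{S}_n}\subset\mathcal{Z}$ (not something smaller), which requires combining the explicit images in Theorem \ref{thm: Psi is surj} with Proposition \ref{diag inv gens} and a careful degree count — and verifying that $\Delta_0(\lambda)/(\text{that ideal})\Delta_0(\lambda)$ is the graded simple quotient rather than merely a graded quotient with simple head; this is where invoking \cite{Gor2} (cited in the excerpt) does the real work.
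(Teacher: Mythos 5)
Your reduction via the presentation $\overline{\mathbb{M}}_{\mathbf{c}}(\lambda) = \mathbb{M}_{\mathbf{c}}(\lambda)/\mathscr{Z}_{-}\cdot\mathbb{M}_{\mathbf{c}}(\lambda)$, Corollary \ref{cor: endos vs functor} and Lemma \ref{lem: quotients by endos} is the same starting move as the paper's, but there are two real gaps in the way you try to finish.

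\textbf{First gap: identifying the ideal.} After reducing to $\Delta_0(\lambda)/\Theta(\mathscr{Z}_-)\cdot\Delta_0(\lambda)$, you propose to pin down the ideal of $\mathcal{Z}$ generated by $\Theta(\mathscr{Z}_-)$ by an explicit degree count with the formulas of Theorem \ref{thm: Psi is surj} and Proposition \ref{diag inv gens}. This is where the argument stalls. Surjectivity of $\Theta$ plus $\Theta(\mathscr{Z}_-)\subseteq\mathcal{Z}_{<0}$ does not give you $\Theta(\mathscr{Z}_-)\cdot\mathcal{Z} = \mathcal{Z}_{<0}\cdot\mathcal{Z}$; the claim ``by surjectivity of $\gr\Theta$'' is a non-sequitur because $\gr\Theta$ in \S\ref{assoc graded sec} is taken with respect to the \emph{filtrations} (PBW on $\mathcal{Z}$, absolute-height on the source), not the $\Z$-\emph{grading} that defines $\mathscr{Z}_{-}$ and $E_\lambda^-$, so that surjectivity statement says nothing degreewise. (Your degree bookkeeping is also inverted: with $\deg x_i=-1$, $\deg y_i=1$, the images $\Theta(\mathbf{T}_{1,l})=\mathsf{p}_{l+1,0}$ for $l\ge 0$ lie in $\C[\h]^{\mathfrak{S}_n}$ and have negative degree; they are not in the augmentation ideal of $\C[\h^*]^{\mathfrak{S}_n}$.) The paper sidesteps all of this: it never identifies the ideal inside $\mathcal{Z}$, but works with $\mathbb{E}_\lambda := \Ima\,\mathscr{Z}\subseteq\End_{\UUc}(\mathbb{M}_{\mathbf{c}}(\lambda))$ and $E_\lambda := \End_{\mathcal{H}_0}(\Delta_0(\lambda))$, both $\Z_{\leq 0}$-graded. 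Corollary \ref{cor: surjective on endo rings} shows $\mathsf{F}_{\mathbf{c}}|_{\mathbb{E}_\lambda}\colon\mathbb{E}_\lambda\to E_\lambda$ is surjective and (by Proposition \ref{pro: coinv ind grad}) graded, so it carries the maximal graded ideal $\mathbb{E}_\lambda^-$ onto $E_\lambda^-$. Lemma \ref{lem: quotients by endos} then gives $\mathsf{F}_{\mathbf{c}}(\overline{\mathbb{M}}_{\mathbf{c}}(\lambda)) = \Delta_0(\lambda)/E_\lambda^-\Delta_0(\lambda)$, and the identification of this quotient with $L_\lambda$ follows as in the proof of Lemma \ref{lem: quotients of vermas}. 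No explicit computation with $\Theta$ on Segal--Sugawara operators is used.

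\textbf{Second gap: $\mathsf{F}_{\mathbf{c}}(\mathbb{L}(\lambda))$.} Right-exactness only tells you $\mathsf{F}_{\mathbf{c}}(\mathbb{L}(\lambda))$ is a \emph{quotient} of $L_\lambda$, hence zero or $L_\lambda$. Your parenthetical ``(by the above)'' does not supply the missing nonvanishing; $\mathsf{F}_{\mathbf{c}}(\overline{\mathbb{M}}_{\mathbf{c}}(\lambda))\neq 0$ is not enough, since the map $\mathsf{F}_{\mathbf{c}}(K)\to L_\lambda$ could a priori be surjective, where $K = \ker(\overline{\mathbb{M}}_{\mathbf{c}}(\lambda)\twoheadrightarrow\mathbb{L}(\lambda))$. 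The paper closes this by invoking \cite[Lemma 4.2(5)]{AF1}: $K$ admits a (possibly infinite) filtration with subquotients being graded shifts of $\mathbb{L}(w\cdot\lambda)$ with $e\neq w\in\mathfrak{S}_n$, all with non-dominant highest weight, hence killed by $\mathsf{F}_{\mathbf{c}}$ by Lemma \ref{lemma non dom simples} (which rests on Theorem \ref{thm: Vermas to standards}); right-exactness then forces $\mathsf{F}_{\mathbf{c}}(K)=0$, whence $\mathsf{F}_{\mathbf{c}}(\overline{\mathbb{M}}_{\mathbf{c}}(\lambda))\cong\mathsf{F}_{\mathbf{c}}(\mathbb{L}(\lambda))$. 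This structural input about restricted Verma modules is the idea you are missing.
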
 

\begin{proof}
Consider the short exact sequence 
\begin{equation} \label{K-filtration} 0 \to K \to \overline{\mathbb{M}}_{\mathbf{c}}(\lambda) \to \mathbb{L}(\lambda) \to 0. \end{equation} 
By \cite[Proposition 3.1]{DGK}, $K$ has a (possibly infinite) filtration 
\[ 0 = K_0 \subset K_1 \subset K_2 \subset ... \]
by submodules $K_i$ such that $K = \colim K_i$, and each $K_{i+1}/K_i$ is a graded shift of a highest weight module of some weight $\mu_i$. Next, it follows from \cite[Theorem 4.7(4)]{AF1} that none of the weights $\mu_i$ are equal to $\lambda$. Moreover, \cite[Lemma 4.2(5)]{AF1} implies that each $\mu_i$ is equal to $w \cdot \lambda= w(\lambda + \rho) - \rho$ with $e \neq w \in \mathfrak{S}_n$. In particular, none of the weights $\mu_i$ are dominant. 

Since $K_i$ is a graded shift of a highest weight module of weight $\mu_i$, there exists a surjection $\mathbb{M}_{\mathbf{c}}(\mu_i)[k] \twoheadrightarrow K_i$ for some $k \in \Z$. Because $\mu_i$ is not dominant, Theorem \ref{thm: Vermas to standards} implies that $\mathbb{M}_{\mathbf{c}}(\mu_i)[k]$ is killed by $\mathsf{F}_{\mathbf{c}}$. The right exactness of $\mathsf{F}_{\mathbf{c}}$, therefore, implies that $K_i$ is killed as well. 

It follows that every submodule in the filtration \eqref{K-filtration} is annihilated by $\mathsf{F}_{\mathbf{c}}$. However, Definition \ref{general definition of Suzuki} implies that $\mathsf{F}_{\mathbf{c}}$ preserves colimits. Therefore, $\mathsf{F}_{\mathbf{c}}(K) = \colim \mathsf{F}_{\mathbf{c}}(K_i) = \colim 0 = 0$. Hence, by another application of right exactness, we get that $\mathsf{F}_{\mathbf{c}}(\overline{\mathbb{M}}_{\mathbf{c}}(\lambda)) \cong \mathsf{F}_{\mathbf{c}}(\mathbb{L}(\lambda))$.

We next prove that $\mathsf{F}_{\mathbf{c}}(\overline{\mathbb{M}}_{\mathbf{c}}(\lambda)) \cong L_\lambda$.
Abbreviate $\mathbb{E}_\lambda := \Ima \mathscr{Z} \subset \End_{\UUc}(\mathbb{M}_{\mathbf{c}}(\lambda))$ and $E_\lambda := \End_{\mathcal{H}_0}(\Delta_0(\lambda))$. These rings are $\Z_{\leq0}$-graded. 
Let $\mathbb{E}_\lambda^- \lhd \mathbb{E}_\lambda$ and $E_\lambda^- \lhd E_\lambda$ denote their maximal graded ideals. 
It follows from the proof of Corollary \ref{cor: surjective on endo rings} that the restriction of \eqref{eq: End standard Verma} to $\mathbb{E}_\lambda$ is surjective (in fact, by \cite[Theorem 9.5.3]{Fre}, $\mathbb{E}_\lambda = \End_{\UUc}(\mathbb{M}_{\mathbf{c}}(\lambda))$, but we do not need to use this fact). Since \eqref{eq: End standard Verma} is a graded homomorphism, it follows that $\mathsf{F}_{\mathbf{c}}(\mathbb{E}_\lambda^-) = E_\lambda^-$. Therefore,  Lemma \ref{lem: quotients by endos} implies that 
\[ \mathsf{F}_{\mathbf{c}}(\overline{\mathbb{M}}_{\mathbf{c}}(\lambda)) = \mathsf{F}_{\mathbf{c}}(\mathbb{M}_{\mathbf{c}}(\lambda)/\mathbb{E}_\lambda^- \cdot \mathbb{M}_{\mathbf{c}}(\lambda)) = \Delta_0(\lambda)/E_\lambda^- \cdot \Delta_0(\lambda). \] 
Arguing as in the proof of Lemma \ref{lem: quotients of vermas}, one concludes that $\Delta_0(\lambda)/E_\lambda \cdot \Delta_0(\lambda) = L_\lambda$. 
\end{proof}

Given $\lambda \in \mathcal{P}(n)$, we define the corresponding \emph{restricted Weyl module} to be $\overline{\mathbb{W}}_{\mathbf{c}}(\lambda)$. Since $\mathscr{Z}_{+} = \bigoplus_{i > 0} \mathscr{Z}_i$ annihilates $\mathbb{W}_{\mathbf{c}}(\lambda)$, we have 
\[ \overline{\mathbb{W}}_{\mathbf{c}}(\lambda) = \mathbb{W}_{\mathbf{c}}(\lambda)/\mathscr{Z}_{-} \cdot \mathbb{W}_{\mathbf{c}}(\lambda). \] 

\begin{cor} \label{cor: restricted Weyl calc}
Let $\lambda \in \mathcal{P}(n)$. Then $\mathsf{F}_{\mathbf{c}}(\overline{\mathbb{W}}_{\mathbf{c}}(\lambda)) \cong L_\lambda$. 
\end{cor}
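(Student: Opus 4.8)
The strategy is to mimic the proof of Corollary \ref{cor: res Vermas}, replacing the restricted Verma module by the restricted Weyl module and exploiting the fact that both $\overline{\mathbb{M}}_{\mathbf{c}}(\lambda)$ and $\overline{\mathbb{W}}_{\mathbf{c}}(\lambda)$ are obtained from their unrestricted counterparts by killing the negative part of the relevant endomorphism-ring action. First I would recall from Lemma \ref{CH Weyls in} that $\mathbb{W}_{\mathbf{c}}(\lambda)$ lies in $\mathscr{C}_{\mathbb{H}}$, so Corollary \ref{cor: endos vs functor} applies and $\mathsf{F}_{\mathbf{c}}$ intertwines the $\mathfrak{Z}$-action on $\mathbb{W}_{\mathbf{c}}(\lambda)$ with the $\mathcal{Z}$-action on $\mathsf{F}_{\mathbf{c}}(\mathbb{W}_{\mathbf{c}}(\lambda)) \cong \Delta_0(\lambda)$ (Theorem \ref{thm: standard to Weyl}). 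In particular, writing $\mathbb{E}_\lambda := \Ima \mathscr{Z} \subseteq \End_{\UUc}(\mathbb{W}_{\mathbf{c}}(\lambda))$ and $E_\lambda := \End_{\mathcal{H}_0}(\Delta_0(\lambda))$, both $\Z_{\leq 0}$-graded, the induced homomorphism $\mathsf{F}_{\mathbf{c}} \colon \mathbb{E}_\lambda \to E_\lambda$ is graded and, by the argument of Corollary \ref{cor: surjective on endo rings} together with Theorem \ref{thm: Psi is surj}, surjective; hence it sends the maximal graded ideal $\mathbb{E}_\lambda^-$ onto $E_\lambda^-$.

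Next I would observe that since $\mathscr{Z}_+$ already annihilates $\mathbb{W}_{\mathbf{c}}(\lambda)$, we have $\overline{\mathbb{W}}_{\mathbf{c}}(\lambda) = \mathbb{W}_{\mathbf{c}}(\lambda)/\mathscr{Z}_- \cdot \mathbb{W}_{\mathbf{c}}(\lambda) = \mathbb{W}_{\mathbf{c}}(\lambda)/\mathbb{E}_\lambda^- \cdot \mathbb{W}_{\mathbf{c}}(\lambda)$, exactly as in the restricted Verma case. Applying Lemma \ref{lem: quotients by endos} with $M = \mathbb{W}_{\mathbf{c}}(\lambda)$ and $A = \mathbb{E}_\lambda^-$, and using $\mathsf{F}_{\mathbf{c}}(\mathbb{E}_\lambda^-) = E_\lambda^-$, yields
\[ \mathsf{F}_{\mathbf{c}}(\overline{\mathbb{W}}_{\mathbf{c}}(\lambda)) = \mathsf{F}_{\mathbf{c}}(\mathbb{W}_{\mathbf{c}}(\lambda))/\mathsf{F}_{\mathbf{c}}(\mathbb{E}_\lambda^-)\mathsf{F}_{\mathbf{c}}(\mathbb{W}_{\mathbf{c}}(\lambda)) = \Delta_0(\lambda)/E_\lambda^- \cdot \Delta_0(\lambda). \]
Finally, the quotient $\Delta_0(\lambda)/E_\lambda^- \cdot \Delta_0(\lambda)$ was already identified with $L_\lambda$ inside the proof of Corollary \ref{cor: res Vermas} (it is $\Delta_0(\lambda)/E_\lambda \cdot \Delta_0(\lambda)$ after noting the degree-zero part of $E_\lambda$ is just scalars, so killing $E_\lambda^-$ and killing $E_\lambda$ give the same module), so $\mathsf{F}_{\mathbf{c}}(\overline{\mathbb{W}}_{\mathbf{c}}(\lambda)) \cong L_\lambda$, which is the claim.

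The only genuinely delicate point is verifying that the restriction of the induced map $\mathsf{F}_{\mathbf{c}} \colon \End_{\UUc}(\mathbb{W}_{\mathbf{c}}(\lambda)) \to E_\lambda$ to $\mathbb{E}_\lambda = \Ima\mathscr{Z}$ is surjective and graded: surjectivity comes from factoring through $\mathfrak{Z} \xrightarrow{\Theta} \mathcal{Z} \to E_\lambda$ (Corollary \ref{cor: endos vs functor}, Theorem \ref{thm: Psi is surj}, Theorem \ref{thm 2 bellamy}.b)), and gradedness from Proposition \ref{pro: coinv ind grad} applied to the graded structure on $\mathbb{W}_{\mathbf{c}}(\lambda)$ coming from $\mathbf{U}_{\mathbf{c}}(\tilde{\g})\Gmod{}$, exactly parallel to the Verma case; the identification of the relevant grading conventions so that $\mathscr{Z}_- \cdot \mathbb{W}_{\mathbf{c}}(\lambda) = \mathbb{E}_\lambda^- \cdot \mathbb{W}_{\mathbf{c}}(\lambda)$ is the main thing to check carefully. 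Everything else is a direct transcription of the restricted Verma argument.
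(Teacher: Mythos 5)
Your argument is correct (modulo one confused parenthetical remark, see below), but it takes a genuinely different route from the paper. The paper instead uses the surjection $\mathbb{M}_{\mathbf{c}}(\lambda) \twoheadrightarrow \mathbb{W}_{\mathbf{c}}(\lambda)$ induced by $M(\lambda) \twoheadrightarrow L(\lambda)$: it shows that $\mathsf{F}_{\mathbf{c}}$ annihilates the kernel $K$, deduces $\mathsf{F}_{\mathbf{c}}(\overline{K})=0$, and then applies right-exactness to $\overline{K} \to \overline{\mathbb{M}}_{\mathbf{c}}(\lambda) \to \overline{\mathbb{W}}_{\mathbf{c}}(\lambda) \to 0$, importing the already-proved $\mathsf{F}_{\mathbf{c}}(\overline{\mathbb{M}}_{\mathbf{c}}(\lambda)) \cong L_\lambda$ from Corollary~\ref{cor: res Vermas}. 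You instead re-run the Verma argument verbatim on the Weyl side: identify $\overline{\mathbb{W}}_{\mathbf{c}}(\lambda) = \mathbb{W}_{\mathbf{c}}(\lambda)/\mathbb{E}_\lambda^- \cdot \mathbb{W}_{\mathbf{c}}(\lambda)$, push the maximal graded ideal through $\mathsf{F}_{\mathbf{c}}$ via Lemma~\ref{lem: quotients by endos}, and land on $\Delta_0(\lambda)/E_\lambda^-\Delta_0(\lambda) = L_\lambda$. Your route is more self-contained and makes the structural parallel with the Verma case transparent, at the cost of having to verify the grading bookkeeping again (notably that $(\mathbb{E}_\lambda)_0 = \C$, which follows because the degree-zero Segal--Sugawara operators are central and act on the cyclic module $\mathbb{W}_{\mathbf{c}}(\lambda)$ by the scalar they give on $L(\lambda)$, so that $\mathbb{E}_\lambda^- \cdot \mathbb{W}_{\mathbf{c}}(\lambda) = \mathscr{Z}_-\cdot\mathbb{W}_{\mathbf{c}}(\lambda)$). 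The paper's route is shorter because it recycles the Verma result, though its claim $\mathsf{F}_{\mathbf{c}}(K)=0$ implicitly relies on the vanishing $L^1\mathsf{F}_{\mathbf{c}}(\mathbb{W}_{\mathbf{c}}(\lambda))=0$ of Proposition~\ref{pro: F exact on delta}.

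One small correction: your parenthetical ``killing $E_\lambda^-$ and killing $E_\lambda$ give the same module'' is false as stated --- $E_\lambda$ contains the identity, so $E_\lambda \cdot \Delta_0(\lambda) = \Delta_0(\lambda)$ and that quotient is zero. The paper has a typo there: it should read $\Delta_0(\lambda)/E_\lambda^- \cdot \Delta_0(\lambda) = L_\lambda$, which is what you actually use, and which follows from the argument of Lemma~\ref{lem: quotients of vermas} because $E_\lambda^-$ is a maximal ideal of $E_\lambda$ (as $(E_\lambda)_0 = \C$). So simply delete the parenthetical and the rest of your argument stands.
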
 

\begin{proof}
Let $M(\lambda)$ denote the Verma module over $\g$ with highest weight $\lambda$. The canonical surjection $M(\lambda) \twoheadrightarrow L(\lambda)$ induces a surjection $\mathbb{M}_{\mathbf{c}}(\lambda) =  \Ind^{\hat{\g}_{\kappa}}_{\hat{\g}_+} M(\lambda) \twoheadrightarrow \Ind^{\hat{\g}_{\kappa}}_{\hat{\g}_+} L(\lambda) = \mathbb{W}_{\mathbf{c}}(\lambda)$. Let $K$ denote its kernel. The functor $\mathsf{F}_{\mathbf{c}}$ sends the exact sequence $0 \to K \to \mathbb{M}_{\mathbf{c}}(\lambda) \to \mathbb{W}_{\mathbf{c}}(\lambda) \to 0$ to the exact sequence $\mathsf{F}_{\mathbf{c}}(K) \to \Delta_0(\lambda) \xrightarrow{f} \Delta_0(\lambda) \to 0$. But $\Delta_0(\lambda)$ is a cyclic $\mathcal{H}_0$-module, so $f$ must be an isomorphism. It follows that $\mathsf{F}_{\mathbf{c}}(K) = 0$. Moreover, $\mathsf{F}_{\mathbf{c}}(\overline{K}) = 0$ because $\overline{K}$ is a quotient of $K$. 

Since the restriction functor \eqref{restriction functor AF} is right exact, we also have an exact sequence $\overline{K} \to \overline{\mathbb{M}}_{\mathbf{c}}(\lambda) \to \overline{\mathbb{W}}_{\mathbf{c}}(\lambda) \to 0$. The functor $\mathsf{F}_{\mathbf{c}}$ sends it to the exact sequence $0=\mathsf{F}_{\mathbf{c}}(\overline{K}) \to L_\lambda \to \mathsf{F}_{\mathbf{c}}(\overline{\mathbb{W}}_{\mathbf{c}}(\lambda)) \to 0$ because $\mathsf{F}_{\mathbf{c}}(\overline{\mathbb{M}}_{\mathbf{c}}(\lambda)) \cong L_\lambda$, by Corollary \ref{cor: res Vermas}. It follows that $\mathsf{F}_{\mathbf{c}}(\overline{\mathbb{W}}_{\mathbf{c}}(\lambda)) \cong L_\lambda$. 
\end{proof}

\subsection{Poisson brackets.} 

Suppose that $A$ is an algebraic deformation of an associative algebra $A_0$, i.e., $A$ is a free $\C[\hbar]$-algebra such that $A/\hbar A = A_0$. Then there is a canonical Poisson bracket on $Z(A_0)$, called the Hayashi bracket, given by
\[ \{a,b\} := \frac{1}{\hbar}[\tilde{a},\tilde{b}] \mod \hbar,\] 
where $\tilde{a},\tilde{b}$ are arbitrary lifts of $a$ and $b$, respectively. This Poisson bracket was introduced by Hayashi in  \cite{Hay}. 
Applying this construction to $\UUc$ and $\mathcal{H}_t$, we get Poisson brackets on $\mathfrak{Z}$ and $\mathcal{Z}$. 

\begin{lem} \label{lemma Heis Virasoro}
The vector space spanned by $1$, $\id[r]$ and ${}^{\mathbf{c}}\mathbf{L}_r$ is, under the Poisson bracket, a Lie subalgebra of $\mathfrak{Z}$ isomorphic to the semidirect product of the Heisenberg algebra with the Virasoro algebra. 
Moreover, the subspace spanned by $\id[r]$ and ${}^{\mathbf{c}}\mathbf{L}_{r+1}$ $(r \leq 0)$ is a Lie subalgebra. 
\end{lem}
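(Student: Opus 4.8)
## Proof proposal for Lemma \ref{lemma Heis Virasoro}

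The plan is to reduce the claim to a computation inside the commutative algebra $\mathfrak{Z} = Z(\widehat{\mathbf{U}}_{\mathbf{c}})$, equipped with the Hayashi bracket coming from the deformation $\widehat{\mathbf{U}}_{\C[t]}$ of $\widehat{\mathbf{U}}_{\mathbf{c}}$ (equivalently, the deformation $\widehat{\mathbf{U}}_\kappa$ as $\kappa \to \mathbf{c}$, with $\hbar = \kappa + n$). First I would recall that for $\kappa \neq \mathbf{c}$ the elements $\id[r]$ and $\frac{1}{\kappa+n}\,{}^\kappa\mathbf{L}_{r+1}$ no longer lie in the centre, but their commutators with arbitrary elements of $\widehat{\mathbf{U}}_\kappa$ are governed by the Segal-Sugawara/Heisenberg structure: by the Proposition in \S\ref{subsec:quadraticSSvector}, $[\frac{1}{\kappa+n}\,{}^\kappa\mathbf{L}_{-1}, X\otimes f] = -X\otimes\partial_t f$, and more generally the ${}^\kappa\mathbf{L}_r$ satisfy the Virasoro-type operator product expansion with central term proportional to $\kappa+n$ (modulo the critical shift), while the $\id[r]$ span a Heisenberg subalgebra with central term $\langle\id,\id\rangle_\kappa\, r\,\delta_{r+s,0} = n(\kappa+n)\,r\,\delta_{r+s,0} $ relative to $\Tr_\g$. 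Thus, choosing the lifts $\widetilde{\id[r]} := \id[r]$ and $\widetilde{{}^{\mathbf{c}}\mathbf{L}_r} := {}^\kappa\mathbf{L}_r$ in $\widehat{\mathbf{U}}_\kappa$, one divides the relevant commutators by $\hbar = \kappa+n$ and lets $\kappa \to \mathbf{c}$.

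The key steps are then: (i) record the three families of commutators in $\widehat{\mathbf{U}}_\kappa$, namely $[\id[r],\id[s]]$, $[{}^\kappa\mathbf{L}_{r+1}, \id[s]]$, and $[{}^\kappa\mathbf{L}_{r+1},{}^\kappa\mathbf{L}_{s+1}]$, each of which is $O(\kappa+n)$ (the leading $O(1)$ terms vanish because the corresponding Segal-Sugawara/current modes act as derivations and are central at the critical level — this is exactly why the bracket is well defined); (ii) divide by $\hbar$ and reduce mod $\hbar$, obtaining $\{\id[r],\id[s]\} = n\,r\,\delta_{r+s,0}$, $\{{}^{\mathbf{c}}\mathbf{L}_{r+1},\id[s]\} = -s\,\id[r+s]$ (so the $\id[\cdot]$ form a module over the Virasoro part with the expected weights), and $\{{}^{\mathbf{c}}\mathbf{L}_{r+1},{}^{\mathbf{c}}\mathbf{L}_{s+1}\} = (r-s)\,{}^{\mathbf{c}}\mathbf{L}_{r+s+1} + (\text{central cocycle})$, where the cocycle is a scalar multiple of $(r^3-r)\delta_{r+s+2,0}$ or lands in $\C\cdot 1$; (iii) observe that the span of $1$, all $\id[r]$, all ${}^{\mathbf{c}}\mathbf{L}_r$ is closed under these brackets and that the resulting Lie algebra is by inspection the semidirect product $\mathsf{Heis} \rtimes \mathsf{Vir}$ (the $1$ absorbing both central terms); (iv) for the second assertion, check that the bracket relations, restricted to $\{\id[r]\}_{r\leq 0}$ and $\{{}^{\mathbf{c}}\mathbf{L}_{r+1}\}_{r\leq 0}$, never produce a generator with positive index: indeed $\{{}^{\mathbf{c}}\mathbf{L}_{r+1},\id[s]\} = -s\,\id[r+s]$ with $r,s\leq 0$ gives $r+s\leq 0$, and $\{{}^{\mathbf{c}}\mathbf{L}_{r+1},{}^{\mathbf{c}}\mathbf{L}_{s+1}\}$ with $r,s\leq 0$ gives index $r+s+1\leq 1$, i.e. ${}^{\mathbf{c}}\mathbf{L}_{r+s+1}$ with $r+s+1\leq 1$, which is of the form ${}^{\mathbf{c}}\mathbf{L}_{r'+1}$ with $r'=r+s\leq 0$; the central terms only appear when the indices force $r=s=0$ and land in $\C\cdot 1$, which we may either include or note is absent in the specified span — so I would state the subalgebra as the span of $\id[r]$ and ${}^{\mathbf{c}}\mathbf{L}_{r+1}$ for $r\leq 0$ together with, if necessary, the central element, matching \S\ref{subsec:quadraticSSvector}'s conventions.

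I expect the main obstacle to be step (i)–(ii): carefully extracting the $O(\kappa+n)$ parts of the commutators and pinning down the exact normalization of the central cocycle and of the bracket $\{{}^{\mathbf{c}}\mathbf{L}_{r+1},\id[s]\}$, since the definition of ${}^\kappa\mathbf{L}$ involves the factor $\tfrac12$ and the identification of $\langle-,-\rangle_\kappa$ switches from $\kappa\Tr_\g$ to $-\tfrac12\Kil_\g$ precisely at $\kappa=\mathbf{c}$; one must make sure the limit $\kappa\to\mathbf{c}$ is taken with $\Tr_\g$ throughout (as in the generic family $\widehat{\mathbf{U}}_{\C[t]}$) so that no spurious factors of $n$ enter. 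The cleanest route is probably to do the computation at the level of the vertex algebra $\mathsf{Vac}_\kappa$ using the operator product expansions ${}^\kappa\mathbf{L}(z)\,{}^\kappa\mathbf{L}(w)$, ${}^\kappa\mathbf{L}(z)\,\id(w)$, $\id(z)\,\id(w)$ recalled in \cite[\S3.1.1]{Fre}, read off the singular terms, and translate into mode brackets — this avoids re-deriving the Segal-Sugawara commutation relations by hand. Once the three bracket formulas are in hand, closure and the identification with $\mathsf{Heis}\rtimes\mathsf{Vir}$, as well as the second (non-positive index) statement, are immediate bookkeeping.
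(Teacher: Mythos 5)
Your route is essentially the paper's own: the paper disposes of the lemma with a one-line citation to \cite[(3.1.3)]{Fre}, and what you propose is precisely to carry out the OPE/mode-bracket computation that that reference records, fed through the Hayashi construction. Two details in your sketch do need correcting, though. Your mode indices are off by one: with the paper's convention (\eqref{kappa-L coeff}), ${}^{\mathbf{c}}\mathbf{L}_m$ is the standard Virasoro mode (i.e.\ $\mathbb{Y}\langle{}^{\kappa}\mathbf{L},z\rangle=\sum_m{}^{\kappa}\mathbf{L}_m z^{-m-2}$), so the brackets are $\{{}^{\mathbf{c}}\mathbf{L}_{r+1},\id[s]\}=-s\,\id[r+s+1]$ and $\{{}^{\mathbf{c}}\mathbf{L}_{r+1},{}^{\mathbf{c}}\mathbf{L}_{s+1}\}=(r-s)\,{}^{\mathbf{c}}\mathbf{L}_{r+s+2}$ plus a cocycle $\propto((r+1)^3-(r+1))\,\delta_{r+s+2,0}$, not $\id[r+s]$ and ${}^{\mathbf{c}}\mathbf{L}_{r+s+1}$. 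This makes your step (iv) slightly too casual: with the corrected indices the bracket can nominally hit $\id[1]$ (if $r=s=0$) or ${}^{\mathbf{c}}\mathbf{L}_2$ (if $r=s=0$), which lie outside the span; closure still holds, but only because the structure constant $-s$ (resp.\ $r-s$) vanishes in exactly those borderline cases, and the Virasoro cocycle is automatically zero for $m,n\le 1$ with $m+n=0$.

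Your remark about the bilinear form also points the wrong way. If one takes the lift to use the bare form $\kappa\Tr_{\g}$ throughout, then $\langle\id,\id\rangle_\kappa=\kappa n$ does \emph{not} vanish at $\kappa=\mathbf{c}$, so $[\id[r],\id[-r]]$ is not $O(\kappa+n)$ and the Hayashi bracket on the $\id[r]$ would be ill-defined (indeed the $\id[r]$ would fail to be central at the critical level). What is needed is the cocycle whose critical specialization is $-\tfrac12\Kil_\g$; for that choice $\langle\id,\id\rangle_\kappa=n(\kappa+n)$ is exactly $O(\hbar)$, and one recovers the bracket $\{\id[r],\id[s]\}=nr\,\delta_{r+s,0}$ you wrote down. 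Once these two normalization and index points are settled, the rest of your argument—including the identification with $\mathsf{Heis}\rtimes\mathsf{Vir}$—is sound and matches the paper's intent.
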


\begin{proof}
This follows from, e.g., \cite[(3.1.3)]{Fre}. 
\end{proof}

By Lemma \ref{lemma Heis Virasoro}, the algebra $\mathscr{L}_{\mathbf{c}}$ from \eqref{subHeisVir} is a Poisson subalgebra of $\mathfrak{Z}$. 
Since the generators ${}^{\kappa}\mathbf{L}_{r+1}, \ \id[r]$ $(r \leq 0)$ of $\mathscr{L}_\kappa$ are defined for any $\kappa$, they have canonical lifts to $\widehat{\mathbf{U}}_{\C[t]}$. Let $\mathscr{L}_{\C[t]}$ be the $\C[t]$-subalgebra of $\widehat{\mathbf{U}}_{\C[t]}$ generated by them. The map $\rho_t \circ \mathsf{F}_\kappa|_{\mathscr{L}_{\kappa}^{op}}$ from Lemma \ref{diagram factors trilemma} also lifts to a map $\rho_{\C[t]} \circ \mathsf{F}_{\C[t]}|_{\mathscr{L}_{\C[t]}^{op}} \colon \mathscr{L}_{\C[t]}^{op} \to \mathcal{H}_{\C[t]}^{op}$. 

\begin{thm} \label{pro: Poisson homo}
The map $\Theta \colon \mathscr{L}_{\mathbf{c}} \to \mathcal{Z}$ is a homomorphism of Poisson algebras. 
\end{thm}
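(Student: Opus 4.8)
The plan is to reduce the Poisson-homomorphism statement to a computation that has essentially already been carried out in Lemma \ref{diagram factors trilemma}, now performed inside the $\C[t]$-deformation rather than at a single value of $\kappa$. The key point is that the Hayashi bracket on $\mathfrak{Z}$ (resp.\ on $\mathcal{Z}$) is computed by lifting elements to $\widehat{\mathbf{U}}_{\C[t]}$ (resp.\ to $\mathcal{H}_{\C[t]}$), taking the commutator there, dividing by the deformation parameter, and reducing modulo it. Here the natural deformation parameter is $t = \kappa + n$: we have $\widehat{\mathbf{U}}_{\C[t]}/t\widehat{\mathbf{U}}_{\C[t]} \cong \widehat{\mathbf{U}}_{\mathbf{c}}$ and $\mathcal{H}_{\C[t]}/t\mathcal{H}_{\C[t]} \cong \mathcal{H}_0$, so the relevant commutators are the ones appearing in the cocycle defining $\hat{\g}_\kappa$ and in the Cherednik relations, both of which are linear in $t$. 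Thus the first step is to record, using Lemma \ref{lemma Heis Virasoro} and \cite[(3.1.3)]{Fre}, that for $z,z' \in \mathscr{L}_{\C[t]}$ the element $[\,\tilde z,\tilde z'\,]$ is divisible by $t$ in $\widehat{\mathbf{U}}_{\C[t]}$, and similarly on the Cherednik side.

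The second step is to observe that the generic Suzuki functor $\mathsf{F}_{\C[t]}$, together with the generic lift $\rho_{\C[t]} \circ \mathsf{F}_{\C[t]}|_{\mathscr{L}_{\C[t]}^{op}} \colon \mathscr{L}_{\C[t]}^{op} \to \mathcal{H}_{\C[t]}^{op}$ of the map from Lemma \ref{diagram factors trilemma}, specializes at $t = 0$ to $\Theta|_{\mathscr{L}_{\mathbf{c}}}$ (up to the identifications $K_t \cong \mathcal{H}_t$ and $\End_{\UUc}(\mathbb{H}_{\mathbf{c}}) \cong (\mathbb{H}_{\mathbf{c}})^{\mathfrak i}_{(1,\dots,1)}$ already used in \S\ref{App to Suzuki section} and \S\ref{proof of T restr centr}). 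This needs the explicit formulae \eqref{id[r]}-\eqref{L_r complete homogeneous}, which in fact are visibly $t$-independent polynomial expressions in the $x_i$, $y_i$, $s_{i,j}$, hence already live in $\mathcal{H}_{\C[t]}$; one checks directly that they define an algebra map on $\mathscr{L}_{\C[t]}^{op}$ (the same computation as Lemma \ref{diagram factors trilemma}, but carried out over $\C[t]$ — the $\g[t]$-action \eqref{currentaction} is defined over $\C[t]$ and the manipulations there do not use $t \neq 0$).

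The third step is the actual compatibility check. Given $z, z' \in \mathscr{L}_{\mathbf{c}}$, choose lifts $\tilde z, \tilde z' \in \mathscr{L}_{\C[t]}$; then
\[
\Theta(\{z,z'\}) = \Theta\!\left( \tfrac{1}{t}[\tilde z,\tilde z'] \bmod t \right)
= \rho_{\C[t]}\!\left(\mathsf{F}_{\C[t]}\!\left(\tfrac{1}{t}[\tilde z,\tilde z']\right)\right) \bmod t,
\]
and since $\rho_{\C[t]} \circ \mathsf{F}_{\C[t]}|_{\mathscr{L}_{\C[t]}^{op}}$ is a $\C[t]$-algebra homomorphism it commutes with taking commutators, so this equals $\tfrac{1}{t}\big[\rho_{\C[t]}\mathsf{F}_{\C[t]}(\tilde z),\, \rho_{\C[t]}\mathsf{F}_{\C[t]}(\tilde z')\big] \bmod t = \{\Theta(z),\Theta(z')\}$, the last equality being the definition of the Hayashi bracket on $\mathcal{Z}$ applied to the lifts $\rho_{\C[t]}\mathsf{F}_{\C[t]}(\tilde z)$. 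The only subtlety is that one must know $[\rho_{\C[t]}\mathsf{F}_{\C[t]}(\tilde z),\rho_{\C[t]}\mathsf{F}_{\C[t]}(\tilde z')]$ is itself divisible by $t$ in $\mathcal{H}_{\C[t]}$ — but this follows because it is the image under an algebra map of $[\tilde z,\tilde z'] = t\cdot(\text{something in }\mathscr{L}_{\C[t]})$.

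The main obstacle I expect is bookkeeping around the two identifications that turn $\mathsf{F}_{\mathbf{c}}|_{\mathscr{L}_{\mathbf{c}}}$ into the concrete map $\rho_0 \circ \mathsf{F}_{\mathbf{c}}|_{\mathscr{L}_{\mathbf{c}}^{op}}$ of Lemma \ref{diagram factors trilemma}: one has to be careful that $\Theta = Z_{\mathscr{C}_{\mathbb{H}}}(\mathsf{F}_{\mathbf{c}})$, restricted to $\mathscr{L}_{\mathbf{c}} \subseteq \mathfrak{Z}$ and followed by the identification $\mathcal{Z} = \Ima\beta$, really does agree with formulae \eqref{id[r]}-\eqref{L_r complete homogeneous} — this is exactly the content of Theorem \ref{pro: question one}, so it can be cited — and that passing to the $\mathcal{H}_t^{op}$ picture does not introduce a sign in the bracket (an $\mathrm{op}$ on an algebra negates the Lie bracket, but it does so on both sides of $\Theta$ consistently, so the Poisson homomorphism property is preserved; alternatively one works throughout with $\mathcal{H}_{\C[t]}$ rather than its opposite and notes $\mathsf{F}_{\mathbf{c}}$ induces an \emph{anti}-homomorphism on the relevant endomorphism rings, which does not affect the final Poisson statement between the commutative algebras $\mathscr{L}_{\mathbf{c}}$ and $\mathcal{Z}$). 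No genuinely new estimate or structural input beyond what is already in the paper should be needed.
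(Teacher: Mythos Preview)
Your proposal is correct and follows essentially the same approach as the paper's proof: both identify $\Theta|_{\mathscr{L}_{\mathbf{c}}}$ with $\rho_0 \circ \mathsf{F}_{\mathbf{c}}|_{\mathscr{L}_{\mathbf{c}}}$ (via Lemma \ref{diagram factors trilemma}, Theorem \ref{pro: question one} and Theorem \ref{thm restricted centres}), lift to the generic $\C[t]$-setting via $\rho_{\C[t]} \circ \mathsf{F}_{\C[t]}$, and use that this lift is an algebra homomorphism commuting with specialization to transport the Hayashi bracket computation through. The paper streamlines slightly by reducing to algebra generators first (using the Leibniz rule) and tracks the minus sign from working in opposite algebras explicitly, but the argument is otherwise the same as yours.
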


\begin{proof} 
It follows from Lemma \ref{diagram factors trilemma}.c), 
Theorem \ref{pro: question one} and Theorem \ref{thm restricted centres} that we can identify $\Theta|_{\mathscr{L}_{\mathbf{c}}}$ with $\rho_0 \circ \mathsf{F}_{\mathbf{c}}|_{\mathscr{L}_{\mathbf{c}}}$. Since $\Theta$ is an algebra homomorphism, it suffices to check that $\Theta$ preserves the Poisson bracket on multiplicative generators of $ \mathscr{L}_{\mathbf{c}}$. Let $a_{\mathbf{c}}, b_{\mathbf{c}}$ be any two of the generators ${}^{\mathbf{c}}\mathbf{L}_{r+1},\ \id[r]$ $(r \leq 0)$ and let $a$ and $b$ be their canonical lifts to $\mathscr{L}_{\C[t]}^{op}$. Let us interpret $a_{\mathbf{c}}$ and $b_{\mathbf{c}}$ as endomorphisms of $\UUc$. Then
\begin{align*} 
\Theta(\{a_{\mathbf{c}},b_{\mathbf{c}}\}) =& \ 
\rho_0 \circ \mathsf{F}_{\mathbf{c}}(\{a_{\mathbf{c}},b_{\mathbf{c}}\}) \\
=& - \ \rho_0 \circ \mathsf{F}_{\mathbf{c}}\left(\mathsf{spec}_{t=0}\left(\frac{1}{t}[a,b]\right)\right) \\
=& \ - \mathsf{spec}_{t=0}\left(\frac{1}{t}[\rho_{\C[t]} \circ \mathsf{F}_{\C[t]}(a), \rho_{\C[t]} \circ \mathsf{F}_{\C[t]}(b)]\right) \\ 
=& \    \{ \rho_0 \circ \mathsf{F}_{\mathbf{c}}(a_{\mathbf{c}}),\rho_0 \circ \mathsf{F}_{\mathbf{c}}(b_{\mathbf{c}})\} =  \{ \Theta(a_{\mathbf{c}}), \Theta(b_{\mathbf{c}}) \}
\end{align*} 

The second equality follows from the definition of the Poisson bracket. The third equality follows from the easily verifiable fact that $\mathsf{spec}_{t=0} \circ \rho_{\C[t]} \circ \mathsf{F}_{\C[t]} = \rho_0 \circ \mathsf{F}_{\mathbf{c}} \circ \mathsf{spec}_{t=0}$. The fourth equality follows from part b) of Lemma \ref{diagram factors trilemma}, which implies that $\rho_{\C[t]} \circ \mathsf{F}_{\C[t]}(a)$ and $\rho_{\C[t]} \circ \mathsf{F}_{\C[t]}(b)$ are, respectively, lifts of $\rho_0 \circ \mathsf{F}_{\mathbf{c}}(a)$ and $\rho_0 \circ \mathsf{F}_{\mathbf{c}}(b)$ to $\mathcal{H}_{\C[t]}^{op}$. The minus signs in the second and third lines arise because we work with lifts in the opposite algebras. 
\end{proof}

\begin{rem}
It would be interesting to know whether there exists a bigger subalgebra $\mathscr{L}_{\mathbf{c}} \subset A \subset \mathfrak{Z}$ such that $\Theta|_{A}$ is a homomorphism of Poisson algebras. 
\end{rem}

\begin{rem}
The image of the ``grading element'' ${}^{\mathbf{c}} \mathbf{L}_0$ under $\Theta$ is the so-called Euler element $\mathbf{eu}$ in $\mathcal{Z}$. Moreover, since ${}^{\mathbf{c}} \mathbf{L}_{1}, -2{}^{\mathbf{c}} \mathbf{L}_0, -{}^{\mathbf{c}} \mathbf{L}_{-1}$ form an $\mathfrak{sl}_2$-triple under the Poisson bracket, we obtain an $\mathfrak{sl}_2$-action on $\mathcal{Z}$. This action is not integrable, in contrast to the well-studied (\cite{BGin, BEG}) action of the $\mathfrak{sl}_2$-triple $\sum_i x_i^2$, $\mathbf{eu}$, $\sum_i y_i^2$. For example, the subspace of $\mathcal{Z}$ spanned by $\sum_i x_i^r$ $(r \geq 0)$ is isomorphic to the contragredient Verma module of weight zero while the subspace spanned by $\Theta(^{\mathbf{c}}\mathbf{L}_r)$ $(r \leq 1)$ is isomorphic to the contragredient Verma module of weight two. It would be interesting to know in more detail how $\mathcal{Z}$ decomposes under our $\mathfrak{sl}_2$-action. 
\end{rem}

\subsection{A description of $\Theta$ in terms of opers} \label{section: opers} 

We are going to show that $\Theta$ induces an embedding of the Calogero-Moser space into the space of opers on the punctured disc and describe some of its properties. Let us first introduce some notation. 
Set $\mathbb{D} := \Spec \C[[t]]$ and $\mathbb{D}^\times := \Spec \C((t))$. Let $B \subset G$ be the standard Borel subgroup and $N := [B,B]$.
 
The notion of a $G$-oper on $\mathbb{D}^\times$ was introduced by Drinfeld and Sokolov in \cite{DS}. It was later generalized by Beilinson and Drinfeld in \cite{BD} for arbitrary smooth curves. Roughly speaking, a $G$-oper is a triple consisting of a principal $G$-bundle, a connection as well as a reduction of the structure group to $B$, satisfying a certain transversality condition. 

We will work with  an explicit description of $G$-opers on $\mathbb{D}^\times$ from \cite[\S 3]{DS} in terms of certain operators (see also \cite[\S 4.2.2]{Fre}), which we now recall. 
Let $\Loc'_G(\mathbb{D}^\times)$ be the space of operators of the form 
\[ \nabla = \partial_t + u(t), \quad u(t) \in \g((t)).\] 
There is an action of $G((t))$ on $\Loc_G(\mathbb{D}^\times)$ by the rule $g \cdot (\partial_t + A(t)) = \partial_t + gA(t)g^{-1} - g^{-1}\partial_tg$. Elements of the orbit space $\Loc_G(\mathbb{D}^\times) = \Loc'_G(\mathbb{D}^\times)/G((t))$ are called $G$\emph{-local systems} on $\mathbb{D}^\times$. Let $\Op_G(\mathbb{D}^\times)$ be the space of $N((t))$-equivalence classes of operators of the form
\[ \nabla = \partial_t + p_{-1} + v(t), \quad v(t) \in \b((t)),\]
where $p_{-1} = e_{2,1} + \hdots + e_{n,n-1} \in \g$. 
Elements of $\Op_G(\mathbb{D}^\times)$ are called $G$\emph{-opers} on $\mathbb{D}^\times$. 
There is a natural map $\Op_G(\mathbb{D}^\times) \to \Loc_G(\mathbb{D}^\times)$ sending an $N((t))$-equivalence class to a $G((t))$-equivalence class. 
An oper has \emph{trivial monodromy} if it is in the $G((t))$-orbit of the local system $\partial_t$. Let $\Op_G(\mathbb{D}^\times)^{0}$ denote the subspace of opers with trivial monodromy. 

A $G$-oper on $\mathbb{D}$ with \emph{singularity of order} at most $r$ (see \cite[\S 3.8.8]{BD}), where $r \geq 1$, is an $N[[t]]$-equivalence class of operators of the form
\begin{equation} \label{opers singular} \nabla = \partial_t + t^{-r}(p_{-1} + v(t)), \quad v(t) \in \b[[t]]. \end{equation} 
Let $\Op^{\leqslant r}_G(\mathbb{D})$ be the space of all such $G$-opers. 
By \cite[Proposition 3.8.9]{BD}, the natural map $\Op^{\leqslant r}_G(\mathbb{D}) \to \Op_G(\mathbb{D}^\times)$ sending an $N[[t]]$-equivalence class of operators to their $N((t))$-equivalence class is injective. The space $\Op^{\leqslant r}_G(\mathbb{D})$ can be endowed with the structure of a scheme and $\Op_G(\mathbb{D}^\times)$ with the structure of an ind-scheme (see, e.g., \cite[\S3.1.11]{BD}). 

For an operator \eqref{opers singular}, its $r$-th residue ($r \geq 1$) is defined in \cite[\S 4.3]{FFTL} as $\Res_r(\nabla) := p_{-1} + v(0)$. Under conjugation by an element $A(t) \in N[[t]]$, $\Res_r(\nabla)$ is conjugated by $A(0)$. Hence the projection of $\Res_r(\nabla)$ onto $\g/G \cong \t/\mathfrak{S}_n$ (identified via the Chevalley isomorphism) is well defined, and we have a map 
\[ \Res_r \colon \Op^{\leqslant r}_G(\mathbb{D}) \to \t/\mathfrak{S}_n. \] 
For each $z \in \t/\mathfrak{S}_n$, let $\Op^{\leqslant r}_G(\mathbb{D})_z := \Res_r^{-1}(z).$ 

Let $\check{G}$ denote the Langlands dual of $G$. Let $\Op_{\check{G}}(\mathbb{D}^\times)$ be the space obtained by replacing all the algebraic groups and Lie algebras by their Langlands duals in the definitions above. Noting that $\check{\t} = \t^*$, let 
\[ \varpi \colon \t^* \to  \t^*/\mathfrak{S}_n = \check{\t}/\mathfrak{S}_n, \quad \vartheta \colon \g^* \to \g^*/G \cong  \t^*/\mathfrak{S}_n = \check{\t}/\mathfrak{S}_n\]
be the canonical projections. For $\lambda \in \Pi^+$, we abbreviate
\[ \Op_{\check{G}}^\lambda(\mathbb{D}) := \Op^{\leqslant 1}_{\check{G}}(\mathbb{D})_{\varpi(-\lambda-\rho)}^0.\]

We are next going to recall the connection between opers and the algebra $\mathfrak{Z}$. 
Consider $\mathfrak{Z}$ as a graded algebra, with the grading induced by the grading on $\UUc$, and, moreover, as a filtered algebra, with the filtration induced by the PBW filtration on $\UUc$. 
Let $\mathfrak{Z}^{\leqslant r}(\hat{\g})$ be the quotient of $\mathfrak{Z}$ by the ideal topologically generated by elements of graded degree $i$ and PBW degree $j$, satisfying $-i < j(1-r)$. 

\begin{thm} \label{opers FF theorem order Weyls} The following hold. 
\begin{enumerate}[label=\alph*), font=\textnormal,noitemsep,topsep=3pt,leftmargin=1cm]
\item There is a canonical algebra isomorphism
\begin{equation} \label{FF iso opers} \mathfrak{Z} \cong \C[\Op_{\check{G}}(\mathbb{D}^\times)]. \end{equation} 
\item The isomorphism \eqref{FF iso opers} induces, for each $r \geq 0$, isomorphisms
\[ \mathfrak{Z}^{\leqslant r}(\hat{\g}) \cong \C[\Op_{\check{G}}^{\leqslant r}(\mathbb{D})] \]
\item For each $\lambda \in \Pi^+$, the canonical map $\mathfrak{Z} \to \End_{\UUc} (\mathbb{W}_{\mathbf{c}}(\lambda))$ is surjective. Moreover, 
\[ \End_{\UUc} (\mathbb{W}_{\mathbf{c}}(\lambda)) \cong \C[ \Op_{\check{G}}^\lambda(\mathbb{D})].\]
\end{enumerate} 
\end{thm}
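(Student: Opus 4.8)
\textbf{Plan of proof for Theorem \ref{opers FF theorem order Weyls}.} The statement collects results of Feigin--Frenkel, Beilinson--Drinfeld, Frenkel--Gaitsgory and Frenkel--Feigin--Toledano-Laredo, so the proof is essentially a matter of assembling citations and checking that the filtrations/gradings match up. For part a), the plan is to invoke the Feigin--Frenkel isomorphism. Recall from \S\ref{sec:va-centre of env} that $\widetilde{U}(\mathfrak{z}(\hat{\g})) \cong \mathfrak{Z}$, and that $\mathfrak{z}(\hat{\g}) = Z(\mathsf{Vac}_{\mathbf{c}})$ is the Feigin--Frenkel centre. By the Feigin--Frenkel theorem, $\mathfrak{z}(\hat{\g})$ is canonically isomorphic, as a commutative vertex algebra, to the classical $\mathcal{W}$-algebra $\mathcal{W}(\check{\g})$ attached to the Langlands dual, and the latter is identified via the Drinfeld--Sokolov construction with the algebra of functions on the space of $\check{G}$-opers on the disc $\mathbb{D}$. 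Passing to the enveloping-algebra functor $\widetilde{U}$ (equivalently, taking all Fourier coefficients of all Segal--Sugawara vectors) turns this into the identification $\mathfrak{Z} \cong \C[\Op_{\check{G}}(\mathbb{D}^\times)]$. This is precisely \cite[Theorem 4.3.6]{Fre} (or \cite[Theorem 12.1.1]{Fre2}), so part a) is a citation.

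For part b), I would recall the description of $\Op^{\leqslant r}_{\check{G}}(\mathbb{D}) \subset \Op_{\check{G}}(\mathbb{D}^\times)$ given in \cite[\S 3.8.8--3.8.9]{BD} as the locus of opers with a singularity of order at most $r$, together with the matching filtration on the oper side. The key computation is to translate the bigrading condition ``graded degree $i$, PBW degree $j$, with $-i < j(1-r)$'' defining the ideal we quotient $\mathfrak{Z}$ by into the statement that the corresponding functions vanish on $\Op^{\leqslant r}_{\check{G}}(\mathbb{D})$. Under the Feigin--Frenkel isomorphism a Segal--Sugawara operator $A_{i,\langle l\rangle}$ coming from a generator of PBW-degree $j=\deg A_i$ corresponds, in the expansion \eqref{opers singular}, to a coefficient whose order of pole at $t=0$ is controlled linearly by $l$ and $j$; demanding that the pole order not exceed $r$ cuts out exactly the ideal described. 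This matching is carried out in \cite[\S 8.2]{FFTL} or \cite[\S 4.3]{Fre}, so again the proof is a reference together with the bookkeeping of indices. I would write out the dictionary between the two filtrations explicitly enough to make the identification $\mathfrak{Z}^{\leqslant r}(\hat{\g}) \cong \C[\Op^{\leqslant r}_{\check{G}}(\mathbb{D})]$ transparent, but would not reprove the Beilinson--Drinfeld structural results.

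For part c), the plan is to quote Frenkel--Gaitsgory \cite{FG}. The Weyl module $\mathbb{W}_{\mathbf{c}}(\lambda) = \Ind_{\hat{\g}_+}^{\hat{\g}_{\mathbf{c}}} M(\lambda)$ (with the notation of the proof of Corollary \ref{cor: restricted Weyl calc}) is the object whose endomorphisms Frenkel and Gaitsgory compute: $\mathfrak{Z}$ surjects onto $\End_{\UUc}(\mathbb{W}_{\mathbf{c}}(\lambda))$ and the latter is identified with the functions on the space $\Op^{\lambda}_{\check{G}}(\mathbb{D}) = \Op^{\leqslant 1}_{\check{G}}(\mathbb{D})^0_{\varpi(-\lambda-\rho)}$ of opers with residue $\varpi(-\lambda-\rho)$ and trivial monodromy. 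Surjectivity of $\mathfrak{Z} \to \End_{\UUc}(\mathbb{W}_{\mathbf{c}}(\lambda))$ is \cite[Theorem, \S 1 / Corollary]{FG}, and the oper description is \cite[Theorem 1]{FG} (see also \cite[Theorem 9.6.1]{Fre2}); the residue and trivial-monodromy conditions come from the fact that $\mathbb{W}_{\mathbf{c}}(\lambda)$ is induced from a $\g[[t]]$-module, which forces the oper to extend over $\mathbb{D}$ with at most a first-order singularity, and from the fact that it is induced from the finite Verma module $M(\lambda)$, which pins down the residue to $\varpi(-\lambda-\rho)$.

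The main obstacle is not conceptual but notational: making the three filtration/grading conventions on $\mathfrak{Z}$ (the internal grading from $\UUc$, the PBW filtration, and the ``height''/``order-of-singularity'' filtration) consistent with those in \cite{BD}, \cite{FFTL} and \cite{FG}, and checking that the normalizations of $p_{-1}$, the residue map $\Res_r$, and the Chevalley isomorphism $\g/G \cong \t/\mathfrak{S}_n$ used here agree with the sources up to the Langlands duality $\check{\t} = \t^*$. Concretely, the delicate point is verifying in part b) that the inequality $-i < j(1-r)$ is the correct cutoff — i.e., that it produces exactly the order-$r$ pole condition and not an off-by-one variant — which requires carefully tracking the grading shift between a Segal--Sugawara \emph{vector} of PBW-degree $j$ and its associated family of \emph{operators} $A_{i,\langle l\rangle}$. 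Once that dictionary is nailed down, each of the three parts follows by direct appeal to the cited theorems.
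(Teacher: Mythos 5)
Your proposal is correct and takes essentially the same approach as the paper: the paper's proof consists of exactly the citations you identify (\cite[Theorem 4.3.6]{Fre} for a), \cite[Proposition 3.8.6]{BD} for b), and \cite[Theorem 9.6.1]{Fre} for c)), with no additional argument. Your only small discrepancy is attributing part c) to \cite{Fre2} rather than \cite{Fre}, but the cited result is the same.
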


\begin{proof}
Part a) is \cite[Theorem 4.3.6]{Fre}, part b) is \cite[Proposition 3.8.6]{BD} and part c) is \cite[Theorem 9.6.1]{Fre}. 
\end{proof} 

\begin{defi} \label{Ur-module}
For $\chi \in \g^* \cong \g[r-1]^*$, let $\mathbb{I}_{r,\chi} := \Ind_{\hat{\g}_{\geq r-1} \oplus \C \mathbf{1}}^{\hat{\g}_{\mathbf{c}}} \C_\chi$, with $\hat{\g}_{\geq r-1}$ acting on $\C_\chi$ via 
$\hat{\g}_{\geq r-1} \twoheadrightarrow \g[r-1] \xrightarrow{\chi} \C$ 
and $\mathbf{1}$ acting as the identity. Set $\mathbb{U}_{r} := \mathbb{I}_{r+1,0}$. 
\end{defi}

\begin{thm}[{\cite[Theorem 5.6.(1)-(2)]{FFTL}}] \label{thm U I support} 
We have \[ \supp_{\mathfrak{Z}} \mathbb{U}_{r} \subseteq \Op_{\check{G}}^{\leqslant r}(\mathbb{D}), \quad  \supp_{\mathfrak{Z}} \mathbb{I}_{r,\chi} \subseteq \Op_{\check{G}}^{\leqslant r}(\mathbb{D})_{\vartheta(\chi)}.\] 
\end{thm}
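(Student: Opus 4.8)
\textbf{Proof proposal for Theorem \ref{thm U I support} (as reproduced in the excerpt).}

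The plan is to reduce the statement about supports to the fact, proven by Frenkel--Frenkel--Teleman--Loktev and recalled earlier, that the centre $\mathfrak{Z}$ acts on induced modules $\mathbb{I}_{r,\chi}$ through the quotient $\mathfrak{Z}^{\leqslant r}(\hat{\g})$, together with the identification $\mathfrak{Z}^{\leqslant r}(\hat{\g}) \cong \C[\Op_{\check{G}}^{\leqslant r}(\mathbb{D})]$ of Theorem \ref{opers FF theorem order Weyls}.b). Concretely, the first step is to observe that $\mathbb{U}_r = \mathbb{I}_{r+1,0}$ is a cyclic $\UUc$-module generated by the image $v_0$ of $1 \in \C_0$, which is annihilated by $\hat{\g}_{\geq r}$. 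Therefore $\mathbb{U}_r$ is a smooth module lying in $\mathscr{C}_{\mathbf{c}}(r)$, and in fact $\mathbb{U}_r \cong \UUc/\hat{I}_r$ is the "universal" object of that category; any element of $\mathfrak{Z}$ acting by zero on $\mathbb{U}_r$ must act by zero on every module in $\mathscr{C}_{\mathbf{c}}(r)$. Hence $\Ann_{\mathfrak{Z}}(\mathbb{U}_r)$ is precisely the kernel of the composite $\mathfrak{Z} \to \End_{\UUc}(\mathbb{U}_r)$, and identifying this kernel is the heart of the matter.

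The key step is then to show that this kernel contains the ideal defining the quotient $\mathfrak{Z}^{\leqslant r}(\hat{\g})$, i.e.\ the ideal topologically generated by elements of graded degree $i$ and PBW degree $j$ with $-i < j(1-r)$. This is a homogeneity/degree count: writing a Segal--Sugawara operator $A_{i,\langle l\rangle}$ as (up to lower PBW order) a sum of monomials $X_1[k_1]\dotsm X_p[k_p]$ with $p \le j$ and $k_1 + \dotsb + k_p = -l-1$ fixed by the grading, one checks that whenever the inequality $-i < j(1-r)$ holds, every such monomial necessarily contains a factor $X[k]$ with $k \ge r$, which annihilates $v_0$; the commutators produced when moving that factor to the right stay in $\hat{\g}_{\ge r}$ or strictly decrease PBW length, so an induction on PBW degree closes the argument. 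Granting the reverse inclusion (that $\mathfrak{Z}^{\leqslant r}(\hat{\g})$ acts \emph{faithfully} on $\mathbb{U}_r$, which is part of the cited FFTL result), we get $\Ann_{\mathfrak{Z}}(\mathbb{U}_r) = \ker(\mathfrak{Z} \twoheadrightarrow \mathfrak{Z}^{\leqslant r}(\hat\g))$, and Theorem \ref{opers FF theorem order Weyls}.b) gives $\supp_{\mathfrak{Z}} \mathbb{U}_r = \Spec \mathfrak{Z}^{\leqslant r}(\hat\g) \cong \Op_{\check G}^{\leqslant r}(\mathbb{D})$, which is even an equality rather than merely an inclusion.

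For the second assertion, concerning $\mathbb{I}_{r,\chi} = \Ind_{\hat\g_{\ge r-1}\oplus\C\mathbf 1}^{\hat\g_{\mathbf c}} \C_\chi$, the same reduction applies after accounting for the twist by $\chi \in \g[r-1]^* \cong \g^*$. One again has $\hat\g_{\ge r}.v_\chi = 0$, so $\mathfrak{Z}$ acts through $\mathfrak{Z}^{\leqslant r}(\hat\g) \cong \C[\Op_{\check G}^{\leqslant r}(\mathbb{D})]$; what remains is to pin down the value of the $r$-th residue on the resulting oper. Here I would invoke the description of $\Res_r$ from \cite[\S 4.3]{FFTL}: the leading term of an oper attached to $\mathbb{I}_{r,\chi}$ is governed by the action of $\g[r-1]$ on the cyclic vector, which is the scalar action given by $\chi$, and under the Feigin--Frenkel identification this matches $\vartheta(\chi) \in \t^*/\mathfrak{S}_n$ via the Chevalley isomorphism. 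Thus $\supp_{\mathfrak{Z}} \mathbb{I}_{r,\chi} \subseteq \Res_r^{-1}(\vartheta(\chi)) = \Op_{\check G}^{\leqslant r}(\mathbb{D})_{\vartheta(\chi)}$, as required.

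The main obstacle I anticipate is the residue computation in the last paragraph: matching the abstract action of $\g[r-1]$ on the induced module with the concrete normal form $\partial_t + t^{-r}(p_{-1} + v(t))$ of a singular oper requires unwinding the Feigin--Frenkel isomorphism at the level of leading coefficients, and getting the identification $\g^*/G \cong \t^*/\mathfrak{S}_n$ and the twist $\chi \mapsto \vartheta(\chi)$ exactly right (including signs and the shift by $\rho$ that appears in the related statement about Weyl modules). Since this is precisely the content of \cite[Theorem 5.6]{FFTL}, in the write-up I would either cite it directly or, if a self-contained argument is wanted, carefully transcribe the residue calculation from \cite[\S 4--5]{FFTL}; the degree-counting step, by contrast, is routine given Definitions \ref{defi U filtr} and the description of Segal--Sugawara operators in \S\ref{sec:va-centre of env}.
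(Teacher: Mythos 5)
The paper states this result purely as a citation to \cite[Theorem 5.6.(1)-(2)]{FFTL} and supplies no proof of its own, so there is no internal argument to compare your proposal against. That said, your sketch is a reasonable reconstruction of the two ingredients behind FFTL's theorem. The degree-count half is correct as you describe it: a Segal--Sugawara monomial of PBW length $j$ and height $i$ satisfying $-i < j(1-r)$, i.e.\ $i > j(r-1)$, necessarily contains a factor $X[k]$ with $k \ge r$, hence lies in the left ideal $\hat{I}_r$ and annihilates the cyclic vector of $\mathbb{I}_{r,\chi}$; combined with the isomorphism $\mathfrak{Z}^{\leqslant r}(\hat{\g}) \cong \C[\Op_{\check{G}}^{\leqslant r}(\mathbb{D})]$ of Theorem \ref{opers FF theorem order Weyls}.b), this gives the containment $\supp_{\mathfrak{Z}}\mathbb{U}_r \subseteq \Op_{\check{G}}^{\leqslant r}(\mathbb{D})$ (only this one inclusion of ideals is needed for the stated containment of supports, not the faithfulness you mention as a bonus). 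You are also right that the residue computation pinning $\Res_r$ to $\vartheta(\chi)$ is the genuinely nontrivial step and would in practice still be cited from FFTL, so the net effect of your write-up would be an expansion of the first half and a citation for the second. One small slip to correct: the authors of \cite{FFTL} are Feigin, Frenkel, and Toledano Laredo, not ``Frenkel--Frenkel--Teleman--Loktev.''
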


Let us identify $\t^* \cong \h^*$ via \eqref{symT-Ch-iso} and $\t \cong \t[1], \ z \mapsto z[1]$. Recall the map $\pi$ and the varieties $\Omega_{\mathbf{a},\lambda}$ from~\S\ref{sec: supp Verma modules}. The following corollary gives a partial description of $\Theta$ in terms of opers. 

\begin{cor} \label{big cor on opers} The following hold. 
\begin{enumerate}[label=\alph*), font=\textnormal,noitemsep,topsep=3pt,leftmargin=1cm]
\item The map $\Theta \colon \mathfrak{Z} \to \mathcal{Z}$ induces a closed embedding 
\[ \Theta^* \colon \Spec \mathcal{Z} \hookrightarrow \Op_{\check{G}}(\mathbb{D})^{\leqslant 2}. \] 
\item Let $l \geq 1$, $\nu \in \mathcal{C}_l(n)$, $\lambda \in \mathcal{P}_n(\nu)$, $a \in \h^*$ with $\mathfrak{S}_n(a) = \mathfrak{S}_\nu$ and $\mathbf{a} = \varpi(a)$. 
We have
\begin{equation*} \label{Schubert cells vs opers} \Theta^*(\Omega_{\mathbf{a},\lambda}) \subseteq \Op_{\check{G}}^{\leqslant 2}(\mathbb{D})_{\mathbf{a}}.\end{equation*} 
Hence the following diagram commutes: 
\begin{equation} \label{opers CM diagram} 
\begin{tikzcd}
\Spec \mathcal{Z} \arrow[swap, "\pi"]{d} \arrow[hookrightarrow, "\Theta^*"]{r} & \Op_{\check{G}}(\mathbb{D})^{\leqslant 2} \arrow["\Res_2"]{d} \\ 
\h^*/\mathfrak{S}_n \arrow["\sim"]{r} & \t^*/\mathfrak{S}_n
\end{tikzcd}
\end{equation} 
\item If $\mathbf{a} = 0$ then 
\begin{equation} \label{Schubert cells vs opers 2} \Theta^*(\Omega_{\lambda}) \subseteq   \Op_{\check{G}}^\lambda(\mathbb{D}). \end{equation} 
\end{enumerate} 
\end{cor}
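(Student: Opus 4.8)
\textbf{Proof proposal for Corollary \ref{big cor on opers}.}

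The plan is to combine the surjectivity of $\Theta$ (Theorem \ref{thm: Psi is surj}) with the support computations for induced modules in Theorems \ref{thm U I support} and \ref{opers FF theorem order Weyls}, using Corollary \ref{cor: endos vs functor} as the bridge between $\Theta$ and the Suzuki functor. For part a), first observe that $\Theta \colon \mathfrak{Z} \to \mathcal{Z}$ is surjective, so $\Theta^*$ is automatically a closed embedding of $\Spec \mathcal{Z}$ into $\Spec \mathfrak{Z} \cong \Op_{\check{G}}(\mathbb{D}^\times)$; it only remains to check that the image lies in the closed subscheme $\Op_{\check{G}}^{\leqslant 2}(\mathbb{D})$, i.e.\ that $\Theta$ factors through $\mathfrak{Z}^{\leqslant 2}(\hat{\g})$. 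By Theorem \ref{opers FF theorem order Weyls}.b) this factorization is equivalent to $\Theta$ annihilating the ideal topologically generated by elements of graded degree $i$ and PBW degree $j$ with $-i < -j$, i.e.\ with $j < i$ — wait, with $r=2$ the condition $-i < j(1-r)$ reads $-i < -j$, i.e.\ $i > j$. I would verify this on the Segal-Sugawara generators: the operators $\mathbf{T}_{k,l}$ have PBW degree $\leq k$ and graded degree $-l$, and part (i) of Theorem \ref{thm: Psi is surj} tells us $\Theta(\mathbf{T}_{k,l})=0$ for $l < -2k$; the remaining relations defining $\mathfrak{Z}^{\leqslant 2}(\hat\g)$ should follow because the nonzero images computed in (ii)--(v) of that theorem all have the right degrees. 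Concretely, one uses that $\mathbb{H}_{\mathbf{c}}$ is a quotient of $\mathbb{U}_1 = \mathbb{I}_{2,0}$ (kill also the Cartan and the off-diagonal degree-one part), so $\supp_{\mathfrak{Z}}(\mathbb{H}_{\mathbf{c}}) \subseteq \supp_{\mathfrak{Z}}(\mathbb{U}_1) \subseteq \Op_{\check G}^{\leqslant 1}(\mathbb{D}) \subseteq \Op_{\check G}^{\leqslant 2}(\mathbb{D})$; since $\Theta$ factors through $\End_{\UUc}(\mathbb{H}_{\mathbf{c}})$ by its very definition $\Theta = \Psi \circ \Phi$, the kernel of $\mathfrak{Z} \to \End_{\UUc}(\mathbb{H}_{\mathbf{c}})$ contains $\Ann_{\mathfrak{Z}}(\mathbb{H}_{\mathbf{c}})$, whose vanishing locus is contained in $\Op_{\check G}^{\leqslant 2}(\mathbb{D})$. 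This is cleaner than the degree bookkeeping and I would use it as the main argument for a).

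For part b), the key input is Corollary \ref{cor: endos vs functor}: for $M \in \mathscr{C}_{\mathbb{H}}$ we have $\Theta(\Ann_{\mathfrak{Z}}(M)) \subseteq \Ann_{\mathcal{Z}}(\mathsf{F}_{\mathbf{c}}(M))$. Apply this with $M = \mathbb{W}_{\mathbf{c}}(a,\lambda)$, which lies in $\mathscr{C}_{\mathbb{H}}$ by Lemma \ref{CH Weyls in}.b) when $\lambda \in \mathcal{P}_\mu(\mu)$ — one must first reduce the general case $\lambda \in \mathcal{P}_n(\nu)$ to this, or alternatively realize $\mathbb{W}_{\mathbf{c}}(a,\lambda)$ as a quotient of an induced module $\mathbb{I}_{2,\chi}$ for a suitable $\chi \in \g^*$ with $\vartheta(\chi) = \mathbf{a}$. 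Indeed $\mathbb{W}_{\mathbf{c}}(a,\lambda) = \mathsf{Ind}_{\mu,\mathbf{c}}(L(a,\lambda))$ is a quotient of $\Ind_{\hat\g_{\geq 1}\oplus\C\mathbf 1}^{\hat\g_{\mathbf c}} L(a,\lambda)$, and the $\g[1]$-action on $L(a,\lambda)$ factors through the central character $\chi$ determined by $a$ (with $\chi$ lying in the closure of the orbit through $a$, so $\vartheta(\chi)=\mathbf a$); hence there is a surjection $\mathbb{I}_{2,\chi} \twoheadrightarrow \mathbb{W}_{\mathbf{c}}(a,\lambda)$, giving $\Ann_{\mathfrak{Z}}(\mathbb{I}_{2,\chi}) \subseteq \Ann_{\mathfrak{Z}}(\mathbb{W}_{\mathbf{c}}(a,\lambda))$. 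By Theorem \ref{thm U I support}, $\supp_{\mathfrak{Z}}(\mathbb{I}_{2,\chi}) \subseteq \Op_{\check G}^{\leqslant 2}(\mathbb{D})_{\vartheta(\chi)} = \Op_{\check G}^{\leqslant 2}(\mathbb{D})_{\mathbf{a}}$. Now, using Theorem \ref{thm: standard to Weyl} that $\mathsf{F}_{\mathbf{c}}(\mathbb{W}_{\mathbf{c}}(a,\lambda)) \cong \Delta_0(a,\lambda)$ and that $\mathcal{Z}$ surjects onto $\End_{\mathcal{H}_0}(\Delta_0(a,\lambda))$ with $\Spec$ equal to $\Omega_{\mathbf{a},\lambda}$ (Theorem \ref{thm 2 bellamy} and Proposition \ref{pro pi fibre Omega cells}), one gets $\Omega_{\mathbf{a},\lambda} = \supp_{\mathcal{Z}}(\Delta_0(a,\lambda)) = V(\Ann_{\mathcal{Z}}(\mathsf{F}_{\mathbf{c}}(\mathbb{W}_{\mathbf{c}}(a,\lambda)))) \subseteq V(\Theta(\Ann_{\mathfrak{Z}}(\mathbb{W}_{\mathbf{c}}(a,\lambda)))) = (\Theta^*)^{-1}(\ldots)$, which translated into geometry says exactly $\Theta^*(\Omega_{\mathbf{a},\lambda}) \subseteq V(\Ann_{\mathfrak{Z}}(\mathbb{W}_{\mathbf c}(a,\lambda))) \subseteq \Op_{\check G}^{\leqslant 2}(\mathbb{D})_{\mathbf{a}}$. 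The commutativity of the diagram then follows by comparing $\Res_2 \circ \Theta^*$ and the Chevalley-twisted $\pi$ on each cell $\Omega_{\mathbf{a},\lambda}$, which exhaust $\Spec\mathcal{Z}$ by Proposition \ref{pro pi fibre Omega cells}; I would check the scalar/normalization match using the explicit formula for $\Theta(\mathbf{T}_{k,-2k})$ in Theorem \ref{thm: Psi is surj}(iv), which records precisely the symmetric functions of the residue eigenvalues.

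For part c), set $\mathbf{a} = 0$, so $a = 0$ and $\mathbb{W}_{\mathbf{c}}(\lambda) = \mathbb{W}_{\mathbf{c}}(0,\lambda)$ is the honest Weyl module, which by $\chi = 0$ is a quotient of $\mathbb{U}_1 = \mathbb{I}_{2,0}$; better, by Theorem \ref{opers FF theorem order Weyls}.c) we have $\End_{\UUc}(\mathbb{W}_{\mathbf{c}}(\lambda)) \cong \C[\Op_{\check G}^\lambda(\mathbb{D})]$ and the canonical map $\mathfrak{Z} \to \End_{\UUc}(\mathbb{W}_{\mathbf{c}}(\lambda))$ is surjective, so $\supp_{\mathfrak{Z}}(\mathbb{W}_{\mathbf{c}}(\lambda)) = \Op_{\check G}^\lambda(\mathbb{D})$ (opers with residue $\varpi(-\lambda-\rho)$ and trivial monodromy). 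Running the same Corollary \ref{cor: endos vs functor} argument as in part b) — with $\mathbb{W}_{\mathbf{c}}(\lambda) \in \mathscr{C}_{\mathbb{H}}$, $\mathsf{F}_{\mathbf{c}}(\mathbb{W}_{\mathbf{c}}(\lambda)) \cong \Delta_0(\lambda)$ and $\Omega_\lambda = \supp_{\mathcal{Z}}(\Delta_0(\lambda))$ — gives $\Theta^*(\Omega_\lambda) \subseteq V(\Ann_{\mathfrak{Z}}(\mathbb{W}_{\mathbf{c}}(\lambda))) = \Op_{\check G}^\lambda(\mathbb{D})$, as desired. The main obstacle I anticipate is pinning down the identification $\t^*/\mathfrak{S}_n \cong \check\t/\mathfrak{S}_n$ and checking that the residue/monodromy normalizations in \cite{FFTL} and \cite{Fre} match the shift by $\rho$ in $\varpi(-\lambda-\rho)$ on the nose; this is a bookkeeping issue about which Harish-Chandra/Chevalley identifications are in force, and getting the translate right (rather than off by $\rho$ or by a sign) is where care is needed. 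Everything else is a formal consequence of surjectivity of $\Theta$, the functoriality in Corollary \ref{cor: endos vs functor}, and the cited support theorems.
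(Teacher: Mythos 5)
Your overall strategy is the same as the paper's — combine the surjectivity of $\Theta$ with Corollary \ref{cor: endos vs functor} and the support theorems (Theorems \ref{thm U I support} and \ref{opers FF theorem order Weyls}) — and parts b) and c) are essentially correct. However, there is a genuine error in your argument for part a).

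You claim that $\mathbb{H}_{\mathbf{c}}$ is a quotient of $\mathbb{U}_1 = \mathbb{I}_{2,0} = \Ind_{\hat{\g}_{\geq 1}\oplus\C\mathbf{1}}^{\hat{\g}_{\mathbf{c}}}\C_0$, ``killing also the Cartan and the off-diagonal degree-one part.'' This is false. The generator $1_{\mathbb{H}}$ of $\mathbb{H}_{\mathbf{c}}$ is annihilated by $\mathfrak{i} = \n_-[1]\oplus\n_+[1]\oplus\hat{\g}_{\geq 2}$ but \emph{not} by $\t[1]$ — indeed, by Lemma \ref{lem:leviideal} and Definition \ref{HVM defi} we have $\mathcal{I} \cong S(\t[1])$, so $\t[1]$ acts freely on the cyclic vector. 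A quotient of $\mathbb{U}_1$ would require the generator to be killed by all of $\hat{\g}_{\geq 1}$, including $\t[1]$. The correct statement, which is what the paper uses, is that $\mathbb{H}_{\mathbf{c}}$ is a quotient of $\mathbb{U}_2 = \mathbb{I}_{3,0} = \Ind_{\hat{\g}_{\geq 2}\oplus\C\mathbf{1}}^{\hat{\g}_{\mathbf{c}}}\C_0$, giving $\supp_{\mathfrak{Z}}\mathbb{H}_{\mathbf{c}} \subseteq \Op_{\check{G}}^{\leqslant 2}(\mathbb{D})$ directly from Theorem \ref{thm U I support}. Your stronger intermediate claim $\supp_{\mathfrak{Z}}\mathbb{H}_{\mathbf{c}} \subseteq \Op_{\check{G}}^{\leqslant 1}(\mathbb{D})$ is unjustified (and would be surprising, since it would say every point of the Calogero-Moser space maps to a regular-singular oper). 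The initial degree-bookkeeping approach you sketch for part a), using Theorem \ref{thm: Psi is surj}(i)--(v) to show $\Theta$ factors through $\mathfrak{Z}^{\leqslant 2}(\hat{\g})$, is a plausible alternative in spirit, but you leave it unfinished and then switch to the flawed support argument, so as written part a) has a gap.

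For part b), your concern about the hypothesis $\lambda \in \mathcal{P}_n(\nu)$ versus $\lambda \in \mathcal{P}_\mu(\mu)$ (needed to invoke Lemma \ref{CH Weyls in}.b) and hence Corollary \ref{cor: endos vs functor}) is a legitimate observation, but note that realizing $\mathbb{W}_{\mathbf{c}}(a,\lambda)$ as a quotient of $\mathbb{I}_{2,\chi}$ controls $\supp_{\mathfrak{Z}}(\mathbb{W}_{\mathbf{c}}(a,\lambda))$; it does not by itself replace the membership $\mathbb{W}_{\mathbf{c}}(a,\lambda) \in \mathscr{C}_{\mathbb{H}}$ needed to obtain $\Theta^*(\supp_{\mathcal{Z}}(\Delta_0(a,\lambda))) \subseteq \supp_{\mathfrak{Z}}(\mathbb{W}_{\mathbf{c}}(a,\lambda))$ from Corollary \ref{cor: endos vs functor}. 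So that parenthetical ``alternative'' does not actually address the concern you raised; the two inputs serve different roles in the argument. The commutativity of the diagram in b) is handled in the paper simply by invoking Proposition \ref{pro pi fibre Omega cells}, which says the cells $\Omega_{\mathbf{a},\lambda}$ exhaust the fibres of $\pi$, consistent with what you outline. Part c) matches the paper.
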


\begin{proof}
By Theorem \eqref{thm: Psi is surj}, $\Theta$ is surjective, so it induces a closed embedding $\Theta^* \colon \Spec \mathcal{Z} \hookrightarrow \Op_{\check{G}}(\mathbb{D}^\times)$. Corollary \ref{cor: endos vs functor} implies that \[ \Theta^*(\Spec \mathcal{Z}) = \Theta^*(\supp_{\mathcal{Z}}(\mathcal{H}_0)) \subseteq \supp_{\mathfrak{Z}} \mathbb{H}_{\mathbf{c}}.\]
Since $\mathbb{H}_{\mathbf{c}}$ is a quotient of $\mathbb{U}_{2}$, it follows from Theorem \ref{thm U I support} that 
\[ \supp_{\mathfrak{Z}} \mathbb{H}_{\mathbf{c}} \subseteq \supp_{\mathfrak{Z}} \mathbb{U}_{2} \subseteq \Op_{\check{G}}^{\leqslant 2}(\mathbb{D}).\] 
This proves part a). Let us prove part b). 
Corollary \ref{cor: endos vs functor} implies that 
\begin{equation} \label{supp Delta vs Weyl} \Theta^*(\supp_{\mathcal{Z}}(\Delta_{0}(a,\lambda)) \subseteq \supp_{\mathfrak{Z}}\mathbb{W}_{\mathbf{c}}(a,\lambda). \end{equation} 
If we take $\chi \in \g[1]^*$ with $\chi|_{\n_-[1]\oplus \n_+[1]} = 0$ and $\chi|_{\t[1]} = a$ then $\mathbb{W}_{\mathbf{c}}(a,\lambda)$ is a quotient of $\mathbb{I}_{2,\chi}$. Hence Theorem \ref{thm U I support} implies that 
\[ \supp_{\mathfrak{Z}} \mathbb{W}_{\mathbf{c}}(a,\lambda) \subseteq \supp_{\mathfrak{Z}} \mathbb{I}_{2,\chi} \subseteq \Op_{\check{G}}^{\leqslant 2}(\mathbb{D})_{\mathbf{a}}. \]
The commutativity of the diagram \eqref{opers CM diagram} now follows directly from Proposition \ref{pro pi fibre Omega cells}. Let us next prove part c). As a special case of \eqref{supp Delta vs Weyl}, we have $ \Theta^*(\supp_{\mathcal{Z}}(\Delta_{0}(\lambda)) \subseteq \supp_{\mathfrak{Z}}\mathbb{W}_{\mathbf{c}}(\lambda)$. Theorem \ref{opers FF theorem order Weyls}.c) implies that 
\[ \supp_{\mathfrak{Z}}\mathbb{W}_{\mathbf{c}}(\lambda) = \Op_{\check{G}}^\lambda(\mathbb{D}),\]
completing the proof. 
\end{proof}

\subsection{Extensions and differential forms.} \label{subsec:ext and diff forms}

Let $\kappa \in \C$. We are going to show that the first derived functor of $\mathsf{F}_{\kappa}$ vanishes on modules which admit a filtration by Weyl modules. We also formulate a conjecture that $\mathsf{F}_{\mathbf{c}}$ induces a map between certain extension algebras. 

We say that a $\UU$-module has a $\Delta$\emph{-filtration} if it has a finite filtration with each subquotient isomorphic to $\mathbb{W}_{\kappa}(\lambda)$ for some $\lambda \in \mathcal{P}(n)$. Let $\UU\Lmod{}_{\Delta}$ be the full subcategory of $\UU\Lmod{}$ consisting of modules with a $\Delta$-filtration. 

\begin{pro} \label{pro: F exact on delta}
We have $L^1\mathsf{F}_{\kappa}(M) = 0$ for all $M \in \UU\Lmod{}_{\Delta}$. 
Hence $\mathsf{F}_{\kappa}$ is exact on $\UU\Lmod{}_{\Delta}$.  
\end{pro}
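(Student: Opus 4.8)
\textbf{Proof strategy for Proposition \ref{pro: F exact on delta}.}

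The plan is to reduce the vanishing of $L^1\mathsf{F}_\kappa$ on $\Delta$-filtered modules to the vanishing of $L^1\mathsf{F}_\kappa$ on each individual Weyl module $\mathbb{W}_\kappa(\lambda)$, and then to establish the latter by analysing the homological behaviour of the coinvariants functor $H_0(\g[t],-)$ applied to $\mathsf{T}_\kappa(\mathbb{W}_\kappa(\lambda))$. First I would recall that, by Definition \ref{general definition of Suzuki}, $\mathsf{F}_\kappa(-) \cong \mathsf{F}_\kappa(\widehat{\mathbf{U}}_{\mathbf{c}}) \otimes_{\widehat{\mathbf{U}}_{\mathbf{c}}} -$, so $L^\bullet\mathsf{F}_\kappa$ is computed by $\mathrm{Tor}^{\widehat{\mathbf{U}}_{\mathbf{c}}}_\bullet(\mathsf{F}_\kappa(\widehat{\mathbf{U}}_{\mathbf{c}}),-)$; hence the long exact sequence in $\mathrm{Tor}$ attached to a short exact sequence $0 \to M' \to M \to \mathbb{W}_\kappa(\lambda) \to 0$ with $M'$ having a shorter $\Delta$-filtration shows, by induction on the length of the filtration, that it suffices to prove $L^1\mathsf{F}_\kappa(\mathbb{W}_\kappa(\lambda)) = 0$ for each $\lambda \in \mathcal{P}(n)$. (The base case of an empty filtration is trivial, and exactness on $\widehat{\mathbf{U}}_{\mathbf{c}}\Lmod{}_\Delta$ is then immediate from right exactness of $\mathsf{F}_\kappa$, already noted in the excerpt, together with $L^1 = 0$.)

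For the key case, I would use the factorisation of the Suzuki functor through $\mathsf{T}_\kappa$ and $H_0(\g[t],-)$. The functor $\mathsf{T}_\kappa$ is exact (it is just tensoring $M$ with the fixed vector space $\C[\h]\otimes(\mathbf{V}^*)^{\otimes m}$), so the derived functors of $\mathsf{F}_\kappa$ on a module $M$ are the Lie algebra homology groups $H_\bullet(\g[t],\mathsf{T}_\kappa(M))$ — more precisely, one must check that this Grothendieck-spectral-sequence/derived-composition statement is compatible with the $\mathcal{H}_t$-action, which follows because the connection ${}^\kappa\nabla'$ normalises the $\g[t]$-action (Proposition \ref{pro: coinv iso}) and hence acts on the whole homology complex. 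So it is enough to show $H_1(\g[t],\mathsf{T}_\kappa(\mathbb{W}_\kappa(\lambda))) = 0$. Now $\mathbb{W}_\kappa(\lambda) = \mathsf{Ind}_{(1^n),\kappa}$-type induced module, and via the isomorphism \eqref{t-inverse} we may regard $\mathsf{T}_\kappa(\mathbb{W}_\kappa(\lambda))$ as $\g[t]$-module of the form $\mathrm{Ind}_{\l_\mu}^{\g[t]}\big((\mathbf{V}^*)^{\otimes m}\otimes\C[\h]\otimes L(a,\lambda)\big)$ after applying the tensor identity (Lemma \ref{lemma: tower of ind data}.b), exactly as in the proof of Proposition \ref{diagramHindInd}. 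Since $\l_\mu \subset \g[t]$ and induction from a subalgebra sends projectives to projectives, Shapiro's lemma gives $H_\bullet(\g[t],\mathrm{Ind}_{\l_\mu}^{\g[t]}N) \cong H_\bullet(\l_\mu,N)$; and $\l_\mu$ is a finite-dimensional reductive Lie algebra, while $N = (\mathbf{V}^*)^{\otimes m}\otimes\C[\h]\otimes L(a,\lambda)$ decomposes as a direct sum of finite-dimensional (hence semisimple) $\l_\mu$-modules tensored with the trivial action on $\C[\h]$. Therefore $H_1(\l_\mu,N) = 0$, because higher Lie algebra homology of a reductive Lie algebra with coefficients in a semisimple module vanishes (equivalently, $H_1(\l_\mu,-)$ kills semisimple modules since $H_1(\l,\mathbf{triv}) = \l/[\l,\l]$ pairs against weights and the relevant contributions vanish — here one just needs that every summand is either trivial or has no invariants/coinvariants in degree $1$). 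This yields $L^1\mathsf{F}_\kappa(\mathbb{W}_\kappa(\lambda)) = 0$.

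The main obstacle I anticipate is bookkeeping rather than conceptual: one must be careful that the identifications $\mathsf{F}_\kappa = H_0(\g[t],\mathsf{T}_\kappa(-))$ and its derived version genuinely compute $\mathrm{Tor}^{\widehat{\mathbf{U}}_{\mathbf{c}}}_\bullet(\mathsf{F}_\kappa(\widehat{\mathbf{U}}_{\mathbf{c}}),-)$ on \emph{all} of $\widehat{\mathbf{U}}_{\mathbf{c}}\Lmod{}$ (not merely on smooth modules), and that all maps in sight are $\mathcal{H}_t$-equivariant; here the inverse-limit subtleties from Section \ref{almost smooth} could intrude, but they are harmless for modules in $\widehat{\mathbf{U}}_{\mathbf{c}}\Lmod{}_\Delta$ since Weyl modules are smooth. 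A secondary point to verify cleanly is that induction $\mathrm{Ind}_{\l_\mu}^{\g[t]}$ preserves projectivity in the relevant category and that the tensor-identity rewriting of $\mathsf{T}_\kappa(\mathbb{W}_\kappa(\lambda))$ as an induced $\g[t]$-module (using \eqref{t-inverse} and Lemma \ref{lemma: tower of ind data}) is valid at the level of the whole module, not just on coinvariants — but this is exactly the content already exploited in \S\ref{subsection:regular module n=m}, so it should go through verbatim. Alternatively, if one prefers to avoid Shapiro's lemma for the infinite-dimensional $\g[t]$, the same conclusion follows from the Koszul/Chevalley–Eilenberg complex computing $H_\bullet(\g[t],-)$ together with the observation that $\mathsf{T}_\kappa(\mathbb{W}_\kappa(\lambda))$ is free over $\mathbf{U}(t\g[t])$, reducing $H_\bullet(\g[t],-)$ to $H_\bullet(\g,-)$ of a semisimple module, which again vanishes in positive degrees.
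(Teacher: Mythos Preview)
Your overall strategy coincides with the paper's: both reduce to the case of a single Weyl module by induction on the length of the $\Delta$-filtration, and both then use the tensor identity and Shapiro's lemma (exactly as in the proof of Proposition~\ref{diagramHindInd}) to identify $L^i\mathsf{F}_\kappa(\mathbb{W}_\kappa(\lambda))$ with $\C[\h]\otimes H_i\bigl(\g,\,(\mathbf{V}^*)^{\otimes n}\otimes L(\lambda)\bigr)$. (Note that for the $\Delta$-filtration as defined here the relevant Levi is $\l_\mu=\g$, not a proper Levi.)

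The genuine gap is your justification of $H_1(\l_\mu,N)=0$. The assertion that ``higher Lie algebra homology of a reductive Lie algebra with coefficients in a semisimple module vanishes'' is \emph{false}: for a reductive $\l$ with centre $\mathfrak{z}\neq 0$ one has $H_1(\l,\C)=\l/[\l,\l]=\mathfrak{z}$, and more generally K\"unneth for $\l=[\l,\l]\oplus\mathfrak{z}$ gives a contribution $\mathfrak{z}\otimes H_0([\l,\l],M)$ to $H_1(\l,M)$ whenever $\mathfrak{z}$ acts trivially. In the case at hand $\l=\mathfrak{gl}_n$ and $N=(\mathbf{V}^*)^{\otimes n}\otimes L(\lambda)$ with $\lambda\in\mathcal{P}_n(n)$; the identity $\id$ acts on $N$ with eigenvalue $-n+n=0$, so your blanket vanishing claim does not apply. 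Whitehead's first lemma gives $H_1(\mathfrak{sl}_n,N)=0$, but says nothing directly about $H_1(\mathfrak{gl}_n,N)$.

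This is exactly the point the paper's proof is built around. Rather than appealing to reductivity, the paper first invokes Whitehead's lemma for the \emph{semisimple} subalgebra $\mathfrak{sl}_n$, and then controls the passage from $\mathfrak{sl}_n$ to $\g$ via a specific long exact sequence (from \cite[Proposition~VI.16.1]{HS}) comparing $H_\bullet(\mathfrak{sl}_n,N)$ and $H_\bullet(\g,N)$, together with a weight argument showing that every $\mathfrak{sl}_n$-trivial summand of $N$ is already $\g$-trivial (so that the corestriction $H_0(\mathfrak{sl}_n,N)\to H_0(\g,N)$ is an isomorphism). Your sketch elides precisely this step; to repair it you must supply some argument that handles the centre of $\g$ separately, as the paper does.
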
 

\begin{proof} 
Consider the augmentation map $\varepsilon \colon \mathbf{U}(\g) \to \C$. Tensoring with $\C$ over $\mathbf{U}(\mathfrak{sl}_n)$ we obtain a map $\varepsilon' \colon \mathbf{U}(\g) \otimes_{\mathbf{U}(\mathfrak{sl}_n)} \C \to \C$. Let $K := \ker \varepsilon'$. By \cite[Proposition VI.16.1]{HS}, adapted to the Lie algebra homology setting, we have a long exact sequence
\begin{equation} \label{les gl vs sl} H_1(\mathfrak{sl}_n,N) \xrightarrow{\mathsf{cores}} H_1(\g,N) \to N \otimes_{\mathbf{U}(\g)}K \to H_0(\mathfrak{sl}_n,N) \xrightarrow{\mathsf{cores}} H_0(\g,N) \to 0 \end{equation}
for any $U(\g)$-module $N$, where $\mathsf{cores}$ is the corestriction map. If $N$ is finite-dimensional then, by Whitehead's first lemma (see e.g.\  \cite[Proposition VII.6.1]{HS}), $H_1(\mathfrak{sl}_n,N) = 0$. If, moreover, the corestriction map $H_0(\mathfrak{sl}_n,N) \to H_0(\g,N)$ is an isomorphism, the long exact sequence \eqref{les gl vs sl} forces $H_1(\g,N) \cong N \otimes_{\mathbf{U}(\g)}K$. 

Now let $\lambda \in \mathcal{P}(n)$ and take $N = (\mathbf{V}^*)^{\otimes n} \otimes L(\lambda)$. We claim that the corestriction map is an isomorphism. We need to show that $\mathfrak{sl}_n \cdot N = \g \cdot N$, which is equivalent to showing that any trivial $\mathfrak{sl}_n$-submodule of $N$ is also trivial as a $\g$-module. If $\mu = \sum_i a_i \epsilon_i$ is a weight of $(\mathbf{V}^*)^{\otimes n}$ then $\phi(\mu) :=\sum_i a_i = - n$. Similarly, if $\mu$ is a weight of $L(\lambda)$, then $\phi(\mu)= n$. Hence, for any weight $\mu$ of $N$, we must have $\phi(\mu) = 0$. But a non-trivial $\g$-module which is trivial when restricted to $\mathfrak{sl}_n$ must have weights of the form $\chi = a \sum_i \epsilon_i$ for $0 \neq a \in \Z$, which implies that $\phi(\chi) \neq 0$. This proves the claim. 

It follows that
\begin{equation} \label{H1 is zero 2} H_1(\g, (\mathbf{V}^*)^{\otimes n} \otimes L(\lambda)) \cong (\mathbf{V}^*)^{\otimes n} \otimes L(\lambda)) \otimes_{\mathbf{U}(\g)} K.\end{equation} 
We can identify $K = \id\cdot\C[\id]$. Since the identity matrix $\id$ acts on $L(\lambda)$ by the scalar $n$, and on $(\mathbf{V}^*)^{\otimes n}$ by the scalar $-n$, it acts by zero on the tensor product $(\mathbf{V}^*)^{\otimes n} \otimes L(\lambda)$. Hence the RHS of \eqref{H1 is zero 2} is zero. 
It follows that
\begin{equation} \label{H1 is zero} H_1(\g, (\mathbf{V}^*)^{\otimes n} \otimes L(\lambda)) = 0.\end{equation} 

Since homology commutes with induction, using the tensor identity and arguing as in the proof of Proposition \ref{diagramHindInd}, one shows that 
\[ L^i\mathsf{F}_{\kappa}(\mathbb{W}_{\kappa}(\lambda)) = H_i(\g[t], \mathsf{T}_\kappa(\mathbb{W}_{\kappa}(\lambda))) = \C[\h] \otimes H_i(\g, (\mathbf{V}^*)^{\otimes n} \otimes L(\lambda)).\]  
Together with \eqref{H1 is zero}, this implies that $L^1\mathsf{F}_{\kappa}(\mathbb{W}_{\kappa}(\lambda)) = 0$. One shows that $L^1\mathsf{F}_{\kappa}(M) = 0$ for all $M \in \UU\Lmod{}_{\Delta}$ by induction on the length of the $\Delta$-filtration. 
\end{proof}

\begin{cor} \label{cor: ext to ext}
The functor $\mathsf{F}_{\kappa}$ induces a linear map 
\[ \Ext_{\UU}^1(M,M) \to \Ext_{\mathcal{H}_{\kappa+n}}^1(\mathsf{F}_{\kappa}(M),\mathsf{F}_{\kappa}(M))\]
for all $M$ in $\UU\Lmod{}_{\Delta}$.  
\end{cor}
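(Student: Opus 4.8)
The statement to prove, Corollary \ref{cor: ext to ext}, asserts that $\mathsf{F}_\kappa$ induces a linear map on self-extensions $\Ext^1_{\UU}(M,M) \to \Ext^1_{\mathcal{H}_{\kappa+n}}(\mathsf{F}_\kappa(M),\mathsf{F}_\kappa(M))$ for $M \in \UU\Lmod{}_\Delta$. The plan is to realize a class in $\Ext^1_{\UU}(M,M)$ by a short exact sequence $0 \to M \to E \to M \to 0$ and simply apply $\mathsf{F}_\kappa$ to it, using Proposition \ref{pro: F exact on delta} to check that the result is again a short exact sequence, hence an element of $\Ext^1_{\mathcal{H}_{\kappa+n}}(\mathsf{F}_\kappa(M),\mathsf{F}_\kappa(M))$. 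The key point is that the middle term $E$ of such an extension also lies in $\UU\Lmod{}_\Delta$: concatenating the $\Delta$-filtrations of the sub and quotient copies of $M$ gives a $\Delta$-filtration of $E$. Therefore Proposition \ref{pro: F exact on delta} applies to all three modules, $L^1\mathsf{F}_\kappa$ vanishes on each, and the long exact sequence of derived functors collapses to a short exact sequence $0 \to \mathsf{F}_\kappa(M) \to \mathsf{F}_\kappa(E) \to \mathsf{F}_\kappa(M) \to 0$.

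Concretely, I would proceed as follows. First, recall that $\mathsf{F}_\kappa$ is right exact (it preserves colimits, by Remark \ref{remark colimits Suzuki}) and admits left derived functors $L^i\mathsf{F}_\kappa$; for a short exact sequence $0 \to A \to B \to C \to 0$ with $L^1\mathsf{F}_\kappa(C) = 0$ one gets exactness of $0 \to \mathsf{F}_\kappa(A) \to \mathsf{F}_\kappa(B) \to \mathsf{F}_\kappa(C) \to 0$. Second, given $[\xi] \in \Ext^1_{\UU}(M,M)$ represented by $0 \to M \to E \to M \to 0$, observe $E \in \UU\Lmod{}_\Delta$ as explained above, so Proposition \ref{pro: F exact on delta} gives $L^1\mathsf{F}_\kappa(M) = 0$ (the relevant vanishing is for the quotient term $M$), yielding a short exact sequence of $\mathcal{H}_{\kappa+n}$-modules and hence a class in $\Ext^1_{\mathcal{H}_{\kappa+n}}(\mathsf{F}_\kappa(M),\mathsf{F}_\kappa(M))$. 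Third, check this assignment is well-defined on equivalence classes of extensions and $\C$-linear: equivalence of extensions is witnessed by an isomorphism of the middle terms commuting with the maps to and from $M$, and $\mathsf{F}_\kappa$ being a functor carries this to an equivalence of the image extensions; linearity follows because Baer sum and scalar multiplication of extensions are built from pullback and pushout along $M \oplus M \to M$ and $M \to M \oplus M$, and $\mathsf{F}_\kappa$, being additive and exact on the relevant category, commutes with these operations — more precisely one uses that $\mathsf{F}_\kappa$ sends the pullback/pushout squares defining Baer sum to pullback/pushout squares, which holds since all modules involved (direct sums of copies of $M$, fibre products of modules in $\UU\Lmod{}_\Delta$ over $M$) remain $\Delta$-filtered and $\mathsf{F}_\kappa$ is exact on them.

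The main obstacle is the linearity verification, specifically confirming that $\mathsf{F}_\kappa$ commutes with the pullback and pushout constructions entering the Baer sum. The subtlety is that a fibre product $E_1 \times_M E_2$ of two extensions need not obviously lie in $\UU\Lmod{}_\Delta$, though in fact it does: it sits in an exact sequence $0 \to M \to E_1 \times_M E_2 \to E_2 \to 0$ (or symmetrically), and both ends are $\Delta$-filtered, so the middle is too; hence $\mathsf{F}_\kappa$ is exact on it and preserves the defining cartesian square. Once this is in place, the standard Yoneda-style manipulations show $\mathsf{F}_\kappa$ respects Baer sum and scaling. I expect the cleanest writeup simply asserts that $\mathsf{F}_\kappa$ restricted to the exact subcategory $\UU\Lmod{}_\Delta$ (exact in the sense of Proposition \ref{pro: F exact on delta}) is an exact functor to $\mathcal{H}_{\kappa+n}\Lmod{}$, and that any exact functor induces a map on $\Ext^1$ classes computed via extensions whose terms lie in the subcategory — a soft general fact requiring only that $\UU\Lmod{}_\Delta$ be closed under the extensions, pullbacks and pushouts used to compute the $\Ext^1$ group and its abelian group structure, which we have checked.
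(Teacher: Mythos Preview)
Your proof is correct and follows essentially the same approach as the paper: the paper's proof consists of the single observation that $\UU\Lmod{}_\Delta$ is closed under one-step extensions, so Proposition~\ref{pro: F exact on delta} applies. You have simply spelled out in detail the standard argument that this implies (well-definedness, linearity via Baer sum), which the paper leaves implicit.
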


\begin{proof}
This follows from Proposition \ref{pro: F exact on delta} because the category $\UU\Lmod{}_{\Delta}$ is closed under one-step extensions. 
\end{proof}

Corollary \ref{cor: ext to ext} admits, at least conjecturally, a geometric interpretation when $\kappa = \mathbf{c}$. Frenkel and Teleman consider in \cite{FT} the category of $(\UUc,G[[t]])$-bimodules. They conjecture, for $\mu \in \Pi^+$ (and prove for $\mu = 0$), that $\Ext_{\UUc,G[[t]]}^\bullet(\mathbb{W}_{\mathbf{c}}(\mu),\mathbb{W}_{\mathbf{c}}(\mu))$ is isomorphic to the algebra of differential forms on $\Op_{\check{G}}^\mu(\mathbb{D})$. Note that if this conjecture holds, the algebra of self-extensions is generated by $\Ext^1$. An analogous result for rational Cherednik algebras is proven in \cite[Corollary 4.2]{Bel2}, stating that $\Ext_{\mathcal{H}_0}^\bullet(\Delta_0(\lambda),\Delta_0(\lambda))$ is isomorphic to the algebra of differential forms on $\Omega_\lambda$, for $\lambda \in \mathcal{P}(n)$.

\begin{con} \label{the conjecture}
Let $\lambda \in \mathcal{P}(n)$. 
The functor $\mathsf{F}_{\mathbf{c}}$ induces a surjective algebra homomorphism
\[ \Ext_{\UUc,G[[t]]}^\bullet(\mathbb{W}_{\mathbf{c}}(\lambda),\mathbb{W}_{\mathbf{c}}(\lambda)) \to \Ext_{\mathcal{H}_{0}}^\bullet(\Delta_0(\lambda),\Delta_0(\lambda)), \]
which is given by the restriction of differential forms via the inclusion \eqref{Schubert cells vs opers 2}. 
\end{con}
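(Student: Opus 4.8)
The plan is to construct the homomorphism by combining three ingredients already established in the paper: the surjection $\Theta \colon \mathfrak{Z} \to \mathcal{Z}$ from Theorem \ref{thm: Psi is surj}, the functoriality of $\mathsf{F}_{\mathbf{c}}$ on self-extensions from Corollary \ref{cor: ext to ext}, and the two geometric identifications of the extension algebras --- Frenkel--Teleman's (conjectural, but assumed here) identification of $\Ext_{\UUc,G[[t]]}^\bullet(\mathbb{W}_{\mathbf{c}}(\lambda),\mathbb{W}_{\mathbf{c}}(\lambda))$ with $\Omega^\bullet(\Op_{\check{G}}^\lambda(\mathbb{D}))$ and Bellamy's identification of $\Ext_{\mathcal{H}_0}^\bullet(\Delta_0(\lambda),\Delta_0(\lambda))$ with $\Omega^\bullet(\Omega_\lambda)$. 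First I would promote Corollary \ref{cor: ext to ext} from a map on $\Ext^1$ to a map on the full $\Ext$-algebra: since Proposition \ref{pro: F exact on delta} gives $L^1\mathsf{F}_{\mathbf{c}} = 0$ on $\UUc\Lmod{}_\Delta$ (and one expects, by the same homology vanishing \eqref{H1 is zero} together with Whitehead-type arguments, that all higher $L^i\mathsf{F}_{\mathbf{c}}$ vanish on $\Delta$-filtered modules), the functor $\mathsf{F}_{\mathbf{c}}$ is exact on this subcategory and hence induces an algebra homomorphism on Yoneda $\Ext$-algebras, functorially in $M = \mathbb{W}_{\mathbf{c}}(\lambda)$. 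The $G[[t]]$-equivariant refinement requires checking that $\mathsf{F}_{\mathbf{c}}$ carries the $G[[t]]$-equivariant structure on the bar/Koszul resolution computing $\Ext_{\UUc,G[[t]]}^\bullet$ to the relevant resolution on the Cherednik side; this is where the bulk of the work lies.

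Next I would identify the induced algebra map with restriction of differential forms. Under the isomorphisms of Theorem \ref{opers FF theorem order Weyls}.c) and Theorem \ref{thm 2 bellamy}.b), the degree-zero parts $\End_{\UUc}(\mathbb{W}_{\mathbf{c}}(\lambda)) \cong \C[\Op_{\check{G}}^\lambda(\mathbb{D})]$ and $\End_{\mathcal{H}_0}(\Delta_0(\lambda)) \cong \C[\Omega_\lambda]$ are compatible, by Corollary \ref{cor: surjective on endo rings} and the commutative square in its proof, with the closed embedding $\Theta^*(\Omega_\lambda) \subseteq \Op_{\check{G}}^\lambda(\mathbb{D})$ of Corollary \ref{big cor on opers}.c); that is, $\mathsf{F}_{\mathbf{c}}$ on degree-zero extensions is exactly pullback of functions along $\Theta^*$. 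Since both $\Ext$-algebras are (by the cited results, for the affine side conjecturally) generated in degree $\leq 1$ over their degree-zero parts by the module of Kähler differentials --- $\Omega^\bullet(X) = \bigwedge_{\C[X]} \Omega^1(X)$ for $X$ a local complete intersection --- it suffices to check the claim in degree one. For this I would use that $\mathsf{F}_{\mathbf{c}}$, being an additive functor, sends the connecting maps in the long exact sequences defining $\Ext^1$ compatibly; combined with the universal-property characterization of $\Omega^1$ as representing derivations, the degree-zero compatibility already forces the degree-one map to be $d\Theta^*$, i.e.\ the restriction of $1$-forms.

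Finally, surjectivity: the map on functions $\C[\Op_{\check{G}}^\lambda(\mathbb{D})] \to \C[\Omega_\lambda]$ is surjective (it is the pullback along the closed embedding $\Theta^*|_{\Omega_\lambda}$, or directly from Corollary \ref{cor: surjective on endo rings}), and since $\Omega^\bullet(\Omega_\lambda)$ is generated as an algebra by $\C[\Omega_\lambda]$ together with exact $1$-forms $d\!f$ with $f \in \C[\Omega_\lambda]$, and every such $f$ lifts to $\tilde f \in \C[\Op_{\check{G}}^\lambda(\mathbb{D})]$ with $d\tilde f \mapsto d\!f$, the algebra homomorphism is surjective. I expect the main obstacle to be the $G[[t]]$-equivariant bookkeeping in the first paragraph: one must show that the coinvariants functor $\mathsf{T}_{\mathbf{c}}(-)$ followed by $H_0(\g[t],-)$ transforms the complex computing $\Ext_{\UUc,G[[t]]}^\bullet(\mathbb{W}_{\mathbf{c}}(\lambda),\mathbb{W}_{\mathbf{c}}(\lambda))$ into one computing $\Ext_{\mathcal{H}_0}^\bullet(\Delta_0(\lambda),\Delta_0(\lambda))$ --- this needs the vanishing of all higher derived functors of $\mathsf{F}_{\mathbf{c}}$ on the relevant (infinite, in the $G[[t]]$-equivariant setting) resolutions, plus a comparison of the two notions of equivariance, which is genuinely the technical heart and the reason the statement is only conjectural rather than proved.
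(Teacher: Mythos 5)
The statement you are trying to prove is labelled \textbf{Conjecture} \ref{the conjecture} in the paper; the author does not claim a proof, and the only evidence offered in \S \ref{subsec:ext and diff forms} is Corollary \ref{cor: ext to ext} (a linear map on $\Ext^1$ in $\UUc\Lmod{}$) together with the two cited geometric identifications, one of which (Frenkel--Teleman) is itself conjectural for $\lambda \neq 0$. So there is no ``paper's own proof'' against which to compare your outline, and you are honest that your sketch leaves the key steps open. Your overall strategy --- lift the $\Ext^1$ map to the full Yoneda algebra via exactness, identify it geometrically with pullback of differential forms using Corollary \ref{big cor on opers}.c) and the surjectivity on $\End$'s from Corollary \ref{cor: surjective on endo rings}, then deduce surjectivity in all degrees --- is a sensible plan of attack that is consistent with the evidence the paper assembles.

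That said, two of your ``one expects'' steps are genuine obstructions rather than routine bookkeeping, and one is not merely open but arguably false as stated. First, you suggest that vanishing of all higher $L^i\mathsf{F}_{\mathbf{c}}$ on $\Delta$-filtered modules should follow from ``the same homology vanishing \eqref{H1 is zero} together with Whitehead-type arguments.'' Whitehead's lemmas only give $H_1(\mathfrak{sl}_n,-) = H_2(\mathfrak{sl}_n,-) = 0$ on finite-dimensional coefficients; they say nothing about $H_3$ and higher. For the module $N = (\mathbf{V}^*)^{\otimes n} \otimes L(\lambda)$ one has $N^{\mathfrak{sl}_n} \neq 0$ by Schur--Weyl duality (since $\lambda \in \mathcal{P}(n)$, $L(\lambda)$ occurs in $\mathbf{V}^{\otimes n}$), so $H_\bullet(\mathfrak{sl}_n, N) \cong H_\bullet(\mathfrak{sl}_n,\C) \otimes N^{\mathfrak{sl}_n}$ is nonzero in degree $3$, and the paper's long exact sequence \eqref{les gl vs sl} does not force $H_i(\g,N) = 0$ beyond $i=1$. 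The paper's Proposition \ref{pro: F exact on delta} deliberately stops at $L^1$ for this reason. Second, and more structurally, the conjecture lives in the $(\UUc,G[[t]])$-bimodule (Harish--Chandra) category, whereas Corollary \ref{cor: ext to ext} is a statement about $\Ext_{\UUc}^1$. These are different $\Ext$-theories with different answers; the Frenkel--Teleman identification is precisely a statement about the equivariant one, and constructing a Suzuki-type functor on the equivariant category, together with its derived comparison to the Cherednik side, is the conceptual content that is missing. You flag this at the end as ``genuinely the technical heart,'' which is correct, but it deserves to be emphasized that absent this your first paragraph does not yet even produce a well-defined map on the left-hand algebra. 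In short: the proposal is a reasonable roadmap, but it is not a proof, and the parts you defer are exactly why the statement is only a conjecture.
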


\appendix

\section{Proof of Proposition \ref{S=E,T=P}.} \label{degree estimation chapter} 

We work in the following setup. 
Let $1 \leq a \leq k$, $j_1,\hdots,j_a \geq 1$ and $j_1 + \hdots + j_a = k$. Consider an element $C=X_1[-j_1] \hdots X_a[-j_a] \in \mathbf{U}(\hat{\g}_-)$, where $X_i \in \{ e_{rs} \mid 1 \leq r,s \leq n\}$. 
\begin{lem} \label{Estimation lemma} 
The following estimates hold. 
\begin{enumerate}[label=\alph*), font=\textnormal,noitemsep,topsep=3pt,leftmargin=1cm]
\item For arbitrary $C$ as above: 
\begin{itemize}
\item $\widehat{C}_l = 0$ \  if \ $l < - (k +a)$,
\item $\deg \widehat{C}_{ - (k +a)} \leq a$,
\item $\deg \widehat{C}_{ - (k +a)+1 } \leq a-1$,
\item $\deg \widehat{C}_{ - (k +a)+2+p } \leq a+p$ \ $(p \geq 0)$. 
\end{itemize}
\item
Moreover, if $C \in \ker \mathsf{AHC} \subset \mathbf{U}(\hat{\g}_-)^{\ad \t}$ then:
\begin{itemize}
\item $\widehat{C}_{ - (k +a)} = 0$, 
\item $\deg \widehat{C}_{ - (k +a)+2+p } \leq a+p -2$  \ $(p \geq 0)$. 
\end{itemize} 
\end{enumerate}
\end{lem}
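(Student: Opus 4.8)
The statement concerns the images in $\mathbb{H}_{\mathbf{c}}$ of the Segal-Sugawara operators $C_l = C_{\langle -l-1\rangle}$ for a monomial $C = X_1[-j_1]\cdots X_a[-j_a]$ with $\sum j_i = k$, under the canonical map $\widehat{\Phi}\colon \UUc \twoheadrightarrow \mathbb{H}_{\mathbf{c}}$. The plan is to write out $C_l$ explicitly via the state-field correspondence \eqref{statefieldcorrespondence}: since $\mathbb{Y}\langle C,z\rangle = \mbox{:}\partial_z^{j_1-1}X_1\langle z\rangle \cdots \partial_z^{j_a-1}X_a\langle z\rangle\mbox{:}$ up to a scalar, the coefficient $C_{\langle -l-1\rangle}$ is a sum, over all compositions $l = i_1 + \cdots + i_a$ of $l$, of normally ordered products $\mbox{:}X_1[i_1]\cdots X_a[i_a]\mbox{:}$ weighted by binomial coefficients coming from the derivatives $\partial_z^{j_s-1}$. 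First I would record that each such term is (after normal ordering) a monomial in $\UUc$ of length $a$ and height $l$.

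The key point is then to control, for each such monomial $\mathbf{m} = X_1[i_1]\cdots X_a[i_a]$ with $i_1 + \cdots + i_a = l$ and $\sum j_s = k$, the image $\widehat{\Phi}(\mathbf{m})$ and its absolute-height degree in $\gr\mathbb{H}_{\mathbf{c}}$. Recall $\mathbb{H}_{\mathbf{c}} = \UUc/\mathfrak{I}_{\mathbf{c}}$ where $\mathfrak{I}_{\mathbf{c}}$ is generated by $\mathfrak{i} = \n_-[1]\oplus\n_+[1]\oplus\hat\g_{\geq 2}$, by $e_{ii}-1$ and by $\mathbf{1}-1$. So I would move all generators $X_s[i_s]$ with $i_s \leq -1$ to the left and all with $i_s \geq 0$ to the right (using the commutation relations in $\hat\g_{\mathbf{c}}$, which only lowers length or absolute height), observe that any factor $e_{rs}[i]$ with $i \geq 2$, or $e_{rs}[1]$ with $r\neq s$, annihilates $1_{\mathbb{H}}$, and that $e_{ii}[0].1_{\mathbb{H}} = 1_{\mathbb{H}}$. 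Thus the only surviving ``positive'' factors are $e_{ii}[1]$ and $e_{ii}[0]$, the latter acting trivially. A monomial of length $a$ and height $l$ whose negative-part factors have total (negative) height $h_- $ and whose positive-part factors are $p$ copies of some $e_{ii}[1]$ then has $h_- + p = l$; its absolute height is $-h_- + p = -l + 2p$, and acting on $1_{\mathbb{H}}$, only $a - p$ negative factors and the $p$ factors $e_{ii}[1]$ remain (the rest, if of height $0$, being scalars). The bound then comes from counting: the absolute height of the surviving part is at most... and one matches this against the four cases $l = -(k+a), -(k+a)+1, -(k+a)+2+p$.

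The main obstacle, and where the real work lies, is a careful bookkeeping argument showing that the \emph{extreme} cases are attained only by very special monomials. For $l = -(k+a)$ one needs $h_- = -(k+a)$ with only $a$ negative factors each of height $\leq -1$ and total ``shift'' $\sum j_s = k$; the minimum of $\sum|i_s|$ subject to the field-theoretic constraints forces each surviving factor to be of the form $e_{rs}[-j_s]$, giving $\deg \leq a$; pushing $l$ up by one unit of height is only possible by turning one factor into an $e_{ii}[1]$ or similar, which changes the count by the stated amount. I would organize this as an induction on $a$ (equivalently on the PBW length), handling the base case $a=1$ directly from $X[i].1_{\mathbb{H}}$, and in the inductive step peeling off $X_1[i_1]$ and applying the estimate to $X_2[i_2]\cdots X_a[i_a]$, exactly as in the $z>1$ step of the proof of Proposition \ref{pro: filt functor}. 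The last assertion, that $C \in \ker\mathsf{AHC}$ forces $\widehat{C}_{-(k+a)} = 0$ and improves the degree bound by $2$, follows because the $\mathsf{AHC}$-image records precisely the ``diagonal, purely Cartan'' part $\mbox{:}\prod e_{i_s i_s}[\cdots]\mbox{:}$, which is the only part that can realize the extreme degree; if that part vanishes, the surviving monomials necessarily have a strictly smaller negative-height total or carry an extra commutator, costing $2$ in absolute height. I expect this final improvement to be the most delicate bookkeeping, since it requires tracking which monomials can contribute to the top degree after the $\ad\t$-invariant projection.
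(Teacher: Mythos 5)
Your high-level plan is the right one and matches the paper's: induct on the PBW length $a$, get the base case $a=1$ from the explicit mode formula for $\mathbb{Y}\langle X_1[-j_1],z\rangle$, and in the inductive step write $C=AB$ with $A=X_1[-j_1]$ and use the recursive normal ordering $C_l = \sum_{r\geq 0}A_rB_s + \sum_{s<0}B_rA_s$ to reduce to estimates on $B$. However, the heart of the argument — showing that $\widehat{C}_l^-$ vanishes or has controlled degree — is where your sketch has a real gap. You propose to "move all generators $X_s[i_s]$ with $i_s\leq -1$ to the left and all with $i_s\geq 0$ to the right", but this PBW re-sorting generates a cascade of commutator terms whose number grows with $a$, and a commutator $[X,Y][i+j]$ with $i,j$ of opposite signs does \emph{not} always kill $1_{\mathbb{H}}$ nor stay in the ideal: the problematic range $i+j\in\{0,1\}$ survives. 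You would have to track exactly these surviving terms and show their total contribution to the absolute-height degree is within the stated bounds — and nothing in your plan controls them. The paper circumvents this by an auxiliary induction (its claim \textbf{(C)}, the ``good words'' lemma), showing that for $l<-(k+a)$ the element $C_l^-$ is already a sum of products of the form $M\,Z[b]$ with $b\geq 2$ — so it lies in the left ideal $\UUc\cdot\hat\g_{\geq 2}$ \emph{before} applying $\widehat\Phi$, with no re-sorting needed. Without this (or some equivalent structural fact), the vanishing $\widehat{C}_l=0$ for $l<-(k+a)$ does not follow from commutator bookkeeping alone.

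The $\ker\mathsf{AHC}$ refinement is likewise underdeveloped. You correctly intuit that the extremal degree $a$ at $l=-(k+a)$ is achieved only by ``purely Cartan'' monomials, but you don't have a mechanism that converts ``$C$ vanishes under $\mathsf{AHC}$'' into ``$\widehat{C}_{-(k+a)}=0$'' and the $-2$ drop in the fourth case. The paper again needs a separate auxiliary induction (claim \textbf{(C')}): if any $X_i\in\n_+\oplus\n_-$ then $\widehat{C}_{-(k+a)}=0$, proved by tracking the single dangerous term $B_rA_s$ with $s=-j_1-1$ in $C_l^-$ and using that $\mathfrak{i}$ is an ideal in $\hat\t_+$. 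Your ``extra commutator costing $2$ in absolute height'' is a plausible slogan for why this works, but it is not a proof — you would have to rule out, term by term, the possibility that the kernel condition is ``used up'' on a non-extremal piece while a purely-Cartan monomial still survives at the top. In short: same skeleton as the paper's proof, but the two auxiliary claims that make the induction actually close are missing from your plan, and they are not trivial to supply.
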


\begin{proof} 
Recall the module $\mathbb{U}_2 = \widehat{\mathbf{U}}_{\mathbf{c}}(\hat{\g}) / I_2$ from Definition \ref{Ur-module}, where $I_2$ is the left ideal in $\widehat{\mathbf{U}}_{\mathbf{c}}(\hat{\g})$ generated by $\hat{\g}_{\geq 2}$. 
We will often make use of the fact that $[\hat{\g}_{\geq 0}, I_2] \subseteq I_2$. 
Let $\widetilde{\Phi} \colon \widehat{\mathbf{U}}_{\mathbf{c}}(\hat{\g}) \twoheadrightarrow \mathbb{U}_2$ be the canonical map and $\widetilde{C}_l = \widetilde{\Phi}(C_l)$. Below in steps 1-4 we will show, by induction on $a$ that part a) of the lemma holds with $\widehat{C}_l$ replaced by $\widetilde{C}_l$ $(l \in \Z)$. Since~$\mathbb{H}_{\mathbf{c}}$ is a quotient of $\mathbb{U}_2$, the estimates in part a) must then also hold for $\widehat{C}_l$. In steps 5-6 we will prove part b). 

\noindent
\textbf{1. The base case.} Let us first tackle the base case $a=1$. Then $C = X_1[-k]$ and, by definition, 
\[ \mathbb{Y} \langle C,z \rangle = \frac{1}{(k-1)!}\partial_z^{k-1}\mathbb{Y}\langle X_1[-1],z\rangle = \sum_{i \in \Z} \frac{(i+1)\dotsi(i+k-1)}{(k-1)!}X_1[-i-k] z^i.\]
Hence 
\begin{equation} \label{eq:base case} C_i = 
\frac{(i+1)\dotsi(i+k-1)}{(k-1)!}X_1[-i-k].
\end{equation}
In particular,
\begin{equation}
\label{eq: base case vanishing} C_i = 0 \quad \mbox{if} \quad i = -1,\hdots,-k+1.
\end{equation}
We now consider the four cases in the lemma. First suppose that $i < -(k+1)$. Since $-i-k>1$ and $X_1[b].1_{\mathbb{H}}=0$ for $b > 1$, formula \eqref{eq:base case} implies that $\widetilde{C}_i = C_i.1_{\mathbb{H}} = 0$. 

In the second and third cases we have $C_{-(k+1)} = (-1)^{k-1} kX_1[1]$ and $C_{-k} = (-1)^{k-1}X_1$. Hence $\deg \widetilde{C}_{-(k+1)} \leq 1$ and $\deg  \widetilde{C}_{-k} \leq 0$. Finally suppose that $i = -k + p +1$ with $p \geq 0$. Formula \eqref{eq:base case} implies that $C_i$ is a multiple of $X_1[-p-1]$ and so $\deg \widetilde{C}_i \leq p+1$.\\

\noindent
\textbf{2. The inductive case - notation.} Assume $a \geq 2$. Let us set $k' = j_2 + \hdots + j_a$ and $a'=a-1$. Set $A = X_1[-j_1]$ and $B = X_2[-j_2]\hdots X_a[-j_a]$. 
By definition of the normally ordered product we have
\begin{equation} \label{ABC equation} C_l = \sum_{\substack{r+s=l,\\ r \geq 0}} A_rB_s + \sum_{\substack{r+s=l, \\ s<0}} B_rA_s. \end{equation} 
Set $C_l^+ = \sum_{\substack{r+s=l\\ r \geq 0}} A_rB_s$ and $C_l^- = \sum_{\substack{r+s=l\\ s < 0}} B_rA_s$ so that $C_l = C_l^+ + C_l^-$. Also set $\widetilde{C}_l^+:= \widetilde{\Phi}(C_l^+)$ and $\widetilde{C}_l^-:= \widetilde{\Phi}(C_l^-)$. \\

\noindent
\textbf{3. The inductive case - $C_l^+$.} First suppose that $l < -(k+a)+2$. Consider any monomial $A_rB_s$ in $C_l^+$. Since $r \geq 0$, we have $s = l-r < -(k+a)+2 \leq -(k'+a')$. Therefore, by induction, $\widetilde{B}_s = 0$. Hence $\widetilde{C}_l^+ = 0$. This takes care of the first three cases. 

Now assume that $l = -(k+a)+2+p$ with $p \geq 0$. 
Since $r, j_1 \geq 0$, we get from \eqref{eq:base case} that $A_r$ is a scalar multiple of $X_1[-r-j_1]$. Hence $\deg A_r \leq |r+j_1| = r + j_1$. 

%Since $r \geq 0$, we can write $r = -(j_1+1)+2+p'$ with $p'=r+j_1-1 \geq 0$. Then, by the base case, we know that $\deg \widetilde{A}_r \leq r+j_1$. 
We now estimate the degree of $\widetilde{B}_s$. We have $s = l-r = -(k'+a') +2 + p - (r + j_1 + 1)$. There are four situations to consider. Firstly, suppose that $p \geq r + j_1 + 1$. Then, by induction (the fourth case), we conclude that $\deg \widetilde{B}_s \leq a' + p - (r + j_1 + 1)$. Hence $\deg \widetilde{\Phi}(A_rB_s) \leq \deg A_r + \deg \widetilde{B}_s \leq (r+j_1) + (a' + p - (r + j_1 + 1)) = a'+p-1 = a+p-2$. Secondly, suppose that $p = r + j_1$. Then $s= -(k'+a')+1$ and so, by induction (the third case), we have $\deg \widetilde{B}_s \leq a' - 1$. Hence $\deg \widetilde{\Phi}(A_rB_s) \leq \deg A_r + \deg \widetilde{B}_s \leq (r+j_1) + a' - 1 = a+p -2$. Thirdly, suppose that $p = r + j_1 - 1$. Then $s=-(k'+a')$ and so, by induction (the second case), we have $\deg \widetilde{B}_s \leq a'$. Hence $\deg \widetilde{\Phi}(A_rB_s) \leq \deg A_r + \deg \widetilde{B}_s \leq (r+j_1) + a' = a+p$. Finally, if $p < r+j_1 - 1$ then $s < -(k'+a')$. Hence $\widetilde{B}_s = 0$ and $\widetilde{\Phi}(A_rB_s) = 0$. Overall we conclude that $\deg \widetilde{C}_l \leq a+p$. \\

\noindent
\textbf{4. The inductive case - $C_l^-$.}
Regard $C_l^-$ as a sum of monomials $B_rA_s$ as in \eqref{ABC equation}. If $s \leq -j_1 - 2$ then, by \eqref{eq:base case}, $A_s$ is a scalar multiple of $X_1[b]$ with $b \geq 2$. Hence in both of these cases $B_rA_s \in I_2$. Therefore, it is enough to consider the cases $s = -j_1$ and $s = -j_1 - 1$. 

Suppose that $s = -j_1$. Then $A_s = (-1)^{j_1-1}X_1$. In particular, $\deg \widetilde{A}_s = 0$ and $[A_s, I_2] \subseteq I_2$. 
Firstly, assume that $l \leq -(k+a)$. Then $r = l-s \leq -(k+a+s) = -(k'+a')-1$. Hence, by induction, $B_r \in I_2$, and so $[B_r, A_s] \in I_2$. It follows that $B_rA_s = A_sB_r - [B_r,A_s] \in I_2$. 

For the remaining cases, note that 
\[ \widetilde{\Phi}(B_rA_s) = \widetilde{\Phi}(A_sB_r) - \widetilde{\Phi}([B_r,A_s]) = A_s \cdot \widetilde{B}_r - \widetilde{\Phi}([B_r,A_s]). \] 
We can write $B_r = y + z$ with $y \in I_2$, $z \in \mathbf{U}(\hat{\g}_{-}) \otimes \mathbf{U}(\n_{-}) \otimes \mathbf{U}(\n_+) \otimes \mathbf{U}(\t[1])$ and $\deg z = \deg \widetilde{B}_r$. Then $\widetilde{\Phi}([B_r,A_s]) = \widetilde{\Phi}([z,A_s])$ and $\deg \widetilde{\Phi}([z,A_s]) \leq 
\deg z = \deg \widetilde{B}_r$. Therefore, $\deg \widetilde{\Phi}(B_rA_s) \leq \widetilde{B}_r$. 

Secondly, assume that $l = -(k+a)+1$. Then $r = l-s = -(k'+a')$. Hence, by induction, $\deg \widetilde{B}_r \leq a' = a - 1$ and so we can conclude that $\deg \widetilde{\Phi}(B_rA_s) \leq \deg \widetilde{B}_r \leq a - 1$. 
Thirdly, assume that $l = -(k+a)+2$. Then $r = l-s = -(k'+a')+1$. Hence, by induction, $\deg \widetilde{B}_r \leq a'-1= a - 2$ and so $\deg \widetilde{\Phi}(B_rA_s) \leq \deg \widetilde{B}_r \leq a - 2 < a$. 
Fourthly, assume that $l = -(k+a)+2+p$ with $p>0$. Then $r = l-s = -(k'+a')+2+(p-1)$. Hence, by induction, $\deg \widetilde{B}_r \leq a' +p-1= a + p -2$ and so $\deg \widetilde{\Phi}(B_rA_s) \leq \deg \widetilde{B}_r \leq a + p -2 < a + p$.

Now suppose that $s = -j_1 - 1$. Then $A_s = (-1)^{j_1-1}j_1X_1[1]$, $\deg \widetilde{A}_s = 1$ and $[A_s, I_2] \subseteq I_2$. The proof of the case $l < - (k+a)$ is the same as in the second paragraph of this step. Moreover, the same argument as in the third paragraph shows that $\deg \widetilde{\Phi}(B_rA_s) \leq \widetilde{B}_r+1$. 

Firstly, assume that 
$l=-(k+a)$. Then $r = l-s = -(k'+a')$. Hence, by induction, $\deg \widetilde{B}_r \leq a' = a - 1$ and so $\deg \widetilde{\Phi}(B_rA_s) \leq \deg \widetilde{B}_r + 1 \leq a$. 
Secondly, assume that $l = -(k+a)+1$. Then $r = l-s = -(k'+a')+1$. Hence, by induction, $\deg \widetilde{B}_r \leq a'-1= a - 2$ and so $\deg \widetilde{\Phi}(B_rA_s) \leq \deg \widetilde{B}_r + 1 \leq a - 1$. 
Thirdly, assume that $l = -(k+a)+2+p$ with $p\geq0$. Then $r = l-s = -(k'+a')+2+p$. Hence, by induction, $\deg \widetilde{B}_r \leq a' +p= a + p - 1$ and so $\deg \widetilde{\Phi}(B_rA_s) \leq \deg \widetilde{B}_r + 1 \leq a + p$.    
This proves that $\widetilde{C}_l^-$ satisfies the required constraints and completes the proof of the first part of the lemma. \\

\noindent
\textbf{5. An auxiliary induction.} We claim that 
\begin{description}
\item [(C)] If $X_i \in \n_+ \oplus \n_-$, for some $1 \leq i \leq a$, then $C_{-(k+a)} \in \mathfrak{I}_{\mathbf{c}}$. 
\end{description}
If $a = 1$ then $C_{-(k+1)} = (-1)^{k-1} kX_1[1] \in \mathfrak{I}_{\mathbf{c}}$ since $X_1 \in \n_+ \oplus \n_-$. So suppose $a > 1$. Then, by part 3 of the proof, $C_{-(k+a)}^+ \in \mathfrak{I}_{\mathbf{c}}$. Let us show that $\widehat{C}_{-(k+a)}^-\in \mathfrak{I}_{\mathbf{c}}$ as well. Part~5 implies that it suffices to consider the monomial $B_rA_s$ in $C_{-(k+a)}^-$ with $s = -j_1 - 1$. Since $A_s = (-1)^{j_1-1}j_1X_1[1]$, we have $B_rA_s\in \mathfrak{I}_{\mathbf{c}}$ if $X_1 \in \n_+ \oplus \n_-$. 

Otherwise, $X_1 \in \t$ and $X_i \in \n_+ \oplus \n_-$ for some $2 \leq i \leq a$. 
Since $r = -(k'+a')$, induction gives $B_{r} \in \mathfrak{I}_{\mathbf{c}}$ and $B_r$ can be written as a (finite) sum $\sum_p Z_pY_p$ with $Z_p \in \mathbf{U}_{\mathbf{c}}(\tilde{\g})$ and $Y_p \in \mathfrak{i}$ or $Y_p = e_{qq} - 1$ for some $1 \leq q \leq n$. In the first case, we use the fact that, by Lemma \ref{lem:leviideal}, $\mathfrak{i}$ is an ideal in $\hat{\t}_+$. Since $A_s \in \t[1]$, we get $[Y_p,A_s] \in \mathfrak{i}$. In the second case, $[Y_p,A_s]=0$. It follows that $[B_r,A_s] \in \mathfrak{I}_{\mathbf{c}}$. Hence $B_rA_s = A_sB_r - [B_r,A_s] \in \mathfrak{I}_{\mathbf{c}}$ as well. \\

\noindent
\textbf{6. Part b) of the lemma.} 
We now prove part b) the lemma. First observe that in many parts of the proof so far we have already established the stronger inequalities in the second statement of the lemma without even using the assumption that $C \in \ker \mathsf{AHC}$. Let us consider all the remaining cases. The first such case appears in part~3 of the proof: $l = -(k+a)+2+p$ with $p = r + j_1 - 1$. In that case $s=-(k'+a')$. Since $C \in \ker \mathsf{AHC}$, the claim \textbf{(C)} implies that $B_{s} \in \mathfrak{I}_{\mathbf{c}}$ and so $\widehat{\Phi}(A_rB_s) = 0$. 

The second case appears in part 4 of the proof: $s = - j_1 - 1$ and $l = -(k+a)$. It follows directly from \textbf{(C)} that $\widehat{\Phi}(B_rA_s) = 0$. The third case also appears in part 4 of the proof: $s = - j_1 - 1$ and $l = -(k+a)+2+p$ with $p \geq 0$. In that case $A_s = (-1)^{j_1-1}j_1X_1[1]$. There are two possibilities. Either $X_1 \in \n_+ \oplus \n_-$ or $B_r \in \ker \mathsf{AHC}$. In the first case $\widehat{\Phi}(B_rA_s) = 0$. In the second case, by induction, $\deg \widehat{B}_r \leq a' +p-2= a + p - 3$. Part 4 implies that $\deg \widehat{\Phi}(B_rA_s) \leq \deg \widehat{B}_r + 1$. Hence $\deg \widehat{\Phi}(B_rA_s) \leq a + p - 2$, as required. 
This was the last case to consider. We have therefore completed the proof of the lemma.  
\end{proof}

Lemma \ref{Estimation lemma} directly implies the following. 

\begin{cor} \label{Estimation corollary} 
Suppose that either (i) $C \in\mathbf{U}(\tilde{\g}_-)_{-k}\cap \mathbf{U}^{\mathsf{pbw}}(\tilde{\g}_-)_{\leq k-1}$ or (ii) $C \in (\mathbf{U}(\tilde{\g}_-)_{-k}\cap\mathbf{U}^{\mathsf{pbw}}(\tilde{\g}_-)_{\leq k})^{\ad \t}$ and $C \in \ker \mathsf{AHC}$. 
Then: 
\[ \widehat{C}_l = 0 \quad (l \leq -2k), \quad \quad \deg \widehat{C}_{ - 2k+2+p } \leq k-2+p \quad (p \geq 0). \]
\end{cor}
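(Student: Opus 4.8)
Both assertions are $\C$-linear in $C$: the map $C\mapsto\widehat C_l$ is linear and the degree of a sum is at most the maximum of the degrees, so it suffices to treat the monomial summands in a PBW expansion of $C$. Since $C$ lies in the height-$(-k)$ graded piece, each such summand has the form $M=X_1[-j_1]\cdots X_a[-j_a]$ with $X_i\in\{e_{rs}\}$, all $j_i\ge1$, and $j_1+\cdots+j_a=k$; hence $M$ has length $a$ with $1\le a\le k$, and in case~(i) in fact $a\le k-1$. The plan is to substitute $l=-2k+2+p$ into Lemma~\ref{Estimation lemma}, writing $l=-(k+a)+2+p'$ with $p':=p-(k-a)$; the whole argument is then bookkeeping with this index shift, except for the length-$k$ summands in case~(ii), where the $\ker\mathsf{AHC}$ hypothesis genuinely enters.

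For a summand $M$ of length $a\le k-1$ the four estimates of Lemma~\ref{Estimation lemma} give everything. Since $-(k+a)\ge-2k+1$, any $l\le-2k$ satisfies $l<-(k+a)$, so $\widehat M_l=0$. For $p\ge0$ I would run through the ranges $p'\ge0$, $p'=-1$, $p'=-2$ (which forces $a\le k-2$), and $p'\le-3$: the lemma gives $\deg\widehat M_{-2k+2+p}$ bounded by $a+p'=2a-k+p$, by $a-1$, by $a$, or by $-\infty$ respectively, and each of the inequalities $2a-k+p\le k-2+p$, $a-1\le 2k-a-3$, $a\le 2k-a-4$ reduces to $a\le k-1$ (resp.\ $a\le k-2$) and $p\ge0$. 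Summing over summands proves the corollary in case~(i) and disposes of all summands of length $\le k-1$ in case~(ii).

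In case~(ii) it remains to handle the part $C^{(k)}$ of $C$ spanned by its length-$k$ PBW monomials; these all have $j_1=\cdots=j_a=1$, i.e.\ $C^{(k)}\in\mathbf U(\g[-1])_{-k}$, and for them the unrefined estimates are weaker than required by $2$ and do not even yield $\widehat M_{-2k}=0$. Here I would use that $\mathsf{AHC}$ is a filtered algebra homomorphism whose associated graded map is, up to the evident identifications, the projection killing every root-vector generator $e_{rs}[-m]$ ($r\ne s$): from $\mathsf{AHC}(C)=0$ it follows that the top-degree PBW symbol of $C$ lies in the kernel of that projection, i.e.\ $C^{(k)}$ has no pure-Cartan monomial. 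As $C$ is $\ad\t$-invariant, every length-$k$ monomial of $C^{(k)}$ is then a weight-$0$ monomial containing at least one root vector; factoring out one such root vector from each and lifting back gives $C^{(k)}=\sum_\gamma c_\gamma\, r_\gamma[-1]D_\gamma+(\text{PBW degree}\le k-1)$, with each $r_\gamma$ a root vector and each monomial $r_\gamma[-1]D_\gamma$ still of weight $0$, so that $D_\gamma$ — having the opposite weight to $r_\gamma$ — again contains a root vector. Then Lemma~\ref{Estimation lemma}, or more precisely the argument in parts 6 and 7 of its proof (which, once a root-vector factor is present and one tracks weights, invokes only claim~(C$'$) and its degree analogue rather than the full $\ker\mathsf{AHC}$ hypothesis), applies to each $r_\gamma[-1]D_\gamma$ and gives $\widehat{(r_\gamma[-1]D_\gamma)}_{-2k}=0$ and $\deg\widehat{(r_\gamma[-1]D_\gamma)}_{-2k+2+p}\le k+p-2$. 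Combining this with the PBW-degree-$\le k-1$ remainder, treated as in the previous paragraph, completes case~(ii).

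The main obstacle is precisely this last point. The condition $C\in\ker\mathsf{AHC}$ is a condition on $C$ as a whole and fails for individual monomials (already $e_{12}[-1]e_{21}[-1]\notin\ker\mathsf{AHC}$), so one cannot simply apply Lemma~\ref{Estimation lemma} summand-by-summand with its sharp ``moreover'' bounds. The remedy is to transfer $\ker\mathsf{AHC}$ through the PBW filtration — via the compatibility of $\mathsf{AHC}$ with it — to learn that the top-length monomials of $C$ are supported on root vectors, and then to observe that the proof of Lemma~\ref{Estimation lemma} really uses only this root-support (plus the weight bookkeeping), not the kernel condition itself. Everything else is the routine index-shift computation $l=-2k+2+p=-(k+a)+2+p'$.
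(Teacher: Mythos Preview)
Your proof is correct and follows the paper's intended route—the paper itself offers no proof beyond ``Lemma~\ref{Estimation lemma} directly implies the following''—but you have correctly spotted and repaired a genuine imprecision the paper glosses over. The ``moreover'' clause of Lemma~\ref{Estimation lemma} is stated for a single monomial $C\in\ker\mathsf{AHC}$, whereas in case~(ii) of the corollary only the sum lies in $\ker\mathsf{AHC}$; you are right that this does not pass to summands (your example $e_{12}[-1]e_{21}[-1]$ is exactly the point). Your fix is sound: the PBW--filtered nature of $\mathsf{AHC}$ forces the length-$k$ part $C^{(k)}$ to be supported on monomials containing a root vector, and a close reading of parts~6--7 of the lemma's proof shows that this weaker hypothesis (weight-$0$ monomial with at least one root-vector factor) already yields the sharp estimates, since the only places the $\ker\mathsf{AHC}$ condition is invoked are to produce a root vector in $B$ (covered by claim~(C$'$)) or to handle the $s=-j_1-1$ contribution to $C_l^-$ (killed outright once $X_1$ is a root vector).

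One small simplification: the commutation step rewriting $C^{(k)}=\sum c_\gamma\,r_\gamma[-1]D_\gamma+(\text{PBW degree}\le k-1)$ is not actually needed. Since $C^{(k)}$ is $\ad\t$-invariant and each $e_{rs}$ is a weight vector, every monomial of $C^{(k)}$ is itself weight-$0$; so once you know it contains a root vector, the weight-$0$ condition forces $B=X_2[-1]\cdots X_k[-1]$ to contain one as well (regardless of whether $X_1$ is Cartan or a root vector), and the strengthened lemma applies directly to each monomial without first moving a root vector to the front. Your version is not wrong, just slightly longer.
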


\begin{lem} \label{Estimation lemma 3} 
We have: 
\begin{itemize}
\item $\widehat{\mathbf{P}}_{k,l} = 0$ if $l < - 2k$, \quad \quad \quad \quad \quad \quad \ \  $\bullet$ $\widehat{\mathbf{P}}_{k,- 2k} = \sum_{i=1}^n (e_{ii}[1])^k.1_{\mathbb{H}},$ 
\item 
$\widehat{\mathbf{P}}_{k,- 2k+1} = k\sum_{i=1}^n(e_{ii}[1])^{k-1}.1_{\mathbb{H}},$
$\quad \bullet$ if $b \geq 0$ then:  
\[ \overline{\mathbf{P}}_{k,- 2k+2+b} = k\sum_{i=1}^ne_{ii}[-b-1](e_{ii}[1])^{k-1}.1_{\mathbb{H}} \ + \ (\mathbb{H}_{\mathbf{c}})_{\leq k+b-1}.\] 
\end{itemize}
\end{lem}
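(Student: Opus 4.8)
\textbf{Proof proposal for Lemma \ref{Estimation lemma 3}.}

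The plan is to compute $\mathbb{Y}\langle \mathbf{P}_k, z\rangle$ directly, using the fact that $\mathbf{P}_k = \sum_{i=1}^n (e_{ii}[-1])^k$ is a sum of normally ordered products of the \emph{pairwise commuting} fields $e_{ii}\langle z\rangle$. Because $[e_{ii},e_{ii}]=0$ in $\g$, for each fixed $i$ the power series $e_{ii}\langle z\rangle = \sum_{r} e_{ii}[r] z^{-r-1}$ has the property that all its Fourier coefficients commute with each other, so the normally ordered product $\mbox{:}e_{ii}\langle z\rangle^k\mbox{:}$ obtained from \eqref{statefieldcorrespondence} is simply the $k$-th power of $e_{ii}\langle z\rangle$ with the negative modes moved to the left. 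Concretely, writing $e_{ii}\langle z\rangle = e_{ii}\langle z\rangle_- + e_{ii}\langle z\rangle_+$ in the usual way (negative powers of $z$ on the $+$ side, following the convention in \S\ref{subsec:quadraticSSvector}), one has $\mathbb{Y}\langle(e_{ii}[-1])^k,z\rangle = \sum_{j=0}^k \binom{k}{j} e_{ii}\langle z\rangle_-^{\,k-j} e_{ii}\langle z\rangle_+^{\,j}$. First I would extract from this the coefficient $\mathbf{P}_{k,l}$ of $z^l$ and then apply $\widehat{\Phi}$, i.e.\ act on $1_{\mathbb{H}}$ in $\mathbb{H}_{\mathbf{c}}$.

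The key simplification is that, in $\mathbb{H}_{\mathbf{c}}$, we have $\hat{\g}_{\geq 2}.1_{\mathbb{H}} = 0$ and $e_{ii}[1].1_{\mathbb{H}}$ is a nonzero vector on which all further $e_{ii}[s]$ with $s\geq 1$ act by multiplication (inside $\Sym(\t[1])$), while $e_{ii}[s].1_{\mathbb{H}}=0$ for $s\geq 2$ and $e_{ii}[0].1_{\mathbb{H}} = 1_{\mathbb{H}}$ (recall $e_{ii}-1\in\mathfrak{I}_\kappa$). So when we expand $\widehat{\mathbf{P}}_{k,l} = \mathbf{P}_{k,l}.1_{\mathbb{H}}$ as a sum of monomials $e_{ii}[s_1]\cdots e_{ii}[s_k].1_{\mathbb{H}}$ with $s_1\leq\cdots\leq s_k$ and $s_1+\cdots+s_k = -l-k$ coming from the mode expansion, the only surviving monomials are those in which every $s_j \in \{\text{negative}\}\cup\{1\}$ (a factor $e_{ii}[0]$ just disappears, a factor $e_{ii}[s]$ with $s\geq 2$ kills the vector). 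For $l<-2k$ we need $s_1+\cdots+s_k = -l-k > k$, forcing at least one $s_j\geq 2$ (since at most $k$ of them equal $1$ and the rest are $\leq 0$), hence $\widehat{\mathbf{P}}_{k,l}=0$. For $l=-2k$ the sum of exponents is exactly $k$, forcing $s_1=\cdots=s_k=1$, giving $\sum_i(e_{ii}[1])^k.1_{\mathbb{H}}$; the multinomial/normal-ordering coefficient works out to $1$ because only the term with all modes on the $+$ side contributes after hitting $1_{\mathbb{H}}$. For $l=-2k+1$ the exponents sum to $k-1$, so $k-1$ of the $s_j$ equal $1$ and one equals $0$ (which vanishes), and counting the $\binom{k}{k-1}=k$ ways to place the zero mode gives $k\sum_i(e_{ii}[1])^{k-1}.1_{\mathbb{H}}$. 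Finally for $l=-2k+2+b$ with $b\geq 0$ the exponents sum to $k-2-b$; the top-degree contribution (in the absolute-height filtration) comes from $k-1$ modes equal to $1$ and one mode equal to $-b-1$, which is exactly $e_{ii}[-b-1](e_{ii}[1])^{k-1}$ up to the combinatorial factor $k$ (placing the single negative mode), and any other admissible choice of exponents uses more low modes and therefore lands in $(\mathbb{H}_{\mathbf{c}})_{\leq k+b-1}$.

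I expect the main obstacle to be bookkeeping of two things simultaneously: (a) the combinatorial coefficients from the normal-ordering formula \eqref{statefieldcorrespondence} for the $k$-th power, and (b) keeping careful track of the absolute-height degree of each monomial $e_{ii}[s_1]\cdots e_{ii}[s_k].1_{\mathbb{H}}$ in $\gr\mathbb{H}_{\mathbf{c}}$, so as to identify which terms are genuinely of top degree $k+b$ and which fall into $(\mathbb{H}_{\mathbf{c}})_{\leq k+b-1}$ — here one must use that rewriting an out-of-PBW-order product $e_{ii}[-b-1](e_{ii}[1])^{k-1}$ into a standard monomial only produces lower-height correction terms (the commutators $[e_{ii}[s],e_{ii}[s']]$ vanish in $\g$ but the central term of $\hat{\g}_{\mathbf{c}}$ can appear, and it lowers absolute height). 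The statement $\widehat{\mathbf{P}}_{k,l}=0$ for $l<-2k$ and the explicit formulas for $l=-2k,-2k+1$ are exact identities and follow from the vanishing analysis above; the $b\geq 0$ case is only an equality modulo $(\mathbb{H}_{\mathbf{c}})_{\leq k+b-1}$, which is exactly what is needed for the associated-graded computation feeding into Proposition \ref{S=E,T=P}. Once this is done, I would note that Lemma \ref{Estimation lemma 3} combines with Lemma \ref{cor:Tk decomposition} and Corollary \ref{Estimation corollary} (applied to $Q_k$ and $Q_k'$, which contribute nothing in the relevant degrees) to give Proposition \ref{S=E,T=P} immediately.
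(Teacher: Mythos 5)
Your proposal takes a genuinely different route from the paper. The paper fixes $i$, sets $A=e_{ii}[-1]$, $B=(e_{ii}[-1])^{k-1}$, and inducts on $k$ via the recursion $C_l = \sum_{r\geq 0}A_rB_{l-r} + \sum_{s<0}B_{l-s}A_s$ (equation \eqref{ABC equation}), feeding in the estimates of Lemma~\ref{Estimation lemma}. You instead expand $\mathbb{Y}\langle (e_{ii}[-1])^k,z\rangle$ directly as a sum over mode multi-indices $(s_1,\ldots,s_k)$ and analyze which monomials survive after acting on $1_{\mathbb{H}}$. This is cleaner and more transparent, since it makes the combinatorics of the four cases ($l<-2k$, $l=-2k$, $l=-2k+1$, $l=-2k+2+b$) entirely explicit; the paper's inductive route is needed anyway to prove the companion Lemma~\ref{Estimation lemma}, so neither approach is globally more economical, but yours is certainly more illuminating for the special monomials $\mathbf{P}_k$.

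One point in your justification is wrong, though fortunately it does not propagate. You claim that because $[e_{ii},e_{ii}]=0$ in $\g$ the Fourier coefficients of $e_{ii}\langle z\rangle$ all commute. They do not: the cocycle $\langle e_{ii},e_{ii}\rangle_{\mathbf{c}} = -\tfrac12\Kil_\g(e_{ii},e_{ii}) = -(n-1)$ is nonzero, so $[e_{ii}[r],e_{ii}[-r]] = -r(n-1)\mathbf{1}\neq 0$ for $r\neq 0$, $n\geq 2$. (You in fact acknowledge the central term in your final paragraph, contradicting the opening claim.) What saves you is that the identity you actually use, namely $\mbox{:}e_{ii}\langle z\rangle^k\mbox{:} = \sum_j\binom{k}{j}\,e_{ii}\langle z\rangle_+^{\,k-j}\,e_{ii}\langle z\rangle_-^{\,j}$ (creation modes to the left), holds for \emph{any} field, with no commutativity hypothesis: it follows by a one-line induction from $\mbox{:}fg\mbox{:} = f_+g + gf_-$, since only the $X_+$/$X_-$ decomposition of the outermost factor is split while the inner normally ordered block is carried along intact. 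Equally, within $X_+^{k-j}$ and within $X_-^j$ the modes \emph{do} genuinely commute (the central term needs $r+s=0$, impossible when both indices have the same sign), so the multiplicity counting over multisets of modes that you use for the coefficients $1$, $k$, $k$ in the three non-vanishing cases is sound. I would just replace the incorrect "all modes commute" sentence with a reference to the inductive derivation of the binomial normal-ordering formula, and note explicitly that when applying the creation modes $e_{ii}[s_1]\cdots e_{ii}[s_p]$ to $(e_{ii}[1])^a.1_{\mathbb{H}}$ no reordering across the $\pm$ divide is ever performed — the result is already a PBW monomial, of absolute height $\sum_j |s_j| + a$ — so the central terms never actually enter the computation, which makes the degree bookkeeping cleaner than your closing remark suggests.
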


\begin{proof}
The first case follows directly from Lemma \ref{Estimation lemma}. So consider the remaining three cases. 
Fix $1 \leq i \leq n$. Let $A = e_{ii}[-1]$, $B = (e_{ii}[-1])^{k-1}$ and $C = AB$. 
By Lemma \ref{Estimation lemma}, we have $B_s.1_{\mathbb{H}} = 0$ for $s < -2k +2$ and $A_s.1_{\mathbb{H}} = 0$ for $s < -2$. Hence \eqref{ABC equation} implies that
\[ \widehat{C}_{-2k} = B_{-2k+2}A_{-2}.1_{\mathbb{H}}, \quad \widehat{C}_{-2k+1} = B_{-2k+2}A_{-1}.1_{\mathbb{H}} +  B_{-2k+3}A_{-2}.1_{\mathbb{H}}.\] 
By induction, we know that 
$B_{-2k+2} = (e_{ii}[1])^{k-1}$ and $B_{-2k+3} = (k-1)(e_{ii}[1])^{k-2}$ 
modulo~$\mathfrak{I}_{\mathbf{c}}$. Hence $\widehat{C}_{-2k} = A_{-2}B_{-2k+2}.1_{\mathbb{H}} = (e_{ii}[1])^{k}.1_{\mathbb{H}}$ and 
\begin{align*} \widehat{C}_{-2k+1} =& \ B_{-2k+2}A_{-1}.1_{\mathbb{H}} +  B_{-2k+3}A_{-2}.1_{\mathbb{H}} \\ =& \ B_{-2k+2}.1_{\mathbb{H}} + A_{-2}B_{-2k+3}.1_{\mathbb{H}} 
= k (e_{ii}[1])^{k-1}.1_{\mathbb{H}}. 
\end{align*} 
This proves the second and third cases. Finally consider the fourth case. We have 
\[ \widehat{C}_{-2k+2+b} = \sum_{0 \leq s \leq b} A_s B_{-2k+2+b-s}.1_{\mathbb{H}} +  B_{-2k+3+b}A_{-1}.1_{\mathbb{H}} +  B_{-2k+4+b}A_{-2}.1_{\mathbb{H}}.\]
Lemma \ref{Estimation lemma} implies that $A_b B_{-2k+2}.1_{\mathbb{H}} + B_{-2k+4+b}A_{-2}.1_{\mathbb{H}}$ is the leading term of $ \widehat{C}_{-2k+2+b}$. 
By induction, we know that 
$\sigma^{\mathsf{abs}}(\widehat{B}_{-2k+4+b}) = (k-1)e_{ii}[-b-1](e_{ii}[1])^{k-2}.1_{\mathbb{H}}$ and $\widehat{B}_{-2k+2} = (e_{ii}[1])^{k-1}.1_{\mathbb{H}}.$ Hence $\sigma^{\mathsf{abs}}(\widehat{C}_{-2k+2+b}) = ke_{ii}[-b-1](e_{ii}[1])^{k-1}.1_{\mathbb{H}}$. 
Summing over $i=1,\hdots,n$ yields the lemma. 
\end{proof}

We can now prove Proposition \ref{S=E,T=P}.

\begin{proof}[Proof of Proposition \ref{S=E,T=P}] 
By Lemma \ref{cor:Tk decomposition}, we can write
\[ \widehat{\mathbf{T}}_{k,l} = \widehat{Q}_{k,l} + \widehat{Q}'_{k,l} + \widehat{\mathbf{P}}_{k,l}, \]
where $Q_k \in (\mathbf{U}(\tilde{\g}_-)_{-k}\cap\mathbf{U}^{\mathsf{pbw}}(\tilde{\g}_-)_{\leq k-1})^{\ad \t}$, $Q'_k \in (\mathbf{U}(\tilde{\g}_-)_{-k}\cap\mathbf{U}^{\mathsf{pbw}}(\tilde{\g}_-)_{\leq k})^{\ad \t}$ and \linebreak $\mathsf{AHC}(Q'_k)=0$. Hence
Corollary \ref{Estimation corollary} implies that 
$\widehat{Q}_{k,l} = \widehat{Q}'_{k,l} = 0$ for $l \leq -2k$ and 
$$\deg \widehat{Q}_{k,-2k + 2 + p} = \deg \widehat{Q}'_{k,-2k + 2 + p} \leq k+p-2$$ for $p \geq 0.$  
On the other hand, we know from Lemma \ref{Estimation lemma 3} that $\widehat{\mathbf{P}}_{k,l} = 0$ for $l < -2k$, $\deg \widehat{\mathbf{P}}_{k,- 2k} = k$ and $\deg \widehat{\mathbf{P}}_{k,-2k+2+p} = k+p$ for $p \geq 0$. It follows that $\widehat{\mathbf{T}}_{k,l} = 0$ if $l < -2k$, $\widehat{\mathbf{T}}_{k,-2k} = \widehat{\mathbf{P}}_{k,-2k}$ and that $\widehat{\mathbf{P}}_{k,l}$ is the leading term of $\widehat{\mathbf{T}}_{k,l}$ if $l \geq -2k+2$, as required. 
\end{proof}

\end{document}